\definecolor{sangria}{rgb}{0.57, 0.0, 0.04}
\definecolor{royalblue}{rgb}{0.0, 0.22, 0.66}
\theoremstyle{plain}
\newtheorem{thm}[subsubsection]{Theorem}
\newtheorem{prop}[subsubsection]{Proposition}
\newtheorem{lemma}[subsubsection]{Lemma}
\newtheorem{cor}[subsubsection]{Corollary}
\newtheorem{conj}[subsubsection]{Conjecture}
\newtheorem{thmapp}{Theorem}[section]
\newtheorem{thmx}[subsubsection]{Theorem}
\newtheorem{corx}[subsubsection]{Corollary}
\newtheorem*{thm*}{Theorem}
\newtheorem*{prop*}{Proposition}
\newtheorem*{lemma*}{Lemma}
\newtheorem*{cor*}{Corollary}
\newtheorem*{conj*}{Conjecture}
\theoremstyle{definition}
\newtheorem{defn}[subsubsection]{Definition}
\newtheorem{rmk}[subsubsection]{Remark}
\newtheorem{notation}[subsubsection]{Notation}
\newtheorem{example}[subsubsection]{Example}
\newtheorem{rmkx}[subsubsection]{Remark}
\newtheorem*{defn*}{Definition}
\newtheorem*{rmk*}{Remark}
\newtheorem*{notation*}{Notation}
\newtheorem*{observation*}{Observation}
\newtheorem*{exercise*}{Exercise}
\newtheorem*{convention*}{Convention}
\newtheorem*{example*}{Example}
\newtheorem*{fact*}{Fact}
\newtheorem*{acknowledgement*}{Acknowledgement}
\newtheorem{rmkapp}[thmapp]{Remark}
\newtheorem{defapp}[thmapp]{Definition}
\numberwithin{equation}{subsection}
\numberwithin{figure}{subsection} 
\numberwithin{table}{subsection} 
\let\c@equation\c@subsubsection
\let\c@table\c@subsubsection
\let\c@figure\c@subsubsection
\newcommand{\PR}[1]{\left(#1\right)}
\newcommand{\CB}[1]{\left\{#1\right\}}
\newcommand{\BR}[1]{\left[#1\right]}
\newcommand{\DP}[1]{(\!(#1)\!)}
\newcommand{\DB}[1]{\llbracket#1\rrbracket}
\newcommand{\RG}[1]{\langle#1\rangle}
\newcommand{\pma}[1]{{\begin{pmatrix}#1\end{pmatrix}}}
\renewcommand{\mod}{\bmod}
\newcommand{\ra}{\rightarrow}
\newcommand{\xra}[1]{\xrightarrow{#1}}
\newcommand{\mono}{\hookrightarrow}
\newcommand{\epi}{\twoheadrightarrow}
\newcommand{\risom}{\buildrel\sim\over\rightarrow} 
\newcommand{\ov}{\overline}
\newcommand{\ud}{\underline}
\renewcommand{\hat}{\widehat}
\newcommand{\til}{\widetilde}
\newcommand{\floor}[1]{\lfloor #1 \rfloor}
\newcommand{\ceil}[1]{\lceil #1 \rceil}
\newcommand{\GL}{\mathrm{GL}}
\newcommand{\GSp}{\mathrm{GSp}}
\newcommand{\Sp}{\mathrm{Sp}}
\newcommand{\GSO}{\mathrm{GSO}}
\DeclareMathOperator{\simc}{\mathrm{sim}}
\newcommand{\Iw}{{\mathrm{Iw}}}
\newcommand{\Fl}{\mathrm{Fl}} 
\DeclareMathOperator{\Rep}{Rep}
\newcommand{\rmG}{\mathrm{G}}
\newcommand{\uG}{\ud{G}}
\newcommand{\uB}{\underline{B}}
\newcommand{\uT}{\ud{T}}
\newcommand{\uC}{\ud{C}} 
\newcommand{\LuG}{\prescript{L}{}{\underline{G}}}
\newcommand{\uPhi}{\underline{\Phi}}
\newcommand{\uDel}{\ud{\Delta}}
\newcommand{\uLam}{\underline{\Lambda}}
\newcommand{\tilW}{\til{W}}
\newcommand{\uW}{\underline{W}}
\newcommand{\utilW}{\underline{\til{W}}}
\newcommand{\uOm}{\underline{\Omega}}
\newcommand{\tilw}{\til{w}} 
\newcommand{\tilz}{\til{z}} 
\newcommand{\ctensor}{\hat{\otimes}}
\newcommand{\ctimes}{\hat{\otimes}}
\newcommand{\btimes}{\boxtimes}
\DeclareMathOperator{\spec}{\mathrm{Spec}}
\DeclareMathOperator{\spf}{\mathrm{Spf}}
\DeclareMathOperator{\Frob}{\mathrm{Frob}}
\DeclareMathOperator{\Hom}{\mathrm{Hom}}
\DeclareMathOperator{\coker}{\mathrm{coker}}
\DeclareMathOperator{\End}{\mathrm{End}}
\DeclareMathOperator{\Ind}{\mathrm{Ind}}
\DeclareMathOperator{\ind}{\mathrm{ind}}
\DeclareMathOperator{\Lie}{\mathrm{Lie}}
\DeclareMathOperator{\Aut}{\mathrm{Aut}}
\DeclareMathOperator{\std}{{\mathrm{std}}}
\DeclareMathOperator{\Gal}{\mathrm{Gal}}
\DeclareMathOperator{\Art}{\mathrm{Art}}
\DeclareMathOperator{\Ext}{\mathrm{Ext}}
\DeclareMathOperator{\HT}{{\mathrm{HT}}}
\newcommand{\rhobar}{\overline{\rho}}
\newcommand{\rbar}{\overline{r}}
\newcommand{\mo}{{-1}}
\renewcommand{\ss}{{\mathrm{ss}}}
\newcommand{\Gr}{\mathrm{Gr}}
\newcommand{\ur}{\mathrm{ur}}
\newcommand{\et}{\normalfont{\text{{\'et}}}}
\newcommand{\diag}{\mathrm{Diag}}
\newcommand{\JH}{\mathrm{JH}}
\newcommand{\rec}{\mathrm{rec}}
\newcommand{\ad}{\mathrm{ad}}
\newcommand{\id}{\mathrm{id}}
\newcommand{\univ}{\mathrm{univ}}
\newcommand{\pret}{\prescript{t}{}}
\newcommand{\Sym}{{\mathrm{Sym}}}
\newcommand{\red}{{\mathrm{red}}}
\newcommand{\reg}{{\mathrm{reg}}}
\newcommand{\nba}{\nabla}
\newcommand{\bss}{{\backslash}}
\newcommand{\Conv}{{\mathrm{Conv}}}
\newcommand{\Ad}{\mathrm{Ad}}
\newcommand{\Adm}{{\mathrm{Adm}}}
\newcommand{\loccit}{\emph{loc.~cit.}}
\newcommand{\tres}{{\text{tr\`es ramifi\'ee}}}
\newcommand{\dash}{\text{-}}
\newcommand{\loc}{\mathrm{loc}}
\newcommand{\alg}{\mathrm{alg}}
\newcommand{\gen}{\mathrm{gen}}
\newcommand{\BM}{\mathrm{BM}}
\newcommand{\der}{\mathrm{der}}
\newcommand{\obv}{\mathrm{obv}}
\newcommand{\Mod}{\mathrm{Mod}}
\newcommand{\CNL}{\hat{\cC}} 
\newcommand{\recgt}{{\mathrm{rec_{GT}}}}
\newcommand{\ddv}[1]{\frac{d{#1}}{dv}}
\newcommand{\rmor}{{\mathrm{or}}}    
\newcommand{\ix}[1]{^{(#1)}}
\newcommand{\nm}{{\mathrm{nm}}}
\newcommand{\nv}{{\mathrm{nv}}}
\newcommand{\pgma}{(\varphi,\Gamma)}
\newcommand{\mm}{{--}}
\newcommand{\pcp}{{\wedge_p}} 
\newcommand{\AP}{\mathrm{AP}}
\newcommand{\jj}{{j\in \cJ}}
\newcommand{\Irr}{\mathrm{Irr}}
\newcommand{\nbaz}{{\nabla_0}}
\newcommand{\catphiMfour}{\Phi\text{-}\mathrm{Mod}_K^{\et,4}}
\newcommand{\catphiMone}{\Phi\text{-}\mathrm{Mod}_K^{\et,1}}
\newcommand{\catphiMsym}{\Phi\text{-}\mathrm{Mod}_K^{\et,\Sym}}
\newcommand{\catphiMn}{\Phi\text{-}\mathrm{Mod}_K^{\et,n}}
\newcommand{\tilC}{\til{C}}
\newcommand{\psip}{{\psi_p}}
\newcommand{\osig}{\overline{\sigma}}
\newcommand{\idx}[1]{^{(#1)}}
\newcommand{\hcS}{\hat{\cS}}
\newcommand{\recGTp}{\rec_{\mathrm{GT},p}}
\newcommand{\Isom}{\mathrm{Isom}}
\newcommand{\ax}{{\mathrm{aux}}}
\newcommand{\fixed}{{\mathrm{fixed}}}
\newcommand{\mc}{\mathcal}
\newcommand{\mf}{\mathfrak}
\newcommand{\mbf}{\mathbf}
\newcommand{\Q}{\mathbf{Q}}
\newcommand{\Qp}{\mathbf{Q}_p}
\newcommand{\Qpbar}{\overline{\mathbf{Q}}_p}
\newcommand{\Z}{\mathbf{Z}}
\newcommand{\Zp}{\mathbf{Z}_p}
\newcommand{\Zpbar}{\overline{\mathbf{Z}}_p}
\newcommand{\R}{\mathbf{R}}
\newcommand{\C}{\mathbf{C}}
\newcommand{\N}{\mathbf{N}}
\newcommand{\F}{\mathbf{F}}
\newcommand{\Fp}{\mathbf{F}_p}
\newcommand{\Fpbar}{\overline{\mathbf{F}}_p}
\newcommand{\G}{\mathbf{G}}
\newcommand{\T}{\mathbb{T}}
\newcommand{\A}{\mathbb{A}}
\newcommand{\fa}{{\mf{a}}}
\newcommand{\fb}{{\mf{b}}}
\newcommand{\fc}{{\mf{c}}}
\newcommand{\fm}{{\mf{m}}}
\newcommand{\fp}{{\mf{p}}}
\newcommand{\fM}{{\mf{M}}}
\newcommand{\fN}{{\mf{N}}}
\newcommand{\fP}{{\mf{P}}}
\newcommand{\fS}{{\mf{S}}}
\newcommand{\cA}{{\mc{A}}}
\newcommand{\cC}{{\mc{C}}}
\newcommand{\cD}{{\mc{D}}}
\newcommand{\cE}{{\mc{E}}}
\newcommand{\cF}{{\mc{F}}}
\newcommand{\cG}{{\mc{G}}}
\newcommand{\cI}{{\mc{I}}}
\newcommand{\cJ}{{\mc{J}}}
\newcommand{\cM}{{\mc{M}}}
\newcommand{\cN}{{\mc{N}}}
\newcommand{\cO}{{\mc{O}}}
\newcommand{\cP}{{\mc{P}}}
\newcommand{\cR}{{\mc{R}}}
\newcommand{\cS}{{\mc{S}}}
\newcommand{\cT}{{\mc{T}}}
\newcommand{\cU}{{\mc{U}}}
\newcommand{\cX}{{\mc{X}}}
\newcommand{\cY}{{\mc{Y}}}
\newcommand{\cZ}{{\mc{Z}}}
\newcommand{\bfa}{{\mbf{a}}}
\newcommand{\al}{\alpha}
\newcommand{\be}{\beta}
\newcommand{\ga}{\gamma}
\newcommand{\Del}{\Delta}
\newcommand{\del}{\delta}
\newcommand{\eps}{\epsilon}
\newcommand{\veps}{\varepsilon}
\newcommand{\Lam}{\Lambda}
\newcommand{\lam}{\lambda}
\newcommand{\sig}{\sigma}
\title{Emerton--Gee stacks, Serre weights, and  Breuil--M\'ezard conjectures for $\GSp_4$}
\author{Heejong Lee}
\address{Korea Institute for Advanced Study, 85 Hoegi-ro, Dongdaemun-gu, Seoul 02455, Republic of Korea}
\email{heejonglee@kias.re.kr}
\begin{document}

\begin{abstract}
   We construct a moduli stack of rank 4 symplectic projective \'etale $\pgma$-modules and prove its geometric properties for any prime $p>2$ and finite extension $K/\Qp$. 
   When $K/\Qp$ is unramified, we adapt the theory of local models recently developed by Le--Le Hung--Levin--Morra to study the geometry of potentially crystalline substacks in this stack. In particular, we prove the unibranch property at torus fixed points of local models and deduce that  tamely potentially crystalline deformation rings are domains under genericity conditions. As applications, we prove, under appropriate genericity conditions, a $\GSp_4$-analogue of the Breuil--M\'ezard conjecture for tamely potentially crystalline deformation rings, the weight part of Serre's conjecture formulated by Gee--Herzig--Savitt for global Galois representations valued in $\GSp_4$ satisfying Taylor--Wiles conditions, and a modularity lifting result for tamely potentially crystalline representations. 
\end{abstract}

\maketitle

\setcounter{tocdepth}{2}
\tableofcontents

\section{Introduction}

\subsection{Background}
Serre's modularity conjecture predicts that every continuous, irreducible, odd Galois representation $\rbar: \Gal(\ov{\Q}/\Q) \ra \GL_2(\Fpbar)$ arises from modular forms \cite{Serre87}. Its refined version predicts the minimal weight of modular forms that give rise to $\rbar$ in terms of the restriction of $\rbar$ at $p$. The weight part of Serre's conjecture, saying that the original conjecture implies the refined conjecture, was crucial for the proof of the conjecture due to Kisin and Khare--Winterberger \cite{KW1,KW2,Kisin2adic}.

The weight part of Serre's conjecture has been generalized, most notably, to 2-dimensional Galois representations over a totally real field \cite{BDJ}, to Galois representations over a totally real field valued in $\GL_n$ \cite{Herzig-Duke-2009SerreweightMR2541127} and in more general groups $G$ \cite{GHS-JEMS-2018MR3871496}. (Both \cite{Herzig-Duke-2009SerreweightMR2541127}  and \cite{GHS-JEMS-2018MR3871496} assume that $p$ is unramified in the totally real field and the Galois representations are tamely ramified at places above $p$. See \cite[Conjecture 1.6.2]{LLHLM-2020-localmodelpreprint} for a generalization to wildly ramified representations, and \cite[Definition 1.1.1]{LLHLM-extreme} for a generalization to a totally real field in which $p$ ramifies.) In these generalizations, the notion of weight is replaced by \emph{Serre weights}, namely, isomorphism classes of irreducible $\Fpbar$-representations of $G(\F_q)$. 

Although stated in the global context, the weight part of Serre's conjecture is closely related to the mod $p$ Langlands program. Let $K/\Qp$ be a finite extension. The mod $p$ Langlands program predicts a certain correspondence between admissible smooth $\Fpbar$-representations of a $p$-adic reductive group $G(K)$ and continuous representations of $\Gal(\ov{K}/K)$ valued in its dual group $G^\vee(\Fpbar)$. 
In this article, we only consider $G=\GL_n$ or $\GSp_4$, in which case $G^\vee$ is isomorphic to $G$.  
Such a correspondence has been established only in the case $G=\GL_2$ and $K=\Qp$ \cite{Col,Pas} or $G=\GL_1$. Despite many recent advances (for example, see \cite{6author,BHHMS2}) in the program, its intricate nature remains poorly understood, and a precise conjecture for $G=\GL_n$ was only made recently \cite{EGH22} in a categorical language. However, using global methods, one can construct a candidate for a mod $p$ Langlands correspondence, namely, an admissible smooth $\Fpbar$-representation $\Pi$ of $\GL_n(K)$ associated with an $n$-dimensional continuous $\Fpbar$-representation $\rhobar$ of $\Gal(\ov{K}/K)$. One major open problem is to prove that $\Pi$ only depends on $\rhobar$ and not on non-canonical choices made in the global argument. The weight part of Serre's conjecture describes the Serre weights appearing in the $\GL_n(\cO_K)$-socle of $\Pi$ explicitly in terms of $\rhobar|_{I_K}$. Thus, the weight part of Serre's conjecture has been a guiding principle in the mod $p$ Langlands program.

The theory of Galois deformations has played an important role in many advances in proving the weight part of Serre's conjecture (\cite{LLHLM1,LLHLM2,LLHLM-2020-localmodelpreprint}), in the mod $p$ and $p$-adic Langlands program (\cite{Col,Pas,6author,BHHMS1}), and in proving modularity lifting results (\cite{TW,W,CHT,BLGGT-MR3152941,BCGP-ab_surf-2018abelian}). Recently, Emerton--Gee constructed an object that algebraically interpolates deformation spaces of $n$-dimensional $p$-adic Galois representations (called the Emerton--Gee stack \cite{EGstack}). The Emerton--Gee stack allows us to employ more geometric methods to study $p$-adic Galois representations. More importantly, it is an essential ingredient in formulating the categorical mod $p$ and $p$-adic Langlands conjecture \cite{EGH22}. When $K/\Qp$ is unramified, \cite{LLHLM-2020-localmodelpreprint} developed a theory of local models to study the geometry of potentially crystalline substacks in Emerton--Gee stacks. In particular, it is applied to analyze singularities of tamely potentially crystalline deformation rings. This led to several applications, including the weight part of Serre's  conjecture and the Breuil--M\'ezard conjecture. The objective of this article is to generalize Emerton--Gee stacks, local models, and their applications to Serre weight conjectures and the Breuil--M\'ezard conjecture to the group $\GSp_4$.

\subsection{Emerton--Gee stacks for $\GSp_4$}
Let $p$ be a prime and let $K/\Qp$ be a finite extension with the ring of integers $\cO_K$, a uniformizer $\varpi$, and the residue field $k$.  We also fix a sufficiently large finite extension $E/\Qp$ with the ring of integers $\cO$, a uniformizer $\varpi$, and the residue field $\F$.  

In \cite{EGstack}, Emerton--Gee constructed $\cX_{n,K}$ a moduli stack of rank $n$ projective \'etale $\pgma$-modules. For a finite $\cO$-algebra $A$, rank $n$ projective \'etale $\pgma$ modules with $A$-coefficients are equivalent to continuous representations of $G_K:=\Gal(\ov{K}/K)$ on rank $n$ projective $A$-modules. However, for general $A$, \'etale $\pgma$-modules behave better than $G_K$-representations in algebraic families. In particular, a family of reducible \'etale $\pgma$-modules can converge to an irreducible one, unlike $G_K$-representations. Thus, $\cX_{n,K}$ is considered the correct notion of a moduli stack of ``$p$-adic Langlands parameters''.

To generalize Emerton--Gee stacks to $\GSp_4$, we define a \emph{symplectic} projective \'etale $\pgma$-module to be a triple $(M,N,\al)$ where $M$ is a rank 4 projective \'etale $\pgma$ module, $N$ is a rank 1 projective \'etale $\pgma$-module, and $\al: M\simeq M^\vee \otimes N$ is an essentially self-dual isomorphism satisfying a certain sign condition  (Definition \ref{def:sym-et-pgma}). These reflect the underlying 4-dimensional vector space of $\GSp_4$, the similitude character of $\GSp_4$, and the non-degenerate skew-symmetric bilinear form on the 4-dimensional vector space, respectively. Under Fontaine's equivalence, symplectic projective \'etale $\pgma$-modules with $A$-coefficients for finite local $\cO$-algebra $A$ are equivalent to continuous representations $\rho: G_K \ra \GSp_4(A)$. We let $\cX_{\Sym,K}$ be a category fibered in groupoids over $\spf \cO$ whose groupoid of $A$-points, for a $p$-adically complete $\cO$-algebra $A$, is the groupoid of symplectic projective \'etale $\pgma$-modules with $A$-coefficients.  The following Theorem generalizes main properties of $\cX_{n,K}$ to $\cX_{\Sym,K}$.

\begin{thmx}[Theorem \ref{thm:sym-pgma-stack-reptbl}, Proposition \ref{prop:crys-ss-stacks}, Theorem \ref{thm:irred-comp-pgma-stack}]\label{thmx:EGstaack}
Suppose that $p>2$.
\begin{enumerate}
    \item The category fibered in groupoids $\cX_{\Sym,K}$ is a Noetherian formal algebraic stack.
    \item For each Hodge type $\lam$ and inertial type $\tau$, there exists a closed substack $\cX_{\Sym,K}^{\lam,\tau}$ (resp.~$\cX_{\Sym,K}^{\ss,\lam,\tau}$) which is a $p$-adic formal algebraic stack and  flat over $\cO$. It is uniquely characterized as the $\cO$-flat closed substack such that for any finite flat $\cO$-algebra $A$, $\cX_{\Sym,K}^{\lam,\tau}(A)$ (resp.~$\cX_{\Sym,K}^{\ss,\lam,\tau}(A)$) is equivalent to the groupoid of representations $\rho:G_K \ra \GSp_4(A)$ such that $\rho\otimes_A E$ is potentially crystalline (resp.~semistable)  with Hodge type $\lam$ and inertial type $\tau$.
    \item The underlying reduced substack $\cX_{\Sym,K,\red}\subset \cX_{\Sym,K}$ is an algebraic stack over $\F$ equidimensional of dimension $4[K:\Qp]$. Moreover, its irreducible components are naturally labeled by Serre weights.
\end{enumerate}
\end{thmx}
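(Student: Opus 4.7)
The plan is to leverage the established results of Emerton--Gee for $\cX_{4,K}$ and $\cX_{1,K}$ via the forgetful morphism $F:\cX_{\Sym,K}\to \cX_{4,K}\times_{\spf\cO}\cX_{1,K}$, $(M,N,\al)\mapsto (M,N)$, and then to adapt the local model theory of \cite{LLHLM-2020-localmodelpreprint} to the symplectic setting for the finer geometric statements. For part (1), I would first show that $F$ is representable by algebraic spaces. On a test base, the fiber of $F$ over $(M,N)$ parametrizes $\pgma$-equivariant isomorphisms $\al:M\risom M^\vee\otimes N$ satisfying the prescribed sign condition. The moduli of $\pgma$-equivariant homomorphisms $M\to M^\vee\otimes N$ is a finite-type affine scheme, cut out in the matrix space of all linear maps by the commutation relations for $\varphi$ and the topological generator of $\Gamma$. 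Within this, the isomorphism locus is open and the essentially self-dual sign condition is closed, so $F$ is representable by locally closed immersions; consequently $\cX_{\Sym,K}$ inherits Noetherianity and the formal algebraic stack structure from its target.

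For part (2), one defines $\cX_{\Sym,K}^{\lam,\tau}$ and $\cX_{\Sym,K}^{\ss,\lam,\tau}$ as scheme-theoretic images in $\cX_{\Sym,K}$ of moduli of \emph{symplectic} Breuil--Kisin modules with tame descent data of type $\tau$ and bounded by $\lam$, constructed $\GSp_4$-equivariantly in the spirit of the $\GL_n$ case in \cite{EGstack}. Equivalently, one takes the flat closure of the preimage under $F$ of $\cX_{4,K}^{\lam_{\std},\tau_{\std}}\times\cX_{1,K}^{\lam_{\simc},\tau_{\simc}}$, where subscripts indicate the decomposition of $(\lam,\tau)$ via the standard and similitude characters of $\GSp_4$. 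Flatness over $\cO$ and uniqueness of the characterization on finite flat $\cO$-algebras both follow from scheme-theoretic density of rigid-analytic points, exactly as in \cite{EGstack}: on rigid points the equivalence between $\GSp_4$-valued potentially crystalline (resp.~semistable) Galois representations and their underlying $\GL_4\times\GL_1$-representations equipped with a symplectic pairing is straightforward, and all Hodge and inertial conditions pass through $F$.

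Part (3) is where the main difficulty lies. The predicted dimension $4[K:\Qp]$ equals $[K:\Qp]$ times the number of positive roots of $\GSp_4$, matching the dimension of the symplectic flag variety and the expected dimension of the minimal Hodge-type potentially crystalline substack. After fixing a definition of Serre weights $\sigma$ of $\GSp_4(k)$ following \cite{GHS-JEMS-2018MR3871496}, I would attach to each $\sigma$ a canonical Hodge--inertial pair $(\lam_\sigma,\tau_\sigma)$ and study the mod-$\varpi$ fiber of $\cX_{\Sym,K}^{\lam_\sigma,\tau_\sigma}$ via a symplectic analogue of the LLHLM local model diagram. The main obstacle is to establish, in the $\GSp_4$ setting, the irreducibility, normality, and dimension properties of the relevant symplectic Kisin varieties and admissible sets: one must work with the affine flag variety and the extended affine Weyl group of $\GSp_4$ in place of those for $\GL_4$, and check that the Schubert-type strata carry irreducible closed substacks of the correct dimension. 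Granting this geometric input, the fact that every closed point of $\cX_{\Sym,K,\red}$ lies on such a component follows from the existence of tame potentially crystalline lifts in a sufficiently small Hodge type, while distinctness of the components indexed by different Serre weights is detected via the generic $\rhobar|_{I_K}$ on each.
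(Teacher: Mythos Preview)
Your approach to parts (1) and (2) is essentially the paper's. For (1) the paper also studies the forgetful map to $\cX_4\times_{\spf\cO}\cX_1$ and proves representability, quasi-compactness, and quasi-separatedness via cartesian diagrams involving $\ud{Isom}$ and $\ud{Aut}$ sheaves; your description of the fiber is correct, though the paper does not claim the map is a locally closed immersion. For (2) the paper simply takes the $\cO$-flat part of $\cX_{\Sym}\times_{\cX_4}\cX_4^{\lam',\tau'}$; the $\cX_1$ factor is unnecessary since the similitude character is already determined.

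For part (3), however, your strategy diverges substantially from the paper and has a genuine gap. You propose to build the components $\cC_\sigma$ via symplectic local models (Schubert strata, Kisin varieties, admissible sets) in the style of \cite{LLHLM-2020-localmodelpreprint}. But the local model machinery requires genericity hypotheses on the types and only gives information at sufficiently generic points; it cannot, by itself, produce the full decomposition of $\cX_{\Sym,K,\red}$ indexed by \emph{all} Serre weights, nor show that the union exhausts \emph{all} finite-type points. The paper instead follows the Emerton--Gee template for $\GL_n$: each $\cC_\sigma$ is constructed as the closure of an explicit locus of \emph{maximally non-split, niveau~1} representations of weight $\sigma$, built from families of extensions valued in the Borel or in the Siegel and Klingen parabolics. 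The serious new difficulty here, which your proposal does not address, is that the Klingen parabolic has a \emph{non-abelian} unipotent radical, so the Emerton--Gee abelian-cohomology construction does not apply directly; the paper circumvents this by a two-step construction inside an auxiliary $\GL_4$-parabolic using only abelian cohomology (Lemma \ref{lem:dim-ext-family-Klingen}).

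Finally, the step you ``grant''---that every closed point lies on some $\cC_\sigma$ via existence of potentially crystalline lifts---is itself one of the hardest ingredients. The paper proves this (Theorem \ref{thm:crys-lift}) by invoking Lin's obstruction theory for non-abelian mod~$p$ lifting, including a verification of non-degeneracy of the Lyndon--Demu\v{s}kin cup product for the Klingen case. Local models play no role in the proof of part (3); they enter only later, under genericity, for the finer results on potentially crystalline substacks.
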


The proof of item (1) and (2) use the corresponding results for the Emerton--Gee stack for $\GL_4\times \GL_1$ and can be easily generalized to $\GSp_{2n}$. To prove item (3), we first construct irreducible components corresponding to Serre weights and prove that their union equals the stack $\cX_{\Sym,K,\red}$, following the strategy for $\GL_n$. The construction of irreducible components requires a generalization of a result of Emerton--Gee constructing a family of extensions using abelian Galois cohomology. One difference between $\GSp_4$ and $\GL_n$ is the presence of a non-abelian unipotent radical of a maximal parabolic subgroup $Q$ called the Klingen parabolic subgroup. It is unclear how the construction of Emerton--Gee can be generalized to the non-abelian case. 
Instead, we construct a family of extensions valued in $Q$ inside a family of extensions valued in the minimal parabolic of $\GL_4$ containing $Q$ by only using abelian Galois cohomology. To prove that their union is equal to the stack $\cX_{\Sym,K,\red}$, we need to prove the existence of crystalline lifts for any continuous representation $G_K \ra \GSp_4(\F)$. In this case, it seems inevitable to use non-abelian Galois cohomology. We prove this by using the main result in \cite{Lin}, which develops an obstruction theory for lifting mod $p$ representations of $G_K$ valued in reductive groups.

Recently, Zhongyipan Lin constructed a generalization of the Emerton--Gee stack to an arbitrary reductive group proved its Noetherian formal algebraicity using a Tannakian formalism \cite{LinEG}. 

\subsection{Local models for potentially crystalline stacks}
Assume that $K/\Qp$ is unramified of degree $f$.

In \cite{LLHLM-2020-localmodelpreprint}, the authors constructed local models of potentially crystalline substacks of $\cX_{n,K}$ using Breuil--Kisin modules. One can associate a Breuil--Kisin module with descent data to a lattice in a potentially crystalline representation. Using this, they identify the potentially crystalline stacks (with bounded $p$-adic Hodge type) with a certain closed substack of the moduli stack of Breuil--Kisin modules with descent data constructed in \cite{CL-ENS-2018-Kisinmodule-MR3764041}. It is proved in \loccit~that the moduli stack of Breuil--Kisin modules with descent data is smoothly equivalent to  a  Pappas--Zhu local model studied in \cite{PZ13-Inv-local_model-MR3103258}. The innovation in \cite{LLHLM-2020-localmodelpreprint} is a construction of certain subvarieties in Pappas--Zhu local models whose open neighborhoods are, after $p$-adic completion, smoothly equivalent to  open neighborhoods of potentially crystalline stacks.

The Pappas--Zhu local models exist for any connected reductive group which splits over a tamely ramified base change. Let $M(\le\lam)$ be the Pappas--Zhu local model over $\spec \cO$  associated to the group $\Res_{K/\Qp}\GSp_4$, Iwahori subgroup of $\GSp_4(K)$, and  a regular Hodge type $\lambda$. Following the idea of \cite{LLHLM-2020-localmodelpreprint}, we construct a closed subvariety $M(\lam,\nba_{\bfa_\tau})\subset M(\le\lam)$ for a tame inertial type $\tau$ (see Definition \ref{def:local-model} and \S\ref{sub:products-local-models}).   
It is a projective variety equipped with a natural $T^f$-action. Its $T^f$-fixed points are given by certain elements $\tilz$ in the extended affine Weyl group of $(\GSp_4)^f$. Our main result on the local model is the following.

\begin{thmx}[Theorem \ref{thm:unibranch-product}]\label{thmx:unibranch}
Suppose that $\lam$ is regular and $\tau$ is sufficiently generic (depending on $\lam$). Then $M(\lam,\nba_{\bfa_\tau})$ is unibranch at any $T^f$-fixed point $\tilz \in M(\lam,\nba_{\bfa_\tau})(\F)$.
\end{thmx}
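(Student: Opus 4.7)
The plan is to establish the unibranch property by reducing to an explicit local computation on affine charts of the Pappas--Zhu local model, adapting the strategy developed by Le--Le Hung--Levin--Morra in the $\GL_n$ setting to the symplectic setting.

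First, I would exploit the product structure suggested by the label of the theorem. Since $K/\Qp$ is unramified of degree $f$, after base change to $\cO$ the group $\Res_{K/\Qp}\GSp_4$ becomes $(\GSp_4)^f$ and the Pappas--Zhu local model $M(\le\lam)$ decomposes as a product $\prod_{j\in\cJ} M_j(\le\lam_j)$ indexed by embeddings. The $T^f$-fixed point $\tilz=(\tilz_j)_j$ decomposes accordingly, and the monodromy condition $\nba_{\bfa_\tau}$ cutting out $M(\lam,\nba_{\bfa_\tau})$ inside $M(\le\lam)$ is likewise a product of per-embedding conditions; the defining ideal splits as a sum of ideals pulled back from the factors. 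Since a product of unibranch (pointed) local schemes over a field is unibranch at the product point, this reduces the statement to the single-embedding case, i.e.\ to $\GSp_4$ over $\Qp$.

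Next, within one factor, I would work in the standard affine open chart around $\tilz$ in the affine Schubert variety for $\GSp_4$, cut out from the corresponding $\GL_4$ chart by the symplectic self-duality. This gives polynomial coordinates in which the universal Breuil--Kisin module admits an explicit matrix description. On this chart one writes out the monodromy operator $\nba_{\bfa_\tau}$ acting on the universal object; the condition that it preserves the Breuil--Kisin lattice translates into finitely many explicit polynomial equations in these coordinates, whose leading terms, under genericity of $\tau$, are controlled by the combinatorics of the affine Weyl element $\tilz$. The plan at this step is to mirror \emph{verbatim}, where possible, the analogous computation for $\GL_n$, and to track with care the new quadratic relations imposed by the symplectic form that are absent in type $A$.

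Finally, one shows that the completed local ring $\hat{\cO}_{M(\lam,\nba_{\bfa_\tau}),\tilz}$ has a unique minimal prime. The strategy is to exhibit, after a suitable change of variables depending on $\tilz$, a near-monomial normal form for the ideal from which unibranch-ness can be read off, e.g.\ by verifying that the tangent cone is geometrically irreducible, or equivalently by constructing a dominant map from a smooth formal neighborhood onto $\spf\hat{\cO}_{M(\lam,\nba_{\bfa_\tau}),\tilz}$. The main obstacle will be the interaction in $\GSp_4$ between short roots, long roots, and the non-degenerate skew-symmetric pairing, which produces cross terms in the monodromy equations that have no analogue in $\GL_n$. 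The delicate point is to impose the genericity conditions on $\tau$ strong enough (in terms of $\lam$) to kill the offending lower-order terms so that the normal form becomes clean, and then to carry out a finite case analysis over the $T$-fixed points $\tilz \in M(\lam,\nba_{\bfa_\tau})(\F)$, indexed by the relevant subset of the extended affine Weyl group of $\GSp_4$, to conclude in each case that only one analytic branch passes through $\tilz$.
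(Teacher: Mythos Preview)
Your proposal diverges from the paper's argument in two essential ways, and the first of these is a genuine gap.

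\textbf{The product reduction.} You claim that unibranch-ness of the factors yields unibranch-ness of the product, appealing to a statement about local schemes ``over a field.'' But the local models $M(\lam_j,\nba_{\bfa_{\tau,j}})$ are schemes over $\spec\cO$, not over $\F$: the point $\tilz$ lies in the special fiber, and what must be shown is that the completed local \emph{$\cO$-algebra} is a domain. The relevant operation is a completed tensor product over $\cO$ of complete local $\cO$-domains with residue field $\F$, and this is \emph{not} automatically a domain. The paper handles this issue carefully: it first arranges (via a spreading-out argument on the universal local model) that each factor admits an $\cO'$-point lifting $\tilz_j$ for some finite extension $\cO'/\cO$ of bounded degree, then passes to $\cO'$ and invokes \cite[Proposition~2.2]{KW2} and \cite[Lemma~A.1.1]{BLGGT-MR3152941}, which require such a rational point. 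Your reduction step, as written, simply omits this.

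\textbf{The single-factor argument.} Your plan for one embedding is to write the monodromy equations explicitly in affine coordinates and extract a normal form from which a unique minimal prime can be read off. The paper takes a completely different and more conceptual route. It introduces an \emph{equal-characteristic} local model $\cM(\lam,\nba_\bfa)$ over $\A^1_\F$ and first proves the unibranch property there, not by explicit equations but by a torus/one-parameter-subgroup argument: the fixed point $\tilz$ is attractive for a suitable $\G_m$-action, so its preimage in the normalization is connected. The transfer to mixed characteristic then goes through a \emph{universal} local model over $\Z[v]\times\A^3$: both the mixed- and equal-characteristic models are base changes of it (under a genericity hypothesis on $\bfa$), and a further genericity hypothesis forces the base change of its normalization to stay normal, so that normalization commutes with specialization and the unibranch property descends. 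None of this requires a case analysis over $\tilz$, nor any symplectic-specific manipulation of the monodromy equations; the only $\GSp_4$-specific input is checking that the one-parameter subgroup can be chosen inside $T^\vee\times\G_m$ rather than $T_4^\vee\times\G_m$. Your proposed explicit approach might in principle succeed, but it would be substantially harder to execute (the interaction of long and short roots you flag is real), and it is not what the paper does.
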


Let $\cX^{\le \lam,\tau}_{\Sym,K,\reg}$ be the scheme-theoretic union of $\cX^{\lam',\tau}_{\Sym,K}$ for $\lam'\le \lam$ regular. For sufficiently generic $\tau$, the $p$-adic completion of the variety $M(\lam,\nba_{\bfa_\tau})$ is locally smoothly equivalent to $\cX^{\le \lam,\tau}_{\Sym,K,\reg}$ (Theorem \ref{thm:pcrys-local-model}). As a result, Theorem \ref{thmx:unibranch} implies the following result on Galois deformation rings. This is the main ingredient for applications below.

\begin{corx}\label{corx:domain}
Suppose that $\lam$ is regular and $\tau$ is sufficiently generic (depending on $\lam$). If  $\rhobar: G_K \ra \GSp_4(\F)$ is tame, then the potentially crystalline deformation ring $R_{\rhobar}^{\lam,\tau}$ with Hodge type $\lam$ and tame inertial type $\tau$ is a domain if it is nonzero.
\end{corx}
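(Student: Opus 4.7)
The plan is to deduce Corollary \ref{corx:domain} directly from Theorems \ref{thmx:local-model} and \ref{thmx:unibranch}, by transferring the local geometric property of the local model at the torus-fixed point to an algebraic property of the framed deformation ring.

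First I would set up the dictionary between $R_\rhobar^{\lam,\tau}$ and the completion of the local model. As noted in the paragraph following Theorem \ref{thmx:local-model}, since $\rhobar$ is tame it corresponds under the local model diagram to a $T^f$-fixed point $\tilz \in M(\lam,\nba_{\bfa_\tau})(\F) \subset M_\reg(\le\lam,\nba_{\bfa_\tau})(\F)$, and the closed substack $\cX_{\Sym,K}^{\lam,\tau} \subset \cX_{\Sym,K,\reg}^{\le\lam,\tau}$ corresponds under the diagram to the closed subvariety $M(\lam,\nba_{\bfa_\tau})^\pcp \subset M_\reg(\le\lam,\nba_{\bfa_\tau})^\pcp$. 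By the characterization of $\cX_{\Sym,K}^{\lam,\tau}$ in Theorem \ref{thmx:EGstaack}(2), $\spf R_\rhobar^{\lam,\tau}$ is, up to finitely many formal variables coming from the framing torsor for $\GSp_4$ at $\rhobar$, the completion of $\cX_{\Sym,K}^{\lam,\tau}$ at $\rhobar$. Since the property of being a domain is preserved under adjoining formal variables, it is enough to show that this stacky completion is a domain.

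Second I would chase through the local model diagram
$$\cX_{\Sym,K,\reg}^{\le\lam,\tau}(\tilz) \longleftarrow \til{U}_{\reg}(\tilz,\le\lam,\nba_{\bfa_\tau})^\pcp \longrightarrow U_{\reg}(\tilz,\le\lam,\nba_{\bfa_\tau})^\pcp$$
restricted to the closed locus cut out by $M(\lam,\nba_{\bfa_\tau})$. Both legs are $T^f$-torsors and hence formally smooth, so after completing at the points lying over $\rhobar$ and over $\tilz$ one obtains a zig-zag of formally smooth local homomorphisms between the completion of $\cX_{\Sym,K}^{\lam,\tau}$ at $\rhobar$ and the completed local ring $\widehat{\cO}_{M(\lam,\nba_{\bfa_\tau}),\tilz}$. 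Formally smooth extensions of Noetherian complete local rings are (completed) power-series extensions, so they preserve the property of being a domain. It therefore suffices to show $\widehat{\cO}_{M(\lam,\nba_{\bfa_\tau}),\tilz}$ is a domain.

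This last statement follows from Theorem \ref{thmx:unibranch} combined with standard commutative algebra. The scheme $M(\lam,\nba_{\bfa_\tau})$ is of finite type over $\cO$ and hence excellent; cut out of the Pappas--Zhu local model under the genericity hypothesis, it is moreover reduced and $\cO$-flat. For an excellent, reduced Noetherian local ring, being unibranch implies analytic irreducibility, so $\widehat{\cO}_{M(\lam,\nba_{\bfa_\tau}),\tilz}$ is reduced with a unique minimal prime and therefore a domain. The main obstacle I anticipate is the bookkeeping of the second step: reconciling the $T^f$-action coming from the local model diagram with the $\GSp_4$-framing torsor at $\rhobar$, so that the formally smooth equivalence between $R_\rhobar^{\lam,\tau}$ and $\widehat{\cO}_{M(\lam,\nba_{\bfa_\tau}),\tilz}$ (up to formal variables on each side) is stated cleanly and respects the closed stratum indexed by $\lam$ rather than the larger regular stratum over $\le\lam$. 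A subsidiary point is verifying that reducedness, $\cO$-flatness, and excellence of $M(\lam,\nba_{\bfa_\tau})$ are indeed inherited under the precise genericity hypotheses in effect here, which is expected from the Pappas--Zhu construction but merits explicit verification.
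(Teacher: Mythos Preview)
Your overall strategy matches the paper's, and you have put your finger on exactly the right obstacle. The local model diagram of Theorem~\ref{thmx:local-model} identifies $\cX_{\Sym,K,\reg}^{\le\lam,\tau}(\tilz)$ with $U_\reg(\tilz,\le\lam,\nba_{\bfa_\tau})^\pcp$ only as a whole; because the left leg is constructed via Elkik approximation and is non-canonical in characteristic zero, there is no a~priori reason it should carry the closed substack $\cX_{\Sym,K}^{\lam,\tau}$ onto the closed subvariety $M(\lam,\nba_{\bfa_\tau})$. (When $\lam=\eta$ the stratification is trivial and your argument goes through verbatim.) So the point you flagged as ``bookkeeping'' is in fact the entire content of the corollary beyond the base case.

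The paper does \emph{not} resolve this by verifying any compatibility with the stratification. Instead (see the proof of Theorem~\ref{thm:pcrys-local-model}(2)) it counts minimal primes and inducts on $\lam$. Theorem~\ref{thmx:unibranch} shows each $\cO(U(\tilz,\lam',\nba_{\bfa_\tau}))^\pcp$ is a domain, so the number of components of $\til U_\reg(\tilz,\le\lam,\nba_{\bfa_\tau})^\pcp$ is at most $\#\{\lam'\le\lam\text{ regular dominant}:\tilz\in\Adm^\vee(\lam')\}$. The matching \emph{lower} bound on the stack side comes from Lemma~\ref{lem:pcrys-stack-nonempty}, whose proof is deferred and relies on the global patching functors of \S\ref{sub:patching-argument}. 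Equality of the counts forces the closed immersion to be an isomorphism between reduced $\cO$-flat equidimensional formal schemes; writing $n_{\lam'}$ for the number of minimal primes in the versal ring to $\cX_{\Sym}^{\lam',\tau}$ at $\rhobar$, one gets $\sum_{\lam'\le\lam}n_{\lam'}$ equal to this count, and induction on $\lam$ gives $n_\lam\in\{0,1\}$. Thus the gap you anticipated is genuine, and its resolution requires global input rather than a direct local identification.
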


\begin{rmkx}
As in \cite{LLHLM-2020-localmodelpreprint}, our main results on local models, as well as their applications, hold under suitable genericity conditions, which will be made explicit in the text. In particular, genericity conditions imply that the Hodge type $\lam$ is small relative to $p$. We refer readers to \S1.3 in \loccit~for discussions on genericity conditions.
\end{rmkx}

\subsection{The inertial local Langlands correspondence} For a mildly generic tame inertial type $\tau$, we construct an irreducible representation $\sig(\tau)$ of $\GSp_4(k)$ in characteristic zero such that if a smooth representation $\pi$ of $\GSp_4(K)$ contains $\sig(\tau)$ (viewed as a $\GSp_4(\cO_K)$-representation via inflation) as a $\GSp_4(\cO_K)$-subrepresentation, then $\tau$ is isomorphic to the restriction to inertia of the $L$-parameter of $\pi$ (Theorem \ref{thm:inertial-LLC}).  
This result is essential to our global arguments. 

Very recently, Suzuki and Xu proved the explicit local Langlands correspondence for $\GSp_4$ and $\Sp_4$ \cite{SuzukiXuGSp4}. We expect that one can use their result to extend the inertial local Langlands for more general $\tau$. 

\begin{rmkx}[Transfer from $\GSp_4$ to $\GL_4$]
    We define natural maps sending Deligne--Lusztig representations and Serre weights of $\GSp_4(k)$ to those of $\GL_4(k)$ (Proposition \ref{prop:transfer-DL-SW}), which might be of independent interest. They are compatible with mod $p$ reduction. In the case of Deligne--Lusztig representations, it is also compatible with the inertial local Langlands in a suitable sense. Moreover, this shows that a Serre weight $\sig$ of $\GSp_4(k)$ is in the set $W^?(\rhobar|_{I_{K}})$ of Serre weights associated with tame $\rhobar: G_K \ra \GSp_4(\F)$ only if its transfer to $\GL_4(k)$ is in the set of Serre weights of $\GL_4(k)$ associated with $\rhobar$ viewed as $\GL_4(\F)$-valued representation, and the converse holds under a very mild assumption (Corollary \ref{cor:transfer-W?}). Thus, it seems reasonable to expect that this map respects the property of being modular.
\end{rmkx}

\subsection{Applications}
From now on, we assume that $K/\Qp$ is unramified of degree $f$. 

The importance of Corollary \ref{corx:domain} is related to the patching argument. Roughly speaking, the patching argument constructs a module $M_\infty(\lam-\eta,\tau)$ over (a certain modification of) $R_{\rhobar}^{\lam,\tau}$. A standard commutative algebra argument shows that the support of $M_\infty(\lam-\eta,\tau)$ is a union of irreducible components in $\spec R_{\rhobar}^{\lam,\tau}$. It is a folklore conjecture that the support is indeed equal to $\spec R_{\rhobar}^{\lam,\tau}$, which almost immediately implies a modularity lifting result in a relevant setup. Moreover, it has applications to the Serre weight conjectures and the Breuil--M\'ezard conjecture. 
Unfortunately, this is hard to prove in general. However, if $M_\infty(\lam-\eta,\tau)\neq 0$ and $ R_{\rhobar}^{\lam,\tau}$ is a domain, then the support of $M_\infty(\lam-\eta,\tau)$ has to be equal to $\spec R_{\rhobar}^{\lam,\tau}$. Thus, Corollary \ref{corx:domain} together with Taylor's ``Ihara avoidance'' argument implies the following modularity lifting result.

\begin{thmx}[Theorem \ref{thm:modularity-lifting}]\label{thmx:modularity-lifting}
Let $r,r':G_F \ra \GSp_4(\cO)$ be continuous representations  that are isomorphic modulo $\varpi$, unramified  at all but finitely many places, and potentially crystalline with regular Hodge type $\lam$ and sufficiently generic (depending on $\lam$) tame inertial type $\tau$ at places above $p$. We further assume that $r \mod \varpi$ is tame at places above $p$ and satisfies Taylor--Wiles conditions. If $r'$ is automorphic, then $r$ is also automorphic.
\end{thmx}

\subsubsection{The geometric Breuil--M\'ezard conjecture}

The original Breuil--M\'ezard conjecture \cite{BM02} measures the complexity of the special fiber of potentially crystalline (or semistable) deformation rings in terms of the mod $p$ representation theory of $\GL_n(k)$. Its geometric formulations are stated in \cite{EG-geom_BM-MR3134019} (for deformation rings) and \cite{EGstack} (for Emerton--Gee stacks)  and proven in \cite{LLHLM-2020-localmodelpreprint} in the tamely potentially crystalline case under genericity assumptions. Dotto formulated the Breuil--M\'ezard conjecture for central division algebras and proved that it follows from the conjecture for $\GL_n$ \cite{DottoBM}.

We formulate the geometric Breuil--M\'ezard conjecture for $\GSp_4$ in the tamely potentially crystalline case and prove it under genericity assumptions. As a preliminary, we establish the inertial local Langlands correspondence for $\GSp_4$ for a mildly generic tame inertial type (Theorem \ref{thm:inertial-LLC}). For a tame inertial type $\tau$, let $\sig(\tau)$ be the corresponding tame type (i.e.~an irreducible representation of $\GSp_4(k)$ over $E$).

\begin{thmx}[Corollary \ref{cor:geom-BM-poly-gen} and \ref{cor:versal-BM-poly-gen}]\label{thmx:BM}
Let $\Lam$ be a finite set of regular Hodge types $\lam$. 
\begin{enumerate}
    \item For each Serre weight $\sigma$, there exists a $4f$-dimensional cycle $\cZ_{\sigma}$ in $\cX_{\Sym,K,\red}$ such that
    \begin{align*}
    \cZ_{\lam,\tau} = \sum_{\sigma} [\ov{\sigma(\tau) \otimes V(\lam-\eta)} : \sigma] \cZ_{\sigma}
    \end{align*}
    holds for any regular Hodge type $\lam\in \Lam$ and any sufficiently generic (depending on $\Lam$) regular tame inertial type $\tau$.
    \item Let $\rhobar: G_K \ra \GSp_4(\F)$ be sufficiently generic (depending on $\Lam$). For each Serre weight $\sigma$, there exists a $(4f+11)$-dimensional cycle $Z_{\sigma}$ in $\spec R_{\rhobar}^\square/\varpi$ such that 
    \begin{align}\label{eqn:versal-BM}
    Z(R_{\rhobar}^{\lam,\tau}/\varpi) = \sum_{\sigma} [\ov{\sigma(\tau) \otimes V(\lam-\eta)} : \sigma] Z_{\sigma}
\end{align}
    holds for any regular Hodge type $\lam\in \Lam$ and any regular tame inertial type $\tau$.
\end{enumerate}
\end{thmx}

By using patching functors and Corollary \ref{corx:domain}, we first construct cycles $Z_{\sig}$ satisfying \eqref{eqn:versal-BM} for regular $\lam$, sufficiently generic $\tau$, and tame $\rhobar$ (Theorem \ref{thm:versal-BM}). Then we algebraically interpolate $Z_{\sig}$ for various $\rhobar$ to construct $\cZ_{\sig}$ in item (1) following the axiomatic argument in \cite[\S8.3]{LLHLM-2020-localmodelpreprint}. Item (2) (where we do \emph{not} assume that $\rhobar$ is tame) essentially follows from item (1) by pulling back the cycles $\cZ_{\sig}$ to a versal ring for $\cX_{\Sym,K,\red}$ at $\rhobar$.

\subsubsection{The weight part of Serre's conjecture}
Let $F$ be a totally real field of even degree over $\Q$ in which $p$ is unramified. Let $\cG$ be an inner form of $\GSp_4$ over $F$ which is compact modulo center at infinity and splits at all finite places. Fix an isomorphism $\iota: \Qpbar \risom \C$. Given a Hecke character $\chi: \A_F^\times/F^\times \ra \C^\times$, a   level $U\subset \cG(\A^\infty_F)$, and an $\cO[\cG(\cO_F\otimes_\Z \Z_p)]$-module $W$, we define a space of algebraic automorphic forms $S_\chi(U,W)$ to be the $\cO$-module of continuous functions $f : \cG(F) \bss \cG(\A^\infty_F) \ra W$ such that  $f(zg) =(\iota^\mo\circ\chi(z))f(g)$ and $f(gu)=u_p^\mo \cdot f(g)$ for all $z\in Z(\A_F^\infty), g\in \cG(\A^\infty_F)$, and $u \in U$.

Let $\rbar: G_F \ra \GSp_4(\F)$ be a continuous and absolutely irreducible representation which is the mod $p$ reduction of the Galois representation attached to a regular algebraic cuspidal automorphic representation of $\cG(\A_F)$ (or equivalently, of $\GSp_4(\A_F)$).
Then $\rbar$ determines a maximal ideal $\fm_{\rbar}$ of an appropriate Hecke algebra, and  $S_\chi(U,\sigma)_{\fm_{\rbar}}\neq 0$ for some $\chi$, $U$, and Serre weight $\sigma$. In this case, we say $\sigma$ is a \emph{modular} Serre weight for $\rbar$ and write $W(\rbar)$ for the set of modular Serre weights for $\rbar$. 
When $\rbar$ is tame at places above $p$, \cite{GHS-JEMS-2018MR3871496} defines a set $W^?(\otimes_{v|p}\rbar|_{I_{F_v}})$ (see Definition \ref{def:W?}) using combinatorial recipes, where ${I_{F_v}}$ denotes the inertia subgroup at $v$, and conjectures that $W(\rbar)=W^?(\otimes_{v|p}\rbar|_{I_{F_v}})$. We verify this conjecture under technical genericity assumptions.

\begin{thmx}[Theorem \ref{thm:SWC}]\label{thmx:SWC}
Let $\rbar: G_F \ra \GSp_4(\F)$ be as above. Moreover, we assume that $\rbar|_{I_{F_v}}$ is tame and sufficiently generic for $v|p$, and $\rbar$ satisfies \emph{Taylor--Wiles conditions} (Definition \ref{def:odd-TW-conditions}). Then $W(\rbar) =W^?(\otimes_{v|p}\rbar|_{I_{F_v}})$. 
\end{thmx}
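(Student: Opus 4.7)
The plan is to reduce Theorem \ref{thmx:SWC} to a purely local statement about a Taylor--Wiles patching functor associated to $\rbar$, following the strategy employed for $\GL_n$ in \cite[\S8]{LLHLM-2020-localmodelpreprint}. Applying Taylor--Wiles patching to the spaces $S_\chi(U,-)_{\fm_{\rbar}}$ as $U$ ranges over a cofinal sequence of levels (the Taylor--Wiles hypothesis of Definition \ref{def:odd-TW-conditions} providing enough auxiliary primes), one obtains an exact covariant functor $M_\infty$ from finitely generated $\cO[[\cG(\cO_F\otimes_{\Z}\Z_p)]]$-modules of fixed central character to finitely generated modules over a patched ring $R_\infty \cong (\Ctensor_{v|p} R_{\rbar|_{G_{F_v}}}^\square)[[x_1,\dots,x_g]]$, satisfying the standard axioms: exactness, minimal support, maximal Cohen--Macaulayness of $M_\infty(\sigma(\tau)\otimes V(\lam-\eta))$ over the patched potentially crystalline deformation ring $R_\infty^{\lam,\tau}$, and local-global compatibility. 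By construction $\sigma=\otimes_{v|p}\sigma_v \in W(\rbar)$ if and only if $M_\infty(\sigma)\ne 0$, so the theorem reduces to the local assertion that $M_\infty(\sigma)\ne 0$ if and only if $\sigma_v \in W^?(\rhobar_v|_{I_{F_v}})$ for every $v|p$, where $\rhobar_v := \rbar|_{G_{F_v}}$.

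For the weight-existence direction $W^?\subseteq W(\rbar)$, given $\sigma_v \in W^?(\rhobar_v|_{I_{F_v}})$ for each $v|p$, a combinatorial argument using the explicit recipe for $W^?$ from \cite{GHS-JEMS-2018MR3871496} produces a ``capturing'' pair $(\lam_v,\tau_v)$ of a regular Hodge type and a mildly generic tame inertial type such that $\sigma_v$ is a Jordan--H\"older constituent of $\ov{\sigma(\tau_v)\otimes V(\lam_v-\eta)}$ with nonzero Breuil--M\'ezard multiplicity and $R_{\rhobar_v}^{\lam_v,\tau_v}\ne 0$. Since $\rbar$ is automorphic, a solvable base change yields an automorphic lift of type $(\lam,\tau)$, and the local-global compatibility built into $M_\infty$ gives $M_\infty(\sigma(\tau)\otimes V(\lam-\eta))\ne 0$. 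By Corollary \ref{corx:domain} each $R_{\rhobar_v}^{\lam_v,\tau_v}$ is a domain, so $R_\infty^{\lam,\tau}$ is a power series ring over a tensor product of formally smooth domains; by Cohen--Macaulayness the patched module then has full support on $\spec R_\infty^{\lam,\tau}$. The Breuil--M\'ezard identity \eqref{eqn:versal-BM} expresses its special fiber cycle as $\sum_{\sig'}[\ov{\sigma(\tau)\otimes V(\lam-\eta)}:\sig']Z_{\sig'}$, and the capture property isolates $\sigma$, forcing $M_\infty(\sigma)\ne 0$.

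For the weight-elimination direction $W(\rbar)\subseteq W^?$, one argues by contrapositive: if $\sigma_v\notin W^?(\rhobar_v|_{I_{F_v}})$ for some $v$, then the support of $Z_{\sigma_v}$ in $\spec R_{\rhobar_v}^\square/\varpi$ is disjoint from $\spec R_{\rhobar_v}^{\lam_v,\tau_v}/\varpi$ for every $(\lam_v,\tau_v)$ with $\sigma_v$ a Jordan--H\"older factor of $\ov{\sigma(\tau_v)\otimes V(\lam_v-\eta)}$ --- a consequence of the construction of $Z_{\sigma_v}$ as a geometric Breuil--M\'ezard cycle on $\cX_{\Sym,K,\red}$ (Theorem \ref{thmx:BM}(1)) together with the combinatorics of $W^?$. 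Feeding this vanishing into the identity \eqref{eqn:versal-BM} for a capturing type and using the domain property (Corollary \ref{corx:domain}) to solve for Hilbert--Samuel multiplicities forces $M_\infty(\sigma)=0$, contradicting $\sigma\in W(\rbar)$. This is the direct $\GSp_4$ analogue of the argument in \cite[\S3.7, \S8]{LLHLM-2020-localmodelpreprint}.

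The main obstacle I expect is the verification of the axiomatic properties of the patching functor $M_\infty$ in the $\GSp_4$ setting --- in particular minimal support and local-global compatibility at places above $p$ --- which rely on the inertial local Langlands correspondence for $\GSp_4$ producing tame types refined enough to detect the components of $R_{\rhobar_v}^{\lam_v,\tau_v}$, on solvable base change producing sufficiently many automorphic lifts of $\rbar$, and on the Taylor--Wiles conditions of Definition \ref{def:odd-TW-conditions} being strong enough to absorb the adjoint Selmer obstructions arising from the nontrivial similitude factor of $\GSp_4$. The combinatorial capture step --- proving that every $\sigma_v\in W^?(\rhobar_v|_{I_{F_v}})$ is isolated by some $(\lam_v,\tau_v)$ with $R_{\rhobar_v}^{\lam_v,\tau_v}\ne 0$ --- is also more intricate at $\GSp_4$ than at $\GL_n$ because of the presence of the Klingen and Siegel parabolics, and must be verified directly from the recipe for $W^?$ in \cite{GHS-JEMS-2018MR3871496}.
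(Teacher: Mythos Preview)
Your overall architecture matches the paper's, but the existence direction has a real gap and the elimination direction takes a route that, in the paper's logical order, is not independent.

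\textbf{Existence.} The assertion ``a solvable base change yields an automorphic lift of type $(\lam,\tau)$'' is where your argument breaks: solvable base change can simplify ramification away from $p$, but it does not manufacture automorphic lifts of a prescribed $p$-adic Hodge type and inertial type. The paper replaces this by a three-stage bootstrap (Steps 1--3 of the proof of Theorem \ref{thm:SWC}). Step~1 shows that \emph{some} obvious weight is modular, by choosing a type $(2\eta,\tau)$ with $W^?(\rbar_p|_{I_{\Qp}})\subset\JH(\osig(\eta,\tau))$, invoking the domain property to get full support of $M_\infty(\sigma^\circ(\eta,\tau))$, and then selecting an $\uparrow$-maximal $\sigma\in W_{\obv}$. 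Step~2 upgrades this: the resulting automorphic lift is potentially diagonalizable (Theorem \ref{thm:longest-shape-def-ring}), and one transports modularity to every longest-shape type via \cite{patrikis-tang-2020potential}, giving $W_{\obv}\subset W(\rbar)$. Step~3 handles a general $\sigma\in W^?$ via the defect-maximizing type $\tau$ of Lemma \ref{lem:defect-maximizing-tame-type}, chosen so that $\sigma$ is $\uparrow$-maximal in $W^?\cap\JH(\osig(\tau))$; since $\JH(\osig(\tau))$ meets $W_{\obv}$, Step~2 supplies $M_\infty(\sigma^\circ(\tau))\ne0$, and full support plus $\uparrow$-maximality force $M_\infty(\sigma)\ne0$. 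Your ``capturing pair'' collapses these three steps and omits the potential-diagonalizability input that actually produces the automorphic lifts you need.

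\textbf{Elimination.} The paper does not deduce elimination from Breuil--M\'ezard cycles; it proves it directly and earlier (Theorem \ref{thm:weight-elim}): if $F(\lambda)$ is modular then $R_{\rbar_v}^{\eta,\tau_s}\ne0$ for every $\tau_s=\tau(s,\tilw_h\cdot\lambda+\eta)$ with $s\in\uW$, Proposition \ref{prop:pcrys-nonzero} converts this to $\tilw(\rbar_v^\ss,\tau_s)\in\Adm(\eta)$ for all $s$, and the combinatorial Lemma \ref{lem:combinatorial-weight-elim} then gives $F(\lambda)\in W^?$. Your Breuil--M\'ezard route can in principle be made to work, but it is not logically independent here: the cycles $\cZ_\sigma$ are built from patched modules (Theorem \ref{thm:versal-BM}), and the support control you invoke passes through Proposition \ref{prop:geom-SW-EG-stack}, whose proof already consumes Theorem \ref{thm:weight-elim}. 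Finally, note that the patching in this paper carries a \emph{fixed} similitude character, so comparing $M_\infty(\sigma^\circ(\lam,\tau))$ across types with different similitude requires the congruent-family machinery of Definition \ref{def:congruent-patching-functors}; your sketch does not address this, and it is precisely the mechanism the paper uses to run Steps~1--3 across varying $\psi_p$.
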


Previously, a similar conjecture was made by Herzig--Tilouine \cite[Conjecture 1]{HerzigTilouine} when $F=\Q$ using \'etale cohomology groups of Siegel modular varieties instead of algebraic automorphic forms on $\cG$. We expect that our method can be used to prove the conjecture of Herzig--Tilouine (or its generalizations to totally real fields) if the conjecture on vanishing of mod $p$ \'etale cohomology groups of Hilbert--Siegel modular varieties localized at a non-Eisenstein maximal ideal outside the middle degree is known (cf.~\cite[Conjecture 4.3]{CarICM}).

Analogous conjectures for a rank $n$ unitary group $U(n)$ over a totally real field were proven  under technical assumptions (when $U(n)$ splits at places above $p$ by \cite{GLS15} for $n=2$, by \cite{LLHLM1,LLHLM2} for $n=3$, and by \cite{LLHLM-2020-localmodelpreprint} for general $n$; when $U(n)$ ramifies at places above $p$ by \cite{KM-2020-unitary} for $n=2$). For the group $\GSp_4$, \cite{Gee-Geraghty-companion-MR2876931} proved modularity of \emph{obvious weights} for $\rbar$ ordinary at places above $p$ assuming modularity of a single obvious weight. See also \cite[Theorem 1.3]{Yam} for a result obtained by a different approach. Our result is independent of \cite{Gee-Geraghty-companion-MR2876931,Yam}.

\begin{rmkx}[The work of Arthur]
    We use \cite[Theorem 3.5]{Mok-Comp-2014-MR3200667} and \cite[Theorem 7.4.1]{GeeTaibi19} for the construction of a Galois representation attached to a regular algebraic cuspidal automorphic representation of an inner form of $\GSp_4$. These results were proven using the main results in \cite{ArthurBook}, which are conditional on the twisted weighted fundamental lemma announced in \cite{CL10fundlemma}, but whose proofs have not appeared. See also \cite[\S1.6]{BCGP2}.
\end{rmkx}

\subsubsection*{Organization of the paper}
In \S\ref{sec:2}, we first recall basic notions related to the representation theory of $\GSp_4(k)$ from \cite{LLHLM-2020-localmodelpreprint}. Essentially the only new result here is the inertial local Langlands correspondence for $\GSp_4$ in \S\ref{sub:ILLC}. 

In \S\ref{sec:local-models}, we introduce local models and prove their properties. Our presentation closely follows \cite{LLHLM-2020-localmodelpreprint}. Where possible, we simplify our argument by deducing results from analogous results for $\GL_4$. After introducing the global affine Grassmannian and universal local models in \S\ref{sub:global-affine-Grass}--\ref{sub:uni-local-models}, we introduce mixed characteristic local models and prove the unibranch property in \S\ref{sub:mixed-char-local-models}--\ref{sub:products-local-models}.  In \S\ref{sub:special-fiber-local-models}--\ref{sub:torus-fixed-point}, we discuss irreducible components in the special fiber of mixed characteristic local models. In \S\ref{sub:torus-fixed-point}, we classify torus fixed points in each irreducible component under genericity conditions. 

We introduce moduli stacks of symplectic \'etale $\pgma$-modules in \S\ref{sub:pgma-stack}, of Breuil--Kisin modules in \S\ref{sub:BK-stack}, and of \'etale $\varphi$-modules in \S\ref{sub:et-phi-stack}. Then we bring them together to prove our results on local model diagrams for potentially crystalline stacks (Theorem \ref{thm:pcrys-local-model}) and their mod $p$ reduction (Theorem \ref{thm:mod-p-local-model}). Both results rely on the existence of certain local lifts whose proofs are postponed to \S\ref{sub:patching-argument}.

In \S\ref{sec:global}, we discuss our setup for global arguments and construct patching functors. In the course, we prove a weight elimination result in \S\ref{subsec:alg-aut-forms}. As applications, we prove the existence of certain local lifts in \S\ref{sub:patching-argument}.

Finally, we formulate the geometric Breuil--M\'ezard conjecture and prove its restricted versions in \S\ref{sub:BM}. We prove our result on the weight part of Serre's conjecture in \S\ref{sub:SWC} and a modularity lifting result in \S\ref{sub:modularity-lifting}.

\subsection{Acknowledgement}
The author would like to thank his advisor Florian Herzig for suggesting him to work on the weight part of Serre's conjecture for $\GSp_4$, his constant encouragement, and numerous comments. The debt that this article owes to the work of Daniel Le, Bao V.~Le Hung, Brandon Levin, and Stefano Morra will be obvious to the reader, and it is a pleasure to acknowledge this. The author would like to thank Daniel Le, Bao V.~Le Hung, and Stefano Morra for discussions and for answering many questions about their work. He would like to thank Pablo Boixeda Alvarez, Wee Teck Gan, Tasho Kaletha, and
Zhongyipan Lin for helpful correspondence. He would also like to thank Shaun Stevens for informing him about Jamie Lust's thesis. 

Part of the work was carried out during visits to the Ulsan National Institute of Science and Technology and the Korea Institute of Advanced Science. The author would like to heartily thank these institutions and Chol Park and Myungjun Yu for their support. The author was partially supported by the Ontario Trillium Scholarship.

\subsection{Notation}
Let $p>2$ be a prime. We let $K/\Qp$ denote a finite field extension with the ring of integers  $\cO_K$ and  the residue field $k$. Throughout this paper, except \S\ref{sub:pgma-stack}, we assume $K$ to be unramified of degree $f$ over $\Q_p$.  
We take $E$ to be a finite extension of $\Q_p$ with ramification degree $e$, the ring of integers $\cO$, and the residue field $\F$. We assume that $E$ is sufficiently large unless otherwise stated. We let $[\cdot]:\F^\times \ra W(\F)^\times$ denote the multiplicative lift.

Let $F$ be a field. We write $G_F:=\Gal(\ov{F}/F)$ where $\ov{F}$ is a separable closure of $F$. If $F$ is a non-archimedean local field, we write $I_F \subset G_F$ for the inertia subgroup and $W_F$ for the Weil group. 

Fix a separable closure $\ov{K}$ of $K$ and suppose that $K/\Qp$ is unramified of degree $f$. We choose $\pi \in \ov{K}$ such that $\pi^{p^f-1}=-p$. We denote by $\omega_K : G_K \ra \cO_K^\times$ the character defined by $g(\pi)= \omega_K(g)\pi$ for $g\in G_K$. For an embedding $\sig: K \mono E$, we write $\omega_{K,\sig} = \sig \circ \omega_K$.

Let $\eps$ denote the $p$-adic cyclotomic character. If $V$ is a de Rham representation of $G_K$ over $E$, then for each $\sigma \in \Hom_{\Qp}(K,E)$, we let $\HT_\sig(V)$ denote the multiset of Hodge--Tate weights labelled by $\sig$ normalized so that $\HT_\sig(\eps)=\CB{1}$.

Let $F$ be a number field. For a place $v$ of $F$, we let $F_v$ be the completion of $F$ at $v$ with the ring of integers $\cO_{F_v}$, a uniformizer $\varpi_v$, and the residue field $k_v$ of size $q_v$. We write $\Frob_{F_v}$ for a geometric Frobenius element in $G_{F_v}$. We normalize the Artin map $\Art_{F_v} : F_v^\times \risom W^{\mathrm{ab}}_{F_v}$ so that uniformizers are mapped to geometric Frobenius elements.

Let $G$ be a split connected reductive group over $\Z$. In the body of the paper, we take $G=\GSp_4$ most of the time. We write $B\subset G$ for a choice of Borel subgroup, $T\subset B$ for a maximal torus, and $U\subset B$ for the unipotent radical of $B$.  Let $\Phi^+ \subset \Phi$ be the subset of positive roots for $(G,B,T)$. We denote by $\Del$ the set of simple roots. We write $X^*(T)$ for the group of characters of $T$ and $X_*(T)$ for the group of cocharacters of $T$.  We write $\Lam_R \subset X^*(T)$ and $\Lam_R^\vee\subset X_*(T)$ for the root lattice and coroot lattice. We let $W$ denote the Weyl group, $W_a$ denote the affine Weyl group, and $\tilW$ denote the extended affine Weyl group for $G$.

We write $G^\vee$ for the split reductive group over $\Z$ defined by the root datum $(X_*(T),X^*(T),\Phi^\vee,\Phi)$. We write $T^\vee \subset G^\vee$ for the induced maximal split torus. We have isomorphisms $X^*(T^\vee)\simeq X_*(T)$ and $X_*(T^\vee)\simeq X^*(T)$. We let $W^\vee$, $W_a^\vee$, and $\tilW^\vee$ denote the Weyl group, affine Weyl group, and extended affine Weyl group for $G^\vee$.

We denote by $\cO_p$ a finite \'etale $\Z_p$-algebra. In the body of this paper, we take $\cO_p$ to be either $\cO_{K}$ or $\cO_F\otimes_{\Z}\Z_p$ for some totally real field $F$ in which $p$ is unramified. Let $F_p = \cO_p[1/p]$. Then $F_p$ is isomorphic to a product $\prod_{v\in S_p}F_v$ for a finite set $S_p$ and finite unramified extensions $F_v/\Qp$. There is also an isomorphism $\cO_p\simeq \prod_{v\in S_p}\cO_{F_v}$ where $\cO_{F_v}$ is the ring of integers of $F_v$.

We define $G_0 := \Res_{\cO_p / \Z_p}G_{/\cO_p}$ and $\uG := (G_0)_{/\cO}$.  Let $B$ be a choice of Borel subgroup and $T\subset B$ be a maximal split torus. We define $B_0, \uB$ and $T_0, \uT$ similarly to $G_0, \uG$. Let $\cJ = \Hom_{\Z_p}(\cO_p, \cO)$. Then $(\uG,\uB,\uT)$ is naturally identified with ($G^\cJ_{/\cO},B^\cJ_{/\cO},T^\cJ_{/\cO})$.
 The root datum of $(\uG,\uB,\uT)$ is given by
 \begin{align*}
    (X^*(\uT),X_*(\uT),\uPhi,\uPhi^\vee) \simeq (X^*(T)^\cJ,X_*(T)^\cJ,\Phi^\cJ,\Phi^{\vee,\cJ}).
 \end{align*}
We have $\uLam_R \simeq \Lam_R^\cJ$, $\uW\simeq W^\cJ$, $\uW_a\simeq W_a^\cJ$, $\utilW\simeq \tilW^\cJ$, and similarly for $\uLam_R^\vee$, $\uW^\vee$, $\uW_a^\vee$, $\utilW^\vee$. 

Let $\varphi$ be the absolute Frobenius on $\cO_p/p$ and its lift to $\cO_p$. If $S$ is a set and $s=(s_\sig)_{\sig \in \cJ} \in S^\cJ$, then we define $\pi(s)$ by $\pi(s)_\sig = s_{\sig \circ \varphi^\mo}$. When $\cO_p = \cO_K$, we fix an embedding $\sig_0: K \mono E$ and define $\sig_j := \sig_0\circ \varphi^{-j}$ for $j\in \Z/f \Z$. This identifies $\cJ$ with $\Z/f\Z$.

We write $\diag(a_1,\dots ,a_n)$ for the diagonal matrix in $\GL_n$ with entries $a_1,\dots, a_n$. If $\mu$ is a cocharacter and $a$ is a scalar, we write $a^\mu$ to denote $\mu(a)$. We write $\Ind$ for the unnormalized parabolic induction and $\ind$ for the compact induction. Any union of schemes or algebraic stacks is defined to be a scheme-theoretic union unless stated otherwise.

\section{Types and weights}\label{sec:2}
In this section, we recall basic notions regarding the representation theory and the extended Weyl group of $\GSp_4$ and prove the inertial local Langlands correspondence for $\GSp_4$ (Theorem \ref{thm:inertial-LLC}). We follow \cite[\S2]{LLHLM-2020-localmodelpreprint} closely throughout this section except \S\ref{sub:ILLC}.

\subsection{Preliminaries}\label{sub:prelim}

\subsubsection{The group $\GSp_4$}
Let  $G=\GSp_4$ be the split reductive group over $\Z$ defined by
\begin{align*}
    \GSp_4(R) = \CB{ g\in \GL_4(R) \mid \pret g J g = \simc(g)J \text{ for some $\simc(g)\in R^\times$}}
\end{align*}
for any commutative ring $R$, where
\begin{align*}
    J= \pma{ & & & 1 \\ & &1& \\&-1& & \\-1 & & & }.
\end{align*}
Then $\simc: g\mapsto \simc(g)$ defines a character of $\GSp_4$, called the \emph{similitude character}. Let $T\subset \GSp_4$ be the maximal diagonal torus and $W = N(T)/T$ be the Weyl group of $G$. We identify $W$ with the subgroup of $N(T)$ generated by two simple reflections 
\begin{align*}
       s_1:=\pma{& 1 & & \\ 1& & & \\ & & &1 \\ & & 1&}, \ s_2:=\pma{1 & & & \\ & & 1& \\ &-1  & & \\ & & &1}.
\end{align*}
We write $w_0=s_1s_2s_1s_2$ for the longest element in $W$. We use the same notation to denote $\cJ$-fold product of $w_0$ in $\uW$.

Let $B$ be the upper triangular Borel subgroup of $\GSp_4$. For any subset $A\subset \{s_1,s_2\}$, we have a standard parabolic subgroup $P_A$ generated by $A$ and $B$. When $A= \{s_1\}$, we call $S:=P_{A}$ the \emph{Siegel parabolic subgroup}, and when $A=\{s_2\}$, we call $Q:=P_A$ the \emph{Klingen parabolic subgroup}. If $P$ is a standard parabolic subgroup with unipotent radical $N$, we write $\ov{P}$ and $\ov{N}$ to denote the opposite parabolic and its unipotent radical.

We identify the character group $ X^*(T)$ with $\Z^3$ by defining the character corresponding to $(a,b;c)\in \Z^3$ by 
\begin{align*}
    (a,b;c):\diag(x,y,zy^\mo,zx^\mo) \mapsto x^ay^bz^c. 
\end{align*}
Similarly, we identify the cocharacter group $X_*(T)$ with $\Z^3$ by
\begin{align*}
    (a,b;c): x \mapsto \diag(x^a, x^b, x^{c-b}, x^{c-a}).
\end{align*}
Sets of simple roots and coroots are given by
\begin{align*}
    \Delta = \CB{\alpha_1=(1,-1;0), \alpha_2 = (0,2;-1))}, \ \ \     \Delta^\vee = \CB{\alpha_1^\vee=(1,-1;0), \alpha_2^\vee = (0,1;0)}.
\end{align*}
Sets of positive roots and coroots are given by
\begin{align*}
    \Phi^+ = \CB{\al_1,\al_2,\al_3=\al_1 + \al_2, \al_4=2\al_1+\al_2}, \ \ \ 
   \Phi^{\vee,+} = \CB{\al_1^\vee, \al_2^\vee, \al_3^\vee = \al_1^\vee + 2\al_2^\vee, \al_4^\vee= \al_1^\vee + \al_2^\vee}.
\end{align*}

The dual group of $\GSp_4$ is isomorphic to $\GSp_4$ by an exceptional isomorphism. We often write $\GSp_4^\vee$ (and $T^\vee$ for its diagonal maximal torus) to emphasize that we are working on the dual side. 
We fix the duality isomorphism by
\begin{align*}
    (X^*(T), X_*(T), \Phi, \Phi^\vee) & \risom (X_*(T^\vee), X^*(T^\vee), \Phi^\vee, \Phi) \\
    \phi:  X^*(T) & \xrightarrow{} X_*(T^\vee) \\
    (a,b;c) & \mapsto (a+b+c,a+c;a+b+2c).
\end{align*}
Then $\phi$ maps $\alpha_1$ to $\alpha_2^\vee$ and $\alpha_2$ to $\alpha_1^\vee$. We also write a map $\phi: W \risom W^\vee$ sending $s_1 \mapsto s_2$, $s_2\mapsto s_1$, so that
\begin{align*}
    \phi(w(\lam)) = \phi(w)(\phi(\lam))
\end{align*}
for any $w\in W$ and $\lam\in X^*(T)$. We fix an element $\eta = (2,1;0)\in X^*(T)$. We often use the same notation to denote the $\cJ$-fold product of $\eta$ in $X^*(\uT)$ and its image in $X_*(\uT^\vee)$ under $\phi$.

\subsubsection{The group $\GL_n$}
If $G=\GL_n$, we add subscript $n$ to the objects introduced above when we need to distinguish them from the case $G=\GSp_4$. For example, we write $T=T_n, B=B_n, U=U_n, W=W_n$. We identify $X^*(T_n) \simeq X_*(T_n) \simeq \Z^n$ in the standard way. We fix an element $\eta' = (n-1,n-2,\dots, 0)\in X^*(T_n)$ and use the same notation for its $\cJ$-fold product in $X^*(\uT_n)$. 

\subsubsection{Affine Weyl group} 
Let $G$ be a split reductive group over $\Z$. Recall that the affine Weyl group $W_a\simeq \Lam_R \rtimes W$ and  the extended Weyl group $\tilW \simeq X^*(T) \rtimes W$ for $G$. 
Similarly, we write $W^\vee_a \simeq \Lam^\vee_R \rtimes W^\vee$ and $\tilW^\vee \simeq X_*(T^\vee) \rtimes W^\vee$ for the affine Weyl group and extended Weyl group for $G^\vee$.

Let $\cA$ be the set of alcoves of $X^*(T)\otimes_\Z \R$. We denote by $A_0$ the dominant base alcove with respect to our choice of the Borel subgroup $B$. The group $W_a$ acts simply transitively on $\cA$, and the choice of $A_0$ defines a Bruhat order on $W_a$ which we denote by $\le$. We also recall the upper arrow ordering on the set $\cA$ (see, \cite[\S II.6.5]{JantzenBook}). This induces an ordering on $W_a$ which we denote by $\uparrow$.

Let $\Omega\le \tilW$ be the subgroup stabilizing $A_0$. Then we have $\tilW \simeq W_a \rtimes \Omega$. We extend the Bruhat order and upper arrow order to $\tilW$ by: for $\tilw_1, \tilw_2 \in W_a$ and $\del_1,\del_2\in \Omega$, when $\del_1=\del_2$, $\tilw_1 \del_1 \le \tilw_2 \del_2$ (resp.~ $\tilw_1 \del_1 \uparrow \tilw_2 \del_2$) if and only if  $\tilw_1 \le \tilw_2$ (resp.~$\tilw_1\uparrow \tilw_2$), and when $\del_1\neq \del_2$, $\tilw_1 \del_1$, $\tilw_2 \del_2$ are incomparable.

We define a Bruhat order on $W_a^\vee$ induced by the antidominant base alcove and extend to $\tilW^\vee$ as in the previous paragraph.
We define a bijection
\begin{align*}
    (-)^*: \tilW &\ra \tilW^\vee \\
    \tilw=t_\nu w &\mapsto \tilw^*:= \phi(w)^\mo t_{\phi(\nu)}.
\end{align*}
This is an anti-automorphism of a group and preserves the Bruhat order on both sides.
We also write $(-)^*$ for its inverse, i.e.~if $\tilw\in \tilW$ and $\tilz = \tilw^*$, then $\tilz^{*}=\tilw$.

\begin{defn}
We define the \emph{admissible set} associated to $\lam_0\in X^*(T)$ as
\begin{align*}
    \Adm(\lam_0):= \CB{\til{w}\in \til{W} \mid \til{w}\le t_{w\lam_0} \text{ for some $w\in W$}}.
\end{align*}
If $\lam\in X^*(\ud{T})$, we define $\Adm(\lam)= \prod_{j\in \cJ} \Adm(\lam_j)$
\end{defn}
We similarly define $\Adm^\vee(\lam)$ for $\lam\in X_*(\uT^\vee)$. The map $(-)^*$ induces a bijection between $\Adm(\phi^\mo(\lam))$ and $\Adm^\vee(\lam)$.

The \emph{$p$-dot action} of $t_\nu w \in \tilW$ on $\lambda \in X^*(T)\otimes \R$ is defined as
\begin{align*}
     t_\nu w\cdot \lambda := w(\lambda + \eta) - \eta  +p\nu 
\end{align*}
A \emph{$p$-alcove} is a connected component of the complement
\begin{align*}
    \PR{X^*(T)\otimes \R } \backslash \cup_{\al\in \Phi,m\in \Z} \CB{x \mid \RG{x+\eta,\alpha^\vee} = pm}.
\end{align*}
We define $p$-alcoves in $X^*(\uT)\otimes \R$ similarly.

We say that an alcove $A$ (resp.~a $p$-alcove $C$) is \emph{restricted (resp.~$p$-restricted)} if for all $x\in A$ (resp.~$x\in C$) and $\al\in \Phi^+$, $0< \RG{x,\al^\vee}<1$ (resp.~$0< \RG{x+\eta,\al^\vee}<p$). 
In the case $G=\GSp_4$, there are four $p$-restricted alcoves given by
\begin{align*}
    C_0 &:= \CB{(a,b;c) \in X^*(T)\otimes \R \mid 0 < b<a, \  0<a+b < p } \\
    C_1 &:= \CB{(a,b;c) \in X^*(T)\otimes \R \mid b< a<p,\  a+b < 2p } \\
    C_2 &:= \CB{(a,b;c) \in X^*(T)\otimes \R \mid a-b<p<a,\  a+b<2p} \\
    C_3 &:= \CB{(a,b;c) \in X^*(T)\otimes \R \mid b<p,\ a-b<p,\  2p<a+b}
\end{align*}
and $C_0$ is called the dominant base $p$-alcove. Similarly, we denote by $\uC_0$ the dominant base $p$-alcove in $X^*(\uT)\otimes \R$.  We define
\begin{align*}
    \tilW^+ &:= \{\tilw \in \tilW \mid \tilw(A_0) \text{ is dominant}\}\\
     \tilW^+_1 &:= \{\tilw \in \tilW \mid \tilw(A_0) \text{ is restricted}\}
\end{align*}
and $\utilW^+:= \tilW^{+,\cJ}$ and $\utilW^+_1 := \tilW_1^{+,\cJ}$.

We use the above notations for $G=\GSp_4$. For $G=\GL_n$, we again add subscript $n$ when we need to distinguish them from the case of $\GSp_4$. For example, we write $\tilW_n$, $\tilW^+_n$, and $\tilW^+_{1,n}$ for $\tilW, \tilW^+$, and $\tilW^+_1$ respectively.

\subsubsection{Transfer map}
We write $\std: \GSp_4^\vee \xrightarrow{} \GL_4^\vee$ for the standard representation. It induces a map between the tori $\uT^\vee \xrightarrow{} \uT_4^\vee$. We also write the induced map between cocharacter groups by $\std :X_*(\uT^\vee) \xrightarrow{} X_*(\uT_4^\vee)$. Explicitly, we have
\begin{align*}
    (a,b;c) \xmapsto{\std} (a,b,c-b,c-a).
\end{align*} 
We also define $\cT: X^*(\uT) \xrightarrow{} X^*(\uT_4)$ to be the unique map which makes the following diagram commute
\[
\begin{tikzcd}
X^*(\uT) \arrow[r, "\cT"] \arrow[d, "\phi"] & X^*(\uT_4) \arrow[d, equal] \\
X_*(\uT^\vee) \arrow[r, "\std"] & X_*(\uT_4^\vee)
\end{tikzcd}
\]
Explicitly, $\cT$ maps $(a,b;c) \in X^*(\uT) \mapsto (a+b+c,a+c,b+c,c) \in X^*(\uT_4)$.

The map $\std$ also induces a map between $\uW^\vee$ and $\uW_4^\vee$. We again denote by $\cT: \uW \ra \uW_4$ the composition $\std\circ \phi$ followed by the identification $\uW_4^\vee = \uW_4$. We have $\cT(w\lambda) = \cT(w)\cT(\lambda)$ for all $\lambda \in X^*(\uT)$, $w\in \uW$. As a result, $\cT$ extends to a map between $\tilW$ and $\tilW_4$. Note that $\cT$ is equal to the composition
\begin{align*}
    \tilW \xra{(-)^*} \tilW^\vee \xra{\std} \tilW_4^\vee \xra{(-)^*} \tilW_4.
\end{align*}

The image of $\cT$ in $\utilW_4$ can be interpreted as the fixed-point set of a certain involution. Let $w'_0 \in \uW_4$ be the longest element. Let $\utilW_{4,a,\Q} = (X^*(T_4)\otimes_\Z\Q) \rtimes W_4$.  Define $\Theta:\utilW_{4,a,\Q} \xrightarrow{} \utilW_{4,a,\Q}$ to be the unique group homomorphism satisfying
\begin{align*}
    \Theta(s) = w'_0 s w'_0, \ \ \Theta(a,b,c,d) = \frac{a+b+c+d}{2}(1,1,1,1) - w'_0(a,b,c,d)
\end{align*}
for $s\in \uW_4$ and $(a,b,c,d)\in X^*(\uT_4)$. 
Then $\tilw \in \utilW_4$ is in the image of $\cT$ if and only if $\Theta(\tilw)=\tilw$.

\begin{lemma}\label{lem:transfer-preserves-bruhat-order}
The  map $\cT: \utilW \ra \utilW_4$ respects the Bruhat order, i.e. for all $\tilw_1, \tilw_2 \in \utilW$, 
\begin{align*}
    \text{$\tilw_1\le \tilw_2$ in $\utilW$ if and only if $\cT(\tilw_1)\le\cT(\tilw_2)$ in $\utilW_4$}.
\end{align*}
Moreover, for $\lam \in X^*(\uT)$, we have $\Adm(\lam) \overset{\cT}{\simeq} \Adm(\cT(\lam))^\Theta$.
\end{lemma}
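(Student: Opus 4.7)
My plan is to reduce both claims to analogous statements about the standard embedding on the dual side, and then handle the reduced statements using affine flag varieties together with a careful analysis of the $\Theta$-action on $W_4$-orbits. Since $\cT=(-)^{*}\circ\std\circ(-)^{*}$ and $(-)^{*}$ is a Bruhat-preserving antihomomorphism, proving Bruhat preservation for $\cT$ (in both directions) is equivalent to proving it for $\std\colon\utilW^\vee\to\utilW_4^\vee$, and the admissible-set statement transfers similarly via the bijection $(-)^{*}\colon\Adm(\phi^{-1}(\lam))\to\Adm^\vee(\lam)$.

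To handle Bruhat preservation for $\std$, I would argue geometrically. The symplectic embedding $\GSp_4^\vee\hookrightarrow\GL_4^\vee$ (using $\GSp_4^\vee\cong\GSp_4$) induces a closed immersion of affine flag varieties $\Fl_{\GSp_4^\vee}\hookrightarrow\Fl_{\GL_4^\vee}$, and the Iwahori of $\GSp_4(\F\DB{t})$ is the intersection of the Iwahori of $\GL_4(\F\DB{t})$ with $\GSp_4$. Consequently the Iwahori orbit labelled by $\tilz\in\utilW^\vee$ is the preimage of the orbit labelled by $\std(\tilz)$, and the same holds for their Schubert closures. Bruhat preservation in both directions then follows from containment of Schubert varieties. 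A purely combinatorial alternative identifies $\utilW^\vee$ as the fixed-point subgroup of the involution on $\utilW_4^\vee$ coming from the folding $\tilde A_3\to\tilde C_2$, and invokes the folding principle that such fixed-point inclusions respect Bruhat order.

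For the admissible set claim, the forward inclusion $\cT(\Adm(\lam))\subset\Adm(\cT(\lam))^\Theta$ is immediate: if $\tilw\le t_{w\lam}$ with $w\in W$, applying $\cT$ gives $\cT(\tilw)\le t_{\cT(w)\cT(\lam)}\in\Adm(\cT(\lam))$, and $\cT(\tilw)$ is $\Theta$-fixed because the image of $\cT$ coincides with $\utilW_4^\Theta$. For the reverse inclusion, I would first verify by an explicit coordinate computation that the $\Theta$-fixed elements of the $W_4$-orbit $W_4\cdot\cT(\lam)$ are exactly $\cT(W\cdot\lam)$: the condition for a permutation of $\cT(\lam)=(a+b+c,a+c,b+c,c)$ to be $\Theta$-fixed is $x_1+x_4=x_2+x_3$, which singles out precisely the $|W|=8$ permutations in the image of $\cT(W)$. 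Then, given $\tilw'=\cT(\tilw)\in\Adm(\cT(\lam))^\Theta$ with $\tilw'\le t_{w'\cT(\lam)}$ for some $w'\in W_4$, I would show that the $\Theta$-stable subset $\{w'\in W_4\mid t_{w'\cT(\lam)}\ge\tilw'\}$ (which contains $\Theta(w')$ since $\tilw'$ is $\Theta$-fixed) contains a $\Theta$-fixed element, so that $\tilw'\le t_{\cT(w)\cT(\lam)}$ for some $w\in W$; Bruhat preservation of $\cT$ in the reverse direction then yields $\tilw\le t_{w\lam}$.

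The main obstacle is the last claim: producing a $\Theta$-fixed dominating translation in $W_4\cdot\cT(\lam)$ from an arbitrary one. A clean proof likely requires either a Coxeter-theoretic exchange property for the folding $\utilW\subset\utilW_4$, or the Kottwitz--Rapoport convex-hull characterization $\tilw\in\Adm(\lam)\iff\tilw(v)-v\in\Conv(W\lam)$ for all vertices $v$ of the base alcove, in which case a $\Theta$-averaging argument should reduce the question to the identity $\Conv(W_4\cdot\cT(\lam))^\Theta=\Conv(\cT(W\cdot\lam))$, which is directly checkable from the folding structure.
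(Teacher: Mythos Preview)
Your proposal is on the right track but is considerably more elaborate than what the paper does: the paper simply cites the literature. The Bruhat-order preservation is Kottwitz--Rapoport \cite[Proposition~2.3]{KR-Manuscr-2000-Minuscule-alcoves-MR1785323}, and the admissible-set identification is Haines--Ng\^o \cite[Proposition~5]{HN-AJM-2002-alcoves-MR1939783}, combined with the equality $\mathrm{Perm}(\mu)=\Adm(\mu)$ for $\GL_n$ (Theorem~1 in the same reference). So there is no new argument to supply.

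That said, your outlined arguments are essentially rediscoveries of these proofs. For the Bruhat-order claim, your geometric argument via the closed immersion of affine flag varieties (or the folding principle) is precisely the content of Kottwitz--Rapoport's result, and is correct. For the admissible-set claim, your first strategy---directly producing a $\Theta$-fixed dominating translation $t_{w''\cT(\lam)}$ from an arbitrary one---is genuinely hard to execute with bare hands and is not how Haines--Ng\^o proceed; you are right to flag it as the obstacle. Your second strategy, using the vertexwise convex-hull description $\tilw\in\Adm(\lam)\iff\tilw(v)-v\in\Conv(W\lam)$ for all vertices $v$ of the base alcove, is exactly the $\mathrm{Perm}$-set characterization, and the identity $\Conv(W_4\cdot\cT(\lam))^\Theta=\Conv(\cT(W\cdot\lam))$ you propose to check is the heart of Haines--Ng\^o's Proposition~5. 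So if you pursued that route to completion you would reproduce their proof. The paper's approach buys brevity; yours would buy self-containment at the cost of reproving known results.
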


\begin{proof}
The first claim is the result of Kottwitz--Rapoport \cite[Proposition 2.3]{KR-Manuscr-2000-Minuscule-alcoves-MR1785323}. The second claim is the result of Haines--Ng\^o \cite[Proposition 5]{HN-AJM-2002-alcoves-MR1939783}. Note that in \loccit, the set $\mathrm{Perm}(\mu)$  is equal to $\Adm(\mu)$ (for the group $\GL_n$) by Theorem 1 in \loccit.
\end{proof}

\subsubsection{Genericity}\label{subsub:genericity}

For a character $\lam \in X^*(T)$ (resp.~a cocharacter $\lam \in X_*(T^\vee)$), we define $h_\lam = \max_{\al \in \Phi}\{\RG{\lam,\al^\vee}\}$ (resp.~$h_\lam = \max_{\al \in \Phi}\{\RG{\al,\lam}\})$. In particular, $h_\eta = 3$ (resp.~$n-1$) if $G=\GSp_4$ (resp.~$G=\GL_n$).

\begin{defn}[cf.~Definition 2.1.10 in \cite{LLHLM-2020-localmodelpreprint}]\label{def:genericty}
Let $G$ be either $\GSp_4$ or $\GL_n$. Let $\lam\in X^*(T)$ be a character and $m\in \Z_{\ge0}$.
\begin{enumerate}
    \item We say that $\lam$ is \emph{$m$-deep in a $p$-alcove $C$ } if 
\begin{align*}
    n_\alpha p + m < \RG{\lam+\eta , \alpha^\vee} < (n_\al+1)p-m
\end{align*}
for all $\al \in \Phi^+$ where $C = \CB{\lam \in X^*(T)\otimes \R \mid n_\alpha p < \RG{\lam+\eta , \alpha^\vee} < (n_\al+1)p, \forall \al\in \Phi^+}$
\item We say $\lam$ is \emph{$m$-deep} if it is $m$-deep in some $p$-alcove $C$.
\item For $\til{w} = wt_\nu\in \til{W}$, we say that $\til{w}$ is \emph{$m$-generic} if $\nu-\eta$ is $m$-deep.
\item For $\til{w} = wt_\nu \in \til{W}$, we say that $\til{w}$ is \emph{$m$-small} if $h_\nu \le m$.
\item For $\til{z} \in \til{W}^\vee$, we say that $\til{z}$ is \emph{$m$-generic} (resp.
~\emph{$m$-small}) if $\til{z}^*$ is $m$-generic (resp.~$m$-small).
\end{enumerate}
Now let $G=\GSp_4$.
\begin{enumerate}
    \item [(6)] For $\bfa = (\bfa_1,\bfa_2,\bfa_3) \in \F^3$, we say that $\bfa$ is \emph{$m$-generic} if
\begin{align*}
    \{\bfa_1,\bfa_2,\bfa_1+\bfa_2,\bfa_1-\bfa_2\}\cap \{-m,-m+1 , \dots, m-1,m\} = \emptyset
\end{align*}
where $\{-m,-m+1 , \dots, m-1,m\}$ is considered as a subset of $\F$ using $\Z \ra \F$. 
\item [(7)] Let $P(X_1,X_2,X_3)\in \Z[X_1,X_2,X_3]$ be a polynomial and let $R$ be a commutative ring. We say that $\bfa\in R^3$ (resp.~$\bfa=(\bfa_j)_{\jj}\in (R^3)^\cJ$) is \emph{$P$-generic} if $P(\bfa) \mod p$ (resp.~$P(\bfa_j) \mod p$) is in $(R/p)^\times$ (resp.~for all $\jj$). 
\item [(8)] We define $P_m(X_1,X_2,X_3)\in \Z[X_1,X_2,X_3]$ to be the polynomial
\begin{align*}
    P_m(X_1,X_2,X_3) = \prod_{a=1}^m (X_1-X_2 -a)(X_2-a)(X_1+X_2+a).
\end{align*}
Note that $\lam-\eta \in C_0$ is $m$-deep if and only if $\lam$ (viewed as an element in $\Z^3$) is $P_m$-generic.
\end{enumerate}
\end{defn}

\begin{lemma}
Let $G=\GSp_4$. If $\lambda\in X^*(T)$ is $m$-deep, then $\cT(\lam)\in X^*(T_4)$ is $m$-deep in the sense of \cite[Definition 2.1.10]{LLHLM-2020-localmodelpreprint}. Similarly, if $\til{w}\in \til{W}$ is $m$-generic (resp. $m$-small), then $\cT(\til{w})\in \til{W}_4$ is $m$-generic (resp. $m$-small).
\end{lemma}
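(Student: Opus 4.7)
The plan is to reduce everything to the explicit formula $\cT(a,b;c) = (a+b+c,a+c,b+c,c)$. First I would verify the compatibility $\cT(\eta) = \eta'$: applying the formula to $\eta = (2,1;0)$ yields $(3,2,1,0) = \eta'$, so by $\Z$-linearity $\cT(\lam+\eta) = \cT(\lam) + \eta'$ for all $\lam \in X^*(\uT)$. The core step is then a direct computation showing that for each positive coroot $\beta^\vee = e_i^\vee - e_j^\vee$ of $\GL_4$ (with $i < j$), there exists a positive coroot $\alpha^\vee \in \Phi^{\vee,+}$ of $\GSp_4$ such that $\RG{\cT(\lam),\beta^\vee} = \RG{\lam,\alpha^\vee}$ for all $\lam \in X^*(T)$. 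Explicitly, for $\lam = (a,b;c)$ the six differences $\cT(\lam)_i - \cT(\lam)_j$ take the values $b,\, a,\, a+b,\, a-b,\, a,\, b$, which match exactly the four $\GSp_4$ positive-coroot pairings $\{a-b,\, b,\, a,\, a+b\}$ with the short-coroot values $a$ and $b$ each occurring twice. This reflects the fact that the standard embedding $\GSp_4 \hookrightarrow \GL_4$ realizes each $\GL_4$ root as the restriction of a $\GSp_4$ root.

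Granted these two ingredients, the $m$-deep claim is immediate: if $\lam$ lies in the $p$-alcove determined by integers $(n_\alpha)_{\alpha\in\Phi^+}$ with $n_\alpha p + m < \RG{\lam+\eta,\alpha^\vee} < (n_\alpha+1)p - m$, then for each positive root $\beta$ of $\GL_4$ I set $n'_\beta := n_\alpha$, where $\alpha$ is the corresponding $\GSp_4$ positive root from the list above, and the identity $\RG{\cT(\lam)+\eta',\beta^\vee} = \RG{\cT(\lam+\eta),\beta^\vee} = \RG{\lam+\eta,\alpha^\vee}$ yields the same strict inequalities for $\cT(\lam)$. The $m$-generic statement for $\til{w} = wt_\nu$ then follows at once from $\cT(wt_\nu) = \cT(w)\, t_{\cT(\nu)}$ together with $\cT(\nu)-\eta' = \cT(\nu-\eta)$, reducing it to the $m$-deep case applied to $\nu-\eta$.

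For the $m$-small claim, I would extend the coroot compatibility by symmetry from $\Phi^{\vee,+}$ to all of $\Phi^\vee$: the negative coroots of $\GL_4$ pair with $\cT(\nu)$ to the same set of values as the negative coroots of $\GSp_4$ pair with $\nu$, again with some values doubled on the $\GL_4$ side. Taking maxima then gives $h_{\cT(\nu)} = h_\nu$, so $h_\nu \le m$ implies $h_{\cT(\nu)} \le m$, i.e.\ $\cT(\til{w})$ is $m$-small. I expect no genuine obstacle; the argument is essentially bookkeeping in the explicit formula for $\cT$ and in the standard embedding of the coroot systems, and the only thing to be careful about is the enumeration of the six $\GL_4$-to-$\GSp_4$ coroot correspondences, which is completed above.
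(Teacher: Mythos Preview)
Your proposal is correct and is precisely the direct computation the paper alludes to; the paper's own proof consists of the single sentence ``This follows from a direct computation,'' and you have carried out exactly that computation, including the key identity $\cT(\eta)=\eta'$ and the explicit matching of the six $\GL_4$ positive-coroot pairings $\{b,a,a+b,a-b,a,b\}$ with the four $\GSp_4$ pairings $\{a-b,b,a,a+b\}$.
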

\begin{proof}
This follows from a direct computation.
\end{proof}

\begin{rmk}
    Almost all genericity conditions in this article can be matched with those for analogous results for $\GL_4$ in \cite{LLHLM-2020-localmodelpreprint} using the above lemma. There are a few exceptions. The polynomial $P_{\lam}$ in Theorem \ref{thm:pcrys-local-model} is a priori not related to the polynomial in Theorem 7.3.2(2) in \loccit. The genericity assumption for Theorem \ref{thm:torus-fixed-point} is also proven under a combinatorial genericity rather than a geometric genericity (in the sense of \S1.3 in \loccit) assumed for Proposition 4.7.3 in \loccit~(we remark that the authors of \cite{LLHLM-2020-localmodelpreprint} were aware that such an improvement is possible). 
\end{rmk}

\subsection{Serre weights and Deligne--Lusztig representations}\label{sub:SW&DL}
Let $G$ be either $\GSp_4$ or $\GL_n$. Recall that we have a finite \'etale $\Zp$-algebra $\cO_p$ and $G_0=\Res_{\cO_p/\Zp} G_{/\cO_p}$. We remark that $G_0(\Fp)=G(k)$ when $\cO_p = \cO_K$, and $G_0(\Fp)=G(\cO_F/p)$ when $\cO_p = \cO_F\otimes_\Z \Z_p$ with $F$ a totally real field in which $p$ is unramified.

\subsubsection{Serre weights}
A \emph{Serre weight} of $G_0(\F_p)$ is an isomorphism class of irreducible $\F$-representations of $G_0(\F_p)$. Recall the set of $p$-restricted dominant weights
\begin{align*}
     X^*_1(\ud{T}) = \CB{\lam \in X^*(\uT) \mid \forall \al^\vee \in \uDel^\vee,   0 \le \RG{\lam,\al^\vee} \le p-1  }.
\end{align*}
For $\lam\in X^*_1(\ud{T})$, we write $L(\lam)$ for the irreducible $\uG_{/\F}$-representation unique up to isomorphism of highest weight $\lam$. Then $F(\lam):= L(\lam)\vert_{\uG(\F_p)}$ is a Serre weight. Moreover, we have the following bijection (\cite[Lemma 9.2.4]{GHS-JEMS-2018MR3871496})
\begin{align*}
    \frac{X_1^*(\ud{T})}{(p-\pi)X^0(\uT)} &\xrightarrow{\sim} \CB{\text{Serre weights of $G_0(\F_p)$}}/\simeq \\ 
    \lam & \mapsto F(\lam)
\end{align*}
where $X^0(\ud{T}):= \CB{\lam\in X^*(\ud{T}) \mid \forall \al\in \uPhi, \RG{\lam,\al^\vee}=0}$. For an integer $m\ge 0$, we say a Serre weight $\sig$ is \emph{$m$-deep} if $\sig \simeq F(\lam)$ for some $m$-deep $\lam$.

Let $X_\reg(\ud{T})\subset X^*_1(\ud{T})$ be the subset of $\lam$ such that $ 0 \le \RG{\lam,\al^\vee} < p-1$ for all $\al^\vee \in \uDel^\vee$. 
We say a Serre weight $\sigma$ is \emph{regular} if $\sigma \simeq F(\lam)$ for some $\lam\in X_\reg(\underline{T})$.

Let $\til{w}_h := w_0t_{-\eta}$. We denote by $\cR$ an endomorphism on $X_\reg(\uT)$ given by $\lambda \mapsto \til{w}_h \cdot \lam$. It preserves $X^0(T)$ and induces a map on the set of regular Serre weights by $\cR(F(\lam)) := F(\cR(\lam))$.

\begin{defn}
Let $\omega-\eta \in \uC_0 \cap X^*(\uT)$ and $\tilw_1 \in \utilW_1^+$. We define
\begin{align*}
    F_{(\tilw_1,\omega)} := F(\pi^\mo(\tilw_1)\cdot(\omega-\eta)).
\end{align*}
Consider the equivalence relation $(\tilw_1,\omega)\sim (t_\nu \tilw_1, \omega-\nu)$ for all $\nu \in X^0(\ud{T})$. The map $(\tilw_1,\omega)\mapsto F_{(\tilw_1,\omega)}$ sends equivalent pairs to the same Serre weight. We call the equivalence class of $(\tilw_1,\omega)$ a \emph{lowest alcove presentation} of $F_{(\tilw_1,\omega)}$. Let $m\in \Z_{\ge 0}$. If $\omega-\eta$ is $m$-deep in $\uC_0$,  we say that $(\tilw_1,\omega)$ is an \emph{$m$-generic} lowest alcove presentation of  $F_{(\tilw_1,\omega)}$.
\end{defn}

\begin{defn}
Let $\sigma$ and $\kappa$ be Serre weights. We write $\sig \uparrow \kappa$ if there exist $\lam,\lam'\in X^*_1(\uT)$   such that $\sig \simeq F(\lam)$, $\kappa \simeq F(\lam')$, and $\lam \uparrow \lam'$. 
\end{defn}

\subsubsection{Deligne--Lusztig representations} 
Let  $(s,\mu)\in \underline{W}\times X^*(\underline{T})$. By \cite[Proposition 9.2.1 and 9.2.2]{GHS-JEMS-2018MR3871496}, we can attach a Deligne--Lusztig representation $R_s(\mu)$ of $G_0(\F_p)$. By \cite[Proposition 10.10]{DL-Annals-1976MR393266}, it is a genuine representation of $G_0(\F_p)$ if $(s,\mu)$ is a \textit{good pair} (see \cite[\S2.2]{LLHL-Duke-2019MR4007598}).

\begin{defn}
Let $R$ be a Deligne--Lusztig representation and $m\in \Z_{\ge0}$.
\begin{enumerate}
    \item We say that $(s,\mu-\eta)$ is an \emph{$m$-generic lowest alcove presentation} of $R$ if $R\simeq R_s(\mu)$ and $\mu-\eta$ is $m$-deep in $\ud{C}_0$. 
    \item We say that $R$ is \emph{$m$-generic} if there exists an $m$-generic lowest alcove presentation of $R$.
\end{enumerate}
\end{defn}

\begin{rmk}
If $\mu-\eta$ is $0$-deep in $\ud{C}_0$, then $(s,\mu)$ is a good pair. Moreover, if $\mu-\eta$ is $1$-deep in $\ud{C}_0$, then $R_s(\mu)$ is  irreducible by \cite[Theorem 6.8]{DL-Annals-1976MR393266}. 
\end{rmk}

The following proposition gives the Jordan--H\"older factors of the reduction of Deligne--Lusztig representations in generic cases.
\begin{prop}[Proposition 2.3.6 in \cite{LLHLM-2020-localmodelpreprint}]\label{prop:JH-DL}
Let $(s,\mu-\eta)$ be a lowest alcove presentation of $R_s(\mu)$ that is $2h_\eta$-generic. For $\lam \in X^*_1(\ud{T})$, $F(\lam) \in \JH(\ov{R_s(\mu)})$ if and only if there exists $\tilw=w t_{-\nu} \in \utilW$ such that
\begin{align*}
    \tilw \cdot (\mu -\eta + s\pi(\nu)))\uparrow \tilw_h \cdot \lam
\end{align*}
and $\tilw \cdot \uC_0 \uparrow \tilw_h \cdot \uC_0$.
\end{prop}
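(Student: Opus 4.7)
For $G=\GL_n$ the statement is exactly Proposition~2.3.6 of \cite{LLHLM-2020-localmodelpreprint}, so the task is to establish the case $G=\GSp_4$ under the $6$-genericity hypothesis. My plan is to reduce to the $\GL_4$ case via the transfer map $\cT:\utilW\to\utilW_4$ introduced above, which is designed precisely to match $\GSp_4$-combinatorics with $\Theta$-invariant $\GL_4$-combinatorics.

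First I would establish, as a preliminary, a compatibility between the transfer map and the Jordan--H\"older decomposition of reduced Deligne--Lusztig representations: for a $6$-generic lowest alcove presentation $(s,\mu-\eta)$ and $\lam\in X^*_1(\uT)$,
\begin{align*}
    F(\lam)\in \JH(\ov{R_s(\mu)})_{\GSp_4} \iff F(\cT(\lam))\in\JH(\ov{R_{\cT(s)}(\cT(\mu))})_{\GL_4}.
\end{align*}
This is the natural companion to the introduction's announced compatibility of $\cT$ with mod $p$ reduction (Proposition \ref{prop:transfer-DL-SW} and Corollary \ref{cor:transfer-W?}). It should follow from comparing Brauer characters of $R_s(\mu)$ and $R_{\cT(s)}(\cT(\mu))$, using that the normalization of the duality isomorphism $\phi$ makes $\cT$ compatible with the parametrizations of Deligne--Lusztig data, and that $F(\cT(\lam))|_{\GSp_4(k)}$ contains $F(\lam)$ (as a summand) in the $6$-generic regime where the relevant irreducibles remain well-behaved under restriction.

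Next I would translate the two alcove-theoretic conditions. The condition $\tilw\cdot\uC_0\uparrow \tilw_h\cdot\uC_0$ cuts out a subset of $\utilW$ whose image under $\cT$ lies in the corresponding subset of $\utilW_4$: since $\uparrow$ is generated by affine reflections and $\cT=\std\circ\phi$ at the level of cocharacter/character lattices, Lemma~\ref{lem:transfer-preserves-bruhat-order} together with the explicit recipe for $\cT$ on alcoves implies that $\tilw\cdot\uC_0\uparrow\tilw_h\cdot\uC_0$ iff $\cT(\tilw)\cdot \uC_{0,4}\uparrow \cT(\tilw_h)\cdot\uC_{0,4}$. A parallel argument gives $\tilw\cdot(\mu-\eta+s\pi(\nu))\uparrow\tilw_h\cdot\lam$ iff $\cT(\tilw)\cdot(\cT(\mu-\eta+s\pi(\nu)))\uparrow \cT(\tilw_h)\cdot\cT(\lam)$, because $\cT$ is $\uW$-equivariant in the appropriate sense ($\cT(w\nu)=\cT(w)\cT(\nu)$) and intertwines the $p$-dot actions. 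Combined with the second part of Lemma~\ref{lem:transfer-preserves-bruhat-order} identifying $\Adm(\lam)$ with $\Adm(\cT(\lam))^\Theta$, this shows that a $\tilw$ for $\GSp_4$ exists iff a $\Theta$-invariant $\tilw'$ for $\GL_4$ exists.

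The main obstacle I anticipate is the backward direction of the reduction: given a $\tilw'\in\utilW_4$ satisfying the $\GL_4$ conditions for $(\cT(s),\cT(\mu),\cT(\lam))$, one must produce a $\Theta$-invariant such $\tilw'$ so that it descends to $\utilW$. Here I would exploit the fact that the data $\cT(s),\cT(\mu),\cT(\lam)$ is itself $\Theta$-invariant, so applying $\Theta$ to any valid $\tilw'$ yields another valid $\tilw'$; a $\Theta$-invariant choice should then be extracted via uniqueness of the element controlling the upper-arrow relation to $\tilw_h\cdot\cT(\lam)$ in each Weyl chamber, once $6$-genericity is invoked to ensure the alcove $\cT(\tilw)\cdot\uC_{0,4}$ is uniquely determined by its image in the $X^*(\uT_4)/\Z$-quotient. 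The $6=2(4-1)$ genericity in $\GSp_4$ is exactly what is required to apply the $\GL_4$ statement of Proposition~2.3.6 in \loccit~after transfer, so no loss occurs.
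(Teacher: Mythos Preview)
The paper does not prove this proposition at all: it simply cites Proposition~2.3.6 of \cite{LLHLM-2020-localmodelpreprint}, whose Section~2 is formulated for general split reductive groups and therefore already covers $\GSp_4$. The genericity bound $6$ for $\GSp_4$ and $2(n-1)$ for $\GL_n$ are both instances of the bound $2(h-1)$ with $h$ the Coxeter number. So no reduction argument is required.

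Your proposed reduction via $\cT$ has a genuine gap. The preliminary equivalence you want,
\[
F(\lam)\in \JH(\ov{R_s(\mu)})_{\GSp_4}\iff F(\cT(\lam))\in \JH(\ov{R_{\cT(s)}(\cT(\mu))})_{\GL_4},
\]
is \emph{false} under $6$-genericity alone. The Example following Proposition~\ref{prop:transfer-DL-SW} exhibits, for $\mu-\eta=(a,b;c)$ with $a=(p-1)/2$, an element $\lam$ with $F(\cT(\lam))\in\JH(\ov{\cT(R_e(\mu))})$ but $F(\lam)\notin\JH(\ov{R_e(\mu)})$. The obstruction is exactly the failure of $\Theta$-invariance that you anticipate in your last paragraph: a valid $\tilw'\in\utilW_4$ need not be conjugate under $\Theta$ to one in the image of $\cT$, and your proposed mechanism for extracting a $\Theta$-invariant choice (uniqueness of the alcove in each Weyl chamber) does not work when $a$ is close to $p/2$, because the relevant $\del'\in\uOm_4$ can fail to be $\Theta$-fixed. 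This is precisely why Proposition~\ref{prop:transfer-DL-SW} requires the extra hypothesis $|a_j-p/2|>3/2$ for the converse direction. Note also that the paper's proof of Proposition~\ref{prop:transfer-DL-SW} \emph{uses} Proposition~\ref{prop:JH-DL}, so invoking that transfer result as a model would be circular; your suggestion of proving the equivalence directly via Brauer characters and restriction from $\GL_4(k)$ to $\GSp_4(k)$ would not rescue this, since the equivalence itself is false.
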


\begin{defn}
Let $w\in \uW$. Let $\nu\in X^*(\uT)$ be an element uniquely determined up to $X^0(\uT)$  by the condition $wt_{-\nu} \in \utilW_1$. We define $\hat{w}:=wt_{-\nu} \mod X^0(\uT)$.
\end{defn}
By abuse of notation, we also let $\hat{w}$ denote a representative in its class.

\begin{defn}\label{def:outer-weight}
Let $R\simeq R_s(\mu)$ be a Deligne--Lusztig representation that is 6-generic if $G=\GSp_4$ and $2(n-1)$-generic if $G=\GL_n$. By Proposition \ref{prop:JH-DL}, we can define a function
\begin{align*}
    F_{R}: \uW &\ra \JH(\ov{R_s(\mu)}) \\
    w & \mapsto F(\tilw_h^\mo \pi^\mo(\hat{w}) \cdot (\mu-\eta + s(\hat{w}^\mo(0)))).
\end{align*}
\end{defn}
Note that this does not depend on the choice of $\hat{w}$. The map $F_R$ depends on the choice of the lowest alcove presentation, which will be clear in the context. So we suppress the dependence on it in the notation.

Serre weights in the image of $F_{R}$ are called \textit{outer weights} in \cite{LLHLM2}.

\subsection{Tame inertial $L$-parameters}\label{sub:inertial-L}
Let $G=\GSp_4$. Recall that we take $\cO_p$ to be a finite \'etale $\Z_p$-algebra. It is isomorphic to $\prod_{v\in S_p}\cO_v$ where $S_p$ is a finite set and $\cO_v$ is the ring of integers in some finite unramified extension $F_v/\Q_p$. Following \cite[\S1.8.2]{LLHLM-2020-localmodelpreprint}, we have the following definitions.

\begin{defn}
Let $A$ be a topological $\cO$-algebra. 
\begin{enumerate}
    \item An \emph{$L$-homomorphism over $A$} is a continuous homomorphism $W_{\Qp} \ra \LuG(A)$. An \emph{$L$-parameter over $A$} is a $\uG^\vee(A)$-conjugacy class of $L$-homomorphisms. When $A$ is finite, any $L$-homomorphism extends to $G_{\Qp}$.
    \item An \emph{inertial $L$-homomorphism over $A$} is a continuous homomorphism $I_{\Q_p} \ra \uG^\vee(A)$ which has open kernel and extends to an $L$-homomorphism over $A$.  An \emph{inertial $L$-parameter over $A$} is a $\uG^\vee(A)$-conjugacy class of inertial $L$-homomorphisms.
    \item An \emph{inertial type for $K$ over $A$} is a $G^\vee(A)$-conjugacy class of homomorphisms $I_K \ra G^\vee(A)$ which has open kernel and extends to a homomorphism $W_K \ra G^\vee(A)$.
\end{enumerate}
\end{defn}
Note that a choice of an isomorphism $\ov{F}_v\simeq \Qpbar$ for  each $v\in S_p$ induces an embedding $G_{F_v}\mono G_{\Qp}$. This induces a  bijection between $L$-homomorphisms over $A$ and collections of   continuous homomorphisms $W_{F_v} \ra G^\vee(A)$ indexed by $S_p$. Once we take $\uG^\vee(A)$-conjugacy classes and $G^\vee(A)$-conjugacy classes respectively, the bijection is independent of the choice of isomorphisms. The same bijection holds for inertial $L$-parameters and collections of inertial types for $F_v$ indexed by $S_p$. In particular, when $\cO_p = \cO_K$, the inertial $L$-parameter over $A$ is equivalent to an inertial type for $K$ over $A$. When $A$ is finite, one can replace Weil groups in the definition with Galois groups.

Let $(s,\mu)\in \underline{W}\times X^*(\underline{T})$ and $s'=\cT(s)$, $\mu'=\cT(\mu)$. Then \cite[Definition 2.2.1]{LLHL-Duke-2019MR4007598} defines a tame inertial $L$-parameter
\begin{align*}
    \tau(s',\mu'+\eta') : I_{\Qp} \ra \uT_4^\vee(\cO). 
\end{align*}
By our choice of $(s',\mu')$, this factors through $\uT^\vee(\cO)\subset \uT_4^\vee(\cO)$, and we let $\tau(s,\mu+\eta)$ denote the induced tame inertial $L$-parameter valued in $\uT^\vee(\cO)$. 

Let $F^*$ denote the endomorphism $p\pi^\mo$ on $X_*(\ud{T}^\vee)$, and $d\ge 1$ be an integer such that $(F^*\circ s^\mo)^d = p^d$. By \cite[\S2.4]{LLHLM-2020-localmodelpreprint}, we have the following explicit description:
\begin{align*}
   \tau(s,\mu+\eta):= \PR{\sum_{i=0}^{d-1} (F^*\circ \phi(s)^\mo)^i(\phi(\mu+\eta))}(\omega_d): I_{\Q_p} \xrightarrow{} \ud{T}^\vee(\cO).
\end{align*}
When $\cO_p = \cO_K$, we also write $\tau(s,\mu+\eta)$ to denote the corresponding tame inertial type for $K$. Following  Example 2.4.1 of \loccit, we write $s_\tau := s_0s_1\dots s_{f-1}\in W$, $r:=\abs{s_\tau}$, and $\textbf{a}^{(0)} := (\sum^{f-1}_{j=0}(F^*\circ s^\mo)^j (\mu+\eta))_0 \in X^*(T)$. Then we have
\begin{align*}
    \tau(s,\mu+\eta) = \PR{\sum_{k=0}^{r-1} p^{fk} \phi(s_\tau)^{-k}\PR{\phi\PR{\textbf{a}^{0}}}}(\omega_{fr}) : I_K \ra T^\vee(\cO).
\end{align*}

Note that base change $-\otimes_\cO E$ and $-\otimes_\cO \F$ induce bijections between tame inertial $L$-parameters over $\cO$, $E$, and $\F$. For  a tame inertial $L$-parameter $\tau$  over $\cO$ or $E$, we write $\ov{\tau}$ for the corresponding tame inertial $L$-parameter over $\F$.

\begin{defn}
Let $\tau$ (resp. $\ov{\tau}$) be a tame inertial $L$-parameter over $E$ (resp. $\F$). Let $m\in \Z_{\ge0}$.
\begin{enumerate}
    \item A pair $(s,\mu)\in \ud{W}\times X^*(\ud{T})$ is an  \emph{$m$-generic lowest alcove presentation} of $\tau$ (resp.~$\ov{\tau}$) if $\mu$ is $m$-deep in $\ud{C}_0$ and $\tau\simeq \tau(s,\mu+\eta)$ (resp.~$\ov{\tau}\simeq \ov{\tau}(s,\mu+\eta)$). 
    
    \item We say  $\tau$ (resp. $\ov{\tau}$) is \emph{$m$-generic} if there is an $m$-generic lowest alcove presentation $(s,\mu)$ of $\tau$ (resp. $\ov{\tau}$).
\end{enumerate}
\end{defn}

\subsection{Inertial local Langlands for $\GSp_4$}\label{sub:ILLC} 
In this subsection, we take $G=\GSp_4$  and $\cO_p=\cO_K$.

In \cite{GT-Annals-LLC-GSp4-MR2800725}, Gan and Takeda established the local Langlands correspondence for $\GSp_4$, which we denote by $\recgt$.  
It is a surjective finite-to-one map that takes equivalence classes of smooth irreducible representations of $\GSp_4(K)$ to $\GSp_4$-conjugacy classes of Weil--Deligne representations of $W_K$ valued in $\GSp_4(\C)$. Fix once and for all an isomorphism $\iota: \C \simeq \Qpbar$. This induces a correspondence $\rec_{\mathrm{GT},\iota}$ over $\Qpbar$. We define a normalized local Langlands correspondence by
\begin{align*}
    \recGTp(\pi) := \rec_{\mathrm{GT},\iota}(\pi \otimes \abs{\simc}^{-3/2})
\end{align*}
for any smooth irreducible $\Qpbar$-representation $\pi$ of $\GSp_4(K)$.

Let $\tau$ be a tame inertial $L$-parameter with 1-generic lowest alcove presentation $(s,\mu)$. We attach a tame type $\sigma(\tau) := R_s(\mu+\eta)$ to $\tau$. We often view $\sig(\tau)$ as $\GSp_4(\cO_K)$-representation by inflation.

\begin{thm}\label{thm:inertial-LLC}
Let $\pi$ be a smooth irreducible $\Qpbar$-representation of $\GSp_4(K)$ and $\tau$ be a tame inertial type for $K$ with 1-generic lowest alcove presentation $(s,\mu)$.  If $\sigma(\tau)\subset \pi\vert_{\GSp_4(\cO_K)}$, then $\recGTp(\pi)\vert_{I_K} \simeq \tau$. Moreover, $\Hom_{\Qpbar[\GSp_4(\cO_K)]}(\sig(\tau), \pi|_{\GSp_4(\cO_K)})$ is 1-dimensional.
\end{thm}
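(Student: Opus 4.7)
The plan is to argue case by case on irreducible smooth $\Qpbar$-representations $\pi$ of $\GSp_4(K)$, splitting into the supercuspidal and non-supercuspidal families and invoking, respectively, the depth-zero regular supercuspidal local Langlands correspondence of \cite{DBR-Annals-2009MR2480618} and the explicit theta-correspondence construction of $\recgt$ in \cite{GT-Annals-LLC-GSp4-MR2800725,GTtheta-RepTheory-2011MR2846304}. The $1$-genericity of $(s,\mu)$ guarantees that $\sigma(\tau)=R_s(\mu+\eta)$ is an irreducible regular Deligne--Lusztig representation of $\GSp_4(k)$, and it is cuspidal as such precisely when $s$ is elliptic in $W$, which will dictate which of the two cases we are in.

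When $s$ is elliptic, I would extend $\sigma(\tau)$ to $Z(K)\cdot\GSp_4(\cO_K)$ with the correct central character and form the compact induction to $\GSp_4(K)$, producing an irreducible depth-zero regular supercuspidal $\pi_0$. A standard Mackey/Frobenius reciprocity argument then shows that any $\pi$ containing $\sigma(\tau)$ on restriction to $\GSp_4(\cO_K)$ must be isomorphic to $\pi_0$, and simultaneously forces $\Hom_{\GSp_4(\cO_K)}(\sigma(\tau),\pi|_{\GSp_4(\cO_K)})$ to be one-dimensional. The DeBacker--Reeder recipe attaches to $\pi_0$ an explicit tame regular $L$-parameter whose restriction to $I_K$ is determined by $(s,\mu)$; unwinding the formula for $\tau(s,\mu+\eta)$ from \S\ref{sub:inertial-L} and matching the DeBacker--Reeder parametrization with $\recgt$ as verified in \cite{GT-Annals-LLC-GSp4-MR2800725} then yields $\recGTp(\pi)|_{I_K}\simeq\tau$.

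When $s$ is non-elliptic, $\pi$ is a Jordan--H\"older constituent of $\Ind_P^{\GSp_4(K)}\rho$ for a proper parabolic $P$ with Levi $L$ (Borel, Siegel, or Klingen) and irreducible $\rho$ of $L(K)$. Mackey theory combined with compatibility of Deligne--Lusztig induction with parabolic induction reduces the appearance of $\sigma(\tau)$ in $\pi|_{\GSp_4(\cO_K)}$ to the appearance of a matching Deligne--Lusztig type $\sigma_L(\tau_L)$ in $\rho|_{L\cap\GSp_4(\cO_K)}$, where $\tau_L$ is cut out by the component of $(s,\mu)$ in the Weyl group of $L$. Since every Levi of $\GSp_4$ is a product of $\GL_i$'s and $\G_m$'s, the inertial local Langlands on $L$ is classical (cf.~\cite{LLHL-Duke-2019MR4007598}) and pins down the inertial type of $\rho$. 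Compatibility of $\recGTp$ with parabolic induction, which is explicit through the theta lifts from $\GSO_{2,2}$ and $\GSO_{3,3}\simeq\PGL_4\times\GL_1$ in \cite{GT-Annals-LLC-GSp4-MR2800725,GTtheta-RepTheory-2011MR2846304}, then yields $\recGTp(\pi)|_{I_K}\simeq\tau$, and the $1$-genericity separates the contributions of distinct Weyl elements to ensure multiplicity one.

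The main obstacle I anticipate is the bookkeeping of normalizations: the shift $\pi\mapsto\pi\otimes|\simc|^{-3/2}$ entering $\recGTp$, the $\eta$-shift in $\sigma(\tau)=R_s(\mu+\eta)$, and the tracking of similitude characters across the theta lifts have to be aligned precisely. A related delicate point, needed for multiplicity one in the non-supercuspidal case, is the explicit computation of Jacquet modules at the level of $\GSp_4(\cO_K)$-types, which under $1$-genericity should isolate a single pair $(s,\mu)$ up to equivalence and rule out any spurious contribution of $\sigma(\tau)$ coming from a different parabolic or a different Levi factor.
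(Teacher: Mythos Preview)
Your strategy matches the paper's: split according to whether $\sigma(\tau)$ is a cuspidal or non-cuspidal Deligne--Lusztig representation, use DeBacker--Reeder in the cuspidal case and the explicit theta lift description of $\recgt$ in the non-cuspidal case, and reduce multiplicity one to Frobenius reciprocity. Two refinements in the paper are worth noting.

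First, in the non-cuspidal case the paper does not stop at ``$\pi$ is a Jordan--H\"older constituent of $\Ind_P^{\GSp_4(K)}\rho$''; it proves $\pi$ \emph{equals} the full parabolic induction. Concretely, Morris's type-theoretic identity $\pi^{\sigma_\fP}\simeq(\pi_N)^{\sigma_M}$ forces $\sigma_M$ to occur in the Jacquet module, hence $\pi_N$ contains (and by supercuspidality equals) an explicit depth-zero supercuspidal $\tau_{\sigma_M}$ of the Levi; then $1$-genericity makes $\Ind_P^{\GSp_4(K)}\tau_{\sigma_M}$ irreducible (this is cited from \cite{GTtheta-RepTheory-2011MR2846304} and \cite{BCGP-ab_surf-2018abelian}), so the nonzero map $\pi\to\Ind_P^{\GSp_4(K)}\tau_{\sigma_M}$ is an isomorphism. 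Working with the full induction rather than a constituent is what makes Proposition~\ref{prop:L-parameter-non-cuspidal} directly applicable. It also streamlines multiplicity one: the chain
\[
\pi^{\sigma(\tau)}\;\simeq\;\pi^{\sigma_\fP}\;\simeq\;(\pi_N)^{\sigma_M}\;\simeq\;(\tau_{\sigma_M})^{\til\sigma_M}\;\simeq\;\Hom_{M(K)}(\tau_{\sigma_M},\tau_{\sigma_M})
\]
reduces to Schur's lemma, with no explicit Jacquet-module computation needed. This addresses precisely the ``delicate point'' you flagged.

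Second, two small corrections. The cuspidality criterion is on $s_\tau=s_0s_1\cdots s_{f-1}\in W$, not on $s\in\uW$ being elliptic (these coincide only when $f=1$). And the compatibility of DeBacker--Reeder's construction with $\recgt$ is not in \cite{GT-Annals-LLC-GSp4-MR2800725}; the paper cites Lust \cite{Lust-J.Alg-2013MR3065991} for this.
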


Recall that given a pair $(s,\mu)\in \ud{W}\times X^*(\ud{T})$, we defined $(s_\tau, \textbf{a}^{(0)}) \in W\times X^*(T)$ by
\begin{align*}
    s_\tau = s_0s_1 \dots s_{f-1}, \ \textbf{a}^{(0)} = \sum_{j=0}^{f-1} ((F^* \circ s^\mo)^j(\mu+\eta))_0.
\end{align*}
For such  $(s_\tau, \textbf{a}^{(0)})$, we associate a pair $(\T,\theta):=(T_{s_\tau},\theta_{s_\tau,\bfa\idx{0}})$ and the Deligne--Lusztig representation $\epsilon_{\GSp_4}\epsilon_\T R_\T^\theta$ following \cite[Lemma 4.2]{Herzig-Duke-2009SerreweightMR2541127}. Using \cite[Corollary 10.5 and Proposition 12.2]{DM} and \cite[Proposition A.5.15(1)]{pseudo-red-grps}, 
one can see that $\epsilon_{\GSp_4}\epsilon_\T R_\T^\theta$ is isomorphic to $R_s(\mu+\eta)$ as $\GSp_4(k)\simeq G_0(\F_p)$-representation.
We say that $R_s(\mu+\eta)$ is \emph{cuspidal} if the torus $\T$ is \emph{not} contained in any proper Levi subgroup of $\GSp_4$. Otherwise, it is called \emph{non-cuspidal}.

Let $P$ be the minimal standard parabolic subgroup of $\GSp_4$ containing $\T$ with Levi factor $M$ and the unipotent radical $N$. By \cite[Proposition 8.2]{DL-Annals-1976MR393266} we have
\begin{align*}
    R_\T^\theta = \Ind_{P(\F_q)}^{G(\F_q)} (R_{\T,P}^\theta).
\end{align*}
Note that $M$ decomposes into a product $\prod_{i=1}^r M_i$ (see \cite[\S2.1]{RS-GSp4-bookMR2344630}). We also write $\T = \prod_{i=1}^r \T_i$ where
$\T_i\subset M_i$ is a maximal torus and $\theta_i = \theta|_{\T_i}$. By the K\"unneth Theorem, we have an isomorphism between $M(k)$-representations
\begin{align*}
    R^\theta_{\T,P} = \bigotimes_{i=1}^r R^{\theta_i}_{\T_i, M_i}.
\end{align*}
We give explicit descriptions of $\T_i$ and $\theta_i$. Let us write $\bfa^{(0)}=(a_1,a_2;a_3)$. Recall that we write $T_2$ for the diagonal torus of $\GL_2$ and $W_2$ for the Weyl group for $\GL_2$. Only in this subsection, we write $W_2=\CB{1,s}$.
\begin{enumerate}
    \item When $s_\tau = 1$, we have $P=B$, $r=3$, $M_i=\T_i = \G_m$ for $1 \le i \le 3$. Then $\theta_i$ is given by $a_i\in X^*(\G_m)$.
    \item When $s_\tau \in \CB{s_1, s_2s_1s_2}$, we have $P=S$, $r=2$, $M_1 \simeq \GL_2$ and $M_2 \simeq \G_m$. 
    Then $(\T_1,\theta_1) = (T_{s}, \theta_{s, \mu})$  where $\mu = (a_1,a_2)$ (resp.\,$(a_1,-a_2)$) in $X^*(T_2)$ when $s_\tau = s_1$ (resp.\,$s_2s_1s_2$), and $\theta_2$ is given by $a_3\in X^*(\G_m)$.
    \item When $s_\tau \in \CB{s_2, s_1s_2s_1}$, we have $P=Q$, $r=2$, $M_1 \simeq \G_m$, and $M_2 \simeq \GL_2$. Then $\theta_1$ is given by $a_1$ (resp.\,$a_2$) in $X^*(\G_m)$, and $(\T_2,\theta_2)=(T_{s}, \theta_{s, \mu})$ where $\mu=(a_2+a_3,a_3)$ (resp.\,$(a_1+a_3,a_3)$) in $X^*(T_2)$ when $s_\tau = s_2$ (resp.\,$s_1s_2s_1$).
    \item When $s_\tau \in \CB{s_1s_2,s_2s_1, w_0}$, we have $P=\GSp_4$. This is the only case in which $R_\T^\theta$ is cuspidal.
\end{enumerate}

When $\sig(\tau)$ is non-cuspidal, we can use the above data to describe $\pi$ as a parabolically induced representation.

\begin{prop}\label{prop:non-cuspidal-pi}
Let $\pi$ and $\sig(\tau)$ be as in Theorem \ref{thm:inertial-LLC} and assume that $\sig(\tau)$ is non-cuspidal. 
Following the above notations, there exists a $M_i(\cO_K)K^\times$-representation $\til{R}_{\T_i,M_i}^{\theta_i}$ extending $\eps_{M_i}\eps_{\T_i} R_{\T_i,M_i}^{\theta_i}$ such that \begin{align*}
    \pi \simeq \Ind_{P(K)}^{\GSp_4(K)} \PR{\bigotimes_{i=1}^r \ind_{M_i(\cO_K)K^\times}^{M_i(K)}\til{R}_{\T_i,M_i}^{\theta_i}}.
\end{align*}
Moreover, $\pi$ contains $\sig(\tau)$ with multiplicity one.
\end{prop}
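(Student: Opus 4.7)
The plan is to identify $\pi$ via the Bushnell--Kutzko / depth-zero type theory, once we observe that $\sigma(\tau)$ has been built precisely to be a $[M,\pi_M]_{\GSp_4}$-type for the appropriate Levi $M$ of $P$. First, I would unravel the data $(s_\tau, \bfa^{(0)})$ and the case distinctions (1)--(3) just above the proposition to write
\[
\sigma(\tau) \;=\; \Ind_{P(k)}^{\GSp_4(k)}\!\Bigl(\bigotimes_{i=1}^r \epsilon_{M_i}\epsilon_{\T_i}R_{\T_i}^{\theta_i}\Bigr),
\]
inflate to $P(\cO_K)$, and apply Frobenius reciprocity combined with the Iwasawa decomposition $\GSp_4(K) = P(K)\,\GSp_4(\cO_K)$: an irreducible $\pi$ containing $\sigma(\tau)$ must be a subquotient of $\Ind_{P(K)}^{\GSp_4(K)}\pi_M$ for some irreducible smooth representation $\pi_M$ of $M(K)$ whose restriction to $M(\cO_K)$ contains $\bigotimes_i \epsilon_{M_i}\epsilon_{\T_i}R_{\T_i}^{\theta_i}$, together with the correct central character pinned down by the similitude component of $\tau$.

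Next, I would factor $\pi_M = \bigotimes_{i=1}^r \pi_{M_i}$ and treat each factor. For a $\G_m$-factor, $\pi_{M_i}$ is a smooth character of $K^\times$ whose restriction to $\cO_K^\times$ equals the corresponding tame character of $k^\times$ given by $a_j$; this character is uniquely determined once the required central character is fixed, and is visibly realized as $\ind_{\cO_K^\times K^\times}^{K^\times}\til{R}_{\T_i}^{\theta_i}$ with $\til{R}_{\T_i}^{\theta_i}$ the obvious extension. For a $\GL_2$-factor, $\theta_i = \theta_{s,\mu}$ with $\mu$ regular in the sense of Deligne--Lusztig (this is guaranteed by the 1-generic hypothesis on $(s,\mu)$), so $\epsilon_{M_i}\epsilon_{\T_i}R_{\T_i}^{\theta_i}$ is an irreducible cuspidal representation of $\GL_2(k)$. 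By the depth-zero supercuspidal classification (Moy--Prasad, or for $\GL_2$ the explicit construction going back to Casselman), there is a unique extension $\til{R}_{\T_i}^{\theta_i}$ to $\GL_2(\cO_K)K^\times$ with the prescribed central character, and $\pi_{M_i} := \ind_{\GL_2(\cO_K)K^\times}^{\GL_2(K)}\til{R}_{\T_i}^{\theta_i}$ is the unique irreducible smooth representation of $\GL_2(K)$ whose restriction to $\GL_2(\cO_K)$ contains $R_{\T_i}^{\theta_i}$ and whose central character is as prescribed; compare the inertial local Langlands correspondence for $\GL_2$.

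It then remains to verify that $\Ind_{P(K)}^{\GSp_4(K)} \pi_M$ is \emph{irreducible}, for then it must equal $\pi$. I would appeal to the reducibility criteria for parabolic induction on $\GSp_4$ of Sally--Tadi\'c (\emph{Mem. Soc. Math. France}), which identify reducibility points in terms of explicit coincidences between the inducing inertial characters (differences $a_i - a_j$ or $a_i + a_j - a_3$ lying on integral translates of the Steinberg line). The 1-generic assumption on $(s,\mu)$ --- equivalently, $\mu-\eta$ being $1$-deep in $\uC_0$ --- forces the entries of $\bfa^{(0)}$ to avoid all such coincidences modulo the torsion characters contributed by $\til{R}_{\T_i}^{\theta_i}$, so the induction is irreducible in all three non-cuspidal cases listed before the proposition. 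Finally, multiplicity one will follow from Mackey and Frobenius: one computes
\[
\Hom_{\GSp_4(\cO_K)}\!\bigl(\sigma(\tau), \Ind_{P(K)}^{\GSp_4(K)}\pi_M\bigr)
 \;=\; \Hom_{M(\cO_K)}\!\bigl(\textstyle\bigotimes R_{\T_i}^{\theta_i},\pi_M|_{M(\cO_K)}\bigr),
\]
and the right-hand side is $1$-dimensional by the type statements for the $M_i(K)$-factors (the $\GL_2$ case being Paskunas' uniqueness of types for depth-zero cuspidals, the $\G_m$ case being trivial). The main obstacle I anticipate is bookkeeping the reducibility criteria for $\GSp_4$ parabolic induction case-by-case and checking that the 1-generic hypothesis really excludes all of them; this is combinatorial but must be done carefully for each of the three parabolic cases.
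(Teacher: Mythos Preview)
Your overall strategy---identify the supercuspidal inducing data from the depth-zero type, then verify irreducibility of the resulting parabolic induction---is the paper's, but the key technical step is not correctly justified, and the paper takes shorter routes for irreducibility and multiplicity one.

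The gap is in going from $\sigma(\tau)\subset\pi|_{G(\cO_K)}$ to ``$\pi$ is a subquotient of $\Ind_{P(K)}^{G(K)}\pi_M$ with $\pi_M|_{M(\cO_K)}\supset\sigma_M$''. Frobenius reciprocity and Iwasawa only tell you how $K$-types behave \emph{under} parabolic induction; they do not by themselves compute the cuspidal support of a given $\pi$. The paper instead passes to the parahoric $\fP$ (the preimage of $P(k)$ in $G(\cO_K)$) and applies Morris's isomorphism $\pi^{\sigma_\fP}\simeq(\pi_N)^{\sigma_M}$ \cite[Lemma~3.6]{Morris-Comp-levelzeroMR1713308}, where $\pi_N$ is the Jacquet module. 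From $\sigma(\tau)\subset\pi$ one then gets $\sigma_M\hookrightarrow\pi_N|_{M(\cO_K)}$; since $(M(\cO_K),\sigma_M)$ is an $M(K)$-type \cite[Prop.~4.1]{Morris-Comp-levelzeroMR1713308}, this forces $\pi_N\simeq\tau_{\sigma_M}$ for a supercuspidal $\tau_{\sigma_M}$ of the claimed shape, and Frobenius reciprocity for Jacquet modules produces a nonzero map $\pi\to\Ind_P^G\tau_{\sigma_M}$.

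For irreducibility of this induction, your Sally--Tadi\'c case analysis would work but is exactly the bookkeeping you flag as delicate; the paper sidesteps it by citing \cite[Lemma~5.1(a),~5.2(b)]{GTtheta-RepTheory-2011MR2846304} for $P\in\{S,Q\}$ and \cite[Prop.~2.4.6]{BCGP-ab_surf-2018abelian} for $P=B$. For multiplicity one, your Mackey identity $\Hom_{G(\cO_K)}(\sigma(\tau),\Ind_P^G\pi_M)=\Hom_{M(\cO_K)}(\sigma_M,\pi_M)$ is only correct once the non-identity double cosets in $P(\cO_K)\backslash G(\cO_K)/P(\cO_K)$ are shown to contribute nothing, which is another genericity check. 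The paper avoids this by reusing Morris's isomorphism: the chain $\pi^{\sigma(\tau)}\simeq\pi^{\sigma_\fP}\simeq(\pi_N)^{\sigma_M}\simeq(\tau_{\sigma_M})^{\til\sigma_M}\simeq\Hom_{M(K)}(\tau_{\sigma_M},\tau_{\sigma_M})$ reduces multiplicity one to the irreducibility of the supercuspidal $\tau_{\sigma_M}$.
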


\begin{proof}

Let $\fP\subset \GSp_4(\cO_K)$ be the parahoric subgroup defined as the inverse image of $P(k)\subset \GSp_4(k)$.  We let $\sig_\fP$ and $\sig_M$ denote the inflation of $\eps_P \eps_\T R^\theta_{\T,P}$ to $\fP$ and $M(\cO_K)$ respectively. By \cite[Lemma 3.6]{Morris-Comp-levelzeroMR1713308}, we have an isomorphism $\pi^{\sigma_\fP} \simeq (\pi_{N})^{\sigma_M}$
where $\pi_N$ is the unnormalized Jacquet module. Since $\sigma(\tau)\subset \pi\vert_{\GSp_4(\cO_K)}$, we get $\sigma_M \mono \pi_N\vert_{M(\cO_K)}$. The pair $(M(\cO_K), \sigma_M)$ is an $M(K)$-type. This implies that there is a quotient map of $M(K)$-representations $\pi_N \risom \tau_{\sigma_M}$ where $\tau_{\sig_M}$ is an irreducible supercuspidal representation of $M(K)$ defined by
\begin{align*}
    \tau_{\sigma_M} = \bigotimes_{i=1}^r \ind_{M_i(\cO_K)K^\times}^{M_i(K)} \til{R}_{\T_i,M_i}^{\theta_i}
\end{align*}
for some $M_i(\cO_K)K^\times$-representation $\til{R}_{\T_i,M_i}^{\theta_i}$ extending  $\eps_{M_i}\eps_{\T_i} R_{\T_i,M_i}^{\theta_i}$ (see \cite[Proposition 4.1]{Morris-Comp-levelzeroMR1713308}). By Frobenius reciprocity, we get a nonzero map
\begin{align*}
    \pi \ra{} \Ind_{P(K)}^{\GSp_4(K)}\tau_{\sigma_M}.
\end{align*}
This is an isomorphism by \cite[Lemma 5.1.(a), 5.2.(b)]{GTtheta-RepTheory-2011MR2846304} (for $P=S$ or $Q$) and \cite[Proposition 2.4.6]{BCGP-ab_surf-2018abelian} (for $P=B$).

Finally, the multiplicity-one assertion follows from
\begin{align*}
    \Hom_{\GSp_4(\cO_K)}(\sig(\tau),\pi|_{\GSp_4(\cO_K)}) & \simeq \Hom_{\fP}(\sig_{\fP},\pi|_{\fP}) \simeq \Hom_{M(\cO_K)}(\sig_M,\pi_N|_{M(\cO_K)})
    \\
    &\simeq \Hom_{M(\cO_K)}(\sig_M,\tau_{\sig_M}|_{M(\cO_K)}) \simeq \Hom_{M(K)} (\tau_{\sig_M},\tau_{\sig_M})
\end{align*}
where the first isomorphism is induced by Frobenius reciprocity, the second from $\pi^{\sig_\fP}\simeq (\pi_N)^{\sig_M}$, the third from the geometrical lemma (\cite[\S2.12]{BZ}) and 1-genericity of $\tau$, and the last from Frobenius reciprocity for compact induction.
\end{proof}

Now suppose that $\sig(\tau)$ is cuspidal. We write $\til{\sigma}(\tau)$ for the $G(\cO_K)K^\times$-representation extending $\sigma(\tau)$ by letting $K^\times$ act by central character of $\pi$. By \cite[Lemma 4.5.1]{DBR-Annals-2009MR2480618}, we have an isomorphism of $G(K)$-representations 
\begin{align*}
    \pi\simeq \ind_{G(\cO_K)K^\times}^{G(K)} (\til{\sigma}(\tau)).
\end{align*}

We prove Theorem \ref{thm:inertial-LLC} using an explicit description of the local Langlands correspondence. In the non-cuspidal case, we use the explicit theta correspondence in \cite{GTtheta-RepTheory-2011MR2846304}. In the cuspidal case, we use the explicit construction of the local Langlands correspondence for tame regular semisimple elliptic Langlands parameters in \cite{DBR-Annals-2009MR2480618}. Note that the compatibility between the local Langlands correspondence for $\GSp_4$  of DeBacker--Reeder and Gan--Takeda is proven by Lust (\cite[Theorem 1.1]{Lust-J.Alg-2013MR3065991}).

\subsubsection{Explicit theta correspondences}\label{subsec:theta-correspondence} 
In \cite{GTtheta-RepTheory-2011MR2846304}, Gan--Takeda established theta correspondences between the group $\GSp_4$ and various similitude orthogonal groups $\GSO_{2,2}, \GSO_{3,3}$, and $\GSO_{4,0}$. This was used to prove the local Langlands conjecture for $\GSp_4$ (\cite{GT-Annals-LLC-GSp4-MR2800725}). Parabolically induced representations of $\GSp_4$ have a nonzero theta lift to $\GSO_{3,3}$. 
The group $\GSO_{3,3}$ admits an exceptional isomorphism 
\begin{align*}
    \GSO_{3,3} \simeq (\GL_4\times \GL_1)/\CB{(z,z^{-2})\mid z \in \GL_1)}.
\end{align*}
Using this isomorphism, we view a representation of $\GSO_{3,3}$ as a representation of $\GL_4\times \GL_1$.

Let $\pi$ be an  irreducible smooth $\C$-representation of $\GSp_4(K)$. If the theta lifting of $\pi$ to $\GSO_{3,3}$ is given by (nonzero) $\Pi\boxtimes \chi$, the $L$-parameter of $\Pi \btimes \chi$ (which is valued in $\GL_4(\C)\times \GL_1(\C)$) factors through the map 
\begin{align*}
    \GSp_4(\C) \xrightarrow{\std\times \simc}\GL_4(\C)\times \GL_1(\C)
\end{align*}
which provides the $L$-parameter $\recgt(\pi)$ of $\pi$. 

In the following theorem, we write $\phi_\tau$ for the $L$-parameter attached to a smooth irreducible $\Qpbar$-representation $\tau$ of $\GL_2(K)$ by \cite{HT} conjugated by $\iota$. We write $\omega_\tau$ for the central character of $\tau$.

\begin{prop}\label{prop:L-parameter-non-cuspidal}
Let $\pi$ be a smooth irreducible $\Qpbar$-representation of $\GSp_4(K)$.
\begin{enumerate}
    \item ($P=B$ the Borel subgroup) If $\pi \simeq \Ind_{B(K)}^{\GSp_4(K)}(\chi_1,\chi_2;\chi)$, then
    \begin{align*}
        \recGTp(\pi) = \chi_1\chi_2\chi \abs{\cdot}^{-3}  \oplus \chi_1\chi\abs{\cdot}^{-2} \oplus \chi_2 \chi\abs{\cdot}^{-1} \oplus \chi : W_K \xrightarrow{} T(E) \subset \GSp_4(E).
    \end{align*}
    \item ($P=Q$ the Klingen parabolic) If $\pi \simeq \Ind_{Q(K)}^{\GSp_4(K)} (\chi\otimes \tau)$, then
    \begin{align*}
        \recGTp(\pi) = \phi_\tau \abs{\cdot}^{-1/2} \oplus \phi_\tau \chi \abs{\cdot}^{-5/2}  : W_K \xrightarrow{} M_Q(E) \subset \GSp_4(E). 
    \end{align*}
    \item ($P=S$ the Siegel parabolic) If $\pi \simeq \Ind_{P(K)}^{\GSp_4(K)} (\tau \otimes \chi)$, then
    \begin{align*}
        \recGTp(\pi) = \chi  \oplus \phi_\tau \chi \abs{\cdot}^{-3/2} \oplus \chi \omega_\tau \abs{\cdot}^{-3} : W_K \xrightarrow{} M_S (E) \subset \GSp_4(E).
    \end{align*}
\end{enumerate}
\end{prop}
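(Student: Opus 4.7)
The plan is to use the explicit theta correspondence recalled in \S\ref{subsec:theta-correspondence}: for any non-cuspidal irreducible smooth $\pi$ of $\GSp_4(K)$, the theta lift $\theta(\pi)$ to $\GSO_{3,3}$ is non-zero, and under the accidental isomorphism $\GSO_{3,3}\simeq (\GL_4 \times \GL_1)/\langle(z,z^{-2})\rangle$ one writes $\theta(\pi) = \Pi \boxtimes \chi$. The $L$-parameter $\recGTi(\pi)$ is then determined by the requirement that its composition with $\std \times \simc : \GSp_4^\vee \to \GL_4^\vee \times \GL_1^\vee$ equals $\phi_\Pi \oplus \phi_\chi$, where $\phi_\Pi$ is computed by the classical local Langlands correspondence for $\GL_4$ applied to $\Pi$, and $\phi_\chi$ is obtained from $\chi$ via the Artin map. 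It therefore remains to compute $\theta(\pi)$ in each of the three cases (for which the relevant computations are carried out in \cite{GTtheta-RepTheory-2011MR2846304}) and then apply the normalization twist $\pi \mapsto \pi \otimes |\simc|^{-3/2}$ to pass from $\recGTi$ to $\recGTp$.

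In the Borel case, the theta lift of $\Ind_{B(K)}^{\GSp_4(K)}(\chi_1,\chi_2;\chi)$ is a principal series of $\GL_4(K) \times K^\times$ whose inducing characters on the diagonal torus of $\GL_4$ are $\chi_1\chi_2\chi$, $\chi_1\chi$, $\chi_2\chi$, $1$ up to an overall normalization, with $\chi$ on the $\GL_1$ factor. Local Langlands for $\GL_4$ then converts these to characters of $W_K$ via the Artin map, producing the four summands in the statement. In the Klingen and Siegel cases, the theta lift is parabolically induced from a $(2,1,1)$- or $(2,2)$-parabolic of $\GL_4$ with $\tau$ (or an appropriate twist of it) in a $\GL_2$-factor; local Langlands for $\GL_2$ then produces the $\phi_\tau$ summand. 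In each case, applying the overall shift by $|\simc|^{-3/2}$ and reading off the exponents of $|\cdot|$ yields the stated formulas. The symplectic similitude condition on the right-hand sides, giving similitude characters $\chi_1\chi_2\chi^2|\cdot|^{-3}$, $\omega_\tau\chi|\cdot|^{-3}$, and $\chi^2\omega_\tau|\cdot|^{-3}$ respectively, provides a useful internal consistency check.

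The main obstacle is the careful bookkeeping of normalization conventions: the paper uses unnormalized parabolic induction $\Ind$ throughout; the modular characters $\delta_B^{1/2}$, $\delta_Q^{1/2}$, $\delta_S^{1/2}$ of the three parabolics of $\GSp_4$ contribute the relative $|\cdot|$-shifts between the various summands of $\recGTi(\pi)$; the Gan--Takeda recipe itself involves certain twists, notably in the identification $\GSO_{3,3}\simeq(\GL_4\times\GL_1)/\langle(z,z^{-2})\rangle$; and the final normalization step involves the overall $|\simc|^{-3/2}$ twist built into $\recGTp$. Assembling these shifts correctly in each of the three cases yields precisely the exponents $-3,-2,-1,0$, or $-1/2,-5/2$, or $0,-3/2,-3$ appearing in the stated formulas.
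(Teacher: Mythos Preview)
Your proposal is correct and follows the same underlying approach as the paper: both use the explicit theta correspondence to $\GSO_{3,3}$ from \cite{GTtheta-RepTheory-2011MR2846304}. The paper's proof is simply a direct citation of \cite[Proposition 13.1(vi),(iv),(v)]{GTtheta-RepTheory-2011MR2846304}, together with the remark that the induction there is normalized (so one must convert to the unnormalized convention used in this paper), whereas you have sketched the mechanism behind that proposition and the normalization bookkeeping explicitly.
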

\begin{proof}
This is a special case of \cite[Proposition 13.1]{GTtheta-RepTheory-2011MR2846304} (vi), (iv), and (v) (for (1), (2), and (3) respectively). Note that the induction in \loccit~is normalized.
\end{proof}

\begin{proof}[Proof of Theorem \ref{thm:inertial-LLC} in the non-cuspidal case]
This follows from Proposition \ref{prop:non-cuspidal-pi} and \ref{prop:L-parameter-non-cuspidal}, and the inertial local Langlands correspondence for $\GL_2$ and $\GL_1$ (e.g.~\cite[Proposition 2.5.5]{LLHLM-2020-localmodelpreprint}).
\end{proof}

\subsubsection{Depth zero regular supercuspidal local Langlands}\label{subsubsec:DBLLC}

Let $\tau\simeq \tau(s,\mu+\eta)$ be a 1-generic tame inertial type for $K$ over $E$ and $\psi:G_K \ra \GSp_4(E)$ be a continuous representation extending $\tau$. We also assume that $\sig(\tau)$ is cuspidal. After conjugating, we can assume that $\psi(I_K)$ is contained in $T(E)$ and $\psi(\Frob_K) \in N_G(T)$. We can write $\psi(\Frob_K)=wt$ for a unique $w\in W$ and some $t\in T(E)$. Note that $t$ gives a well-defined class in $T/(1-w)T$, and thus  $\simc(t)$ is independent of the choice of $t$. By the construction of $\tau$, we must have $w = \phi(s_\tau)$. The cuspidality of $\sig(\tau)$ implies $w\in\CB{s_1s_2,s_2s_1,w_0}$. Then $\psi$ is TRSELP (tame regular semisimple elliptic $L$-parameter) in the sense of \cite[\S 4.1]{DBR-Annals-2009MR2480618}.

For any TRSELP $\psi$, DeBacker--Reeder constructed the $L$-packet of depth-zero supercuspidal representations associated with it. These representations are distributed among the pure inner forms of $\GSp_4$. The pure inner forms are parameterized by Galois cohomology $H^1(K,\GSp_4)$. Since $H^1(K,\Sp_4)=1$ (because $\Sp_4$ is simply-connected) and $H^1(K,\G_m)=1$, we have $H^1(K,\GSp_4)=1$. Thus, all these are representations of $\GSp_4$.

Now we explain the construction in \cite[\S 4]{DBR-Annals-2009MR2480618} in a special case.  The $L$-packet of $\psi$, denoted by $\Pi(\psi)$, is parameterized by $\mathrm{Irr}(C_\psi)$, the (finite) set of irreducible representations of $C_\psi = \pi_0 (Z_G(\Im\psi))$. 
One can check that $C_\psi$ is trivial when $s_\tau = s_1s_2$ or $s_2s_1$ and is isomorphic to $\Z/2\Z$ when $s_\tau = w_0$. In both cases, we simply take the trivial representation of $C_\psi$.  Then the corresponding element in $\Pi(\psi)$ is given by 
\begin{align*}
    \ind^{G(K)}_{G(\cO_K)K^\times} \til{R}_{\T}^{\theta} 
\end{align*}
where $(\T,\theta)=(T_{s_\tau},\theta_{s_\tau,\bfa\idx{0}})$, $\til{R}_{\T}^{\theta} |_{G(\cO_K)} \simeq \eps_G\eps_{\T}{R}_{\T}^{\theta} $, and $\til{R}_{\T}^{\theta} |_{K^\times}$ sends $p$ to $\simc(t)$.

\begin{proof}[Proof of Theorem \ref{thm:inertial-LLC} in the cuspidal case]
We know that $\sig(\tau)\subset \pi|_{G(\cO_K)}$ implies that $\pi\simeq \ind_{G(\cO_K)K^\times}^{G(K)} (\til{\sig}(\tau))$ for some $\til{\sig}(\tau)$ extending $\sig(\tau)$. Let $\omega_\pi$ be the central character of $\pi$. By the above construction, $\pi$ is contained in the $L$-packet of $\psi$ such that $\psi|_{I_K}\simeq \tau$ and $\simc(\psi)(\Frob_K)=\omega_\pi(p)$. The multiplicity one assertion follows from
\begin{align*}
    \Hom_{G(\cO_K)}(\sig(\tau),\pi|_{G(\cO_K)}) \simeq \Hom_{G(\cO_K)K^\times}(\til{\sig}(\tau),\pi|_{G(\cO_K)K^\times}) \simeq \Hom_{G(K)}(\pi,\pi)
\end{align*}
where the first isomorphism exists because $\pi$ admits a central character, and the second isomorphism follows from the Frobenius reciprocity for compact inductions.
\end{proof}

\subsubsection{Serre weights of a tame inertial $L$-parameter}
We define the conjectural set of Serre weights associated to a tame inertial $L$-parameter following \cite[Definition 9.2.5]{GHS-JEMS-2018MR3871496}.
\begin{defn}\label{def:W?}
Let $\rhobar$ be a tame inertial $L$-parameter over $\F$. We define $W^?(\rhobar)$ to be the set $\cR (\JH (\osig([\rhobar])))$. 
\end{defn}

\begin{defn}\label{def:obvious-weight}
Let $\rhobar\simeq \ov{\tau}(s,\mu)$ be a $2h_\eta$-generic tame inertial $L$-parameter over $\F$. We define a function
\begin{align*}
    F_{\rhobar} : \uW & \ra W^?(\rhobar) \\
    w &\mapsto \cR(F_{\sig([\rhobar])}(w)).
\end{align*}
We define $W_{\obv}(\rhobar)$ to be the image of $F_{\rhobar}$ and call its elements \emph{obvious weights} of $\rhobar$.
\end{defn}
Note that $W_{\obv}(\rhobar)$ does not depend on the choice of the lowest alcove presentation of $\rhobar$ and coincides with $W_{\obv}(\rhobar)$ defined in \cite[Definition 2.6.3]{LLHLM-2020-localmodelpreprint}. In \loccit, $F_{\rhobar}(w)$ is called the obvious weight of $\rhobar$ corresponding to $w$.

\subsubsection{Transfer to $\GL_4$}
Recall that we have a map $\cT$ defined in \S\ref{sub:prelim} which maps $\uW$ to $\uW_4$ and $X^*(\uT)$ to $X^*(\uT_4)$.
Using this, we define the \emph{transfer} of Deligne--Lusztig representations and Serre weights of $G_0(\F_p)$ to $(\GL_4)_0(\F_p)$.
\begin{prop}\label{prop:transfer-DL-SW}
The map $\cT$ induces a well-defined assignment from Deligne--Lusztig representations (resp.\,Serre weights) of $G_0(\F_p)$ to the Deligne--Lusztig representations (resp.\,Serre weights) of $(\GL_4)_0(\F_p)$ given by 
\begin{align*}
    R_s(\mu) &\mapsto \cT(R_s(\mu)):= R_{\cT(s)}(\cT(\mu)) \\
    F(\lam) &\mapsto \cT(F(\lam)):= F(\cT(\lam)).
\end{align*}
Suppose that $\mu-\eta\in \ud{C}_0$ is $2h_\eta$-deep. Then $F(\lam)\in \JH(\ov{R_s(\mu)})$ implies $\cT(F(\lam))\in \JH(\ov{\cT(R_s(\mu))})$. The converse holds if, in addition, $\mu-\eta = (a_j,b_j;c_j)_\jj$ and $\abs{a_j - p/2}>3/2$ for each $\jj$.
\end{prop}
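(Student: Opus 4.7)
I would reduce both implications to the combinatorial criterion of Proposition \ref{prop:JH-DL}, and structure the proof in three parts: well-definedness, the forward implication, and the converse.

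For well-definedness on Serre weights, I use the parametrization $X_1^*(\uT)/(p-\pi)X^0(\uT)$ of \cite[Lemma 9.2.4]{GHS-JEMS-2018MR3871496}, so it suffices to check that $\cT$ sends $X_1^*(\uT)$ into $X_1^*(\uT_4)$ and $(p-\pi)X^0(\uT)$ into $(p-\pi)X^0(\uT_4)$. The first is a direct check on the defining inequalities: for $\lambda=(a,b;c)$ with $0\le a-b\le p-1$ and $0\le b\le p-1$, the image $\cT(\lam)=(a+b+c,a+c,b+c,c)$ has consecutive differences $b,\,a-b,\,b$, all in $[0,p-1]$. Since $X^0(\uT)$ for $\GSp_4$ is $\{(0,0;c)_\jj\}$, which maps under $\cT$ to $(c,c,c,c)_\jj \in X^0(\uT_4)$, and $\cT$ is $\pi$-equivariant, the second inclusion follows. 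For Deligne--Lusztig representations, one realizes $R_s(\mu)$ via the Deligne--Lusztig induction from $(T_s,\theta_{s,\mu})$; the data $(T_s,\theta_{s,\mu})$ transforms equivariantly under $\std\colon \GSp_4 \hookrightarrow \GL_4$, giving a well-defined transfer of isomorphism classes.

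For the forward implication, let $\tilw=wt_{-\nu}\in\utilW$ be a witness for $F(\lam)\in\JH(\ov{R_s(\mu)})$ provided by Proposition \ref{prop:JH-DL}. Since $\cT(\eta)=\eta'$, a direct computation shows that $\cT$ intertwines the $p$-dot actions:
\begin{align*}
\cT\bigl(w(\lambda+\eta)-\eta+p\nu\bigr)=\cT(w)(\cT(\lambda)+\eta')-\eta'+p\cT(\nu).
\end{align*}
Moreover $\cT(w_0)$ is the longest element of $\uW_4$, so $\cT(\tilw_h)=\tilw_h'$. Setting $\tilw':=\cT(\tilw)$ and $\nu':=\cT(\nu)$ gives the analogous identity for $\GL_4$. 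Preservation of the $\uparrow$ order on alcoves of the form $\tilw\cdot\uC_0$ follows from combining its description by affine hyperplane reflections with Lemma \ref{lem:transfer-preserves-bruhat-order} and the fact that $\cT$ sends the base dominant alcove into the base dominant alcove. Applying Proposition \ref{prop:JH-DL} on the $\GL_4$ side yields $\cT(F(\lam))\in\JH(\ov{\cT(R_s(\mu))})$.

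For the converse, the hard part, I plan to exploit the characterization of the image of $\cT$ in $\utilW_4$ as the $\Theta$-fixed set (\S\ref{sub:prelim}). Given a witness $\tilw'=w't_{-\nu'}\in\utilW_4$ for the $\GL_4$ JH criterion, note that $\cT(s)$, $\cT(\mu-\eta)$, and $\cT(\lam)$ are all $\Theta$-invariant, so $\Theta(\tilw')$ is again a witness. Under the genericity hypothesis (that $\mu-\eta$ is $6$-deep \emph{and} $|a_j-p/2|>3/2$ for each $j$), I expect to show that the set of valid witnesses is constrained enough that any $\tilw'$ must in fact be $\Theta$-fixed, whence $\tilw'=\cT(\tilw)$ for some $\tilw\in\utilW$, and this $\tilw$ is then a witness for the $\GSp_4$ side. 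The main obstacle is precisely this control on witnesses: the subtlety is that while $\cT$ sends simple reflections of $\uW$ to reflections in $\uW_4$ for $s_1$ (giving $(23)\in S_4$), the image $\cT(s_2)=(12)(34)$ is not a simple reflection, so extra ``$\GL_4$-specific'' witnesses could a priori appear. The hypothesis $|a_j-p/2|>3/2$ is engineered to keep the weight $\cT(\mu)$ away from the affine hyperplane $\RG{-+\eta',(e_1-e_4)^\vee}=p$, which is exactly the wall whose crossing would allow a non-$\Theta$-invariant witness; closing this argument reduces to a combinatorial enumeration of alcoves in the $\Theta$-orbits within the relevant admissible set $\Adm(\cT(\lam))$ of Lemma \ref{lem:transfer-preserves-bruhat-order}.
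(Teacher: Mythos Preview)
Your treatment of well-definedness and the forward implication aligns with the paper: it too invokes Proposition~\ref{prop:JH-DL}, the fact that $\cT$ intertwines $p$-dot actions, and (for the linkage order) the coincidence of $\le$ and $\uparrow$ on $\utilW^+$ together with Lemma~\ref{lem:transfer-preserves-bruhat-order}.

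For the converse, your instinct to exploit $\Theta$-invariance is correct, but the plan has a genuine gap. The paper does not argue by showing that the witness set is ``constrained enough'' that $\tilw'$ must equal $\Theta(\tilw')$, and it does not enumerate alcoves in any admissible set --- $\Adm(\cT(\lam))$ plays no role here. The key structural step you are missing is a reduction to the finite group $\uOm_4$: since the weight in question is $F(\cT(\lam))$, the restricted alcove containing $\cT(\lam)$ is fixed, so the witness $\tilw'\in\utilW_{1,4}$ must lie in a single $\uOm_4$-coset. Concretely one writes $\tilw'=\cT(\tilw_0)\del'$ with $\lam\in\tilw_0\cdot\uC_0$ and $\del'\in\uOm_4$, and the question becomes whether $\del'$ is $\Theta$-fixed. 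Because $\Omega_4/\cT(\Omega)\cong\Z/2\Z$, the only obstruction is that some component $\del'_j$ is the generator sending $(a,b,c,d)\mapsto(b,c,d,a-p)$. A direct computation with this generator, using that the resulting weight $\tilw_h'^{-1}\tilw'\cdot(\cT(\mu)-\eta'+\cT(s)\pi(\hat w'^{-1}(0)))$ is $\Theta$-fixed together with the bound $\lvert z_j+w_j-y_j-x_j\rvert\le 3$ on the entries of $\cT(s)\pi(\hat w'^{-1}(0))_j$ from \cite[Remark~4.1.4]{LLHL-Duke-2019MR4007598}, forces $\lvert 2a_j-p\rvert\le 3$, contradicting the hypothesis. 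Incidentally, your identification of the obstruction with the hyperplane $\langle\,\cdot+\eta',(e_1-e_4)^\vee\rangle=p$ is not correct: on $\cT(\mu-\eta)$ that condition reads $a_j+b_j+3=p$, not $2a_j\approx p$.
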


\begin{proof}
The first claim for Deligne--Lusztig representations follows from the fact that the map $\cT$ respects the Weyl group action on the character lattice and the $p$-dot action. For Serre weights, it follows from the fact that $\cT$ maps $X^*_1(\ud{T})$  and $X^0(\ud{T})$  into $X^*_1(\ud{T}_4)$ and  $X^0(\ud{T}_4)$ respectively.

Suppose that $F(\lam)\in \JH(\overline{R_{s}(\mu)})$. By Proposition \ref{prop:JH-DL}, there exists $w\in \uW$ and $\tilw\in \utilW_1$ such that $\hat{w}\uparrow \tilw$  and
\begin{align*}
    F(\lam)\simeq F\PR{ \tilw_h^\mo \tilw  \cdot(\mu-\eta + s\pi(\hat{w}^\mo(0)))} \in \JH(\ov{R_s(\mu)}).
\end{align*}
Let $\cT(\tilw_h)= \tilw_{h}'$. By applying $\cT$ to the above equation, we have
\begin{align*}
    \cT(F(\lam))\simeq  F\PR{ \tilw_h^{\prime\mo} \cT(\tilw)  \cdot(\cT(\mu)-\eta' + \cT(s\pi(\hat{w}^\mo(0))))}.
\end{align*}
Since the two orders $\le$ and $\uparrow$ coincide on $\utilW^+$, we have $\cT(\hat{w})\uparrow \cT(\tilw)$  by Lemma \ref{lem:transfer-preserves-bruhat-order}.  Then the above equality implies $\cT(F(\lam))\in \JH( \ov{\cT(R_s(\mu))})$ by Proposition \ref{prop:JH-DL}.

For the converse, suppose $\cT(F(\lam))\in \JH( \ov{\cT(R_s(\mu))})$. Then there exists $w'\in \uW_4$ and $\tilw'\in \utilW_1$ such that $\hat{w}' \uparrow \tilw'$ and
\begin{align*}
    \cT(F(\lam))\simeq F\PR{ \tilw_h^{\prime\mo} \tilw'  \cdot(\cT(\mu)-\eta' + \cT(s)\pi(\hat{w}^{\prime\mo}(0)))}. 
\end{align*}
This shows that $\tilw_h^{\prime\mo} \tilw'  \cdot(\cT(\mu)-\eta' + \cT(s)\pi(\hat{w}^{\prime\mo}(0)))$ is fixed by $\Theta$ (defined in \S\ref{sub:prelim}). Suppose that $\tilw'$ is fixed by $\Theta$. Then $\hat{w}^{\prime\mo}(0)$ is fixed by $\Theta$, and a simple computation shows that $\hat{w}'$ is fixed by $\Theta$ as well. Thus $\tilw'=\cT(\tilw)$ and $\hat{w}'=\cT(\hat{w})$ for some $\tilw\in \utilW_1$ and $w\in \uW$. As in the previous paragraph, this shows that $F(\lam)\in \JH(\ov{R_s(\mu)})$.

We finish the proof by showing that $\tilw'$ is fixed by $\Theta$. Let $\tilw_0\in \utilW$ be an element such that $\lam \in \tilw_0\cdot \uC_0$. Then we can write $\tilw'=\cT(\tilw_0)\del'$ for some $\del'\in \uOm_4$. Suppose that $\del'$ is not fixed by $\Theta$. Since $\Omega_4/\cT(\Omega)$ is the cyclic group of order 2, we can assume that $\del'_j$ is the generator of $\Omega_4$ sending $(a,b,c,d)$ to $(b,c,d,a-p)$ for at least one $j\in \cJ$. Let us write $\mu-\eta = (a_j,b_j;c_j)_\jj$ and $\cT(s)\pi(\hat{w}^{\prime\mo})=(x_j,y_j,z_j,w_j)_\jj$. Then 
\begin{align*}
    \PR{\del'\cdot (\cT(\mu-\eta)+\cT(s)\pi(\hat{w}^{\prime\mo}))}_j & = \del'_j \cdot (a_j+b_j+c_j +x_j, a_j + c_j+ y_j, b_j+c_j+z_j, c_j+w_j) \\
    & = (a_j + c_j+ y_j, b_j+c_j+z_j, c_j+w_j, a_j+b_j+c_j +x_j - p). 
\end{align*}
Since $\del'\cdot (\cT(\mu-\eta)+\cT(s)\pi(\hat{w}^{\prime\mo}))$ is fixed by $\Theta$, we have $2a_j  - p = z_j+ w_j -  y_j - x_j$. However, $\abs{z_j+w_j-y_j-x_j} \le 3$ by \cite[Remark 4.1.4]{LLHL-Duke-2019MR4007598}. This leads to a contradiction; hence $\del'$ is fixed by $\Theta$.
\end{proof}

\begin{example}
The condition $\abs{a_j-p/2}>3/2$ in the preceding lemma may seem dubious but it is necessary. For example, let $K=\Qp$ and $(s,\mu-\eta)\in W\times X^*(T)$ be a lowest alcove presentation of $R_s(\mu)$ where $s=e$, $\mu-\eta=(a,b;c)$. We take $a= (p-1)/2$. Let $w' \in W_4$ be the element sending $(x,y,z,w)$ to $(y,z,w,x)$. Then $\hat{w}'=w't_{-\nu}$ where $\nu= (1,0,0,0)$. Let
\begin{align*}
    \lam' &:= \tilw_h^{\prime\mo} \hat{w}\cdot ( \cT(\mu)-\eta' + \nu) \\
    &= \tilw_h^{\prime\mo} \cdot \cT(a,a-b;b+c-a)
\end{align*}
so that $F(\lam')\in \JH(\ov{
\cT(R_e(\mu))})$. However, one can easily check that
\begin{align*}
    F(\tilw_h^\mo\cdot (a,a-b;b+c-a))\notin \JH(\ov{R_e(\mu)}).
\end{align*}
\end{example}

\begin{cor}\label{cor:transfer-W?}
Let $\rhobar\simeq \tau(s,\mu)$ be a $2h_\eta$-generic tame inertial $L$-parameter. Recall the set $W^?(\std(\rhobar))$ defined in \cite[Definition 2.6.1]{LLHLM-2020-localmodelpreprint}.  Suppose that $\mu-\eta=(a_j,b_j;c_j)_\jj$ and $\abs{a_j-p/2}>3/2$ for each $\jj$. Then 
\begin{align*}
    \cT(W^?(\rhobar)) = W^?(\std(\rhobar)) \cap \{ \cT(F(\lam)) \mid \lam\in X_1^*(\uT) \}.
\end{align*}
\end{cor}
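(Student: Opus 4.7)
The plan is to unwind the definitions and reduce everything to Proposition \ref{prop:transfer-DL-SW}, after establishing two compatibilities of $\cT$: with the construction $\rhobar \mapsto \sig(\rhobar)$, and with the shift map $\cR$.

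First I would check that $\cT$ intertwines the shift operators on regular Serre weights. A direct computation gives $\cT(\eta) = (2{+}1{+}0, 2{+}0, 1{+}0, 0) = \eta'$ and $\cT(w_0) = w_0'$, hence $\cT(\tilw_h) = \tilw_h'$. Since $\cT$ is compatible with the $p$-dot action (as $\cT(w\cdot \lambda)=\cT(w)\cdot\cT(\lam)$ and $\cT(p\nu)=p\cT(\nu)$), this yields $\cR' \circ \cT = \cT \circ \cR$, where $\cR'$ denotes the analogous endomorphism on the $\GL_4$-side. A parallel short computation with the simple coroots shows that $\cT$ preserves regularity, i.e.~$\lam \in X_\reg(\uT) \iff \cT(\lam) \in X_\reg(\uT_4)$, so $\cR'$ is well-defined on the image.

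Next, the construction of tame inertial $L$-parameters in \S\ref{sub:inertial-L} is set up precisely so that $\std \circ \tau(s,\mu+\eta) = \tau(\cT(s), \cT(\mu)+\eta')$. Consequently, if $(s,\mu-\eta)$ is the $6$-generic lowest alcove presentation of $\rhobar$ (i.e.~$\sig([\rhobar]) = R_s(\mu)$), then $(\cT(s),\cT(\mu)-\eta')$ is a $6$-generic lowest alcove presentation of $\std(\rhobar)$ and $\sig([\std(\rhobar)]) = R_{\cT(s)}(\cT(\mu)) = \cT(R_s(\mu))$.

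With these compatibilities in hand, Proposition \ref{prop:transfer-DL-SW} (both directions, using the hypothesis $\abs{a_j-p/2}>3/2$ for the converse) gives
\[
\cT(\JH(\ov{\sig([\rhobar])})) = \JH(\ov{\sig([\std(\rhobar)])}) \cap \{\cT(F(\lam)) \mid \lam \in X^*_1(\uT)\}.
\]
Applying $\cR'$ and using that $\cR' \circ \cT = \cT \circ \cR$ as well as that $\cR'$ is a bijection on regular Serre weights (so it distributes over the intersection, since both sides live among regular weights by the genericity assumption and Proposition \ref{prop:JH-DL}) gives the desired equality $\cT(W^?(\rhobar)) = W^?(\std(\rhobar)) \cap \{\cT(F(\lam)) \mid \lam \in X^*_1(\uT)\}$. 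The only substantive obstacle is Proposition \ref{prop:transfer-DL-SW} itself, which has already been proved; the rest is bookkeeping with the compatibility of $\cT$ with the affine Weyl combinatorics.
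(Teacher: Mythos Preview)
Your proposal is correct and matches the paper's (implicit) approach: the paper states this as an immediate corollary of Proposition~\ref{prop:transfer-DL-SW} without further proof, and you have simply spelled out the required compatibilities of $\cT$ with $\cR$ and with the assignment $\rhobar \mapsto \sigma([\rhobar])$, together with the observation that all Serre weights involved are regular so that $\cR'$ distributes over the intersection.
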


The following Corollary shows the compatibility between the inertial local Langlands correspondence and the transfer of Deligne--Lusztig representations. 

\begin{cor}\label{LLC-type_transfer-compatibility}
Let $\pi$ be a smooth irreducible $\Qpbar$-representation of $\GSp_4(K)$ and $\psi = \recGTp(\pi)$. Let $\Pi$ be a smooth irreducible $\Qpbar$-representation of $\GL_4(K)$ corresponding to $\std(\psi)$ under the local Langlands correspondence of \cite{HT} (conjugated by $\iota$). Let $\tau$ be a 1-generic tame inertial $L$-parameter. If $\sigma(\tau)\subset \pi \vert_{\GSp_4(\cO_K)}$, then $\cT(\sigma(\tau)) \subset \Pi\vert_{\GL_4(\cO_K)}$.
\end{cor}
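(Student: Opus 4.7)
The plan is to chain together three ingredients: the inertial local Langlands for $\GSp_4$ (Theorem \ref{thm:inertial-LLC}), the compatibility of the transfer map $\cT$ with tame types (Proposition \ref{prop:transfer-DL-SW}), and the inertial local Langlands for $\GL_4$ (the $\GL_n$-analogue of \cite[Proposition 2.5.5]{LLHLM-2020-localmodelpreprint}).

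First, apply Theorem \ref{thm:inertial-LLC} to the hypothesis $\sigma(\tau)\subset \pi|_{\GSp_4(\cO_K)}$ to deduce $\psi|_{I_K} \simeq \tau$. Composing with the standard representation gives $\std(\psi)|_{I_K} \simeq \std\circ \tau$. If $(s,\mu)$ is a $1$-generic lowest alcove presentation of $\tau$, then by the very construction of $\tau(s,\mu+\eta)$ recalled in \S\ref{sub:inertial-L} (which passes through $\cT(s)$ and $\cT(\mu)$ before factoring through $\uT^\vee$), the composite $\std\circ\tau$ is the tame inertial type of $\GL_4$ attached to the pair $(\cT(s), \cT(\mu+\eta))$; here we use that $\cT(\eta)=\eta'$, which is a direct computation from the formula $\cT(a,b;c)=(a+b+c,a+c,b+c,c)$.

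Second, by Proposition \ref{prop:transfer-DL-SW} applied to the Deligne--Lusztig representation $\sigma(\tau)=R_s(\mu+\eta)$, we have
\[
\cT(\sigma(\tau)) \;=\; \cT(R_s(\mu+\eta)) \;=\; R_{\cT(s)}\bigl(\cT(\mu)+\eta'\bigr),
\]
which is precisely the $\GL_4$-tame type $\sigma(\std\circ\tau)$ attached to the inertial $L$-parameter identified in the previous step.

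Third, apply the inertial local Langlands correspondence for $\GL_4$: since $\Pi$ corresponds to $\std(\psi)$ under the Harris--Taylor correspondence, and $\std(\psi)|_{I_K} \simeq \std\circ\tau$, the tame type $\sigma(\std\circ\tau)$ appears as a $\GL_4(\cO_K)$-subrepresentation of $\Pi|_{\GL_4(\cO_K)}$. Combined with the identification $\cT(\sigma(\tau)) = \sigma(\std\circ\tau)$, this yields $\cT(\sigma(\tau))\subset \Pi|_{\GL_4(\cO_K)}$.

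The only subtlety to check is that the various normalizations (the twist $|\simc|^{-3/2}$ in $\recGTp$ and the half-integer twists built into the Harris--Taylor conventions) do not affect the argument. This is harmless because all such twists are unramified characters of $W_K$, hence restrict trivially to $I_K$; so the equality $\std(\psi)|_{I_K}\simeq \std\circ\tau$ goes through regardless of the choice of normalization. The main conceptual input is really just the compatibility of types under $\cT$ in Proposition \ref{prop:transfer-DL-SW}; everything else is formal.
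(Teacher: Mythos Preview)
Your proof is correct and follows essentially the same route as the paper's: apply Theorem~\ref{thm:inertial-LLC} to obtain $\psi|_{I_K}\simeq\tau$, use the construction of $\cT(\sigma(\tau))$ to identify it with the $\GL_4$ tame type attached to $\std\circ\tau$, and invoke the inertial local Langlands for $\GL_4$ (the paper cites \cite[Theorem 3.7(2)]{Shotton-Duke-BMconj-MR3769675} rather than \cite[Proposition~2.5.5]{LLHLM-2020-localmodelpreprint}), together with the observation that the normalization twists are unramified.
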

\begin{proof}
By Theorem \ref{thm:inertial-LLC}, we have $\psi\vert_{I_K} \simeq \tau$. Then the claim follows from the construction of $\cT(\sig(\tau))$ and \cite[Theorem 3.7 (2)]{Shotton-Duke-BMconj-MR3769675}. (Note that although the local Langlands correspondence used in \loccit~is  normalized, it only differs by an unramified twist.)
\end{proof}

\begin{rmk}
The converse of the preceding corollary is not true, as the $L$-packet of $\psi$ can have two elements (e.g.~if $\psi|_{I_K}\simeq \tau(s,\mu)$ and $s_\tau = w_0$).
\end{rmk}

\subsection{Combinatorics between types and weights}\label{sub:comb-type-wt}
Let $G$ be either $\GSp_4$ or $\GL_n$. We first introduce some notation and definitions.
\begin{notation}
Let $(s,\mu)$ be a lowest alcove presentation of a Deligne--Lusztig representation $R$ (resp.~a tame inertial $L$-parameter $\tau$). We let $\tilw(R)$ (resp.~$\tilw(\tau)$) denote  $t_{\mu+\eta}s \in \utilW$.
If $\rhobar$ is another tame inertial $L$-parameter with a lowest alcove presentation $(s',\mu')$, we write
\begin{align*}
    \tilw(\rhobar,\tau):=\tilw(\tau)^\mo \tilw(\rhobar) \in  \utilW.
\end{align*}
\end{notation}

\begin{rmk}\label{rmk:central-char-LAP}
When working with tame inertial $L$-parameters and Serre weights, we need to choose their lowest alcove presentations in a compatible way. This can be done by fixing an algebraic central character (lifting a given central character) in the sense of \cite[\S2.2]{LLHLM-2020-localmodelpreprint} (see also Lemma 2.2.4 and 2.3.2 therein). Since such compatible lowest alcove presentations always exist in our context, we assume that all lowest alcove presentations we choose are compatible for simplicity.
\end{rmk}

For a dominant character $\lam\in X^*(\uT)$, let $W(\lam)_{/\cO}$ be the irreducible algebraic $\uG_{/\cO}$-representation unique up to isomorphism of highest weight $\lam$. Let $V(\lam)$ be the restriction of $W(\lam)_{/\cO}$ to $G_0(\Z_p)$.
We define a \emph{type} (of $G_0(\Z_p)$) to be a pair $(\lam+\eta,\tau)$ where $\lam\in X_*(\uT^\vee)$ is a dominant cocharacter and $\tau$ is a 1-generic tame inertial $L$-parameter. To a type $(\lam+\eta,\tau)$, we associate a locally algebraic representation of $G_0(\Z_p)$
\begin{align*}
    \sigma(\lam,\tau) := \sigma(\tau) \otimes_{\cO} V(\phi^\mo(\lam)).
\end{align*}

Let $W(\lam)_{/\F}$ be the dual Weyl module of highest weight $\lam$ for the algebraic group $\uG_{/\F}$ and $W(\lam)$ be the restriction of $W(\lam)_{/\F}$ to $G_0(\F_p)$. Note that $V(\lam)\otimes_\cO \F \simeq W(\lam)$.

Let $\lam\in X^*(\uT)$ be a dominant character. We have the set of \emph{admissible pairs} defined in \cite[\S2.1]{LLHLM-2020-localmodelpreprint}
\begin{align*}
    \AP(\lam+\eta) := \CB{(\tilw_1,\tilw_2) \in (\utilW_1^+ \times \utilW^+)/X^0(\uT) \mid \tilw_1 \uparrow t_\lam \tilw_h^\mo \tilw_2}.
\end{align*}

\begin{prop}[Proposition 2.3.7 in \cite{LLHLM-2020-localmodelpreprint}]\label{prop:JH-factor-lam-tau}
Let $\lam \in X^*(\uT)$ be a dominant character. Let $m$ be an integer such that $m\ge \max\{h_{\lam+\eta}, 2h_\eta\}$. For an $m$-generic Deligne--Lusztig representation $R$, we have the following bijection
\begin{align*}
\begin{split}
\AP(\lam+\eta) & \simeq \JH( \ov{R}\otimes_\F W(\lam) ) \\ 
(\tilw_1,\tilw_2) &\mapsto F_{(\tilw_1, \tilw(R)\tilw_2^\mo(0))}.
\end{split}
\end{align*}
Moreover, every Jordan--H\"older factor is $(m-h_{\lam+\eta})$-deep.
\end{prop}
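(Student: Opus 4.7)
The plan is to mirror the proof of Proposition 2.3.7 of \cite{LLHLM-2020-localmodelpreprint} in the $\GL_n$ case, replacing the root system/admissibility combinatorics by the $\GSp_4$ analogue, and using the $\GSp_4$ version of the Jordan--H\"older formula for $\ov R$ already recorded in Proposition \ref{prop:JH-DL}. Concretely, starting from a lowest alcove presentation $(s,\mu)$ of a sufficiently generic Deligne--Lusztig representation $R = R_s(\mu)$ and the dominant weight $\lam$, I will compute $\JH(\ov R \otimes_\F W(\lam))$ by decomposing the tensor product using the character of $W(\lam)$ and then applying the generic decomposition pattern for Deligne--Lusztig representations. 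The output of this computation is then matched to the combinatorial set $\AP(\lam+\eta)$ alcove by alcove.

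First, I would factor the problem using the character formula $\mathrm{ch}(W(\lam)) = \sum_\nu m_\lam(\nu)\, e^\nu$, which reduces $\ov R \otimes W(\lam)$ (as virtual characters, then as actual composition series in the generic range) to a sum of twists of $\ov R$ by weights $\nu$ of $W(\lam)$. Under the genericity hypothesis $m \ge h_\lam + 3$, all these twists remain deep enough inside a fixed region of the extended affine Weyl group that Proposition \ref{prop:JH-DL} applies uniformly; the second bound $m \ge 6$ ensures the base Deligne--Lusztig representation is itself irreducible with known socle/cosocle combinatorics. Next, I would reparameterize: a weight $\nu$ of $W(\lam)$ is recorded by an element $\tilw_2 \in \utilW^+$ through the equality $\tilw(R)\tilw_2^\mo(0) = (\mu+\eta - \text{shift by }\nu)$, using that $\lam$-weights of $W(\lam)$ correspond naturally to the $W$-orbit of $\lam$ refined by the upper-arrow order; the extremal-weight part of this bijection is standard and the intermediate weights follow from the $\GSp_4$ Weyl character formula via the genericity bound $h_\lam$. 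Then for each fixed $\tilw_2$, Proposition \ref{prop:JH-DL} produces the Jordan--H\"older factors of the corresponding twist of $\ov R$ as $\tilw_1$ ranges over those elements of $\utilW_1^+$ satisfying $\tilw_1 \uparrow t_\lam \tilw_h^\mo \tilw_2$, which is exactly the admissibility relation defining $\AP(\lam+\eta)$. Assembling, each admissible pair gives rise to the predicted Serre weight $F_{(\tilw_1, \tilw(R)\tilw_2^\mo(0))}$, and the $\lam$-compatibility of the induced lowest alcove presentations with that of $R$ is immediate from Definition \ref{def:central-char} since $t_{\tilw(R)\tilw_2^\mo(0)-\eta}\tilw_1$ and $t_\lam t_{\mu+\eta}s$ have the same image in $X^*(\uZ)$ (by construction of the parameterization). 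Finally, I would verify injectivity by showing different pairs produce distinct tuples $(\tilw_1, \omega \bmod X^0(\uT))$ (here the genericity $\ge m-h_\lam - 3 \ge 0$ ensures we are inside the restricted region where lowest alcove presentations are unique up to $X^0$), and surjectivity by a dimension/character count: the sum of the multiplicities produced matches $\dim W(\lam) \cdot (\text{number of JH factors of } \ov R)$, which equals the total length of $\ov R \otimes W(\lam)$ in the generic range (again using Proposition \ref{prop:JH-DL} and the fact that $W(\lam)$ has dimension given by Weyl's formula).

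The main obstacle, and where the $\GSp_4$ case requires new input beyond the $\GL_n$ proof, is the appearance of short and long roots, which makes the weight multiplicities $m_\lam(\nu)$ and the alcove combinatorics genuinely different from type $A$; in particular, matching admissible pairs to weights of $W(\lam)$ has to be done with respect to the $\GSp_4$ upper-arrow order rather than the type $A$ one. To navigate this, I would use the transfer map $\cT$ (Lemma \ref{lem:transfer-preserves-bruhat-order} gives $\cT(\Adm(\lam)) = \Adm(\cT(\lam))^\Theta$) to check compatibilities against the already established $\GL_4$ statement, thereby lifting the counting/injectivity portion from $\GL_4$; the direct combinatorial identification of admissible pairs with JH factors in the $\GSp_4$ alcove geometry is then verified case-by-case using the explicit description of $\Phi^+$ recorded in \S\ref{sub:prelim}. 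Handling the central character bookkeeping (the final sentence of the statement) is routine once the parameterization is set up, by reading off the image in $\uOm \simeq X^*(\uZ)$.
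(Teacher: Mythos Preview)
The paper does not prove this proposition itself: as the bracketed attribution indicates, it is quoted directly from \cite[Proposition~2.3.7]{LLHLM-2020-localmodelpreprint}, with only the numerical genericity bound adjusted from the $\GL_n$ constant $n-1$ to the $\GSp_4$ constant $h_\eta=3$. There is therefore no proof in this paper to compare your proposal against.

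That said, a comment on your sketch. The opening move---writing $[\ov R \otimes W(\lam)] = \sum_\nu m_\lam(\nu)\,[\ov{R_s(\mu+\nu)}]$ in the Grothendieck group via the Deligne--Lusztig character formula and then applying Proposition~\ref{prop:JH-DL} term by term---is indeed the standard approach and is essentially what underlies the argument in \cite{LLHLM-2020-localmodelpreprint}. But your ``reparameterization'' step is where the real work hides, and your description of it is not correct as written: the weights $\nu$ of $W(\lam)$ fill out $\Conv(\lam)$ with nontrivial multiplicities $m_\lam(\nu)$, not just a Weyl orbit, so they are not in bijection with the $\tilw_2$'s. After moving each $\mu+\nu$ back into the base alcove via the dot action one gets signs and substantial cancellation, and it is precisely in tracking that cancellation that the admissibility condition $\tilw_1 \uparrow t_\lam \tilw_h^{-1}\tilw_2$ emerges. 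Your dimension/character count for surjectivity and the appeal to the transfer map $\cT$ do not substitute for this step: the transfer only sees the $\Theta$-invariant part of the $\GL_4$ combinatorics and does not by itself establish the $\GSp_4$ bijection. In \cite{LLHLM-2020-localmodelpreprint} this cancellation is handled via the translation principle (Jantzen's formulas relating $W(\lam')$'s across alcoves) rather than a bare weight-by-weight expansion, which is what makes the argument go through uniformly.
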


\begin{rmk}
Suppose $\lam=0$ in Proposition \ref{prop:JH-factor-lam-tau}. By \cite[Proposition 2.1.6]{LLHLM-2020-localmodelpreprint}, the condition $\tilw_1 \uparrow \tilw_h^\mo \tilw_2$ is equivalent to $\tilw_2 \uparrow \tilw_h \tilw_1$. Let us write
\begin{align*}
    \nu = \pi^\mo(\tilw_1)\cdot(\mu-\eta + s\tilw_2^\mo(0))
\end{align*}
for $(\tilw_1,\tilw_2)\in \AP(\eta)$ so that $F_{(\tilw_1,\tilw(R)\tilw_2^\mo(0))} = F(\nu)$. 
Then
\begin{align*}
    \pi^\mo(\tilw_2) \cdot (\mu-\eta + s\tilw_2^\mo(0)) \uparrow \tilw_h \cdot \nu
\end{align*}
as in Proposition \ref{prop:JH-DL}.
\end{rmk}

\begin{prop}[Proposition 2.6.2 in \cite{LLHLM-2020-localmodelpreprint}]\label{prop:JH-factor-W?}
Let $m$ be an integer such that $m\ge 2h_\eta$. Let $\rhobar$ be a tame inertial $L$-parameter over $\F$ with an $m$-generic lowest alcove presentation. Then there is a bijection
\begin{align*}
    \{ (\tilw,\tilw_2)\in (\utilW^+_1 \times \utilW^+)/X^0(\uT) \mid \tilw_2 \uparrow \tilw    \} &\risom W^?(\rhobar) \\
    (\tilw,\tilw_2) & \mapsto F_{(\tilw,\tilw(\rhobar)\tilw_2^\mo(0))}.
\end{align*}
Moreover, every Jordan--H\"older factor is $(m-3)$-deep.
\end{prop}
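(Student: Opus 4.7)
The plan is to deduce the proposition from Proposition~\ref{prop:JH-factor-lam-tau} applied with $\lam=0$ to the Deligne--Lusztig representation $R=\sig([\rhobar])$, combined with the defining relation $W^?(\rhobar)=\cR(\JH(\osig([\rhobar])))$ from Definition~\ref{def:W?}.

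First I would observe that a lowest alcove presentation $(s,\mu)$ of $\rhobar$ is simultaneously a lowest alcove presentation of $\sig([\rhobar])=R_s(\mu+\eta)$, so that $\tilw(\sig([\rhobar]))=t_{\mu+\eta}s=\tilw(\rhobar)$ and the $m$-genericity hypothesis on $\rhobar$ transfers to $\sig([\rhobar])$. Since $h_\lam=0$ when $\lam=0$, the hypothesis $m\ge 6$ (resp.~$m\ge 2(n-1)$) is exactly what Proposition~\ref{prop:JH-factor-lam-tau} requires, giving a bijection
\[
\AP(\eta)\risom \JH(\osig([\rhobar])),\qquad (\tilw_1,\tilw_2)\mapsto F_{(\tilw_1,\,\tilw(\rhobar)\tilw_2^\mo(0))},
\]
in which every Jordan--H\"older factor is $(m-3)$-deep and the displayed lowest alcove presentations are $0$-compatible with the lowest alcove presentation of $\rhobar$.

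Next I would exploit that $\cR$ is an involution on regular Serre weights: one checks $\tilw_h^2=w_0t_{-\eta}w_0t_{-\eta}=t_{w_0(-\eta)}t_{-\eta}=t_{\eta-\eta}=1$, so $\cR^2=\mathrm{id}$. Composing the bijection above with $\cR$ therefore produces a bijection $\AP(\eta)\risom W^?(\rhobar)$. To put it in the form advertised in the statement, I would compute, using $\pi(\tilw_h)=\tilw_h$ (since $\tilw_h$ is $\cJ$-diagonal), that $\cR(F_{(\tilw_1,\omega)})=F(\pi^\mo(\tilw_h\tilw_1)\cdot(\omega-\eta))$; then the change of variables $\tilw:=\tilw_h\tilw_1$ (chosen inside $\utilW_1^+$ modulo $X^0(\uT)$) and $\tilw_2':=\tilw_2$ rewrites this as $F_{(\tilw,\tilw(\rhobar)\tilw_2^{\prime\mo}(0))}$. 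By the symmetry $\tilw_1\uparrow \tilw_h^\mo\tilw_2\Leftrightarrow \tilw_2\uparrow \tilw_h\tilw_1$ (already recorded in the remark following Proposition~\ref{prop:JH-factor-lam-tau}, via \cite[Proposition~2.1.6]{LLHLM-2020-localmodelpreprint}), the admissibility condition defining $\AP(\eta)$ is converted into exactly $\tilw_2'\uparrow\tilw$.

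The $(m-3)$-depth claim is preserved because $\cR$ (which is $p$-dot multiplication by $\tilw_h$) does not affect the depth of a Serre weight. Compatibility of the new lowest alcove presentation with the lowest alcove presentation of $\rhobar$ follows from the $0$-compatibility in Proposition~\ref{prop:JH-factor-lam-tau} once one checks that left multiplication by $\tilw_h=w_0t_{-\eta}$ does not affect the image in $X^*(\uZ)/(p-\pi)X^*(\uZ)$, which is a direct computation at the level of $\Omega=\utilW/W_a$. The main obstacle I anticipate is the combinatorial bookkeeping of the reindexing step, namely verifying that for each $\tilw_1\in\utilW_1^+$ the element $\tilw_h\tilw_1$ admits a (unique modulo $X^0(\uT)$) representative inside $\utilW_1^+$ so that the substitution is well defined in the quotient $(\utilW_1^+\times\utilW^+)/X^0(\uT)$, and propagating the $0$-compatibility through this substitution; this is a purely combinatorial statement about $\tilw_h$ acting on restricted dominant alcoves and should yield to the explicit description $\tilw_h\tilw_1=t_{\eta+w_0\nu_1}w_0w_1$ when $\tilw_1=t_{\nu_1}w_1$.
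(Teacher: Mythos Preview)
The paper does not prove this proposition; it merely records the statement with a citation to \cite[Proposition~2.6.2]{LLHLM-2020-localmodelpreprint}. Your strategy---apply Proposition~\ref{prop:JH-factor-lam-tau} with $\lam=0$ to $R=\sigma([\rhobar])$ and then compose with $\cR$---is the natural one and does work, but two of your side computations are wrong and need to be replaced.

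First, $\tilw_h^2\neq 1$: one has $w_0(-\eta)=-w_0\eta$, not $\eta$, so $\tilw_h^2=t_{-\eta-w_0\eta}$ is a nontrivial element of $X^0(\uT)$ (for $\GSp_4$ it is $t_{(0,0;-3)}$). This is harmless, since you only need $\cR$ to be a bijection on regular Serre weights, which follows from invertibility of $\tilw_h$; and the reindexing $\tilw:=\tilw_h\tilw_1$ does land in $\utilW_1^+$ because left multiplication by $\tilw_h$ permutes the restricted alcoves (e.g.\ for $\GSp_4$ it acts on the $(a,b)$-plane by $(a,b)\mapsto(2-a,1-b)$, a half-turn preserving the restricted parallelogram). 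Second, your compatibility argument is incorrect as written: compatibility in Definition~\ref{def:central-char} is a condition in $X^*(\uZ)$ itself, not in $X^*(\uZ)/(p-\pi)X^*(\uZ)$, and left multiplication by $\tilw_h$ \emph{does} shift the image there, by the class of $t_{-\eta}$. The point you are missing is that Proposition~\ref{prop:JH-factor-lam-tau} yields compatibility with the Deligne--Lusztig representation $R$, governed by Definition~\ref{def:central-char}(2) (image of $t_{\mu+\eta}s$), whereas the present proposition asks for compatibility with the $\F$-valued $L$-parameter $\rhobar$, governed by Definition~\ref{def:central-char}(3) (image of $t_\mu s$). These two differ by exactly the class of $t_\eta$, which is cancelled by the $t_{-\eta}$ coming from $\tilw_h$ in the new presentation $(\tilw_h\tilw_1,\omega)$. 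With these two fixes your argument is complete.
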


\section{The theory of local models}\label{sec:local-models}
In this section, we generalize the theory of local models in \cite{LLHLM-2020-localmodelpreprint} to the group $\GSp_4$. Note that when we write $\GSp_4$ in this section, we mean the dual group $\GSp_4^\vee$. In particular, the set of coroots $\Phi^\vee$ of $\GSp_4$ is identified with a set of roots of $\GSp_4^\vee$ by the duality isomorphism $\phi$. If  $\al^\vee\in \Phi^\vee$, we let $U_{\al^\vee}\subset \GSp_4^\vee$ denote the root subgroup associated to the root $\phi(\al^\vee)$ of $\GSp_4^\vee$. In other words, $U_\al^\vee \subset \GSp_4^\vee$ is a subgroup such that  $U_{\al^\vee}\simeq \G_a$ and $tut^\mo = \phi(\al^\vee)(t)u$ for any $t\in T^\vee$ and $u\in U_{\al^\vee}$. We let $G=\GSp_4$ for the remainder of this paper.

Let $X= \A^1_\Z$ be an affine line with coordinate function $v$. We denote by $X_0=\spec \Z$ the zero section of $X$ and $X^0 = \spec \Z[v,v^\mo]$. We often write $t:\spec R \ra X$ to denote a $\Z[v]$-algebra $R$ such that $v$ is mapped to $t\in R$.

\subsection{Global affine Grassmannians}\label{sub:global-affine-Grass}
Let $\cG$  be the Neron blowup of $\GSp_{4/X}$ in $B_{/X}$ along $X_0$ defined in \cite[Definition 3.1]{MRR}. By Theorem 3.2 of \loccit, it is a smooth affine group scheme over $X$ with connected fibers. For $t:\spec R \ra X$ with $t$ regular in $R$, the set of $R$-points is given by
\begin{align*}
    \cG(R) = \{g \in \GSp_4(R) \mid \ g \mod t \in B(R/t) \}.
\end{align*}
There is a morphism of $X$-group schemes $\cG \ra \GSp_{4/X}$. If $g\in \cG(R)$, we denote by $\ov{g}$ its image in $\GSp_4(R)$. We also have a similitude character $\simc: \cG \ra \G_m$ sending $g$ to $\simc(\ov{g})$.

The base change $\cG\times_X X^0$ is isomorphic to $\GSp_{4/X^0}$ and the base change along $\varpi: \spec \cO \ra X$ is isomorphic to an Iwahori group scheme inside $\GSp_{4/\cO}$. In particular, $\cG\times_{X,\varpi} \spec \cO$ coincides with the group scheme constructed in \cite[Corollary 4.2]{PZ13-Inv-local_model-MR3103258} for $\GSp_4$ and the Iwahori group scheme (see also \cite[Example 3.3]{MRR}).

We also define a functor $L^+\cM$ whose $R$-points, for $t: \spec R \ra X$, are given by
\begin{align*}
    L^+\cM(R):= \{g\in \Lie \GSp_4 (R\DB{v-t}) \mid \text{$g$ is upper triangular modulo $v$}  \}.
\end{align*}

Let $\cG_4$ be the Bruhat--Tits group for $\GL_4$ defined as in \cite[\S3.1]{LLHLM-2020-localmodelpreprint}. Note that the map $\std: \GSp_4 \ra \GL_4$ induces a morphism $X$-group schemes $\cG \ra \cG_4$ (see \cite[\S2.4]{MRR}) which we denote by $\std$ as well. It is easy to see that $\std: \cG \ra \cG_4$ is a closed immersion.  

We write $L\cG$ for the loop group and $L^+\cG$ for the positive loop group of $\cG$. For $t: \spec R \ra X$, their $R$-points are given by
\begin{align*}
    L\cG (R) &=  \cG (R\DP{v-t}) \\
    L^+\cG (R) &= \cG(R\DB{v-t})
\end{align*}
where we consider $R\DB{v-t}$ and $R\DP{v-t}$ as $\Z[v]$-algebra by sending $v$ to $v$. It is known that $L^+\cG$ is representable by a (not finite type) group scheme and $L\cG$ is representable by an ind-group scheme.

\begin{rmk}\label{rmk:LG-valued-in-noetherian-R}
When $R$ is Noetherian, $v$ is regular in both $R\DB{v-t}$ and  $R\DP{v-t}$. Thus we have the following description:
\begin{align*}
    L\cG(R) & = \CB{g \in \GSp_4(R\DP{v-t}) \mid g  \mod v \in B(R\DP{v-t}/v) } \\ 
    L^+\cG(R) & = \CB{g \in \GSp_4(R\DB{v-t}) \mid g \mod v \in B(R\DB{v-t}/v) }.
\end{align*}
\end{rmk}

We define $\Gr_{\cG,X}$ to be the fpqc quotient sheaf $L^+\cG \bss L\cG$. By \cite[Proposition 6.5]{PZ13-Inv-local_model-MR3103258}, $\Gr_{\cG,X}$ is representable by an ind-projective ind-scheme. 
By the properties of $\cG$, the generic fiber $\Gr_{\cG,X}\times_X X^0$ is isomorphic to $\Gr_{\GSp_4}\times_\Z X^0$, a constant family of affine Grassmannian for the group $\GSp_4$ over $X^0$, and its special fiber $\Gr_{\cG,X}\times_X X_0$ is the affine flag variety $\Fl:=\cI \bss\GSp_{4}$.

Let $d \in \Z$ and $h\in \Z_{\ge0}$. We define subfunctors  $L\cG^{\simc=d}, L\cG^{\simc=d,\le h}\subset L\cG$ by
\begin{align*}
        L\cG^{\simc=d}(R) &= \CB{ g \in L\cG(R) \mid \simc(\ov{g}) \in (v-t)^d (R\DB{v-t})^\times} \\
        L\cG^{\simc=d,\le h}(R) &= \CB{g\in L\cG^{\simc=d}(R) \mid \ov{g}\in \frac{1}{(v-t)^h}M_4(R\DB{v-t})}.
\end{align*}
Both are stable under left multiplication by $L^+\cG$ and induce fpqc quotient subsheafs
\begin{align*}
    \Gr_{\cG,X}^{\simc=d,\le h} := L^+\cG \bss L\cG^{\simc=d,\le h} \subset \Gr_{\cG,X}^{\simc=d}:= L^+\cG \bss L\cG^{\simc=d} \subset \Gr_{\cG,X}.
\end{align*}
The sheaf $\Gr_{\cG,X}^{\simc=d,\le h}$ is representable by a projective scheme over $X$ and $\Gr_{\cG,X}^{\simc=d} = \varinjlim_{h} \Gr_{\cG,X}^{\simc=d,\le h}$.

Our next goal is to describe affine open charts of the projective scheme $\Gr_{\cG,X}^{\simc=d,\le h}$. We define the negative loop group  to be a subfunctor $L^\mm\cG\subset L \cG$ such that for $t:\spec R \ra X$,
{\small\begin{align*}
    L^\mm\cG(R) = \CB{g\in \cG(R\DP{v-t}) \middle\vert 
    \begin{array}{c}
        \ov{g}\in \GSp_4\PR{R\BR{\frac{1}{v-t}}}\\
        \ov{g} \mod {\frac{1}{v-t}} \in \ov{U}(R)  \\
         \ov{g} \mod\frac{v}{v-t} \in B\PR{R\BR{\frac{1}{v-t}}/\PR{\frac{v}{v-t}}} 
    \end{array}
    }.
\end{align*}}

\begin{lemma}\label{lem:mult-formally-et}
The multiplication map
\begin{align*}
    L^+\cG\times_X L^\mm\cG \ra L\cG
\end{align*}
is a formally \'etale (in the sense of \cite[Definition 3.2.4]{LLHLM-2020-localmodelpreprint}) monomorphism, and so is the map $L^\mm \cG \ra \Gr_{\cG,X}$.
\end{lemma}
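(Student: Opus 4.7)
The plan is to reduce both assertions to the analogous results for the Bruhat--Tits group $\cG_4$ of $\GL_4$ from \cite{LLHLM-2020-localmodelpreprint}, using the closed immersion $\std: \cG \hookrightarrow \cG_4$. Applying the loop functors (which preserve closed immersions) yields closed immersions of (ind-)schemes $L\cG \hookrightarrow L\cG_4$, $L^+\cG \hookrightarrow L^+\cG_4$, and $L^{--}\cG \hookrightarrow L^{--}\cG_4$, compatible with multiplication. It therefore suffices to show that the commutative square
\[
\begin{tikzcd}
L^+\cG \times_X L^{--}\cG \arrow[r,"m"] \arrow[d] & L\cG \arrow[d] \\
L^+\cG_4 \times_X L^{--}\cG_4 \arrow[r,"m_4"] & L\cG_4
\end{tikzcd}
\]
is Cartesian, for then formal étaleness of $m$ follows from that of $m_4$ by base change.

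To verify the Cartesian property, I would introduce the \emph{symplectic involution} $\sigma: \GL_4 \to \GL_4$ defined by $\sigma(g) = J^{-1} (g^T)^{-1} J$. Using $J^2 = -I$ and the anti-diagonal shape of $J$, a direct matrix computation shows that $\sigma$ is an involutive algebraic group automorphism of $\GL_4$ preserving the upper-triangular Borel $B_4$ (and hence also its opposite). Consequently $\sigma$ descends to an involutive automorphism of the N\'eron blowup $\cG_4$ and induces involutions on $L\cG_4$, $L^+\cG_4$, and $L^{--}\cG_4$. Moreover, an element $g \in \GL_4(R)$ lies in $\GSp_4(R)$ precisely when $\sigma(g) = \simc(g)^{-1} g$, and this twisted fixed-point condition cuts out $L\cG \subset L\cG_4$ as well as its $L^{\pm}$-variants. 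Now, given $(g_4^+, g_4^{--}) \in L^+\cG_4(R) \times L^{--}\cG_4(R)$ whose product $g := g_4^+ g_4^{--}$ lies in $L\cG(R)$, applying $\sigma$ yields the alternative factorization
\[
g \;=\; \bigl(\simc(g)\,\sigma(g_4^+)\bigr)\cdot \sigma(g_4^{--})
\]
in $L^+\cG_4 \cdot L^{--}\cG_4$. The uniqueness of such factorizations in $L\cG_4$, which follows from $L^+\cG_4 \cap L^{--}\cG_4 = \{1\}$ inside $L\cG_4$, then forces $g_4^+ = \simc(g)\sigma(g_4^+)$ and $g_4^{--} = \sigma(g_4^{--})$, placing each factor in the appropriate twisted fixed locus and completing the proof of the Cartesian property.

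The second assertion follows from the first and the triviality of $L^+\cG \cap L^{--}\cG$ inside $L\cG$. Concretely, for a square-zero thickening $R \twoheadrightarrow R_0$ with nilpotent ideal $I$, a point $\bar h \in L^{--}\cG(R_0)$, and a lift $\til\xi \in \Gr_{\cG,X}(R)$ of the image of $\bar h$, one represents $\til\xi$ fpqc-locally by some $\til g \in L\cG(R')$, writes $\til g \bmod I = \bar p_0 \bar h$ for some $\bar p_0 \in L^+\cG(R'/IR')$, and then uses the formal \'etaleness of $m$ to lift $(\bar p_0, \bar h)$ uniquely to $(\til p, \til h) \in L^+\cG(R') \times L^{--}\cG(R')$ with $\til p\,\til h = \til g$; the component $\til h$ is then the required lift of $\bar h$ mapping to $\til \xi$, with uniqueness guaranteed by $L^+\cG \cap L^{--}\cG = \{1\}$ and descent to $R$ by the fpqc sheaf property of $L^{--}\cG$. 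The main obstacle throughout is the symplectic-involution argument for the Cartesian property; once this is in place, everything is a formal consequence of the $\GL_4$-case results in \cite{LLHLM-2020-localmodelpreprint}.
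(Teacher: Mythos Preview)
Your strategy is different from the paper's. The paper does not try to show the square is Cartesian; it simply notes that $m$ is a monomorphism (being the restriction of the monomorphism $m_4$ from \cite[Lemma~3.2.2]{LLHLM-2020-localmodelpreprint}) and then reruns the argument of \cite[Lemma~3.2.6]{LLHLM-2020-localmodelpreprint} directly for $\cG$, using the formal smoothness of $L^+\cG$ and $L^\mm\cG$ (immediate from smoothness of $\cG$) together with a $\GSp_4$-analogue of the Lie-algebra decomposition in \cite[Lemma~3.2.3]{LLHLM-2020-localmodelpreprint}. This avoids any comparison with $\cG_4$ beyond the monomorphism step.

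Your argument for the Cartesian property has a genuine gap. For the ``alternative factorization'' $g=\bigl(\simc(g)\,\sigma(g_4^+)\bigr)\cdot\sigma(g_4^{--})$ to lie in $L^+\cG_4\cdot L^\mm\cG_4$ with the indicated grouping, you need the scalar matrix $\simc(g)\,I_4$ to lie in $L^+\cG_4(R)$, i.e.\ $\simc(g)\in (R\DB{v-t})^\times$. But for $g\in L\cG(R)$ one only knows $\simc(g)\in (R\DP{v-t})^\times$ a priori, and the hypothesis that $g$ factors in $L\cG_4$ only yields $\simc(g)^2=\det(g)=\det(g_4^+)\det(g_4^{--})$ with $\det(g_4^+)\in (R\DB{v-t})^\times$ and $\det(g_4^{--})\in R[\tfrac{1}{v-t}]^\times$; this does not visibly force $\simc(g)$ itself into $(R\DB{v-t})^\times$, so uniqueness of factorization cannot be invoked as written. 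One possible repair is to first factor the similitude $\simc(g)=c^+c^{--}$ in $L^+\G_m\cdot L^\mm\G_m$ and then compare $g_4^+g_4^{--}$ with $\bigl(c^+\sigma(g_4^+)\bigr)\bigl(c^{--}\sigma(g_4^{--})\bigr)$, but that requires an independent treatment of the $\G_m$-case which you have not supplied. The paper's direct approach sidesteps this issue entirely.
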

\begin{proof}
Since this map is a restriction of a monomorphism $L^+\cG_4 \times_X L^\mm \cG_4 \ra L\cG_4$ (\cite[Lemma 3.2.2]{LLHLM-2020-localmodelpreprint}), it is a monomorphism. Then we can show that it is formally \'etale following the argument in \cite[Lemma 3.2.6]{LLHLM-2020-localmodelpreprint} using a version of \cite[Lemma 3.2.3]{LLHLM-2020-localmodelpreprint} for our setup and formal smoothness of $L^+\cG$ and $L^\mm \cG$. Note that formal smoothness easily follows from smoothness of $\cG$.
\end{proof}

Let $\til{z}=wt_\nu \in \til{W}^{\vee}$. We define $\cU(\til{z})$ to be the subfunctor of $L\cG$ given by
{\small\begin{align*}
    \cU(\til{z})(R) = \CB{g \in L\cG(R) \middle\vert \begin{array}{cl}
        \ov{g}(v-t)^{-\nu} \in \GSp_4(R[\frac{1}{v-t}]),    \\ 
         \ov{g}(v-t)^{-\nu}w^\mo \mod \frac{1}{v-t} \in \ov{U}(R), \\
         \ov{g}(v-t)^{-\nu} \mod \frac{v}{v-t} \in B(R[\frac{1}{v-t}]/(\frac{v}{v-t}))
    \end{array}}.
\end{align*}}

\begin{lemma}\label{lem:U(z)-for-et}
The subfunctor $\cU(\tilz) \subset L\cG$ is stable under left multiplication by $L^\mm \cG$ and is a left $L^\mm \cG$-torsor. The natural map $\cU(\til{z}) \ra \Gr_{\cG,X}$ is formally \'etale monomorphism.
\end{lemma}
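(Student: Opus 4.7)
The plan is to reduce both assertions to the analogous results for $\GL_4$ established in \cite[\S3.2]{LLHLM-2020-localmodelpreprint}, exploiting the closed immersion $\std\colon\cG\mono \cG_4$. Let $\cU_4(\std(\tilz))$ denote the subfunctor of $L\cG_4$ defined in \loccit\ by the analogous three conditions. First, I would verify the fibre product identifications
\begin{align*}
\cU(\tilz) &= \cU_4(\std(\tilz))\times_{L\cG_4} L\cG,\\
L^\mm\cG &= L^\mm\cG_4 \times_{L\cG_4} L\cG,\\
L^+\cG &= L^+\cG_4 \times_{L\cG_4} L\cG.
\end{align*}
These reduce to unwinding the definitions: the reduction conditions modulo $\frac{1}{v-t}$ and $\frac{v}{v-t}$ for $\cG_4$ involve the opposite unipotent and Borel of $\GL_4$, and restricting to symplectic elements cuts these down to $\ov{U}$ and $B$ in $\GSp_4$ because $\std$ preserves the chosen Borel and maximal torus.

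With these identifications in hand, stability under left multiplication by $L^\mm\cG$ is immediate: for $h\in L^\mm\cG(R)$ and $g\in \cU(\tilz)(R)$, the product $hg$ lies in $L\cG(R)$ by the group structure and in $\cU_4(\std(\tilz))(R)$ by the $\GL_4$ stability, so the fibre product description yields $hg\in \cU(\tilz)(R)$. For simple transitivity of the $L^\mm\cG$-action, if $g_1,g_2\in \cU(\tilz)(R)$ then $g_2g_1^\mo$ lies in $L\cG(R)$ and, by the $\GL_4$ torsor statement, in $L^\mm\cG_4(R)$, hence in $L^\mm\cG(R)$. For the existence of fpqc-local sections, I would exhibit the explicit element $(v-t)^{\nu}\dot{w}$, where $\dot{w}$ is a choice of lift of $w$ to $N(T)(\Z)$ inside $\GSp_4$: it visibly lies in $\cU(\tilz)$, and its $L^\mm\cG$-translates exhaust $\cU(\tilz)$ because their $\std$-images exhaust $\cU_4(\std(\tilz))$ by the $\GL_4$ torsor result combined with the fibre product description.

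For the monomorphism claim, since $\std\colon\cG\mono \cG_4$ is a closed immersion of smooth affine group schemes over $X$, the induced maps on positive and full loop groups are closed immersions of ind-schemes, and thus $\Gr_{\cG,X}\ra \Gr_{\cG_4,X}$ is a closed immersion, in particular a monomorphism. The composite $\cU(\tilz)\ra \Gr_{\cG,X}\ra \Gr_{\cG_4,X}$ factors through $\cU_4(\std(\tilz))\ra \Gr_{\cG_4,X}$, which is a monomorphism by the $\GL_4$ analogue of the present lemma; hence $\cU(\tilz)\ra \Gr_{\cG,X}$ is itself a monomorphism.

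The main obstacle is the careful bookkeeping needed to establish the fibre product identifications on the nose; one has to match the Iwahori-style factorization conditions for $\cG$ with those for $\cG_4$ under the symplectic restriction, and also confirm that the N\'eron blowup conditions built into $\cG$ (the reduction modulo $v$ landing in $B$) pull back cleanly from the corresponding conditions for $\cG_4$. Once this is settled, the rest of the lemma is formal.
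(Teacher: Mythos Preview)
Your reduction-to-$\GL_4$ strategy is sound and gives a valid proof. The paper's argument is organized differently: it notes that stability and the (pseudo-)torsor property are immediate from the shape of the defining conditions of $\cU(\tilz)$ and $L^{--}\cG$ (each of the three conditions on $\ov g(v-t)^{-\nu}$ is preserved under left multiplication by $L^{--}\cG$, and conversely $g_1g_2^{-1}$ for $g_1,g_2\in\cU(\tilz)$ lands in $L^{--}\cG$), and deduces the monomorphism claim by citing the preceding Lemma~\ref{lem:mult-formally-et}. Since that lemma's own proof establishes the monomorphism by restricting the $\GL_4$ statement, the essential input is the same; you carry out the $\GL_4$ reduction uniformly for all three assertions, while the paper checks the torsor part by direct inspection and isolates the $\GL_4$ reduction in the prior lemma. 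Your fibre-product identification $\cU(\tilz)=\cU_4(\std(\tilz))\times_{L\cG_4}L\cG$ is in any case used explicitly elsewhere in the paper (Proposition~\ref{prop:moduli-interpretation-open-chart}), so your approach fits naturally.

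One small correction: the explicit element $(v-t)^\nu\dot w$ you propose does not lie in $\cU(\tilz)$ in general. With either ordering of the factors, one of the defining conditions fails: for $g=\dot w(v-t)^\nu$, when $t\in R^\times$ and $w\neq e$ the quotient $R[\tfrac{1}{v-t}]/(\tfrac{v}{v-t})$ is isomorphic to $R$ and the reduction of $\ov g(v-t)^{-\nu}=\dot w$ does not lie in $B(R)$; for $g=(v-t)^\nu\dot w$, already the first condition $\ov g(v-t)^{-\nu}\in\GSp_4(R[\tfrac{1}{v-t}])$ typically fails. Fortunately this section is unnecessary: the torsor assertion here is only the pseudo-torsor condition (free and transitive action on points), and your earlier transitivity argument via the fibre-product identification $L^{--}\cG=L^{--}\cG_4\times_{L\cG_4}L\cG$ already establishes this without producing any section.
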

\begin{proof}
The first claim follows from the definition of $\cU(\tilz)$. The map is monomorphism by Lemma \ref{lem:mult-formally-et}. 
To prove the formal \'etaleness, we show that $\cU(\tilz)$ is formally smooth over $X$ as in the proof of \cite[Lemma 3.2.7]{LLHLM-2020-localmodelpreprint}.

Let $R\epi S$ be a square-zero nilpotent thickening with kernel $J$. Given $A_S \in \cU(\tilz)(S)$, we want to find $A_R\in \cU(\tilz)(R)$ lifting $A_S$.  Since $A_S(v-t)^{-\nu} \in \GSp_4(S[{\frac{1}{v-t}}])$, we can find  $A_R' \in L\cG(R)$  lifting $A_S$ and $A_R'(v-t)^{-\nu} \in \GSp_4(R[{\frac{1}{v-t}}])$ by the smoothness of $\GSp_4$, and the set of such lifts of $A_S$ is $(I+\Lie \GSp_4(J[\frac{1}{v-t}]))A_R'$, where $I$ is the identity matrix. 
To show that there exists $A_R\in \cU(\tilz)(R)$  lifting $A_S$, we need to find $A_R$ satisfying two conditions:
\begin{align*}
    A_R(v-t)^{-\nu}w^\mo \mod \frac{1}{v-t} \in \ov{U}(R),\ \   A_R(v-t)^{-\nu} \mod \frac{v}{v-t} \in B(R[\frac{1}{v-t}]/(\frac{v}{v-t})).
\end{align*}
Again, the existence of a lift satisfying each condition follows from the smoothness of $\ov{U}$ and $B$. The existence of a lift satisfying all three conditions follows from the surjectivity of the quotient map
\begin{align*}
    \Lie \GSp_4(J[\frac{1}{v-t}]) \ra \Lie \GSp_4(J) \times \Lie \GSp_4(J[\frac{1}{v-t}]/ (\frac{v}{v-t})). &\qedhere
\end{align*} 
\end{proof}

For $h\in \Z_{\ge0}$,  we define $\cU(\til{z})^{\simc,\le h}$ to be the base change $\cU(\til{z})\times_{L\cG} L\cG^{\simc=d,\le h} $ where $d= \simc(\nu)$. The following Proposition shows that $\cU(\til{z})^{\simc,\le h}$ is represented by a finite type affine scheme over $X$.

\begin{prop}\label{prop:moduli-interpretation-open-chart}
For a Noetherian $\Z[v]$-algebra $R$, $\cU(\til{z})^{\simc,\le h} (R)$ is the set of matrices $A\in \GSp_4(R[(v-t)^{\pm 1}])$ satisfying:
\begin{itemize}
    \item For $1\le i,j \le 4$,
    \begin{align*}
        A_{ij} = v^{\delta_{i>j}}\PR{\sum_{k=-h}^{\nu'_j-\delta_{i>j} - \delta_{i<w'(j)}} c_{ij,k}(v-t)^k }
    \end{align*}
    and $c_{w'(j) j, \nu'_j - \delta_{w'(j)>j}} = 1$ where $(\nu'_1,\nu'_2,\nu'_3,\nu'_4)=\std(\nu)$ and $w'=\std(w)$,
    \item $\simc (A) = \simc(w) (v-t)^d$.
\end{itemize}
\end{prop}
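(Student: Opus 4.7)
The plan is to deduce the statement from the analogous description of open charts in the affine Grassmannian for $\GL_4$ (namely Proposition 3.2.7 of \cite{LLHLM-2020-localmodelpreprint}) by pulling back along the closed immersion $\std\colon \cG\hookrightarrow \cG_4$, and then separately pin down the similitude character from the three defining conditions of $\cU(\tilz)$.

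First I would observe that $\std$ induces closed immersions $L^+\cG\hookrightarrow L^+\cG_4$, $L^\mm\cG\hookrightarrow L^\mm\cG_4$, and $L\cG\hookrightarrow L\cG_4$, and that the three conditions defining $\cU(\tilz)$ are precisely the pullbacks of the corresponding conditions for $\cU(\std(\tilz))$ (using that $B$, $\ov{U}$, and the Iwahori-at-$v=0$ structure for $\GSp_4$ are obtained by intersection from their $\GL_4$-counterparts along $\std$). Hence $\cU(\tilz)^{\simc,\le h}(R)$ consists of those matrices $A\in \cU(\std(\tilz))^{\le h}(R)$ which additionally lie in $\GSp_4(R[(v-t)^{\pm 1}])$. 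Applying the $\GL_4$ result then gives directly the stated shape $A_{ij}=v^{\delta_{i>j}}\sum_{k=-h}^{\nu'_j-\delta_{i>j}-\delta_{i<w'(j)}} c_{ij,k}(v-t)^k$ together with the pivot $c_{w'(j)j,\,\nu'_j-\delta_{w'(j)>j}}=1$: the factor $v^{\delta_{i>j}}$ comes from the Iwahori condition $\ov{g}\bmod v\in B$, the upper bound $\nu'_j$ from the integrality $\ov{g}(v-t)^{-\nu}\in \GSp_4(R[\tfrac{1}{v-t}])$, the subtraction $\delta_{i<w'(j)}$ from the upper-triangular vanishing of $\ov{g}(v-t)^{-\nu}w^\mo\bmod \tfrac{1}{v-t}$, and the normalization of the pivot from the unipotence of the same reduction.

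Next I would identify the similitude. Setting $h:=\ov{g}(v-t)^{-\nu}w^\mo\in L^\mm\cG(R)$, the triviality of $\simc$ on $\ov{U}$ gives $\simc(h)\equiv 1\bmod \tfrac{1}{v-t}$, while $h\in L^\mm\cG(R)$ gives $\simc(h)\in R[\tfrac{1}{v-t}]$. Combined with $\simc(\ov{g})\in (v-t)^d R\DB{v-t}^\times$ (from $g\in L\cG^{\simc=d,\le h}(R)$), the identity $\simc(h)=\simc(w)^\mo\simc(\ov{g})(v-t)^{-d}$ shows that $\simc(h)$ lies in the intersection $R[\tfrac{1}{v-t}]\cap R\DB{v-t}=R$. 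The congruence $\simc(h)\equiv 1\bmod \tfrac{1}{v-t}$ then forces $\simc(h)=1$, which yields $\simc(\ov{g})=\simc(w)(v-t)^d$ as asserted.

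The main obstacle I expect will be the bookkeeping of the interplay between the Iwahori shift $v^{\delta_{i>j}}$ (which mixes $v$ and $v-t$ via $v=(v-t)+t$), the $(v-t)$-filtration imposed by $\nu$, and the pivot data encoded by $w^\mo$; this follows the $\GL_4$ argument essentially verbatim but must be rechecked in the symplectic context, together with the verification that the symplectic relation $\pret{A}JA=\simc(w)(v-t)^d J$ imposes no extra vanishing conditions on the individual coefficients $c_{ij,k}$ beyond those already recorded by the entry bounds, so that the scheme $\cU(\tilz)^{\simc,\le h}$ is cut out inside its $\GL_4$-analogue by precisely these $\GSp_4$-equations.
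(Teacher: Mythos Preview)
Your approach is essentially the same as the paper's: both reduce to the $\GL_4$ result of \cite{LLHLM-2020-localmodelpreprint} via the closed immersion $\std\colon \cG\hookrightarrow \cG_4$, which the paper phrases as the fiber product identity $\cU(\tilz)^{\simc,\le h}=\cU_4(\std(\tilz))^{\det,\le h}\times_{L\cG_4}L\cG$. Your separate derivation of $\simc(A)=\simc(w)(v-t)^d$ from the $\ov U$-condition is correct and more explicit than the paper's one-line citation, but it is exactly the $\GSp_4$-analogue of the determinant computation already contained in the cited $\GL_4$ proposition; the ``obstacle'' you anticipate does not arise, since the symplectic constraint only intersects the $\GL_4$ chart with $\GSp_4$ and imposes no further vanishing on the individual $c_{ij,k}$ beyond the listed bounds.
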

\begin{proof}
This follows from the fact that $\cU(\til{z})^{\simc,\le h} = \cU_4(\std(\tilz))^{\det,\le h}\times_{L\cG_4} L\cG$ and Proposition 3.2.8 in \cite{LLHLM-2020-localmodelpreprint} (where $\cU_4(\std(\tilz))^{\det,\le h}$ denotes the affine chart defined in \S3.2 of \loccit).
\end{proof}

\begin{cor}\label{prop:tilU-open-in-Gr}
The map $\cU(\til{z})^{\simc,\le h} \ra \Gr_{\cG,X}^{\simc=d,\le h}$ is an open immersion. 
\end{cor}

\begin{proof}
Since a formally \'etale monomorphism between finite type schemes is an open immersion (\cite[Remark 3.2.5]{LLHLM-2020-localmodelpreprint} and \cite[\href{https://stacks.math.columbia.edu/tag/025G}{Tag 025G}]{stacks-project}), this follows from Lemma \ref{lem:U(z)-for-et}.
\end{proof}

\subsection{Geometry of universal local models}\label{sub:uni-local-models}
We introduce universal local models and discuss their basic properties.

\subsubsection{Schubert varieties}\label{subsec:schubert-varities}
Given a dominant cocharacter $\lam\in X_*(T^\vee)$, we denote by $s_\lam$ the section  $X\ra \Gr_{\cG,X}$ induced by the element $(v-t)^\lam\in L\cG(R)$ 
for any $\Z[v]$-algebra $R$. A \emph{global Schubert variety} $\cS_{X}(\lam)$ is defined as the minimal irreducible closed subscheme of $\Gr_{\cG,X}$ containing the section $s_\lam$ and stable under the right multiplication by $L^+\cG$ (cf.~\cite[Definition 3.1]{Zhu14-Annals-coherence_conj-MR3194811}). The map $S_X(\lam) \ra X$ is proper. We also write $\cS_{X^0}(\lam)= \cS_X(\lam)\times_X X^0$. As $\Gr_{\cG,X}\times_X X^0 \simeq \Gr_{\GSp_4}\times_\Z X^0$, $\cS_{X^0}(\lam)$ is the constant family of Schubert varieties in $\Gr_{\GSp_4}$ for $\lam$  over $X^0$. 

Let $\Conv(\lam)$ be the convex hull of the subset $W\lam \subset X_*(T^\vee)$. 
An \emph{open Schubert cell} $\cS^\circ_X(\lam)$ is defined as an open subscheme
\begin{align*}
    \cS^\circ_X(\lam) = \cS_X(\lam)\backslash \cup_{\lam'\in \Conv(\lam), \lam'\notin W\lam} \cS_X(\lam') \subset \cS_X(\lam).
\end{align*}
Again, the base change $\cS^\circ_{X^0}(\lam) = \cS^\circ_X(\lam)\times_X X^0$ is the constant family of open Schubert cells of $\Gr_{\GSp_4}$ for $\lam$  over $X^0$.

We have a map $L^+\cG \ra \Gr_{\cG,X}$ given by the orbit map $g \mapsto s_\lam g$. Note that it factors through a subscheme $\Gr_{\cG,X}^{\simc=d,\le h}$ with $d=\simc(\lam)$ and $h$ sufficiently large. The stabilizer subgroup scheme $L^+\cG_\lam \subset L^+\cG$ of $s_\lam$  is given by
\begin{align*}
    L^+\cG_\lam(R) = L^+\cG(R) \cap \Ad ((v-t)^{-\lam})(L^+\cG(R))
\end{align*}
for $t: \spec R \ra X$.  Thus we have a monomorphism
\begin{align*}
    L^+\cG_\lam \bss L^+\cG \mono \Gr_{\cG,X}^{\simc=d,\le h}
\end{align*}
whose scheme-theoretic image is $\cS_\lam(X)$. Over $X^0$, we have an isomorphism $(L^+\cG_\lam\bss L^+\cG) \times_X X^0 \simeq \cS^\circ_{X^0}(\lam)$. Note that there is a map from $L^+\cG_\lam \times_X X^0$ to $P_\lam \times X^0$ sending $g \mapsto g \mod (v-t)$ where  $P_\lam\subset \GSp_4$ is the parabolic subgroup associated with $\lam$. Thus, we have a natural map $(L^+\cG_\lam\bss L^+\cG) \times_X X^0 \ra P_\lam \bss \GSp_4 \times_\Z X^0$ given by $g \mapsto g \mod (v-t)$. Composing this map with the preceding isomorphism, we get a map
\begin{align*}
    \pi_\lam: \cS^\circ_{X^0}(\lam) \xrightarrow{} (P_\lam \backslash \GSp_4)\times_\Z X^0.
\end{align*}

\subsubsection{Universal local models}
For convenience, we let $\std: \A^3 \ra \A^4$ be the morphism sending $(a_1,a_2,a_3) $ to $(a_1,a_2,a_3-a_2, a_3-a_1)$. Note that this matches with the description of $\std: X_*(T^\vee) \ra X_*(T^\vee_4)$.

We define a subfunctor $L\cG^\nba$ of $L\cG \times_\Z \A^3$ given by
\begin{align*}
    L\cG^\nba(R) = \CB{(g,\bfa) \mid g\in L\cG(R), \bfa\in \A^3, v\ddv{\ov{g}} \ov{g}^\mo + \ov{g}\diag(\std(\bfa)) \ov{g}^\mo \in \frac{1}{v-t}L^+\cM(R)}.
\end{align*}
for $t: \spec R \ra X$.  Since $L\cG^\nba$ is stable under left multiplication by $L^+\cG$, it defines a closed sub-ind-scheme $\Gr^\nba_{\cG,X}:= L^+\cG\bss L\cG^\nba \subset \Gr_{\cG,X}\times_\Z \A^3$ which is ind-proper over $X\times_\Z \A^3$.

\begin{defn}
Let $\lam\in X_*(T^\vee)$ be a dominant cocharacter.  We define the \emph{naive universal local model} $\cM^\nv_X (\le \lam, \nba)$ as  $\Gr^\nba_{\cG,X}\cap (\cS_X(\lam)\times_\Z \A^3)$.
\end{defn}

We write $\cM^\nv_{X^0}(\le \lam,\nba) = \cM^\nv_{X}(\le \lam,\nba)\times_X X^0$. It is a proper scheme over $X^0\times \A^3$.

\begin{prop}\label{prop:schubert-cell-reduction}
Let $\lam\in X_*(T^\vee)$ be a dominant cocharacter. The map $\pi_\lam$ induces an isomorphism
\begin{align*}
    \PR{\cM^\nv_{X^0}(\le\lam,\nba) \cap (\cS^\circ_{X^0}(\lam)\times_\Z \A^3)}\BR{\frac{1}{h_{\lam} !}} \simeq (P_{\lam}\bss \GSp_4) \times_\Z X^0 \times_\Z \A^3\BR{\frac{1}{h_{\lam}!}}.
\end{align*}
\end{prop}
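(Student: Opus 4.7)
The plan is to work on the affine open charts $\cU(\tilz)^{\simc,\le h}$ of \S\ref{sub:global-affine-Grass}, which cover the open Schubert cell $\cS^\circ_{X^0}(\lam)$ as $\tilz$ ranges over the Weyl orbit $W\cdot \lam \subset \tilW^\vee$. By Proposition \ref{prop:moduli-interpretation-open-chart}, an $R$-point of such a chart is an explicit matrix $A \in \GSp_4(R[(v-t)^{\pm 1}])$ whose entries are Laurent polynomials in $(v-t)$ of controlled order, together with a pinning condition forcing the leading coefficient in a distinguished position of each column to be $1$. Under the induced trivialization of $\cS^\circ_{X^0}(\lam)$ on such a chart, the projection $\pi_\lam$ amounts to reading off the constant-term coefficients $c_{ij,0}$ that parametrize the image in $P_\lam\bss \GSp_4$.

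Next I would expand the $\nba$-condition $v\ddv{\ov{g}}\ov{g}^\mo + \ov{g}\diag(\std(\bfa))\ov{g}^\mo \in \frac{1}{v-t}L^+\cM$ in powers of $(v-t)$. Writing $A = \sum_k B_k(v-t)^k$ and using the identity
\[
v\ddv{A} = t\sum_k k B_k (v-t)^{k-1} + \sum_k k B_k (v-t)^k,
\]
the vanishing of poles of order $\ge 2$ at $v=t$ together with the upper-triangular-modulo-$v$ condition on the residue translates into a triangular system of equations on the $c_{ij,k}$, of the shape
\[
k \cdot c_{ij,k} = F_{ij,k}\PR{\{c_{i'j',k'}\}_{|k'|<|k|},\, \bfa,\, t} \qquad (k \neq 0),
\]
where the prefactor $k$ arises from $\ddv{(v-t)^k} = k(v-t)^{k-1}$. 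Since the indices $k$ occurring in the $(v-t)$-expansion on the open Schubert cell satisfy $|k|\le h_\lam$, inverting $h_\lam!$ permits recursive solution for all $c_{ij,k}$ with $k\neq 0$ in terms of the $c_{ij,0}$, $\bfa$, and $t$. This yields a candidate section of $\pi_\lam \times \id$ over $(P_\lam\bss\GSp_4)\times_\Z X^0 \times_\Z \A^3[1/h_\lam!]$, which one then verifies to be two-sided inverse to the map in the statement.

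The main obstacle is the bookkeeping needed to check (i) that this recursion is compatible across different charts $\cU(\tilz)$ and with the right $L^+\cG$-equivalence defining $\Gr_{\cG,X}$, and (ii) that the resulting section lands inside the open cell $\cS^\circ_{X^0}(\lam)$ rather than in its closure. Both are essentially the same combinatorial analysis carried out for $\GL_n$ in \cite{LLHLM-2020-localmodelpreprint}; indeed, the closed immersion $\std:\cG\mono \cG_4$ presents our $\nba$-condition as the restriction of the analogous condition for $\cG_4$ with parameter $\std(\bfa)$, so much of the computation can be imported from the $\GL_4$ case, with the symplectic duality cutting out $P_\lam\bss \GSp_4$ inside $P_{\std(\lam)}\bss \GL_4$.
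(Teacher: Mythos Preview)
Your overall strategy—cover by affine charts, expand the $\nba$-condition, and solve a recursion in the Laurent coefficients after inverting $h_\lam!$—is correct and is exactly what the paper does.  The gap is in the choice of charts and, consequently, in your description of $\pi_\lam$.

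The paper does \emph{not} work on the charts $\cU(\tilz)^{\simc,\le h}$ of $\Gr_{\cG,X}$.  Instead it pulls back the standard big-cell cover $\{\ov N_\lam w\}_{w\in W}$ of $P_\lam\bss\GSp_4$ along $\pi_\lam$, obtaining charts $\til N_\lam w$ of the open Schubert cell $\cS^\circ_{X^0}(\lam)$ itself.  On $\til N_\lam w$ an element has the canonical form $(v-t)^\lam N w$ with $N\in\ov N_\lam(R[v-t])$ a genuine \emph{polynomial} of bounded degree (no poles), and $\pi_\lam$ is then \emph{by construction} $(v-t)^\lam N w\mapsto (N\bmod(v-t))w$.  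The $\nba$-condition becomes a one-directional recursion determining each root coefficient $X_{\al^\vee,j}$ for $j>0$ from the constant terms $X_{\al',0}$ with $\al'\le\al$; this is precisely the computation in \cite[Proposition~3.3.4]{LLHLM-2020-localmodelpreprint} that you invoke, and importing it via $\std$ goes through cleanly in these coordinates.

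By contrast, the $\cU(\tilz)^{\simc,\le h}$ are open in the whole Grassmannian, not in $\cS^\circ_{X^0}(\lam)$: they meet many Schubert strata, and the intersection with the open cell is not described by your setup.  More seriously, your assertion that ``$\pi_\lam$ amounts to reading off $c_{ij,0}$'' is not justified and in general false: $\pi_\lam$ is defined via a representative in $L^+\cG$, whereas the $\cU(\tilz)$-representative lies in the negative-loop direction, and on $\til N_\lam\subset\cU(t_\lam)$ one checks directly that many of the $c_{ij,0}$ are forced to vanish independently of the point, so they cannot parametrize $P_\lam\bss\GSp_4$.  Your proposed bidirectional recursion in $|k|$ is likewise an artifact of the Laurent parametrization; on the open cell, in the correct coordinates, there are no negative powers to solve for.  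The ``obstacles'' you flag in (i) and (ii) are symptoms of this mismatch and evaporate once you switch to the $\til N_\lam w$ charts.
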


\begin{proof}
The proof of \cite[Proposition 3.3.4]{LLHLM-2020-localmodelpreprint} carries over verbatim. As in \loccit, we can describe $\pi_{\lam}$ on open charts where the open charts are described by a certain unipotent group scheme. Then the computation in \loccit~easily generalizes to our setting using the decomposition of the unipotent group scheme by root subgroups instead of matrix entries. 
\end{proof}

\begin{rmk}\label{rmk:generic-fiber-conn-comp}
The preceding proposition implies that $\cM^\nv_{X^0}(\le \lam, \nba) \cap (\cS^\circ_{X^0}(\lam) \times_\Z \A^3)\BR{\frac{1}{h_\lam !}}$ is indeed proper over $X^0\times_\Z \A^3 \BR{\frac{1}{h_\lam !}}$ (by the properness of partial flag varieties). Thus the map
\begin{align*}
    \cM^\nv_{X^0}(\le\lam, \nba) \cap (\cS^\circ_{X^0}(\lam) \times_\Z \A^3)\BR{\frac{1}{h_\lam !}} \mono  \cM^\nv_{X^0}(\le \lam,\nba)\BR{\frac{1}{h_\lam !}}
\end{align*}
is a proper open immersion. Since a proper morphism is universally closed, this map is an inclusion of a connected component. Using this inductively, we obtain an isomorphism (cf.~\cite[Corollary 3.3.5]{LLHLM-2020-localmodelpreprint})
\begin{align*}
    \PR{\cM^\nv_{X^0}(\le\lam, \nba)\BR{\frac{1}{h_\lam !}}}_{\red} \risom \coprod_{\lam'\le\lam,\lam'\in X_*^+(T^\vee)} (P_{\lam'}\bss \GSp_4)\times_\Z X^0 \times_\Z \A^3\BR{\frac{1}{h_\lam !}}.
\end{align*}
\end{rmk}

 \begin{defn}
    Let $\lam\in X_*(T^\vee)$ be a dominant cocharacter. The \emph{universal local model} $\cM_X(\lam,\nba)$ is the closure of the connected component $\cM_{X^0}^\nv (\le\lam,\nba) \cap (\cS^\circ_{X^0}(\lam)\times_\Z \A^3)$ of $\cM_{X^0}^\nv (\le\lam,\nba)$ inside $\cM_X^{\nv}(\le \lam, \nba)$. In particular, it is $v$-flat.
\end{defn}

\begin{prop}\label{prop:nv-local-model-smooth}
 Let $\lam\in X_*(T^\vee)$ be a dominant cocharacter. Then $\cM_{X^0}^\nv (\le \lam, \nba)\BR{\frac{1}{(2h_\lam)!}}$ is smooth over $X^0\times_\Z \A^3\BR{\frac{1}{(2h_\lam)!}}$.
\end{prop}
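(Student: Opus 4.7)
The plan is to leverage Proposition \ref{prop:schubert-cell-reduction} applied not only to $\lam$ but to every dominant $\lam'\le\lam$, together with a scheme-theoretic strengthening of the decomposition noted in Remark \ref{rmk:generic-fiber-conn-comp}. The factor $(2h_\lam)!$ is chosen so that $h_{\lam'}!$ is inverted for every dominant $\lam'\le\lam$ (which holds because for dominant $\lam'$ in the convex hull of $W\lam$, one has $h_{\lam'}\le 2h_\lam$ by a direct pairing estimate using the explicit description of $\Phi^\vee$ for $\GSp_4$). Once this is arranged, Proposition \ref{prop:schubert-cell-reduction} identifies each stratum $\cM^\nv_{X^0}(\le\lam',\nba)\cap(\cS^\circ_{X^0}(\lam')\times_\Z \A^3)[\tfrac{1}{(2h_\lam)!}]$ with the smooth scheme $(P_{\lam'}\bss \GSp_4)\times_\Z X^0\times_\Z \A^3$ over $X^0\times_\Z \A^3[\tfrac{1}{(2h_\lam)!}]$.

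Next, I would argue, exactly as in Remark \ref{rmk:generic-fiber-conn-comp}, that each such identification makes the inclusion
\[
\cM^\nv_{X^0}(\le\lam',\nba)\cap (\cS^\circ_{X^0}(\lam')\times_\Z \A^3) \hookrightarrow \cM^\nv_{X^0}(\le\lam,\nba)
\]
a proper open immersion, hence the inclusion of a union of connected components as \emph{schemes}. Iterating this over all dominant $\lam'\le\lam$ in decreasing Bruhat order peels off an open-and-closed smooth (in particular reduced) subscheme at each step. By Remark \ref{rmk:generic-fiber-conn-comp} the underlying reduced scheme already decomposes as a disjoint union indexed by the dominant $\lam'\le\lam$; combined with the fact that each piece being peeled off is already smooth, there is no room for additional embedded or nilpotent structure in the complement. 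This upgrades the reduced decomposition to a scheme-theoretic disjoint union
\[
\cM^\nv_{X^0}(\le\lam,\nba)\BR{\tfrac{1}{(2h_\lam)!}} \;\simeq\; \coprod_{\substack{\lam'\le\lam\\ \lam'\in X_*^+(T^\vee)}} (P_{\lam'}\bss \GSp_4)\times_\Z X^0 \times_\Z \A^3\BR{\tfrac{1}{(2h_\lam)!}}.
\]

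Smoothness of $\cM^\nv_{X^0}(\le\lam,\nba)[\tfrac{1}{(2h_\lam)!}]$ over $X^0\times_\Z \A^3[\tfrac{1}{(2h_\lam)!}]$ then follows at once, since partial flag varieties $P_{\lam'}\bss \GSp_4$ are smooth over $\Z$, and smoothness is preserved under products with $X^0\times_\Z \A^3$ and under taking disjoint unions.

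The main technical hurdle I anticipate is upgrading Remark \ref{rmk:generic-fiber-conn-comp} from a statement about reduced structures to a scheme-theoretic decomposition. The inductive peeling argument requires at each step that the open-and-closed stratum already accounts scheme-theoretically (not merely topologically) for the connected component it lies in; the cleanest way to see this, which I would use, is that a smooth closed-and-open subscheme of a scheme $Y$ that exhausts a connected component of $Y_{\red}$ must equal the corresponding connected component of $Y$ itself, since otherwise $Y$ would have embedded points or strictly larger nilpotents there, contradicting the openness (hence flatness) of the immersion. Verifying the bound $h_{\lam'}\le 2h_\lam$ for $\GSp_4$ is a finite combinatorial check using the explicit description of $\Phi^\vee$ and $\phi$ from \S\ref{sub:prelim}.
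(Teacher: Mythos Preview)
Your approach has a genuine gap at the inductive step, and it is precisely the hurdle you flag but do not actually resolve.

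The first peeling works: for $\lam'=\lam$, the stratum $Z_\lam:=\cM^\nv_{X^0}(\le\lam,\nba)\cap(\cS^\circ_{X^0}(\lam)\times\A^3)$ is genuinely open in $Y:=\cM^\nv_{X^0}(\le\lam,\nba)$ because $\cS^\circ_{X^0}(\lam)$ is open in $\cS_{X^0}(\lam)$, and Proposition~\ref{prop:schubert-cell-reduction} then makes it proper over the base, so $Z_\lam$ is open-and-closed in $Y$ as schemes. But for a maximal $\lam''<\lam$, the stratum $Z_{\lam''}:=\cM^\nv_{X^0}(\le\lam'',\nba)\cap(\cS^\circ_{X^0}(\lam'')\times\A^3)$ is \emph{not} known to be open in the complement $Y':=Y\setminus Z_\lam$. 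What is open in $Y'$ is the subscheme $U\subset Y'$ lying over the open locus $\cS^\circ_{X^0}(\lam'')$ of $|Y'|$; its local rings are quotients of $\cO_{\cS_{X^0}(\lam)\times\A^3}$, not of $\cO_{\cS_{X^0}(\lam'')\times\A^3}$. Proposition~\ref{prop:schubert-cell-reduction} controls $Z_{\lam''}$, which sits inside $U$ as a closed subscheme with the same support, but it says nothing about nilpotents on $U$ in the directions normal to $\cS_{X^0}(\lam'')$ inside $\cS_{X^0}(\lam)$. Your ``smooth open-and-closed subscheme exhausting a connected component of $Y_{\red}$'' argument applies to $Z_\lam\subset Y$, where openness is already known; it does not apply to $Z_{\lam''}\subset Y'$, where you would need openness as input. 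So the induction is circular: you are assuming away exactly the possible non-reducedness that the proposition is meant to exclude.

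The paper's proof addresses this directly. It observes that the non-smooth locus is $T^\vee$-stable and closed, hence its (proper) image in the base is closed; if nonempty, some geometric fiber of the non-smooth locus contains a $T^\vee$-fixed point, which must lie on a section $s_\mu$ for some $\mu\in\Conv(\lam)$. At such a point $x$ it bounds the relative tangent space $\dim T_xY/S\le\dim P_\mu\backslash\GSp_4$ by an explicit computation using the chart description in Proposition~\ref{prop:moduli-interpretation-open-chart}: one writes tangent vectors as $(1+\eps X)(v-t)^\mu$ with $X\in\Lie\GSp_4(k\DP{v-t})$ subject to degree bounds and the $\nba$-condition, and checks that the diagonal of $X$ vanishes while each off-diagonal entry is either zero or determined by a single coefficient. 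Combined with the dimension count from Remark~\ref{rmk:generic-fiber-conn-comp}, this forces $\hat\cO_{Y,x}$ to be a power series ring over $\hat\cO_{S,s}$, hence $Y\to S$ is smooth at $x$. This tangent-space bound is the missing ingredient; it is what rules out the nilpotents your induction cannot see.
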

\begin{proof}
The proof of \cite[Proposition 3.3.8]{LLHLM-2020-localmodelpreprint} generalizes to our situation straightforwardly using the Lie algebra of $\GSp4$ instead of the matrix algebra in the tangent space dimension computation.  
\end{proof}    

We finish this subsection with two lemmas regarding the normalization of the universal local model. For an integral scheme $Y$, we let $Y^\nm \ra Y$  denote the normalization of $Y$.

Let $l>0$ be an integer. It will be useful to consider the base change \begin{align*}
    \cM_X(\lam,\nba)_l := \cM_X(\lam,\nba)\times_{X, v\mapsto v^l} X
\end{align*}
and its normalization $(\cM_X(\lam,\nba)_l)^\nm$. When $l=e$, if we take $\varpi= (-p)^{1/e}$ as the uniformizer in $\cO$, this has the advantage that the map $\spec \cO \ra X\times \A^3$ sending $v$ to $-p$ is the composite of the maps $\spec \cO \ra X\times \A^3$ sending  $v \mapsto \varpi$ and $X\times \A^3 \ra X\times \A^3$ sending $v\mapsto v^e$. In particular, we have 
\begin{align*}
    \cM_X(\lam,\nba) \times_{X\times \A^3, v\mapsto -p} \spec \cO = \cM_X(\lam,\nba)_e \times_{X\times \A^3, v\mapsto\varpi} \spec \cO.
\end{align*}

\begin{lemma}\label{lem:univ-local-model-flat}
There is an open subscheme $U\subset \A^3$ depending only on $\lam$ and $l$, such that $\cM_X(\lam,\nba)_l\times_{\A^3} U \ra X \times_\Z U$ and $(\cM_X(\lam,\nba)_l)^\nm \times_{\A^3} U \ra X \times_\Z U$ are flat.
\end{lemma}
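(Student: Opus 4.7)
The plan is to combine three ingredients: properness of $f\colon \cM_X(\lam,\nba)_l \to X\times\A^3$, the $v$-flatness built into the definition of the universal local model, and the triviality of flatness over a field. Properness follows from the factorization through $\cS_X(\lam)\times\A^3 \to X\times\A^3$. The $v$-flatness is inherited from $\cM_X(\lam,\nba)$ via the flat base change $v\mapsto v^l$ (which is free of rank $l$), and surjectivity of $\cM_X(\lam,\nba)_l \to X\times\A^3$ follows from properness together with the density of the image, which contains the defining dense open $X^0\times\A^3$.

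The key observation is that $f$ is flat at every point $y$ whose image $\mf{p}\in X\times\A^3$ lies in $\pi_2^{-1}(\eta_{\A^3})$, where $\pi_2$ is the second projection and $\eta_{\A^3}$ is the generic point of $\A^3$. Such a prime $\mf{p}$ corresponds, upon localization at $\Z[\bfx]\setminus\{0\}$ (with $\bfx=(x_1,x_2,x_3)$), to a prime of the PID $\Q(\bfx)[v]$, and so $\mf{p}$ is either the generic point of $X\times\A^3$ (local ring a field) or a height-one prime $(g)$ with $g\in\Z[v,\bfx]$ irreducible (local ring a DVR with uniformizer $g$). In the first case flatness is automatic; in the second, the local criterion reduces flatness of $\cO_{Y,y}$ over $\cO_{X\times\A^3,\mf{p}}$ to $g$ being a nonzerodivisor on $\cO_{Y,y}$ together with flatness of $\cO_{Y,y}/g$ over the residue field $\kappa(\mf{p})$. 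The latter is automatic since $\kappa(\mf{p})$ is a field, and the former holds because the relevant irreducible component of $Y$ surjects onto $X\times\A^3$ (so $g$ is a nonzero function on it), whence its stalk at $y$ is a domain.

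Let $W\subset\cM_X(\lam,\nba)_l$ be the non-flat locus of $f$, which is closed since the flat locus of a finitely presented morphism over a Noetherian base is open. By properness, $f(W)$ is closed in $X\times\A^3$ and, by the key observation, disjoint from $\pi_2^{-1}(\eta_{\A^3})$. By Chevalley's theorem, $\pi_2(f(W))$ is constructible and does not contain $\eta_{\A^3}$, so its closure $Z:=\overline{\pi_2(f(W))}$ is a proper closed subset of $\A^3$. Setting $U:=\A^3\setminus Z$ gives an open subset, depending only on $\lam$ and $l$, with $f(W)\cap(X\times U)=\emptyset$, yielding the desired flatness for $\cM_X(\lam,\nba)_l$. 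The same argument applied to each irreducible component of $(\cM_X(\lam,\nba)_l)^\nm$ (each integral, proper over $X\times\A^3$, and dominating it) produces an analogous open; intersecting with the previous yields the final $U$. The main subtlety I foresee is verifying that each irreducible component of the normalization surjects onto $X\times\A^3$: this uses the surjectivity of $\cM_X(\lam,\nba)\to X\times\A^3$ together with its preservation under the base change $v\mapsto v^l$ and the finite normalization map, and may require passing to the reduction of $\cM_X(\lam,\nba)_l$ first if the base change fails to be reduced.
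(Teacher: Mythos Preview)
The paper's proof is a bare citation to a spreading-out lemma in \cite{LLHLM-2020-localmodelpreprint}, so your self-contained argument supplies the actual content. Your strategy---closed non-flat locus, push forward by properness, show the image misses $\pi_2^{-1}(\eta_{\A^3})$, then take the complement of the closure of its projection to $\A^3$---is exactly the expected argument and almost certainly what the cited lemma does.

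The gap is in your justification that $g$ is a nonzerodivisor on $\cO_{Y,y}$. You argue via ``the relevant irreducible component of $Y$ surjects onto $X\times\A^3\ldots$ whence its stalk at $y$ is a domain,'' but the needed conclusion concerns $\cO_{Y,y}$ itself, not the stalk of one component, and these agree only when $Y$ is locally integral at $y$. You never establish reducedness of $\cM_X(\lam,\nba)_l$, and the base change $v\mapsto v^l$ can genuinely destroy it in characteristic dividing $l$ (e.g.\ $A=\F_p[x]$ with $v=x^p$, $l=p$ gives $A[w]/(w^p-x^p)$, which is non-reduced); your proposed fix of ``passing to the reduction'' would not prove the lemma as stated. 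A cleaner route uses your listed ingredients directly and bypasses the issue: since $y$ lies over $\eta_{\A^3}$ you are working over the PID $\Q(\bfx)[v]$. Over $\{v\neq 0\}$ the map is smooth by Proposition~\ref{prop:nv-local-model-smooth} (note $(2h_\lam)!$ is a unit in $\Q(\bfx)$, and over $X^0\times\A^3[1/(2h_\lam)!]$ the closure $\cM_X(\lam,\nba)$ coincides with its defining clopen connected component), while at the unique remaining closed point $(v)$ of $\Q(\bfx)[v]$, flatness over the DVR is precisely the $v$-flatness built into the definition. The same two observations handle the normalization, which by the paper's convention is taken of an integral scheme and hence is itself integral: it agrees with $\cM_X(\lam,\nba)_l$ over the smooth locus and is $v$-torsion-free since $v$ is nonzero on it.
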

\begin{proof}
This follows from \cite[Lemma 3.5.5, Remark 3.5.6]{LLHLM-2020-localmodelpreprint}.
\end{proof}

\begin{lemma}\label{lem:spreading-normality}
There is an open subscheme $U\subset \A^3$ only depending on $\lam$ and $l$, such that if $R$ is a complete DVR and $f: \spec R \ra X\times \A^3$ is a morphism sending $v$ to a uniformizer of $R$ and factors through $X\times U$, the base change 
\begin{align*}
    (\cM_X(\lam,\nba)_l)^\nm_R := (\cM_X(\lam,\nba)_l)^\nm \times_{X\times \A^3} \spec R 
\end{align*}
is flat over $\spec R$, and  $(\cM_X(\lam,\nba)_l)^\nm_R$ is normal.
\end{lemma}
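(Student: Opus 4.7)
I would shrink $U$ to simultaneously inherit flatness from Lemma \ref{lem:univ-local-model-flat} and generic fiber smoothness from Proposition \ref{prop:nv-local-model-smooth}, and then verify normality of $Y_R:=(\cM_X(\lam,\nba)_l)^\nm_R$ by Serre's criterion $(R_1)+(S_2)$. Concretely, take $U\subset\A^3$ to be the intersection of the open set from Lemma \ref{lem:univ-local-model-flat} with an open subscheme on which $(2h_\lam)!$ is invertible (for instance by inverting those primes globally on $\A^3$); this still depends only on $\lam$ and $l$. Writing $Y:=(\cM_X(\lam,\nba)_l)^\nm$, which is normal by construction, flatness of $Y_R\ra\spec R$ is immediate by flat base change from Lemma \ref{lem:univ-local-model-flat}.

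Next, the generic fiber $Y_R\otimes_R \mathrm{Frac}(R)$ is smooth, hence regular and $(S_2)$, over $\mathrm{Frac}(R)$ by Proposition \ref{prop:nv-local-model-smooth}, since the generic point of $\spec R$ lies above $X^0\times U$ where $(2h_\lam)!$ is invertible. This already handles all points of $Y_R$ lying above the generic point of $\spec R$, which covers all codimension $0$ points of $Y_R$ and all codimension $1$ points not lying in the special fiber.

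The remaining codimension $1$ points are the generic points of the irreducible components of the special fiber $Y_{R,s}:=Y_R\otimes_R R/\varpi_R$. By the standard depth formula for flat local homomorphisms, $\depth\cO_{Y_R,y}=1+\depth\cO_{Y_{R,s},y}$ and $\dim\cO_{Y_R,y}=1+\dim\cO_{Y_{R,s},y}$ for any $y\in Y_{R,s}$. So $(S_2)$ of $Y_R$ along the special fiber reduces to $(S_1)$ of $Y_{R,s}$, and regularity of $\cO_{Y_R,y}$ at a codimension $1$ point $y$ in the special fiber (where $\dim\cO_{Y_{R,s},y}=0$) reduces to $\cO_{Y_{R,s},y}$ being a field, i.e., to $Y_{R,s}$ being reduced at $y$. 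Both conditions thus follow once one knows that $Y_{R,s}$ is generically reduced and has no embedded primes.

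\textbf{Main obstacle.} The crucial step is establishing this generic reducedness (and absence of embedded primes) of $Y_{R,s}$, or equivalently of the special fiber of $(\cM_X(\lam,\nba)_l)^\nm|_{X\times U}$ over $\{v=0\}\times U$. This is precisely the purpose of the base change $v\mapsto v^l$ followed by normalization: it resolves the ramification of the coordinate $v$ at codimension $1$ components of the special fiber of $\cM_X(\lam,\nba)$, provided that $l$ is compatible with the ramification indices and $U$ is shrunk to avoid degenerate loci. I would verify this by a local computation on the affine charts $\cU(\tilz)^{\simc,\le h}$ from Proposition \ref{prop:moduli-interpretation-open-chart}, exploiting the smoothness of Proposition \ref{prop:nv-local-model-smooth} on the open Schubert cells to bound the codimension of the non-smooth locus of $\cM_X(\lam,\nba)$, and then explicitly checking that the normalization forces each component of the special fiber of $\cM_X(\lam,\nba)_l|_{X\times U}$ to have $v$-valuation equal to $1$.
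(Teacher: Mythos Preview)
Your reduction via Serre's criterion is the right framework and matches what underlies \cite[Proposition~3.5.2]{LLHLM-2020-localmodelpreprint}, which the paper simply invokes as a black box (with $S=\A^3[1/(2h_\lam)!]$, $M=(\cM_X(\lam,\nba)_l)^\nm\times_{\A^3}S$, and Proposition~\ref{prop:nv-local-model-smooth} supplying the smoothness hypothesis of Setup~3.5.1 there).

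Your ``main obstacle'' paragraph, however, is not a proof and rests on a misconception. The base change $v\mapsto v^l$ has nothing to do with resolving ramification along the special fiber; its only purpose (see the paragraph before Lemma~\ref{lem:univ-local-model-flat}) is that when $l=e$ one can factor the map $\spec\cO\to X\times\A^3$, $v\mapsto -p$, through $v\mapsto\varpi$. The lemma is stated for \emph{arbitrary} $l$, with $U$ depending on $l$ only through the scheme $\cM_X(\lam,\nba)_l$ itself, so there is no ``compatibility with ramification indices'' to impose; and no explicit description of the normalization is available that would let you verify ``$v$-valuation~$1$'' chart by chart. What actually produces $U$ is a spreading-out argument over the parameter space $\A^3$: one shows that the fiber of $Y_0:=Y|_{v=0}$ over the (characteristic-zero) generic point of $\A^3$ is geometrically reduced and without embedded components, and then uses constructibility of these fiberwise conditions (EGA~IV) to propagate them to all fibers over a nonempty open $U$. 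Since $Y_{R,s}$ is a base change of such a fiber of $Y_0\to U$, your Serre-criterion reduction then concludes. This spreading step is exactly what \cite[Proposition~3.5.2]{LLHLM-2020-localmodelpreprint} packages, and it is the missing ingredient in your proposal.
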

\begin{proof}
This follows from \cite[Proposition 3.5.2]{LLHLM-2020-localmodelpreprint}. To satisfy Setup 3.5.1 of \emph{loc.~cit.}, take $S= \A^3[\frac{1}{(2h_\lam)!}]$, $M= (\cM_X(\lam,\nba)_l)^\nm \times_{\A^3} S$, and use Proposition \ref{prop:nv-local-model-smooth}.
\end{proof}

\subsection{Local models in mixed characteristic}\label{sub:mixed-char-local-models}
We specialize to objects over $\cO$ by taking base change $\spec \cO \ra X$ sending $v$ to $-p$. For an fpqc sheaf $Y \ra X$, we write $Y_\cO = Y\times_X \spec \cO$. For example, we have $L\cG_\cO= L\cG \times_X \spec \cO$ and $L^+\cG_\cO, L^+\cM_\cO$ similarly.

We have the global affine Grassmannian $\Gr_{\cG,\cO}= L^+\cG_\cO \bss L\cG_\cO$. Its generic fiber $\Gr_{\cG,E}$ is equal to the usual affine Grassmannian associated to $\GSp_4$ over $E$. For $\lambda\in X_*(T^\vee)$ a dominant cocharacter, we write $S_E^\circ(\lam)\subset \Gr_{\GSp_4,E}$ for the open affine Schubert cell and $S_E(\lam)$ for its reduced closure. The Zariski closure of $S_E(\lam)$ in $\Gr_{\cG,\cO}$ is the Pappas--Zhu local model $M(\le \lam)$ associated to the group $\GSp_4$, the conjugacy class of $\lam$, and the Iwahori subgroup $\cI$ (\cite{PZ13-Inv-local_model-MR3103258}). It is known that $M(\le\lam)$ is projective over $\spec \cO$ (\S7.1 in \loccit).

Let $\bfa \in \cO^3$. We define $L\cG_\cO^{\nba_\bfa}\subset L\cG_\cO$ to be the subfunctor given by
\begin{align*}
    L\cG_\cO^{\nba_\bfa} (R) := \CB{ g \in L\cG_\cO(R) \mid v\ddv{\ov{g}}\ov{g}^\mo + \ov{g} \diag(\std(\bfa))\ov{g}^\mo \in \frac{1}{v+p} L^+\cM_4(R)}
\end{align*}
for an $\cO$-algebra $R$. It is stable under left $L^+\cG_{\cO}$ multiplication and induces a closed sub-ind-scheme $\Gr^{\nba_\bfa}_{\cG,\cO} \subset \Gr_{\cG,\cO}$.
\begin{defn}\label{def:local-model}
Let $\bfa \in \cO^3$. We define the \emph{naive local model} as $M^\nv(\le\lam, \nba_\bfa)= M(\le\lam) \cap \Gr_{\cG,\cO}^{\nba_\bfa}$. The \emph{(mixed characteristic) local model} $M(\lam,\nba_\bfa)$ is defined to be the Zariski closure of $S^\circ_E(\lam)\cap \Gr_{\cG,\cO}^{\nba_\bfa}$ in $M(\le \lam)$. By their construction, $M^\nv(\le\lam, \nba_\bfa)$ and $M(\lam,\nba_{\bfa})$ are projective over $\spec \cO$, and the latter is also flat.
\end{defn}

For later application, we define $M(\le\lam,\nba_{\bfa})$ to be the union of $M(\lam',\nba_{\bfa})$ over all dominant cocharacters $\lam'\le\lam$. Since $S^\circ_E(\lam') \subset S_E(\lam)$, $M(\lam',\nba_{\bfa})$ and $M(\le\lam,\nba_{\bfa})$ are closed subschemes of $M^\nv(\le\lam, \nba_\bfa)$. 

\begin{prop}\label{prop:generic-fiber-M(=<lam)}
We have an equality 
\begin{align*}
    M(\le\lam,\nba_{\bfa})\times_{\spec \cO} \spec E = M^\nv(\le\lam, \nba_\bfa)\times_{\spec \cO} \spec E.
\end{align*}
\end{prop}
\begin{proof}
   By Remark \ref{rmk:generic-fiber-conn-comp} (and by taking base change $\spec E \ra X$), for dominant $\lam'\le\lam$, $M(\lam',\nba_{\bfa}) \times_{\spec \cO} \spec E$ is a connected component of $M^\nv(\le\lam, \nba_\bfa)\times_{\spec \cO} \spec E$. The claim follows by taking the union over all dominant $\lam'\le\lam$.
\end{proof}

We define $U(\tilz) := \cU(\tilz) \times_{X}\spec \cO$ and  $U(\tilz,\lam,\nba_{\bfa}) := U(\tilz) \cap M(\lam,\nba_\bfa)$ where the intersection is taken inside $\Gr_{\cG,\cO}$. Note that the latter is equal to $(\cU(\tilz)^{\simc,\le h}\times_X \spec \cO)\cap M(\lam,\nba_\bfa)$ for large enough $h\ge 0$. Thus, it is an open affine subscheme of $M(\lam,\nba_\bfa)$ by Corollary \ref{prop:tilU-open-in-Gr}.

Our main interest in the geometry of $M(\lam,\nba_\bfa)$ is whether a completed local ring $\mathcal{O}_{M(\lam,\nba_\bfa),x}^\wedge$ is a domain for some $x \in M(\lam,\nba_\bfa)$ . To understand this property more geometrically, we introduce the following definition. 

\begin{defn}
Let $Y$ be a scheme. A point $y \in Y$ is called \emph{unibranch} if the normalization of the local ring $(\cO_{Y,y})_\red$ is local. 
\end{defn}

\begin{rmk}
Suppose that $Y$ is Noetherian and excellent. The following conditions are equivalent:
\begin{enumerate}
    \item $y\in Y$ is unibranch;
    \item the fiber above $y$ of the normalization map $Y^\nm \ra Y$ is a single point (\cite[\href{https://stacks.math.columbia.edu/tag/0C3B}{Tag 0C3B}]{stacks-project});
    \item the completed local ring $\cO_{Y,y}^\wedge$ is a domain (\cite[\href{https://stacks.math.columbia.edu/tag/0C2E}{Tag 0C2E}]{stacks-project}).
\end{enumerate}
\end{rmk}

Let $\til{z}=w t_{\nu}\in \tilW^{\vee}$. There is a constant section $\tilz: \spec \Z \mono \cU(\til{z})^{\simc,\le h} \times_X X_0$ for $h$ large enough,  given by $w v^\nu \in \GSp_4(\Z\DP{v})$. We denote its composition with $\cU(\til{z})^{\simc,\le h}\times_X X_0 \mono \Gr_{\cG,X} \times_X X_0$ 
again by $\til{z}$.

Let $\bfa\in \A^3(\cO)$. We also denote the induced $\F$-point by $\bfa$. We write $\til{z}_{\F,\bfa}$ for the $\F$-point $ \spec \F \ra (\Gr_{\cG,X} \times_X X_0)\times_\Z \A^3$ given by $(\til{z},\bfa)$.

Recall that $e$ denotes the ramification index of the extension $\cO/\Zp$.

\begin{thm}\label{thm:unibranch-local-model}
There exists a non-empty open subscheme $U\subset \A^3$ depending only on $\lam$ and $e$, such that if $\bfa \in U(\cO)$, then $M(\lam, \nba_\bfa)$ is unibranch at any point $\til{z}_{\F,\bfa}$ contained in the special fiber. In addition, $\cO(U(\til{z},\lam,\nba_\bfa))^\pcp$ is a domain. 
\end{thm}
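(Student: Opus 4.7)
The plan is to prove the unibranch property by comparing the mixed characteristic local model with an equal characteristic analogue through the universal normalization, following the strategy sketched in the introduction.

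First, I would shrink the open subscheme $U\subset \A^3$ to lie in the intersection of the open loci provided by Lemma~\ref{lem:univ-local-model-flat} and Lemma~\ref{lem:spreading-normality}, applied to the base change $\cM_X(\lam,\nba)_e$ along $v\mapsto v^e$. For $\bfa\in U(\cO)$, there are two natural maps $\spec R\to X\times \A^3$ sending $v$ to a uniformizer and factoring through $X\times U$: the mixed characteristic one $\spec \cO \to X\times \A^3$, $v\mapsto \varpi$, and the equal characteristic one $\spec \F\DB{v}\to X\times \A^3$, $v\mapsto v$ (after composing with $\bfa:\spec \cO\to \A^3$ and reducing mod $\varpi$). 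Because $\varpi=(-p)^{1/e}$ after suitable choice, the mixed characteristic base change of $\cM_X(\lam,\nba)_e$ is precisely $M(\le\lam)\cap \Gr_{\cG,\cO}^{\nba_\bfa}$, and restricting to the component containing $S_E^\circ(\lam)$ recovers $M(\lam,\nba_\bfa)$. Denote by $M^{\mathrm{eq}}(\lam,\nba_{\ov{\bfa}})$ the analogous component of the equal characteristic base change over $\spec \F\DB{v}$.

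The key point is that Lemma~\ref{lem:spreading-normality} makes both base changes of $(\cM_X(\lam,\nba)_e)^\nm$ normal. By the universal property of normalization, together with the flatness from Lemma~\ref{lem:univ-local-model-flat} (which guarantees that these base changes are generically reduced and hence their normalizations coincide with the normalizations of the base changes), the base change to $\spec \cO$ is the normalization of $M(\lam,\nba_\bfa)$ and the base change to $\spec \F\DB{v}$ is the normalization of $M^{\mathrm{eq}}(\lam,\nba_{\ov{\bfa}})$. Both normalizations sit over a common special fiber at $v=0$, obtained by further base change along $\spec \F\to \spec \cO/\varpi$ or $\spec \F\to \spec\F\DB{v}/v$, and they have canonically identified fibers above the $T^\vee$-fixed point $\tilz_{\F,\bfa}$. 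Consequently, the number of geometric points of the normalization lying over $\tilz_{\F,\bfa}$ agrees for $M(\lam,\nba_\bfa)$ and $M^{\mathrm{eq}}(\lam,\nba_{\ov\bfa})$, so unibranchness at $\tilz_{\F,\bfa}$ transfers between the two models.

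Next I would establish unibranchness of the equal characteristic local model at $\tilz_{\F,\bfa}$, which is the main technical obstacle. Here one works over a base of equal characteristic $p$, where the moduli problem simplifies: using the affine chart $U(\tilz)^{\simc,\le h}$ with its explicit matrix description from Proposition~\ref{prop:moduli-interpretation-open-chart}, the defining monodromy equation $v\ddv{\ov g}\ov g^\mo + \ov g\diag(\std(\bfa))\ov g^\mo \in \frac{1}{v-t}L^+\cM(R)$ becomes polynomial in the open chart coordinates, and in characteristic $p$ one can exploit either Frobenius-type splittings, a torus action by $T^\vee\times \G_m$ whose fixed points are exactly the $\tilz_{\F,\bfa}$, or a direct analysis of the tangent/branch structure at $\tilz$ under genericity, to show that the geometric fiber of the normalization of $M^{\mathrm{eq}}(\lam,\nba_{\ov\bfa})$ over $\tilz_{\F,\bfa}$ is a single point. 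The genericity hypothesis on $\bfa$ controls the monodromy equation and rules out degenerations that create multiple branches.

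Finally, combining the transfer principle from the second paragraph with this equal characteristic unibranchness proves that $M(\lam,\nba_\bfa)$ is unibranch at $\tilz_{\F,\bfa}$. For the second assertion, note that $U(\tilz,\lam,\nba_\bfa)$ is an open affine neighborhood of $\tilz_{\F,\bfa}$ in $M(\lam,\nba_\bfa)$, and $M(\lam,\nba_\bfa)$ is $\cO$-flat, hence $v$-torsion free; the $p$-adic completion of $\cO(U(\tilz,\lam,\nba_\bfa))$ agrees with $\cO_{M(\lam,\nba_\bfa),\tilz_{\F,\bfa}}^\wedge$, which is a domain by the standard equivalence between unibranchness at a point of an excellent Noetherian scheme and the domain property of the completed local ring. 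The hardest step is the equal characteristic unibranchness in the previous paragraph, since it requires controlling how the defining $\nba_{\bfa}$-equation cuts out the relevant Schubert variety at the torus-fixed point $\tilz$, and it is here that genericity of $\bfa$ (in particular its $P_m$-genericity for $m$ depending on $\lam$) is essential.
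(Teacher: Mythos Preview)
Your overall strategy matches the paper's: compare the mixed and equal characteristic local models through the normalized universal local model, establish unibranchness in equal characteristic, and transfer. The framing via $\spec\F\DB{v}$ rather than $\A^1_\F$ is a harmless variant, since Lemma~\ref{lem:spreading-normality} applies to any complete DVR and the fibers at $v=0$ agree. (The paper instead only uses a \emph{surjection} from the equal characteristic normalization onto the base change of the universal normalization, which suffices.)

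However, your final paragraph contains a genuine error. The $p$-adic completion $\cO(U(\tilz,\lam,\nba_\bfa))^\pcp$ is \emph{not} the completed local ring at the point $\tilz_{\F,\bfa}$: it is the $\varpi$-adic completion of the coordinate ring of an affine open whose special fiber is positive-dimensional and contains many closed points besides $\tilz_{\F,\bfa}$. Unibranchness at a single point does not imply that this larger ring is a domain. The paper's argument is different: first, $\cO(U(\tilz,\lam,\nba_\bfa))[1/\varpi]$ is a regular domain by Proposition~\ref{prop:nv-local-model-smooth}; second, the preimage of the \emph{entire} open chart $U(\tilz)$ in the special fiber of $M(\lam,\nba_\bfa)^\nm$ is connected --- this is the ``Moreover'' clause of Proposition~\ref{prop:unibranch-equal-char}, transferred via the comparison diagram; and then \cite[Lemma~3.7.2]{LLHLM-2020-localmodelpreprint} gives that the $p$-adic completion is a domain. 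The connectedness of the full preimage, not merely of the fiber over $\tilz$, is the missing ingredient.

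A smaller point: the equal-characteristic unibranchness (your third paragraph) holds for \emph{all} $\bfa\in\F^3$, with no genericity needed; it is proved using a contracting one-parameter subgroup in $T^\vee\times\G_m$ (Proposition~\ref{prop:unibranch-equal-char}). The genericity of $\bfa$ enters only through Lemmas~\ref{lem:univ-local-model-flat} and~\ref{lem:spreading-normality}, i.e.\ only to make the transfer diagram commute.
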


This is a preliminary version of the main unibranch result (Theorem \ref{thm:unibranch-product}). Their proofs follow from the proof of \cite[Theorem 3.7.1]{LLHLM-2020-localmodelpreprint}, but we provide a detailed argument in the hope that it will be useful to the reader. We first introduce the local model in equal characteristic and prove its unibranch property.

\begin{defn}
Let $\bfa\in \A^3(\F)$. We define $\cM^\nv(\le \lam,\nba_\bfa):= \cM_X^\nv(\le \lam,\nba)\times_{\A^3}\bfa$. 
We define the \emph{equal characteristic local model} $\cM(\lam,\nba_\bfa)$ by the Zariski closure of
\begin{align*}
    \PR{\cM^\nv_{X^0}(\le\lam,\nba)\cap (\cS^\circ_{X^0}(\lam)\times_\Z \A^3)}\times_{\A^3} \bfa
\end{align*}
in $\cM^\nv(\le\lam,\nba_\bfa)$. Note that $\cM^\nv(\le\lam,\nba_\bfa)$ and $\cM(\lam,\nba_\bfa)$ are projective over $\A^1_\F$ and the latter is furthermore flat.
\end{defn}

\begin{rmk}\label{rmk:comparing-generic-fiber-of-various-local-models}
Although mixed or equal characteristic local models are generally \emph{not} equal to base changes of universal local models, their generic fibers can be obtained by taking base changes of the generic fibers of universal local models. Suppose that $h_\lam < p $. 
Taking fiber product over the $E$-point $(-p,\bfa)\in X \times \A^3$ (resp.~at $\F$-point $\bfa$), we have
\begin{align*}
    \cM_{X^0}(\lam,\nba) \times_{X\times \A^3} \spec E  &=  M(\lam,\nba_\bfa)\times_\cO \spec E
    \\
    \cM_{X^0}(\lam,\nba) \times_{\A^3} \spec \F &= \cM(\lam,\nba_\bfa) \times_{X}X^0.
\end{align*}
This is because the generic fibers on the right-side are connected components of the generic fibers of the corresponding naive local models by Remark \ref{rmk:generic-fiber-conn-comp}.  Moreover, $\cM_{X}(\lam,\nba) \times_{X\times \A^3} \spec \cO$ (resp.~$\cM_{X}(\lam,\nba) \times_{\A^3} \spec \F$) is closed in $M^\nv(\lam,\nba_{\bfa})$ (resp.~$\cM^\nv(\lam,\nba_\bfa)$). Thus, $M(\lam,\nba_\bfa)$ (resp.~$\cM(\lam,\nba_\bfa)$) is characterized as $p$-flat (resp.~$v$-flat) closure of $\cM_{X^0}(\lam,\nba)\times_{X\times \A^3} \spec E$ inside $\cM_X(\lam,\nba) \times_{X\times \A^3} \spec \cO$ (resp.~$\cM_{X^0}(\lam,\nba)\times_{X\times \A^3} \spec E$ inside $\cM_X(\lam,\nba) \times_{\A^3} \spec \F$).
\end{rmk}

For $l\in \Z_{> 0}$ and an $X$-scheme $Y$, we write $Y_l$ for the base change $Y\times_{X, v\mapsto v^l} X$.

\begin{prop}[cf.~Proposition 3.4.4 in \cite{LLHLM-2020-localmodelpreprint}]\label{prop:unibranch-equal-char}
Let $l>0$ be an integer. Then $\cM(\lam,\nba_\bfa)_l$ is unibranch at any $\til{z}_{\F,\bfa}$ contained in its special fiber. Moreover, the preimage of $\cU(\tilz)$ in $(\cM(\lam,\nba_\bfa)_l)^\nm \times_X X_0$ is connected.
\end{prop}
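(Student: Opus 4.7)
My plan is to adapt the proof of Proposition 3.4.4 of \cite{LLHLM-2020-localmodelpreprint}, which establishes the analogous statement for $\GL_n$, to $\GSp_4$, appealing to the $\GL_4$ case via the closed embedding $\std:\cG \hookrightarrow \cG_4$ whenever possible. First I would verify that $\cM(\lam,\nba_\bfa)_l$ is integral and $v$-flat over $X = \A^1_\F$: by Proposition \ref{prop:schubert-cell-reduction}, the locus $\cM^\nv_{X^0}(\le\lam,\nba) \cap (\cS^\circ_{X^0}(\lam)\times \A^3)$ is smooth with geometrically connected fibers over $X^0 \times \A^3$, so specializing at $\bfa$ and pulling back along $v\mapsto v^l$ produces an irreducible smooth open subscheme of $\cM^\nv(\le\lam,\nba_\bfa)_l$ whose Zariski closure is $\cM(\lam,\nba_\bfa)_l$. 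Consequently $\cM(\lam,\nba_\bfa)_l$ is integral and proper $v$-flat over $X$, and its normalization $(\cM(\lam,\nba_\bfa)_l)^\nm$ is finite, integral, and flat over $X$ with irreducible geometric generic fiber.

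The core step is the connectedness assertion. I would work explicitly on the affine chart $\cU(\tilz)^{\simc,\le h}$, parametrizing its points by matrices $A$ satisfying the degree bounds and similitude condition of Proposition \ref{prop:moduli-interpretation-open-chart}, and writing the $\nba_\bfa$-equations as explicit polynomial relations among the entries of $A$. Let $Y(\tilz)$ denote the resulting closed subscheme of $\cU(\tilz)^{\simc,\le h}$; it is an affine open of $\cM(\lam,\nba_\bfa)_l$, and by the same proposition together with Remark \ref{rmk:generic-fiber-conn-comp} its generic fiber over $X^0$ is an open in $(P_\lam \bss \GSp_4) \times X^0$ pulled back along $v\mapsto v^l$, hence smooth and irreducible. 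Writing $Y(\tilz)^\nm$ for the restriction of $(\cM(\lam,\nba_\bfa)_l)^\nm$ to $Y(\tilz)$, I would then show that the special fiber $Y(\tilz)^\nm \times_X X_0$ is connected by adapting the explicit calculation of \cite[\S3.4]{LLHLM-2020-localmodelpreprint}: one constructs a flat family over $X$ extending the Schubert cell piece and containing $\tilz$, verifies that every geometric point of $Y(\tilz)^\nm \times_X X_0$ can be specialized from a point of the generic fiber within this family, and concludes connectedness.

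The unibranch property at $\tilz_{\F,\bfa}$ then follows by torus equivariance: the right $T^\vee$-action on $\Gr_{\cG,X}$ preserves $\cU(\tilz)$, fixes $\tilz_{\F,\bfa}$, and lifts to the normalization, so the finite fiber of $Y(\tilz)^\nm \to Y(\tilz)$ above $\tilz_{\F,\bfa}$ is a $T^\vee$-stable finite subscheme. Two distinct points of this fiber would lie in distinct connected components of the $T^\vee$-fixed locus of $Y(\tilz)^\nm \times_X X_0$, contradicting the connectedness just established.

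I expect the main obstacle to be the explicit analysis of $Y(\tilz)$ on the affine chart: the similitude condition $\simc(A) = \simc(w)(v-t)^d$ couples entries of $A$ in a way absent in the $\GL_n$ case, and one must verify it is compatible with the $\nba_\bfa$-equations so as to preserve the irreducibility and the Schubert cell description of the generic fiber. A natural shortcut is to pass through a closed embedding $\cM(\lam,\nba_\bfa)_l \hookrightarrow \cM_4(\std(\lam),\nba_{\std(\bfa)})_l$ induced by $\std$, which is compatible with affine charts, torus actions, and torus-fixed points, so that the combinatorial bookkeeping (degree bounds, shape of the $\nba$-equations, indexing of torus-fixed points) can be imported directly from \cite[\S3.4]{LLHLM-2020-localmodelpreprint}, isolating the genuinely new input to checking that the symplectic constraints cut out an irreducible subscheme of the expected dimension inside the $\GL_4$ local model.
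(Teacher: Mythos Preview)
Your deduction of unibranch from connectedness has a genuine gap. You argue that two distinct points in the fiber of $Y(\tilz)^\nm \to Y(\tilz)$ over $\tilz_{\F,\bfa}$ ``would lie in distinct connected components of the $T^\vee$-fixed locus of $Y(\tilz)^\nm \times_X X_0$, contradicting the connectedness just established.'' But connectedness of a scheme does not imply connectedness of its torus-fixed locus (e.g.\ $\PP^1$ with the standard $\G_m$-action), so this step fails as written. The $T^\vee$-action alone does not contract $\cU(\tilz)$ to $\tilz$, and without a contracting action there is no mechanism forcing the fiber in the normalization to be a single point. Your connectedness step is also too vague: ``every geometric point of $Y(\tilz)^\nm\times_X X_0$ can be specialized from a point of the generic fiber'' is essentially flatness and does not by itself give connectedness of the special fiber.

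The missing ingredient, and what the paper actually uses, is a \emph{contracting} one-parameter subgroup valued in $T^\vee\times\G_m$ (the extra $\G_m$ being loop rotation, scaling $v$). This is exactly the content of \cite[Lemma~3.4.7]{LLHLM-2020-localmodelpreprint}; the paper observes that one can choose such a cocharacter landing in the $\GSp_4$-torus $T^\vee\times\G_m$ rather than $T_4^\vee\times\G_m$, so that it acts on $\cU(\tilz)^{\simc,\le h}$ with strictly positive weights and contracts everything to $\tilz$. Once you have this, both claims follow at once via \cite[Lemma~3.4.8]{LLHLM-2020-localmodelpreprint}: the coordinate ring of $Y(\tilz)^\nm$ is a positively graded domain, so its degree-zero part is a field, hence the fiber over $\tilz$ is a single point (unibranch); and $v$ has positive weight, so $Y(\tilz)^\nm\times_X X_0$ is still positively graded with the same degree-zero part, hence connected. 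Your proposed detour through the embedding $\std$ into the $\GL_4$ local model is unnecessary here---the only new input is checking that the one-parameter subgroup can be taken in $T^\vee\times\G_m$, and the rest of the argument from \cite{LLHLM-2020-localmodelpreprint} goes through verbatim.
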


\begin{proof}
This mainly follows from  \cite[Lemma 3.4.7, 3.4.8]{LLHLM-2020-localmodelpreprint}. Indeed, we can take a one-parameter subgroup in Lemma 3.4.7 in \emph{loc.\;cit.}~valued in $T^{\vee}\times \G_m$ (instead of $T^\vee_4 \times \G_m$) whose induced action on $\cU(\tilz)^{\simc,\le h}$ (instead of $\cU_4(\tilz)^{\det,\le h}$) satisfies the conditions stated in Lemma 3.4.7 in \emph{loc.\;cit.}. Using this, the proof of \cite[Proposition 3.4.4]{LLHLM-2020-localmodelpreprint} can be applied in our case too.
\end{proof}

\begin{proof}[Proof of Theorem \ref{thm:unibranch-local-model}]
The novel idea in \cite{LLHLM-2020-localmodelpreprint} is to compare the mixed characteristic and equal characteristic local models inside the universal local model. Let $U$ be an open subscheme $U \subset \A^3$ satisfying Lemmas \ref{lem:univ-local-model-flat}  and \ref{lem:spreading-normality} for $l=e$ and $(2h_\lam)!e!$ is invertible. Note that this implies $U(\cO) = \emptyset$ unless $p$ does not divide $(2h_\lam)!e!$. Thus we can assume that $\cO/\Z_p$ is tame and take $\varpi = (-p)^{1/e}$ (after enlarging the residue field if necessary). Let $\bfa \in U(\cO)$.  The following diagram explains how to make such a comparison.
\[
\begin{tikzcd}
& M(\lam,\nba_\bfa)^\nm \arrow[ld, dashed, "\sim"]  \arrow[d] & \\
(\cM_X(\lam,\nba)_e)^\nm\times_{X\times U} \spec \cO  \arrow[r] \arrow[d] & M(\lam,\nba_\bfa) \arrow[r] \arrow[d] & \spec \cO \arrow[d, "{(v\mapsto \varpi,\bfa)}"] \\
(\cM_X(\lam,\nba)_e)^\nm \arrow[r] & \cM_X(\lam,\nba)_e \arrow[r] & X\times U \\
(\cM_X(\lam,\nba)_e)^\nm\times_{X\times U} \A^1_\F \arrow[u] \arrow[r] & \cM(\lam,\nba_\bfa)_e \arrow[u] \arrow[r] & \A^1_\F \arrow[u, "{(v\mapsto v,\bfa)}"'] \\
& ( \cM(\lam,\nba_\bfa)_e)^\nm  \arrow[lu, dashed, two heads] \arrow[u] & 
\end{tikzcd}
\]
By Lemma \ref{lem:univ-local-model-flat}, $\cM_X(\lam,\nba)_e$ is flat over $X\times U$. Thus, all rectangles are cartesian by Remark \ref{rmk:comparing-generic-fiber-of-various-local-models}. Moreover, two base changes of normalization map $(\cM_X(\lam,\nba)_e)^\nm \ra \cM_X(\lam,\nba)_e$ are finite and birational. Finiteness is obvious, and birationality is preserved by base change  because the dense open subscheme $\cM_{X^0}(\lam,\nba)_e\subset \cM_{X}(\lam,\nba)_e$, which is already normal by Proposition \ref{prop:nv-local-model-smooth}, is still dense after each base change. This induces two surjective dashed arrows by \cite[\href{https://stacks.math.columbia.edu/tag/035Q}{Tag 035Q}]{stacks-project}, and the top dashed arrow is an isomorphism by Lemma \ref{lem:spreading-normality}.

Note that $M(\lam,\nba_\bfa)$ and $\cM(\lam,\nba_\bfa)$ share the same special fiber. Suppose that $M(\lam,\nba_\bfa)$ is \emph{not} unibranch at $\til{z}_{\F,\bfa}$. Then there are at least two points in the preimage of $\til{z}_{\F,\bfa}$ in $M(\lam,\nba_\bfa)^\nm$. Therefore, the preimage of $\til{z}_{\F,\bfa}$ in $(\cM(\lam,\nba)_e)^\nm$ contains at least two points. In turn, this implies that the preimage of $\til{z}_{\F,\bfa}$ in $(\cM_X(\lam,\nba)_e)^\nm\times_{X\times U} \A_\F^1$ contains at least two points. By the surjectivity of the bottom dashed map, this contradicts Proposition \ref{prop:unibranch-equal-char}. This proves that $M(\lam,\nba_\bfa)$ is unibranch at $\tilz_{\F,\bfa}$.

Note that $\cO(U(\tilz,\lam,\nba_\bfa))[1/\varpi]$ is a regular domain by Proposition \ref{prop:nv-local-model-smooth}. Also, the preimage of $U(\tilz,\lam,\nba_\bfa)$ in the special fiber of $M(\lam,\nba_\bfa)^\nm$ is connected by Proposition \ref{prop:unibranch-equal-char} and the above diagram. Then $\cO(U(\tilz,\lam,\nba_\bfa))^\pcp$ is a domain by \cite[Lemma 3.7.2]{LLHLM-2020-localmodelpreprint}. 
\end{proof}

\subsection{Products of local models}\label{sub:products-local-models}
We now generalize Theorem \ref{thm:unibranch-local-model} to products of local models. Let $\cJ$ be a finite set. Let $\lam = (\lam_j)_{j\in\cJ} \in X_*(T^\vee)^\cJ$ be a dominant cocharacter and $\til{z} = (\til{z}_j)_\jj \in \tilW^{\vee,\cJ}$. We define
\begin{align*}
    \cM_{X,\cJ}(\lam,\nba) = \prod_{\jj} \cM_X(\lam_j,\nba) \subset (\Gr_{\cG,X}\times_\Z \A^3)^\cJ.
\end{align*}
Let $\bfa = (\bfa_j)_\jj \in (\A^3)^\cJ(\cO)$. 
We also define  $M_\cJ(\lam,\nba_\bfa) := \prod_{\jj}M(\lam_j,\nba_{\bfa_j})$ and  $U(\tilz,\lam,\nba_\bfa)) := \prod_\jj U(\tilz_j,\lam_j,\nba_{\bfa_j})$.
The following is the main result of this chapter.

\begin{thm}\label{thm:unibranch-product}
There exists a non-empty open subscheme $U\subset (\A^3)^\cJ$ depending only on $\lam$ and $e$, such that if $\bfa \in U(\cO)$, then $M_\cJ (\lam, \nba_\bfa)$ is unibranch at any point $\til{z}_{\F,\bfa}$ contained in its special fiber. In addition, $\cO(U(\til{z},\lam,\nba_\bfa))^\pcp$ is a domain.
\end{thm}

\begin{proof}
Let $M_j:= \cM_X(\lam_j,\nba) \ra X\times \A^3$ and $\tilz_j \in \tilW^\vee$. The special fiber $M_j\times_X X_0$ intersect with a section $\tilz_j$ only if $\tilz_j = wt_\nu$ for some $w\in W$ and $\nu \in \Conv(\lam_j)$. Note that there are finitely many $\tilz_j$ satisfying these conditions. 

Let $\eta$ be the generic point of $\A^3$. We define $Fix_j$ to be the set of $\tilz_j \in \tilW^\vee$ which intersects with $M_j\times_{X\times \A^3} (X_0 \times \eta)$. For any $\tilz_j$ such that $\tilz_j = wt_\nu$ for some $w\in W$ and $\nu \in \Conv(\lam_j)$, consider a map
\begin{align*}
    f_{\tilz_j}: (M_j\times_X X_0)  \cap  \tilz \ra \A^3.
\end{align*}
(Here, the intersection is taken inside ${\Gr_{\cG,X}\times_X X_0\times \A^3}$.) 
The image of $f_{\tilz_j}$ is constructible, and it contains $\eta$ if and only if $\tilz_j \in Fix_j$. Note that any constructible set containing the generic point of an irreducible scheme contains an open neighborhood of the generic point. Thus, there exists an open neighborhood $V'_j$ of $\eta$ which is contained in the image of $f_{\tilz_j}$ for all $\tilz_j \in Fix_j$ and the complement of the image of $f_{\tilz_j}$ for all $\tilz_j \notin Fix_j$.

Let $V_j$ be the intersection of $V'_j$ with the open subscheme of $\A^3$ satisfying the conclusion of Lemma \ref{lem:univ-local-model-flat} for $\lam_j$ and $e$.  
Then $M_j|_{V_j} \ra X\times V_j$ and $\tilz_j\in Fix_j$ satisfy the assumptions of \cite[Corollary 3.6.2]{LLHLM-2020-localmodelpreprint}. As a result, there exists an integer $e'$ and an open subscheme $U'_j\subset V_j$ satisfying the following: for any $\tilz_j\in Fix_j$ and $\bfa_j \in U_j(\cO)$, there exists a finite DVR extension $\cO'$ of $\cO$ of degree $\le e'$ and an $\cO'$-point of $M_j$ lifting $(\tilz_j,\bfa_j) \in M(\lam_j,\nba_{\bfa_j})(\F)$.

Let $U_j\subset U_j'$ be the open subscheme in which Theorem \ref{thm:unibranch-local-model} holds for $\lam_j$ and $e e'!$. Then we define $U= \prod_\jj U_j$.  Let $\cO'/\cO$ be the extension obtained by adjoining an $e'!$-th root of uniformizer and $e'!$-th root of unity. Note that $\cO'$ contains any extension of $\cO$ of degree $\le e'$.
For $\tilz\in \prod_\jj Fix_j$ and $\bfa \in U(\cO)$, $M_\cJ(\lam,\nba_\bfa)$ is unibranch at $\tilz_{\F,\bfa}$ if and only if the completed local ring of $M_\cJ(\lam,\nba_\bfa)$ at $\tilz_{\F,\bfa}$ is a domain, which is a subring of the completed local ring of $M_\cJ(\lam,\nba_\bfa) \times_\cO \spec \cO'$ at $\tilz_{\F,\bfa}$. Similarly, $\cO(U_\cJ(\tilz,\lam,\nba_{\bfa}))^\pcp$ is a subring of $\cO(U_\cJ(\tilz,\lam,\nba_{\bfa}\times_\cO \spec \cO'))^\pcp$. Thus, it suffices to prove the claim after replacing $\cO$ with $\cO'$. Note that both the completed local ring of $M_\cJ(\lam,\nba_\bfa) \times_\cO \spec \cO'$ at $\tilz_{\F,\bfa}$ and $\cO(U_\cJ(\tilz,\lam,\nba_{\bfa}\times_\cO \spec \cO'))^\pcp$ are completed tensor products of the completed local ring of $M(\lam_j,\nba_{\bfa_j})\times_\cO \spec \cO'$ at $\tilz_{j,\F,\bfa_j}$ and $\cO(U_\cJ(\tilz_j,\lam_j,\nba_{\bfa_j})\times_\cO \spec \cO')^\pcp$, which are known to be domain by Theorem \ref{thm:unibranch-local-model}. By the choice of $\cO'$, each $M_j\times_{\cO}\spec \cO'$ has an $\cO'$-point lifting $(\tilz_j,\bfa_j)$ in its special fiber. Then the two claims follow from \cite[Proposition 2.2]{KW2}, and \cite[Lemma A.1.1]{BLGGT-MR3152941} respectively (as explained in the last two paragraphs of the proof of \cite[Theorem 3.7.1]{LLHLM-2020-localmodelpreprint}).
\end{proof}

\subsection{Special fiber of naive local models in mixed characteristic}\label{sub:special-fiber-local-models}
In the remainder of this section, we study the special fiber of the naive local models in mixed characteristic. Recall that the naive local models in mixed characteristic are defined by  imposing the monodromy condition on the Pappas--Zhu local models inside $\Gr_{\cG,\cO} = L^+\cG_\cO \bss L \cG_\cO$. The special fiber $\Gr_{\cG,\cO}\times_\cO \spec \F$ is the affine flag variety $\Fl := \cI_\F \bss L(\GSp_4)_\F$. Here $\cI_F:= L^+\cG_{\F}$ is the usual Iwahori group scheme. Given $\bfa\in \cO^3$, we define
$\Fl^{\nba_\bfa}:= \Fl \times_{\Gr_{\cG,\cO}} \Gr_{\cG,\cO}^{\nba_\bfa}$.

It is known by \cite[Theorem 9.3]{PZ13-Inv-local_model-MR3103258}  that the special fiber $M(\le \lam)_\F$ is the reduced union of the affine Schubert cells $S^\circ_\F(\til{z})$ for $\til{z}\in \Adm^\vee(\lam)$. Thus, the underlying reduced closed subscheme of $M^\nv(\le\lam,\nba_\bfa)_\F$ is equal to the reduced union of $S^\circ_\F(\til{z})\cap \Fl^{\nba_\bfa}$ for $\til{z}\in \Adm^\vee(\lam)$. 

For $\al\in \Phi$, we define $H_\al^{(0,1)} = \{x \in X^*(T)\times_\Z \R \mid 0<\RG{x,\al^\vee}<1\}$.
The following is the main result of this subsection.

\begin{thm}[{cf.~\cite[Theorem 4.2.4]{LLHLM-2020-localmodelpreprint}}]\label{thm:schubert-cell-with-monodromy}
Let $h>0$ be an integer, $\til{w}\in \tilW$, and $\bfa\in \cO^3$. Suppose that $\til{w}$ is $h$-small and $\bfa \mod \varpi \in \F^3$ is $h$-generic (Definition \ref{def:genericty}). Then $S^\circ_\F(\til{w}^*)\cap \Fl^{\nba_\bfa}$ is an affine space of dimension $4 - \# \{\al \in \Phi^+ \mid \til{w}(A_0) \subset H_\al^{(0,1)}\}$.
\end{thm}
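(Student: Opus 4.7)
The plan is to adapt the strategy of the $\GL_n$ analogue in \cite[\S4]{LLHLM-2020-localmodelpreprint} to $\GSp_4$. Using the affine open chart $\cU(\tilw^*)\times_X \spec\F$ from Proposition \ref{prop:tilU-open-in-Gr}, I would parameterize $S^\circ_\F(\tilw^*)$ explicitly: by Proposition \ref{prop:moduli-interpretation-open-chart} specialized at $v\mapsto 0$, its points are matrices $A(v)\in \GSp_4(\F\DP{v})$ whose entries are Laurent polynomials in $v$ with prescribed degree bounds, with certain leading entries normalized to $1$ and the remaining $c_{ij,k}$ forming coordinates on an affine space; the open Schubert cell condition then cuts out an open subscheme.

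The monodromy condition, defined through the $\std$ embedding, reads that
\[
v N_A := v^2 \ddv{A} A^\mo + v\cdot A\diag(\std(\bfa))A^\mo
\]
lies in $L^+\cM_4(\F)$, i.e.\ all negative-degree coefficients of $vN_A$ vanish and its constant term is upper triangular. Expanding in the coordinates $c_{ij,k}$ gives a system of linear equations whose coefficients one checks to be of the form $\RG{\std(\bfa),\al^\vee}+k$ for $\al$ a root of $\GL_4$ and $|k|\le h$, the bound on $k$ coming from the $h$-smallness of $\tilw$. The $h$-genericity of $\bfa$ then ensures these coefficients are nonzero whenever the equation is not already formally trivial.

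An inductive argument parallel to that of Proposition \ref{prop:schubert-cell-reduction} then eliminates all higher-degree coordinates $c_{ij,k}$ ($k>0$) in terms of the lowest-degree ones. The surviving coordinates are naturally indexed by the positive roots of $\GSp_4$ (pairs of $\GL_4$ roots collapsing under the symplectic constraint), and each such root $\al$ contributes one further linear equation whose coefficient is zero (equation trivial, coordinate free) precisely when $\tilw(A_0)\not\subset H_\al^{(0,1)}$, and nonzero (coordinate forced to vanish) precisely when $\tilw(A_0)\subset H_\al^{(0,1)}$. This yields an affine space of dimension $4 - \#\CB{\al\in\Phi^+ : \tilw(A_0)\subset H_\al^{(0,1)}}$.

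The main obstacle will be the explicit combinatorial matching between the vanishing of the coefficient of the equation attached to $\al$ and the condition $\tilw(A_0)\not\subset H_\al^{(0,1)}$. While the $\GL_4$ version of this matching is essentially handled in \cite{LLHLM-2020-localmodelpreprint}, the descent along $\std$ requires careful case-by-case verification for each of the four positive roots of $\GSp_4$ (with the short/long distinction), and checking that the symplectic structure on $A$ is preserved throughout the elimination procedure.
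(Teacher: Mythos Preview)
Your overall strategy—impose the monodromy condition on an explicit parameterization, obtain linear equations whose leading coefficients are $\langle\std(\bfa),\alpha^\vee\rangle+k$, and invoke $h$-genericity—is indeed the same as the paper's (both following \cite[Theorem~4.2.4]{LLHLM-2020-localmodelpreprint}). However, there is a genuine gap in your choice of parameterization.

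The open chart $\cU(\tilw^*)_\F$ of Proposition~\ref{prop:moduli-interpretation-open-chart} is \emph{not} the Schubert cell $S^\circ_\F(\tilw^*)$: it is an open neighbourhood of $\tilw^*$ in all of $\Fl$, equal to $L^{\mm}\cG_\F\cdot\tilw^*$, and contains pieces of \emph{every} Schubert cell whose closure passes through $\tilw^*$. The cell $S^\circ_\F(\tilw^*)$ is the \emph{closed} subscheme of this chart cut out by the integrality condition $(\tilw^*)^{-1}g\in\cI_\F$, not an open one as you assert. In particular, the coordinates $c_{ij,k}$ from Proposition~\ref{prop:moduli-interpretation-open-chart} vastly overcount: the chart $\cU(\tilw^*)^{\simc,\le h}_\F$ has dimension of order $h\cdot\#\Phi$, whereas $S^\circ_\F(\tilw^*)$ has dimension $\ell(\tilw^*)$. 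So your ``inductive elimination of higher $c_{ij,k}$'' would first have to cut down to the correct closed locus, which you have not done.

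The paper avoids this by parameterizing $S^\circ_\F(\tilz)$ directly via the Bruhat-type isomorphism $\tilz N_{\tilz}\simeq S^\circ_\F(\tilz)$ with $N_{\tilz}=\tilz^{-1}L^{\mm}\cG_\F\tilz\cap\cI_\F$, and then decomposing $N_{\tilz}\simeq\prod_{(\alpha^\vee,m)\in\Phi^\vee_{\tilz}}U_{-\alpha^\vee,m}$ into affine root groups (Corollary~\ref{cor:schubert-cell-description}). In these root-group coordinates the monodromy condition determines all but the top coefficient of each $M_{\alpha^\vee}$, so the surviving dimension is $\#\{\alpha^\vee: U_{-\alpha^\vee,m}\subset N_{\tilz}\text{ for some }m\}$; a short combinatorial count then identifies this with $4-\#\{\alpha\in\Phi^+:\tilw(A_0)\subset H_\alpha^{(0,1)}\}$. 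Note also that your reference to Proposition~\ref{prop:schubert-cell-reduction} is misplaced: that result concerns the generic fibre over $X^0$, whereas the special-fibre argument needed here is the one in \cite[\S4.2]{LLHLM-2020-localmodelpreprint}.
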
 

As a Corollary, we get a classification of top-dimensional (i.e.~$4$-dimensional) irreducible components of $M^\nv(\le\lam,\nba_\bfa)_\F$. 
If $d\in \Z_{\ge 0}$ and $X$ is a reduced scheme, we let $\Irr_d X$ denote the set of $d$-dimensional irreducible closed subschemes of $X$.

\begin{defn}
We say that $\tilw \in \tilW$ is \emph{regular} if $\tilw^*(A_0)\nsubseteq H^{(0,1)}_\al$ for any $\al\in \Phi^+$. 
Let $\lam\in X_*(T^\vee)$. We define $\Adm_\reg^\vee(\lam)\subset \Adm^\vee(\lam)$ to be the subset of $\tilz\in \Adm^\vee(\lam)$ such that $\tilz^*$ is regular. We similarly define $\Adm_\reg(\nu)$ for $\nu\in X^*(T)$. Then $(-)^*$ induces a bijection between $\Adm_\reg(\nu)$ and $\Adm_\reg^\vee(\phi(\nu))$.
\end{defn}

\begin{cor}\label{cor:naive-spec-fiber-irred-comp}
Let $\lam\in X^*(T^\vee)$ be a dominant cocharacter. Suppose that $\bfa \mod \varpi \in \F^3$ is $h_\lam$-generic. There is a bijection
\begin{align*}
    \Adm_\reg^\vee(\lam) &\xrightarrow{\sim} \Irr_{4}M^\nv(\le\lam, \nba_\bfa)_\F    \\
    \til{z} &\mapsto \ov{(S^\circ_\F(\til{z}) \cap \Fl^{\nba_\bfa})}.
\end{align*}
\end{cor}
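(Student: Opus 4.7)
The plan is to deduce the Corollary directly from Theorem \ref{thm:schubert-cell-with-monodromy} using the stratification observation preceding the statement. Recall that, set-theoretically,
\[
M^\nv(\le\lam,\nba_\bfa)_\F \;=\; \bigcup_{\tilz \in \Adm^\vee(\lam)} \bigl(S^\circ_\F(\tilz)\cap \Fl^{\nba_\bfa}\bigr),
\]
where the pieces are locally closed and pairwise disjoint since the open affine Schubert cells $S^\circ_\F(\tilz) \subset \Fl$ are.

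First I would check that Theorem \ref{thm:schubert-cell-with-monodromy} applies uniformly, with parameter $h = h_\lam$, to every $\tilz \in \Adm^\vee(\lam)$. Any such $\tilz$ satisfies $\tilz \le t_{w\lam}$ in $\tilW^\vee$ for some $w\in W$, which forces the translation part of $\tilz$ to lie in the convex hull $\Conv(W\lam)$; equivalently, $\tilz^*\in \tilW$ is $h_\lam$-small in the sense of Definition \ref{def:genericty}. Combined with the hypothesis that $\bfa \mod \varpi$ is $h_\lam$-generic, Theorem \ref{thm:schubert-cell-with-monodromy} then yields
\[
\dim \bigl(S^\circ_\F(\tilz) \cap \Fl^{\nba_\bfa}\bigr) \;=\; 4 - \#\{\al \in \Phi^+ \mid \tilz^*(A_0)\subset H_\al^{(0,1)}\},
\]
and in particular this dimension equals $4$ exactly when $\tilz^*$ is regular, i.e.~when $\tilz \in \Adm_\reg^\vee(\lam)$, and is strictly less than $4$ otherwise.

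With this dimension formula in hand, the Corollary reduces to a short formal argument. Any $Z \in \Irr_4 M^\nv(\le\lam, \nba_\bfa)_\F$ is $4$-dimensional and irreducible, so its generic point lies in exactly one stratum $S^\circ_\F(\tilz) \cap \Fl^{\nba_\bfa}$; that stratum must then have dimension $4$, forcing $\tilz \in \Adm_\reg^\vee(\lam)$, and $Z = \ov{S^\circ_\F(\tilz)\cap \Fl^{\nba_\bfa}}$. Conversely, for $\tilz \in \Adm_\reg^\vee(\lam)$ the intersection $S^\circ_\F(\tilz)\cap \Fl^{\nba_\bfa}$ is an affine space of dimension $4$ by Theorem \ref{thm:schubert-cell-with-monodromy}, so its closure in $M^\nv(\le\lam,\nba_\bfa)_\F$ is irreducible of dimension $4$ and therefore an element of $\Irr_4 M^\nv(\le\lam, \nba_\bfa)_\F$. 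Injectivity of the map follows because each intersection is dense and open in its own closure, so if the closures attached to two distinct $\tilz_1, \tilz_2 \in \Adm_\reg^\vee(\lam)$ coincided, then the resulting irreducible closed subset would contain the two disjoint nonempty open subsets $S^\circ_\F(\tilz_i)\cap \Fl^{\nba_\bfa}$, which is impossible. There is no real obstacle beyond Theorem \ref{thm:schubert-cell-with-monodromy}: the Corollary is essentially bookkeeping, the only mild subtlety being the uniform verification of $h_\lam$-smallness for all admissible $\tilz$.
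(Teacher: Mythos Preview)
Your proof is correct and follows essentially the same approach as the paper's own proof, which simply cites the Pappas--Zhu stratification of $M(\le\lam)_\F$ by affine Schubert cells indexed by $\Adm^\vee(\lam)$ together with Theorem \ref{thm:schubert-cell-with-monodromy}, and notes the $h_\lam$-smallness of admissible elements. You have just spelled out in more detail the formal bookkeeping that the paper leaves implicit.
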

\begin{proof}
This follows from \cite[Theorem 9.3]{PZ13-Inv-local_model-MR3103258} and Theorem \ref{thm:schubert-cell-with-monodromy}. To apply Theorem \ref{thm:schubert-cell-with-monodromy}, note that $\tilz \in \Adm^\vee_\reg(\lam)$ is $h_\lam$-small.
\end{proof}

To prove Theorem \ref{thm:schubert-cell-with-monodromy}, we describe $S^\circ_\F(\tilz)$ using explicit coordinates. A version of Bruhat decomposition says that we have a double coset decomposition
\begin{align*}
    L(\GSp_4)_\F = \cup_{\til{z}\in \tilW^\vee} \cI_\F \til{z} \cI_\F
\end{align*}
and the open Schubert cell  $S^\circ_\F (\til{z})$  can be identified with $\cI_\F\bss \cI_\F \til{z} \cI_\F $.

Let $L^\mm \cG_\F \subset L\cG_\F $ be the subfunctor given by
\begin{align*}
    L^\mm\cG_\F(R) = \CB{g\in \GSp_4 \PR{R\BR{\frac{1}{v}}} \mid g \mod \frac{1}{v} \in \ov{B}(R)}
\end{align*}
for any $\F$-algebra $R$. We define $N_{\til{z}} := \til{z}^\mo L^\mm \cG_\F \tilz  \cap \cI_\F$.

Recall that the duality isomorphism $\phi$ identifies a coroot $\al^\vee$ of $G$ and a root $\phi(\al^\vee)$ of $G^\vee$, and we write $U_{\al^\vee}$ for the root subgroup of $G^\vee$ associated to $\phi(\al^\vee)$. Given $(\al^\vee,m)\in \Phi^\vee \times \Z$, we let $U_{\al^\vee,m}\subset LU_{\al^\vee}$ denote the subfunctor such that for any $\F$-algebra $R$, $U_{\al^\vee,m}(R) \subset  U_{\al^\vee}(R\DP{v})$ is identified with $v^mR \subset R\DP{v}$ under the isomorphism $U_{\al^\vee}\simeq \G_a$.

In what follows, we fix $x \in A_0$. Note that  $\ceil{\RG{x, \al^\vee}} = \del_{\al^\vee>0}$ and $\floor{\RG{x, \al^\vee}} = -\del_{\al^\vee<0}$.

\begin{prop}
Let $\tilz\in \tilW^\vee$.
\begin{enumerate}
    \item We have the following  decomposition
\begin{align*}
    N_{\til{z}} \simeq \prod_{(\al^\vee,m)\in \Phi^\vee_{\til{z}}} U_{-\al^\vee,m}
\end{align*}
where the product runs over the set $\Phi^\vee_{\til{z}}= \CB{(\al^\vee,m)\in \Phi^\vee\times \Z \mid \RG{x,\al^\vee} < m < \RG{\til{z}^*(x),\al^\vee}}$. 
    \item  There is an isomorphism
\begin{align*}
    \cI_\F \times_\F \CB{\til{z}} \times_\F N_{\til{z}} \simeq \cI_\F \til{z} \cI_\F
\end{align*}
given by multiplication.  This induces an isomorphism $\til{z} N_{\til{z}} \simeq S^\circ_\F(\til{z})$.
\end{enumerate}
\end{prop}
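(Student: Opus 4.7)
Both claims are standard consequences of the affine Bruhat decomposition in the loop group $(L\GSp_4)_\F$, and the argument is essentially parallel to the $\GL_n$ analogue in \cite{LLHLM-2020-localmodelpreprint}; I would carry out the straightforward adaptation to $\GSp_4$.

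For (1), I would fix an ordering of the positive affine coroots and use the Chevalley commutation relations in $\GSp_4^\vee$ to write, as schemes, $\cI_\F \simeq T^\vee_\F \times \prod_{(\beta^\vee,k)} U_{\beta^\vee,k}$ with the product running over pairs satisfying $k \ge \ceil{\RG{x,\beta^\vee}} = \del_{\beta^\vee>0}$, and similarly $\nloop\cG_\F$ as a product over $k \le \floor{\RG{x,\beta^\vee}} = -\del_{\beta^\vee<0}$ together with a torus piece. Conjugation by $\til{z} = wt_\nu \in \tilW^\vee$ acts on affine coroots by the usual formula $\til{z}\, U_{\beta^\vee,k}\,\til{z}^{-1} = U_{w\beta^\vee,\,k+\RG{\beta^\vee,\nu}}$, which via the antihomomorphism $(-)^*$ and the duality $\phi$ corresponds to the natural action of $\til{z}^*$ on the apartment $X^*(T)\otimes\R$. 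Intersecting $\til{z}^{-1}\nloop\cG_\F\til{z}$ with $\cI_\F$ then picks out exactly those affine coroots $(\beta^\vee,k)$ satisfying both $k \ge \ceil{\RG{x,\beta^\vee}}$ and $k \le \floor{\RG{\til{z}^*(x),\beta^\vee}}$; substituting $\beta^\vee = -\al^\vee$ and using that $\RG{x,\al^\vee}$ is never an integer for $x \in A_0$ converts these to the strict inequalities defining $\Phi^\vee_{\til{z}}$.

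For (2), the multiplication map $\cI_\F \times_\F \CB{\til{z}} \times_\F N_{\til{z}} \to \cI_\F\til{z}\cI_\F$ is surjective by the definition of the Bruhat cell. For injectivity I would note that the right-$\cI_\F$-stabilizer of $\cI_\F\til{z}$ inside $\cI_\F \bss (L\GSp_4)_\F$ is $\cI_\F \cap \til{z}^{-1}\cI_\F\til{z}$, so it suffices to check that multiplication gives an isomorphism of schemes $(\cI_\F \cap \til{z}^{-1}\cI_\F\til{z}) \times_\F N_{\til{z}} \risom \cI_\F$. This follows from exactly the same root-subgroup bookkeeping as in (1): each affine root subgroup of $\cI_\F$ corresponds, after conjugation by $\til{z}$, to an affine coroot lying either again in $\cI_\F$ (contributing to the stabilizer factor) or in $\nloop\cG_\F$ (contributing to $N_{\til{z}}$), and these pieces together account for all of $\cI_\F$ without overlap. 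Passing to left $\cI_\F$-cosets then yields the second isomorphism $\til{z} N_{\til{z}} \simeq S^\circ_\F(\til{z})$. The main difficulty throughout is purely combinatorial bookkeeping between $\tilW$ and $\tilW^\vee$ via $(-)^*$ and $\phi$, so that the index set correctly involves $\til{z}^*(x)$ with the right signs and the root subgroups appear as $U_{-\al^\vee,m}$ rather than $U_{\al^\vee,m}$; all the rest is a formal consequence of $\GSp_4^\vee$ being split reductive.
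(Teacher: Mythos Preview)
Your proposal is correct and follows essentially the same route as the paper: for (1), identify which affine root subgroups $U_{-\al^\vee,m}$ lie in $N_{\til{z}}$ via the conjugation formula and assemble the product decomposition; for (2), use the factorization $\cI_\F = (\cI_\F \cap \til{z}^{-1}\cI_\F\til{z}) \cdot N_{\til{z}}$ coming from the affine root bookkeeping. One minor correction: surjectivity in (2) is \emph{not} ``by definition of the Bruhat cell,'' since $N_{\til{z}} \subsetneq \cI_\F$ in general and one must show $\cI_\F \til{z} N_{\til{z}} = \cI_\F \til{z} \cI_\F$; this too follows from the factorization you establish. The paper in fact labels injectivity as the obvious direction and derives surjectivity from $\cI_\F = ((\til{z}^{-1}\cI_\F\til{z})\cap\cI_\F) N_{\til{z}}$, but since that factorization yields both directions your argument is complete either way.
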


\begin{proof}
\begin{enumerate}
    \item We claim that $U_{-\al^\vee,m} \subset N_{\tilz}$ if and only if $(\al^\vee,m)\in \Phi^\vee_{\tilz}$.
    It is easy to check that  $U_{-\al^\vee,m} \subset \cI_\F$ if and only if $\del_{\al^\vee >0} \le m$.  Let $\tilz^* = st_{\nu}\in \tilW$. Direct computation shows that $\tilz U_{-\al^\vee,m} \tilz^\mo = U_{-s^\mo(\al^\vee),m-\RG{\nu,\al^\vee}}$. Thus $U_{-\al,m} \subset \tilz^\mo L^\mm \cG_\F \tilz$ if and only if $m-\RG{\nu,\al^\vee} < \RG{x,s^\mo(\al^\vee)}$, or equivalently $m < \RG{s(x) + \nu, \al^\vee}$. This proves the claim.
    
    Observe that $U_{-\al^\vee,m}\subset N_{\tilz}$ only if $\RG{\tilz^*(x),\al^\vee} >0$. Let $w\in W$ be the unique element such that $w \tilz^* \in \tilW^+$. If $\RG{\tilz^*(x),\al^\vee} = \RG{w\tilz^*(x),w(\al^\vee)} >0$, then $w(\al^\vee)>0$. In other words, $\phi(w)N_{\tilz}\phi(w)^\mo \subset L\ov{U}$. Then the claimed decomposition follows from the decomposition of $\ov{U}$.
    
    \item The injectivity of the multiplication map is obvious. The surjectivity follows from $\cI_\F =  ((\tilz^\mo\cI_\F \tilz)\cap \cI_\F) N_{\tilz}$.
\end{enumerate}
\end{proof}

Let $d_{\al^\vee, \til{z}} = \floor{\RG{\til{z}^*(x) , \al^\vee}} - \ceil{\RG{x, \al^\vee}}$.

\begin{cor}\label{cor:schubert-cell-description}
Let $R$ be a Noetherian $\F$-algebra. If $M\in N_{\til{z}}(R)$, we can write $M = \prod_{\al^\vee \in \Phi^\vee} v^{\del_{\al^\vee>0}}M_{\al^\vee}$ where $M_{\al^\vee}\in U_{-\al^\vee}(R[v])\simeq R[v]$ is a polynomial of degree at most $d_{\al^\vee,\til{z}}$ when $d_{\al^\vee,\til{z}}\ge 0$ and is zero when $d_{\al^\vee,\til{z}}<0$.
\end{cor}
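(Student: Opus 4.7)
The plan is to deduce this directly from the product decomposition of $N_{\tilz}$ established in the preceding Proposition by grouping factors according to their root and computing the range of admissible valuations.

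First, I would apply the isomorphism
\[
N_{\tilz} \simeq \prod_{(\al^\vee,m)\in \Phi^\vee_{\tilz}} U_{-\al^\vee,m}
\]
from Proposition (1). Since $x\in A_0$ lies in the interior of the base alcove, $\RG{x,\al^\vee}$ is not an integer, so the condition $\RG{x,\al^\vee}<m<\RG{\tilz^*(x),\al^\vee}$ is equivalent to $\lceil\RG{x,\al^\vee}\rceil \le m\le \lfloor\RG{\tilz^*(x),\al^\vee}\rfloor$, and $\lceil\RG{x,\al^\vee}\rceil = \del_{\al^\vee>0}$ as noted just before the statement of the Proposition.

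Next, I fix an ordering on $\Phi^\vee$ and, for each $\al^\vee$, collect all factors $U_{-\al^\vee,m}$ with $m$ in the range $\del_{\al^\vee>0}\le m\le \lfloor\RG{\tilz^*(x),\al^\vee}\rfloor$. Using the isomorphism $U_{-\al^\vee}\simeq \G_a$, the subgroup $U_{-\al^\vee,m}(R)$ is identified with $v^m R \subset R\DP{v}$. Since these groups all lie in the abelian root group $LU_{-\al^\vee}$, the product (in any order) of elements from $U_{-\al^\vee,m}(R)$ over the valid range of $m$ corresponds to summation of polynomials, yielding an element of the form
\[
M_{\al^\vee} = \sum_{m=\del_{\al^\vee>0}}^{\lfloor\RG{\tilz^*(x),\al^\vee}\rfloor} c_{\al^\vee,m}\, v^m = v^{\del_{\al^\vee>0}}\cdot (\text{polynomial of degree}\le d_{\al^\vee,\tilz})
\]
in $U_{-\al^\vee}(R[v])$, where $d_{\al^\vee,\tilz}=\lfloor\RG{\tilz^*(x),\al^\vee}\rfloor - \del_{\al^\vee>0}$ by definition.

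Finally, multiplying these contributions over all $\al^\vee\in\Phi^\vee$ in the chosen order gives the desired expression $M=\prod_{\al^\vee\in\Phi^\vee} v^{\del_{\al^\vee>0}} M_{\al^\vee}$. The statement is essentially bookkeeping, so there is no real obstacle; the only subtlety is that the outer product over different roots $\al^\vee$ is as a product of schemes (it depends on the chosen ordering), whereas the inner summation for fixed $\al^\vee$ is canonical because the root group $U_{-\al^\vee}$ is abelian.
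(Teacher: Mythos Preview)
Your proposal is correct and matches the paper's approach. The paper states the Corollary without proof, treating it as immediate from the preceding Proposition; you have filled in exactly the bookkeeping that makes this immediate---grouping the factors $U_{-\al^\vee,m}$ over the range $\del_{\al^\vee>0}\le m\le \lfloor\RG{\tilz^*(x),\al^\vee}\rfloor$ and using commutativity within each abelian root group $LU_{-\al^\vee}$ to rewrite the product as a polynomial.
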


\begin{proof}[Proof of Theorem \ref{thm:schubert-cell-with-monodromy}]
Let $\tilz= \tilw^*$ and $w\in W$ be the unique element such that $w \tilw \in \tilW^+$.
We follow the proof of \cite[Theorem 4.2.4]{LLHLM-2020-localmodelpreprint} using Corollary \ref{cor:schubert-cell-description} instead of Corollary 4.2.12 of \emph{loc.~cit.}. Indeed, \loccit~shows that the monodromy condition on $\tilz M$ for some $M\in N_{\tilz}(R)$ implies  that coefficients in the polynomial $M_{\al^\vee}$ are determined by its top degree coefficient and coefficients of $M_{\al^{\prime\vee}}$ such that $w(\al^{\prime \vee}) < w(\al^\vee)$. Inductively, this shows that $S^\circ_{\F}(\tilz) \cap \Fl^{\nba_\bfa}$ is an affine space with coordinates given by the top degree coefficients of $M_{\al^{\vee}}$. 
Thus, the dimension of $S^\circ_{\F}(\tilz) \cap \Fl^{\nba_\bfa}$ is equal to
\begin{align*}
    \# \CB{\al^\vee \in \Phi^\vee \mid U_{-\al^\vee,m}\subset N_{\tilz} \text{ for some $m\ge 0$}}.
\end{align*}
This is equal to the number of $\al^\vee\in \Phi^\vee$ such that $\RG{\tilw(x),\al^\vee}>0$ and $\tilz^*(x)$ and $x$ do not lie in the same $\al$-strip. The first condition says that $w(\al^\vee)\in \Phi^{\vee,+}$, which has size 4. For such $\al$, the second condition holds unless   $\al^\vee\in\Phi^{\vee,+}$ and $\tilw(A_0)\subset H^{(0,1)}_\al$. This completes the proof. 
\end{proof}

We now make comparison between irreducible components in  $M^\nv(\le\lam,\nba_\bfa)_\F$ for different choices of $\lam$ and $\bfa$. 
We define $L(\GSp_4)_\F^\nbaz \subset L(\GSp_4)_\F$ as the subsheaf given by
\begin{align*}
    R \mapsto \CB{A \in L\GSp_4(R) \mid v \ddv{A}A^\mo \in \frac{1}{v}L^+\cM_\F(R)}
\end{align*}
for an $\F$-algebra $R$. Let  $\Fl^\nbaz$ denote the fpqc-quotient sheaf $\cI_\F \bss L(\GSp_4)_\F^\nbaz$.

\begin{rmk}\label{rmk:translation-and-monodromy}
Let $\bfa\in \cO^3$ and $\til{z} = s^\mo t_\mu \in \tilW^\vee$ such that $\bfa \equiv s^\mo(\mu) \mod \varpi$. Choose a dominant cocharacter $\lam\in X_*(T^\vee)$ and $\tilw\in \tilW$. A direct computation shows that (cf.~\cite[Proposition 4.3.1]{LLHLM-2020-localmodelpreprint})
\begin{align*}
    M^\nv(\le\lam,\nba_\bfa)_\F \til{z} &= M(\le \lam)_\F \til{z}\cap \Fl^\nbaz \\
    (S^\circ_\F (\til{w}^*)\cap \Fl^{\nba_\bfa})\til{z} &= (S^\circ_\F (\til{w}^*)\til{z})\cap \Fl^{\nbaz}.
\end{align*}
This allows us to compare $M^\nv(\le\lam,\nba_\bfa)_\F$ for different $\lam$ and $\bfa$ and its irreducible components inside $\Fl^\nbaz$.
\end{rmk}

\begin{defn}
Let $\til{s} \in \tilW$ and $\til{w}_1, \til{w}_2 \in \tilW^+$. We define
\begin{align*}
    S^\circ_\F(\til{w}_1,\til{w}_2, \til{s}) &:= S^\circ_\F((\til{w}_2^\mo w_0 \til{w}_1)^*)\til{s}^* \subset \Fl \\ 
    S^\circ_\F(\til{w}_1,\til{w}_2, \til{s})^\nbaz &:= S^\circ_\F(\til{w}_1,\til{w}_2, \til{s}) \cap \Fl^\nbaz \subset \Fl^\nbaz \\
    S^\nbaz_\F(\til{w}_1,\til{w}_2, \til{s}) &:= \ov{S^\circ_\F(\til{w}_1,\til{w}_2, \til{s})^\nbaz}
\end{align*}
where the closure is taken in $\Fl^\nbaz$. 
\end{defn}

\begin{lemma}\label{lem:schubert-cell-nba-zero-dimension}
Let $\til{s}=t_{\mu}s$, $\tilw_1$, and $\tilw_2$ be as above. If, in addition, $\tilw_2^\mo w_0 \tilw_1$ is $m$-small and $\til{
s}$ is $m$-generic for some integer $m$, then $S^\nbaz_\F(\til{w}_1,\til{w}_2, \til{s})$ is an irreducible closed subschemes of $\Fl^\nbaz$ of dimension 4.
\end{lemma}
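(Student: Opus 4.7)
The strategy is to reduce the statement to Theorem~\ref{thm:schubert-cell-with-monodromy} via the translation identity in Remark~\ref{rmk:translation-and-monodromy}. Set $\til{w} := \til{w}_2^{-1} w_0 \til{w}_1$ and $\tilz := \til{s}^* = \phi(s)^{-1} t_{\phi(\mu)} \in \tilW^{\vee}$, and choose a lift $\bfa \in \cO^3$ of $\phi(s^{-1}\mu) \in \F^3$. The $m$-genericity of $\til{s} = t_\mu s$ (equivalently, $s^{-1}\mu-\eta$ is $m$-deep) translates into the $m$-genericity of $\bfa \bmod \varpi$ in the sense of Definition~\ref{def:genericty}. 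With these choices, Remark~\ref{rmk:translation-and-monodromy} gives
\[
S^\circ_\F(\til{w}_1, \til{w}_2, \til{s})^\nbaz \;=\; \bigl(S^\circ_\F(\til{w}^*) \cap \Fl^{\nba_\bfa}\bigr) \cdot \tilz.
\]

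Since $\til{w}$ is $m$-small by hypothesis and $\bfa \bmod \varpi$ is $m$-generic, Theorem~\ref{thm:schubert-cell-with-monodromy} applies and identifies $S^\circ_\F(\til{w}^*) \cap \Fl^{\nba_\bfa}$ with an affine space of dimension $4 - \#\{\al \in \Phi^+ \mid \til{w}(A_0) \subset H_\al^{(0,1)}\}$. Right translation by $\tilz$ is an automorphism of $\Fl$, so $S^\circ_\F(\til{w}_1, \til{w}_2, \til{s})^\nbaz$ is again an affine space of the same dimension, and its Zariski closure $S^\nbaz_\F(\til{w}_1, \til{w}_2, \til{s})$ in $\Fl^\nbaz$ is irreducible of that dimension.

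The remaining step --- and the main obstacle --- is to verify that the counting set is empty, so that the dimension is exactly $4$. Writing $\til{w}_i = t_{\mu_i} w_i$ with $\til{w}_i(A_0) = w_i A_0 + \mu_i$ dominant, we have $\til{w} = t_\nu \ov{w}$ with $\ov{w} = w_2^{-1} w_0 w_1$ and $\nu = w_2^{-1}(w_0 \mu_1 - \mu_2)$. Since $w_0$ acts as $-1$ on the (co)root lattice in type $C_2$, dominance of $\til{w}_1(A_0)$ gives $\langle w_0 \til{w}_1(x), \al^\vee \rangle = -\langle \til{w}_1(x), \al^\vee \rangle < 0$ for every $x \in A_0$ and $\al \in \Phi^+$. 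A short case analysis over the Weyl element $w_2 \in W$ (where the admissibility of $\til{w}_2$ constrains the sign pattern of $\langle \mu_2, w_2(\al^\vee)\rangle$ uniformly) then shows that for each $\al \in \Phi^+$ the pairing $\langle \til{w}(x), \al^\vee \rangle$ takes values outside $(0,1)$ as $x$ ranges over $A_0$, ruling out containment in $H_\al^{(0,1)}$. This combinatorial check is the technical heart of the proof and parallels the analogous verification for $\GL_n$ in \cite{LLHLM-2020-localmodelpreprint}.
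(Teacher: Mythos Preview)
Your approach is correct and matches the paper's: translate via Remark~\ref{rmk:translation-and-monodromy} to $S^\circ_\F(\tilw^*)\cap\Fl^{\nba_\bfa}$ with $\tilw=\tilw_2^{-1}w_0\tilw_1$, then apply Theorem~\ref{thm:schubert-cell-with-monodromy}. For the regularity of $\tilw$ (your ``main obstacle''), the paper simply cites \cite[Proposition~2.1.5]{LLHLM-2020-localmodelpreprint}, which is a general statement about extended affine Weyl groups and applies directly to $\GSp_4$---so the case analysis you sketch is unnecessary, and you may invoke that proposition in place of it.
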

\begin{proof}
This follows from Remark \ref{rmk:translation-and-monodromy} and Theorem \ref{thm:schubert-cell-with-monodromy} (cf.~\cite[Proposition 4.3.4]{LLHLM-2020-localmodelpreprint}).
\end{proof}

In fact, many of $S^\nbaz_\F(\til{w}_1,\til{w}_2, \til{s})$ for different $\til{s},\tilw_1,$ and $\tilw_2$ are equal.   

\begin{prop}[{\cite[Proposition 4.3.5 and 4.3.6]{LLHLM-2020-localmodelpreprint}}]\label{prop:schubert-cell-identification}
Let $\til{s} \in \tilW$ and $\til{w}_1, \til{w}_2 \in \tilW^+$.
\begin{enumerate}
    \item Suppose that, for $i=1,2$, $\til{w}_i$ is $m_i$-small for some integer $m_i$ and $\til{s}$ is $(m_1+m_2)$-generic. Then we have 
\begin{align*}
    S^\nbaz_\F(\til{w}_1,\til{w}_2, \til{s}) = S^\nbaz_\F(\til{w}_1,e, \til{s}\til{w}_2^\mo).
\end{align*}
    \item Suppose that $\tilw_1$ is in $\tilW_1^+$ and $\til{s}$ is $h_\eta$-generic. For all $w\in W$, we have
    \begin{align*}
    S^\nbaz_\F(\til{w}_1,e, \til{s}) = S^\nbaz_\F(\til{w}_1,e, \til{s}w).
\end{align*}
\end{enumerate}
\end{prop}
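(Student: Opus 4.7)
My plan for both parts is the same: show that each side is an irreducible closed subvariety of $\Fl^\nbaz$ of dimension $4$ via Lemma~\ref{lem:schubert-cell-nba-zero-dimension}, establish one containment, and then conclude equality by irreducibility and matching dimension.

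For part~(1), I would write $\til{s} = t_\alpha s_0$ and $\til{w}_2 = t_\beta w$ with $h_\beta \le m_2$, so that $\til{w}_2^\mo w_0\til{w}_1$ becomes $(m_1+m_2)$-small and $\til{s}\til{w}_2^\mo = t_{\alpha - s_0 w^\mo\beta}\,(s_0 w^\mo)$ remains $m_1$-generic (since $|\langle s_0 w^\mo\beta,\gamma^\vee\rangle| \le m_2$ for every coroot $\gamma^\vee$). Then Lemma~\ref{lem:schubert-cell-nba-zero-dimension} applies to both sides. The positivity $\til{w}_1,\til{w}_2\in\tilW^+$ should yield the alcove-combinatorial length identity $\ell(\til{w}_2^\mo w_0\til{w}_1) = \ell(w_0\til{w}_1) + \ell(\til{w}_2)$, so the standard Bruhat-cell inclusion $S^\circ_\F(g)\cdot h \subseteq S^\circ_\F(gh)$ gives
\[
S^\circ_\F(\til{w}_1,e,\til{s}\til{w}_2^\mo) = S^\circ_\F((w_0\til{w}_1)^*)\til{w}_2^{-*}\til{s}^*\ \subseteq\ S^\circ_\F((\til{w}_2^\mo w_0\til{w}_1)^*)\til{s}^* = S^\circ_\F(\til{w}_1,\til{w}_2,\til{s}).
\]
Intersecting with $\Fl^\nbaz$ and taking closures, the resulting containment of $4$-dimensional irreducible closed subvarieties is forced to be an equality.

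For part~(2), $w_0\til{w}_1$ is $3$-small because $\til{w}_1 \in \tilW_1^+$, and $\til{s}w$ shares its translation part with $\til{s}$ (since $w \in W$ has no translation component), so $\til{s}w$ remains $3$-generic; thus both sides are irreducible closed of dimension $4$. A key observation is that right multiplication by $w^*\in W^\vee$ preserves $\Fl^\nbaz$, since $w^*$ is constant in $v$ and hence $v(Aw^*)'(Aw^*)^\mo = vA'A^\mo$; but this alone is insufficient because $(\til{s}w)^* = w^*\til{s}^*$ multiplies $w^*$ on the \emph{left} of $\til{s}^*$. My plan is to induct on $\ell(w)$ to reduce to a simple reflection $w=s_\al$, and then use Remark~\ref{rmk:translation-and-monodromy} to transport both sides into $S^\circ_\F((w_0\til{w}_1)^*)\cap \Fl^{\nba_\bfa}$ with a common $\bfa = \phi(s_0^\mo\alpha)$ depending only on the translation part of $\til{s}$. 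The second transported variety becomes $S^\circ_\F((w_0\til{w}_1)^*)\cdot(\til{s}^{-*}w^*\til{s}^*) \cap \Fl^{\nba_\bfa}$ where $\til{s}^{-*}w^*\til{s}^*\in\tilW^\vee$ has bounded translation part, and explicit coordinate analysis using the parametrization $\til{z}N_{\til{z}}$ from \S\ref{sub:special-fiber-local-models} together with Theorem~\ref{thm:schubert-cell-with-monodromy} should identify the two closures inside $\Fl^{\nba_\bfa}$.

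The main obstacle I anticipate is part~(2): the two subvarieties live in different Schubert cells of $\Fl$, so their coincidence inside $\Fl^\nbaz$ is genuinely a $\nbaz$-phenomenon, with each side acquiring compensating boundary components upon closure. Making this precise will require the explicit $\cU(\til{z})$-coordinates from Proposition~\ref{prop:moduli-interpretation-open-chart} and careful bookkeeping of how the monodromy equations interact with right multiplication by a finite Weyl element. By contrast, part~(1) is essentially forced by standard Bruhat-cell combinatorics once the correct length-additivity in $\tilW$ is established.
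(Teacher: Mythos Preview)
The paper's own proof is simply a citation: it invokes Propositions~4.3.5 and~4.3.6 of \cite{LLHLM-2020-localmodelpreprint} and asserts that those arguments generalize to $\GSp_4$. So the real comparison is between your sketch and the $\GL_n$ proofs there.

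Your approach to part~(1) is essentially the correct one and matches the argument in \cite{LLHLM-2020-localmodelpreprint}: the length additivity $\ell(\tilw_2^\mo w_0\tilw_1)=\ell(w_0\tilw_1)+\ell(\tilw_2)$ for $\tilw_1,\tilw_2\in\tilW^+$ (which follows from the alcove combinatorics summarized in \cite[Proposition~2.1.5]{LLHLM-2020-localmodelpreprint}) gives the Schubert-cell inclusion, and then both sides being $4$-dimensional and irreducible by Lemma~\ref{lem:schubert-cell-nba-zero-dimension} forces equality. Your verification that $\til{s}\tilw_2^\mo$ is $m_1$-generic is correct.

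Part~(2), however, has a genuine gap. First, a computational slip: after right-translating both sides by $\til{s}^{-*}$ into $\Fl^{\nba_\bfa}$, the second variety becomes $S^\circ_\F((w_0\tilw_1)^*)\cdot w^*\cap\Fl^{\nba_\bfa}$, not $S^\circ_\F((w_0\tilw_1)^*)\cdot(\til{s}^{-*}w^*\til{s}^*)\cap\Fl^{\nba_\bfa}$; the element $(\til{s}w)^*\til{s}^{-*}$ equals $w^*$ since $(-)^*$ is an antihomomorphism. More seriously, the phrase ``explicit coordinate analysis \dots\ should identify the two closures'' is where the entire content lies and you have not carried it out. The statement that $S^\nbaz_\F(\tilw_1,e,\til{s})$ depends only on $\til{s}(0)$, and not on the finite Weyl part of $\til{s}$, is precisely the nontrivial $\nbaz$-phenomenon. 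In \cite{LLHLM-2020-localmodelpreprint} this is established by a direct coordinate computation: one writes both $S^\circ_\F((w_0\tilw_1)^*)\cap\Fl^{\nba_\bfa}$ and its $w^*$-translate in the explicit affine coordinates coming from Theorem~\ref{thm:schubert-cell-with-monodromy} (the top-degree coefficients of the $M_{\al^\vee}$), and checks that the closures agree cell by cell. Your sketch identifies the correct ingredients but does not execute this comparison; as written, it is a plan rather than a proof. The induction on $\ell(w)$ is a reasonable organizing device, but the base case (a single simple reflection) still requires the explicit analysis.
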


\begin{defn}
Let $(\til{w}_1,\omega) \in \tilW_1^+ \times X^*(T)$ with $t_\omega$ being $h_\eta$-generic. We define $C_{(\til{w}_1,\omega)}:= S^\nbaz_\F(\til{w}_1,e, \til{s})$ for any $\til{s}\in \tilW$ such that $\til{s}(0)=\omega$. 
\end{defn}
This is well-defined by Proposition \ref{prop:schubert-cell-identification}. Since $\til{w}_1$ is $h_\eta$-small, $C_{(\til{w}_1,\omega)}$ is irreducible of dimension 4 by Lemma \ref{lem:schubert-cell-nba-zero-dimension}.

Let $\lam\in X_*(T^\vee)$ be a regular dominant cocharacter. Recall that the set of admissible pairs
\begin{align*}
    \AP(\phi^\mo(\lam)) = \CB{(\til{w}_1,\til{w}_2) \in (\tilW_1^+ \times \tilW^+)/X^0(T) \mid \til{w}_1 \uparrow t_{\phi^\mo(\lam)-\eta} \tilw_h^\mo \til{w}_2}.
\end{align*}
By \cite[Corollary 2.1.7]{LLHLM-2020-localmodelpreprint}, there is a bijection
\begin{align}\label{eqn:AP-AdmReg-bij}
\begin{split}
    \AP(\phi^\mo(\lam)) & \xrightarrow{\sim} \Adm^\vee_\reg(\lam) \\ 
    (\til{w}_1, \tilw_2) & \mapsto (\tilw_2^\mo w_0 \tilw_1)^*.
\end{split}
\end{align}

\begin{thm}\label{thm:AP-irred-comp-bij}
Let $\lam\in X_*(T^\vee)$ be a regular dominant cocharacter and $\bfa \in \cO^3$. Let $\til{z}= s^\mo t_\mu  \in \tilW^\vee$ be $h_{\lam+\eta}$-generic such that $\bfa \equiv \phi(s^\mo(\mu)) \mod \varpi$. We have a bijection 
\begin{align*}
    \AP(\phi^\mo(\lam)) & \xrightarrow{\sim} \Irr_4 (M^\nv(\le\lam,\nba_\bfa)_\F \til{z}) \\
    (\tilw_1,\tilw_2) & \mapsto C_{(\tilw_1, \til{z}^* \tilw_2^\mo(0))}.
\end{align*}
\end{thm}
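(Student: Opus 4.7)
My plan is to obtain the bijection by composing three identifications already established in the excerpt, with a geometric right-translation by $\tilz$ as the key move in the middle. First, Corollary \ref{cor:naive-spec-fiber-irred-comp} gives a bijection
\[
\Adm^\vee_\reg(\lam) \;\xrightarrow{\sim}\; \Irr_4 M^\nv(\le\lam,\nba_\bfa)_\F, \qquad \tilz' \mapsto \overline{S^\circ_\F(\tilz')\cap \Fl^{\nba_\bfa}},
\]
provided $\bfa\bmod \varpi$ is $h_\lam$-generic, which follows from $\bfa\equiv\phi(s^\mo(\mu))\bmod\varpi$ together with the $(h_\lam+3)$-genericity of $\tilz$. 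Second, \eqref{eqn:AP-AdmReg-bij} furnishes $\AP(\phi^\mo(\lam))\simeq \Adm^\vee_\reg(\lam)$ via $(\tilw_1,\tilw_2)\mapsto(\tilw_2^\mo w_0\tilw_1)^*$. Third, right translation by $\tilz$ is an automorphism of $\Fl$; by Remark \ref{rmk:translation-and-monodromy} it converts $S^\circ_\F(\tilz')\cap\Fl^{\nba_\bfa}$ into $S^\circ_\F(\tilz')\tilz\cap \Fl^\nbaz$ and hence induces $\Irr_4 M^\nv(\le\lam,\nba_\bfa)_\F\simeq \Irr_4(M^\nv(\le\lam,\nba_\bfa)_\F\tilz)$.

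Composing these three bijections and using $(\tilz^*)^*=\tilz$, I would track that $(\tilw_1,\tilw_2)$ is sent to the closure of $S^\circ_\F((\tilw_2^\mo w_0\tilw_1)^*)\tilz\cap\Fl^\nbaz$, which by the definitions just above the theorem equals $S^\nbaz_\F(\tilw_1,\tilw_2,\tilz^*)$. To match the form in the theorem, I then apply Proposition \ref{prop:schubert-cell-identification}(1) to rewrite
\[
S^\nbaz_\F(\tilw_1,\tilw_2,\tilz^*) \;=\; S^\nbaz_\F(\tilw_1,e,\tilz^*\tilw_2^\mo),
\]
and observe that by the definition of $C_{(\tilw_1,\omega)}$ the right-hand side is exactly $C_{(\tilw_1,\,\tilz^*\tilw_2^\mo(0))}$.

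The only genuinely non-formal ingredient is the genericity bookkeeping required to invoke Proposition \ref{prop:schubert-cell-identification}(1) and to ensure $C_{(\tilw_1,\tilz^*\tilw_2^\mo(0))}$ is well defined. Since $\tilw_1\in\utilW_1^+$, it is automatically $1$-small. Writing $\tilw_i=t_{\nu_i}w_i$, the translation part of $\tilw_2^\mo w_0\tilw_1$ equals $w_2^\mo w_0\nu_1-w_2^\mo\nu_2$; as $(\tilw_1,\tilw_2)\in\AP(\phi^\mo(\lam))$ this translation part lies in $\mathrm{Conv}(\phi^\mo(\lam))$, forcing $h_{\nu_2}\le h_\lam+1$, i.e.\ $\tilw_2$ is $(h_\lam+1)$-small. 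Proposition \ref{prop:schubert-cell-identification}(1) therefore requires $\tilz^*$ to be $(h_\lam+2)$-generic, which is implied by the hypothesis. A direct computation shows that the translation part of $\tilz^*\tilw_2^\mo$ differs from that of $\tilz^*$ by at most an element of norm $h_\lam+1$, so it remains at least $3$-deep and $C_{(\tilw_1,\tilz^*\tilw_2^\mo(0))}$ is well defined in the sense of the definition preceding the theorem. Thus the composition above yields the desired bijection.
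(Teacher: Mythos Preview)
Your approach is exactly the paper's: compose Corollary~\ref{cor:naive-spec-fiber-irred-comp}, the bijection~\eqref{eqn:AP-AdmReg-bij}, Remark~\ref{rmk:translation-and-monodromy}, and Proposition~\ref{prop:schubert-cell-identification} in precisely this order. The only issue is in your genericity bookkeeping.

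The claim that $\tilw_1\in\tilW_1^+$ is automatically $1$-small is false for $\GSp_4$. For instance $\tilw_h=w_0t_{-\eta}\in\tilW_1^+$ and $h_{-\eta}=3$, so the correct bound is that $\tilw_1$ is $3$-small. With this correction your triangle-inequality argument for $\tilw_2$ only yields $h_{\nu_2}\le h_\lam+3$, and then Proposition~\ref{prop:schubert-cell-identification}(1) would require $\tilz^*$ to be $(h_\lam+6)$-generic, which is \emph{not} supplied by the hypothesis. The paper instead uses that $\tilw_1$ is $3$-small and $\tilw_2$ is $h_\lam$-small, so that the required genericity is exactly the assumed $(h_\lam+3)$. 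The $h_\lam$-smallness of $\tilw_2$ does not follow from your route via $\Adm(\phi^{-1}(\lam))$ and the triangle inequality; it needs the finer structure of the bijection $\AP(\phi^{-1}(\lam))\simeq\Adm^\vee_\reg(\lam)$ coming from \cite[Corollary~2.1.7]{LLHLM-2020-localmodelpreprint} (for $\lam=\eta$ one sees it directly from the equivalence $\tilw_1\uparrow\tilw_h^{-1}\tilw_2\Leftrightarrow\tilw_2\uparrow\tilw_h\tilw_1$). This same sharper bound is also what makes $t_{\tilz^*\tilw_2^{-1}(0)}$ at least $3$-generic, so that $C_{(\tilw_1,\tilz^*\tilw_2^{-1}(0))}$ is well defined.
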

\begin{proof}
This follows from Corollary \ref{cor:naive-spec-fiber-irred-comp}, Remark \ref{rmk:translation-and-monodromy}, Proposition \ref{prop:schubert-cell-identification}, and \eqref{eqn:AP-AdmReg-bij} as in the proof of \cite[Theorem 4.3.10]{LLHLM-2020-localmodelpreprint}. 
\end{proof}

\subsection{Matching irreducible components and Serre weights}\label{sub:matching-irred-comp-SW}
Let $\cI_{1,\F}\subset \cI_\F$ be the subfunctor given by 
\begin{align*}
    R \mapsto \CB{A\in \GSp_4(R\DB{v}) \mid A \mod v \in U(R)} 
\end{align*}
for $\F$-algebra $R$. We define $\til{\Fl}$ as the fpqc-quotient sheaf $\cI_{1,\F} \bss L\GSp_4$. 
Then the natural quotient map $\Psi: \til{\Fl} \ra \Fl$ is a $T^\vee_\F$-torsor. 

If $Y\subset \Fl$ is a closed subscheme, we let $\til{Y}$ denote the pullback $Y\times_{\Fl}\til{\Fl}$. Then $\til{Y} \ra Y$ is again a $T^\vee_\F$-torsor. Let $\lam\in X_*(T^\vee)$ be a dominant cocharacter and $\bfa \in \cO^3$. Then we have the following $T^\vee_\F$-torsors
\begin{align*}
    \til{M}(\leq\lam)_\F & \ra M(\le\lam)_\F \\
    \til{M}^\nv(\le\lam, \nba_\bfa)_\F & \ra M^\nv(\le\lam, \nba_\bfa)_\F \\
    \til{M}(\lam, \nba_\bfa)_\F & \ra M(\lam, \nba_\bfa)_\F.
\end{align*}
For  $(\tilw,\omega)\in \tilW_1^+ \times X^*(T)$ such that $t_\omega$ is $h_\eta$-generic, we also have a $T^\vee_\F$-torsor $\til{C}_{(\til{w},\omega)} \ra C_{(\tilw,\omega)}$.

Let $\cJ= \Hom(k,\F)$. For $\jj$, if $X_j\subset \Fl$ (resp.~$\til{X}_j\subset \til{\Fl}$) is a closed subscheme, we define $X_\cJ:= \prod_\jj X_j$ (resp.~$\til{X}_\cJ := \prod_\jj \til{X}_j$) to be the closed subscheme in $\Fl^\cJ$ (resp.~$\til{\Fl}^\cJ$). In particular, given a dominant cocharacter $\lam = (\lam_j)_\jj \in X_*(\ud{T}^{\vee})$ and $\bfa = (\bfa_j)_\jj \in (\cO^{3})^\cJ$, we have
\begin{align*}
    M^\nv_\cJ (\le\lam,\nba_\bfa)_\F &= \prod_\jj M(\le\lam_j,\nba_{\bfa_j})_\F \\
    \til{M}_\cJ^\nv (\le\lam,\nba_\bfa)_\F &= \prod_\jj \til{M}(\le\lam_j,\nba_{\bfa_j})_\F.
\end{align*}
Let $\tilz = (\tilz_j)_\jj \in \utilW^\vee$ such that $\tilz_j = s_j^\mo t_{\mu_j}$ and $\bfa_j \equiv s_j^\mo(\mu_j) \mod \varpi$.
We have the following cartesian diagram
\[
\begin{tikzcd}
\til{M}_\cJ^\nv (\le\lam,\nba_\bfa)_\F \arrow[r, hook, "r_{\til{z}}"] \arrow[d] & \til{\Fl}_\cJ^\nbaz \arrow[d] \\
M_\cJ^\nv (\le\lam,\nba_\bfa)_\F \arrow[r, hook, "r_{\til{z}}"]  & \Fl_\cJ^\nbaz
\end{tikzcd}
\]
where $r_{\til{z}}$ is right translation by $\til{z}$.  Let $(\tilw_1,\tilw_2) \in \AP(\phi^\mo(\lam)) = \prod_\jj \AP(\phi^\mo(\lam_j))$. If $\tilz$ is $h_{\lam+\eta}$-generic, by Theorem \ref{thm:AP-irred-comp-bij}, we have a $7f$-dimensional irreducible component of $\til{M}^\nv_\cJ (\le\lam,\nba_\bfa)_\F \til{z}$ given by
\begin{align*}
    \til{C}_{(\tilw_1,\til{z}^*\tilw_2^\mo(0))} := \prod_\jj  \til{C}_{(\tilw_{1,j},\til{z}_j^*\tilw_{2,j}^\mo(0))}.
\end{align*}

\begin{defn}
Let $\sigma$ be a $h_\eta$-deep Serre weight. 
Let $(\tilw_1,\omega)\in \utilW_1^+ \times X^*(\uT)$ be the lowest alcove presentation of $\sigma$ corresponding to $\zeta$. We define $C_\sigma^\zeta:= C_{(\tilw_1,\omega)}$ and $\tilC^\zeta_\sigma := \tilC_{(\tilw_1,\omega)}$.
\end{defn}

\begin{thm}\label{thm:nv-irred-comp-SW-bij}
Let $\lam \in X_*(\uT^\vee)$ be a regular dominant cocharacter. Let $R$ be a Deligne--Lusztig representation with $\max\{h_{\lam+\eta},2h_\eta\}$-generic lowest alcove presentation $(s,\mu)$. 
Let  $\bfa \in (\cO^3)^\cJ$ such that $\bfa_j \equiv \phi(s_j^\mo(\mu_j+\eta_j)) \mod \varpi$. Then we have
\begin{align*}
    \Irr_{4f}\PR{(M_\cJ^\nv(\le\lam,\nba_\bfa)_\F) \phi(s^\mo t_{\mu+\eta})} = \CB{C_\sigma^\zeta \mid \sigma \in \JH(\ov{R}\otimes W(\phi^\mo(\lam)-\eta))}.
\end{align*}
\end{thm}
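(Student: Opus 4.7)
The plan is to compose two bijections, both parametrized by $\AP(\phi^\mo(\lam)) = \prod_{\jj} \AP(\phi^\mo(\lam_j))$: the geometric bijection of Theorem \ref{thm:AP-irred-comp-bij} (applied componentwise over $\cJ$) and the Jordan--H\"older parametrization of Proposition \ref{prop:JH-factor-lam-tau}. The heart of the matter is that both produce the common label $(\tilw_1, \tilw(R)\tilw_2^\mo(0))$.

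To apply Theorem \ref{thm:AP-irred-comp-bij} factor by factor, I set $\tilz := \phi(s^\mo t_{\mu+\eta}) \in \utilW^\vee$. Since $\phi$ respects the semidirect product structure, $\tilz = \phi(s)^\mo t_{\phi(\mu+\eta)}$, and unwinding the definition of $(-)^*$ yields the clean identity $\tilz^* = t_{\mu+\eta}s = \tilw(R)$. The $\max\{6,h_\lam+3\}$-genericity of the lowest alcove presentation $(s,\mu)$ of $R$ transfers to the required $(h_{\lam_j}+3)$-genericity of each $\tilz_j$, because $m$-deepness of a point in an alcove is preserved under Weyl action and translation. By hypothesis $\bfa_j \equiv \phi(s_j^\mo(\mu_j+\eta)) \pmod{\varpi}$, so taking products over $\jj$ of Theorem \ref{thm:AP-irred-comp-bij} produces a bijection
\begin{align*}
    \AP(\phi^\mo(\lam)) \xrightarrow{\sim} \Irr_{4f}\PR{M_\cJ^\nv(\le\lam,\nba_\bfa)_\F \cdot \phi(s^\mo t_{\mu+\eta})}, \quad (\tilw_1,\tilw_2) \mapsto C_{(\tilw_1, \tilw(R)\tilw_2^\mo(0))}.
\end{align*}
On the other hand, Proposition \ref{prop:JH-factor-lam-tau}, applied with the character $\phi^\mo(\lam)-\eta$ in place of $\lam$, gives the bijection
\begin{align*}
    \AP(\phi^\mo(\lam)) \xrightarrow{\sim} \JH\PR{\ov{R}\otimes_\F W(\phi^\mo(\lam)-\eta)}, \quad (\tilw_1,\tilw_2) \mapsto F_{(\tilw_1,\tilw(R)\tilw_2^\mo(0))},
\end{align*}
with every Jordan--H\"older factor at least $3$-deep (since $\max\{6,h_\lam+3\} - h_\lam - 3 \ge 3$), so that all the $C^\zeta_\sigma$ on the right-hand side of the theorem are defined.

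It remains to verify that the label $(\tilw_1,\tilw(R)\tilw_2^\mo(0))$ is the lowest alcove presentation of $\sigma := F_{(\tilw_1,\tilw(R)\tilw_2^\mo(0))}$ compatible with $\zeta$, so that $C_{(\tilw_1,\tilw(R)\tilw_2^\mo(0))} = C^\zeta_\sigma$. By Proposition \ref{prop:JH-factor-lam-tau}, this presentation of $\sigma$ is $(\phi^\mo(\lam)-\eta)$-compatible with the LAP $(s,\mu)$ of $R$; by hypothesis, the LAP $(s,\mu)$ is $(\lam-\eta)$-compatible with $\zeta$, which under the identification $X_*(\uT^\vee) \simeq X^*(\uT)$ via $\phi^\mo$ is the same condition. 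Combining these two statements about images in $X^*(\uZ)$ under \eqref{eqn:central-char} shows that the LAP of $\sigma$ is compatible with $\zeta$. The main bookkeeping obstacle lies in this careful tracking of the identity $\tilz^* = \tilw(R)$ together with the propagation of central-character compatibility through the various maps; once these matches are in place, the theorem follows by composing the two bijections.
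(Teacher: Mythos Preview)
Your proof is correct and follows exactly the paper's approach, which simply cites Proposition~\ref{prop:JH-factor-lam-tau} and Theorem~\ref{thm:AP-irred-comp-bij}; you have merely supplied the details of the matching, including the key identity $\tilz^{*}=\tilw(R)$ and the $\zeta$-compatibility check. One small correction: when you invoke Proposition~\ref{prop:JH-factor-lam-tau} with the character $\phi^{-1}(\lam)-\eta$, the depth bound is $\max\{6,h_{\lam}+3\}-h_{\phi^{-1}(\lam)-\eta}-3=\max\{6,h_{\lam}+3\}-h_{\lam}\ge 3$ (your parenthetical $\max\{6,h_{\lam}+3\}-h_{\lam}-3\ge 3$ is off by $h_{\eta}=3$), but the conclusion that every $\sigma$ is $3$-deep, and hence $C^{\zeta}_{\sigma}$ is defined, is unaffected.
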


\begin{proof}
This follows from Proposition \ref{prop:JH-factor-lam-tau} and Theorem \ref{thm:AP-irred-comp-bij}.
\end{proof}

\subsection{Torus-fixed points}\label{sub:torus-fixed-point}

The torus $T^{\vee,\cJ}$ acts on $\Fl^\cJ$ by right translation and stabilizes $\Fl_\cJ^\nbaz$. The natural inclusion ${\tilW}^{\vee,\cJ} \subset \Fl^\cJ$ (sending $wt_{\nu} \in \tilW^{\vee,\cJ}$ to $w v^{\nu} \in \Fl^\cJ(\F)$) identifies ${\tilW}^{\vee,\cJ}$ and the set of $T^{\vee,\cJ}$-fixed points in $\Fl^\cJ$. Note that $\tilW^{\vee,\cJ}$ is contained in $\Fl_\cJ^\nbaz$. In this subsection, we prove the following result on the $T^{\vee,\cJ}$-fixed points of irreducible components in $\Fl_\cJ^\nbaz$.

\begin{thm}\label{thm:torus-fixed-point}
Let $\sigma$ be a $h_\eta$-deep Serre weight with a lowest alcove presentation $(\tilw_1,\omega)$. 
Then, the set of $T^{\vee,\cJ}$-fixed points in $C_{(\tilw_1,\omega)}$ is given by
\begin{align*}
    C_{(\tilw_1,\omega)}^{T^{\vee,\cJ}} = \{(t_{\omega}\tilw)^* \mid \tilw\in \tilW^\cJ, \tilw \le w_0 \tilw_1 \}.
\end{align*}
\end{thm}
\begin{proof}
It suffices to prove the claim for $\cJ=\{*\}$. By definition, $C_{(\tilw_{1},\omega)} = S_\F^{\nba_0}(\tilw_{1},e,t_{\omega})$, and the claim follows from Theorem \ref{thm:BA-GSp4} and Remark \ref{rmk:translation-and-monodromy}.
\end{proof}

\begin{cor}\label{cor:geom-SW-local-model}
Let $\rhobar$ be a $2h_\eta$-generic tame inertial $L$-parameter over $\F$. Let $W^\zeta_g(\rhobar)$ be the set of $h_\eta$-deep Serre weights $\sigma$ such that $\tilw^*(\rhobar) \in C^\zeta_\sigma$. Then, we have $W^\zeta_g(\rhobar) = W^?(\rhobar)$.
\end{cor}

\begin{proof}
This follows from a direct computation using Theorem \ref{thm:torus-fixed-point} as in the proof of \cite[Theorem 4.7.6]{LLHLM-2020-localmodelpreprint}.
\end{proof}

We end this section by recording the following proposition, which will be used in \S\ref{sub:SWC}.

\begin{prop}\label{prop:unibranch-T-fixed-point}
$C_{(\tilw_1,\omega)}$ is unibranch at its $\uT^\vee$-fixed points.
\end{prop}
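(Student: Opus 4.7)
The plan is to realize $C_{(\tilw_1,\omega)}$, up to a right translation by an element of $\utilW^\vee$, as an irreducible component of the (reduced) special fiber of a mixed characteristic local model $M(\lam,\nba_\bfa)$, and then deduce unibranchness from Theorem \ref{thm:unibranch-local-model} by a short commutative algebra step.

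First I would choose parameters. Fix $(\tilw_1,\omega)$ and a $\uT^\vee$-fixed point $\tilz \in C_{(\tilw_1,\omega)}$, which by Theorem \ref{thm:torus-fixed-point} is of the form $(t_\omega \tilw)^*$ for some $\tilw \le w_0 \tilw_1$. Pick a regular dominant cocharacter $\lam \in X_*(\uT^\vee)$ large enough that $(\tilw_1,e) \in \AP(\phi^\mo(\lam))$, together with $\bfa \in (\cO^3)^\cJ$ whose reduction mod $\varpi$ is determined by $\omega$ as in Theorem \ref{thm:AP-irred-comp-bij} (e.g.~$\bfa \equiv \phi(\omega) \bmod \varpi$ for the translation $\tilz_0^* = t_\omega$) and which additionally lies in the open genericity locus of Theorem \ref{thm:unibranch-local-model} (equivalently Theorem \ref{thm:unibranch-product}) attached to the data $(\lam,e)$. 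These conditions are jointly satisfiable provided $\omega$ is sufficiently generic, which can be arranged by strengthening the $3$-deep hypothesis on $\sig$. Theorem \ref{thm:AP-irred-comp-bij} then identifies $C_{(\tilw_1,\omega)} \cdot \tilz_0^*$ with an irreducible component of $M(\lam,\nba_\bfa)_\F$ passing through the translated point $\tilz \tilz_0^*$.

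Next I would invoke Theorem \ref{thm:unibranch-local-model}: the completed local ring $A := \hat{\cO}_{M(\lam,\nba_\bfa),\,\tilz \tilz_0^*}$ is a domain. Since $M(\lam,\nba_\bfa)$ is $\cO$-flat, $p$ is a nonzero element of $A$, so every minimal prime $\overline{\fq}$ of $A/p$ is of the form $\tilde{\fq}/p$ for a prime $\tilde{\fq} \subset A$ containing $p$. The quotient $A/\tilde{\fq}$ is then a domain, and it identifies canonically with the completed local ring at $\tilz\tilz_0^*$ of the corresponding irreducible component of $M(\lam,\nba_\bfa)_\F$ with its reduced induced structure. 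Specializing this to the component $C_{(\tilw_1,\omega)} \cdot \tilz_0^*$ and noting that right translation by $\tilz_0^*$ is an isomorphism of $\Fl$ (hence preserves the unibranch question) yields that $C_{(\tilw_1,\omega)}$ is unibranch at $\tilz$.

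The main obstacle is the combinatorial bookkeeping in the first step: one must confirm that for every admissible $(\tilw_1,\omega)$ and every $\tilz$ in its $\uT^\vee$-fixed locus, the cocharacter $\lam$ and the parameter $\bfa$ can be chosen to simultaneously meet the admissibility requirement $(\tilw_1,e) \in \AP(\phi^\mo(\lam))$, the matching-mod-$\varpi$ condition relating $\bfa$ to $\omega$, and the open genericity hypothesis of Theorem \ref{thm:unibranch-local-model}. As each of these conditions carves out a nonempty open locus and $\lam$ can always be enlarged, no new geometric input is needed beyond the results already established; the verification is routine.
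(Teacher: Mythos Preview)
There is a genuine gap in your second paragraph. Write $x$ for the point $\tilz\tilz_0^*$. The minimal primes of $A/pA = \hat{\cO}_{M(\lam,\nba_\bfa)_\F,\,x}$ are the \emph{formal branches} of the special fiber at $x$; they surject onto the global irreducible components of $M(\lam,\nba_\bfa)_\F$ through $x$, and the fiber over a component $C$ has cardinality equal to the number of analytic branches of $C$ at $x$. Your identification of $A/\tilde{\fq}$ with $\hat{\cO}_{C,x}$ therefore already assumes what you want to prove: in general $\hat{\cO}_{C,x} = (A/pA)/\fp_C(A/pA)$, where $\fp_C$ is the minimal prime of the \emph{uncompleted} ring $\cO_{M(\lam,\nba_\bfa)_\F,\,x}$ cutting out $C$, and $\fp_C(A/pA)$ is prime exactly when $C$ is unibranch at $x$. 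That $A$ itself is a domain places no constraint on how many minimal primes of $A/pA$ lie over a fixed $\fp_C$. (There is also a smaller issue: Theorem~\ref{thm:AP-irred-comp-bij} describes components of $M^\nv(\le\lam,\nba_\bfa)_\F$, not of $M(\lam,\nba_\bfa)_\F$, so the identification in your first step is not immediate either.)

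The paper proceeds quite differently and never leaves equal characteristic. As in Proposition~\ref{prop:unibranch-equal-char} (via \cite[Lemmas~3.4.7--3.4.8]{LLHLM-2020-localmodelpreprint}), one exhibits a one-parameter subgroup of $T^\vee\times\G_m$ acting on the relevant affine chart so as to contract it to the fixed point while preserving $C_{(\tilw_1,\omega)}$; a contracting $\G_m$-action with isolated fixed point forces unibranchness directly. This needs no genericity on $\omega$ beyond what is required to define $C_{(\tilw_1,\omega)}$. Note too that in the paper Theorem~\ref{thm:unibranch-local-model} is itself \emph{deduced from} the equal-characteristic Proposition~\ref{prop:unibranch-equal-char}, so your proposed direction of reduction runs opposite to the established logic.
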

\begin{proof}
This can be proven as the corresponding result for $\GL_4$ \cite[Proposition 4.7.5]{LLHLM-2020-localmodelpreprint} similarly to Proposition \ref{prop:unibranch-equal-char}. 
\end{proof}

\section{Moduli stacks in $p$-adic Hodge theory}\label{sec:moduli-stacks}

In this section, we construct a symplectic variant of the moduli stacks of \'etale $\pgma$-modules constructed in \cite{EGstack} and prove its properties including the existence of closed substacks parameterizing potentially crystalline representations. We also construct symplectic variants for the $p$-adic formal algebraic stacks considered in \cite[\S 5]{LLHLM-2020-localmodelpreprint}, namely, the moduli stacks of Breuil--Kisin modules, and \'etale $\phi$-modules. Then we study the geometric properties of potentially crystalline substacks using local models in \S\ref{sec:local-models}.

\subsection{Symplectic \'etale $\pgma$-modules}\label{sub:pgma-stack}
Only in this subsection, we allow $K/\Qp$ to be ramified.

For a $p$-adically complete $\cO$-algebra $R$, we write $\cX_n(R)$ for the groupoid of projective \'etale $\pgma$-modules of rank $n$ with $R$-coefficients, in the sense of \cite[Definition 2.7.2]{EGstack}. Then $\cX_n$ is a Noetherian formal algebraic stack over $\spf \cO$ (Corollary 5.5.17 in \emph{loc.~cit.}). When $R$ is a finite $\cO$-algebra, there is an equivalence of categories between the category of projective \'etale $(\varphi,\Gamma)$-modules with $R$-coefficients and the category of continuous representations of $G_K$ on finite projective $R$-modules. The goal of this subsection is to construct a moduli stack of \emph{symplectic} $\pgma$-modules over $\cO$ using the Emerton--Gee stack.

We start by defining a symplectic variant of \'etale $\pgma$-modules.

\begin{defn}\label{def:sym-et-pgma}
Let $R$ be a $p$-adically complete $\cO$-algebra. A \emph{symplectic} projective \'etale $\pgma$-module (of rank 4) with $R$-coefficients is a triple $(M,N,\al)$ where $M\in \cX_4(R)$, $N\in \cX_1(R)$, and $\al: M \simeq M^\vee \otimes N$ is an isomorphism between rank 4 \'etale $\pgma$-modules satisfying the alternating condition $(\al^\vee\otimes N)^\mo \circ \al=-1_{M}$. Note that $\al$ induces a non-degenerate bilinear pairing
\begin{align*}
   p_\al: M\times M &\ra N
    \\
    (m_1, m_2) &\mapsto \al(m_1)(m_2)
\end{align*}
that is alternating, i.e.~$p_\al(m_1,m_2) = -p_\al(m_2,m_1)$. Then a morphism $F: (M,N,\al) \ra (M',N',\al')$ of symplectic projective \'etale $\pgma$-modules is a pair of morphisms $f:M\ra M'$ and $g:N\ra N'$ of \'etale $\pgma$-modules that commutes with the bilinear pairings induced by $\al$ and $\al'$. 
We write $\cX_\Sym(R)$ for the groupoid of projective symplectic \'etale $\pgma$-modules with $R$-coefficients.
\end{defn}

\begin{lemma}\label{lem:sym-gal-rep-equiv}
Let $R$ be a commutative ring and $G$ be an abstract group. There is an equivalence of groupoids
\begin{align*}
    \CB{\rho: G \ra \GSp_4(R)} &\simeq \CB{(\rho',\chi, \al)\mid \begin{array}{cc}
         \rho':G \ra \GL_4(R), \chi: G \ra \GL_1(R), \\
        \al: \rho' \simeq (\rho')^\vee\otimes \chi, (\al^\vee\otimes \chi)^\mo \circ \al = -1
    \end{array} } \\
    \rho &\mapsto  (\std(\rho), \simc(\rho), \al_\rho)
\end{align*} 
where morphisms in the latter groupoid are given by isomorphisms of $\rho'$ and $\chi$ commuting with $\al$, and the isomorphism $\al_\rho$ is given by the matrix $J$ with respect to the standard basis of $\std(\rho)$ and its dual basis.
\end{lemma}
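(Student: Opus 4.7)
The forward functor is immediate: given $\rho : G \to \GSp_4(R)$, set $\rho' = \std \circ \rho$, $\chi = \simc \circ \rho$, and take $\alpha_\rho$ to be the map corresponding to the matrix $J$ in the standard basis and its dual. The defining relation $\pret \rho(g)\, J\, \rho(g) = \simc(\rho(g)) J$ of $\GSp_4$ is precisely the $G$-equivariance of $\alpha_\rho : \rho' \to (\rho')^\vee \otimes \chi$, while the skew-symmetry $\pret J = -J$ translates directly into the alternating condition $(\alpha_\rho^\vee \otimes \chi)^{-1} \circ \alpha_\rho = -1$. Functoriality on morphisms is formal.

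For essential surjectivity, a triple $(\rho', \chi, \alpha)$ determines a matrix $A \in M_4(R)$ (via the standard basis of $R^4$, its dual, and any generator of $\chi$), with the equivariance of $\alpha$ reading $\pret\rho'(g)\, A\, \rho'(g) = \chi(g) A$ and the alternating condition reading $A = -\pret A$. The key technical step is to produce $P \in \GL_4(R)$ with $\pret P\, J\, P = A$; conjugating $\rho'$ by $P$ (leaving $\chi$ unchanged) then lands in $\GSp_4(R)$ and supplies a morphism in the right groupoid between the given triple and one in the image of the forward functor. This normalization is a rank-$4$ symplectic Gram--Schmidt: use non-degeneracy of $A$ to find $v_1, v_4$ with $\pret v_1\, A\, v_4 = 1$, then descend to the rank-$2$ orthogonal complement $\langle v_1, v_4 \rangle^\perp$, where any non-degenerate skew-symmetric form equals $u J_2$ for some unit $u$ and is brought to $J_2$ by rescaling one basis vector.

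Full faithfulness is then a direct unwinding: a morphism between two triples coming from $\GSp_4$-valued representations $\rho_1, \rho_2$ is a pair $(f, g) \in \GL_4(R) \times \GL_1(R)$ intertwining the components $\rho_i', \chi_i$ and satisfying the compatibility $\pret f\, J\, f = g J$, which is exactly the defining equation of $\GSp_4$; hence $(f, g) \in \GSp_4(R)$ intertwines $\rho_1$ and $\rho_2$.

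The main obstacle is the symplectic Gram--Schmidt step, concretely the existence of a symplectic pair $(v_1, v_4)$ over $R$; this amounts to locating a unimodular vector that pairs to a unit with some other vector under $A$. The statement is automatic for local rings (and more generally whenever the implicit $\GSp_4$-torsor of symplectic bases is trivial), which covers every $R$ occurring in this paper. The remainder of the argument is purely formal matrix manipulation, and in particular the construction of the inverse assignment on objects need not be made functorial---essential surjectivity plus fully faithful suffices for the equivalence.
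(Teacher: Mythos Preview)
Your proof is correct and follows essentially the same approach as the paper: both argue essential surjectivity via symplectic Gram--Schmidt (finding a hyperbolic pair $v_1,v_4$, then descending to the orthogonal complement) and dispatch full faithfulness as an elementary unwinding. Your explicit acknowledgment that the Gram--Schmidt step over a general commutative ring requires the symplectic-basis torsor to be trivial---automatic for local $R$, which is all the paper ever uses---is a point the paper's proof glosses over, so if anything your writeup is more careful.
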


\begin{proof}
Note that $\al$ defines a non-degenerate alternating $R$-bilinear pairing $\RG{-,-}_{\al}:\rho'\times \rho' \ra R$ given by $\RG{x,y}_{\al}=\al(y)(x)$. Then the essential surjectivity follows from the presentation of standard symplectic space. More precisely, one can find two vectors $v_1,v_4 \in \rho'$ such that $\RG{v_1,v_4}_\al=1$, and two vectors $v_2, v_3$ such that $\RG{v_2,v_3}_\al=1$ in the complement of the span of $v_1,v_4$. Then the  basis $\{v_1,v_2,v_3,v_4\}$ gives the standard symplectic space. The proof of full faithfulness is elementary.
\end{proof}

\begin{cor}\label{cor:sym-pgma-gal-equiv}
Let $R$ be a finite local $\cO$-algebra. There is an equivalence of groupoids
\begin{align*}
    \CB{\rho: G_K \ra \GSp_4(R)} \simeq \cX_\Sym(R).
\end{align*}
\end{cor}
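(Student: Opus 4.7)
The plan is to combine Lemma \ref{lem:sym-gal-rep-equiv}, applied to $G = G_K$ with continuous representations, with the standard Fontaine-type equivalence between continuous $G_K$-representations on finite projective $R$-modules and rank-$n$ projective \'etale $\pgma$-modules with $R$-coefficients (which is the starting point for the definition of $\cX_n$ in \cite{EGstack}, recalled in the introduction just above Theorem \ref{thmx:EGstaack}).

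First, I would apply Lemma \ref{lem:sym-gal-rep-equiv} with $G = G_K$ and insist that the representations be continuous: this is harmless because $\GSp_4(R) \to \GL_4(R) \times \GL_1(R)$ via $(\std, \simc)$ is a closed immersion of topological groups, so continuity of $\rho$ is equivalent to continuity of both $\std(\rho)$ and $\simc(\rho)$. This gives an equivalence between continuous $\rho: G_K \to \GSp_4(R)$ and triples $(\rho', \chi, \al)$ with $\rho': G_K \to \GL_4(R)$ continuous, $\chi: G_K \to \GL_1(R)$ continuous, and $\al: \rho' \simeq (\rho')^\vee \otimes \chi$ an essentially self-dual alternating isomorphism of $G_K$-representations.

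Next, I would invoke Fontaine's equivalence in the form used in \cite{EGstack}, which for any finite local $\cO$-algebra $R$ produces equivalences
\[
\CB{\rho': G_K \to \GL_4(R)\text{ continuous}} \simeq \cX_4(R), \qquad \CB{\chi: G_K \to \GL_1(R)\text{ continuous}} \simeq \cX_1(R),
\]
and, more importantly, these equivalences assemble into a symmetric monoidal equivalence between the tensor category of continuous $G_K$-representations on finite projective $R$-modules and the tensor category of projective \'etale $\pgma$-modules with $R$-coefficients. In particular, the equivalence is compatible with formation of duals and tensor products, so it sends the isomorphism $\al: \rho' \simeq (\rho')^\vee \otimes \chi$ to an isomorphism $M \simeq M^\vee \otimes N$ of \'etale $\pgma$-modules, where $M \in \cX_4(R)$ and $N \in \cX_1(R)$ correspond to $\rho'$ and $\chi$, respectively. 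The alternating condition $(\al^\vee \otimes \chi)^{-1}\circ \al = -1_{\rho'}$ is preserved under the equivalence, giving exactly the data of an object of $\cX_\Sym(R)$ as in Definition \ref{def:sym-et-pgma}.

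Composing the two equivalences yields the desired equivalence of groupoids $\{\rho: G_K \to \GSp_4(R)\} \simeq \cX_\Sym(R)$, with functoriality (preservation of morphisms) following from the functoriality of each of the two inputs. No step is really hard: the only point that requires mild care is the compatibility of Fontaine's equivalence with duality and tensor products in the presence of $R$-coefficients, but this is recorded in \cite{EGstack} and is immediate from the construction of $\cX_n$ via $\pgma$-modules over the appropriate period ring (both duality and tensor product are computed entry-wise on the $\pgma$-module side and match the Galois-side operations under Fontaine's dictionary).
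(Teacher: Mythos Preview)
Your proposal is correct and follows exactly the same approach as the paper: combine Lemma \ref{lem:sym-gal-rep-equiv} (applied to $G_K$) with Fontaine's equivalence between Galois representations and \'etale $\pgma$-modules, using compatibility with duals and tensor products. The paper's proof is a single sentence stating precisely this, so your expansion of the details (continuity via the closed immersion $\GSp_4 \hookrightarrow \GL_4 \times \GL_1$, symmetric monoidality of Fontaine's functor) is a faithful elaboration rather than a different route.
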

\begin{proof}
This follows from Lemma \ref{lem:sym-gal-rep-equiv} and  Fontaine's equivalence between Galois representations and \'etale $\pgma$-modules.
\end{proof}

We recall some definitions from \cite{Emerton-formal-alg-stk}. A formal algebraic stack is \textit{quasi-compact} if it admits a morphism from a quasi-compact formal algebraic space which is representable by algebraic spaces, smooth, and  surjective. A morphism $\cX \ra \cY$ of formal algebraic stacks is \textit{quasi-compact} if for every morphism $Z \ra \cY$ whose source is an affine scheme, the fiber product $\cX\times_{\cY} Z$ is a quasi-compact formal algebraic stack. A formal algebraic stack $\cX$ is \textit{quasi-separated} if its diagonal morphism (which is representable by algebraic spaces) is quasi-compact and quasi-separated. We have the following generalization.

\begin{defn}
A  morphism $f: \cX \ra \cY$ of formal algebraic stacks is \emph{quasi-separated} if the relative diagonal $\Del_{f} : \cX \ra \cX \times_\cY \cX$ is quasi-compact and quasi-separated.
\end{defn}

It is easy to see that if $f: \cX \ra \cY$ is a  quasi-separated morphism of formal algebraic stacks whose target is quasi-separated, then $\cX$ is quasi-separated.

A formal algebraic stack $\cX$ is \emph{locally Noetherian} if it admits a morphism from a disjoint union of Noetherian affine formal algebraic spaces which is representable by algebraic spaces, smooth, and surjective. Moreover, $\cX$ is \emph{Noetherian} if it is locally Noetherian, quasi-compact, and quasi-separated.

We denote by $\std: \cX_\Sym \ra \cX_4$ and $\simc: \cX_\Sym \ra \cX_1$  morphisms sending $(M,N,\al)$ to $M$ and $N$ respectively.

\begin{thm}\label{thm:sym-pgma-stack-reptbl}
The morphism $\std \times \simc: \cX_{\Sym} \ra \cX_4 \times_{\spf \cO} \cX_1$ is representable by  algebraic spaces of finite presentation and quasi-separated. In particular, the category fibered in groupoids $\cX_\Sym$ is a Noetherian formal algebraic stack over $\spf \cO$.
\end{thm}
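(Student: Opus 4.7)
The plan is to exhibit $\cX_\Sym$ as a closed formal algebraic subspace, over $\cX_4 \times_{\spf \cO} \cX_1$, of a suitable relative $\mathrm{Isom}$ functor that is already known to be representable by algebraic spaces with good finiteness properties. This reduces the theorem to facts about $\cX_4$ and $\cX_1$ already established in \cite{EGstack}, so the first task is to identify and bootstrap those facts; the second is to check that the alternating condition is closed; the third is to propagate Noetherianity down from the target.

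Concretely, I would first introduce the functor $\cI$ over $\cX_4 \times_{\spf \cO} \cX_1$ whose $T$-points, for $T \to \cX_4 \times \cX_1$ classifying $(M,N)$, are the $\pgma$-module isomorphisms $\al : M \lrisom M^\vee \otimes N$. This $\cI$ is the base change along $(M,N) \mapsto (M, M^\vee \otimes N)$ of the relative $\mathrm{Isom}$ inside $\cX_4$ between two $\cX_4$-valued families. Since $\cX_4$ is a Noetherian formal algebraic stack, its diagonal $\cX_4 \to \cX_4 \times_{\spf\cO}\cX_4$ is representable by algebraic spaces, quasi-compact, and quasi-separated (and in fact of finite presentation), from which the analogous properties of $\cI \to \cX_4\times_{\spf\cO}\cX_1$ follow. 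The alternating condition $(\al^\vee \otimes N)^\mo \circ \al = -1_M$ is the vanishing of the morphism $\al \mapsto (\al^\vee \otimes N)^\mo\circ\al + 1_M$ into the relative endomorphism space of $M$; this is a closed condition because the endomorphism functor is separated, so $\cX_\Sym$ is a closed subspace of $\cI$. Combined, this shows that $\std \times \simc$ is representable by algebraic spaces, quasi-compact, and quasi-separated.

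For the final assertion, I would observe that $\cX_4 \times_{\spf \cO} \cX_1$ is itself a Noetherian formal algebraic stack, since one can take products of smooth covers of the two factors by disjoint unions of Noetherian affine formal algebraic spaces. A quasi-compact, quasi-separated morphism representable by algebraic spaces, from a formal algebraic stack into a Noetherian formal algebraic stack, yields a Noetherian source: pulling back such a cover by an affine formal algebraic space produces, after a further smooth cover of the resulting quasi-compact quasi-separated algebraic space, a smooth cover of $\cX_\Sym$ by a disjoint union of Noetherian affine formal algebraic spaces, while quasi-compactness and quasi-separatedness of $\cX_\Sym$ over $\spf\cO$ follow from the corresponding properties of the morphism and the target.

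The main obstacle will be the representability step, since $\pgma$-modules carry a continuous $\Gamma$-action and the Isom functor is not simply the Isom of free $\phi$-modules. In practice I would extract this from Emerton--Gee's presentation of $\cX_n$ as a limit of finite-type formal algebraic stacks together with their analysis of the diagonal, where all the needed finiteness statements for the relative $\mathrm{Hom}$ and $\mathrm{Isom}$ between two families are implicitly proved. Once that representability is secured, the closedness of the alternating condition and the descent argument are routine, and the argument generalizes verbatim to $\GSp_{2n}$ by replacing $\cX_4$ by $\cX_{2n}$.
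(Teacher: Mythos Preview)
Your proposal is correct and takes essentially the same approach as the paper: realize $\cX_\Sym$ inside the $\mathrm{Isom}$ functor pulled back from the diagonal of $\cX_4$, cut out the alternating condition via a cartesian square involving the section $-1_M$ into $\ud{\Aut}(M)$, and then descend Noetherianity from the target using \cite[Lemma~5.19]{Emerton-formal-alg-stk}. The only packaging difference is that the paper, invoking merely quasi-separatedness of $\cX_4$, obtains only that the section $-1_M$ is quasi-compact (not closed) and therefore supplies a separate cartesian-diagram argument for quasi-separatedness of $\std\times\simc$ by analyzing $\ud{\mathrm{Isom}}$ between two symplectic triples; your closedness claim, which implicitly uses the stronger input that the diagonal of $\cX_4$ is affine, yields quasi-separatedness for free via composition with a closed immersion.
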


\begin{proof}
We first prove that $\std\times\simc$ is representable by algebraic spaces and quasi-compact. Let $S\ra \spf \cO$ be a test scheme and $S \ra \cX_4 \times_{\spf \cO} \cX_1$ be a morphism corresponding to \'etale $\pgma$-modules $M_S$ and $N_S$ (of rank 4 and 1, respectively). If $S'$ is a $S$-scheme, we write $(M_{S'},N_{S'})$ for the pullback of $(M_S,N_S)$ to $S'$. The fiber product
\begin{align*}
    \ud{\al}_S := \cX_\Sym \times_{\cX_4\times \cX_1} S
\end{align*}
is given by the following subsheaf of $\ud{\Isom}(M_S,M_S^\vee\otimes N_S)$
\begin{align*}
    \ud{\al}_S: S' \mapsto \CB{\al' \in \Isom(M_{S'},M_{S'}^\vee\otimes N_{S'}) \mid ((\al')^\vee\otimes N_{S'})\circ \al' = -1_{M_S}}.
\end{align*}
We have the following cartesian diagram
\[
\begin{tikzcd}
\ud{\al}_S \arrow[rr] \arrow[d] & & S \arrow[d , "{-1_{M_S}}"] \\
\ud{\Isom}(M_S,M_S^\vee\otimes N_S) \arrow[r] & \ud{\Isom}(M_S,M_S^\vee\otimes N_S)  \times_S \ud{\Isom}(M_S^\vee\otimes N_S, M_S)  \arrow[r, "c"] & \ud{\Aut}(M_S)
\end{tikzcd},
\]
where the right vertical map is a constant section given by $-1_{M_S} \in \ud{\Aut}(M_S)$, the left bottom horizontal map sends $\al$ to $(\al, (\al^\vee \otimes N_S)^\mo)$, and $c$ is the composition. Since $\cX_4$ is quasi-separated, $\ud{\Aut}(M_S) \ra S$ is quasi-compact and quasi-separated. This implies that the section $-1_{M_S}$ is quasi-compact, and so is $\ud{\al}_S \ra \ud{\Isom}(M_S,M_S^\vee\otimes N_S)$. In particular, $\ud{\al}_S \ra S$ is quasi-compact. 
This proves that $\std\times \simc$ is representable by algebraic spaces locally of finite presentation and quasi-compact.

We now prove that $\std\times \simc$ is quasi-separated. By the following cartesian diagram
\[
\begin{tikzcd}
\ud{\Isom}((M_1,N_1,\al_1),(M_2,N_2,\al_2)) \arrow[r] \arrow[d] & \ud{\Isom}(M_1,M_2) \times \ud{\Isom}(N_1,N_2) \arrow[r] \arrow[d]  & S \arrow[d] \\
\cX_\Sym \arrow[r] & \cX_\Sym \times_{\cX_4\times \cX_1} \cX_\Sym \arrow[r] & \cX_\Sym \times \cX_\Sym
\end{tikzcd},
\]
it suffices to show that
\begin{align*}
    \ud{\Isom}((M_1,N_1,\al_1),(M_2,N_2,\al_2)) \ra \ud{\Isom}(M_1,M_2) \times_S \ud{\Isom}(N_1,N_2)
\end{align*}
is quasi-compact and quasi-separated. It is clearly a monomorphism and is therefore quasi-separated. Note that there is a morphism
\begin{align*}
   d: \ud{\Isom}(M_1,M_2) \times_S \ud{\Isom}(N_1,N_2) & \ra \ud{\Isom}(M_1,M_2) \times_S \ud{\Isom}(N_1,N_2) \\
    (f,g) &\mapsto (\al_2^\mo \circ ((f^\vee)^\mo\otimes g ) \circ \al_1 ,g)
\end{align*}
which makes the following diagram cartesian
\[
\begin{tikzcd}
\ud{\Isom}((M_1,N_1,\al_1),(M_2,N_2,\al_2)) \arrow[r] \arrow[d] & \ud{\Isom}(M_1,M_2) \times_S \ud{\Isom}(N_1,N_2) \arrow[d, "\PR{\id, d}" ] \\
\ud{\Isom}(M_1,M_2) \times_S \ud{\Isom}(N_1,N_2) \arrow[r, "\Del"] & (\ud{\Isom}(M_1,M_2) \times_S \ud{\Isom}(N_1,N_2))^2
\end{tikzcd}.
\]
Then the top horizontal arrow is quasi-compact because $\ud{\Isom}(M_1,M_2) \times_S \ud{\Isom}(N_1,N_2)$ is  quasi-separated.

Finally, $\cX_\Sym$ is a formal algebraic stack over $\spf \cO$ by \cite[Lemma 5.19]{Emerton-formal-alg-stk}. It is Noetherian because $\std\times \simc$ is representable by algebraic spaces of finite presentation. 
\end{proof}

Let $\tau:I_K \ra T^\vee(E)$ be an inertial $K$-type and   $\lam\in X_*(\uT^{\vee})$ be a dominant cocharacter.  Recall the closed substacks $\cX_4^{\lam',\tau'}$ (resp.~$\cX_4^{\ss,\lam',\tau'}$)  of $\cX_4$ called the potentially crystalline (resp.~semistable) substack with Hodge type $\lam':=\std(\lam)$ and inertial type $\tau':=\std(\tau)$ (\cite[Theorem 4.8.12]{EGstack}). It is an $\cO$-flat $p$-adic formal algebraic substack. For a finite flat $\cO$-algebra $A$, $\cX_4^{\lam',\tau'}(A)$ (resp.~$\cX_4^{\ss,\lam',\tau'}(A)$) is the full subgroupoid of $\cX_4(A)$ consisting of lattices in potentially crystalline (resp.~semistable) $G_K$-representations of Hodge type $\lam'$ and inertia type $\tau'$.  For convenience, we write $\cX_4^{\lam,\tau}$ and $\cX_4^{\ss,\lam,\tau}$ instead of $\cX_4^{\lam',\tau'}$ and $\cX_4^{\ss,\lam',\tau'}$. We define $\cX_{\Sym}^{\lam,\tau}$ (resp.~$\cX_{\Sym}^{\ss,\lam,\tau}$) to be the $\cO$-flat part (see \cite[Example 9.11]{Emerton-formal-alg-stk}) of  $\cX_\Sym \times_{\cX_4} \cX_4^{\lam',\tau'}$ (resp.~$\cX_\Sym \times_{\cX_4} \cX_4^{\ss,\lam',\tau'}$). 

\begin{prop}\label{prop:crys-ss-stacks}
Let $\lam, \tau$ be as above. 
\begin{enumerate}
    \item The stack $\cX_\Sym^{\lam,\tau}$ (resp.~$\cX_{\Sym}^{\ss,\lam,\tau}$) is a $p$-adic formal algebraic stack and uniquely determined as the $\cO$-flat closed substack of $\cX_\Sym$ such that for a finite flat $\cO$-algebra $A$, $\cX_\Sym^{\lam,\tau}(A)\subset \cX_\Sym(A)$ (resp.~$\cX_\Sym^{\ss,\lam,\tau}(A)\subset \cX_\Sym(A)$) is precisely the subgroupoid consisting of lattices in potentially crystalline (resp.~semistable) $G_K$-representations of Hodge type $\lam$ and inertia type $\tau$. 
    \item The algebraic stacks $\cX_\Sym^{\lam,\tau}\times_{\spf \cO} \spec \F$ (resp.~$\cX_{\Sym}^{\ss,\lam,\tau}\times_{\spf \cO} \spec \F$)  are equidimensional of dimension $d_\lam:= \sum_{j\in\cJ} \dim P_{\lam_j}\bss \GSp_4$.
\end{enumerate}
\end{prop}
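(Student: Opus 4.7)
The plan is to deduce item (1) from Theorem~\ref{thm:sym-pgma-stack-reptbl} together with the analogous result for $\cX_4$ (and $\cX_1$), and to obtain item (2) from the local model machinery and Breuil--Kisin comparison developed in the remainder of this section.

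For item (1), the starting point is that $\cX_4^{\std(\lam),\std(\tau)}$ and $\cX_1^{\simc(\lam),\simc(\tau)}$ are $\cO$-flat $p$-adic formal algebraic substacks by \cite[Theorem 4.8.12]{EGstack}, while by Theorem~\ref{thm:sym-pgma-stack-reptbl} the morphism $\std\times\simc: \cX_\Sym \to \cX_4\times_{\spf\cO}\cX_1$ is representable by algebraic spaces, quasi-compact and quasi-separated. Hence the fiber product
\[
\cY := \cX_\Sym \times_{\cX_4\times_{\spf\cO}\cX_1} \bigl(\cX_4^{\std(\lam),\std(\tau)} \times_{\spf\cO} \cX_1^{\simc(\lam),\simc(\tau)}\bigr)
\]
is a $p$-adic formal algebraic substack of $\cX_\Sym$, and I will define $\cX_\Sym^{\lam,\tau}$ to be its $\cO$-flat part; $\cX_\Sym^{\ss,\lam,\tau}$ is defined analogously, replacing the potentially crystalline substacks with the semistable ones.

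The moduli description at a finite flat $\cO$-algebra $A$ then follows from Corollary~\ref{cor:sym-pgma-gal-equiv}: a continuous $\rho: G_K \to \GSp_4(A)$ is potentially crystalline (resp.~semistable) of Hodge type $\lam$ and inertia type $\tau$ if and only if $\std\circ\rho$ is of the corresponding type for $\GL_4$ (this is the standard tannakian characterization via the faithful representation $\std$, e.g.~\cite{BuzzardGee}). The compatibility condition on the similitude character $\simc\circ\rho$ is then automatic from the identity $\det\circ\std = \simc^2$ on $\GSp_4$, which pins down the Hodge--Tate weights and inertial type of $\simc\circ\rho$ up to a $2$-torsion ambiguity that is resolved by flatness. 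Uniqueness of the $\cO$-flat closed substack with the stated moduli description is standard: an $\cO$-flat closed formal algebraic substack of a $p$-adic formal algebraic stack is determined by its set of $\Qpbar$-valued points.

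For item (2), I will appeal to the local model theory of \S\ref{sec:local-models} and the Breuil--Kisin comparison in \S\ref{sub:BK-stack}--\S\ref{sub:et-phi-stack}. Specifically, $\cX_\Sym^{\lam,\tau}$ admits a smooth cover (up to $\cO$-flatness) by an open substack of the symplectic Breuil--Kisin stack $Y_{\Sym}^{\le\lam,\tau}$ (Proposition~\ref{prop:EG-stack=BK-stack}), which by Theorem~\ref{thm:BK-local-model} is smoothly equivalent to the mixed characteristic local model $M(\le\lam)$. The special fiber $M(\le\lam)_\F$ is a reduced union of affine Schubert cells $S^\circ_\F(\tilz)$ indexed by $\tilz \in \Adm^\vee(\lam)$ by \cite[Theorem 9.3]{PZ13-Inv-local_model-MR3103258}, whose dimensions are computed in Theorem~\ref{thm:schubert-cell-with-monodromy}; the maximal ones have dimension $d_\lam$, yielding equidimensionality of the special fiber of $\cX_\Sym^{\lam,\tau}$ of the claimed dimension. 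The semistable case is analogous, with a slight modification of the closed condition on Breuil--Kisin modules.

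The main obstacle is item (2): the dimension count genuinely requires developing the local model theory for $\GSp_4$ and the explicit description of its special fiber, which occupies the bulk of \S\ref{sec:local-models}. By contrast, item (1) is largely formal once Theorem~\ref{thm:sym-pgma-stack-reptbl} is in hand, reducing everything to the corresponding $\GL_4$ and $\GL_1$ results from \cite{EGstack}.
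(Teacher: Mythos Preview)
Your treatment of item (1) is essentially correct and close to the paper's approach, though the paper simply takes the $\cO$-flat part of $\cX_\Sym \times_{\cX_4} \cX_4^{\std(\lam),\std(\tau)}$ without involving $\cX_1$ explicitly; your version with the extra $\cX_1$ factor is harmless but unnecessary.

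Your approach to item (2), however, has a genuine gap. The proposition is stated in \S\ref{sub:pgma-stack}, which explicitly allows $K/\Qp$ to be ramified and places no genericity hypothesis on $\tau$. By contrast, the entire local model machinery you invoke---Theorem~\ref{thm:BK-local-model}, Proposition~\ref{prop:EG-stack=BK-stack}, Theorem~\ref{thm:schubert-cell-with-monodromy}---is developed only for $K/\Qp$ unramified and requires $\tau$ to be sufficiently generic (e.g.\ $(h+2)$-generic for Proposition~\ref{prop:EG-stack=BK-stack}). So your argument cannot cover the proposition as stated. There is also a logical circularity issue: Proposition~\ref{prop:crys-ss-stacks} appears before the Breuil--Kisin comparison in the paper's development, and the results you cite from \S\ref{sub:local-models-pcrys} come after. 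Finally, even under the restrictive hypotheses, your dimension count is not quite right: $Y_\Sym^{\le\lam,\tau}$ is smoothly equivalent to $M(\le\lam)$, not to the monodromy locus, and Theorem~\ref{thm:schubert-cell-with-monodromy} computes dimensions of Schubert cells cut out by the monodromy condition, which gives $4$ per embedding---this matches $d_\lam$ only when $\lam$ is regular.

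The paper's actual argument for item (2) is both simpler and more general: one follows the proof of \cite[Theorem~4.8.14]{EGstack} verbatim, replacing $\GL_n$ by $\GSp_4$. The point is that versal rings to $\cX_\Sym^{\lam,\tau}$ at finite type points are (formally smooth over) potentially crystalline framed deformation rings, whose dimension is computed directly by \cite[Theorem~A]{BG} for $G$-valued representations; subtracting the framing dimension yields $d_\lam$. This works for arbitrary $K$ and $\tau$ and requires none of the local model input.
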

\begin{proof}
The first claim follows from the construction. For the second claim, the proof is identical to that  of \cite[Theorem 4.8.14]{EGstack}, using $\GSp_4$ instead of $\GL_4$ and the dimension formula for the potentially crystalline (or semistable) symplectic deformation ring (\cite[Theorem A]{BG}).
\end{proof}

\subsubsection{Irreducible components in $\cX_{\Sym,\red}$}\label{subsub:irred-comp-pgma-stack}
Let $\cX_{4,\red}$ be the underlying reduced substack of $\cX_4$. It is an algebraic stack of finite presentation over $\F$ and equidimensional of dimension $6[K:\Qp]$ (\cite[Theorem 5.5.11 and 6.5.1]{EGstack}). Moreover, its irreducible components are labeled by Serre weights (of $\GL_4(k)$). Our goal is to prove analogous results for the underlying reduced substack $\cX_{\Sym,\red}$ of $\cX_{\Sym}$.

Let $\sigma$ be a Serre weight (of $\GSp_4(k)$). Then there exists $\lam\in X^*_1(\uT)$ such that $\sigma \simeq F(\lam)$. For each $\jj$, we identify $\lam_j$ with a triple of integers $(\lam_{j,1}, \lam_{j,2} ; \lam_{j,3})$ such that $0 \le \lam_{j,1}- \lam_{j,2}, \lam_{j,2} \le p-1$ as explained in \S\ref{sub:prelim}.

\begin{defn}\label{def:rhobar-max-nonsplit-weight-sig}
We say that $\rhobar: G_K \ra \GSp_4(\F)$ is \emph{maximally non-split of niveau 1 and of weight $\sigma$} if 
\begin{align}\label{eqn:max-non-split-rhobar}
    \rhobar = \pma{\chi_1 & *& *& *\\ 
    0& \chi_2& *& *\\ 
    0&0 & \chi_3&* \\ 
    0& 0&0 & \chi_4 }
\end{align}
where \begin{itemize}
    \item $\rhobar$ is maximally non-split of niveau 1, i.e.~it has a unique $G_K$-stable complete flag;
    \item $\bigoplus_{i=1}^4  \chi_i|_{I_K} = \prod_{\jj} \ov{\omega}_{K,\sigma_j}^{\phi(\lam_j+\eta_j)}$ (as $T^\vee(\F)$-valued representations);
    \item If $\chi_1 \chi_2^\mo|_{I_K} = \ov{\veps}$ (resp.~$\chi_2 \chi_3^\mo|_{I_K} = \ov{\veps}$), then $\lam_{j,2}=p-1$ (resp.~$\lam_{j,1}-\lam_{j,2}=p-1$) for all $\jj$ if and only $\chi_1 \chi_2^\mo = \ov{\veps}$ (resp.~$\chi_2 \chi_3^\mo = \ov{\veps}$) and the element of $\Ext^1_{G_K}(\chi_1,\chi_{2})$ (resp.~$\Ext^1_{G_K}(\chi_2,\chi_3)$) determined by $\rhobar$ is tr\`es ramifi\'ee. Otherwise,  $\lam_{j,2} = 0$ (resp.~$\lam_{j,1} - \lam_{j,2} = 0$) for all $\jj$. (Note that $\chi_1 \chi_2^\mo = \chi_3 \chi_4^\mo$.)
\end{itemize}
\end{defn}

The following is the main theorem of this subsection.

\begin{thm}\label{thm:irred-comp-pgma-stack}
The stack $\cX_{\Sym,\red}$ is an algebraic stack over $\spec \F$ of finite presentation and equidimensional of dimension $4[K:\Q_p]$. For each Serre weight $\sigma$, there exists an irreducible component $\cC_\sigma \subset \cX_{\Sym,\red}$ containing a dense locus of $\rhobar$ maximally non-split of niveau 1 and of weight $\sigma$. This induces a bijection between the set of isomorphism classes of Serre weights and the set of irreducible components of $\cX_{\Sym,\red}$.
\end{thm}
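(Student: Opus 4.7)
The plan is to follow the strategy developed by Emerton--Gee for $\cX_{4,\red}$, modified to handle the symplectic structure and, crucially, the non-abelian unipotent radical arising in the $\GSp_4$-Borel. First I would establish that $\cX_{\Sym,\red}$ is an algebraic stack of finite presentation over $\F$: this follows formally from Theorem \ref{thm:sym-pgma-stack-reptbl} combined with the general fact (used for $\cX_4$ in \cite{EGstack}) that the underlying reduced substack of a Noetherian formal algebraic stack is an algebraic stack of finite presentation over its residue field. The target dimension $4[K:\Qp]$ is consistent with Proposition \ref{prop:crys-ss-stacks}(2) applied to the regular (Iwahori) Hodge type $\lam=\eta$, where $P_\eta=B$ and $\dim B\bss\GSp_4 = 4$.

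Next, for each Serre weight $\sigma$ I would construct an irreducible substack $\cC_\sigma\subset\cX_{\Sym,\red}$ whose generic $\Fbar$-points are $\rhobar$ maximally non-split of niveau 1 and weight $\sigma$, as in Definition \ref{def:rhobar-max-nonsplit-weight-sig}. Such $\rhobar$ factor through the Borel of $\GSp_4$, whose unipotent radical is non-abelian (containing the Heisenberg unipotent radical of $Q$), so the Emerton--Gee construction by iterated abelian Galois-cohomology extensions does not directly apply. My approach is to exploit the closed immersion $\std\times\simc:\cX_\Sym\hookrightarrow\cX_4\times\cX_1$: given $\sigma$, one chooses a compatible similitude character $N$ and a Serre weight $\std(\sigma)$ of $\GL_4(k)$, and then realizes $\cC_\sigma$ as the $\cO$-flat part of the pullback along $\std\times\simc$ of the Emerton--Gee component $\cC_{\std(\sigma)}\subset\cX_{4,\red}$. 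One needs to verify that the self-duality condition $M\simeq M^\vee\otimes N$ cuts out an irreducible closed substack of the right dimension. A direct count of the free parameters (four characters $\chi_1,\dots,\chi_4$ constrained by $\chi_1\chi_2^\mo=\chi_3\chi_4^\mo$, plus extension classes along the flag constrained by the alternating pairing) produces $4[K:\Qp]$, matching the required dimension; irreducibility follows because the parameterizing family of characters and extensions is itself irreducible.

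Finally, to establish the covering $\cX_{\Sym,\red}=\bigcup_\sigma\cC_\sigma$ and deduce the bijection, I would show every $\rhobar:G_K\ra\GSp_4(\Fbar)$ lies on some $\cC_\sigma$. This reduces to producing a characteristic-zero crystalline lift of $\rhobar$ of some fixed small regular Hodge type: any such lift places $\rhobar$ in $\cX_\Sym^{\lam,\tau}\times_\cO\spec\F$, which by Proposition \ref{prop:crys-ss-stacks}(2) has the correct dimension $4[K:\Qp]$; a deformation-theoretic/degeneration argument then moves $\rhobar$ to a maximally non-split representation of the form \eqref{eqn:max-non-split-rhobar} of some weight $\sigma$, hence into $\cC_\sigma$. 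Once the covering is known, counting by dimension and the fact that the constructed families all have dimension $4[K:\Qp]$ gives equidimensionality, and distinctness of the $\cC_\sigma$ follows by recovering $\sigma$ from the central character of $\rhobar$ and the extension data of its generic points, much as in \cite[\S6.5]{EGstack}.

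The main obstacle is the covering step, which genuinely requires non-abelian Galois cohomology: unlike the $\GL_n$ case, one cannot inductively build a $\GSp_4$-valued crystalline lift by extending successive characters, because the relevant obstruction classes live in groups associated to the non-abelian unipotent radicals of parabolics of $\GSp_4$. I would invoke the main result of \cite{Lin}, which develops an obstruction theory for lifting mod $p$ Galois representations valued in general reductive groups, and apply it to $\GSp_4$ to produce crystalline lifts of prescribed Hodge type for every $\rhobar$. The technical heart of the proof is therefore a combination of the $\GL_4$-embedding trick (for constructing $\cC_\sigma$) with Lin's lifting theorem (for the covering), both of which are needed precisely because the $\GSp_4$-unipotent radicals are non-abelian.
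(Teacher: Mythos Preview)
Your overall architecture (construct $\cC_\sigma$, then show they cover via crystalline lifts and Lin) matches the paper, but there are two concrete gaps, and in one place you have misplaced where the key difficulty lies.

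First, your construction of $\cC_\sigma$ by pulling back $\cC_{\std(\sigma)}\subset\cX_{4,\red}$ along $\std\times\simc$ does not give irreducibility or the dimension for free. A generic point of $\cC_{\std(\sigma)}$ is \emph{not} symplectic, so the pullback sees a proper closed substack; your parameter count is only a heuristic and does not rule out reducibility of the cut locus. (Also, ``$\cO$-flat part'' makes no sense here: we are over $\F$.) The paper instead constructs $\cC_\sigma$ directly as the scheme-theoretic image of an explicit irreducible family of $\GSp_4$-valued extensions built through the \emph{Siegel} parabolic $S$, whose unipotent radical is abelian: one takes a family $\bar\theta_T$ over the $\GL_2$-component $\cC_{\sigma'}\subset\cX_{2,\red}$ and forms extensions of $\bar\theta_T^\vee\otimes\chi_\sigma$ by $\bar\theta_T$ using the Emerton--Gee construction verbatim (Lemma~\ref{lem:dim-ext-family-Siegel}). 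Irreducibility and the dimension $4[K:\Q_p]$ then come from the irreducibility of the input family and a rank computation for $H^2(G_K,\Ad_S(\bar\theta,\chi))$.

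Second, your covering argument is incomplete because you never construct or bound the complementary locus. The paper produces a closed substack $\cC_{small}\subset\cX_{\Sym,\red,\Fpbar}$ of dimension $<4[K:\Q_p]$ such that $\cX_{\Sym,\red,\Fpbar}=\bigl(\bigcup_\sigma\cC_{\sigma,\Fpbar}\bigr)\cup\cC_{small}$ (Proposition~\ref{prop:EGstack-gsp4-weak-irred-comp}); only with this in hand does the crystalline-lift argument work, since equidimensionality of $\cX_\Sym^\lam\times_{\spf\cO}\spec\Fpbar$ forces its reduced subspace to be a union of $\cC_{\sigma,\Fpbar}$'s. It is in bounding $\dim\cC_{small}$, specifically for the Klingen-parabolic-valued families, that the $\GL_4$-embedding trick actually enters (Lemma~\ref{lem:dim-ext-family-Klingen}): one builds $Q$-valued families inside $Q_4$-valued $\GL_4$ families in two abelian steps ($U_{Q_3}$ then $U_{P_4}$) and cuts out the symplectic locus, computing its fibers by hand. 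Lin's obstruction theory is then used, as you say, to show every $\rhobar$ admits a crystalline lift; but note that the paper applies it only to the genuinely non-abelian (Klingen) case, handling the Siegel and completely reducible cases by elementary means.
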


Before we proceed, we define \emph{potentially diagonalizable representations}. 

\begin{defn}\label{defn:pd-local}
A continuous representation $\rho: G_K \ra \GSp_4(\cO)$ is \emph{potentially diagonalizable} if $\rho\otimes_\cO E$ is potentially crystalline and $\std(\rho)$ is potentially diagonalizable in the sense of \cite[\S1.4]{BLGGT-MR3152941}.
\end{defn}

\begin{example}\label{ex:pdreps}
Suppose that $\rho:G_K \ra \GSp_4(\cO)$ is potentially crystalline and ordinary (in the sense of \cite[\S 1.4]{BLGGT-MR3152941}). Then $\rho$ is potentially diagonalizable by Lemma 1.4.3 of \loccit.
\end{example}

The following two results will be proven in \S\ref{subsub:families-of-extensions} and \S\ref{subsub:crys-lift} respectively. We write $\cX_{\Sym,\red,\Fpbar}$ for $\cX_{\Sym,\red}\times_{\F}\spec \Fpbar$.

\begin{prop}\label{prop:EGstack-gsp4-weak-irred-comp}
For each Serre weight $\sigma$, there exists an algebraic stack $\cC_{\sig,\Fpbar}\subset \cX_{\Sym,\red,\Fpbar}$ irreducible of dimension $4[K:\Qp]$ containing a dense locus of $\rhobar$ maximally non-split of niveau 1 and of weight $\sigma$. Furthermore, there exists a closed substack $\cC_{small}\subset \cX_{\Sym,\red,\Fpbar}$ of dimension  strictly less than $4[K:\Qp]$ such that 
\begin{align*}
    \cX_{\Sym,\red,\Fpbar} = \bigcup_{\sig} \cC_{\sig,\Fpbar} \cup \cC_{small}.
\end{align*}
\end{prop}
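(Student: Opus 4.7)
The plan is to adapt the $\GL_n$ strategy of \cite{EGstack} to the $\GSp_4$ setting, as outlined in the introduction. The proof splits into two pieces: producing the stacks $\cC_{\sig,\Fpbar}$ via a moduli of extensions with prescribed successive quotients, and controlling the complement of $\bigcup_\sig \cC_{\sig,\Fpbar}$ via a crystalline lifts argument.

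For the construction of $\cC_{\sig,\Fpbar}$, I would fix $\sig \simeq F(\lam)$ with $\lam \in X^*_1(\uT)$ and build a moduli stack parameterizing $\rhobar$ of the form \eqref{eqn:max-non-split-rhobar} together with a choice of $G_K$-stable complete flag. The diagonal characters $(\chi_1,\dots,\chi_4)$ are constrained by the similitude relation $\chi_1\chi_4 = \chi_2\chi_3$ and by the inertial data specified in Definition \ref{def:rhobar-max-nonsplit-weight-sig}. For $\GL_n$, Emerton--Gee build the analogous stack by successive $\Ext^1$-extensions of characters, leveraging the abelian successive quotients of the Borel's unipotent radical. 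The difficulty specific to $\GSp_4$ is that its Borel has a non-abelian (Heisenberg) unipotent factor coming from the Klingen parabolic $Q$, so the character-by-character construction cannot be carried out directly in $B_{\GSp_4}$. Following the introduction, the workaround is to embed the $Q$-valued extension problem inside the extension problem for the minimal parabolic of $\GL_4$ containing $Q$; the successive quotients there are abelian and so amenable to the family-of-extensions machinery of \cite[\S 5]{EGstack}. Cutting out the symplectic locus via the similitude condition produces the desired moduli stack, whose image in $\cX_{\Sym,\red,\Fpbar}$ is $\cC_{\sig,\Fpbar}$. A dimension count parallel to \cite[\S 5.5]{EGstack} yields $\dim \cC_{\sig,\Fpbar} = 4[K:\Q_p]$, matching the dimension $d_\eta$ in Proposition \ref{prop:crys-ss-stacks}(2) for the regular Hodge type $\eta$; and irreducibility follows because the stack of diagonal characters with the prescribed inertial restrictions is irreducible and the successive $\Ext^1$ parameters run over vector bundles.

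For the covering assertion, I would use the existence of potentially crystalline lifts for any $\rhobar: G_K \ra \GSp_4(\Fpbar)$. The obstruction theory for mod~$p$ lifts of reductive-group-valued representations developed in \cite{Lin} ensures that every such $\rhobar$ lies in $\cX_{\Sym}^{\eta,\tau}(\cO')$ for a suitable tame inertial type $\tau$ and some finite extension $\cO'/\cO$. By Proposition \ref{prop:crys-ss-stacks}(2), each $\cX_{\Sym}^{\eta,\tau}\times_{\spf\cO}\spec\Fpbar$ has dimension exactly $4[K:\Q_p]$, and hence the underlying reduced substacks of these potentially crystalline special fibers cover $\cX_{\Sym,\red,\Fpbar}$ up to substacks of smaller dimension. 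A density argument analogous to \cite[Theorem 6.5.1]{EGstack} then shows that each top-dimensional irreducible component of $\cX_{\Sym,\red,\Fpbar}$ contains a dense locus of $\rhobar$ maximally non-split of niveau $1$ and of some weight $\sig$; this component therefore coincides with (one of the) $\cC_{\sig,\Fpbar}$ just constructed. Everything remaining has dimension $<4[K:\Q_p]$ and is packaged into $\cC_{small}$.

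The main obstacle will be the construction step. Staying within abelian Galois cohomology while the Heisenberg part of $Q$ is non-abelian forces the detour through the minimal $\GL_4$-parabolic containing $Q$, and one must then reconcile the $\GL_4$-level extension data with the $\GSp_4$-structure: preservation of the symplectic pairing, tracking of the similitude character, and correct handling of the \tres~ramification condition built into Definition \ref{def:rhobar-max-nonsplit-weight-sig} (which dictates precisely when adjacent character extensions must be \tres~ramifi\'ee, reflecting the boundary behavior when $\lam$ touches the walls of $X^*_1(\uT)$). A secondary technical point is verifying that the crystalline lifts produced by \cite{Lin} can be arranged with Hodge type exactly $\eta$, as this is what ensures the dimension bound $4[K:\Q_p]$ is sharp and hence that no top-dimensional components escape the union $\bigcup_\sig \cC_{\sig,\Fpbar}$.
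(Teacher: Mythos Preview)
Your proposal diverges from the paper's proof in two places, and the second is a genuine gap.

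For the construction of $\cC_{\sig,\Fpbar}$, you propose to parameterize Borel-valued $\rhobar$ and handle the Heisenberg part of $U_Q$ via the $\GL_4$-parabolic embedding.  The paper instead factors through the \emph{Siegel} parabolic $S$, whose unipotent radical $U_S\simeq\G_a^{\oplus3}$ is abelian: one first builds the top-left $2\times2$ block $\ov\theta$ as a point of the known $\GL_2$ Emerton--Gee component $\cC_{\sigma'}$ (for a $\GL_2(k)$-Serre weight $\sigma'$ read off from $\sigma$), and then parameterizes extensions of $\ov\theta^\vee\otimes\chi_\sigma$ by $\ov\theta$ using abelian cohomology (Lemma~\ref{lem:dim-ext-family-Siegel}).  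The Klingen/$\GL_4$ detour (Lemma~\ref{lem:dim-ext-family-Klingen}) is used only in the construction of $\cC_{small}$, for the locus of $\rhobar$ factoring through $Q$ with \emph{irreducible} middle $\GL_2$-block---in particular it assumes $\ov\theta$ irreducible, so it would not apply to the Borel-valued reps you want to parameterize.

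For the covering assertion, crystalline lifts do not suffice and are not used in the paper's proof of this Proposition (they enter only in Theorem~\ref{thm:irred-comp-pgma-stack}).  Granting Theorem~\ref{thm:crys-lift}, you would obtain $\dim\cX_{\Sym,\red,\Fpbar}\le 4[K:\Qp]$, but nothing prevents a $4[K:\Qp]$-dimensional component from lying outside $\bigcup_\sigma\cC_{\sigma,\Fpbar}$; your ``density argument analogous to \cite[Theorem~6.5.1]{EGstack}'' is precisely what you have not supplied.  The paper instead constructs $\cC_{small}$ explicitly by stratifying all $\Fpbar$-points by parabolic type---Siegel-valued $\rhobar$ outside the generic locus (via the strata $\cZ_{\sigma',r}$ and Lemma~\ref{lem:dim-ext-family-Siegel} with $r>0$), Klingen-valued with irreducible middle (Lemma~\ref{lem:dim-ext-family-Klingen}), $\iota_{\mathrm{aux}}$-type, and irreducibles---and bounding the dimension of each stratum directly.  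Your secondary concern about arranging Hodge type exactly $\eta$ is a red herring: $\dim P_\lambda\backslash\GSp_4=4$ for every regular $\lambda$, so any regular Hodge type gives the needed bound.
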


\begin{thm}\label{thm:crys-lift}
Any continuous representation $\rhobar: G_K \ra \GSp_4(\Fpbar)$ admits a lift $\rho: G_K \ra \GSp_4(\ov{\Z}_p)$ such that $\rho\otimes_{\ov{\Z}_p}\Qpbar$ is crystalline of regular Hodge type (i.e.~given by a regular dominant $\lam\in X_*(\uT^\vee)$). Furthermore, $\rho$ can be taken to be potentially diagonalizable.
\end{thm}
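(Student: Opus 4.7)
The plan is to invoke Lin's obstruction theory \cite{Lin} for lifting mod $p$ Galois representations of $G_K$ valued in general reductive groups. Applied to $H=\GSp_4$, Lin's main result will provide a crystalline lift of $\rhobar$ with prescribed regular Hodge type, provided a non-abelian Galois cohomology obstruction vanishes. The crucial point is that this framework directly handles the non-abelian Klingen unipotent radical of $\GSp_4$, which as remarked after Theorem \ref{thmx:EGstaack} obstructs a purely abelian construction of the kind that works for $\GL_n$.

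To prove the first assertion, I would choose a sufficiently regular Hodge type $\lam\in X_*(\uT^\vee)$ whose composition with the similitude character matches the Hodge--Tate weight of a fixed crystalline lift of $\simc\circ\rhobar$ (which always exists, since $\GL_1$-valued mod $p$ characters always lift crystallinely), and then apply Lin's lifting criterion to produce a crystalline $\rho: G_K \ra \GSp_4(\ov{\Z}_p)$ lifting $\rhobar$ with Hodge type $\lam$. For sufficiently regular $\lam$, the relevant non-abelian obstruction can be shown to vanish.

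For the potential diagonalizability clause, by Example \ref{ex:pdreps} it suffices to arrange the lift to be ordinary. The strategy is: after replacing $K$ by a finite tamely ramified extension $K'/K$, arrange that $\rhobar|_{G_{K'}}$ factors through a Borel subgroup $B\subset \GSp_4$---indeed, $\rhobar$ restricted to wild inertia is a finite $p$-group and hence lies in the unipotent radical of some Borel, and a further tame base change moves the tame quotient into $T$. Lifting each diagonal character with strictly decreasing Hodge--Tate weights and assembling the upper-triangular extensions root subgroup by root subgroup via abelian $H^1$ of characters produces an ordinary crystalline lift of $\rhobar|_{G_{K'}}$. One then extends this lift back to $G_K$ using Lin's machinery, retaining the ordinary property by comparison with the $G_{K'}$-restriction, and hence obtaining a potentially diagonalizable lift.

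The main obstacle will be showing that Lin's non-abelian cohomological obstruction actually vanishes in the $\GSp_4$ setting, particularly for the second part. Unlike $\GL_n$, where all unipotent radicals of parabolics are abelian and the entire lifting problem reduces to iterated applications of abelian $H^1$, here one must truly work with the non-abelian Klingen radical; this is precisely the step where Lin's machinery is indispensable, and verifying its hypotheses in our situation---together with the compatibility with the similitude constraint---is the crux of the proof.
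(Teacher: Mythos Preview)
Your high-level instinct to use Lin's obstruction theory is right, and the paper does exactly this. However, both halves of your plan have issues that matter.

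For the first assertion, Lin's Theorem~A (see Theorem~\ref{thm:Lin}) is not a black box that takes an arbitrary $H$-valued $\rhobar$ and returns a crystalline lift. It is stated for representations factoring through a parabolic $P\subset H$, and it lifts within $P$ given a chosen crystalline lift of the Levi quotient, subject to three hypotheses (a codimension estimate, a prime-to-$p$ base change condition, and mild regularity). So you cannot simply ``apply Lin's lifting criterion'' to $\rhobar$; you must first identify the minimal parabolic containing the image of $\rhobar$ and do a case analysis. The paper does this: $G$-completely reducible $\rhobar$ are handled by hand, Siegel-parabolic-valued $\rhobar$ are handled by abelian cohomology (since $U_S$ is abelian), and only the genuinely non-abelian Klingen case invokes Lin. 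Moreover, in that case the mild regularity hypothesis (MR2) is not automatic: the paper verifies it by an explicit computation of the quadratic form on the Lyndon--Demu\v{s}kin complex coming from the commutator on $U_Q$. You flag this as ``the main obstacle'' but give no indication of how to check it; this is a real computation, not a formality.

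For potential diagonalizability, your strategy is backwards and has a genuine gap. You propose to restrict to $K'$, produce an ordinary lift of $\rhobar|_{G_{K'}}$, and then ``extend this lift back to $G_K$ using Lin's machinery.'' Lin's theorem does not do this: it lifts $\rhobar$ over $G_K$ directly, not descent of a lift from $G_{K'}$ to $G_K$. There is no mechanism here for taking a lift over a subgroup and promoting it. The paper's approach is the reverse: in each case the lift $\rho$ is constructed over $G_K$ and lands, by construction, in the relevant parabolic; one then simply observes that $\rho|_{G_{K'}}$ is ordinary for a suitable finite unramified $K'$ (because the Levi pieces become Borel-valued after enough unramified base change), whence $\rho$ is potentially diagonalizable by Example~\ref{ex:pdreps}. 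This direction---lift first, then restrict---works; your direction does not.
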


Granting Proposition \ref{prop:EGstack-gsp4-weak-irred-comp} and Theorem \ref{thm:crys-lift}, we prove:

\begin{proof}[Proof of Theorem \ref{thm:irred-comp-pgma-stack}]
If a closed immersion of reduced algebraic stacks that are locally of finite type over $\spec \Fpbar$ is surjective on finite type points (see \cite[\S6.6]{EGstack}), then it is an isomorphism. Thus, if we prove that any $\rhobar:G_K \ra \GSp_4(\Fpbar)$ is contained in some $\cC_{\sig,\Fpbar}$, then the closed immersion $\cup_\sig \cC_{\sig,\Fpbar} \mono \cX_{\Sym,\red,\Fpbar}$ from Proposition \ref{prop:EGstack-gsp4-weak-irred-comp} is an isomorphism. By Theorem \ref{thm:crys-lift}, $\rhobar$ is contained in the reduction of a crystalline stack $\cX_\Sym^{\lam}\times_{\spf \cO} \spec \Fpbar$ for some regular cocharacter $\lam$. By Proposition \ref{prop:crys-ss-stacks}, the algebraic stack $\cX_\Sym^{\lam}\times_{\spf \cO} \spec \Fpbar$ is  equidimensional of dimension $4[K:\Qp]$. So its underlying space is a union of $\cC_{\sig,\Fpbar}$, and one of them contains $\rhobar$.

It remains to show that $\cC_{\sig,\Fpbar}$ descends to $\cC_{\sig}$ over $\spec \F$. We need to show that $\Gal(\Fpbar/\F)$ stabilizes each component $\cC_{\sig,\Fpbar}\subset \cX_{\Sym,\red,\Fpbar}$. This follows because  $\Gal(\Fpbar/\F)$-action preserves the property of being maximally non-split of niveau 1 and of weight $\sig$. 
\end{proof}

\subsubsection{Families of extensions}\label{subsub:families-of-extensions}
We prove Proposition \ref{prop:EGstack-gsp4-weak-irred-comp}. The essential ingredient is generalizations of the Proposition 5.4.4 in \cite{EGstack} which computes the dimension of families of extensions inside $\cX_{n,\red,\Fpbar}$. We start by introducing several notations.

Recall that $S$ (resp.~$Q$) denotes the Siegel (resp.~Klingen) parabolic subgroup of $\GSp_4$. We write $U_S$ (resp.~$U_Q$) for its unipotent radical and $L_S$ (resp.~$L_Q$) for its Levi component. Then $U_S\simeq \G_a^{\oplus 3}$ and $U_Q$ is an extension of $\G_a^{\oplus 2}$ by $\G_a$. 

We also need the following auxiliary groups.  Let $Q_4$ be the minimal parabolic of $\GL_4$ containing $\std(Q)$ and $U_{Q_4}$ be its unipotent radical. We define the following subgroups of $U_{Q_4}$;
\begin{align*}
    U_{Q_3} = \CB{\pma{1 & * & *& 0 \\
    0 & 1 & 0 & 0 \\
    0 & 0 & 1 & 0 \\
    0 & 0 & 0 & 1}}, \ 
    U_{Q_3'} = \CB{\pma{1 & 0 & 0& 0 \\
    0 & 1 & 0 & * \\
    0 & 0 & 1 & * \\
    0 & 0 & 0 & 1}}, \ 
    U_{P_4} = \CB{\pma{ 1& 0 & 0& * \\
    0 & 1 & 0 & * \\
    0 & 0 & 1 & * \\
    0 & 0 & 0 & 1}}.
\end{align*}
Let $\iota: \GL_4 \ra \GL_4$ be an involution given by $\iota(A) = J^\mo A^{-\top}J$. Then $(U_{Q_4})^\iota = U_Q$  and  $\iota: U_{Q_3} \risom U_{Q'_3}$.

Let $R$ be a finite $\Zpbar$-algebra. Let ${\theta}:G_K \ra \GL_2(R)$ be a continuous representation and $\chi: G_K \ra R^\times$ be a continuous character. Then ${\theta} \oplus ({\theta}^\vee \otimes \chi)$ is an $L_S$-valued representation of $G_K$ and $\chi \oplus {\theta} \oplus \det({\theta})\chi^\mo$ is an $L_Q$-valued representation of $G_K$. We write 
\begin{align*}
    \Ad_S({\theta},\chi) : G_K & \xra{{\theta} \oplus ({\theta}^\vee \otimes \chi)} L_S(R) \xra{\Ad} \Aut(U_S(R))
    \\
    \Ad_Q(\chi,{\theta}) : G_K & \xra{\chi \oplus {\theta} \oplus \det({\theta})\chi^\mo} L_Q(R) \xra{\Ad} \Aut(U_Q(R))
    \\
    \Ad_{Q_4}(\chi,\theta) :  G_K & \xra{\chi \oplus {\theta} \oplus \det({\theta})\chi^\mo} L_{Q_4}(R) \xra{\Ad} \Aut(U_{Q_4}(R)).
\end{align*}
Note that $G_K$-actions on  $U_{Q_3}$ and $U_{Q'_3}$ induced by $\Ad_{Q_4}(\chi,\theta)$ define subrepresentations of  $\Ad_{Q_4}(\chi,\theta)$, which  we denote  by $\Ad_{Q_3}(\chi,\theta)$ and $\Ad_{Q'_3}(\chi,\theta)$ respectively. The isomorphism $\iota: U_{Q_3} \risom U_{Q'_3}$ induces an isomorphism of $G_K$-modules
\begin{align*}
    \Ad_{Q_3}(\chi,\theta) \risom \Ad_{Q'_3}(\chi, \theta).
\end{align*}
There is an obvious quotient map $U_{P_4} \epi U_{Q'_3}.$  We can view $U_{P_4}$ as the unipotent radical of a standard parabolic subgroup of $\GL_4$ whose Levi factor is $\GL_3 \times \GL_1$. If $\psi: G_K \ra \GL_3(R)$ is a continuous representation, we write
\begin{align*}
    \Ad_{P_4}(\psi,\chi) : G_K \xra{\psi \oplus \chi} \GL_3(R) \times \GL_1(R) \xra{\Ad} \Aut(U_{P_4}(R)).
\end{align*}

Finally, we remark that there is an auxiliary embedding
\begin{align*}
\iota_{\mathrm{aux}}:\{(g,h)\in \GL_2\times \GL_2 \mid \det(g)=\det(h)\} &\ra \GSp_4 \\
   \left(\pma{a & b \\ c& d}, \pma{x & y \\ z &w}\right) & \mapsto \pma{a & & & b \\ 
    & x & y & \\
    & z& w & \\
    c& & & d}.
\end{align*}
We denote by $G_{\ax}$ the source of $\iota_{\mathrm{aux}}$ and view it as a subgroup scheme of $\GSp_4$ via $\iota_{\mathrm{aux}}$.

\begin{rmk}\label{rmk:GSp4-rep-classification}
    We classify $\rhobar: G_K \ra \GSp_4(\Fpbar)$ into the following categories. Let $\chi = \simc(\rhobar)$. 
    \begin{enumerate}[label=(\Alph*)]
        \item We have $\rhobar$ whose image is contained in $B(\Fpbar)$ after conjugation. Then 
        we can write
        \begin{align*}
            \rhobar \simeq \pma{\chi_1 & * & * & * \\ 0 & \chi_2 & * & * \\ 0 & 0 & \chi\chi_2^\mo  & * \\ 
            0 & 0 & 0 & \chi \chi_1^\mo }.
        \end{align*}
        \item We have $\rhobar$ whose image is contained in $S(\Fpbar)$ whose composition with $S \epi L_S$ is $L_S$-irreducible. Then we can write 
        \begin{align*}
            \rhobar \simeq \pma{\ov{\theta} & * \\ 0 & \ov{\theta}^\vee \chi}
        \end{align*}
        where $\ov{\theta}$ is an irreducible 2-dimensional $G_K$-representation.
        \item Similarly, we have $\rhobar$ whose image is contained in $Q(\Fpbar)$ whose composition with $Q \epi L_Q$ is $L_Q$-irreducible. Then we can write 
        \begin{align*}
            \rhobar \simeq \pma{\chi_1 & * & *  \\ 0 & \ov{\theta} & * \\ 0 & 0 & \chi \chi_1^\mo }
        \end{align*}
        where $\ov{\theta}$ is an irreducible 2-dimensional $G_K$-representation such that $\det(\ov{\theta}) = \chi$.
        \item We have $\rhobar$ that is $G$-irreducible such that $\std(\rhobar)$ is also irreducible. These are precisely the unramified twists of representations of the form $\Ind_{G_{K_4}}^{G_K}\ov{\omega}_{K_4,\iota}^{a}$ where $K_4/K$ is the unramified extension of degree $4$, $\iota: K_4 \mono E$ is an embedding, and $a$ is an integer divisible by $q+1$ and not divisible by $q^2+1$. 
        \item Finally, we have $G$-irreducible $\rhobar$ such that $\std(\rhobar)$ is reducible. These are precisely the representations valued in $G_{\ax}(\Fpbar)$ after conjugation and $\std(\rhobar)\simeq \ov{\theta}_1\oplus \ov{\theta}_2$ where $\ov{\theta}_i$ are 2-dimensional $G_K$-representations and $\det(\ov{\theta}_1)=\det(\ov{\theta}_2)$. To see this, note that when $\std(\rhobar)$ is reducible, if it contains a 1-dimensional $G_K$-stable subspace, it belongs to either (A) or (C), and if it contains a 2-dimensional $G_K$-stable subspace that is Lagrangian (i.e.~the symplectic bilinear form restricts to zero), it belongs to (B), and the only remaining case is when it contains a 2-dimensional $G_K$-stable subspace that is symplectic (i.e.~the symplectic bilinear form restricts to a non-degenerate bilinear form), it is valued in $G_{\ax}(\Fpbar)$.  
    \end{enumerate}
    Note that these items are disjoint. This can be easily seen by considering $\std(\rhobar)^{\ss}$, except for items (B) and (E). For split $\rhobar$ in item (B) (i.e.~valued in $L_S(\Fpbar)$), there is $\rhobar'$ in item (E) satisfying $\std(\rhobar) \simeq \std(\rhobar')$, but they are non-isomorphic as $\GSp_4$-valued representations because the bilinear forms restricted to irreducible constituents are Lagrangian and symplectic, respectively.
\end{rmk}

We now consider families of extensions valued in the Siegel parabolic subgroup. 
Let $T$ be a reduced finite type $\Fpbar$-scheme with a morphism $T \ra \cX_{2,\red,\Fpbar}$ whose scheme-theoretic image is of pure dimension $d$. We let $\chi: G_K \ra \Fpbar^\times$ be a continuous character. We write $\ov{\theta}_T$ for the family of $G_K$-representations corresponding to  $T \ra \cX_{2,\red}$. Following the convention of \cite{EGstack}, we write $H^2(G_K, \Ad_S(\ov{\theta}_T,\chi))$ for the coherent sheaf on $T$ defined as the cohomology group of the \emph{Herr complex} $H^2(\cC^\bullet(M))$ on $T$ where $M$ is the rank 3 \'etale $\pgma$-module corresponding to the family $\Ad_S(\ov{\theta}_T,\chi)$ (see Remark 5.1.30 of \loccit). 

We suppose that $H^2(G_K, \Ad_S(\ov{\theta}_T,\chi))$ is locally free of constant rank $r$. Following the discussion before Proposition 5.4.4 in \loccit, we find a complex of finite rank locally free $\cO_T$-modules
\begin{align*}
    C^0_T \ra Z^1_T
\end{align*}
such that $\coker(C^0_T \ra Z^1_T)\simeq H^1(G_K,\Ad_S(\ov{\theta}_\cT,\chi))$. Let $V= \ud{Spec}(\Sym ((Z_T^1)^\vee))$. Then $V$ parameterizes a universal family of extension
\begin{align*}
    0 \ra \ov{\theta} \ra \rhobar_V \ra \ov{\theta}^\vee \otimes \chi \ra 0
\end{align*}
such that $\rhobar_V$ is valued in $\GSp_4$. This induces a morphism $V \ra \cX_{\Sym,\red,\Fpbar}$.

\begin{lemma}\label{lem:dim-ext-family-Siegel}
Let $T\ra \cX_{2,\red,\Fpbar}, \chi$, and $V$ be as above. In particular, the scheme-theoretic image of $T \ra \cX_{2,\red,\Fpbar}$ is of pure dimension $d$, and $H^2(G_K, \Ad_S(\ov{\theta}_T,\chi))$ is locally free of constant rank $r$. Then the scheme-theoretic image of $V \ra \cX_{\Sym,\red,\Fpbar}$ is of dimension $\le d + 3[K:\Qp]+ r-1$. Furthermore, if $T$ is generically maximally non-split of niveau 1, then the equality holds.
\end{lemma}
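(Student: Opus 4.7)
The plan is to adapt the strategy of \cite[Proposition 5.4.4]{EGstack} (which establishes the analogous dimension estimate for parabolic extensions in the $\GL_n$ case) to the symplectic setting. Since $V$ is the total space of the rank-$\rank(Z^1_T)$ vector bundle associated with $Z^1_T$ over $T$, we have $\dim V = d + \rank(Z^1_T)$. By construction $\coker(C^0_T \to Z^1_T) = H^1(G_K,\Ad_S(\ov{\theta}_T,\chi))$, so $\rank(Z^1_T) = \rank(C^0_T) - \rank H^0 + \rank H^1$ fiberwise. Because $U_S$ is $3$-dimensional, the coefficient module $\Ad_S(\ov{\theta}_T,\chi)$ has Galois-rank $3$, and the local Euler characteristic formula yields
\begin{align*}
    \rank H^1 = \rank H^0 + \rank H^2 + 3[K:\Qp] = \rank H^0 + r + 3[K:\Qp].
\end{align*}

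Next, I would compute the generic fiber of $V \to \cX_{\Sym,\red,\Fpbar}$. Two cocycles $\xi_1,\xi_2 \in Z^1_T$ give rise to isomorphic Siegel-$\GSp_4$-valued extensions of $(\ov{\theta},\chi)$ if and only if they differ by a coboundary (image of $C^0_T$, of rank $\rank(C^0_T) - \rank H^0$) or by the action of an automorphism of the Levi datum $(\ov{\theta},\chi)$. In addition, the center $\G_m \subset \GSp_4$ acts on cocycles by scaling and preserves the isomorphism class of $\rhobar_V$, contributing an extra dimension $1$ to the fiber. Putting these contributions together, the generic fiber has dimension at least $\rank(C^0_T) - \rank H^0 + 1$, so
\begin{align*}
    \dim(\text{image}) &\le \dim V - (\rank(C^0_T) - \rank H^0 + 1)\\
    &= d + \rank H^1 - 1 \le d + 3[K:\Qp] + r - 1,
\end{align*}
the final inequality using $\rank H^0 \ge 0$ together with the Euler characteristic relation above.

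For the equality statement, suppose $T$ is generically maximally non-split of niveau $1$. Then at the generic point of $T$, the four successive Jordan--H\"older characters $\chi_1,\dots,\chi_4$ of $\rhobar_V$ (cf.\ \eqref{eqn:max-non-split-rhobar}) are pairwise distinct and the extension is maximally non-split, which forces $H^0(G_K,\Ad_S(\ov{\theta},\chi)) = 0$ generically. The Euler characteristic inequality then becomes an equality, and a direct check shows that the only identifications among cocycles in the generic fiber come from coboundaries together with the central $\G_m$-scaling, giving the matching lower bound.

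The main obstacle is the last step: rigorously showing that generically $\Aut(\rhobar_V) = \G_m$ (scalars only), so that the generic fiber of $V \to \cX_{\Sym,\red,\Fpbar}$ has dimension precisely $\rank(B^1_T) + 1$. This is a genericity argument using the maximally non-split of niveau $1$ hypothesis, which pins down the unique $G_K$-stable flag of $\rhobar_V$ and rules out any extra automorphisms beyond the center; the details should be a straightforward adaptation of the corresponding step in \cite[Proposition 5.4.4]{EGstack}, with the symplectic center playing the role of $\G_m \subset \GL_n$.
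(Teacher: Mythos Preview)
Your approach is exactly what the paper does---it simply cites the proof of \cite[Proposition~5.4.4]{EGstack}---so the overall strategy is right. However, your dimension bookkeeping contains a genuine error that invalidates the bound.

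The final inequality is backwards. From the Euler characteristic you have $\rank H^1 = \rank H^0 + 3[K:\Qp] + r$, so
\[
d + \rank H^1 - 1 \;=\; d + \rank H^0 + 3[K:\Qp] + r - 1 \;\ge\; d + 3[K:\Qp] + r - 1,
\]
which is the wrong direction. The phrase ``using $\rank H^0 \ge 0$'' cannot give you $\le$.

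The missing $\rank H^0$ comes from your fiber estimate. When you compute the fiber of $V \to \cX_{\Sym,\red,\Fpbar}$ over a point $\rhobar$, you only credit the coboundaries $B^1$ (of rank $\rank C^0_T - \rank H^0$) and the central $\G_m$. But the fiber is the $2$-categorical fiber product, so it records a choice of isomorphism $\phi:\mathrm{ext}(t,\xi)\simeq\rhobar$; generically $\phi$ lies in the Siegel parabolic, and its unipotent part ranges over all of $U_S$, not just over $U_S/H^0$. Equivalently, every element of $H^0(G_K,\Ad_S(\ov\theta,\chi))\subset U_S(\Fpbar)$ commutes with $\rhobar$ and hence lies in $\Aut_{\cX_{\Sym}}(\rhobar)$, contributing an extra $\rank H^0$ to the fiber dimension beyond the central $\G_m$. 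Including this, the fiber has dimension at least $(\dim T - d) + \rank C^0_T + 1$ (note also that $\dim V = \dim T + \rank Z^1_T$, not $d + \rank Z^1_T$; the difference $\dim T - d$ is absorbed in the fiber via $T\to\cX_{2,\red}$), and then
\[
\dim(\text{image}) \le (\dim T + \rank Z^1_T) - \bigl((\dim T - d) + \rank C^0_T + 1\bigr) = d + (\rank H^1 - \rank H^0) - 1 = d + 3[K:\Qp] + r - 1,
\]
as desired. A small side remark: the center of $\GSp_4$ acts \emph{trivially} by conjugation, so it does not ``scale cocycles''; its contribution to the fiber is purely through $\Aut(\rhobar)$.
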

\begin{proof}
This follows from the proof of \cite[Proposition 5.4.4]{EGstack}.
\end{proof}

\begin{rmk}\label{rmk:H2-Siegel}
    We record an elementary result on the unipotent radical $L_S$ of $S$. Let $X$ be a 4-dimensional vector space over $\Fpbar$ equipped with a non-degenerate alternating bilinear form $\RG{-,-}$. Given a choice of Siegel parabolic subgroup $S$, there is a decomposition $X\simeq Y \oplus Z$ where $Y,Z$ are 2-dimensional subspaces such that the bilinear form induces a perfect pairing $Y\times Z \ra \Fpbar$, and $U_S(\Fpbar)$ can be identified with a subspace of $f\in \Hom(Z,Y)$ such that
    \begin{align*}
        \RG{z_1, f(z_2)} = \RG{z_2, f(z_1)}, \ \ \forall z_1,z_2\in Z.
    \end{align*}
    
    There is an isomorphism $\al: Z\simeq Z^\vee$ given by a unique up to scaling non-degenerate alternating bilinear form on $Z\simeq \Fpbar^2$. Note that this bilinear form is completely unrelated to the one in the previous paragraph. If $\{e_1,e_2\}$ is a basis for $\Fpbar^2$ with a dual basis $\{e_1^\vee, e_2^\vee\}$, it maps (up to scaling) $e_1\mapsto -e_2^\vee$ and $e_2 \mapsto e_1^\vee$. Let $f_\al$ be the composition $Z\stackrel{\al}{\simeq}Z^\vee \simeq Y$. Then
    \begin{align*}
        \RG{e_1, f_\al(e_2)} = - \RG{e_2, f_{\al}(e_1)} = 1
    \end{align*}
    and thus $f_\al\notin U_S(\Fpbar)$. The same is true when $\al$ is replaced by its composition with a unipotent automorphism of $Z$.
\end{rmk}

We now consider the Klingen parabolic case. Let $\ov{\theta}: G_K \ra \GL_2(\Fpbar)$ be a continuous irreducible representation and let $\del = \det(\ov{\theta})$. Let $T$ be a reduced finite type $\Fpbar$-scheme with a morphism $T \ra \cX_{1,\red,\Fpbar}$ whose scheme-theoretic image is of pure dimension $d$. We furthermore assume that  $H^2(G_K, \chi_T^2 \del^\mo)$ is locally free of constant rank $r$. For each $t\in T(\Fpbar)$, $H^1(G_K, \Ad_{Q}(\chi_t,\ov{\theta}))$ parameterizes $\rhobar: G_K \ra Q(\Fpbar)$ whose projection onto $L_Q(\Fpbar)$ is $\chi_t \oplus \ov{\theta} \oplus \det(\ov{\theta})\chi^\mo$. However, the unipotent radical $U_{Q}$ is non-abelian. To avoid using non-abelian cohomology, we construct a family of extensions in two steps.

Since $\theta$ is irreducible, we have $H^2(G_K, \Ad_{Q_3}(\chi_T, \ov{\theta}))=0$. Similar to Siegel parabolic case, there exists a locally free $\cO_T$-module $Z^1_T$ of constant rank with a surjection onto $H^1(G_K,\Ad_{Q_3}(\chi_T, \ov{\theta}))$. The vector bundle $V := \ud{Spec}(\Sym((Z_T^1)^\vee))$ parameterizes a universal family of extension 
\begin{align*}
    0 \ra \chi_T \ra \ov{\psi}_V \ra \ov{\theta} \ra 0.
\end{align*}
By \cite[Proposition 5.4.4]{EGstack}, the scheme-theoretic image of $V \ra \cX_{3,\red,\Fpbar}$ is of dimension $\le d+ 2[K:\Qp]-1$.

We can and do replace $V$ by its open dense locus on which 
$H^2(G_K, \Ad_{P_4}(\ov{\psi}_V,\chi_V^\mo \del))$ vanishes.  
Let $\chi_V$ be the pullback of $\chi_T$ to $V$. There is a locally free $\cO_V$-module $Z^1_V$ of constant rank with a surjection onto $H^1(G_K,\Ad_{P_4}(\ov{\psi}_V,\chi_V^\mo \del))$, and  the vector bundle $W:= \ud{Spec}(\Sym((Z^1_V)^\vee))$ parameterizes a universal family of extensions
\begin{align*}
    0 \ra \ov{\psi}_V \ra \rhobar_W \ra \det(\rhobar)\chi_T \ra 0.
\end{align*}
This induces a morphism $f': W \ra \cX_{4,\red,\Fpbar}$ whose scheme-theoretic image is of dimension $\le d+ 5[K:\Qp]-2$ by \loccit. We denote by $f:W \ra \cX_{4,\red,\Fpbar} \times \cX_{1,\red,\Fpbar}$ given by $f' \times \delta$. Then the scheme-theoretic image of $f$ is of dimension $\le d+5[K:\Qp] - 3$. 
Then
\begin{align*}
    \tilW= W \times_{\cX_{4,\red,\Fpbar}\times \cX_{1,\red,\Fpbar}} \cX_{\Sym,\red,\Fpbar}
\end{align*}
is an algebraic space by Theorem \ref{thm:sym-pgma-stack-reptbl}. We write $\til{f}:\tilW \ra  \cX_{\Sym,\red,\Fpbar}$  for the base change of $f$ along $\std\times \simc$.

\begin{lemma}\label{lem:dim-ext-family-Klingen}
Let $\ov{\theta}, T\ra \cX_{1,\red,\Fpbar}$, and $\til{f}: \til{W} \ra \cX_{\Sym,\red,\Fpbar}$ be as above. In particular, the scheme-theoretic image of $T\ra \cX_{1,\red,\Fpbar}$ is of pure dimension $d$, and $H^2(G_K, \chi_T^2 \del^\mo)$ is locally free of constant rank $r$.  Then the scheme-theoretic image of $\til{f}$ is of dimension $\le d + 3[K:\Q_p] + r -1$.
\end{lemma}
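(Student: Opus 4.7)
The plan is to run the same dimension count as for the Siegel case (Lemma \ref{lem:dim-ext-family-Siegel}, itself following \cite[Proposition~5.4.4]{EGstack}), but now handling the non-abelian unipotent $U_Q$ by means of the two-step construction $V \to T$ and $W \to V$ set up in the paragraphs preceding the statement, cut down by the symplectic condition. The key cohomological input is the $G_K$-equivariant central extension
\begin{equation*}
0 \to \chi_T^2 \del^{\mo} \to \Ad_Q(\chi_T,\ov{\theta}) \to \chi_T\ov{\theta}^\vee \to 0
\end{equation*}
arising from $0 \to Z(U_Q) \to U_Q \to U_Q/Z(U_Q) \to 0$; here $Z(U_Q)$ is the root subgroup for the highest root of $Q$, and $U_Q/Z(U_Q)$ carries the remaining two. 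Using the involution $\iota$ that characterizes $U_Q$ as $(U_{Q_4})^\iota$, the symplectic condition cutting $\til{W}$ out of $W$ translates, at the level of cocycles, to the requirement that the image of the $W$-class $[\rhobar_W] \in H^1(G_K, M)$ (where $M := \ov{\psi}_V \otimes \chi_V\del^{\mo}$) under the quotient $M \twoheadrightarrow \chi_V\ov{\theta}^\vee$ agrees, up to the sign dictated by $\iota$, with the $V$-class $[\ov{\psi}_V]$.

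First I would identify $\til{W}$ (after possibly shrinking $T$ to an open subscheme on which the relevant cohomology sheaves are locally free of constant rank) with the moduli of triples consisting of a $T$-point, an extension class $[\ov{\psi}_V] \in H^1(G_K,\chi_V\ov{\theta}^\vee)$, and a lift to $H^1(G_K,\Ad_Q(\chi_V,\ov{\theta}))$ under the quotient above. The long exact sequence
\begin{equation*}
\cdots \to H^1(\chi_V^2\del^{\mo}) \to H^1(\Ad_Q) \to H^1(\chi_V\ov{\theta}^\vee) \xrightarrow{\partial} H^2(\chi_V^2\del^{\mo}) \to H^2(\Ad_Q) \to H^2(\chi_V\ov{\theta}^\vee) \to 0
\end{equation*}
then controls the fibers of $\til{W} \to V$: the locus $V_0 \subset V$ where a lift exists is the vanishing locus of $\partial$ (of codimension at most $r$ by the locally-free hypothesis on $H^2(G_K,\chi_T^2\del^{\mo})$), and over $V_0$ the fiber is a torsor under $\mathrm{im}\bigl(H^1(\chi_V^2\del^{\mo}) \to H^1(\Ad_Q)\bigr)$.

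Second, I would assemble these pieces into the dimension bound on $\til{f}$ exactly as in the Siegel case. Passing to the image in $\cX_{\Sym,\red,\Fpbar}$ costs one dimension of scaling on the extension class plus $h^0(\Ad_Q)$ of automorphisms. Applying the local Euler characteristic formula $h^1(\Ad_Q) - h^0(\Ad_Q) = h^2(\Ad_Q) + 3[K:\Q_p]$ and bounding $h^2(\Ad_Q) \le h^2(\chi_V^2\del^{\mo}) + h^2(\chi_V\ov{\theta}^\vee) = r$ generically (since $\ov{\theta}$ is irreducible, $H^0(\chi_V\ov{\theta}^\vee) = 0$, so by local Tate duality $H^2(\chi_V\ov{\theta}^\vee) = 0$ on a dense open of $V$), one obtains
\begin{equation*}
\dim \mathrm{image}(\til{f}) \;\le\; \dim T + h^1(\Ad_Q) - h^0(\Ad_Q) - 1 \;\le\; d + h^2(\Ad_Q) + 3[K:\Q_p] - 1 \;\le\; d + r + 3[K:\Q_p] - 1.
\end{equation*}

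The main technical point I anticipate is the rank-stratification bookkeeping uniformly across $T$: the ranks $h^0$ and $h^2$ of the various modules may jump on closed subschemes of $T$, so strictly speaking one must stratify $T$ so that all relevant cohomologies are locally free on each stratum (following \cite[Proposition~5.1.25]{EGstack} or similar), run the count on each stratum, and verify that any upward jumps in $h^2(\Ad_Q)$ along a proper closed substratum are more than compensated by the drop in stratum dimension, so that the uniform bound $d + r + 3[K:\Q_p] - 1$ survives. This is the same bookkeeping already carried out in the proof of \cite[Proposition~5.4.4]{EGstack}, and I do not expect any essentially new difficulty in our setting.
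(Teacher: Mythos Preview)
Your opening observation is correct and is in fact the pivot of the paper's argument as well: the symplectic condition on $\rhobar_W$ forces the image of the $W$-class under $H^1(\Ad_{P_4}) \to H^1(\Ad_{Q_3'}) \cong H^1(\chi_V\ov{\theta}^\vee)$ to land on the line through the $V$-class $[c']$.

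The gap appears when you then try to run the dimension count through $H^1(\Ad_Q)$, writing a long exact sequence that contains $H^2(\Ad_Q)$ and invoking the Euler characteristic formula $h^1(\Ad_Q)-h^0(\Ad_Q)=h^2(\Ad_Q)+3[K:\Qp]$. As defined just above the lemma, $\Ad_Q(\chi,\ov{\theta})$ is the $G_K$-action on the \emph{non-abelian} Heisenberg group $U_Q$, for which there is no $H^2$ and no linear Euler characteristic; the paper flags this explicitly (``the unipotent radical $U_Q$ is non-abelian. To avoid using non-abelian cohomology, we construct a family of extensions in two steps''). If you intend $\Lie U_Q$ instead, you still owe two non-trivial identifications: that $\til{W} = W\times_{\cX_4}\cX_{\Sym}$ agrees with the moduli of $Q$-valued lifts (governed by non-abelian $H^1(U_Q)$, where the lifting obstruction from $H^1(\chi\ov{\theta}^\vee)$ is a cup-product, hence quadratic rather than the linear $\partial$ you wrote), and that this in turn has the same dimension as the abelian $H^1(\Lie U_Q)$. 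Neither is addressed.

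The paper sidesteps all of this by never leaving abelian $\GL_4$ cohomology. Using \cite[\href{https://stacks.math.columbia.edu/tag/0DS4}{Tag 0DS4}]{stacks-project} and the equality $\til{W}_{\til{f}(w)} = W_{f(w)}$, it reduces to showing $\dim\til{W}_v \le \dim W_v - 2[K:\Qp]+r+1$ for each $v\in V(\Fpbar)$. This comes in two pieces: the condition that $[b]$ map into the line $\Fpbar[c']$ cuts the image of $\til{W}_v$ in $W_v$ to codimension $2[K:\Qp]-r-1$, read off from the abelian long exact sequence for $0\to\chi_t^2\del^{\mo}\to\Ad_{P_4}(\ov{\psi}_v,\chi_t^{\mo}\del)\to\Ad_{Q_3'}(\chi_t,\ov{\theta})\to 0$; and the fibres $\til{W}_w$ over that image are zero-dimensional, since $\Aut(\rhobar_w)=\Fpbar^\times$ forces any two symplectic structures on $\rhobar_w$ to be isomorphic. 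Combined with the $\GL_4$ bound $\dim W - \dim W_{f(w)} \le d + 5[K:\Qp]-2$ from \cite[Proposition~5.4.4]{EGstack}, this gives the claimed inequality.
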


\begin{proof}

For $w: \spec \Fpbar \ra \til{W}$, we write $\til{W}_{\til{f}(w)}$ (resp.~$W_{f(w)}$) for the fiber of $\til{f}$ at $\til{f}(w)$ (resp.~of $f$ at $\std\times \simc \circ \til{f}(w)$). 
We have the following commutative diagram
\[
\begin{tikzcd}
\til{W}_{\til{f}(w)}  \arrow[rd]  \arrow[d, equal] \arrow[r]& \spec \Fpbar \arrow[rd, "\til{f}(w)"] 
\\
W_{f(w)} \arrow[rd]& \til{W} \arrow[r, "\til{f}"] \arrow[d] & \cX_{\Sym,\red,\Fpbar} \arrow[d, "\std\times \simc"] 
\\
 & W \arrow[r, "f"] & \cX_{4,\red,\Fpbar} \times \cX_{1,\red,\Fpbar}.
\end{tikzcd}
\]
Let $\til{\cZ}$ (resp.~$\cZ$) be the scheme-theoretic image of $\til{f}$ (resp.~$f$). By \cite[\href{https://stacks.math.columbia.edu/tag/0DS4}{Tag 0DS4}]{stacks-project}, we have
\begin{align*}
    \dim \til{\cZ} &\ge \dim \til{W} - \dim \til{W}_{\til{f}(w)} \\
    \dim {\cZ} &\ge \dim {W} - \dim {W}_{{f}(w)}
\end{align*}
and both equalities hold for $w$ whose image is contained in an dense open subset of $\tilW$. Thus, it suffices to show that 
\begin{align*}
   \dim \til{W} - \dim \til{W}_{\til{f}(w)}  \le d + 3[K:\Qp] + r- 1
\end{align*}
after possibly shrinking $\tilW$ by its open dense subspace. We do this by replacing $V$ by its subspace on which $\Hom_{G_K}(\ov{\theta},\ov{\psi}_V)$ vanishes and $W$ and $\tilW$ by its preimages.

Let $U$ be the scheme-theoretic image of $\tilW \ra W$. Since $\tilW_{\til{f}(w)} = W_{f(w)} $, we have 
\begin{align*}
    \dim \til{W} - \dim \til{W}_{\til{f}(w)} = (\dim \tilW - \dim U) + (\dim U - \dim W) + (\dim W - \dim W_{f(w)}).
\end{align*}
We claim that
\begin{align}
    \dim \tilW - \dim U &\le 1 \label{eqn:dim1}\\
    \dim U - \dim W &\le -2[K:\Qp] +r+1 \label{eqn:dim2}\\
    \dim W - \dim W_{f(w)} & \le d + 5[K:\Qp]  -3. \label{eqn:dim3}
\end{align}
By combining them, we get the desired inequality. For \eqref{eqn:dim3}, this follows from \cite[\href{https://stacks.math.columbia.edu/tag/0DS4}{Tag 0DS4}]{stacks-project} and the upper bound for $\dim \cZ$. 

For \eqref{eqn:dim2}, we compute $\dim U - \dim V$ and $\dim V - \dim W$. The latter is given by $-\rank Z^1_V$. For the former, again using \cite[\href{https://stacks.math.columbia.edu/tag/0DS4}{Tag 0DS4}]{stacks-project}, we have
\begin{align*}
    \dim U - \dim V \le \dim U_v
\end{align*}
for any $v\in V$. By our construction, $v$ corresponds to a non-split extension $\ov{\psi}_v$ of $\ov{\theta}$ by $\chi_t$, where $t$ is the image of $v$ under $V \ra T$. Then $\ov{\psi}_v$ determines a class $[c] \in H^1(G_K, \Ad_{Q_3}(\chi_t, \ov{\theta}))$. By the isomorphism $\iota:  \Ad_{Q_3}(\chi_t, \ov{\theta}) \risom  \Ad_{Q_3'}(\chi_t, \ov{\theta})$, we get $[c'] := \iota([c])$.  Any $w\in W_v(\Fpbar)$ determines a class $[b]\in H^1(G_K, \Ad_{P_4}(\ov{\psi}_v,\chi_t^\mo \del))$. Then $w$ is in $U_v(\Fpbar)$ if and only if $[b]$ is mapped to $\Fpbar^\times [c']$ under the map
\begin{align*}
    g: H^1(G_K,\Ad_{P_4}(\ov{\psi}_v,\chi_t^\mo \del)) \ra H^1(G_K,\Ad_{Q_3'}(\chi_t,\ov{\theta})).
\end{align*}
We apply Galois cohomology to the following short exact sequence
\begin{align*}
    0 \ra \chi_t^2 \del^\mo \ra \Ad_{P_4}(\ov{\psi}_v,\chi_t^\mo \del) \ra \Ad_{Q_3'}(\chi_t,\ov{\theta}) \ra 0.
\end{align*}
Since $\dim_{\Fpbar} H^2(G_K, \chi_t^2 \del^\mo) = r$, it shows that $g^\mo(\Fpbar[c'])$ is of codimension $2[K:\Qp]-r-1$. 
Let $Z'_v$ (resp.~$Z''_v$) be the preimage of $\Fpbar[c']$ (resp.~$0$) under the composition of $Z^1_v:= v^*(Z^1_V) \epi H^1(G_K,\Ad_{P_4}(\ov{\psi}_v,\chi_t^\mo \del))$ and $g$, which are of codimension $2[K:\Qp]-r-1$ (resp.~$2[K:\Qp]-r-2$). Then $U_v = \ud{Spec}((Z'_v)^\vee) \backslash \ud{Spec}((Z''_v)^\vee)$, whose dimension is $\rank Z^1_v - (2[K:\Qp]-r-1)$. Since $\rank Z^1_v = \rank Z^1_V$, this completes the proof of \eqref{eqn:dim2}.

Finally, for \eqref{eqn:dim1}, we again use \cite[\href{https://stacks.math.columbia.edu/tag/0DS4}{Tag 0DS4}]{stacks-project} to obtain
\begin{align*}
    \dim \tilW - \dim U \le \dim \tilW_{u}
\end{align*}
for $u\in U(\Fpbar)$. Note that the fiber $\tilW_{u}$ fits into the following base change diagram
\[
\begin{tikzcd}
    \tilW_{u} \arrow[r, "\til{f}"] \arrow[d] & \cX_{\Sym,\red,\Fpbar} \arrow[d, "\std\times \simc"] \\ 
    \spec \Fpbar \arrow[r, "f(u)"] & \cX_{4,\red,\Fpbar} \times \cX_{1,\red,\Fpbar}.
\end{tikzcd}
\]
As explained in the proof of Theorem \ref{thm:sym-pgma-stack-reptbl}, $\tilW_u$ is given by the space of isomorphisms $\Isom(\rhobar_u, \rhobar^\vee_u \otimes \delta)$ satisfying the alternating condition, where $\rhobar_u$ is the 4-dimensional Galois representation corresponding to $f(u)$. Since $\rhobar_u$ admits a unique filtration with graded pieces given by $\chi_t$, $\ov{\theta}$, and $\chi_t^\mo \delta$, we have $\Aut(\rhobar_u)\simeq \G_m$, and every element in $\Isom(\rhobar_u, \rhobar^\vee_u \otimes \delta)$ satisfies the alternating condition. This shows that $\tilW_u\simeq \G_m$. 
\end{proof}

\begin{proof}[Proof of Proposition \ref{prop:EGstack-gsp4-weak-irred-comp}]
Let $\sigma$ be a Serre weight. Then $\sigma \simeq F(\lam)$ for $\lam \in X^*_1(\uT)$ well-defined up to $(p-\pi)X^0(\uT)$. We identify $\lam_j$ with a triple $(\lam_{j,1}, \lam_{j,2}; \lam_{j,3})$ as in \S\ref{sub:prelim}. Then $\lam':= (\lam_{j,1}+ \lam_{j,2} +  \lam_{j,3}+2, \lam_{j,1} + \lam_{j,3}+2)$ defines an element in $X^*_1(\uT_2)/(p-\pi)X^0(\uT_2)$, and thus $\sigma':= F(\lam')$ is a well-defined Serre weight of $\GL_2(k)$. 

There exists an irreducible component $\cC_{\sigma',\Fpbar} \subset \cX_{2,\red,\Fpbar}$ of dimension $[K:\Qp]$ characterized as the closure of the locus $\cU_{\sig'}$ of $G_K$-representations of the form
\begin{align*}
    \ov{\theta}=\pma{\chi_1 & * \\ 0 & \chi_2}
\end{align*}
such that $\chi_1 \ur_{t_1^\mo} \oplus \chi_2 \ur_{t_{2}^\mo} = \prod_\jj \ov{\omega}_{K,\sig_j}^{\lam_{j}'+(1,0)}$ for some $t_1, t_2 \in \Fpbar^\times$, and moreover, if $\chi_1 \chi_2^\mo|_{I_K}  = \ov{\veps}$, then  $\lam_{j,2}=p-1$ for all $\jj$ if and only if   $\chi_1\chi_2^\mo =\ov{\veps}$ and the extension class is \tres~(see \cite[Theorem 5.5.12]{EGstack} (corrected in \cite{EGerrata}) and \cite[\S 7.4]{LLHLM-2020-localmodelpreprint}). When $\lam_{j,1}-\lam_{j,2}=0$ for all $\jj$, we assume that $t_2^2\neq 1$ by replacing $\cU_{\sig'}$ by its dense open substack.  We call such a $\ov{\theta}$  nonsplit of weight $\sigma'$. 
There exists a codimension 1 closed substack $\cC_{\sigma'}^{\mathrm{fixed}} \subset \cC_{\sigma',\Fpbar}$ containing a dense locus of nonsplit $\ov{\theta}$ of weight $\sigma'$ as above with $t_2=1$. We let $\cU_{\sigma'}^{\mathrm{fixed}} = \cC_{\sigma'}^{\mathrm{fixed}} \cap \cU_{\sigma'}$. We can and do assume that $\cC_{\sigma',\Fpbar}$ (resp.~$\cU_{\sigma'}$) is obtained from $\cC_{\sigma'}^{\mathrm{fixed}}$ (resp.~$\cU_{\sigma'}^{\mathrm{fixed}}$) by taking unramified twists (cf.~\cite[Lemma 5.3.2]{EGstack}).

Let $T$ be an $\Fpbar$-scheme smoothly covering $\cU_{\sig'}^{\fixed}$ (resp.~$\cU_{\sig'}$) if $
\lam_{j,1}-\lam_{j,2}=p-1$ for all $\jj$, which we call the tr\`es ramifi\'ee case (resp.~otherwise). 
We write $\chi_{\sigma}$ for the continuous character $\prod_{\jj} \ov{\omega}^{\simc(\phi(\lam + \eta))}_{K,\sig_j}: G_K \ra \Fpbar^\times$. Let $\ov{\theta}_T$ be the family $G_K$-representations parameterized by $T\ra \cX_{2,\red,\Fpbar}$. We claim that $H^2(G_K, \Ad_{S}(\ov{\theta}_T, \chi_{\sigma}))$ is of constant rank 1 in the tr\`es ramifi\'ee case and $0$ otherwise. 

By local Tate duality, $H^2(G_K, \Ad_{S}(\ov{\theta}_T, \chi_{\sigma}))\simeq H^0(G_K, \Ad_{S}(\ov{\theta}_T^\vee, \chi_{\sigma}^\mo (1))^\vee$ and $H^0(G_K, \Ad_{S}(\ov{\theta}_T^\vee, \chi_{\sigma}^\mo (1))) \subset \Hom_{G_K}(\ov{\theta}_T, \ov{\theta}_T^\vee \otimes \chi_{\sigma}(1))$. If $f\in \Hom_{G_K}(\ov{\theta}_T, \ov{\theta}_T^\vee \otimes \chi_{\sigma}(1))$ is an isomorphism, it is induced by $\al$ in Remark \ref{rmk:H2-Siegel} composed with a unipotent automorphism of $\ov{\theta}_T$ (which is only possible when $\chi_1=\chi_2$). However, such an isomorphism is not contained in $H^0(G_K, \Ad_{S}(\ov{\theta}_T^\vee, \chi_{\sigma}^\mo (1))$ by \loccit. Thus, any element in $H^0(G_K, \Ad_{S}(\ov{\theta}_T^\vee, \chi_{\sigma}^\mo (1))$ factors through $\Hom_{G_K}(\chi_2, \chi_2^\mo \chi_{\sig}(1))$, which is of rank 1 if and only if we are in the \tres~case.

By applying Lemma \ref{lem:dim-ext-family-Siegel} and the preceding discussion, we obtain a vector bundle $V$ over $T$ parameterizing extensions $\rhobar_V$ of $\ov{\theta}_T^\vee \otimes \chi_{\sigma}$ by $\ov{\theta}_T$, and the induced morphism $V\ra \cX_{\Sym,\red,\Fpbar}$ has the scheme-theoretic image of dimension $4[K:\Qp]-1$. We let $\cC_{\sigma,\Fpbar}$ be the scheme-theoretic image of the unramified twists of $\rhobar_V$. By Lemma 5.3.2 in \cite{EGstack}, $\cC_{\sigma,\Fpbar}$ is of dimension $4[K:\Qp]$.

It remains to construct $\cC_{small}$ and prove that $\cX_{\Sym,\red,\Fpbar} = \bigcup_{\sigma} \cC_{\sigma,\Fpbar} \cup \cC_{small}$. If the union $\bigcup_{\sigma} \cC_{\sigma,\Fpbar} \cup \cC_{small}$ exhausts all $\Fpbar$-points of $\cX_{\Sym,\red,\Fpbar}$, then the equality follows. Thus, we construct finitely many closed substacks of $\cX_{\Sym,\red,\Fpbar}$ containing $\Fpbar$-points of $\cX_{\Sym,\red,\Fpbar}$ which are (a priori) not contained in all $\cC_{\sigma,\Fpbar}$ and take $\cC_{small}$ to be the union of such closed substacks. We follow the classification in Remark \ref{rmk:GSp4-rep-classification}.
\begin{enumerate}[label=(\Alph*)]
    \item Let $\cU\subset \cX_{2,\red,\Fpbar}$ be the complement of $\cU_{\sig'}$ for all $\sig'$ and its 0-dimensional locus of irreducible representations. For $r\ge 0$ and $s\in \Z/(q-1)$, let $\cU_{r,s}\subset \cU$ be the locus of representations $\ov{\theta}$ such that  $\dim_{\Fpbar}H^2(G_K,\Ad_S(\ov{\theta},\ov{\omega}_{K,\sig_0}^{s})=r$. By the description of $\cU_{\sig'}$, we have $\dim \cU_{0,s} \le [K:\Qp]-1$ and $\dim \cU_{1,s} \le [K:\Qp]-2$. For $r \ge 2$, $\cU_{r,s}$ consists of representations of the form $\ov{\theta}=\chi_1\oplus \chi_2$ such that $\chi_i^2 = \ov{\omega}_{K,\sig_0}^s(1)$ for $i=1,2$ and moreover $\chi_1\neq \chi_2$ (resp.~$\chi_1=\chi_2$) when $r=2$ (resp.~$r=3$), and its dimension is -2 (resp.~-4). For all $r,s$, we can apply Lemma \ref{lem:dim-ext-family-Siegel} to construct a universal family of extensions of $\ov{\theta}^\vee\otimes \ov{\omega}_{K,\sig_0}^s$ by $\ov{\theta}$ with unramified twists whose scheme-theoretic image in $\cX_{\Sym,\red,\Fpbar}$ is at most $4[K:\Qp]-1$.  
    \item Let $\cU\subset \cX_{2,\red,\Fpbar}$ be the 0-dimensional locus of irreducible $G_K$-representations. By Remark \ref{rmk:H2-Siegel}, $H^2(G_K, \Ad_S(\ov{\theta},\chi)) = 0$ for any $\ov{\theta}\in \cU$ and a character $\chi$. For $s\in \Z/(q-1)$ and $\chi = \ov{\omega}_{K,\sig_0}^{s}$, we can apply Lemma \ref{lem:dim-ext-family-Siegel} to construct a universal family of extensions of $\ov{\theta}^\vee\otimes \ov{\omega}_{K,\sig_0}^s$ by $\ov{\theta}$ with unramified twists whose scheme-theoretic image in $\cX_{\Sym,\red,\Fpbar}$ is at most $3[K:\Qp]$.

    \item For $s\in \Z/(q-1)$, let $T_{s}$ be an $\Fpbar$-scheme smoothly covering the irreducible component of $\cX_{1,\red,\Fpbar}$ given by the unramified twists of $\ov{\omega}_{K,\sig_0}^s$. Let $\chi_{T_s}$ be the universal character parameterized by $T_s$.  Let $\{\ov{\theta}_i\}$ be a finite set of 2-dimensional irreducible $G_K$-representations over $\Fpbar$ whose unramified twists exhaust all 2-dimensional irreducible $G_K$-representations over $\Fpbar$. Let $\del_i:=\det(\ov{\theta}_i)$. There is a non-empty open (resp.~positive codimension closed) subscheme $T_{s,i}^\circ$ (resp.~$T_{s,i}^c$) of $T_s$ such that $H^2(G_K,\chi_{T_s}\del^\mo)$ is of rank 0 (resp.~rank 1). Then we can apply Lemma \ref{lem:dim-ext-family-Klingen} for $\ov{\theta} = \ov{\theta}_i$ and $T=T_{s,i}^\circ$ and $T_{s,i}^c$ to construct families of extensions with unramified twists whose scheme-theoretic image in $\cX_{\Sym,\red,\Fpbar}$ is at most $3[K:\Qp]$.

    Note that the construction before Lemma \ref{lem:dim-ext-family-Siegel} excludes the locus of $V$ where $H^2(G_K, \Ad_{P_4}(\ov{\psi}_V,\chi_V^\mo\del))\neq 0$. By the local Tate duality, such a locus is contained in the locus on which the extension of $\ov{\theta}$ by $\chi_V$ given by $\ov{\psi}_V$ splits, or in other words, $\Hom_{G_K}(\ov{\theta},\ov{\psi}_V)\neq 0$. Then the $\GSp_4$-valued $G_K$-representations of the form in Remark \ref{rmk:GSp4-rep-classification}(C) are of the form $\iota_{\ax}(\rhobar, \ov{\theta})$ where $\rhobar$ is an extension of $\chi_V^\mo \delta$ by $\chi_V$. Such representations will be covered in item (E) below.

    \item For the triples $(\rhobar,\chi,\al)\in \cX_{\Sym,\red,\Fpbar}(\Fpbar)$ such that $\std(\rhobar)$ is irreducible, there are only finitely many pairs $(\rhobar,\chi)$ up to unramified twists. Fix a pair $(\rhobar,\chi)$. Then the space of isomorphisms $\al: \rhobar\simeq \rhobar^\vee\otimes \chi$ is parameterized by $\G_m$. The automorphism group of $(\rhobar,\chi)$ is $\G_m\times \G_m$, and it acts naturally on the the copy of $\G_m$ parameterizing $\al$ where the first factor acts by weight $-2$ and the second factor acts by weight $1$. The copy of $\G_m$ parameterizing triples $(\rhobar,\chi,\al)$ with fixed $(\rhobar,\chi)$ maps into $\cX_{\Sym,\red,\Fpbar}$ and factors through a quotient $\G_m \epi [\G_m/(\G_m\times \G_m)] \simeq [*/\G_m]$ followed by a monomorphism. Thus, after unramified twists, its scheme-theoretic image is of dimension 0.

    \item We define an algebraic stack $\cX_{\ax}$ over $\Fpbar$ by the following cartesian diagram
    \[
    \begin{tikzcd}
        \cX_{\ax} \arrow[r] \arrow[d] & \cX_{1,\red,\Fpbar} \arrow[d, "\Delta"] \\ 
        \cX_{2,\red,\Fpbar} \times \cX_{2,\red,\Fpbar} \arrow[r, "\det\times \det"] & \cX_{1,\red,\Fpbar} \times \cX_{1,\red,\Fpbar}.
    \end{tikzcd}
    \]
    For an $\Fpbar$-algebra $R$, $\cX_{\ax}(R)$ is the groupoid whose objects are triples
    \begin{align*}
        (\ov{\theta}_1,\ov{\theta}_2,\del)\in (\cX_{2,\red,\Fpbar} \times \cX_{2,\red,\Fpbar} \times \cX_{1,\red,\Fpbar})(R)
    \end{align*}
    together with isomorphisms $\det(\ov{\theta}_1) \simeq \det(\ov{\theta}_2) \simeq \del$. Its morphisms are morphisms in $\cX_{2,\red,\Fpbar} \times \cX_{2,\red,\Fpbar} \times \cX_{1,\red,\Fpbar}$ commuting with the isomorphisms between the determinants $\det(\ov{\theta}_i)$ and $\del$. In particular, an automorphism of $(\ov{\theta}_1,\ov{\theta}_2,\del)$ can be viewed (Zariski locally, after choosing bases) as a triple $(g_1,g_2,c)\in (\GL_2\times \GL_2\times \GL_1)(R)$ satisfying $\det(g_1)=\det(g_2)=c$. By its construction, $\dim \cX_{\ax} = 2[K:\Qp]-1$. 
    
    We define
    \begin{align*}
        \iota_{\ax}:\cX_{\ax}&\ra \cX_{\Sym,\red,\Fpbar} \\
        (\ov{\theta}_1,\ov{\theta}_2,\del) & \mapsto (\ov{\theta}_1\oplus \ov{\theta}_2,\del,\al)
    \end{align*}
    where $\al = \al_1\oplus \al_2$ and $\al_i:\ov{\theta}_i \simeq \ov{\theta}_i^\vee \otimes \del$ defined in the following way: Zariski locally on $R$, we can choose a basis $\{e_1,e_2\}$ of $\ov{\theta}_i$ with a dual basis $\{e_1^\vee,e_2^\vee\}$ of $\ov{\theta}_i^\vee$ and a basis $\{e_1\wedge e_2\}$ of $\del$ using the isomorphism $\det(\ov{\theta}_i)\simeq \del$. Then $\al_i$ maps $e_1$ to $-e_2^\vee \otimes e_1 \wedge e_2$ and $e_2$ to $e_1^\vee \otimes e_1 \wedge e_2$. Note that $\al$ is independent of the choice of basis. 

    We claim that the scheme-theoretic image of $\iota_{\ax}$ is of at most $2[K:\Qp]-1$ dimension. By \cite[\href{https://stacks.math.columbia.edu/tag/0DS4}{Tag 0DS4}]{stacks-project}, it suffices to show that for $x= (\ov{\theta}_1,\ov{\theta}_2,\del)\in \cX_{\ax}(\Fpbar)$, the fiber $\cX_{\ax,\iota_{\ax}(x)}$ is at least 0-dimensional. Since $\iota_{\ax}$ is faithful, it is representable by an algebraic space \cite[\href{https://stacks.math.columbia.edu/tag/04SZ}{Tag 04SZ}]{stacks-project}. Thus, the fiber is at least 0-dimensional.
\end{enumerate} 

We take $\cC_{small}$ to be the union of finitely many closed substacks constructed as scheme-theoretic images in above items. 
It has dimension $< 4[K:\Qp]$, and $\bigcup_{\sigma} \cC_{\sigma,\Fpbar} \cup \cC_{small}$ exhausts all $\Fpbar$-points of $\cX_{\Sym,\red,\Fpbar}$. This completes the proof.
\end{proof}

\subsubsection{Existence of crystalline lifts}\label{subsub:crys-lift} 
In his thesis \cite{Lin}, Zhongyipan Lin developed an obstruction theory for lifting $G_K$-representations valued in reductive groups with mod $p$ coefficients and applied it to prove the existence of crystalline lifts of $G_K$-representations valued in the exceptional group $G_2$. As already mentioned in \loccit, the existence of crystalline lifts for classical groups follows from certain codimension estimates on the moduli stack of $\pgma$-modules. We briefly recall Lin's results and prove Theorem \ref{thm:crys-lift}.

Recall that $G=\GSp_4$. We say that a $G_K$-representation $\rhobar: G_K \ra \GSp_4(\F)$ is \emph{$G$-completely reducible} if for any parabolic subgroup $P$ of $G$ containing the image of $\rhobar$, a Levi subgroup $L$ of $P$ also contains the image of $\rhobar$. These are precisely representations in Remark \ref{rmk:GSp4-rep-classification} item (A--E) with all extensions being trivial in item (A--C). 
In these cases, we can find explicit crystalline lifts using the crystalline lifts of irreducible $G_K$-representations (e.g.~the proof of \cite[Theorem 6.4.4]{EGstack}).

It remains to prove the existence of crystalline lifts of $\rhobar: G_K \ra \GSp_4(\F)$ that factors through either $S(\F)$ or $Q(\F)$. Suppose that $\rhobar$ factors through $S(\F)$ and denote its Levi factor by $\ov{\theta}\oplus \ov{\chi}\ov{\theta}^\vee$ where $\ov{\theta}: G_K \ra \GL_2(\F)$ is a continuous representation and $\ov{\chi} : G_K \ra \F^\times$ is a character.  Then $\rhobar$ corresponds to an extension class $[\ov{c}]\in H^1(G_K, \Ad_{S}(\ov{\theta},\ov{\chi}))$. In this case, the $G_K$-module $\Ad_{S}(\ov{\theta},\ov{\chi})$ is abelian. By \cite[Theorem 6.4.4]{EGstack}, we can find crystalline lifts $\theta: G_K \ra \GL_2(\cO)$ of $\ov{\theta}$ and $\chi: G_K \ra \cO^\times$ of $\ov{\chi}$ with Hodge type given by regular dominant cocharacters $\lam = (\lam_{j,1},\lam_{j,2})_{\jj}\in X_*(\uT_2^\vee)$ and  $\mu=(\mu_{j})_\jj \in X_*(\uT_1^\vee)$ respectively. Then $\chi \theta^\vee$ has Hodge type $\lam'=(\mu_j - \lam_{j,2}, \mu_j - \lam_{j,1})_{\jj}$. Twisting $\theta$ by a crystalline character with sufficiently large Hodge type with trivial mod $p$ reduction, we may assume that $\lam$ is \emph{slightly greater than} $\lam'$ (i.e.~
$\lam_{j,2} \ge \mu_{j}-\lam_{j,2} +1$ for all $\jj$, and the inequality is strict for at least one $j$). Let $R= R_{\ov{\theta}}^{\lam}$ be the crystalline framed deformation ring of $\ov{\theta}$ with Hodge type $\lam$. Then the proof of Theorem 6.3.2 in \loccit~easily generalizes to our setup and shows that there exists a crystalline lift $\theta':G_K \ra \GL_2(\cO)$ of $\ov{\theta}$ with Hodge type $\lam$ lying on the same irreducible component of $\spec R$ that $\theta$ does, and a lift $\rho: G_K \ra S(\cO)$ of $\rhobar$ with Levi factor $\theta' \oplus \chi \theta^{\prime\vee}$. By Lemma 6.3.1 in \loccit, $\rho$ is crystalline.

Finally, we suppose that  $\rhobar: G_K \ra \GSp_4(\F)$ factors through $Q(\F)$ with Levi factor $\rhobar^\ss = \ov{\chi} \oplus \ov{\theta} \det(\ov{\theta}) \ov{\chi}^\mo$. We may assume $\ov{\theta}$ is irreducible; otherwise $\rhobar$ factors through $S(\F)$ and thus its crystalline lift exists by the previous paragraph. As in the previous paragraph, we can find crystalline lifts $\theta: G_K \ra \GL_2(\cO)$ of $\ov{\theta}$ and $\chi: G_K \ra \cO^\times$ of $\ov{\chi}$ with Hodge type given by regular dominant cocharacters $\lam = (\lam_{j,1},\lam_{j,2})_{\jj}\in X_*(\uT_2^\vee)$ and  $\mu=(\mu_{j})_\jj \in X_*(\uT_1^\vee)$ respectively. Then $\Ad_{Q}({\chi},{\theta})/{\chi}^2\det({\theta})^\mo \simeq {\chi}{\theta}^\vee$ has Hodge type given by $(\mu_{j}-\lam_{j,2},\mu_{j}-\lam_{j,1})_{\jj}$. By twisting $\chi$, we may assume that $\mu_{j}\ge \lam_{j,1}$ for all $\jj$ and the inequality is strict for at least one $j$.

Let $R_{\ov{\theta}}^{\lam}$ (resp.~$R_{\ov{\chi}}^\mu$) be the crystalline framed deformation ring of $\ov{\theta}$ (resp.~$\ov{\chi}$) of Hodge type $\lam$ (resp.~$\mu$). We let $X=\spec R$ be an irreducible component of $\spec R_{\ov{\theta}}^{\lam} \ctimes_{\cO} R_{\ov{\chi}}^\mu$. We let $\theta^\univ: G_K \ra \GL_2(R)$ and $\chi^\univ: G_K \ra R^\times$ be the universal families.

\begin{thm}[Theorem A in \cite{Lin}]\label{thm:Lin}
Let $[\ov{c}]\in H^1(G_K, \Ad_{Q}(\ov{\chi},\ov{\theta}))$ be the class corresponding to $\rhobar$. Recall that we assume $p>2$. Suppose that
\begin{enumerate}
    \item for each $s \ge 1$, the locus
    \begin{align*}
        X_s:=\{x\in \spec R \mid \dim k(x)\otimes_R H^2(G_K, \chi^\univ (\theta^\univ)^\vee) \ge s\}
    \end{align*}
    has codimension $\ge s+1$ in $X$;
    \item there exists a finite extension $K'/K$ of degree prime-to-$p$ such that $\rhobar^\ss(G_{K'})$ has $p$-power order and;
    \item there exists a $\Zpbar$-point of $\spec R$ whose restriction to $G_{K'}$ is mildly regular in the sense of \cite[Definition 3.0.1]{Lin}. 
\end{enumerate}
Then there exists a $\Zpbar$-point of $\spec R$ corresponding to $\theta': G_K \ra \GL_2(\Zpbar)$ and $\chi': G_K \ra \Zpbar^\times$ and a class $[c]\in H^1(G_K, \Ad_{Q}(\chi',{\theta'}))$ lifting $[\ov{c}]$.
\end{thm}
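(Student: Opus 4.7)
The plan is to combine geometric obstruction theory on $\spec R$ with a careful analysis of the local Galois cohomology of $\Ad_Q(\chi,\theta)$. The essential non-triviality comes from the fact that $\Ad_Q$ is non-abelian: it sits in a central short exact sequence $\chi^2\det(\theta)^\mo \mono \Ad_Q \epi \chi\theta^\vee$ of $G_K$-modules. I would first reduce to lifting against the abelian quotient $\chi\theta^\vee$, for which hypothesis~(1) supplies the key obstruction estimate, and then lift against the central abelian piece using Tate duality and the mildly regular $\Zpbar$-point from hypothesis~(3).

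First I would build the Herr complex for $\chi^{\univ}(\theta^{\univ})^\vee$ as a two-term complex $[C^0 \to Z^1]$ of finitely generated $R$-modules whose cokernel is $\mc{H}^1 := H^1(G_K,\chi^{\univ}(\theta^{\univ})^\vee)$; after replacing $\spec R$ by a dense open subscheme, $C^0$ and $Z^1$ become locally free. The hypothesis that $X_s = \{\dim \mc{H}^2 \ge s\}$ has codimension $\ge s+1$ ensures, via a dimension count using Tate's local Euler characteristic formula, that the total space of extensions over $\spec R$ is generically a smooth vector bundle of the expected rank, so its projection to $\spec R$ is flat over a dense open $U\subset \spec R$. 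Over $U$, every mod-$p$ extension class admits a lift.

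Next, I would use hypotheses~(2) and (3) to produce a $\Zpbar$-point of $\spec R$ whose specialization actually hits $[\ov{c}]$. Restricting to the prime-to-$p$ extension $K'/K$ trivializes the $p$-power-torsion action on $\Ad_Q$, so the lifting problem over $G_{K'}$ becomes essentially abelian; the mildly regular $\Zpbar$-point from hypothesis~(3) guarantees that the obstruction sheaf $\mc{H}^2$ attains its generic rank there, and an Elkik-style approximation produces an honest $\Zpbar$-point at which $[\ov{c}]|_{G_{K'}}$ lifts. One then descends to $G_K$ through the Hochschild--Serre spectral sequence for $\Gal(K'/K)$: since $[K':K]$ is prime to $p$, the relevant finite-group cohomology of the coefficient module vanishes and the lifted class descends uniquely.

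The main obstacle will be the second step: generic lifting is easy, but one needs the specialization map at a particular $\Zpbar$-point to cover the specific class $[\ov{c}]$, not merely some class. This is where all three hypotheses must interact. The codimension bound (1) controls the dimension of the unobstructed locus; the prime-to-$p$ extension (2) reduces the non-abelian obstruction to an abelian one on $G_{K'}$; and mild regularity (3) pins down an actual $\Zpbar$-point with generic cohomological behavior, enabling a Hensel-type approximation. Without any one of these three, the argument would collapse.
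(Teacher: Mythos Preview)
The paper does not prove this theorem. It is quoted verbatim as Theorem~A of \cite{Lin} and used as a black box: immediately after the statement the paper writes ``We now explain how the assumptions in the previous Theorem hold in our setting'' and proceeds only to verify hypotheses (1)--(3) for the specific $\rhobar$ under consideration (irreducibility of $\ov\theta$ gives (1), the tame image gives (2), and an explicit computation of the quadratic form on the Lyndon--Demu\v{s}kin complex gives (3)). There is therefore no proof in the paper to compare your proposal against.

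As for your sketch itself: the broad shape (reduce along the central extension $\chi^2\det(\theta)^{-1}\hookrightarrow \Ad_Q \twoheadrightarrow \chi\theta^\vee$, control obstructions in $H^2$ generically via hypothesis (1), then use (2) and (3) to hit the specific class) is roughly in the spirit of Lin's obstruction theory, but several steps are too vague to stand as a proof. In particular, your ``Elkik-style approximation produces an honest $\Zpbar$-point at which $[\ov c]|_{G_{K'}}$ lifts'' and the subsequent Hochschild--Serre descent are asserted rather than argued; the actual argument in \cite{Lin} uses a Lyndon--Demu\v{s}kin presentation of $G_{K'}$ and an explicit non-degeneracy of the cup-product quadratic form (this is precisely what ``mildly regular'' encodes), not an approximation-and-descent mechanism of the kind you outline. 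If you want to reconstruct the proof, that is the missing ingredient.
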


We now explain how the assumptions in the previous Theorem hold in our setting. By the irreducibility of $\ov{\theta}$,  $H^2(G_K, \chi^\univ (\theta^\univ)^\vee) =0$ and item (1) follows. Item (2) follows from the fact that both $\ov{\chi}$ and $\ov{\theta}$ have images of prime-to-$p$ order. For item (3), we take the $\cO$-point of $\spec R$ given by $\theta$ and $\chi$. By the condition on Hodge type, we have $H^0(G_{K'}, \chi^2 \theta^\vee) = 0$ (this is the condition (MR1) in \cite[Definition 3.0.1]{Lin}). To verify the condition (MR2) in \loccit, which asserts the non-degeneracy of the quadratic form defining the cup product on Lyndon--Demu\v{s}kin complex, we follow \S A in \loccit. Since $\rhobar^\ss|_{G_{K'}}$ is a trivial representation, the computation is much simpler. If we identify the underlying 2-dimensional vector space of $\chi \theta^\vee$ as
\begin{align*}
    \G_a \oplus \G_a &\simeq U_Q/Z(U_Q) \\
    (x,y) &\mapsto \pma{1&x&y&  \\ & 1 & & y \\ & & 1 &-x \\ & & & 1} \mod Z(U_Q),
\end{align*}
then the quadratic form defining the cup product on Lyndon--Demu\v{s}kin complex (see \S2 and \S A in \loccit) is given by a matrix $\pma{& M_{12} \\ M_{21} & }$
where
\begin{align*}
    M_{12} = \pret M_{21} = \pma{0 & 1   & & \\
    -1 & 0 & & \\ 
    & & 0& 1 \\ 
    & & -1 &0 \\ 
     & & & & \dots \\ 
     & & & & & 0 & 1 \\ 
     & & & & & -1 & 0} .
\end{align*}
Thus, the quadratic form is non-degenerate. By Theorem \ref{thm:Lin}, there exists a lift of  $\rhobar$ which is crystalline by the condition on Hodge type.

In all cases of $\rhobar$, the crystalline lift $\rho$ is ordinary when restricted to $G_{K'}$ for some finite unramified extension $K'/K$. Thus $\rho$ is potentially diagonalizable as explained in Example \ref{ex:pdreps}.  \hfill $\square$

\subsection{Symplectic Breuil--Kisin modules}\label{sub:BK-stack}

We start by recalling the definition of Breuil--Kisin modules with tame descent data. We refer to \cite[\S5.1]{LLHLM-2020-localmodelpreprint} for further detail. Throughout this subsection, let $\tau: I_{\Qp} \ra \uT_n^\vee(\cO)$ be a tame inertial $L$-parameter with 1-generic lowest alcove presentation $(s,\mu)$. We let $R$ be a $p$-adically complete $\cO$-algebra.

Let $r$ be the order of $s_\tau$. We write $f'=fr$ and let $K'/K$ be the unramified extension of degree $r$. We fix a choice $\pi'= (-p)^{1/(p^{f'}-1)}$. We let $L' = K'(\pi')$ and $\Del = \Gal(L'/K)$. Define $\fS_{L',R} := (\cO_{K'}\otimes_{\Zp} R)\DB{u'}$. It is equipped with an endomorphism $\varphi$ and a $\Del$-action such that
\begin{align*}
    \fS_{L',R}^\Del = \fS_{K,R} := (\cO_K \otimes_{\Zp} R)\DB{v} 
\end{align*}
where $v = (u')^{p^{f'}-1}$. Let $\cJ' = \Hom_{\Zp}(\cO_{K'},\cO)$. We have a decomposition $\fS_{L',R} = \bigoplus_{j'\in \cJ'} R\DB{u'}$ where $\cO_{K'}$ acts on $j'$-summand through the map $j'$.
For any $\fS_{L',R}$-module $M$, we let $M^{(j')}$ be the $R\DB{u'}$-submodule of $M$ such that $\cO_{K'}$ acts by $j'$.

Let $h\ge 0$ be an integer. A Breuil--Kisin module  of rank $n$, height in $[0,h]$, and type $\tau$ with $R$-coefficients is a  projective $\fS_{L',R}$-module $\fM$ of rank $n$ equipped with
\begin{itemize}
    \item an injective $\fS_{L',R}$-linear map $\phi_\fM : \varphi^*(\fM) \ra \fM$ whose cokernel is annihilated by $E(v)^h$ where $E(v) = v+p$; 
    \item a semilinear $\Del$-action commuting with $\phi_\fM$ such that
    \begin{align*}
    \fM^{(j')} \mod u' \simeq \tau^\vee \otimes_\cO R
\end{align*}
as $\Del':= \Gal(L'/K')$-representation  for each $j' \in \cJ'$.
\end{itemize}
We write $Y_n^{[0,h],\tau}(R)$ for the groupoid of Breuil--Kisin modules  of rank $n$, height in $[0,h]$, and type $\tau$ with $R$-coefficients.

\begin{prop}[Proposition 3.1.3 and 3.3.5 in \cite{cegsB}]\label{prop:BK-stack-GLn-finite-type}
    The category fibered in groupoids $Y_n^{[0,h],\tau}$ is a $p$-adic formal algebraic stack of finite type over $\spf \cO$ with affine diagonal.
\end{prop}

\begin{rmk}\label{rmk:BK-mod-base-change}
Note that we can change the $r$ above  by any integer divisible by the order of $s_\tau$. Using this, we can always interpret Breuil--Kisin modules with different descent data as modules over a same base.
\end{rmk}

\begin{defn}[cf. Definition 2.1.9 in \cite{EL}]
For $\fM \in Y_n^{[0,h],\tau}(R)$, we define $\fM^\vee \in Y_n^{[0,h],\tau^\vee}(R)$ by setting
\begin{align*}
    \fM^\vee = \Hom_{\fS_{L',R}}(\fM, \fS_{L',R})
\end{align*}
with induced semilinear $\Del$-action and $\phi_{\fM^\vee}: \varphi^*(\fM^\vee) \ra \fM^\vee$ given by the formula
\begin{align*}
    \phi_{\fM^\vee}(f)(m) = \varphi (f ( \phi_\fM^\mo ( E(v)^h m)))
\end{align*}
for all $f\in \fM^\vee$ and $m\in \fM$. Here $\phi_\fM^\mo(E(v)^hm)$ makes sense because of the height condition and that $\phi_\fM$ is injective.
\end{defn}

\begin{defn}
For $i=1,2$, let $n_i\ge 1$ and $h_i\ge 0$ be integers and $\tau_i$ be a tame inertial $L$-parameter valued in $\uT_{n_i}^\vee$. For $\fM_i \in Y_{n_i}^{[0,h_i],\tau_i}(R)$, we define a Breuil--Kisin module
\begin{align*}
    \fM_1\otimes_{\fS_{L',R}} \fM_2 \in Y_{n_1n_2}^{[0,h_1+h_2],\tau_1\otimes \tau_2}(R)
\end{align*}
by defining $\phi_{\fM_1\otimes\fM_2} := \phi_{\fM_1} \otimes \phi_{\fM_2}$ and taking $\Del$-action induced by  $\Del$-actions on $\fM_1$ and $\fM_2$.
\end{defn}

When $n=1$ and $h=0$, we write $Y_1^{0,\tau} := Y_1^{[0,0],\tau}$. For $\fN\in Y_1^{0,\tau}$, the evaluation map $\fN \otimes \fN^\vee \ra \fS_{L',R}$ is an isomorphism of Breuil--Kisin modules.

Let $\tau:I_{\Qp}\ra \uT^\vee(\cO)$ be a tame inertial $L$-parameter. For simplicity, we write $Y_4^{[0,h],\tau}$ to denote $Y_4^{[0,h],\std(\tau)}$.

\begin{defn}[cf.~Definition 2.1.17 in \cite{EL}]
 A \emph{symplectic} Breuil--Kisin module of  height in $[0,h]$, and type $\tau$ with $R$-coefficients is a triple $(\fM,\fN,\al)$ where $\fM \in Y_4^{[0,h],\tau}(R)$, $\fN\in Y_1^{0,\simc(\tau)}$, and $\al: \fM \simeq \fM^\vee \otimes \fN$ satisfying the \emph{alternating condition}
\begin{align*}
    (\al^\vee\otimes \fN)^\mo \circ \al = -1_\fM.
\end{align*}
We write $Y_\Sym^{[0,h],\tau}(R)$ for the groupoid of symplectic Breuil--Kisin modules of height in $[0,h]$ and type $\tau$ with $R$-coefficients.
\end{defn}

\begin{prop}\label{prop:BKstack-alg}
The map $Y_\Sym^{[0,h],\tau} \ra  Y_4^{[0,h],\tau}\times_{\cO} Y_1^{0,\simc(\tau)}$ sending $(\fM,\fN,\al)$ to $(\fM,\fN)$ is representable by schemes of finite presentation. In particular, $Y_\Sym^{[0,h],\tau}$ is a $p$-adic formal algebraic stack of finite type over $\spf \cO$. 
\end{prop}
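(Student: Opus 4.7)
The plan is to mimic the proof of Theorem \ref{thm:sym-pgma-stack-reptbl}, replacing \'etale $\pgma$-modules by Breuil--Kisin modules with tame descent data, and to leverage the fact that $\ud{Isom}$ sheaves for Breuil--Kisin modules are representable by affine schemes rather than merely by algebraic spaces. Concretely, let $T \to Y_4^{[0,h],\tau}\times_\cO Y_1^{0,\simc(\tau)}$ be a morphism from a scheme $T$, corresponding to a pair $(\fM_T,\fN_T)$. The fiber product
\[
\ud{\al}_T := Y_\Sym^{[0,h],\tau}\times_{Y_4^{[0,h],\tau}\times Y_1^{0,\simc(\tau)}}T
\]
is the subsheaf of $\ud{Isom}(\fM_T,\fM_T^\vee\otimes \fN_T)$ cut out by the alternating condition $(\al^\vee\otimes \fN_T)^{-1}\circ \al = -1_{\fM_T}$. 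I will show that this is representable by a scheme over $T$, which gives the claim; the $p$-adic formal algebraic property then follows because $Y_4^{[0,h],\tau}$ and $Y_1^{0,\simc(\tau)}$ are $p$-adic formal algebraic stacks by \cite[Theorem 5.1.12]{LLHLM-2020-localmodelpreprint}.

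The first step is to verify that $\ud{Isom}$ and $\ud{Aut}$ sheaves of Breuil--Kisin modules of type $\tau$ are representable by affine schemes over the base, and further that they are of finite presentation. Given two Breuil--Kisin modules $\fM_1,\fM_2$ over $T$, an $\fS_{L',R}$-linear homomorphism between them that commutes with $\phi$ and $\Del$ is determined by a finite amount of data with closed conditions (compatibility with Frobenius and descent data give polynomial equations on matrix coefficients), so $\ud{Hom}(\fM_1,\fM_2)$ is a closed subscheme of an affine space, and $\ud{Isom}$ is an open subscheme thereof. In particular $\ud{Aut}(\fM_T)\to T$ is affine of finite presentation, hence quasi-compact and quasi-separated.

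Next, I will run the cartesian diagram argument of Theorem \ref{thm:sym-pgma-stack-reptbl} verbatim. Namely, the sheaf $\ud{\al}_T$ is the pullback of the constant section $-1_{\fM_T}\in \ud{Aut}(\fM_T)$ along the composition
\[
\ud{Isom}(\fM_T,\fM_T^\vee\otimes\fN_T)\to \ud{Isom}(\fM_T,\fM_T^\vee\otimes\fN_T)\times_T\ud{Isom}(\fM_T^\vee\otimes\fN_T,\fM_T)\xrightarrow{c}\ud{Aut}(\fM_T),
\]
where the first map sends $\al\mapsto (\al,(\al^\vee\otimes\fN_T)^{-1})$ and $c$ is composition. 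Since $\ud{Aut}(\fM_T)\to T$ is separated, the constant section is a closed immersion, hence quasi-compact; pulling back shows that $\ud{\al}_T\to \ud{Isom}(\fM_T,\fM_T^\vee\otimes\fN_T)$ is a closed immersion, and the latter is affine over $T$. Therefore $\ud{\al}_T$ is representable by an affine $T$-scheme, proving the first assertion.

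For the second assertion, note that $Y_4^{[0,h],\tau}\times_\cO Y_1^{0,\simc(\tau)}$ is a $p$-adic formal algebraic stack (the product of two such), and representable-by-schemes morphisms preserve this property (cf.\ \cite[Lemma 5.1.6]{LLHLM-2020-localmodelpreprint}), so $Y_\Sym^{[0,h],\tau}$ is a $p$-adic formal algebraic stack. The only subtle point I anticipate is verifying the representability of $\ud{Isom}$ for Breuil--Kisin modules with tame descent data at the level of generality required — one must be careful that the descent data impose closed conditions compatible with the height condition — but this is essentially formal and should follow from the explicit description of $\fM^{(j')}$ in terms of power series modules over $R\DB{u'}$.
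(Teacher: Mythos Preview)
Your proposal is correct and follows essentially the same approach as the paper: identify the fiber product as the subsheaf $\ud{\al}_T$ of $\ud{Isom}(\fM_T,\fM_T^\vee\otimes\fN_T)$ cut out by the alternating condition, and realize it via the same cartesian square against the section $-1_{\fM_T}$ of $\ud{Aut}(\fM_T)$. The only differences are in the references: the paper invokes \cite[Proposition 3.1.3]{cegsB} for the representability of $\ud{Isom}$ by an affine scheme (rather than arguing it directly as you do), and cites \cite[Lemma 5.19]{Emerton-formal-alg-stk} for the final passage to $p$-adic formal algebraicity.
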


\begin{proof}
Let $S = \spec R$. Consider a $S$-point $(\fM_S, \fN_S) \in Y_4^{[0,h],\tau}\times_{\cO} Y_1^{0,\simc(\tau)}(S)$. For any $S$-scheme $S'$, we write $(\fM_{S'},\fN_{S'})$ for the pullback of $(\fM_S, \fN_S)$ to $S'$. The fiber product $Y_\Sym^{[0,h],\tau} \times_{ Y_4^{[0,h],\tau}\times_{\cO} Y_1^{0,\simc(\tau)}} S$ is the subsheaf $\ud{\al}_S$ of $\ud{\Isom}(\fM_{S}, \fM_S^\vee \otimes \fN_S)$ given by
\begin{align*}
    \ud{\al}_S : S' \mapsto \CB{\al' \in \Isom(\fM_{S'} , \fM_{S'}^\vee \otimes \fN_{S'}) \mid ((\al')^\vee \otimes \fN_{S'})^\mo \circ \al'  = -1_{\fM_{S'}}}
\end{align*}
for any $S$-scheme $S'$. The sheaf $\ud{\Isom}(\fM_{S}, \fM_S^\vee \otimes \fN_S)$ is representable by affine schemes by Proposition \ref{prop:BK-stack-GLn-finite-type}.  There is a cartesian square
\[
\begin{tikzcd}
\ud{\al}_S \arrow[rr] \arrow[d] &  & S \arrow[d, "-1_{\fM_S}"] \\
\ud{\Isom}(\fM_{S}, \fM_S^\vee \otimes \fN_S)  \arrow[r]  & \ud{\Isom}(\fM_{S}, \fM_S^\vee \otimes \fN_S) \times_S \ud{\Isom}(\fM_{S}^\vee \otimes \fN_S, \fM_S) \arrow[r, "c"] & \ud{\Aut}(\fM_{S})
\end{tikzcd}
\]
where the right vertical map is a constant section given by $-1_{\fM_S} \in \ud{\Aut}(\fM_{S})$, the left bottom horizontal map sends  $\al$ to $(\al, (\al^\vee\otimes \fN)^\mo)$, and $c$ is the composition. This shows that $\ud{\al}_S$ is representable by schemes of finite presentation. The claim that $Y_\Sym^{[0,h],\tau}$ is a $p$-adic formal algebraic stack follows from \cite[Lemma 5.19]{Emerton-formal-alg-stk} and Proposition \ref{prop:BK-stack-GLn-finite-type}.
\end{proof}

We recall several notions regarding Breuil--Kisin modules. We refer \cite[\S5]{LLHLM-2020-localmodelpreprint} and the references therein for a more detailed discussion.

Let $\be= (\be^{(j')})_{j'\in \cJ'}$ be an eigenbasis of  $\fM\in Y_n^{[0,h],\tau}$ (in the sense of \cite[Definition 5.1.6]{LLHLM-2020-localmodelpreprint}). Then the dual basis $\be^\vee = ((\be^{(j')})^\vee)_{j'\in \cJ'}$ is an eigenbasis of $\fM^\vee$.

\begin{defn}\label{defn:eigenbasis}
Let $(\fM,\fN,\al)\in Y_\Sym^{[0,h],\tau}(R)$. An \emph{eigenbasis} of $(\fM,\fN,\al)$ is a pair $(\be,\ga)$ of eigenbases $\be= (\be^{(j')})_{j'\in \cJ'}$ of $\fM$ and $\ga = (\ga^{(j')})_{j'\in \cJ'}$ of $\fN$   satisfying
\begin{align*}
    \al(\be^{(j')}) = ((\be^{(j')})^\vee \otimes \ga^{(j')})J.
\end{align*}
\end{defn}

Let $(\fM,\fN,\al)\in Y_\Sym^{[0,h],\tau}(R)$ be a symplectic Breuil--Kisin module with eigenbasis $(\be,\ga)$. For $j'\in \cJ'$, we write $C^{(j')}_{\fM,\be} \in M_4(R\DB{u'})$ for the matrix representation of $\phi_\fM^{(j')}$ with respect to $\varphi^*(\be^{(j'-1)})$ and $\be^{(j')}$.  The height condition implies $E(u')^h (C^{(j')}_{\fM,\be})^\mo\in M_4(R\DB{u'})$.  

Recall that  $(s,\mu)$ is a  1-generic lowest alcove presentation of $\tau$. For $j'\in \cJ'$, we write $j'=j_0 + fk$ for unique $0\le j \le f-1$ and $0\le k \le r-1$. Define 
\begin{align}\label{eqn:dd-parameters}
\begin{split}
    \bm{\alpha}'_{j'}&= \left\{\begin{array}{ll}
        s_\tau^{-k} (s_{f-1}^\mo s_{f-2}^\mo \dots s_{f-j}^\mo) (\mu_{f-j} + \eta_{f-j}) & \text{if $j'\neq 0$} \\
        \mu_0 + \eta_0 & \text{if $j'= 0$}
    \end{array}\right.
    \\
 (\bfa')\ix{j'} &= \sum_{i=0}^{f'-1} \bm{\alpha}'_{-j'+i}p^i\\
s'_{\rmor,j'} &=s_{\tau}^{k+1}(s_{f-1}^\mo s_{f-2}^\mo \dots s_{j+1}^\mo).
\end{split}
\end{align}
Then we ``remove the descent datum" and get the matrix
\begin{align*}
A_{\fM,\be}^{(j')} := \Ad( \phi(s'_{\rmor,j'})^\mo\phi(-(\bfa')\ix{j'})(u')) (C_{\fM,\be}\ix{j'})  \in M_4(R\DB{v})
\end{align*}
that is upper triangular modulo $v$ for each $j'\in \cJ'$. By the height condition, we have $E(v)^h (A_{\fM,\be}^{(j')})^\mo \in M_4(R\DB{v})$ which is upper triangular modulo $v$. Both $C_{\fM,\be}^{(j')}$ and $A_{\fM,\be}\ix{j'}$ depend only on $j' \mod f$. Since $\al(\be^{(j')})=((\be^{(j')})^\vee\otimes\ga^{(j')})J$, we have the following symplecticity property of matrices $C_{\fM,\be}^{(j')}$ and $A_{\fM,\be}\ix{j'}$.

\begin{lemma}\label{lem:BK-Frob-GSp4}
Let $(\fM,\fN,\al)\in Y_\Sym^{[0,h],\tau}(R)$ with an eigenbasis $(\be,\ga)$.
\begin{enumerate}
    \item The partial Frobenius matrix $C^{(j')}_{\fM,\be}$ satisfies
    \begin{align*}
        (C^{(j')}_{\fM,\be})^\top J C^{(j')}_{\fM,\be} = C^{(j')}_{\fN,\ga} E(v)^h J
    \end{align*}
    In particular, we have $C^{(j')}_{\fM,\be} \in \GSp_4(R\DB{u'}[\frac{1}{E(v)}])$.
    \item Similarly, we have
    \begin{align*}
        (A^{(j')}_{\fM,\be})^\top J A^{(j')}_{\fM,\be} = A^{(j')}_{\fN,\ga} E(v)^h J.
    \end{align*}
    In particular, for $p$-adically complete $R$, we have $A\ix{j'}_{\fM,\be} \in \GSp_4(R\DB{v}[\frac{1}{E(v)}])$ and is upper-triangular modulo $v$, and is therefore in $L\cG_\cO(R)$.
\end{enumerate}
\end{lemma}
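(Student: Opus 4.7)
The plan is to reduce both identities to the statement that $\al$, being a morphism of Breuil--Kisin modules, intertwines the Frobenius structures on $\fM$ and $\fM^\vee \otimes \fN$. The eigenbasis condition makes this intertwining especially simple: it trivializes $\al$ to the constant matrix $J$.

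For (1), I first compute the Frobenius matrix of $\fM^\vee$ with respect to the dual eigenbasis $\be^\vee$. Unwinding the definition $\phi_{\fM^\vee}(f)(m) = \varphi(f(\phi_\fM^{-1}(E(v)^h m)))$ (where $\phi_\fM^{-1}(E(v)^h m)$ is well-defined thanks to the height condition) gives $C^{(j')}_{\fM^\vee, \be^\vee} = E(v)^h (C^{(j')}_{\fM,\be})^{-\top}$; tensoring with $\fN$ then yields $C^{(j')}_{\fM^\vee \otimes \fN, \be^\vee \otimes \ga} = E(v)^h C^{(j')}_{\fN,\ga} (C^{(j')}_{\fM,\be})^{-\top}$. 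On the other hand, the eigenbasis condition $\al(\be^{(j')}) = ((\be^{(j')})^\vee \otimes \ga^{(j')})J$ says that the matrix of $\al$ on the $j'$-th component is simply $J$, and since $J$ has integer entries it is fixed by $\varphi$. Writing out the commutation $\al \circ \phi_\fM = \phi_{\fM^\vee \otimes \fN} \circ \varphi^*(\al)$ in coordinates then becomes
\begin{align*}
J \cdot C^{(j')}_{\fM,\be} = E(v)^h\, C^{(j')}_{\fN,\ga}\, (C^{(j')}_{\fM,\be})^{-\top} \cdot J,
\end{align*}
and left-multiplying by $(C^{(j')}_{\fM,\be})^\top$ yields the first identity. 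The $\GSp_4$ membership is then just the defining similitude relation, with $\simc(C^{(j')}_{\fM,\be}) = C^{(j')}_{\fN,\ga} E(v)^h$.

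For (2), the key observation is that the conjugating element $g := \phi(s'_{\rmor,j'})^{-1} \phi(-(\bfa')\ix{j'})(u')$ itself lies in $\GSp_4$: its two factors come respectively from the Weyl group and from an evaluated cocharacter, both transported by the duality isomorphisms $\phi\colon W \risom W^\vee$ and $X^*(\uT) \risom X_*(\uT^\vee)$ and so valued in $\GSp_4$ by the construction of the parameters \eqref{eqn:dd-parameters}. A direct calculation shows that for any $g \in \GSp_4$ and any matrix $M$ satisfying $M^\top J M = s \cdot J$ with $s$ a scalar commuting with $g$, one has $(gMg^{-1})^\top J (gMg^{-1}) = s J$, using the basic identity $g^{-\top} J g^{-1} = \simc(g)^{-1} J$. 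Applying this with $M = C^{(j')}_{\fM,\be}$ and $s = C^{(j')}_{\fN,\ga} E(v)^h$ produces $(A^{(j')}_{\fM,\be})^\top J A^{(j')}_{\fM,\be} = C^{(j')}_{\fN,\ga} E(v)^h J$, and the rank-one analogue of the descent-removal operation applied to $\fN$ (trivial as an $\Ad$-conjugation, but effecting the passage from $R\DB{u'}$ to $R\DB{v}$ via $\Delta$-equivariance) identifies $C^{(j')}_{\fN,\ga}$ with $A^{(j')}_{\fN,\ga}$, giving the second identity. Finally, for $p$-adically complete $R$, the paper has already observed that $A^{(j')}_{\fM,\be}$ lies in $M_4(R\DB{v})$ and is upper triangular modulo $v$; combining this Iwahori condition with the symplectic similitude relation just established and with $R\DB{v} \subset R\DP{v+p}$ places $A^{(j')}_{\fM,\be}$ in $\cG(R\DP{v+p}) = L\cG_\cO(R)$.

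The main technical point I anticipate is the rank-one bookkeeping: justifying the equality $C^{(j')}_{\fN,\ga} = A^{(j')}_{\fN,\ga}$ under the descent-removal operation requires matching the parameters $(\bfa')\ix{j'}$ and $s'_{\rmor,j'}$ appearing for $\fM$ (type $\tau$) with their similitude-character images for $\fN$ (type $\simc(\tau)$). This is a small diagram chase using \eqref{eqn:dd-parameters} and the $\Delta$-equivariance of $\al$, conceptually straightforward but requiring care with the indexing of components by $j' \bmod f$ versus $j' \bmod f'$.
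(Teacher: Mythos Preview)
Your proposal is correct and follows essentially the same route as the paper. The paper computes the Frobenius matrix of $\fM^\vee\otimes\fN$ with respect to the transported basis $(\be^\vee\otimes\ga)J$ directly, obtaining $C^{(j')}_{\fN,\ga}E(v)^h J^{-1}(C^{(j')}_{\fM,\be})^{-\top}J$, and then equates it with $C^{(j')}_{\fM,\be}$; you instead compute with respect to $\be^\vee\otimes\ga$ and carry the change-of-basis matrix $J$ through the intertwining relation --- these are the same calculation, and for part (2) your explicit use of the fact that $\GSp_4$-conjugation preserves the similitude relation is exactly what the paper's ``and similarly for $A^{(j')}$'' is hiding.
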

\begin{proof}
For item (1), we can show that $(\be^\vee\otimes\ga)J$ is an eigenbasis of $\fM^\vee\otimes \fN$ and
\begin{align*}
    C\ix{j'}_{\fM^\vee\otimes \fN, (\be^\vee\otimes  \ga) J} = C\ix{j'}_{\fN,\ga} E(v)^h J^\mo (C^{(j')}_{\fM,\be})^{-\top} J.
\end{align*}
Then the claim follows from the definition of a symplectic Breuil--Kisin module and its eigenbasis. 

For item (2), note that $X:= \phi(s'_{\rmor,j'})^\mo\phi(-(\bfa')\ix{j'})(u')$ is a diagonal matrix such that $X(w_0 X w_0)$ is a scalar matrix. Since $X = X^{\top}$, the claimed equation is equivalent to
\begin{align*}
    X^\mo (C^{(j')}_{\fM,\be})^{\top} X J X C^{(j')}_{\fM,\be} X^\mo = C^{(j')}_{\fN,\ga} E(v)^h J.
\end{align*}
Then it follows from item (1) because
\begin{align*}
    XJX = X(Jw_0) (w_0Xw_0) w_0 = X (w_0 X w_0) J
\end{align*}
is a scalar multiple of $J$ (and similarly for $X^\mo J X^\mo$).

To realize $A\ix{j'}_{\fM,\be}$ as an element in $L\cG_\cO(R)$, note that since $Y^{[0,h],\tau}_{\Sym}$ is finite type over $\spf \cO$, the morphism $\spf R \ra Y^{[0,h],\tau}_{\Sym}$ factors through $\spf R \ra \spf R'$ for some finite type $\cO$-subalgebra $R' \subset R$. Thus $A\ix{j'}_{\fM,\be}$ is an element in $\GSp_4(R'\DB{v}[\frac{1}{E(v)}])$, which defines an element in $L\cG_{\cO}(R')$ by Remark \ref{rmk:LG-valued-in-noetherian-R}. By composing $\spf R \ra \spf R'$, we get the desired element in $L\cG_{\cO}(R)$.
\end{proof}

We record how $A^{(j')}_{\fM,\be}$ changes under a change of basis.

\begin{prop}\label{prop:BK-change-of-basis}
Let $(\fM,\fN,\al)\in Y_\Sym^{[0,h],\tau}(R)$ with an eigenbasis $(\be_1,\ga_1)$. Suppose that $(\be_2,\ga_2)$ is another eigenbasis such that, for each $j'\in \cJ'$, $\be_2\ix{j'} =  \be_1\ix{j'}D\ix{j'}$ and $\ga\ix{j'}_2 =  \ga_1\ix{j'}\simc(D\ix{j'})$ for some $D\ix{j'}\in \GSp_4(R\DB{u'})$. Define
\begin{align*}
    I\ix{j'}:= \Ad( \phi(s'_{\rmor,j'})^\mo\phi(-(\bfa')\ix{j'})(u'))(D\ix{j'}).
\end{align*}
Then $I\ix{j'} \in \cI(R)$ and it satisfies 
\begin{align*}
    A_{\fM,\be_1}\ix{j'} = I\ix{j'}  A_{\fM,\be_2}\ix{j'} (\Ad(\phi(s_j^\mo) \phi(\mu_j + \eta_j)(v)) (\varphi(I\ix{j'-1})^\mo))
\end{align*}
where $j\in \cJ$, $j = j' \mod f$.

Conversely, for any $I\ix{j'}\in \cI(R)$ only depending on $j'\mod f$, $(\be_2,\ga_2)$ given by $\be_{2,j'}=\be_{1,j'}D\ix{j'}$ and $\ga_{2,j'}=\ga_{1,j
}\simc(D\ix{j'})$ is an eigenbasis where $D\ix{j'}$ is defined by 
\begin{align*}
    D\ix{j'}:= \Ad( \phi((\bfa')\ix{j'})(u')\phi(s'_{\rmor,j'}))(I\ix{j'}) \in \GSp_4(R\DB{u'}).
\end{align*}
\end{prop}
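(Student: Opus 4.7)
The plan is to reduce the statement to its $\GL_4$ analogue, treated essentially in \cite[\S5.1]{LLHLM-2020-localmodelpreprint}, and separately verify the symplectic compatibility. The forward direction has two independent pieces: (a) the matrix transformation formula for $A^{(j')}$, and (b) the membership $I^{(j')} \in \cI(R)$. Only (b) carries symplectic content beyond the $\GL_4$ case; piece (a) is purely a computation on partial Frobenius matrices.

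For the transformation formula, I would first note that applying $\phi_\fM$ to $\varphi^*(\be_2^{(j'-1)}) = \varphi^*(\be_1^{(j'-1)})\varphi(D^{(j'-1)})$ and using $\fS_{L',R}$-linearity gives $C_{\fM,\be_1}^{(j')} = D^{(j')} C_{\fM,\be_2}^{(j')} \varphi(D^{(j'-1)})^{-1}$. Conjugating by $\phi(s'_{\rmor,j'})^{-1}\phi(-(\bfa')^{(j')})(u')$ then produces $A_{\fM,\be_1}^{(j')} = I^{(j')} A_{\fM,\be_2}^{(j')} X^{(j')}$ where $X^{(j')} := \Ad(\phi(s'_{\rmor,j'})^{-1}\phi(-(\bfa')^{(j')})(u'))(\varphi(D^{(j'-1)})^{-1})$. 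Unwinding $\varphi(D^{(j'-1)})$ via the definition of $D^{(j'-1)}$ in terms of $I^{(j'-1)}$, and using $\varphi(u') = (u')^p$, $(u')^{p^{f'}-1} = v$, the formula collapses to $X^{(j')} = \Ad(\phi(s_j^{-1})\phi(\mu_j+\eta_j)(v))(\varphi(I^{(j'-1)})^{-1})$ after the combinatorial identity
\[
\phi(s'_{\rmor,j'})^{-1}\phi\bigl(p(\bfa')^{(j'-1)} - (\bfa')^{(j')}\bigr)(u')\phi(s'_{\rmor,j'-1}) \equiv \phi(s_j^{-1})\phi(\mu_j+\eta_j)(v)
\]
modulo the center. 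This identity is the same one used in the $\GL_4$ case and is a direct consequence of the recursions in \eqref{eqn:dd-parameters} with separate treatment of $j' = 0$ versus $j' \neq 0$.

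For $I^{(j')} \in \cI(R)$, the integrality (entries in $R\DB{v}$) and the upper-triangular-modulo-$v$ condition follow from the corresponding $\GL_4$ statement, since these depend only on the $\Del$-equivariance built into the notion of eigenbasis and on the fact that the conjugation precisely ``removes the descent datum''. The symplectic condition is automatic: since $\phi$ sends $W$ into $W^\vee \subset \GSp_4$ and $X^*(T)$ into $X_*(T^\vee) \subset \GSp_4$, the conjugating elements $\phi(s'_{\rmor,j'})^{-1}$ and $\phi(-(\bfa')^{(j')})(u')$ lie in $\GSp_4(R((u')))$, so conjugating $D^{(j')} \in \GSp_4(R\DB{u'})$ preserves the symplectic group.

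The converse is proved by running the same argument backwards. Given $I^{(j')} \in \cI(R)$, symplecticity of $D^{(j')}$ again follows from conjugation within $\GSp_4$, integrality of the entries in $R\DB{u'}$ from the $\GL_4$ analogue. To see that $(\be_2,\ga_2)$ is an eigenbasis, the $\Del$-equivariance of $D^{(j')}$ is equivalent to $I^{(j')}$ being defined over $R\DB{v}$, which holds by hypothesis; the pairing relation $\al(\be_2^{(j')}) = ((\be_2^{(j')})^\vee \otimes \ga_2^{(j')}) J$ reduces, via the corresponding relation for $(\be_1,\ga_1)$, to the symplectic identity $(D^{(j')})^\top J D^{(j')} = \simc(D^{(j')}) J$, which is exactly the condition $D^{(j')} \in \GSp_4(R\DB{u'})$ that we just verified. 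The main obstacle is purely bookkeeping: tracking how the descent-removal conjugation interacts with the Frobenius twist across the index shift $j' \mapsto j'-1$, and confirming the combinatorial identity above; neither is conceptually difficult, but both invite sign and index errors.
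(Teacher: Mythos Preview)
Your proposal is correct and follows essentially the same approach as the paper: both reduce to the $\GL_4$ change-of-basis formula (\cite[Proposition 5.1.8]{LLHLM-2020-localmodelpreprint}) and then verify the symplectic compatibility separately. The paper's proof is terser---it simply cites that proposition and notes that the conditions $\be_2^{(j')} = \be_1^{(j')}D^{(j')}$ and $\ga_2^{(j')} = \ga_1^{(j')}\simc(D^{(j')})$ imply $(\be_2,\ga_2)$ is an eigenbasis---but your expanded version correctly unpacks the same argument, including the combinatorial identity relating the descent-removal conjugations at indices $j'$ and $j'-1$.
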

\begin{proof}
This essentially follows from \cite[Proposition 5.1.8]{LLHLM-2020-localmodelpreprint}. Note that the conditions $\be_2\ix{j'} =  \be_1\ix{j'}D\ix{j'}$ and $\ga\ix{j'}_2 =  \ga_1\ix{j'}\simc(D\ix{j'})$ imply that $(\be_2,\ga_2)$ is an eigenbasis.
\end{proof}

\begin{defn}
Let $(\fM,\fN,\al)\in Y_\Sym^{[0,h],\tau}(\F)$. We define the \emph{shape} of $(\fM,\fN,\al)$ to be the unique element $\tilz \in \tilW^{\vee,\cJ}$ such that $A\ix{j}_{\fM,\be}\in \cI(\F) \tilz_j \cI(\F)$ for some eigenbasis $(\be,\ga)$ and for each $\jj$. By Proposition \ref{prop:BK-change-of-basis}, this is independent of the choice of eigenbasis.  
\end{defn}

For integers $a\le b$, we define $L^{[a,b]}\cG_\cO$ to be the subfunctor of $L\cG_\cO$ given by
\begin{align*}
    L^{[a,b]}\cG_\cO(R) = \CB{ g\in L\cG_\cO(R) \mid E(v)^{-a}\ov{g}, E(v)^b \ov{g}^\mo \in M_{4}(R\DB{v+p}) }
\end{align*}
for an $\cO$-algebra $R$.  It is preserved by left and right multiplication by $L^+\cG_\cO$. We write
\begin{align*}
    \Gr^{[a,b]}_{\cG,\cO} := L^+\cG_\cO \bss L^{[a,b]}\cG_\cO
\end{align*}
for the induced sub-ind-scheme of $\Gr_{\cG,\cO}$.  For $(s,\mu) \in W^\cJ \times X^*(T)^\cJ$, we define $(s,\mu)$-twisted $\varphi$-conjugation action of $(L^+\cG_\cO)^\cJ$ on $(L^{[a,b]}\cG_\cO)^\cJ$ by 
\begin{align*}
     (I\ix{j})_\jj \cdot (A\ix{j})_\jj := \PR{I\ix{j}  A\ix{j} ((\Ad(\phi(s_j^\mo) \phi(\mu_j + \eta_j)(v)) (\varphi(I\ix{j-1})^\mo)) }_\jj.
\end{align*}
Similarly, we define $(s,\mu)$-twisted conjugation action by the above formula, but with $\varphi$ omitted. 
We denote by $[(L^{[a,b]}\cG_\cO)^\cJ /_{\varphi,(s,\mu)} (L^+\cG_\cO)^\cJ]$ the fpqc quotient stack using $(s,\mu)$-twisted $\varphi$-conjugation action.

Let $Y_\Sym^{[0,h],\tau,\be}$ be the category fibered in groupoids over $\spf \cO$ sending $R$ to a groupoid of tuples $(\fM,\fN,\al,\be,\ga)$ where $(\fM,\fN,\al) \in Y_{\Sym}^{[0,h],\tau}(R)$ and $(\be,\ga)$ is an eigenbasis of $(\fM,\fN,\al)$. By Proposition \ref{prop:BK-change-of-basis}, $Y_\Sym^{[0,h],\tau,\be}$ is an $L^+\cG_\cO^\cJ$-torsor over $Y_\Sym^{[0,h],\tau}$.

\begin{prop}\label{prop:presenting-BK-stack}
The morphism $Y_\Sym^{[0,h],\tau,\be} \ra (L^{[0,h]}\cG_\cO)^\cJ$ sending $(\fM,\fN,\al,\be,\ga)$ to $(A\ix{j}_{\fM,\be})_\cJ$ induces an isomorphism of $p$-adic formal algebraic stacks $Y_\Sym^{[0,h],\tau} \simeq [(L^{[0,h]}\cG_\cO)^\cJ /_{\varphi,(s,\mu)} (L^+\cG_\cO)^\cJ]^\pcp$. Here, $\pcp$ denotes the $p$-adic completion.
\end{prop}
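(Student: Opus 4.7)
The plan is to upgrade the $\GL_n$-version of the local model presentation from \cite[Theorem~5.1.11]{LLHLM-2020-localmodelpreprint} to the symplectic setting, establishing first an isomorphism at the level of torsors (before taking quotients) and then descending.

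First I would verify that the morphism is well defined: for $(\fM, \fN, \al, \be, \ga) \in Y_\Sym^{[0,h],\tau,\be}(R)$ with $R$ a $p$-adically complete $\cO$-algebra, the tuple $(A\ix{j}_{\fM,\be})_\jj$ lands in $(L^{[0,h]}\cG_\cO)^\cJ(R)$. The $\GSp_4$-valuedness of $A\ix{j}_{\fM,\be}$ (rather than merely $\GL_4$-valuedness) is Lemma~\ref{lem:BK-Frob-GSp4}(2), whose proof uses the compatibility of the eigenbases $\be,\ga$ with $\al$ built into Definition~\ref{defn:eigenbasis}. The $[0,h]$-bound on powers of $E(v)$ comes from the height condition on $\fM$, which forces $E(v)^h(A\ix{j}_{\fM,\be})^{-1}\in M_4(R\DB{v})$; together with $A\ix{j}_{\fM,\be}\in M_4(R\DB{v})$ this yields membership in $L^{[0,h]}\cG_\cO$.

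Next I would prove that
\[
Y_\Sym^{[0,h],\tau,\be} \;\longrightarrow\; (L^{[0,h]}\cG_\cO)^\cJ
\]
is an isomorphism of fpqc stacks. Essential surjectivity is by inverse construction: given $(A\ix{j})_\jj\in(L^{[0,h]}\cG_\cO)^\cJ(R)$, form $\fM=\bigoplus_{j'\in\cJ'}(R\DB{u'})^{\oplus 4}$ with the standard basis $\be$, equip it with the descent datum determined by $(s'_{\rmor,j'}, (\bfa')\ix{j'})$ as in \eqref{eqn:dd-parameters}, and define $\phi_\fM$ by
\[
C\ix{j'}_{\fM,\be} := \Ad\!\bigl(\phi(s'_{\rmor,j'})\,\phi((\bfa')\ix{j'})(u')\bigr)(A\ix{j}),\qquad j\equiv j'\pmod f.
\]
Build $\fN$ analogously from the similitude characters $\simc(A\ix{j})\in L\G_m(R)$ (together with the rank-one descent datum matching $\simc(\tau)$), and define $\al\colon\fM\simeq\fM^\vee\otimes\fN$ in the standard bases by the matrix $J$. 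Using Lemma~\ref{lem:BK-Frob-GSp4}, one checks that $\al$ is a morphism of Breuil--Kisin modules with descent data, and the alternating condition $(\al^\vee\otimes\fN)^{-1}\circ\al=-1_\fM$ reduces to the identity $J^{-\top}=-J$. Fully faithfulness is the statement that any morphism of symplectic Breuil--Kisin modules fixing the data $(\fM,\fN,\al,\be,\ga)$ and inducing the identity on $(A\ix{j})_\jj$ is the identity, which follows by unwinding definitions as in \emph{loc.\;cit.}

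The final step uses Proposition~\ref{prop:BK-change-of-basis}, which both exhibits $Y_\Sym^{[0,h],\tau,\be}\to Y_\Sym^{[0,h],\tau}$ as an $(L^+\cG_\cO)^\cJ$-torsor and identifies the change-of-basis action with the $(s,\mu)$-twisted $\varphi$-conjugation action on $(L^{[0,h]}\cG_\cO)^\cJ$. Descending the isomorphism along this torsor and then $p$-adically completing yields
\[
Y_\Sym^{[0,h],\tau} \;\simeq\; \bigl[(L^{[0,h]}\cG_\cO)^\cJ /_{\varphi,(s,\mu)} (L^+\cG_\cO)^\cJ\bigr]^{\pcp}.
\]
The main technical point I anticipate is bookkeeping the symplectic form carefully through the removal of descent data: one must confirm that conjugation by $\phi(s'_{\rmor,j'})\phi((\bfa')\ix{j'})(u')$ preserves $\GSp_4$-valuedness (since $s'_{\rmor,j'}\in W$ and $(\bfa')\ix{j'}\in X^*(T)$ are $\GSp_4$-data), and that the sign convention $(\al^\vee\otimes\fN)^{-1}\circ\al=-1_\fM$ matches precisely the convention $J^\top=-J$ used to define $\GSp_4$, so that no parasitic orthogonal/symplectic ambiguity is introduced.
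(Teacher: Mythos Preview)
Your proposal is correct and follows essentially the same approach as the paper, which simply states that the result ``can be proven as \cite[Proposition 5.2.1]{LLHLM-2020-localmodelpreprint} using Proposition~\ref{prop:BK-change-of-basis}.'' Your argument is precisely the symplectic unpacking of that reference: Lemma~\ref{lem:BK-Frob-GSp4} for well-definedness, an explicit inverse construction for the torsor-level isomorphism, and Proposition~\ref{prop:BK-change-of-basis} to identify the $(L^+\cG_\cO)^\cJ$-action with $(s,\mu)$-twisted $\varphi$-conjugation and descend.
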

\begin{proof}
This can be proven as \cite[Proposition 5.2.1]{LLHLM-2020-localmodelpreprint} using Proposition \ref{prop:BK-change-of-basis}.
\end{proof}

Recall that $\Gr_{\cG,\F}$ is equal to the affine flag variety $\Fl = \cI_\F \bss L(\GSp_4)_\F$. For integers $a\le b$, let  $L^{[a,b]}(\GSp_4)_\F$ denote the functor defined on $\F$-algebras $R$ by
\begin{align*}
    L^{[a,b]}(\GSp_4)_\F : R \ra \CB{A \in L\GSp_4(R) \mid Av^{-a}, A^\mo v^b \in M_4(R\DB{v})}.
\end{align*}
The fpqc quotient $[\cI_\F \bss L^{[a,b]}(\GSp_4)_\F]$ is isomorphic to a Noetherian closed subscheme $\Fl^{[a,b]}\subset \Fl$. Note that $\Gr_{\cG,\F}^{[a,b]} := \Gr_{\cG,\cO}^{[a,b]}\times_{\spec \cO} \spec \F \subset \Fl^{[a,b]}$ where the inclusion is strict.

We define
\begin{align*}
    \cI_{1,\F} : R \mapsto \CB{A\in \GSp_4(R\DB{v}) \mid A \mod v \in U(R)}
\end{align*}
and $\til{\Gr}_{\cG,\F}^{[a,b]}:= [\cI_{1,\F} \bss L^{[a,b]}\cG_\F]$. It is a $T^\vee_\F$-torsor over $\Gr_{\cG,\F}^{[a,b]}$. Similarly, we have $\til{\Fl}^{[a,b]} := \cI_{1,\F} \bss L^{[a,b]}(\GSp_4)_\F$ which is a $T^\vee_\F$-torsor over $\Fl^{[a,b]}$.

\begin{prop}\label{prop:presenting-BK-stack-mod-p}
Suppose that  the lowest alcove presentation $(s,\mu)$ of $\tau$ is $(h+1)$-generic. There is an isomorphism of algebraic stacks  $\pi_{(s,\mu)}: Y_{\Sym,\F}^{[0,h],\tau} \risom [(\til{\Gr}^{[0,h]}_{\cG,\F})^\cJ /_{(s,\mu)} T_\F^{\vee,\cJ}]$.
\end{prop}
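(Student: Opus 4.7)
The plan is to adapt the proof of the $\GL_n$-analogue (the corresponding Proposition in \S5.2 of \cite{LLHLM-2020-localmodelpreprint}) to the symplectic setting. The adaptation is essentially mechanical, since the arguments depend only on the structure of the Iwahori subgroup and the root-theoretic bounds coming from genericity.

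Starting from Proposition \ref{prop:presenting-BK-stack} reduced mod $p$, we have
\[Y_{\Sym,\F}^{[0,h],\tau} \simeq [(L^{[0,h]}\cG_\F)^\cJ /_{\varphi,(s,\mu)} (L^+\cG_\F)^\cJ].\]
The strategy is to factor the quotient by $(L^+\cG_\F)^\cJ$ using the exact sequence
\[1 \to \cI_{1,\F} \to L^+\cG_\F \to T^\vee_\F \to 1\]
coming from evaluation at $v=0$, split by the inclusion of constants $T^\vee_\F \hookrightarrow L^+\cG_\F$, which gives a semidirect decomposition $L^+\cG_\F \simeq T^\vee_\F \ltimes \cI_{1,\F}$. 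We quotient first by $\cI_{1,\F}^\cJ$, producing $(\til{\Gr}^{[0,h]}_{\cG,\F})^\cJ$, then by the residual $T^{\vee,\cJ}_\F$-action.

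Concretely, I would construct $\pi_{(s,\mu)}$ at the level of groupoids: given $(\fM,\fN,\al) \in Y_{\Sym,\F}^{[0,h],\tau}(R)$, pick an eigenbasis $(\be,\ga)$ per Definition \ref{defn:eigenbasis}, form the matrices $(A\ix{j}_{\fM,\be})_\jj \in (L^{[0,h]}\cG_\F(R))^\cJ$ from Lemma \ref{lem:BK-Frob-GSp4}(2), and project to $(\til{\Gr}^{[0,h]}_{\cG,\F}(R))^\cJ$ via the left-$\cI_{1,\F}^\cJ$-quotient, viewed modulo the $(s,\mu)$-twisted conjugation of $T^{\vee,\cJ}_\F$. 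Proposition \ref{prop:BK-change-of-basis} describes the ambiguity under change of eigenbasis: decomposing the change-of-basis element $I\ix{j} \in \cI(R)$ as $t\ix{j} u\ix{j}$ with $t\ix{j} \in T^\vee_\F(R)$ and $u\ix{j} \in \cI_{1,\F}(R)$, the $t\ix{j}$-contribution descends to the $(s,\mu)$-twisted $T^{\vee,\cJ}_\F$-action on the target, while the $u\ix{j}$-contribution must be shown to act trivially on the target quotient. The inverse morphism is then obtained by lifting a class representative to matrices satisfying Lemma \ref{lem:BK-Frob-GSp4}(2) and reconstructing the corresponding symplectic Breuil--Kisin module.

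The main obstacle is showing that the $u\ix{j}$-part of the change-of-basis action descends trivially to the combined $\cI_{1,\F}^\cJ$- and $T^{\vee,\cJ}_\F$-quotient. Unwinding, this amounts to showing that the right-multiplication factor $\Ad(\phi(s_j^\mo)\phi(\mu_j+\eta_j)(v))\varphi(u\ix{j-1})^\mo$ appearing in the twisted $\varphi$-conjugation action can be rewritten, modulo those quotients, as left-multiplication by an element of $\cI_{1,\F}^\cJ$. Modulo $p$ the Frobenius satisfies $\varphi(v) = v^p$, so $\varphi(u\ix{j-1})^\mo$ lies in a deeper layer of $\cI_{1,\F}$ than $u\ix{j-1}$; the $(h+1)$-genericity of $(s,\mu)$ then bounds the shift of affine roots induced by conjugation by the cocharacter $\phi(\mu_j+\eta_j)(v)$ so that $\Ad(\phi(s_j^\mo)\phi(\mu_j+\eta_j)(v))\varphi(u\ix{j-1})^\mo$ remains in $\cI_{1,\F}$ and the resulting recursive system in $\cI_{1,\F}^\cJ$ has a unique solution. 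The root-theoretic estimates underlying this step carry over from the $\GL_n$ case without change, since they depend only on the bound $h_{\mu+\eta} \le h$ and on the decomposition of $\cI_{1,\F}$ into affine root subgroups, both of which transfer verbatim to $\GSp_4$.
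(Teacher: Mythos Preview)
Your proposal is correct and follows essentially the same approach as the paper: the paper's proof simply invokes Proposition \ref{prop:presenting-BK-stack} together with \cite[Lemma 5.2.2]{LLHLM-2020-localmodelpreprint}, and what you have written is a faithful unpacking of the content of that lemma (the successive-approximation argument showing that, under $(h+1)$-genericity, $\varphi$-twisted $\cI_{1,\F}^\cJ$-conjugation coincides with left $\cI_{1,\F}^\cJ$-multiplication). The one point the paper makes explicit that you leave implicit is the $\GSp_4$-compatibility: in the notation of \loccit, $(I\ix{j})_\jj$ lies in $\GSp_4(R\DB{v})^\cJ$ if and only if the solution $(X_j)_\jj$ does, which is exactly what ensures the $\GL_4$ argument restricts to the symplectic setting.
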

\begin{proof}
This follows from Proposition \ref{prop:presenting-BK-stack} and \cite[Lemma 5.2.2]{LLHLM-2020-localmodelpreprint}. Note that $(I\ix{j})_\jj$ in \loccit~is an element in $\GSp_4(R\DB{v})^\cJ$ if and only if $(X_j)_\jj$ in \loccit~is an element in  $\GSp_4(R\DB{v})^\cJ$.
\end{proof}

Recall that, for each $\tilz \in \tilW^{\vee,\cJ}$, there is a  subfunctor $\cU(\tilz) = \prod_{\jj} \cU(\tilz_j) \subset L\cG^\cJ$. For any integers $a \le b$, we define
\begin{align*}
    U^{[a,b]}(\tilz) := \cU(\tilz)_\cO \cap L^{[a,b]}\cG_\cO^\cJ.
\end{align*}
The natural projection map $U^{[a,b]}(\tilz) \ra (\Gr_{\cG,\cO}^{[a,b]})^\cJ$ is an open immersion by Proposition \ref{prop:tilU-open-in-Gr}. Since $\Gr_{\cG,\F}^\cJ = \Fl^\cJ$ is ind-proper and its $T^{\vee,\cJ}$-fixed points are exactly $\tilz \in \tilW^{\vee,\cJ}$, $\cU(\tilz)_\F$ form an open cover of $\Gr^\cJ_{\cG,\F}$. 
Also we have $T^{\vee,\cJ}$-torsors
\begin{align*}
    \til{\cU}(\tilz) &:=T^{\vee,\cJ} \cU(\tilz) \ra \cU(\tilz) \\
    \til{U}^{[a,b]}(\tilz) &:=T^{\vee,\cJ}_\cO U(\tilz)^{[a,b]} \ra U(\tilz)^{[a,b]}.
\end{align*}
The images of ${U}(\tilz)^{[a,b]}_\F$ (resp.~$\til{U}(\tilz)^{[a,b]}_\F$) form an open cover of $({\Gr}^{[a,b]}_{\cG,\F})^\cJ$ (resp.~$(\til{\Gr}^{[a,b]}_{\cG,\F})^\cJ$).

Recall that $Y_\Sym^{[0,h],\tau}$ is a $p$-adic formal algebraic stack, which implies $Y_\Sym^{[0,h],\tau} = \varinjlim_{i} Y_\Sym^{[0,h],\tau}\times_{\spf\cO} \spec \cO/\varpi^i$. The algebraic stack $Y_\Sym^{[0,h],\tau}\times_{\spf\cO} \spec \cO/\varpi^i$ has the same underlying topological space for all $i\in \Z_{>0}$. There is a bijection between open substacks  of a given algebraic stack and open subsets of its underlying topological space (see \cite[\href{https://stacks.math.columbia.edu/tag/06FJ}{Tag 06FJ}]{stacks-project}). Thus there is a bijection between open substacks of $Y_{\Sym,\cO/\varpi^i}^{[0,h],\tau} := Y_\Sym^{[0,h],\tau}\times_{\spf\cO} \spec \cO/\varpi^i$ and open substacks of $Y_{\Sym,\F}^{[0,h],\tau}$.

\begin{defn}[cf. Definition 5.2.4 in \cite{LLHLM-2020-localmodelpreprint}]\label{defn:gauge}
Let $\tilz \in \tilW^{\vee,\cJ}$. 
\begin{enumerate}
    \item We define $Y_{\Sym,\F}^{[0,h],\tau}(\tilz)$ to be the open substack of $Y_{\Sym,\F}^{[0,h],\tau}$ corresponding to
    \begin{align*}
        [\til{U}^{[0,h]}(\tilz)_\F /_{(s,\mu)} T_\F^{\vee,\cJ}] \subset [(\til{\Gr}^{[0,h]}_{\cG,\F})^\cJ /_{(s,\mu)} T_\F^{\vee,\cJ}]
    \end{align*}
    under the isomorphism $\pi_{(s,\mu)}$. We write $Y_{\Sym, \cO/\varpi^i}^{[0,h],\tau}(\tilz)$ to be the open substack of $Y_{\Sym,\cO/\varpi^i}^{[0,h],\tau}$ induced by $Y_{\Sym,\F}^{[0,h],\tau}(\tilz)$.
    \item We define the  $p$-adic formal open substack $Y_\Sym^{[0,h],\tau}(\tilz) := \varinjlim_i Y_{\Sym, \cO/\varpi^i}^{[0,h],\tau}(\tilz)$ of $Y_\Sym^{[0,h],\tau}$. 
    \item Let $R$ be a $p$-adically complete $\cO$-algebra. We say that $(\fM,\fN,\al)\in Y_\Sym^{[0,h],\tau}(R)$ \emph{admits a $\tilz$-gauge} if $(\fM,\fN,\al) \in Y_\Sym^{[0,h],\tau}(\tilz)(R)$.
    \item For $(\fM,\fN,\al) \in Y_\Sym^{[0,h],\tau}(\tilz)(R)$, we define a \emph{$\tilz$-gauge basis} of $(\fM,\fN,\al)$ as an eigenbasis $(\be,\ga)$ such that $A_{\fM,\be}\ix{j}\in \til{\cU}(\tilz_j)(R)$ for all $\jj$.
\end{enumerate}
\end{defn}

Note that since $\til{U}^{[0,h]}(\tilz)_\F$  form an open cover of $(\til{\Gr}^{[0,h]}_{\cG,\F})^\cJ$, $Y_{\Sym}^{[0,h],\tau}(\tilz)$ form an open cover of $Y_{\Sym}^{[0,h],\tau}$. In Corollary \ref{cor:BK-stack-nonempty}, we show that when these open charts are non-empty.

\begin{prop}\label{prop:gauge-basis}
Let $(s,\mu)$ be a $(h+1)$-generic lowest alcove presentation of $\tau$. Let $(\fM,\fN,\al)\in Y_\Sym^{[0,h],\tau}(\tilz)(R)$. If $\fM$ admits an eigenbasis, then $\fM$ admits a $\tilz$-gauge basis, and the set of $\tilz$-gauge bases is a $T^{\vee,\cJ}(R)$-torsor.
\end{prop}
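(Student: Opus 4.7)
The plan is to reinterpret both claims through the presentation of $Y^{[0,h],\tau}_{\Sym,\F}$ in Proposition \ref{prop:presenting-BK-stack-mod-p}. A choice of eigenbasis $(\be,\ga)$ yields a tuple $A = (A^{(j)}_{\fM,\be})_\jj \in (L^{[0,h]}\cG_\cO(R))^\cJ$, and by Proposition \ref{prop:presenting-BK-stack} this exhibits $(\fM,\fN,\al)$ as the class of $A$ under the $(s,\mu)$-twisted $\varphi$-conjugation action of $L^+\cG_\cO^\cJ$. Composing with the isomorphism $\pi_{(s,\mu)}$ and invoking $p$-adic completeness of $R$, the hypothesis $(\fM,\fN,\al)\in Y^{[0,h],\tau}_\Sym(\tilz)(R)$ becomes the assertion that the image of $A$ in $(\Gr^{[0,h]}_{\cG,\cO})^\cJ$ lies in the open subscheme $U^{[0,h]}(\tilz)(R)^\cJ$.

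For the existence of a $\tilz$-gauge basis, I would combine the open immersion $\cU(\tilz)\mono \Gr_{\cG,\cO}^\cJ$ of Proposition \ref{prop:tilU-open-in-Gr} with the formal étaleness of $L^+\cG \times L^\mm\cG \to L\cG$ from Lemma \ref{lem:mult-formally-et} to factor, fpqc-locally on $R$, each $A^{(j)} = J^{(j)}\cdot U^{(j)}$ with $J^{(j)}\in L^+\cG_\cO(R)$ and $U^{(j)}\in \cU(\tilz_j)(R)$ (the product lying in $L^{[0,h]}\cG_\cO(R)$). The task is then to solve iteratively for $I = (I^{(j)})\in L^+\cG_\cO(R)^\cJ$ so that $B^{(j)} := I^{(j)} A^{(j)}\,\Ad(\phi(s_j^{-1})\phi(\mu_j+\eta_j)(v))\,\varphi(I^{(j-1)})^{-1}$ lies in $\til{\cU}(\tilz_j)(R) = T^\vee_\cO\cdot \cU(\tilz_j)(R)$ for each $j\in \cJ$, using the $(h+1)$-genericity of $(s,\mu)$ to control the $v$-adic orders of the $\Ad$-conjugate of $\varphi(I^{(j-1)})^{-1}$ so that right-multiplication respects the factorization $L^+\cG \cdot \cU(\tilz_j)$.

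For the torsor structure, once one $\tilz$-gauge basis has been exhibited, any other $\tilz$-gauge basis comes from the residual stabilizer of $\til{\cU}^{[0,h]}(\tilz)$ inside the $L^+\cG^\cJ$-orbit of $A$. Via $\pi_{(s,\mu)}$, the fiber of the projection $\til{\cU}^{[0,h]}(\tilz) \to [\til{\cU}^{[0,h]}(\tilz)/_{(s,\mu)}T^{\vee,\cJ}] \simeq Y^{[0,h],\tau}_\Sym(\tilz)$ over the $R$-point corresponding to $(\fM,\fN,\al)$ is a $T^{\vee,\cJ}(R)$-torsor (at each mod-$\varpi^i$ level, hence in the $p$-adic limit). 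Translating back along Proposition \ref{prop:BK-change-of-basis} yields the claim.

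The hard part will be the iterative step in the existence argument. Because $\cJ$ is cyclic under the Frobenius shift $j\mapsto j-1$, the $I^{(j)}$ cannot be chosen independently: the sequence must close up consistently when one wraps around the cycle. The $(h+1)$-genericity of $(s,\mu)$ is precisely what ensures this closing is possible; modulo $v$, the twisted $\varphi$-conjugation action degenerates to the $(s,\mu)$-twisted $T^{\vee,\cJ}$-action on $\til{\Gr}^{[0,h]}_{\cG,\F}$ (this is the content of the analog of Lemma 5.2.2 of \cite{LLHLM-2020-localmodelpreprint} used in Proposition \ref{prop:presenting-BK-stack-mod-p}), and the leftover obstruction is a $T^{\vee,\cJ}(R)$-ambiguity absorbed by the torsor structure in the statement.
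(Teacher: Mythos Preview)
Your route differs substantially from the paper's. You propose to rerun the successive-approximation argument of \cite[Proposition~5.2.7]{LLHLM-2020-localmodelpreprint} directly for $\GSp_4$; the paper instead invokes that $\GL_4$ result as a black box to obtain a $\tilz$-gauge basis $\be$ for $\fM$ (with such bases forming a $T_4^{\vee,\cJ}(R)$-torsor), and then performs a short explicit torus computation to upgrade to a symplectic gauge basis: picking any eigenbasis $\ga$ of $\fN$, both $\al(\be)$ and $(\be^\vee\otimes\ga)J$ are $\tilz$-gauge bases of $\fM^\vee\otimes\fN$, so $\al(\be_j)=(\be_j^\vee\otimes\ga_j)Jt_j$ for some $t\in T_4^{\vee,\cJ}(R)$; the alternating condition forces $t_jJt_j^{-1}=J$, and one then solves $t_j'Jt_j'=c_jJt_j^{-1}$ for $t'\in T_4^{\vee,\cJ}(R)$ and $c$ to produce the symplectic gauge basis $(\be t',\ga c)$, the solution set being a $T^{\vee,\cJ}(R)$-torsor. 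This is considerably shorter precisely because the ``hard part'' you flag --- the cyclic iterative step over $\cJ$ --- is entirely outsourced to the $\GL_4$ case.

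Your torsor argument as written has a circularity problem. The identification $Y^{[0,h],\tau}_\Sym(\tilz)\simeq[\til U^{[0,h]}(\tilz)/_{(s,\mu)}T^{\vee,\cJ}]$ over $p$-adically complete $R$ is the content of (the $[0,h]$ analogue of) Theorem~\ref{thm:BK-local-model}, whose proof \emph{uses} Proposition~\ref{prop:gauge-basis}. The isomorphism $\pi_{(s,\mu)}$ of Proposition~\ref{prop:presenting-BK-stack-mod-p} is established only over $\F$; lifting it to $\cO/\varpi^i$ is exactly what the successive approximation of \cite[Proposition~5.2.7]{LLHLM-2020-localmodelpreprint} achieves, so you cannot invoke the quotient presentation ``at each mod-$\varpi^i$ level'' as a shortcut. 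If you were to carry out the approximation fully for $\GSp_4$ (which your sketch does not), the torsor claim would then follow by computing the stabilizer directly via Proposition~\ref{prop:BK-change-of-basis}, not via $\pi_{(s,\mu)}$.
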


\begin{proof}
Note that if $(\be,\ga)$ is a $\tilz$-gauge basis for $(\fM,\fN,\al)$, then $\be$ is a $\tilz$-gauge basis for $\fM$ in the sense of \cite[Definition 5.2.6]{LLHLM-2020-localmodelpreprint}. By Proposition 5.2.7 of \emph{loc.~cit.}, $\fM$ admits a $\tilz$-gauge basis $\be$, 
and the set of such bases is $T_4^{\vee,\cJ}$-torsor. Let $\ga$ be any eigenbasis of $\fN$. 
Since $(\be^\vee\otimes \ga)J$ and $\al(\be)$ are $\tilz$-gauge basis of $\fM^\vee\otimes \fN$, there exists $t\in T_4^{\vee,\cJ}(R)$ such that $\al(\be_j)= (\be_j^\vee\otimes\ga_j) J t_j$ for $\jj$. Then the alternating condition implies that $t_j J t_j^\mo = J$. Let $\be'= \be t'$ for some $t'\in T^{\vee,\cJ}_4(R)$ and $\ga' = \ga c$ for some $c\in (R\DB{v}^\times)^\cJ$. Then $(\be')^\vee = \be^\vee (t')^\mo$ and
 \begin{align*}
     \al(\be'_j) = ((\be'_j)^\vee\otimes \ga'_j)t'_jJt'_jt_jc_j^\mo \text{ for each $\jj$}.
 \end{align*}
The condition on $t$ implies that there exists $t'_j$ and $c_j$ such that $t'_jJt'_j = c_jJt^\mo_j$ for each $\jj$, and the solution gives a $\tilz$-gauge basis $(\be',\ga')$ of $(\fM,\fN,\al)$. Since the set of solutions is a $T^{\vee,\cJ}$-torsor, the set of $\tilz$-gauge bases is a  $T^{\vee,\cJ}$-torsor.
\end{proof}

Let $\lam \in X_*(T^\vee)^\cJ$ be a dominant cocharacter such that $\std(\lam_j) \in [0,h]^4$ for all $\jj$. There is an  $\cO$-flat closed $p$-adic formal substack $Y_4^{\le\std(\lam),\tau}\subset Y_4^{[0,h],\tau}$ (e.g.~\cite[\S5.3]{LLHLM-2020-localmodelpreprint}). For simplicity, we write $Y_4^{\le\lam,\tau}$ instead of $Y_4^{\le\std(\lam),\tau}$. We have its symplectic variant $Y_{\Sym}^{\le\lam,\tau}$ which is defined as an  $\cO$-flat part  of $Y_\Sym^{[0,h],\tau} \times_{Y^{[0,h],\tau}_4} Y_4^{\le\lam,\tau}$. For any finite extension  $E'/E$ with ring of integers $\cO'$, $(\fM,\fN,\al)\in Y_{\Sym}^{[0,h],\tau}(\cO')$ belongs to $(\fM,\fN,\al)\in Y_{\Sym}^{\le\lam,\tau}(\cO')$ if and only if the elementary divisors of $A_{\fM,\be}\ix{j} \in \GSp_4(E'\DP{v+p})$ (for any eigenbasis $(\be,\ga)$) is bounded by $E(v)^{\lam_j}$ for each $\jj$. In particular, $(A_{\fM,\be}\ix{j})_\jj$ gives a $E'$-point in $S_E(\lam)\subset \Gr_{\cG,E}^\cJ$. We have its open substack
\begin{align*}
    Y^{\le\lam,\tau}_\Sym(\tilz) = Y^{\le\lam,\tau}_\Sym \times_{Y^{[0,h],\tau}_\Sym} Y^{[0,h],\tau}_\Sym(\tilz).
\end{align*}

\begin{rmk}
For any finite extension $\F'/\F$, $Y_4^{\le \lam,\tau}(\F')$ is the full subgroupoid of $Y_4^{[0,h],\tau}(\F')$ consisting of Breuil--Kisin modules whose shapes lie in the set $\Adm^\vee(\std(\lam))$ by \cite[Proposition 5.4]{CL-ENS-2018-Kisinmodule-MR3764041}. By Lemma \ref{lem:transfer-preserves-bruhat-order}, $Y_{\Sym}^{\le \lam,\tau}(\F')$ is the full subgroupoid of $Y_{\Sym}^{[0,h],\tau}(\F')$ consisting of Breuil--Kisin modules whose shapes lie in the set $\Adm^\vee(\lam) \simeq \Adm^\vee(\std(\lam))^\Theta$. 
\end{rmk}

On the local model side, recall the Pappas--Zhu local model $M_\cJ(\le\lam) \subset \Gr^{[0,h],\cJ}_{\cJ,\cO}$. We have the open neighborhood at $\tilz$ defined by $U(\tilz,\le\lam) := M_\cJ(\le\lam) \cap \cU(\tilz)_\cO$. We also write  $\til{U}(\tilz,\le\lam) := T^{\vee,\cJ}_\cO \times_{\spec \cO} U(\tilz,\le\lam)$.

\begin{thm}\label{thm:BK-local-model}
Let $(s,\mu)$ be a $(h+1)$-generic lowest alcove presentation of $\tau$ and let $\tilz\in \tilW^{\vee,\cJ}$. We have a local model diagram of $p$-adic formal algebraic stacks over $\cO$
\[
\begin{tikzcd}
& \til{U}(\tilz, \le \lam)^\pcp \arrow[ld, "T^{\vee,\cJ}_\cO"'] \arrow[rd, "T^{\vee,\cJ}_\cO"] & \\
Y_\Sym^{\le \lam,\tau}(\tilz) = \BR{\til{U}(\tilz, \le \lam) /_{(s,\mu)} T^{\vee,\cJ}_\cO}^\pcp 
& & U(\tilz, \le \lam)^\pcp 
\end{tikzcd}
\]
where diagonal arrows are $T^{\vee,\cJ}_\cO$-torsors. The superscript $\wedge_p$ means taking $p$-adic completion.
\end{thm}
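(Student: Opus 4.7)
The plan is to generalize the proof of the analogous statement for $\GL_4$, namely Theorem 5.3.1 of \cite{LLHLM-2020-localmodelpreprint}, adapting it to the symplectic setting using the machinery developed in this subsection. The idea is to introduce an auxiliary stack of symplectic Breuil--Kisin modules equipped with a $\tilz$-gauge basis, identify it with $\til{U}(\tilz,\le\lam)^\pcp$, and then extract the two diagonal arrows from the forgetful map on one side and from the trivial-torsor projection on the other.

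First I will introduce the $p$-adic formal algebraic stack $Y^{\le\lam,\tau,\be}_\Sym(\tilz)$ parameterizing tuples $(\fM,\fN,\al,\be,\ga)$ where $(\fM,\fN,\al)\in Y^{\le\lam,\tau}_\Sym(\tilz)(R)$ and $(\be,\ga)$ is a $\tilz$-gauge basis in the sense of Definition \ref{defn:gauge}. By Proposition \ref{prop:gauge-basis}, every object admits such a basis (Zariski locally) and the set of $\tilz$-gauge bases is a $T^{\vee,\cJ}_\cO(R)$-torsor, so the forgetful morphism $Y^{\le\lam,\tau,\be}_\Sym(\tilz) \to Y^{\le\lam,\tau}_\Sym(\tilz)$ is a $T^{\vee,\cJ}_\cO$-torsor. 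This will be the left diagonal arrow.

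Next, I will construct a morphism $\Theta: Y^{\le\lam,\tau,\be}_\Sym(\tilz) \to \til{U}(\tilz,\le\lam)^\pcp$ by sending $(\fM,\fN,\al,\be,\ga)$ to the tuple of straightened partial Frobenius matrices $(A^{(j)}_{\fM,\be})_\jj$, viewed inside $\til{U}(\tilz,\le\lam) = T^{\vee,\cJ}_\cO \times U(\tilz,\le\lam)$ via the trivial product structure. Three conditions must be verified: (i) Lemma \ref{lem:BK-Frob-GSp4}(2) places $A^{(j)}_{\fM,\be}$ in $L\cG_\cO(R)$ rather than merely in $L\cG_{4,\cO}(R)$, so $\Theta$ lands in $\Gr_{\cG,\cO}^{\cJ}$; (ii) the $\tilz$-gauge condition on $(\be,\ga)$ pushes the image into $\til{\cU}(\tilz)_\cO$; and (iii) the height bound $\le\lam$ cuts out exactly $M(\le\lam)$ by the Pappas--Zhu construction of the local model. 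The right diagonal arrow is then tautologically the projection $\til{U}(\tilz,\le\lam)=T^{\vee,\cJ}_\cO\times U(\tilz,\le\lam)\to U(\tilz,\le\lam)$, which is a trivial $T^{\vee,\cJ}_\cO$-torsor.

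Finally, I will prove that $\Theta$ is an isomorphism of $p$-adic formal algebraic stacks. Both source and target are $\cO$-flat (the source by the definition of $Y^{\le\lam,\tau}_\Sym$ together with Proposition \ref{prop:BKstack-alg}, the target by the flatness of the Pappas--Zhu local model), so it suffices to verify the isomorphism after reduction modulo $\varpi^i$ for each $i$ and pass to the colimit. The mod-$p$ case follows directly by combining Proposition \ref{prop:presenting-BK-stack-mod-p} with Definition \ref{defn:gauge}, which identifies $Y^{[0,h],\tau}_{\Sym,\F}(\tilz)$ with the quotient $[\til{U}^{[0,h]}(\tilz)_\F /_{(s,\mu)} T^{\vee,\cJ}_\F]$, and then intersecting with the height-$\le\lam$ locus in $M(\le\lam)_\F$. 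The lift to $\cO/\varpi^i$ is essentially formal given Proposition \ref{prop:presenting-BK-stack} and an infinitesimal deformation argument as in the $\GL_4$ case. The main obstacle is simply the surjectivity of $\Theta$: one must show that every $\GSp_4$-valued matrix tuple in $\til{U}(\tilz,\le\lam)(R)$ arises from some symplectic Breuil--Kisin module, i.e.~that the symplectic data $(\fN,\ga,\al)$ can be canonically reconstructed from $(\fM,\be)$ together with the symplecticity of the partial Frobenius recorded in Lemma \ref{lem:BK-Frob-GSp4}(1). Once this reconstruction is carried out functorially, the remaining verifications reduce to tracking the compatibility of the $T^{\vee,\cJ}_\cO$-actions on both sides.
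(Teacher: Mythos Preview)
Your proposal follows essentially the same approach as the paper: introduce the auxiliary stack $Y^{\le\lam,\tau,\be}_\Sym(\tilz)$ of symplectic Breuil--Kisin modules with $\tilz$-gauge basis, show it is a $T^{\vee,\cJ}_\cO$-torsor over $Y^{\le\lam,\tau}_\Sym(\tilz)$ via Proposition~\ref{prop:gauge-basis}, and identify it with $\til{U}(\tilz,\le\lam)^\pcp$ via the partial Frobenius matrices.

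The paper's execution is more direct than your proposed mod-$\varpi^i$ reduction. Rather than first establishing the map $\Theta$ on general $R$ and then arguing the isomorphism level by level, the paper exploits $\cO$-flatness of both $Y^{\le\lam,\tau,\be}_\Sym(\tilz)$ and $\til{U}(\tilz,\le\lam)^\pcp$: it checks that $(A^{(j)}_{\fM,\be})_\jj \in \til{U}(\tilz,\le\lam)(\cO')$ for $\cO'$ the ring of integers in a finite extension of $E$ (where the height-$\le\lam$ condition has its elementary-divisor characterization and $M(\le\lam)$ is the closure of $S_E(\lam)$), and then $\cO$-flatness alone produces the map for all $R$. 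The quotient isomorphism is then deduced in one step from Propositions~\ref{prop:BK-change-of-basis} and~\ref{prop:gauge-basis}, bypassing your separate surjectivity discussion and the infinitesimal lifting step. Your point (iii), that the height bound cuts out exactly $M(\le\lam)$, is precisely what the $\cO'$-point argument handles cleanly; for general $R$ this identification is not immediate since both sides are closure constructions, so your mod-$p$ route would in any case need something equivalent to the paper's $\cO$-flatness argument.
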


\begin{proof}
This follows from the proof of \cite[Theorem 5.3.1 and 5.3.3]{LLHLM-2020-localmodelpreprint} using Proposition \ref{prop:BK-change-of-basis} and \ref{prop:gauge-basis}.  
\end{proof}

\begin{cor}\label{cor:BK-stack-nonempty}
Under the hypothesis in Theorem \ref{thm:BK-local-model}, $Y^{\le \lam,\tau}_\Sym(\tilz) \neq \emptyset$ if and only if $\tilz\in \Adm^\vee(\lam)$.
\end{cor}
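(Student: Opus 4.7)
The proof goes through the local model diagram of Theorem \ref{thm:BK-local-model}, which exhibits $Y^{\le\lam,\tau}_\Sym(\tilz)$ as the $p$-adic completion of $[\til U(\tilz,\le\lam)/_{(s,\mu)} T^{\vee,\cJ}_\cO]$, where $\til U(\tilz,\le\lam) = T^{\vee,\cJ}_\cO \times U(\tilz,\le\lam)$ is a trivial $T^{\vee,\cJ}_\cO$-torsor over $U(\tilz,\le\lam) = M_\cJ(\le\lam)\cap \cU(\tilz)_\cO$. In particular, $Y^{\le\lam,\tau}_\Sym(\tilz)$ is non-empty if and only if $U(\tilz,\le\lam)$ is, reducing the claim to the equivalence
\[
U(\tilz,\le\lam)\neq\emptyset \iff \tilz\in\Adm^\vee(\lam).
\]

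For the ``if'' direction, I would invoke the Pappas--Zhu theorem \cite[Theorem 9.3]{PZ13-Inv-local_model-MR3103258}, combined with Lemma \ref{lem:transfer-preserves-bruhat-order} (which identifies $\Adm^\vee(\lam)$ with the $\Theta$-invariants of the $\GL_4$ admissible set), to conclude that the reduced closed subscheme of $M_\cJ(\le\lam)_\F$ is $\bigcup_{\tilw^*\in\Adm^\vee(\lam)}S_\F(\tilw^*)$. Thus $\tilz\in\Adm^\vee(\lam)$ implies that $\tilz$ defines an $\F$-point of $M_\cJ(\le\lam)$; since $\tilz$ is tautologically an $\F$-point of $\cU(\tilz)$ via its constant section $wv^\nu$ (as discussed in \S\ref{sub:mixed-char-local-models}), we immediately obtain $\tilz\in U(\tilz,\le\lam)(\F)$.

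The ``only if'' direction I plan to handle via Bialynicki--Birula contracting dynamics on $\Fl^\cJ$. Choose a generic cocharacter $s\colon\G_m\hookrightarrow T^{\vee,\cJ}$ acting on $\Fl^\cJ$ by right translation. Using the explicit coordinate description of $\cU(\tilz)$ in Proposition \ref{prop:moduli-interpretation-open-chart}, one verifies that $\cU(\tilz)_\F$ is contained in the Bialynicki--Birula attracting cell of $\tilz$: contracting by $s$ scales the off-pivot polynomial coefficients $c_{ij,k}$ to zero while preserving the pivot normalization $c_{w'(j)j,\nu'_j-\delta_{w'(j)>j}}=1$, so every point in $\cU(\tilz)_\F$ has $\tilz$ as the $t\to 0$ limit of its $\G_m$-orbit. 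Therefore any $\bar\F$-point $x$ of $U(\tilz,\le\lam)_{\bar\F}$ satisfies $\lim_{t\to0}s(t)\cdot x=\tilz$ in $\Fl^\cJ$, and since $M_\cJ(\le\lam)_{\bar\F}$ is closed and $T^{\vee,\cJ}$-stable, this limit lies in $M_\cJ(\le\lam)$, forcing $\tilz\in\Adm^\vee(\lam)$. The residual case where $U(\tilz,\le\lam)$ has non-empty generic fiber but empty special fiber is disposed of by the valuative criterion of properness applied to the projective $\cO$-flat scheme $M_\cJ(\le\lam)\to\spec\cO$, which produces a specialization to the special fiber against which the same contracting argument applies.

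The principal technical point is verifying that $\cU(\tilz)_\F$ indeed lies in the Bialynicki--Birula attracting cell of $\tilz$; this should be a direct verification from the coordinates of Proposition \ref{prop:moduli-interpretation-open-chart}, strictly parallel to the treatment of the affine flag variety carried out for $\GL_n$ in \cite{LLHLM-2020-localmodelpreprint}.
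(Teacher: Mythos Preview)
Your approach is essentially the same as the paper's. Both reduce via Theorem~\ref{thm:BK-local-model} to the non-emptiness of $U(\tilz,\le\lam)_\F$, use \cite[Theorem 9.3]{PZ13-Inv-local_model-MR3103258} for the Schubert stratification of $M_\cJ(\le\lam)_\F$, and then invoke a contracting/Bia\l ynicki--Birula argument to conclude. The only difference is packaging: the paper cites \cite[Lemma 4.7.1]{LLHLM-2020-localmodelpreprint} (stating that $S^\circ_\F(\til s)\cap\cU(\tilz)_\F\neq\emptyset$ iff $\tilz\in S_\F(\til s)$) rather than spelling out the flow argument from the coordinates of Proposition~\ref{prop:moduli-interpretation-open-chart}.

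One minor correction: your ``residual case'' is unnecessary, since $Y^{\le\lam,\tau}_\Sym(\tilz)$ is a $p$-adic formal stack and hence non-empty iff its special fiber is, so you may work entirely over $\F$ from the outset. In fact the argument you sketch for that case has a small gap: the specialization produced by the valuative criterion lands in $M_\cJ(\le\lam)_\F$ but need not lie in the open $\cU(\tilz)_\F$, so the contracting flow toward $\tilz$ is not directly available for it.
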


\begin{proof}
By Theorem \ref{thm:BK-local-model}, the stack $Y^{\le\lam,\tau}_\Sym(\tilz)$ is nonempty if and only if $U(\tilz,\le\lam)\neq \emptyset$. By \cite[Theorem 9.3]{PZ13-Inv-local_model-MR3103258}, $M_\cJ(\le\lam)_\F =\cup_{\til{s}\in \Adm^\vee(\lam)} S^\circ_\F(\til{s})$. Then $U(\tilz,\le\lam)\neq \emptyset$ if and only if  $S^\circ_\F(\til{s}) \cap \cU(\tilz)_\F$ for some $\til{s}\in \Adm^\vee(\lam)$. The last condition is equivalent to $\tilz \in S_\F(\til{s})$ for some $\til{s}\in \Adm^\vee(\lam)$ by \cite[Lemma 4.7.1]{LLHLM-2020-localmodelpreprint} (which easily generalizes to our setup). Then the claim follows from the standard description of torus-fixed points of affine Schubert varieties.
\end{proof}

\subsection{Symplectic \'etale $\varphi$-modules }\label{sub:et-phi-stack}
In this subsection, we fix a dominant cocharacter $\lam\in X_*(T)^\cJ$ such that $\std(\lam_j) \in [0,h]^4$ and a $1$-generic inertial tame type $\tau$. We continue to denote by $R$ a $p$-adically complete $\cO$-algebra.

Let $n>0$ be an integer. The ring $\cO_{\cE,K}:= W(k)\DP{v}^\pcp$  is equipped with a Frobenius endomorphism $\varphi$ extending usual Frobenius on $W(k)$ and sending $v$ to $v^p$. For $p$-adically complete Noetherian $\cO$-algebra $R$, $\Phi\text{-}\mathrm{Mod}_K^{\et,n}(R)$ is defined as the groupoid of rank $n$ \'etale $\varphi$-modules with $R$-coefficients. It is known that $\Phi\text{-}\mathrm{Mod}_K^{\et,n}$ is an  ind-algebraic fppf stack over $\spf \cO$ \cite[Corollary 3.1.5]{EGstack}.   

Objects of $\Phi\text{-}\mathrm{Mod}_K^{\et,n}(R)$ are given by rank $n$ projective modules $\cM$ over $\cO_{\cE,K}\ctensor_{\Z_p} R$ equipped with an isomorphism $\phi_\cM : \varphi^*(\cM) \risom \cM$. For each $\jj$, we have an induced morphism   $\phi_\cM\ix{j}: \cM\ix{j-1} \ra \cM\ix{j}$. We also define $\cM^\vee \in \Phi\text{-}\mathrm{Mod}_K^{\et,n}(R)$  the dual \'etale $\varphi$-module of $\fM$ whose underlying module is $\Hom_{\cO_{\cE,K}\ctensor_{\Z_p} R}(\cM, \cO_{\cE,K}\ctensor_{\Z_p} R)$ and  $\phi_{\cM^\vee} : \varphi^*(\cM^\vee) \ra \cM^\vee$ given by a formula
\begin{align*}
\phi_{\cM^\vee}(f)(m) = \varphi(f ( \phi_{\cM}^\mo(m))  
\end{align*}
for all $f\in \varphi^*(\cM^\vee) =\Hom_{\cO_{\cE,K}\ctensor_{\Z_p} R}(\varphi^*(\cM), \cO_{\cE,K}\ctensor_{\Z_p} R)$ and $m\in \cM$.

We define $\catphiMsym$ as the moduli stack of \emph{symplectic} \'etale $\varphi$-modules whose objects are given by triples $(\cM,\cN,\al)\in \catphiMsym(R)$ where $\cM \in \catphiMfour(R)$, $\cN\in \catphiMone(R)$, and $\al: \cM \risom \cM^\vee\otimes_{\cO_{\cE,K}\ctensor_{\Z_p}R} \cN$ satisfying alternating condition $(\al^\vee \otimes \cN)^\mo \circ \al = -1$. 

\begin{prop}\label{prop:et-phi-mod-stack-alg}
The map $\catphiMsym \ra \Phi\text{-}\mathrm{Mod}_K^{\et,4} \times_\cO \Phi\text{-}\mathrm{Mod}_K^{\et,1}$ sending $(\cM,\cN,\al)$ to $(\cM,\cN)$ is representable by algebraic spaces. In particular, $\catphiMsym$ is an  ind-algebraic fppf stack over $\spf \cO$.
\end{prop}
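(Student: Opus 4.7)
The plan is to transcribe, essentially verbatim, the arguments used in the proofs of Theorem \ref{thm:sym-pgma-stack-reptbl} and Proposition \ref{prop:BKstack-alg} to the setting of \'etale $\varphi$-modules. The key observation is that for the present statement we only need representability by algebraic spaces (and ind-algebraicity of the source), not the finer properties of quasi-compactness or quasi-separatedness required in the $\pgma$-module case.

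First, I would fix a test scheme $S = \spec R$ for a $p$-adically complete Noetherian $\cO$-algebra $R$, together with an $S$-point of $\catphiMfour \times_\cO \catphiMone$ corresponding to a pair $(\cM_S, \cN_S)$. Writing $(\cM_{S'}, \cN_{S'})$ for the pullback to any $S$-scheme $S'$, the fiber product $\catphiMsym \times_{\catphiMfour \times_\cO \catphiMone} S$ is identified with the subsheaf $\ud{\al}_S \subset \ud{Isom}(\cM_S, \cM_S^\vee \otimes \cN_S)$ given by
\begin{align*}
\ud{\al}_S : S' \mapsto \CB{\al' \in Isom(\cM_{S'}, \cM_{S'}^\vee \otimes \cN_{S'}) \mid ((\al')^\vee \otimes \cN_{S'})^\mo \circ \al' = -1_{\cM_{S'}}}.
\end{align*}

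Second, by \cite[Corollary 3.1.5]{EGstack}, $\catphiMn$ is an ind-algebraic fppf stack over $\spf \cO$, and in particular its diagonal is representable by algebraic spaces. It follows that $\ud{Isom}(\cM_S, \cM_S^\vee \otimes \cN_S)$ and $\ud{Aut}(\cM_S)$ are representable by algebraic spaces over $S$. Exactly as in the proof of Theorem \ref{thm:sym-pgma-stack-reptbl}, there is a cartesian square
\[
\begin{tikzcd}[column sep=small]
\ud{\al}_S \arrow[rr] \arrow[d] & & S \arrow[d, "-1_{\cM_S}"] \\
\ud{Isom}(\cM_S, \cM_S^\vee \otimes \cN_S) \arrow[r] & \ud{Isom}(\cM_S, \cM_S^\vee \otimes \cN_S) \times_S \ud{Isom}(\cM_S^\vee \otimes \cN_S, \cM_S) \arrow[r, "c"] & \ud{Aut}(\cM_S)
\end{tikzcd}
\]
in which the right vertical map is the constant section $-1_{\cM_S}$, the bottom-left arrow sends $\al$ to $(\al, (\al^\vee \otimes \cN_S)^\mo)$, and $c$ is composition. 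This exhibits $\ud{\al}_S$ as a base change of algebraic spaces over $S$, hence representable by algebraic spaces. This establishes the representability claim.

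Finally, I would conclude that $\catphiMsym$ is itself an ind-algebraic fppf stack over $\spf \cO$ by applying the obvious analogue of \cite[Lemma 5.19]{Emerton-formal-alg-stk} for ind-algebraic fppf stacks: a stack equipped with a morphism to an ind-algebraic fppf stack which is representable by algebraic spaces is again ind-algebraic. The main ``obstacle'' is nothing more than bookkeeping — checking that the alternating condition cuts out the right representable subsheaf and that the target stack's ind-algebraicity transfers — and the argument is a direct formal transcription of the two earlier proofs.
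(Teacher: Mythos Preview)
Your proposal is correct and is exactly the argument the paper intends: the paper's proof simply says to repeat the proof of Proposition \ref{prop:BKstack-alg} with \cite[Corollary 3.1.5]{EGstack} replacing \cite[Proposition 3.1.3]{cegsB}, and you have written out precisely that transcription, including the cartesian square and the appeal to (the ind-algebraic analogue of) \cite[Lemma 5.19]{Emerton-formal-alg-stk}.
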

\begin{proof}
This can be proven as Proposition \ref{prop:BKstack-alg} using \cite[Corollary 3.1.5]{EGstack}.
\end{proof}

\begin{defn}
Let $(\cM,\cN,\al)\in \catphiMsym(R)$. A \emph{basis} of $(\cM,\cN,\al)$ is a pair $(\be,\ga) = ((\be\ix{j})_{\jj} , (\ga\ix{j})_{\jj})$ where for each $\jj$, $\be\ix{j}$ (resp.~$\ga\ix{j}$) is a basis for a rank 4 (resp.~1) free $R\DB{v}[1/v]^\pcp$-module $\cM\ix{j}$ (resp.~$\cN\ix{j}$)   such that
\begin{align*}
    \al(\be\ix{j}) = ((\be\ix{j})^\vee \otimes \ga\ix{j})J.
\end{align*}
\end{defn}

\begin{lemma}\label{lem:et-phi-Frob-GSp4}
Let $(\cM,\cN,\al)\in \catphiMsym(R)$.
\begin{enumerate}
    \item If $(\be,\ga)$ is a basis of $(\cM,\cN,\al)$, then the matrix representation of $\phi_\cM\ix{j}$ (resp.~$\phi_\cN\ix{j}$) with respect to the basis $\be\ix{j-1}$ and $\be\ix{j}$ (resp.~$\ga\ix{j-1}$ and $\ga\ix{j}$) is given by a matrix $A\ix{j}\in \GSp_4(R\DP{v}^\pcp)$ (resp.~$\simc(A)\in (R\DP{v}^\pcp)^\times$).
    \item If $(\be_1,\ga_1)$ and $(\be_2,\ga_2)$ are basis of $(\cM,\cN,\al)$ such that $\be_2\ix{j} = \be_1\ix{j}A_j$ and $\ga_2\ix{j} = \ga_1 \ix{j}c_j$ for some $A_j \in \GL_4(R\DP{v}^\pcp)$ and $c_j \in (R\DP{v}^\pcp)^\times$, then $A_j \in \GSp_4(R\DP{v}^\pcp)$ and $c_j = \simc(A_j)$.
\end{enumerate}
\end{lemma}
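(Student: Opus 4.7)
The plan is to adapt the proof of Lemma \ref{lem:BK-Frob-GSp4} to the étale setting. The essential structural fact that both items depend on is the interaction between the symplectic isomorphism $\al$ and the Frobenius structure, expressed matrix-theoretically. Since in the étale case $\phi_{\cM}$ is genuinely an isomorphism (rather than merely injective with height bounds), the computations simplify: the $E(v)^h$ factor present in Lemma \ref{lem:BK-Frob-GSp4} disappears.

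For item (1), I would first observe that if $(\be,\ga)$ is a basis of $(\cM,\cN,\al)$, then $(\be^\vee \otimes \ga)J$ furnishes a basis of $\cM^\vee \otimes_{\cO_{\cE,K}\ctensor_{\Z_p}R} \cN$, and that by definition $\al(\be\ix{j}) = ((\be\ix{j})^\vee\otimes\ga\ix{j})J$. Letting $A\ix{j}$ denote the matrix of $\phi_\cM\ix{j}$ in the basis $\be$ and $c\ix{j}$ that of $\phi_\cN\ix{j}$ in the basis $\ga$, the computation proceeds by comparing two expressions for the matrix of $\phi_{\cM^\vee\otimes \cN}\ix{j}$ in the basis $\al(\be)$. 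First, since $\al$ is an isomorphism of étale $\varphi$-modules, the commutative square forces this matrix to equal $A\ix{j}$. Second, a direct computation using the formula $\phi_{\cM^\vee}(f)(m)=\varphi(f(\phi_\cM^\mo(m)))$ gives the matrix of $\phi_{\cM^\vee}\ix{j}$ in the dual basis as $(A\ix{j})^{-\top}$, hence the matrix of $\phi_{\cM^\vee\otimes\cN}\ix{j}$ in the basis $\be^\vee\otimes \ga$ is $(A\ix{j})^{-\top}c\ix{j}$; the change of basis by $J$ then produces $J^\mo (A\ix{j})^{-\top}c\ix{j} J$. Equating the two expressions yields $(A\ix{j})^\top J A\ix{j} = c\ix{j}J$, which is precisely the condition that $A\ix{j}\in \GSp_4(R\DP{v}^\pcp)$ with $\simc(A\ix{j})=c\ix{j}$. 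The statement for $\phi_\cN\ix{j}$ is automatic because $\cN$ is rank one and its matrix coincides with the similitude factor just produced.

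For item (2), both $(\be_1,\ga_1)$ and $(\be_2,\ga_2)$ satisfy the compatibility condition with $\al$. Applying $\al$ to $\be_2\ix{j}=\be_1\ix{j}A_j$ by $R\DB{v}[1/v]^\pcp$-linearity gives $\al(\be_1\ix{j})A_j = ((\be_1\ix{j})^\vee\otimes\ga_1\ix{j})JA_j$. On the other hand, substituting $\be_2\ix{j}=\be_1\ix{j}A_j$ and $\ga_2\ix{j}=\ga_1\ix{j}c_j$ into the defining identity $\al(\be_2\ix{j})=((\be_2\ix{j})^\vee\otimes\ga_2\ix{j})J$ yields $((\be_1\ix{j})^\vee\otimes\ga_1\ix{j})A_j^{-\top}c_j J$. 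Equating these two expressions forces $JA_j = A_j^{-\top}c_j J$, equivalently $A_j^\top JA_j = c_j J$, which gives $A_j\in \GSp_4(R\DP{v}^\pcp)$ with $\simc(A_j)=c_j$.

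I expect no significant obstacle; the argument is essentially bookkeeping in linear algebra, directly parallel to the Breuil--Kisin computation of Lemma \ref{lem:BK-Frob-GSp4}. The only point requiring slight care is ensuring that when transporting matrices through the duality and tensor product, the Frobenius twist $\varphi^*$ is tracked correctly on both sides, but this is identical to the Breuil--Kisin situation and already accounted for in the formula for $\phi_{\cM^\vee}$.
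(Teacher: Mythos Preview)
Your proposal is correct and follows essentially the same approach as the paper: the paper's proof simply refers item (1) back to Lemma \ref{lem:BK-Frob-GSp4} and says item (2) is a direct computation using the condition $\al(\be\ix{j})=((\be\ix{j})^\vee\otimes\ga\ix{j})J$, which is exactly what you have carried out in detail.
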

\begin{proof}
Item (1) can be shown as Lemma \ref{lem:BK-Frob-GSp4}. Item (2) follows from a direct computation using the condition $\al(\be\ix{j})=( (\be\ix{j})^\vee \otimes \ga\ix{j})J$.
\end{proof}

For a tame inertial type $\tau'$ valued in $T_n^\vee(E)$, there is a morphism $\varepsilon_{\tau'} : Y^{[0,h],\tau'}_n \ra \Phi\text{-}\mathrm{Mod}_K^{\et,n}$   representable by algebraic spaces, proper, and of finite presentation (\cite[Proposition 5.4.1]{LLHLM-2020-localmodelpreprint}).

Given $\fM \in Y^{[0,h],\tau'}_n(R)$ and an integer $m\ge 0$, we define $\fM(m)\in Y^{[m,h+m],\tau'}_n(R)$ to be the Breuil--Kisin module whose underlying module is $\fM$ and the Frobenius endomorphism is given by $\phi_{\fM(m)} = E(v)^m \phi_{\fM}$.

We define a morphism of ind-algebraic fppf stacks over $\spf \cO$ 
\begin{align*}
    \veps_\tau : Y_\Sym^{[0,h],\tau} &\ra \catphiMsym \\
    (\fM,\fN,\al) &\mapsto (\varepsilon_{\std(\tau)}(\fM), \varepsilon_{\simc(\tau)}(\fN(h)), \varepsilon_{\std(\tau)} (\al)).
\end{align*}

\begin{rmk}
Note that the dual of Breuil--Kisin module and \'etale $\varphi$-module are not compatible. This is because of $E(v)^h$ in the formula defining the Frobenius of dual Breuil--Kisin modules. This is why we use $\varepsilon_{\simc(\tau)}(\fN(h))$ instead of $\varepsilon_{\simc(\tau)}(\fN)$ in the definition of $\veps_\tau$, so that we still have $\veps_{\tau}(\fM^\vee \otimes \fN) = \veps_{\std(\tau)}(\fM)^\vee \otimes  \varepsilon_{\simc(\tau)}(\fN(h))$. In particular, this explains that $\veps_{\std(\tau)}(\al)$ is well-defined.
\end{rmk}

\begin{prop}\label{prop:eps-tau-closed-immersion}
Suppose that $\tau$ is $(h+1)$-generic. The map $\varepsilon_\tau :Y_\Sym^{[0,h],\tau} \ra \catphiMsym$ is a closed immersion.
\end{prop}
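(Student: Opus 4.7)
The plan is to reduce the statement to the analogous fact for $\GL_4$ (and $\GL_1$). Under the $(h{+}1)$-genericity assumption on $\tau$ (which in particular implies $(h{+}1)$-genericity of $\std(\tau)$, and more than enough for the rank $1$ character $\simc(\tau)$), the map
\[
\varepsilon_{\std(\tau)}\times\varepsilon_{\simc(\tau)(h)}:\ Y_4^{[0,h],\std(\tau)}\times_{\cO} Y_1^{0,\simc(\tau)}\ \lra\ \catphiMfour\times_{\cO}\catphiMone
\]
is a closed immersion, by the $\GL_n$ analogue of the statement proved in \cite{LLHLM-2020-localmodelpreprint} (a refinement of loc.\ cit., Proposition 5.4.1, obtained from the explicit matrix description of $Y^{[0,h],\tau}_n$ once $\tau$ is generic enough relative to $h$). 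I will take this as a known ingredient.

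Second, I would consider the natural commutative square
\[
\begin{tikzcd}
Y_\Sym^{[0,h],\tau} \arrow[r,"\veps_\tau"] \arrow[d] & \catphiMsym \arrow[d] \\
Y_4^{[0,h],\std(\tau)}\times_{\cO} Y_1^{0,\simc(\tau)} \arrow[r] & \catphiMfour\times_{\cO}\catphiMone,
\end{tikzcd}
\]
in which the vertical arrows forget the symplectic datum $\al$ and are representable (by Proposition~\ref{prop:BKstack-alg} and Proposition~\ref{prop:et-phi-mod-stack-alg} respectively), and the bottom horizontal arrow is the closed immersion above. The key claim is that this square is $2$-cartesian. Granted this, closed immersions are stable under base change along the representable right vertical arrow, so $\veps_\tau$ is itself a closed immersion.

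Third, to prove the square is cartesian, suppose given a $p$-adically complete $\cO$-algebra $R$ together with a symplectic \'etale $\varphi$-module $(\cM,\cN,\al)\in\catphiMsym(R)$ and a Breuil--Kisin lift $(\fM,\fN)\in Y_4^{[0,h],\std(\tau)}(R)\times Y_1^{0,\simc(\tau)}(R)$ such that $\veps_{\std(\tau)}(\fM)=\cM$ and $\veps_{\simc(\tau)}(\fN(h))=\cN$. Because $\veps_{\std(\tau)}$ is a closed immersion, hence fully faithful, and because the formation of duals and tensor products commutes with $\veps$ up to the canonical twist (this is precisely the point of the $(h)$-twist in the definition of $\veps_\tau$, so that $\veps_{\std(\tau)}(\fM^\vee\otimes\fN)\simeq \cM^\vee\otimes\cN$), the isomorphism $\al$ lifts to a unique isomorphism of Breuil--Kisin modules $\al_{\mathrm{BK}}:\fM\risom \fM^\vee\otimes\fN$. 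The alternating relation $(\al_{\mathrm{BK}}^\vee\otimes\fN)^{-1}\circ\al_{\mathrm{BK}}=-1_{\fM}$ is then an equality of Breuil--Kisin endomorphisms whose image under the faithful functor $\veps_{\std(\tau)}$ is the analogous relation for $\al$, which holds by hypothesis; faithfulness therefore promotes it to the integral level. Thus $(\fM,\fN,\al_{\mathrm{BK}})$ is an object of $Y_\Sym^{[0,h],\tau}(R)$ mapping to the given data, and uniqueness is likewise built in, establishing $2$-cartesianness.

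The main obstacle is the cartesianness step, and specifically the ``dualities commute up to twist'' bookkeeping that justifies $\veps_{\std(\tau)}(\fM^\vee\otimes\fN)\simeq\veps_{\std(\tau)}(\fM)^\vee\otimes\veps_{\simc(\tau)}(\fN(h))$ functorially in the symplectic data; once this compatibility is made precise, the full-faithfulness of the $\GL_4$ functor does the rest.
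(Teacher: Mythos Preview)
Your proposal is correct and follows essentially the same route as the paper: both reduce to the $\GL_4\times\GL_1$ closed-immersion result of \cite[Proposition~5.4.3]{LLHLM-2020-localmodelpreprint}, use full faithfulness of $\veps_{\std(\tau)}$ (monomorphism $\Rightarrow$ fully faithful, Stacks~04ZZ) to show the square is $2$-cartesian, and conclude by base change. The paper's proof is terser---it simply asserts cartesianness ``as a result'' of full faithfulness---whereas you spell out the lifting of $\al$ and the alternating relation explicitly; the duality/twist compatibility you flag as the main obstacle is exactly the content of the Remark immediately preceding the proposition.
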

\begin{proof}
By \cite[Proposition 5.4.3]{LLHLM-2020-localmodelpreprint}, $\varepsilon_{\std(\tau)} : Y^{[0,h],\tau}_4 \ra \catphiMfour$ is a monomorphism. Then it is fully faithful by \cite[\href{https://stacks.math.columbia.edu/tag/04ZZ}{Tag 04ZZ}]{stacks-project}. As a result, the diagram
\[
\begin{tikzcd}
Y_\Sym^{[0,h],\tau} \arrow[rr, "\varepsilon_\tau"] \arrow[d] & &  \catphiMsym \arrow[d] \\
Y^{[0,h],\tau}_4 \times_{\spf\cO} Y^{0,\simc(\tau)}_1 \arrow[rr, "\varepsilon_{\std(\tau)} \times \veps_{\simc(\tau)}"] & & \catphiMfour\times_{\spf \cO} \catphiMone
\end{tikzcd}
\]
is cartesian, and the claim follows from \cite[Proposition 5.4.3]{LLHLM-2020-localmodelpreprint}.
\end{proof}

\begin{lemma}\label{lem:eps_tau-formula}
Let $(\fM,\fN,\al) \in Y^{[0,h],\tau}_\Sym(R)$ and $(\cM,\cN,\veps_\tau(\al)) = \veps_\tau((\fM,\fN,\al))$. If $(\be,\ga)$ is an eigenbasis of $(\fM,\fN,\al)$, there exists a basis $(\fb,\fc)$ of $(\cM,\cN,\veps_\tau(\al))$ determined by $(\be,\ga)$ such that $\phi_{\cM}\ix{j}$ with respect to $\fb$ is given by $A\ix{j}_{\fM,\be}s_{j}^\mo v^{\mu_j+\eta_j}$.
\end{lemma}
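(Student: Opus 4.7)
The plan is to reduce to the analogous formula for $\GL_4$ established in \cite[Proposition 5.4.1]{LLHLM-2020-localmodelpreprint} (and its analog for $\GL_1$), and then verify that the bases so constructed are compatible with the symplectic structure. First, from the eigenbasis $(\be,\ga)$ of $(\fM,\fN,\al)$, I would apply the $\GL_4$ version of the statement to the eigenbasis $\be$ of $\fM \in Y_4^{[0,h],\std(\tau)}(R)$ to produce a basis $\fb$ of $\cM = \veps_{\std(\tau)}(\fM)$ such that the matrix of $\phi_\cM\ix{j}$ with respect to $\fb$ equals $A\ix{j}_{\fM,\be}\, s_j^{-1} v^{\mu_j+\eta_j}$. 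This uses the explicit descent-data descent procedure: set $\fb\ix{j} := \be\ix{j} \cdot \phi(-(\bfa')\ix{j})(u')\, \phi(s'_{\rmor,j})$ (viewed after the base change that removes $u'$ in favor of $v$ and extracts the $\Delta$-invariants), and the claimed matrix formula is then a direct computation unwinding Definition of $A\ix{j}_{\fM,\be}$.

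Next, I would perform the analogous construction for $\ga$, but applied to the shifted Breuil--Kisin module $\fN(h)$, producing a basis $\fc$ of $\cN = \veps_{\simc(\tau)}(\fN(h))$. Since the underlying $\fS_{L',R}$-module of $\fN(h)$ agrees with that of $\fN$, the same formula $\fc\ix{j} := \ga\ix{j}\cdot \phi(-\simc((\bfa')\ix{j}))(u')\,\phi(\simc(s'_{\rmor,j}))$ gives the desired basis, and the $E(v)^h$ factor in the Frobenius of $\fN(h)$ is what makes the compatibility with dualization work below.

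The remaining step -- and the only nontrivial one -- is to verify that the pair $(\fb,\fc)$ so constructed is a \emph{basis} of $(\cM,\cN,\veps_\tau(\al))$ in the sense of Definition of basis, i.e.\ that $\veps_\tau(\al)(\fb\ix{j}) = ((\fb\ix{j})^\vee \otimes \fc\ix{j}) J$ for each $\jj$. By construction of $\veps_\tau$, the map $\veps_\tau(\al) = \veps_{\std(\tau)}(\al)$ is obtained by inverting $E(v)$ and taking $\Delta$-invariants of $\al$. The eigenbasis condition $\al(\be\ix{j'}) = ((\be\ix{j'})^\vee \otimes \ga\ix{j'}) J$ together with the matching transformation rules applied to both $\be\ix{j}$ and $\ga\ix{j}$ by the same element $\phi(-(\bfa')\ix{j})(u')\, \phi(s'_{\rmor,j})$ (using that the auxiliary elements in \eqref{eqn:dd-parameters} are chosen compatibly with $\std$ and $\simc$) shows that the identity $\veps_\tau(\al)(\fb\ix{j}) = ((\fb\ix{j})^\vee \otimes \fc\ix{j})J$ holds after descent. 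This is essentially a bookkeeping check that the $\GL_4$ twist and the $\GL_1$ twist combine to produce the matrix $J$ on the right-hand side.

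The main obstacle I anticipate is not conceptual but notational: keeping track of the various $(\bfa')\ix{j}, s'_{\rmor,j}$ and their images under $\std$ and $\simc$, and ensuring that the $E(v)^h$-shift in $\fN(h)$ exactly compensates the $E(v)^h$-factor appearing in the definition of the dual Breuil--Kisin module, so that $\veps_\tau(\al)$ lands in the correct target $\cM^\vee \otimes \cN$ rather than in a twist thereof. Once these identifications are done carefully, the formula for $\phi_\cM\ix{j}$ in terms of $A\ix{j}_{\fM,\be}$ is immediate from the $\GL_4$ case.
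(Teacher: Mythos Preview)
Your approach is correct and essentially the same as the paper's: both reduce to the $\GL_4$ result (the paper cites \cite[Proposition 5.4.2]{LLHLM-2020-localmodelpreprint}) to obtain $\fb$ with the stated Frobenius matrix. The paper takes a small shortcut by observing that once $\fb$ is fixed, the basis condition $\veps_\tau(\al)(\fb\ix{j}) = ((\fb\ix{j})^\vee \otimes \fc\ix{j}) J$ uniquely determines $\fc\ix{j}$ (since $\cN$ has rank one), so your explicit construction of $\fc$ from $\ga$ and the subsequent compatibility verification are unnecessary.
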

\begin{proof}
The existence of $\fb$ follows from \cite[Proposition 5.4.2]{LLHLM-2020-localmodelpreprint}. Then there is a unique $\fc$ such that $(\fb,\fc)$ is a basis of  $(\cM,\cN,\veps_\tau(\al))$.
\end{proof}

Let $a \le b$ be integers. There is a natural map
\begin{align*}
    \iota'_{\tilz}: \prod_\jj (L^{[a,b]}\GSp_4)_\F \tilz_j &\ra \catphiMsym  \\
    (A\ix{j}\tilz_j)_\jj & \mapsto (\cM,\cN,\al)
\end{align*}
where $\cM$ (resp.~$\cN$) is a free rank $4$ (resp.~1) \'etale $\varphi$-module such that $\phi_\cM\ix{j}$ (resp.~$\phi_\cN\ix{j}$) with respect to the standard basis is given by $A\ix{j}\tilz_j$ (resp.~$\simc(A\ix{j}\tilz_j)$) and $\al$ is given by the matrix $J$ with respect to the standard basis of $\cM$ and its dual basis. 
We also define a closed subscheme
\begin{align*}
    \til{\Fl}^{[a,b]}_{\cJ,\tilz} := \prod_\jj (\cI_{1,\F} \bss (L^{[a,b]}\GSp_4)_\F \tilz_j) \subset \til{\Fl}^\cJ.
\end{align*}
We denote by $T^{\vee,\cJ}_\F\mathrm{\dash conj}$ a $T^{\vee,\cJ}_\F$-action on  $ \til{\Fl}^{[a,b]}_{\cJ,\tilz}$ given by
\begin{align*}
    (D_j)_\jj\cdot (\cI_{1,\F} A\ix{j}\tilz_j)_\jj  = (D_j \cI_{1,\F} A\ix{j}\tilz_j D_{j-1}^\mo)_\jj
\end{align*}
for $(D_j)_\jj \in T^{\vee,\cJ}_\F$ and $(\cI_{1,\F} A\ix{j}\tilz_j)_\jj \in \til{\Fl}^{[a,b]}_{\cJ,\tilz}$.

\begin{prop}\label{prop:et-phi-mod-finite-height}
Suppose that $\tilz= \sig^\mo t_{\nu+\eta}$ where $\nu$ is $(b-a+1)$-deep in $\uC_0$. Then the morphism $\iota'_{\tilz}$ induces  a monomorphism
\begin{align*}
    \iota_{\tilz} : [\til{\Fl}^{[a,b]}_{\cJ,\tilz} / T_\F^{\vee,\cJ}\mathrm{\dash conj}] \mono \catphiMsym.
\end{align*}
\end{prop}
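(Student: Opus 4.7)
The plan is to reduce the symplectic statement to the corresponding statement for $\GL_4$ in \cite{LLHLM-2020-localmodelpreprint} via the forgetful functor $\catphiMsym \ra \catphiMfour$, and then extract the symplectic constraints from the compatibility with the alternating form $\al$.

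Being a monomorphism of stacks is equivalent to the induced functor on $R$-points being fully faithful for every $\F$-algebra $R$. By fpqc descent, I may represent two $R$-points of the source by matrices $(A_i\ix{j}\tilz_j)_\jj \in \prod_\jj (L^{[a,b]}\GSp_4)(R)\tilz_j$ for $i=1,2$. Any isomorphism $(f_\cM, f_\cN)\colon \iota'_\tilz(A_1\tilz) \risom \iota'_\tilz(A_2\tilz)$ in $\catphiMsym(R)$ consists of compatible isomorphisms $f_\cM$ of the underlying rank-$4$ \'etale $\varphi$-modules and $f_\cN$ of the rank-$1$ ones, intertwining the alternating forms $\al_i$ which, in the standard basis fixed by $\iota'_\tilz$, are both represented by $J$.

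Next, I would apply the $\GL_4$-analog of the present proposition to $f_\cM$; its genericity hypothesis holds because $\std(\nu)$ inherits the $(b-a+1)$-depth of $\nu$ in the dominant chamber. This yields (fpqc-locally) a factorization $F_j = I'_j D'_j$ of the matrix $F_j$ of $f_\cM$ on $\cM\ix{j}$ in standard bases, with $I'_j \in \cI_{1,4,\F}(R)$ and $D'_j \in T_{4,\F}^\vee(R)$, uniquely realizing $f_\cM$ by a $T_{4,\F}^{\vee,\cJ}\mathrm{\dash conj}$ isomorphism in the $\GL_4$ quotient stack. Similarly, $f_\cN$ is given by a scalar $c_j \in R^\times$ on each $\cN\ix{j}$. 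The symplectic compatibility $(f_\cM^\vee \otimes f_\cN) \circ \al_1 = \al_2 \circ f_\cM$ translates to the matrix identity
\[
F_j^\top J F_j = c_j J
\]
for every $\jj$, which precisely says $F_j \in \GSp_4(R\DP{v}^\pcp)$ with $\simc(F_j) = c_j$. Since $\cI_{1,\F}(R) = \cI_{1,4,\F}(R) \cap \GSp_4(R\DB{v})$ and $T^\vee = T_4^\vee \cap \GSp_4$, the $\GSp_4$-Iwahori decomposition $F_j = I''_j D''_j$ with $I''_j \in \cI_{1,\F}(R)$ and $D''_j \in T^\vee(R)$ is simultaneously a $\GL_4$-Iwahori factorization of $F_j$, and thus coincides with $(I'_j, D'_j)$ by uniqueness. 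Hence $I'_j \in \cI_{1,\F}(R)$ and $D'_j \in T^\vee(R)$, so $(f_\cM, f_\cN)$ is realized by the $T^{\vee,\cJ}\mathrm{\dash conj}$ isomorphism given by $(D'_j)$ in the source stack, with $c_j = \simc(D'_j)$ forced. This establishes full faithfulness, hence the monomorphism property.

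The main obstacle will be the matrix translation of the symplectic compatibility into the identity $F_j^\top J F_j = c_j J$, which requires a careful identification of $\cM_i^\vee \otimes \cN_i$ with $\cM_i$ via $\al_i$, compatible with the standard bases used throughout the construction of $\iota'_\tilz$. A secondary subtlety is matching the $\GSp_4$- and $\GL_4$-Iwahori decompositions of $F_j$, which rests on the uniqueness of the $\GL_4$-Iwahori factorization together with the compatibility of the embedding $\GSp_4 \subset \GL_4$ with the respective Iwahori and torus structures.
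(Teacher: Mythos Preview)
Your approach is essentially the paper's: reduce to $\GL_4$ and impose the symplectic constraint. Your matrix identity $F_j^\top J F_j = c_j J$ is exactly Lemma~\ref{lem:et-phi-Frob-GSp4}(2), and together with the $\GL_4$ Iwahori-level statement \cite[Lemma~5.4.4]{LLHLM-2020-localmodelpreprint} it yields $F_j \in \cI_\F$ (the $\GSp_4$-Iwahori), hence a monomorphism $[\prod_\jj (L^{[a,b]}\GSp_4)_\F\tilz_j/_\varphi \cI_\F] \mono \catphiMsym$. This is the first half of the paper's proof.

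The gap is your last step. You assert that the $T$-part $D'_j$ of the factorization $F_j = I'_j D'_j$ \emph{is} the $T^{\vee,\cJ}$-conj morphism realizing $(f_\cM,f_\cN)$, but this does not follow from uniqueness of the Iwahori decomposition. The $T^{\vee,\cJ}$-conjugation on $\til{\Fl}^{[a,b]}_{\cJ,\tilz}$ has $D_{j-1}^{-1}$ on the right, whereas Iwahori-$\varphi$-conjugation has $\varphi(F_{j-1})^{-1}$; these actions are genuinely different, so the passage from the Iwahori quotient to the $T$-quotient is not ``take the $T$-part of $F_j$.'' The paper closes this gap by invoking \cite[Lemma~5.2.2]{LLHLM-2020-localmodelpreprint}, which supplies the isomorphism $[\prod_\jj (L^{[a,b]}\GSp_4)_\F\tilz_j/_\varphi \cI_\F] \simeq [\til{\Fl}^{[a,b]}_{\cJ,\tilz}/T^{\vee,\cJ}_\F\text{-conj}]$ via a nontrivial change of coordinates that preserves $\GSp_4$-valuedness (cf.\ the remark in the proof of Proposition~\ref{prop:presenting-BK-stack-mod-p}). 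Citing that lemma, rather than arguing by uniqueness of factorization, is what is needed here. Note also that when you write ``apply the $\GL_4$-analog of the present proposition,'' what you actually use to obtain $F_j \in \cI_{4,\F}$ is the Iwahori-level Lemma~5.4.4, not the $T$-level analog; the $T$-level statement would only hand you an abstract element of $T_4^{\vee,\cJ}$, with no direct relation to $F_j$.
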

\begin{proof}
By Lemma \ref{lem:et-phi-Frob-GSp4} and \cite[Lemma 5.4.4]{LLHLM-2020-localmodelpreprint}, the morphism $\iota'_{\tilz}$ factors through a monomorphism
\begin{align*}
    [\prod_\jj (L^{[a,b]}\GSp_4)_\F \tilz_j/_{\varphi}\cI_\F] \mono \catphiMsym,
\end{align*}
and the source is isomorphic to $[\til{\Fl}^{[a,b]}_{\cJ,\tilz} / T_\F^{\vee,\cJ}\mathrm{\dash conj}]$ by Lemma 5.2.2 in \loccit. 
\end{proof}

By combining Proposition \ref{prop:presenting-BK-stack-mod-p}, Lemma \ref{lem:eps_tau-formula}, and Proposition \ref{prop:et-phi-mod-finite-height},  we obtain the following result.

\begin{prop}\label{prop:diagram-BK-and-et-phi-stacks}
Let $a\le b$, $h\ge 0$ be integers  and $\tilz = \sig^\mo t_{\nu+\eta}\in \tilW^{\vee,\cJ}$ such that $\nu$ is $(b-a+1)$-deep in $\uC_0$. Suppose that $(\Gr_{\cG,\F}^{[0,h],\cJ})\tilw^*(\tau) \subset \Fl^{[a,b]}_{\cJ,\tilz}$. Then we have a commutative diagram
\[
\begin{tikzcd}
 \til{M}_\cJ(\le\lam)_\F \arrow[r, hook] \arrow[d] & \til{\Gr}_{\cG,\F}^{\BR{0,h},\cJ} \arrow[r, hook, "r_{\tilw^*(\tau)}"] \arrow[d, "\pi_{(s,\mu)}"] & \til{\Fl}^{\BR{a,b}}_{\cJ,\tilz} \arrow[r] & \BR{\til{\Fl}^{\BR{a,b}}_{\cJ,\tilz} / T_\F^{\vee,\cJ}\dash\mathrm{conj}}\ \arrow[d, "\iota_{\tilz}"] \\
  Y_{\Sym,\F}^{\le\lam,\tau} \arrow[r, hook] & Y_{\Sym,\F}^{\BR{0,h},\tau} \arrow[rr, hook, "\veps_\tau"] & & \catphiMsym
\end{tikzcd}
\]
where all hooked arrows are closed immersions. Here, $r_{\tilw^*(\tau)}$ is the map given by the right multiplication by $\tilw^*(\tau)$.
\end{prop}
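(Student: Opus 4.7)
The plan is to verify the two assertions of the proposition separately: commutativity of the diagram, and the fact that each of the four hooked arrows is a closed immersion. The proposition is essentially a packaging of Propositions \ref{prop:presenting-BK-stack-mod-p}, \ref{prop:eps-tau-closed-immersion}, and \ref{prop:et-phi-mod-finite-height}, together with Lemma \ref{lem:eps_tau-formula}, into a single local model diagram; no genuinely new input should be needed beyond these. The left square will commute tautologously once the leftmost vertical arrow is defined as the restriction to the $\le\lam$-locus of the middle vertical arrow.

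For the main commutativity claim (the right-hand rectangle), I would chase an $\F$-algebra point $(A\ix{j})_\jj \in \til{\Gr}_{\cG,\F}^{[0,h],\cJ}$ through both paths. By Proposition \ref{prop:presenting-BK-stack-mod-p}, the inverse of $\pi_{(s,\mu)}$ produces a symplectic Breuil--Kisin module $(\fM,\fN,\al)$ admitting an eigenbasis $(\be,\ga)$ with $A_{\fM,\be}\ix{j}=A\ix{j}$; by Lemma \ref{lem:eps_tau-formula} the image $\veps_\tau(\fM,\fN,\al)$ carries a distinguished basis with respect to which $\phi_\cM\ix{j}$ is represented by $A\ix{j}s_j^{-1}v^{\mu_j+\eta_j}$. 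On the other hand, right translation $r_{\tilw^*(\tau)}$ sends $(A\ix{j})_\jj$ to $(A\ix{j}s_j^{-1}v^{\mu_j+\eta_j})_\jj$ since $(\tilw^*(\tau))_j=(t_{\mu_j+\eta_j}s_j)^{*}=s_j^{-1}t_{\mu_j+\eta_j}$, and the monomorphism $\iota_{\tilz}$ (which exists by Proposition \ref{prop:et-phi-mod-finite-height} thanks to the $(b-a+1)$-depth of $\nu$) assigns to this element precisely the \'etale $\varphi$-module with the same partial Frobenius. To descend the identification to the quotient stacks, one then checks that the $(s,\mu)$-twisted $T_\F^{\vee,\cJ}$-action on $\til{\Gr}_{\cG,\F}^{[0,h],\cJ}$ (encoding change of eigenbasis by Proposition \ref{prop:BK-change-of-basis}) is intertwined with the conjugation action on $\til{\Fl}^{[a,b]}_{\cJ,\tilz}$ (encoding change of basis for the \'etale $\varphi$-module by Lemma \ref{lem:et-phi-Frob-GSp4}) under right translation by $\tilw^*(\tau)$; this is a direct computation, with the Frobenius twist distinguishing the two actions being absorbed precisely upon multiplication by $s_j^{-1}v^{\mu_j+\eta_j}$.

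For the four closed immersion assertions: $\til{M}_\cJ(\le\lam)_\F\hookrightarrow\til{\Gr}_{\cG,\F}^{[0,h],\cJ}$ is the $T_\F^{\vee,\cJ}$-torsor pullback of the closed immersion $M_\cJ(\le\lam)_\F\hookrightarrow(\Gr_{\cG,\F}^{[0,h]})^\cJ$, since the Pappas--Zhu local model sits inside the affine Grassmannian as a Schubert variety; $r_{\tilw^*(\tau)}$ is the restriction of the automorphism of the affine flag variety given by right multiplication by an invertible loop, with image in $\til{\Fl}^{[a,b]}_{\cJ,\tilz}$ by hypothesis; the inclusion $Y_{\Sym,\F}^{\le\lam,\tau}\hookrightarrow Y_{\Sym,\F}^{[0,h],\tau}$ is the mod $\varpi$ reduction of the closed immersion defining $Y_\Sym^{\le\lam,\tau}$; and $\veps_\tau$ is a closed immersion by Proposition \ref{prop:eps-tau-closed-immersion}. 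The only point that requires genuine care in the whole argument is the torus-action bookkeeping indicated above, and the computation proceeds in direct parallel to the analogous $\GL_n$ argument in \cite{LLHLM-2020-localmodelpreprint}; no feature specific to $\GSp_4$ introduces additional difficulty.
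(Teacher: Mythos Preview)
Your proposal is correct and follows essentially the same approach as the paper, which simply states that the result is obtained ``by combining Proposition~\ref{prop:presenting-BK-stack-mod-p}, Lemma~\ref{lem:eps_tau-formula}, and Proposition~\ref{prop:et-phi-mod-finite-height}.'' Your version supplies more explicit detail (the torus-action bookkeeping and the verification of the closed immersion properties via Proposition~\ref{prop:eps-tau-closed-immersion}), but the underlying argument is identical.
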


\subsection{Local models for potentially crystalline stacks}\label{sub:local-models-pcrys}

We have a canonical morphism $\varepsilon_\infty: \cX_n \ra \catphiMn$ constructed in \cite[Proposition 3.7.2]{EGstack} which corresponds to restricting $G_K$-representations to $G_{K_\infty}$-representations when evaluated at complete Noetherian finite local $\cO$-algebras. We have an induced map $\varepsilon_\infty : \cX_\Sym \ra \catphiMsym$.

\begin{prop}\label{prop:eps-inf-mono}
Suppose $\tau$ is $(h+2)$-generic. Then the map $\varepsilon_\infty: \cX_\Sym^{[0,h],\tau} \ra \catphiMsym$ is a monomorphism.
\end{prop}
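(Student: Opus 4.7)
The plan is to reduce to the corresponding monomorphism statement for $\GL_n$, namely Proposition 5.4.7 of \cite{LLHLM-2020-localmodelpreprint}, applied separately to $\GL_4$ and $\GL_1$.  Since $(h+2)$-genericity of $\tau$ implies $(h+2)$-genericity of both $\std(\tau)$ and $\simc(\tau)$ (for $\GL_1$ genericity is automatic), that result gives us that the horizontal arrow in the square
\[
\begin{tikzcd}
\cX_\Sym^{[0,h],\tau} \arrow[r, "\varepsilon_\infty"] \arrow[d, "\std\times\simc"'] & \catphiMsym \arrow[d, "\std\times\simc"] \\
\cX_4^{[0,h],\std(\tau)} \times_{\spf \cO} \cX_1^{[0,h],\simc(\tau)} \arrow[r, "\varepsilon_\infty\times\varepsilon_\infty"'] & \catphiMfour \times_{\spf \cO} \catphiMone
\end{tikzcd}
\]
is a monomorphism.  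Since monomorphisms are stable under base change, it suffices to prove that the square above is $2$-Cartesian.

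To verify Cartesianness, I would test on $R$-points for a $p$-adically complete $\cO$-algebra $R$.  An object in the fibered product
\[ \catphiMsym \times_{\catphiMfour \times \catphiMone} \bigl(\cX_4^{[0,h],\std(\tau)} \times \cX_1^{[0,h],\simc(\tau)}\bigr)(R) \]
consists of a symplectic \'etale $\varphi$-module $(\cM,\cN,\al_\cM)$ together with a potentially crystalline lift $(\fM,\fN)$ of $(\cM,\cN)$ under $\varepsilon_\infty$.  The goal is to exhibit a unique alternating isomorphism $\al : \fM \simeq \fM^\vee \otimes \fN$ lifting $\al_\cM$.  The full faithfulness of $\varepsilon_\infty$ on the $\GL_4$ side (by the $\GL_4$ monomorphism property applied to the isomorphism functor) yields a bijection
\[ \Isom(\fM, \fM^\vee \otimes \fN) \simeq \Isom(\cM, \cM^\vee \otimes \cN), \]
which (once compatibility of duality, of the height twist $\fN \rightsquigarrow \fN(h)$, and of tensor products with $\varepsilon_\infty$ is verified) sends alternating isomorphisms to alternating isomorphisms, because the condition $(\al^\vee \otimes \fN)^{-1} \circ \al = -1_\fM$ is preserved by the monoidal functor $\varepsilon_\infty$.

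The principal technical obstacle will be rigorously establishing that $\varepsilon_\infty$ is compatible with the formation of duals and tensor products of Breuil--Kisin modules in a way that matches the alternating condition with the one on \'etale $\varphi$-modules.  This requires unpacking the explicit definition of $\varepsilon_\infty$ as the base change along $\fS_{L',R} \to \cO_{\cE,K}\hat\otimes R$, carefully accounting for the twist by $E(v)^h$ that enters the definition of $\fN^\vee$ on the Breuil--Kisin side but is absent for \'etale $\varphi$-modules.  Once this compatibility is established, Cartesianness of the square is a formal consequence of the $\GL_4 \times \GL_1$ version of the monomorphism, and the conclusion that $\varepsilon_\infty : \cX_\Sym^{[0,h],\tau} \to \catphiMsym$ is a monomorphism follows as a pullback.
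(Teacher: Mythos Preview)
Your strategy---reduce to the $\GL_4 \times \GL_1$ statement by showing the square is Cartesian and then invoke the known $\GL_n$ monomorphism---is exactly the paper's (which simply says ``using the argument given in the proof of Proposition~\ref{prop:eps-tau-closed-immersion}''); the relevant $\GL_n$ input is \cite[Proposition 7.2.11]{LLHLM-2020-localmodelpreprint} rather than 5.4.7.

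One correction worth flagging: your final paragraph conflates $\varepsilon_\infty$ with $\varepsilon_\tau$.  The map $\varepsilon_\infty$ goes from \'etale $(\varphi,\Gamma)$-modules to \'etale $\varphi$-modules (it corresponds to restriction from $G_K$ to $G_{K_\infty}$), not from Breuil--Kisin modules; there is no $\fS_{L',R}$ or $E(v)^h$-twist in play here, and the compatibility of $\varepsilon_\infty$ with duals and tensor products is immediate from its construction.  The obstacle you describe is the one for $\varepsilon_\tau$, already dealt with in Proposition~\ref{prop:eps-tau-closed-immersion}.  (Also, the $\GL_1$ factor should carry height $[0,2h]$ rather than $[0,h]$; cf.\ the proof of Proposition~\ref{prop:EG-stack=BK-stack}.)
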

\begin{proof}
This follows from \cite[Proposition 7.2.11]{LLHLM-2020-localmodelpreprint} using the argument given in the proof of Proposition \ref{prop:eps-tau-closed-immersion}
\end{proof}

We define $\cX_{\Sym}^{\le\lam,\tau}$ (resp.~ $\cX_{\Sym,\reg}^{\le\lam,\tau}$) to be the scheme-theoretic union of $\cX_{\Sym}^{\lam',\tau}$ for all dominant (resp.~regular dominant) cocharacters $\lam'\le \lam$. Similarly, we have $\cX_4^{\le\lam',\tau'}$.

Recall the $\cO$-flat closed substack $Y^{[0,h],\tau,\nba_\infty}_4$ (resp.~$Y^{\le\lam,\tau,\nba_\infty}_4$) of $Y^{[0,h],\tau}_4$ (resp.~$Y^{\le\lam,\tau}_4$) defined in \cite[\S7.2]{LLHLM-2020-localmodelpreprint}. Its key property is that if $\tau$ is $(h+2)$-generic, then $\cX^{[0,h],\tau}_4 \ra \catphiMfour$ factors through $Y^{[0,h],\tau,\nba_\infty}_4$ and induces isomorphisms $\cX^{[0,h],\tau}_4 \simeq Y^{[0,h],\tau,\nba_\infty}_4$ and  $\cX^{\le\lam,\tau}_4 \simeq Y^{\le\lam,\tau,\nba_\infty}_4$ (\cite[Prop 7.2.3]{LLHLM-2020-localmodelpreprint}).  We define $Y^{[0,h],\tau,\nba_\infty}_\Sym$ (resp.~$Y^{\le\lam,\tau,\nba_\infty}_\Sym$) to be the $\cO$-flat part of $Y^{[0,h],\tau}_\Sym \times_{Y^{[0,h],\tau}_4} Y^{[0,h],\tau,\nba_\infty}_4$ (resp.~$Y^{\le\lam,\tau}_\Sym \times_{Y^{\le\lam,\tau}_4} Y^{\le\lam,\tau,\nba_\infty}_4$). We have the following result.

\begin{prop}\label{prop:EG-stack=BK-stack}
Suppose that $\tau$ is $(h+2)$-generic. The map $\veps_\infty$ factors through $Y_\Sym^{[0,h],\tau,\nba_\infty}$ and induces isomorphisms $\cX_\Sym^{[0,h],\tau} \simeq Y_\Sym^{[0,h],\tau,\nba_\infty}$ and $\cX_\Sym^{\le\lam,\tau}\simeq Y_\Sym^{\le\lam,\tau,\nba_\infty}$.
\end{prop}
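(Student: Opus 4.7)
The plan is to reduce the statement to the corresponding result for $\GL_4$ and $\GL_1$ proved in \cite[Proposition 7.2.3]{LLHLM-2020-localmodelpreprint}, which provides isomorphisms $\cX_4^{[0,h],\tau} \risom Y_4^{[0,h],\tau,\nba_\infty}$ and $\cX_1^{[0,0],\simc(\tau)} \risom Y_1^{[0,0],\simc(\tau),\nba_\infty}$, each realized as identifications of scheme-theoretic images inside the relevant stacks of \'etale $\varphi$-modules via $\veps_\infty$ and $\veps_\tau$. Throughout, we use that $\veps_\infty$ on $\cX_\Sym$ and $\veps_\tau$ on $Y_\Sym^{[0,h],\tau}$ commute with the forgetful functors $\std$ and $\simc$, and that they are compatible with duals and tensor products up to the Frobenius twist $E(v)^h$ on the Breuil--Kisin side, which is precisely absorbed by using $\fN(h)$ in the definition of $\veps_\tau$ on the symplectic stack.

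For the factorization, take $(M,N,\al) \in \cX_\Sym^{[0,h],\tau}(A)$ for a suitable test ring $A$. By the $\GL_4$ and $\GL_1$ cases, there exist unique $\fM \in Y_4^{[0,h],\tau,\nba_\infty}(A)$ and $\fN \in Y_1^{[0,0],\simc(\tau),\nba_\infty}(A)$ with $\veps_{\std(\tau)}(\fM) \simeq \veps_\infty(M)$ and $\veps_{\simc(\tau)}(\fN(h)) \simeq \veps_\infty(N)$. The symplectic isomorphism $\veps_\infty(\al): \veps_\infty(M) \risom \veps_\infty(M)^\vee \otimes \veps_\infty(N)$ then transports, under these identifications and the compatibility of $\veps_\tau$ with duals and tensor products, to an isomorphism of the form $\veps_{\std(\tau)}(\fM) \risom \veps_{\std(\tau)}(\fM^\vee \otimes \fN)$ in $\catphiMfour$. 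Since $\veps_{\std(\tau)}$ is a closed immersion on $Y_4^{[0,h],\tau}$ by Proposition \ref{prop:eps-tau-closed-immersion} (applied to $\GL_4$), this lifts uniquely to an isomorphism $\al_0: \fM \risom \fM^\vee \otimes \fN$ of Breuil--Kisin modules. The alternating identity $(\al_0^\vee \otimes \fN)^\mo \circ \al_0 = -1_\fM$ is a polynomial relation in $\al_0$ which holds after applying the faithful functor $\veps_{\std(\tau)}$, hence holds for $\al_0$ itself. Thus $(\fM,\fN,\al_0) \in Y_\Sym^{[0,h],\tau,\nba_\infty}(A)$ maps to $\veps_\infty(M,N,\al)$ under $\veps_\tau$, and this construction is functorial, giving the required factorization.

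For the isomorphism, both $\cX_\Sym^{[0,h],\tau}$ and $Y_\Sym^{[0,h],\tau,\nba_\infty}$ embed into $\catphiMsym$ via $\veps_\infty$ and $\veps_\tau$ respectively (by Propositions \ref{prop:eps-tau-closed-immersion} and \ref{prop:eps-inf-mono}), so the map just constructed is a monomorphism; it suffices to check that the two scheme-theoretic images in $\catphiMsym$ coincide. An object $(\cM,\cN,\al) \in \catphiMsym$ lies in the image of $\cX_\Sym^{[0,h],\tau}$ exactly when the underlying pair $(\cM,\cN)$ lies in $\cX_4^{[0,h],\tau} \times \cX_1^{[0,0],\simc(\tau)}$, and in the image of $Y_\Sym^{[0,h],\tau,\nba_\infty}$ exactly when $(\cM,\cN)$ lies in $Y_4^{[0,h],\tau,\nba_\infty} \times Y_1^{[0,0],\simc(\tau),\nba_\infty}$; these conditions coincide by the $\GL_4$ and $\GL_1$ cases. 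Combined with the $\cO$-flatness built into both definitions, this yields the first isomorphism. The $\cX_\Sym^{\le\lam,\tau} \simeq Y_\Sym^{\le\lam,\tau,\nba_\infty}$ isomorphism follows by restriction to the closed substacks cut out by the Hodge type bound, which by definition are pulled back from the $\GL_4$ side, where they are already matched under the $\GL_4$ isomorphism.

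The main obstacle I anticipate is faithfully tracking the compatibility of duality with $\veps_\tau$: the latter does not strictly commute with $(-)^\vee$ but only up to a twist by $E(v)^h$, which is precisely the reason $\veps_\tau$ on the symplectic stack sends $\fN$ to $\fN(h)$ rather than to $\fN$ itself. Once this bookkeeping is set up carefully (so that the images $\veps_\infty(M)^\vee \otimes \veps_\infty(N)$ and $\veps_{\std(\tau)}(\fM^\vee \otimes \fN)$ are visibly identified as objects of $\catphiMfour$), the remainder of the argument is a formal consequence of the $\GL_4$ and $\GL_1$ statements together with the monomorphism properties of $\veps_\infty$ and $\veps_\tau$.
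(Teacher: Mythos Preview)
Your approach is essentially the same as the paper's: both reduce to the $\GL_4 \times \GL_1$ result of \cite[Proposition 7.2.3]{LLHLM-2020-localmodelpreprint} via the full faithfulness of $\veps_\infty$ and $\veps_\tau$, identifying each side with the fiber product of the corresponding $\GL_4 \times \GL_1$ stack with $\catphiMsym$ over $\catphiMfour \times \catphiMone$ (taking $\cO$-flat parts). The paper compresses your pointwise construction and image-comparison into the single fiber-product displayed formula, but the content is the same.

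One small correction in your indexing: the similitude factor $N$ lies in $\cX_1^{[0,2h],\simc(\tau)}$, not $\cX_1^{[0,0],\simc(\tau)}$, since if $M$ has Hodge type $\std(\lam)\in[0,h]^4$ then $N$ has Hodge type $\simc(\lam)=\lam_1+\lam_4\in[0,2h]$; correspondingly the paper pairs $Y_4^{[0,h],\tau,\nba_\infty}$ with $Y_1^{[0,2h],\simc(\tau)}$ (no $\nba_\infty$ decoration is needed in rank one). Your recovery of $\fN\in Y_1^{0,\simc(\tau)}$ from $N$ via ``untwist $\fN(h)$'' implicitly uses this: one first obtains $\fN'\in Y_1^{[0,2h],\simc(\tau)}$ from the $\GL_1$ result, and the existence of the symplectic $\al$ on the Breuil--Kisin side forces $\fN'$ to have height exactly $h$, so that $\fN:=\fN'(-h)$ makes sense.
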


\begin{proof}
Since $\veps_\infty$ and $\veps_{\tau}$ are fully faithful, we have
\begin{align*}
    \cX_\Sym^{[0,h],\tau} & \simeq \PR{(\cX_4^{[0,h],\tau} \times \cX_1^{[0,2h],\simc(\tau)}) \times_{\catphiMfour \times \catphiMone} \catphiMsym}_{\cO\mathrm{\dash flat}} \\ 
    Y_\Sym^{[0,h],\tau,\nba_\infty} & \simeq \PR{(Y_4^{[0,h],\tau,\nba_\infty}\times Y_1^{[0,2h],\simc(\tau)})\times_{\catphiMfour \times \catphiMone} \catphiMsym}_{\cO\mathrm{\dash flat}}
\end{align*}
where the subscript ${\cO\mathrm{\dash flat}}$ means taking the $\cO$-flat part. Then the claim follows from the corresponding result for $\GL_4 \times \GL_1$ in \cite[Prop 7.2.3]{LLHLM-2020-localmodelpreprint}.
\end{proof}

We assume that $\tau$ is $(h+2)$-generic until the end of this subsection. Using the preceding proposition, we consider $\cX_{\Sym}^{[0,h],\tau}$ as a closed substack of $Y_{\Sym}^{[0,h],\tau}$.

Before stating the main result of this subsection, we introduce some notations.  Recall that $Y_{\Sym}^{\le\lam,\tau}$ is the union of $Y_{\Sym}^{\le\lam,\tau}(\tilz)$ for $\tilz\in \Adm^\vee(\lam)$. We define
\begin{align*}
    Y_{\Sym}^{\le\lam,\tau,\nba_\infty}(\tilz)\subset Y_{\Sym}^{\le\lam,\tau,\nba_\infty},\ \  \cX_{\Sym}^{\lam,\tau}(\tilz)\subset \cX_{\Sym}^{\lam,\tau}, \ \  \cX_{\Sym,\reg}^{\le\lam,\tau}(\tilz)\subset \cX_{\Sym,\reg}^{\le\lam,\tau}
\end{align*}
to be the open substacks obtained by taking the intersection with $Y_{\Sym}^{\le\lam,\tau}(\tilz)$ inside $Y_{\Sym}^{\le\lam,\tau}$. We also define $T^{\vee,\cJ}_\cO$-torsors
\begin{align*}
    \til{U}(\tilz, \le \lam, \nba_{\tau,\infty}) \ra Y^{\le\lam,\tau,\nba_\infty}_\Sym(\tilz), \ \ 
    \til{\cX}_{\Sym}^{\lam,\tau}(\tilz) \ra \cX_{\Sym}^{\lam,\tau}(\tilz), \ \ 
    \til{\cX}_{\Sym,\reg}^{\lam,\tau}(\tilz)  \ra \cX_{\Sym,\reg}^{\lam,\tau}(\tilz)
\end{align*}
by taking pullback along the $T^{\vee,\cJ}_\cO$-torsor $\til{U}(\tilz,\le\lam)^\pcp \ra Y^{\le\lam,\tau}_\Sym(\tilz)$.

Let $\bfa\in \cO^3$. We define  $M_{\cJ,\reg}(\le\lam,\nba_{\bfa})$ to be the union of $M_\cJ(\lam',\nba_{\bfa})$ for all regular dominant  cocharacters $\lam'\le\lam$. Note that by Proposition \ref{prop:generic-fiber-M(=<lam)}, $M_{\cJ,\reg}(\le\lam,\nba_{\bfa})$ is a maximal $\cO$-flat closed subvariety equidimensional of dimension $1+4\#\cJ$ inside $M_\cJ^\nv(\le\lam,\nba_{\bfa})$ (which is in turn contained in $M_\cJ(\le\lam)$).  We define open subschemes
\begin{align*}
    U^\nv(\tilz,\le\lam,\nba_{\bfa}) \subset M_\cJ^\nv(\le\lam,\nba_{\bfa})\ \  \text{and} \ \ U_{\reg}(\tilz,\le\lam,\nba_{\bfa}) \subset M_{\cJ,\reg}(\le\lam,\nba_{\bfa})
\end{align*}
by taking the intersection with the open subscheme $U(\tilz,\le\lam)\subset M_\cJ(\le\lam)$. We also define $T^{\vee,\cJ}_\cO$-torsors
\begin{align*}
    \til{U}^\nv(\tilz,\le\lam,\nba_{\bfa}) \ra U^\nv(\tilz,\le\lam,\nba_{\bfa}) \ \ \text{and} \ \ 
    \til{U}_{\reg}(\tilz,\le\lam,\nba_{\bfa}) \ra U_{\reg}(\tilz,\le\lam,\nba_{\bfa}) 
\end{align*}
by taking pullback along the trivial $T^{\vee,\cJ}_\cO$-torsor $\til{U}(\tilz,\le\lam) \ra U(\tilz,\le\lam)$.

To the lowest alcove presentation $(s,\mu)$ of $\tau$, we associate tuples of integer $\bfa_\tau = (\bfa_{\tau,j'})_{j' \in \cJ'} \in (\cO^3)^{\cJ'}$ given by $\bfa_{\tau,j'} := (s'_{\rmor,j'})^\mo (\bfa')\ix{j'}/(1-p^{f'})$. We remark that if $\jj$  and  $j \equiv j' \mod f$, then $\bfa_{\tau,j'} \equiv s_j^\mo(\mu_j+\eta_j) \mod \varpi$ (see \cite[Lemma 7.3.1]{LLHLM-2020-localmodelpreprint}).

\begin{thm}\label{thm:pcrys-local-model}
\begin{enumerate}
    \item We have the following commutative diagram of $p$-adic formal algebraic stacks
\[
\begin{tikzcd}
\til{\cX}_{\Sym,\reg}^{\le\lam,\tau}(\tilz) \arrow[rrrr, bend left=15, dashed] \arrow[r, hook]& \til{U}(\til{z},\le\lam,\nba_{\tau,\infty}) \arrow[r, hook] \arrow[ld, "T^{\vee,\cJ}_\cO"'] \arrow[rr, bend left=15, dashed] & \til{U}(\til{z},\le\lam)^{\wedge_p} \arrow[ld, "T^{\vee,\cJ}_\cO"']  \arrow[rd, "T^{\vee,\cJ}_\cO"]  & \til{U}^\nv(\til{z}, \le\lam,\nba_{\bfa_\tau})^{\wedge_p} \arrow[rd, "T^{\vee,\cJ}_\cO"]  \arrow[l, hook']&  \til{U}_\reg(\tilz,\le\lam,\nba_{\bfa_\tau})^\pcp \arrow[l, hook']
\\
 Y_\Sym^{\le\lam,\tau,\nba_\infty}(\til{z}) \arrow[r, hook] & Y_\Sym^{\le\lam,\tau}(\til{z}) &  & U(\til{z},\le\lam)^{\wedge_p} &U^\nv(\til{z},\le\lam,\nba_{\bfa_\tau})^{\wedge_p} \arrow[l, hook'].
\end{tikzcd}
\]
where the two parallelograms are cartesian, hooked arrows are closed immersions, and diagonal arrows are $T^{\vee,\cJ}_\cO$-torsors. Moreover, there exists an integer $N_\mathrm{sing} = N(\CB{\lam_j}_{j\in \cJ})$ depending only on $\CB{\lam_j}_{j\in \cJ} \subset \Z^3$ such that if $\mu$ is $N_\mathrm{sing}$-deep in $\ud{C}_0$, then the dotted arrows exist and are closed immersions for all $\til{z}\in \Adm^\vee(\lam)$.

\item There exists a nonzero polynomial $P= P_{\CB{\lam_j}_{j\in\cJ}}(X_1,X_2,X_3)\in \Z[X_1,X_2, X_3]$ depending only on $\CB{\lam_j}_{j\in \cJ} \subset \Z^3$ and the ramification index $e$ of $\cO$ such that if $\mu$ is $P$-generic, then the longer dotted arrow is an isomorphism. In particular, we obtain the following local model diagram
\[
\begin{tikzcd}
 & \til{\cX}_{\Sym,\reg}^{\le\lam,\tau}(\tilz) \simeq \til{U}_\reg(\tilz,\le\lam,\nba_{\bfa_\tau})^\pcp \arrow[ld, "T^{\vee,\cJ}_\cO"']  \arrow[rd, "T^{\vee,\cJ}_\cO"]& \\
 {\cX}_{\Sym,\reg}^{\le\lam,\tau}(\tilz) & & {U}_\reg(\tilz,\le\lam,\nba_{\bfa_\tau})^\pcp
\end{tikzcd}.
\]
Furthermore, for any tame $\rhobar\in \cX_{\Sym}(\F)$ and regular dominant $\lam'\le\lam$, the versal rings of $\cX_{\Sym}^{\lam',\tau}$ at $\rhobar$ are domains (if nonzero).
\end{enumerate}
\end{thm}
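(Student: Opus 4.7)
The plan is to adapt the proof of \cite[Theorem 7.4.2]{LLHLM-2020-localmodelpreprint} to the symplectic setting, using the tools developed in \S\ref{sub:BK-stack}--\ref{sub:local-models-pcrys}. First I would assemble the right portion of the diagram directly from the BK local model diagram of Theorem \ref{thm:BK-local-model}: the rightmost $T^{\vee,\cJ}_\cO$-torsor square and the definition of $M^\nv(\le\lam,\nba_{\bfa_\tau})$ as $M(\le\lam)\cap \Gr_{\cG,\cO}^{\nba_{\bfa_\tau}}$ together give all the arrows to the right of $\til{U}(\tilz,\le\lam)^\pcp$, and the restriction to the regular Hodge types yields the inclusion from $\til{U}_\reg(\tilz,\le\lam,\nba_{\bfa_\tau})^\pcp$. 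On the left side, Proposition \ref{prop:EG-stack=BK-stack} identifies $\cX_\Sym^{\le\lam,\tau}$ with $Y_\Sym^{\le\lam,\tau,\nba_\infty}$, so that $\til{U}(\tilz,\le\lam,\nba_{\tau,\infty})$ is defined by pullback along the $T^{\vee,\cJ}_\cO$-torsor, making the left square cartesian.

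The crux is constructing the dotted arrow $\til{U}(\tilz,\le\lam,\nba_{\tau,\infty}) \hookrightarrow \til{U}^\nv(\tilz,\le\lam,\nba_{\bfa_\tau})^\pcp$, which requires comparing the \emph{analytic} monodromy condition $\nba_\infty$ from Breuil--Kisin theory with the \emph{algebraic} condition $\nba_{\bfa_\tau}$ defining the local model. My plan is to combine Lemma \ref{lem:BK-Frob-GSp4} (the partial Frobenius matrices $A^{(j)}_{\fM,\be}$ lie in $L\cG_\cO(R)$) with the $\GL_4$-comparison via $\std$. Applying the Frobenius-iteration argument in the proof of \cite[Proposition 7.3.4]{LLHLM-2020-localmodelpreprint} to $\std(\fM)$ and $\simc(\fN)$, and noting that the cartesian diagram relating $\cX_\Sym$ to $\cX_4\times\cX_1$ preserves the monodromy conditions, one shows that for $\mu$ deep enough (which defines the integer $N_\mathrm{sing}=N(\{\lam_j\})$) the two monodromy conditions become congruent modulo a sufficiently high power of $\varpi$ when evaluated on the Frobenius matrix. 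Elkik's approximation theorem then produces a map between $p$-adic completions; it is a closed immersion since both sides are $\cO$-flat closed subschemes of $\til{U}(\tilz,\le\lam)^\pcp$ sharing the same generic fiber (both cut out the potentially crystalline locus).

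For part (2), under $P_{\{\lam_j\}}$-genericity Lemma \ref{lem:spreading-normality} (applied to each factor of $\cJ$) ensures that the base change of the normalization of the universal local model is normal, and Theorem \ref{thm:unibranch-product} yields unibranchness at the torus-fixed points $\tilz_{\F,\bfa_\tau}$. Combined with the $\cO$-flatness of both sides of the longer dotted arrow and the agreement on generic fibers, a standard argument (an $\cO$-flat closed immersion into a normal scheme that is generically an isomorphism and surjective on the relevant completed stalks must be an isomorphism) upgrades the closed immersion to an isomorphism, giving the stated local model diagram. For the final claim on versal rings, a tame $\rhobar\in\cX_\Sym^{\lam',\tau}(\F)$ lies in $\cX_\Sym^{\le\lam,\tau}(\tilz)(\F)$ for the shape $\tilz\in \Adm^\vee(\lam')$ of the associated symplectic Breuil--Kisin module, and the local model diagram together with formal smoothness of the $T^{\vee,\cJ}_\cO$-torsors identifies the versal ring to $\cX_\Sym^{\lam',\tau}$ at $\rhobar$ with the completed local ring of $M_\cJ(\lam',\nba_{\bfa_\tau})$ at $\tilz_{\F,\bfa_\tau}$ up to formally smooth factors; Theorem \ref{thm:unibranch-product} then gives the domain property.

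The main obstacle is verifying the congruence between the analytic and algebraic monodromy conditions in the symplectic setting with the required explicit control on the modulus $\varpi^N$. Although the $\GL_4$ case of \cite{LLHLM-2020-localmodelpreprint} provides a template, one must ensure that the symplecticity conditions on $A^{(j)}_{\fM,\be}$ are preserved throughout Elkik's approximation; this is ultimately enabled by performing the approximation on the ambient $\GL_4\times\GL_1$ problem and using the cartesian description of $Y_\Sym^{[0,h],\tau,\nba_\infty}$ as the $\cO$-flat part of $(Y_4^{[0,h],\tau,\nba_\infty}\times Y_1^{[0,2h],\simc(\tau)})\times_{\Phi\text{-}\mathrm{Mod}}\catphiMsym$ used in Proposition \ref{prop:EG-stack=BK-stack}.
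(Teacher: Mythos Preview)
Your overall architecture is right, but there are two real problems.

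In part (1), your justification that the Elkik-produced map is a closed immersion is incorrect. The dotted arrow is \emph{not} the restriction of the inclusion into $\til{U}(\tilz,\le\lam)^\pcp$; Elkik only produces a map that agrees with the inclusion modulo $p$, so arguing via ``same generic fiber inside a common ambient space'' is invalid. The paper instead observes that the map is a closed immersion modulo $p$ (from the genuine inclusion $\til{U}(\tilz,\le\lam,\nba_{\tau,\infty})\times_\cO\spec\cO/p \subset \til{U}^\nv(\tilz,\le\lam,\nba_{\bfa_\tau})$ of \cite[Proposition 7.1.10]{LLHLM-2020-localmodelpreprint}), hence a closed immersion on $p$-adic completions. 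Also, the paper applies Elkik's approximation (via \cite[Proposition 3.3.9]{LLHLM-2020-localmodelpreprint}) directly to the symplectic universal local model---this only needs that the open charts are affine and smooth after inverting $v$, which is Proposition \ref{prop:nv-local-model-smooth}---rather than descending from $\GL_4\times\GL_1$ as you suggest.

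The serious gap is in part (2). Your ``standard argument'' presupposes that the closed immersion is surjective (or generically an isomorphism), but you have not shown that $\til{\cX}_{\Sym,\reg}^{\le\lam,\tau}(\tilz)$ has enough irreducible components. Unibranchness and normality bound the number of components of the \emph{target} from above by $\#\{\lam'\le\lam : \lam'\text{ regular dominant}, \tilz\in\Adm^\vee(\lam')\}$, but to match this you need a \emph{lower} bound on the number of components of the \emph{source}. The paper obtains this from Lemma \ref{lem:pcrys-stack-nonempty}, which asserts $\cX_\Sym^{\lam',\tau}(\tilz)\neq\emptyset$ for each such $\lam'$; its proof (deferred to \S\ref{sub:patching-argument}) requires global input---congruent patching functors and potentially diagonalizable lifts---and is not a formal consequence of the local model theory. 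Without this, you cannot conclude the closed immersion is an isomorphism, and the subsequent identification of the versal ring to $\cX_\Sym^{\lam',\tau}$ with the completed local ring of $M_\cJ(\lam',\nba_{\bfa_\tau})$ (needed for the domain claim) is also unjustified: the paper deduces it by the same component-counting combined with an induction on $\lam$.
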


For the proof of Theorem \ref{thm:pcrys-local-model}, we need the following lemma. Its proof is given in \S\ref{sub:patching-argument}.

\begin{lemma}\label{lem:pcrys-stack-nonempty}
Suppose that $\tau$ is $(h+4(h_\eta+1))$-generic. Then $\cX_{\Sym}^{\lam,\tau}(\tilz) \neq \emptyset$ if and only if $\tilz \in \Adm^\vee(\lam)$. 
\end{lemma}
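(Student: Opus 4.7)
The plan splits into two implications. The ``only if'' direction is immediate from the chain of inclusions
\[
\cX_\Sym^{\lam,\tau}(\tilz) \subset \cX_\Sym^{\le\lam,\tau}(\tilz) \simeq Y_\Sym^{\le\lam,\tau,\nba_\infty}(\tilz) \subset Y_\Sym^{\le\lam,\tau}(\tilz),
\]
where the isomorphism is Proposition \ref{prop:EG-stack=BK-stack} and compatibility of the $\tilz$-gauge condition with $\veps_\infty$: a non-empty $\cX_\Sym^{\lam,\tau}(\tilz)$ forces $Y_\Sym^{\le\lam,\tau}(\tilz)\neq \emptyset$, whence $\tilz \in \Adm^\vee(\lam)$ by Corollary \ref{cor:BK-stack-nonempty}.

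For the converse, given $\tilz \in \Adm^\vee(\lam)$, the plan is to produce a finite-flat $\cO'$-point of $\cX_\Sym^{\lam,\tau}(\tilz)$ over some finite extension $\cO'/\cO$ by exhibiting a tame $\rhobar: G_K \to \GSp_4(\F)$ together with a potentially crystalline lift $\rho: G_K \to \GSp_4(\cO')$ of Hodge type $\lam$ and inertial type $\tau$ whose mod-$p$ reduction lies in $\cX_\Sym^{\lam,\tau}(\tilz)$. First I use the bijection $\Adm^\vee(\lam) \simeq \AP(\phi^\mo(\lam))$ from \eqref{eqn:AP-AdmReg-bij} to write $\tilz$ as the image of an admissible pair $(\tilw_1,\tilw_2)$. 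Proposition \ref{prop:JH-factor-lam-tau} then identifies a Jordan--H\"older factor $\sigma := F_{(\tilw_1,\tilw(\sigma(\tau))\tilw_2^\mo(0))}$ of $\ov{\sigma(\tau)} \otimes W(\phi^\mo(\lam)-\eta)$, and I choose a tame $\rhobar$ whose lowest alcove presentation is compatible with that of $\tau$ and satisfies $\sigma \in W^?(\rhobar)$, using Proposition \ref{prop:JH-factor-W?} and the $(h+16)$-genericity hypothesis.

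The non-emptiness of $\cX_\Sym^{\lam,\tau}(\tilz)$ is then read off from the congruent family of fixed-similitude patching functors to be constructed in \S\ref{sub:patching-argument}: globalize $\rhobar$ to an automorphic $\rbar: G_F \to \GSp_4(\F)$ satisfying the Taylor--Wiles conditions (Definition \ref{def:odd-TW-conditions}), and apply the patching functor $M_\infty$ to the locally algebraic representation $\sigma(\lam,\tau)$. A standard commutative algebra argument shows that the support of $M_\infty(\ov{\sigma(\lam,\tau)})$ is a union of irreducible components of $\spec R_\rhobar^{\lam,\tau}/\varpi$, and under the matching of irreducible components with Serre weights in Theorem \ref{thm:nv-irred-comp-SW-bij}, the component labelled by the chosen $\sigma$ meets the stratum associated to $\tilz$, forcing $\cX_\Sym^{\lam,\tau}(\tilz)$ to admit a finite-flat point.

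The principal obstacle is establishing the non-vanishing of $M_\infty(\sigma)$ for our selected $\sigma$. Non-vanishing for a single ``obvious weight'' in $W_{\obv}(\rhobar)$ follows from classical local-global compatibility applied to a well-chosen automorphic $\rbar$ (via the constructions of \cite{Mok-Comp-2014-MR3200667,BCGP-ab_surf-2018abelian}); propagating it to an arbitrary $\sigma \in W^?(\rhobar)\cap \JH(\ov{\sigma(\lam,\tau)})$ requires both the weight elimination theorem (Theorem \ref{thm:weight-elim}) and the congruences among fixed-similitude patching functors of distinct similitude characters. The ``$+16$'' in the genericity hypothesis is calibrated precisely so that Proposition \ref{prop:JH-factor-lam-tau} (demanding $\max\{6,h_\lam+3\}$-genericity for the Jordan--H\"older parametrization) and the weight elimination step (demanding genericity of order roughly $12$) can be invoked simultaneously, leaving some slack for the globalization to satisfy the Taylor--Wiles hypotheses.
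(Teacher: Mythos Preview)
Your ``only if'' direction matches the paper's. The ``if'' direction, however, has two genuine gaps.

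First, the bijection \eqref{eqn:AP-AdmReg-bij} identifies $\AP(\phi^\mo(\lam))$ with $\Adm^\vee_\reg(\lam)$, not with all of $\Adm^\vee(\lam)$. So your very first step---writing an arbitrary $\tilz\in\Adm^\vee(\lam)$ as coming from an admissible pair---fails for non-regular $\tilz$.

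Second, and more seriously, the step ``propagating non-vanishing from a single obvious weight to an arbitrary $\sigma\in W^?(\rhobar)\cap\JH(\ov{\sigma(\lam,\tau)})$'' is \emph{not} a consequence of weight elimination plus congruences. Weight elimination (Theorem \ref{thm:weight-elim}) gives the inclusion $W(\rbar)\subset W^?(\rbar^\ss_p)$; what you need here is the reverse inclusion, i.e.\ the modularity direction of the Serre weight conjecture. That is Theorem \ref{thm:SWC}, proven much later, and its proof (via Theorem \ref{thm:pcrys-local-model}) \emph{uses} this very Lemma. Your argument is circular.

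The paper avoids both problems by a different choice of $\rhobar$: rather than selecting $\rhobar$ merely so that the targeted Serre weight lies in $W^?(\rhobar)$, it picks a tame $\rhobar$ with a $16$-generic lowest alcove presentation satisfying $\tilw(\rhobar,\tau)^*=\tilz$. Proposition \ref{prop:pcrys-nonzero-iff} then gives $R_{\rhobar}^{\lam,\tau}\neq 0$: its proof needs only that all \emph{obvious} weights are modular (obtained from Theorem \ref{thm:longest-shape-def-ring}, potential diagonalizability, and weight elimination) together with the combinatorial fact $\JH(\ov{\sigma}(\lam,\tau))\cap W_{\obv}(\rhobar)\neq\emptyset$ from \cite[Proposition 2.6.6]{LLHLM-2020-localmodelpreprint}. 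Finally, because the semisimple Breuil--Kisin module attached to $\rhobar$ has shape exactly $\tilz$, the point $\rhobar$ lands in $\cX_\Sym^{\lam,\tau}(\tilz)$ automatically---no appeal to the matching of irreducible components with Serre weights is needed. This makes the argument work for all $\tilz\in\Adm^\vee(\lam)$ and avoids circularity.
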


\begin{proof}[Proof of Theorem \ref{thm:pcrys-local-model}]
The proof is essentially identical to that of \cite[Theorem 7.3.2]{LLHLM-2020-localmodelpreprint}. For item (1), note that Proposition 3.3.9 in \loccit, which uses Elkik's approximation theorem, easily generalizes to our setup; its proof only uses the fact that open charts of universal local models are affine and smooth after inverting $v$ which follows from Proposition \ref{prop:nv-local-model-smooth} in our case. For item (2), we use Theorem \ref{thm:unibranch-product} and Lemma \ref{lem:pcrys-stack-nonempty} instead of Theorem 3.7.1 and Lemma 7.3.5 in \loccit. 
\end{proof}

\subsection{Potentially crystalline stacks modulo $p$}\label{sub:pcrys-modp}
Let $\lam\in X_*(T^\vee)^\cJ$ be a regular dominant cocharacter such that $\std(\lam)\subset([0,h]^4)^\cJ$. Let $\tau$ be a tame inertial type with $(h+2)$-generic lowest alcove presentation $(s,\mu)$. 
Recall that both $\cX_{\Sym,\red}$ and $\cX_{\Sym,\F}^{\lam,\tau}$ are equidimensional algebraic stacks over $\F$ of dimension $4f$. Thus $\cX_{\Sym,\F}^{\lam,\tau}$ is topologically a union of irreducible components of $\cX_{\Sym,\red}$, which are labeled by Serre weights. 

Suppose that $\cC_\sig \subset \cX_{\Sym,\F}^{\le\lam,\tau}$. We define algebraic stacks $\til{\cC}_\sig$  and $\til{\cX}_{\Sym,\F}^{\le\lam,\tau}$ by the following cartesian diagram
\begin{equation}\label{eqn:diagram-EG-BK-stacks}
    \begin{tikzcd}
\til{\cC}_\sig \arrow[r, hook]  \arrow[d] & \til{\cX}_{\Sym,\F}^{\le\lam,\tau} \arrow[r, hook] \arrow[d] & \til{M}_\cJ(\le\lam)_\F  \arrow[d, "\pi_{\PR{s,\mu}}"] \\
\cC_\sig \arrow[r, hook] & {\cX}_{\Sym,\F}^{\le\lam,\tau} \arrow[r, hook] & Y_{\Sym,\F}^{\le\lam,\tau}.
\end{tikzcd}
\end{equation}
If $\mu$ is $(2h-2)$-deep in $\uC_0$, then the closed immersion $\til{\cX}_{\Sym,\F}^{\le\lam,\tau} \mono \til{M}_\cJ(\le\lam)_\F$ factors through $\til{M}_\cJ^\nv (\le\lam,\nba_{\bfa_\tau})_\F$ by \cite[Proposition 7.1.10]{LLHLM-2020-localmodelpreprint}. 
Recall that by Theorem \ref{thm:nv-irred-comp-SW-bij}, $7f$-dimensional irreducible components of $\til{M}_\cJ^\nv (\le\lam,\nba_{\bfa_\tau})_\F$ are exactly $\til{C}_\sigma^\zeta \tilw(\tau)^*$ for all $\sigma \in \JH(W(\phi^\mo(\lam)-\eta)\otimes \ov{\sig}(\tau))$.

The following theorem describes the underlying reduced substack of $\cX_{\Sym,\F}^{\le\lam,\tau}$.

\begin{thm}\label{thm:naive-BM}
Let $\lam,\tau$ be as above. We assume that $\mu$ is $\max\{2h_\eta,(h+4(h_\eta+1))\}$-deep in $\uC_0$. Then we have
\begin{align*}
    \PR{\cX_{\Sym,\reg}^{\le\lam,\tau}}_\red = \PR{\cX_{\Sym}^{\lam,\tau}}_\red = \bigcup_{\sig \in \JH(W(\phi^\mo(\lam)-\eta)
    \otimes \ov{\sig}(\tau))} \cC_\sig.
\end{align*}
\end{thm}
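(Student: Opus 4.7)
The plan is to prove the second equality directly; the first then follows. Indeed, $\cX_{\Sym,\reg}^{\le\lam,\tau}$ is by definition the union of $\cX_{\Sym}^{\lam',\tau}$ over regular dominant $\lam' \le \lam$, and the monotonicity $\JH(W(\phi^\mo(\lam')-\eta)\otimes\ov{\sig}(\tau))\subseteq \JH(W(\phi^\mo(\lam)-\eta)\otimes\ov{\sig}(\tau))$ coming from Proposition \ref{prop:JH-factor-lam-tau} and the ordering on admissible pairs lets the second equality, applied at each such $\lam'$, collapse the resulting union of Serre-weight sets back to the one attached to $\lam$ itself. By Proposition \ref{prop:crys-ss-stacks} and the regularity of $\lam$ (so $d_\lam = 4f$), both sides of the second equality are purely $4f$-dimensional, hence by Theorem \ref{thm:irred-comp-pgma-stack} both are unions of components $\cC_\sig$; the remaining task is to identify which $\sig$ occur.

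For the upper bound, the $(h+16)$-deep hypothesis on $\mu$ ensures via \cite[Proposition 7.1.10]{LLHLM-2020-localmodelpreprint} that the horizontal closed immersion $\til{\cX}_{\Sym,\F}^{\le\lam,\tau} \hookrightarrow \til{M}_\cJ(\le\lam)_\F$ of diagram \eqref{eqn:diagram-EG-BK-stacks} factors through $\til{M}_\cJ^\nv(\le\lam,\nba_{\bfa_\tau})_\F$, whose $7f$-dimensional irreducible components, after right translation by $\tilw(\tau)^*$, are exactly the $\til{C}_\sig^\zeta$ for $\sig \in \JH(W(\phi^\mo(\lam)-\eta)\otimes\ov{\sig}(\tau))$ by Theorem \ref{thm:nv-irred-comp-SW-bij}. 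Pulling back along the closed immersion, the irreducible components $\cC_\sig$ appearing in $(\cX_{\Sym,\reg}^{\le\lam,\tau})_\red$ can only be indexed by such $\sig$.

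For the lower bound, Lemma \ref{lem:pcrys-stack-nonempty} ensures $\cX_{\Sym}^{\lam,\tau}(\tilz) \ne \emptyset$ for every $\tilz \in \Adm^\vee(\lam)$, and Theorem \ref{thm:pcrys-local-model}(2) provides the local-model isomorphism $\til{\cX}_{\Sym,\reg}^{\le\lam,\tau}(\tilz) \simeq \til{U}_\reg(\tilz,\le\lam,\nba_{\bfa_\tau})^\pcp$. For each $\sig \in \JH(W(\phi^\mo(\lam)-\eta)\otimes\ov{\sig}(\tau))$, the admissible pair $(\tilw_1,\tilw_2)$ attached to $\sig$ via Proposition \ref{prop:JH-factor-lam-tau} yields $\tilz := (\tilw_2^\mo w_0 \tilw_1)^* \in \Adm^\vee(\lam)$, and the top-dimensional irreducible component of the local model whose translate is $\til{C}_\sig^\zeta$ pulls back to a top-dimensional irreducible component of $\cX_{\Sym}^{\lam,\tau}$. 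By dimension and Theorem \ref{thm:irred-comp-pgma-stack}, this component equals $\cC_{\sig'}$ for a unique Serre weight $\sig'$, and the final step is to check $\sig' = \sig$.

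The main obstacle is exactly this matching of Serre-weight labels: the component $\cC_\sig$ is defined intrinsically via the dense locus of $\rhobar$ that are maximally non-split of niveau $1$ and of weight $\sig$ (Definition \ref{def:rhobar-max-nonsplit-weight-sig}), whereas the label $\sig$ on $C_\sig^\zeta$ arises from the combinatorics of admissible pairs. I would verify the match by selecting a $\uT^\vee$-fixed $\F$-point on the component in question, whose image under the local-model isomorphism corresponds to a semisimple Breuil--Kisin module and hence a tame $\rhobar$; deforming along the Schubert cell via the explicit affine-space description of Theorem \ref{thm:schubert-cell-with-monodromy} produces a one-parameter family of symplectic triples $(\fM,\fN,\al)$ whose associated $G_{K_\infty}$-representations, obtained through Proposition \ref{prop:et-phi-mod-finite-height} and lifted back to $G_K$ via Proposition \ref{prop:eps-inf-mono}, can be checked to be maximally non-split of niveau $1$ and of weight $\sig$. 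This is the $\GSp_4$-analogue of the matching argument in \cite[\S7.4]{LLHLM-2020-localmodelpreprint} for $\GL_n$, and should transfer through the map $\cT$ of \S\ref{sub:prelim} together with Corollary \ref{LLC-type_transfer-compatibility}.
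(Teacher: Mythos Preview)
Your upper bound is fine and matches the paper. The lower bound, however, has a genuine gap: you invoke Theorem~\ref{thm:pcrys-local-model}(2), but that result requires $\mu$ to be $P_{\{\lam_j\},e}$-generic for a polynomial $P$ coming from the unibranch theorem (Theorem~\ref{thm:unibranch-product}), and this is in general strictly stronger than the $(h+16)$-deep hypothesis of Theorem~\ref{thm:naive-BM}. So as stated your argument only proves the theorem under an uncontrolled additional genericity assumption.

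The paper instead obtains the lower bound from Lemma~\ref{lem:lowerbound-mod-p-crys-stack}, whose proof (deferred to \S\ref{sub:patching-argument}) is entirely different: for each $\sig \in \JH(W(\phi^\mo(\lam)-\eta)\otimes\ov{\sig}(\tau))$ one takes a maximally nonsplit $\rhobar$ of niveau~1 and weight~$\sig$, produces via the patching functors of Theorem~\ref{thm:existence-patching-functor}(2) a potentially crystalline lift of type $(\lam+\eta,\tau)$, and concludes $\rhobar \in \cX_{\Sym,\F}^{\lam,\tau}(\F)$. Since such $\rhobar$ are dense in $\cC_\sig$, this gives $\cC_\sig \subset \cX_{\Sym,\F}^{\lam,\tau}$ directly, with no label-matching needed and only the $(h+16)$-deep hypothesis. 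The proof of Theorem~\ref{thm:naive-BM} is then a pure counting argument: at least $|\JH(\cdots)|$ components from the lemma, at most $|\JH(\cdots)|$ from the embedding into the naive local model.

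Your proposed label-matching via torus-fixed points and explicit Schubert-cell families is essentially the content of the proof of Theorem~\ref{thm:mod-p-local-model}, which the paper treats \emph{after} Theorem~\ref{thm:naive-BM} and whose final assertion in fact cites Theorem~\ref{thm:naive-BM} as input. (The reference to Corollary~\ref{LLC-type_transfer-compatibility} is misplaced: that result concerns the inertial local Langlands for smooth representations and plays no role here; the actual matching uses \cite[Lemma~7.4.6]{LLHLM-2020-localmodelpreprint}.) So you have inverted the logical order, and in doing so imported the stronger genericity requirement unnecessarily.
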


We need the following lemma, whose proof is given in \S\ref{sub:patching-argument}.

\begin{lemma}\label{lem:lowerbound-mod-p-crys-stack}
Suppose that $\mu$ is $(h+4(h_\eta+1))$-deep in $\uC_0$. Then for each $\sig \in \JH(W(\phi^\mo(\lam)-\eta)
    \otimes \ov{\sig}(\tau))$, we have $\cC_\sig \subset \cX_{\Sym,\F}^{\lam,\tau}\subset \cX_{\Sym,\F}^{\le \lam,\tau}$.
\end{lemma}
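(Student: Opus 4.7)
The plan is to reduce the lemma to producing explicit potentially crystalline lifts for a Zariski-dense locus of $\cC_\sig$, and then to apply the local patching functor constructed in \S\ref{sub:patching-argument}. The inclusion $\cX_{\Sym,\F}^{\lam,\tau}\subset \cX_{\Sym,\F}^{\le\lam,\tau}$ is immediate from the definition of the right-hand side as a union over dominant cocharacters $\lam'\le\lam$. For the first inclusion $\cC_\sig\subset \cX_{\Sym,\F}^{\lam,\tau}$, recall that by Proposition \ref{prop:crys-ss-stacks} the substack $\cX_{\Sym,\F}^{\lam,\tau}$ is closed in $\cX_{\Sym,\F}$, and by Theorem \ref{thm:irred-comp-pgma-stack} the locus of $\rhobar:G_K\ra\GSp_4(\Fpbar)$ maximally non-split of niveau $1$ and of weight $\sig$ is dense in $\cC_\sig$. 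It therefore suffices to exhibit, for every such $\rhobar$, a lift $\rho:G_K\ra\GSp_4(\Zpbar)$ that is potentially crystalline of Hodge type $\lam$ and inertial type $\tau$.

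To produce such a lift I will apply the local patching functor $M_\infty$ of \S\ref{sub:patching-argument} to the locally algebraic representation $\sig(\lam-\eta,\tau)=\sig(\tau)\otimes_\cO V(\phi^\mo(\lam)-\eta)$ of $\GSp_4(\cO_K)$. By the formalism of patching, $M_\infty(\sig(\lam-\eta,\tau))$ is a faithful module over (a modification of) the framed potentially crystalline deformation ring $R_\rhobar^{\lam,\tau}$ whenever it is non-zero; $\cO$-flatness of $R_\rhobar^{\lam,\tau}$ then yields a $\Zpbar$-point of $\spec R_\rhobar^{\lam,\tau}$, which is the desired lift. Since $M_\infty$ is exact on finite $\cO[\GSp_4(\cO_K)]$-modules of the appropriate fixed central character, and since $\sig\in \JH(\ov{\sig(\lam-\eta,\tau)})=\JH(W(\phi^\mo(\lam)-\eta)\otimes\ov{\sig}(\tau))$ by hypothesis, a d\'evissage reduces the non-vanishing of $M_\infty(\sig(\lam-\eta,\tau))$ to the non-vanishing of $M_\infty(\sig)$, i.e.~to the modularity of the Serre weight $\sig$ for the globalization of $\rhobar$ that underlies $M_\infty$.

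The main obstacle is thus the non-vanishing $M_\infty(\sig)\neq 0$ for $\rhobar$ maximally non-split of niveau $1$ and of weight $\sig$. The plan is to construct an explicit crystalline ordinary lift of such $\rhobar$ valued in the upper-triangular Borel subgroup of $\GSp_4$, exploiting the filtration structure in \eqref{eqn:max-non-split-rhobar} and the existence of crystalline characters with prescribed Hodge--Tate weights; such a lift is potentially diagonalizable by Example \ref{ex:pdreps} and can be globalized, along the lines of Theorem \ref{thm:crys-lift} and the Khare--Wintenberger method, to an automorphic $r:G_F\ra\GSp_4(\cO)$ with the prescribed local behavior at a chosen place $\til{v}\mid p$. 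The local ordinary structure of this lift realizes $\sig$ as an ``obvious'' modular weight of $r$ at $\til{v}$, giving $M_\infty(\sig)\neq 0$ via the construction of $M_\infty$ from the global automorphic input. The genericity hypothesis that $\mu$ is $(h+16)$-deep in $\uC_0$ is exactly what is needed to have the inertial local Langlands correspondence of Theorem \ref{thm:inertial-LLC}, the Jordan--H\"older enumeration of Proposition \ref{prop:JH-factor-lam-tau}, and the Taylor--Wiles hypotheses on the globalization available simultaneously, and matches the genericity slack used in the construction of the congruent family of fixed similitude patching functors in \S\ref{sec:global}.
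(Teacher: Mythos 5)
Your overall architecture is the same as the paper's: both arguments reduce, via the density of maximally non-split niveau-$1$ points of weight $\sig$ in $\cC_\sig$ and the closedness of $\cX_{\Sym,\F}^{\lam,\tau}$, to showing that such a $\rhobar$ lies in $\cX_{\Sym,\F}^{\lam,\tau}(\F)$, and both do this by exactness of a patching functor once $M_\infty(\sig)\neq 0$ is known. The genuine gap is in your key step establishing $M_\infty(\sig)\neq 0$. An automorphic (or potentially diagonalizable) lift of $\rhobar$ of some auxiliary type only yields nonvanishing of $M_\infty$ on a lattice in that type, hence, by exactness, modularity of \emph{some} Jordan--H\"older factor of its reduction; it does not by itself single out the factor $\sig$. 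Your appeal to ``the local ordinary structure realizes $\sig$ as an obvious modular weight'' is a Gee--Geraghty-style ordinary statement that is not established in this paper (and which the paper explicitly treats as an independent approach); making it rigorous would require either Hida-theoretic input or precisely the weight-elimination argument you omit. The paper instead chooses the auxiliary tame type $\tau(e,\kappa)$ with $\tilw(\rhobar^\ss,\tau(e,\kappa))=t_\eta$, so that $W^?(\rhobar^\ss)\cap \JH(\osig(\tau(e,\kappa)))=\{\sig\}$ by Lemma \ref{lem:JH-factor-longest-shape}, produces a potentially diagonalizable lift of exactly this type via Lemma \ref{lem:max-nonsplit-rhobar-lifting} (so that potential diagonalizability of the patching functors in Theorem \ref{thm:existence-patching-functor}(2) forces $M_\infty^{\psip}(\sig^\circ(\tau(e,\kappa)))\neq 0$), and then uses the weight elimination Theorem \ref{thm:weight-elim} to conclude $M_\infty^{\psip}(\osig(\tau(e,\kappa)))/\varpi=M_\infty^{\psip}(\sig)$; this is also where the depth hypotheses (16-generic $\rhobar$, $7$-deep Jordan--H\"older factors) are actually consumed.

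A second, related gap is the fixed similitude constraint. The patching functors of this paper fix the similitude character, and the character $\psip$ compatible with the auxiliary type $(\eta,\tau(e,\kappa))$ is in general different from the character $\psi_p'$ compatible with $(\lam,\tau)$; a single functor $M_\infty$ cannot see both types. Transferring $M_\infty^{\psip}(\sig)\neq 0$ to $M_\infty^{\psi_p'}(\sig)\neq 0$, and hence to $M_\infty^{\psi_p'}(\osig(\lam,\tau))\neq 0$ with support in $R_\infty^{\lam,\tau,\psi_p'}$, is exactly what the \emph{congruent family} of fixed similitude patching functors (Definition \ref{def:congruent-patching-functors}, Theorem \ref{thm:existence-patching-functor}(2)) provides, since the two functors agree on $\varpi$-torsion objects; your proposal never invokes this congruence, so as written the final deduction that $\rhobar\in\cX_{\Sym,\F}^{\lam,\tau}(\F)$ does not go through.
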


\begin{proof}[Proof of Theorem \ref{thm:naive-BM}]
Since $\mu$ is $(h+4(h_\eta+1))$-deep in $\uC_0$, Lemma \ref{lem:lowerbound-mod-p-crys-stack} shows that $\cX_{\Sym,\F}^{\lam,\tau}$ (resp.~$\til{\cX}_{\Sym,\F}^{\lam,\tau}$) has at least $\# \JH(W(\phi^\mo(\lam)-\eta)
    \otimes \ov{\sig}(\tau))$ irreducible components of dimension $4f$ (resp.~$7f$). On the other hand, the number of $7f$-dimensional irreducible components of $\til{\cX}_{\Sym,\F}^{\le\lam,\tau}$ is at most that of $\til{M}_\cJ^\nv(\le\lam,\nba_{\bfa_\tau})_\F$, which is $\# \JH(W(\phi^\mo(\lam)-\eta)
    \otimes \ov{\sig}(\tau))$ by Theorem \ref{thm:nv-irred-comp-SW-bij}. This proves our claim.
\end{proof}

\begin{thm}\label{thm:mod-p-local-model}
Let $(s,\mu)$ be a $\max\{2h-2,2h_\eta\}$-generic lowest alcove presentation of $\tau$. 
If $\sig$ is a Serre weight such that $\cC_{\sig}\subset \cX_{\Sym}^{\lam,\tau}$, then $\sig \in \JH(W(\phi^\mo(\lam)-\eta)
    \otimes \ov{\sig}(\tau))$ and we have a commutative diagram
    \[
    \begin{tikzcd}
     & & & & \til{C}_\sig^\zeta \arrow[d, hook] \\
    \til{\cC}_\sig \arrow[r, hook]   \arrow[rrrru, bend left=10, "\simeq"] \arrow[d, "T^{\vee,\cJ}_\F"] & \til{\cX}_{\Sym,\F}^{\le\lam,\tau} \arrow[r, hook] \arrow[d, "T^{\vee,\cJ}_\F"]  & \til{M}_\cJ^\nv(\le\lam,\nba_{\bfa_\tau})_\F \arrow[r, hook]& \til{M}_\cJ(\le\lam)_\F \arrow[r, hook, "r_{\tilw^*\PR{\tau}}"] \arrow[d, "T^{\vee,\cJ}_\F"]  & \til{\Fl}_{\cJ,\tilw^*\PR{\tau}}^{[0,h]} \arrow[d, "T^{\vee,\cJ}_\F"] \\
    \cC_\sig \arrow[r, hook] \arrow[rrrrd, bend right=10]&  {\cX}_{\Sym,\F}^{\le\lam,\tau} \arrow[rr, hook]& & Y_{\Sym,\F}^{\le\lam,\tau} \arrow[r, hook]& \BR{\til{\Fl}_{\cJ,\tilw^*\PR{\tau}}^{[0,h]} / T^{\vee,\cJ}_\F \dash \mathrm{conj}}  \arrow[d]\\
     & & & & \Phi\text{-}\mathrm{Mod}_{K,\F}^{\et,\Sym}
    \end{tikzcd}
    \]
    where all rectangles are cartesian, all vertical arrows labeled by $T^{\vee,\cJ}_\F$ are $T^{\vee,\cJ}_\F$-torsors, and all hooked arrows are closed immersions. The bottom diagonal map is the composition of canonical morphisms $\cC_\sig \mono \cX_{\Sym,\red} \xra{\varepsilon_\infty} \Phi\text{-}\mathrm{Mod}_{K,\F}^{\et,\Sym}$. Furthermore, if $(s,\mu)$ is $(h+4(h_\eta+1))$-generic, then the above diagram holds for all $\sig \in \JH(W(\phi^\mo(\lam)-\eta)
    \otimes \ov{\sig}(\tau))$.
\end{thm}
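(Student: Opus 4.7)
The strategy is to build the diagram by invoking Proposition \ref{prop:diagram-BK-and-et-phi-stacks} with shape $\tilz = \tilw^*(\tau)$ and with integers $a \le b$ chosen so that $(\Gr_{\cG,\F}^{[0,h]})^{\cJ}\, \tilw^*(\tau) \subset \Fl^{[a,b]}_{\cJ,\tilw^*(\tau)}$ and $\mu+\eta$ is $(b-a+1)$-deep in $\uC_0$; the $(h+3)$-genericity hypothesis on $(s,\mu)$ is precisely what ensures these bounds can be simultaneously satisfied. This produces the rightmost columns of the desired diagram. The remaining columns are then obtained by pulling back the closed immersion $\cC_\sig \hookrightarrow \cX_{\Sym,\F}^{\le\lam,\tau}$, together with the factorization $\til{\cX}_{\Sym,\F}^{\le\lam,\tau} \hookrightarrow \til{M}_\cJ^\nv(\le\lam,\nba_{\bfa_\tau})_\F$ furnished by \cite[Proposition 7.1.10]{LLHLM-2020-localmodelpreprint}, so that all rectangles are cartesian by construction and the $T^{\vee,\cJ}_\F$-torsor structures are preserved.

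Next I would extract the Serre weight labeling. Since $\cC_\sig \subset \cX_{\Sym}^{\lam,\tau}$ is a $4f$-dimensional irreducible component of $\cX_{\Sym,\red}$ by Theorem \ref{thm:irred-comp-pgma-stack}, its pullback $\til{\cC}_\sig$ is a $7f$-dimensional irreducible component of $\til{\cX}_{\Sym,\F}^{\le\lam,\tau}$, and hence of $\til{M}_\cJ^\nv(\le\lam,\nba_{\bfa_\tau})_\F$. After right translation by $\tilw^*(\tau)$, Theorem \ref{thm:nv-irred-comp-SW-bij} forces the image to coincide with $\til{C}_{\sig'}^\zeta$ for a unique $\sig' \in \JH(W(\phi^{-1}(\lam)-\eta) \otimes \ov{\sig}(\tau))$, so that $\til{\cC}_\sig \simeq \til{C}_{\sig'}^\zeta$ as subschemes. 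What remains is the identification $\sig = \sig'$, which then yields both item (1) and the isomorphism depicted by the diagonal arrow in the diagram.

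The principal obstacle is thus matching the two Serre weight labelings. In the $\GL_n$-analogue (cf.~\cite[Theorem 7.4.2]{LLHLM-2020-localmodelpreprint}) this is accomplished by testing at a Galois representation $\rhobar$ in the dense locus of $\cC_\sig$ which is maximally non-split of niveau $1$ and of weight $\sig$ (whose existence is supplied by Proposition \ref{prop:EGstack-gsp4-weak-irred-comp}), explicitly computing its attached Breuil--Kisin module, and checking that the resulting shape lies in the prescribed orbit. I would carry out the same strategy in the symplectic setting: via Corollary \ref{cor:sym-pgma-gal-equiv}, $\rhobar$ together with its similitude character $\simc(\rhobar)$ and the standard alternating form produces a symplectic Breuil--Kisin module, and the symplectic compatibility of the complete $\rhobar$-stable flag encoded in Definition \ref{def:rhobar-max-nonsplit-weight-sig} constrains the shape to lie in the $T^{\vee,\cJ}$-orbit defining $\til{C}_\sig^\zeta\, \tilw^*(\tau)$. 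Modulo routine bookkeeping with the duality isomorphism $\phi$ and the transfer $\cT$ on Weyl groups, this reduces to the $\GL_4$-computation in \loccit~combined with $\Theta$-invariance, giving $\sig = \sig'$. The ``furthermore'' clause under $(h+16)$-genericity is then immediate from Lemma \ref{lem:lowerbound-mod-p-crys-stack}, which supplies $\cC_\sig \subset \cX_{\Sym,\F}^{\lam,\tau}$ for every $\sig \in \JH(W(\phi^{-1}(\lam)-\eta) \otimes \ov{\sig}(\tau))$, so that the preceding argument applies uniformly to each such $\sig$.
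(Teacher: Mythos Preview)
Your outline is correct and matches the paper's approach: assemble the diagram via Proposition~\ref{prop:diagram-BK-and-et-phi-stacks} and the factorization through $\til{M}_\cJ^\nv(\le\lam,\nba_{\bfa_\tau})_\F$, identify the image of $\til{\cC}_\sig$ with some $\til{C}_{\sig'}^\zeta$ via Theorem~\ref{thm:nv-irred-comp-SW-bij}, and deduce the ``furthermore'' clause from Lemma~\ref{lem:lowerbound-mod-p-crys-stack} (equivalently Theorem~\ref{thm:naive-BM}).

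Your description of how $\sig = \sig'$ is established, however, is slightly off and could lead you in a harder direction. You write ``explicitly computing its attached Breuil--Kisin module, and checking that the resulting shape lies in the prescribed orbit'', but neither the paper nor \cite[Theorem 7.4.2]{LLHLM-2020-localmodelpreprint} proceeds this way. The argument runs from the $\sig'$ side: one chooses a particular dense open $\til{U}_{\sig'}^\zeta \subset \til{C}_{\sig'}^\zeta$ (an open Schubert cell cut out by the monodromy condition, for a carefully arranged triple $(\tilw_1,\tilw_2,\til{s})$), and invokes \cite[Lemma 7.4.6]{LLHLM-2020-localmodelpreprint} to show that the \'etale $\varphi$-modules parametrized there have $\bV^*_K$ upper triangular with diagonal characters $\chi_i$ explicitly determined by $\sig'$. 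One then picks $\rhobar$ in $\til{\cU}_\sig \cap \til{U}_{\sig'}^\zeta \tilw^*(\tau)^{-1}$, uses \cite[Lemma 7.2.10(4)]{LLHLM-2020-localmodelpreprint} (3-genericity) to promote the $G_{K_\infty}$-stable flag to a $G_K$-stable one, and compares the resulting characters with those dictated by Definition~\ref{def:rhobar-max-nonsplit-weight-sig} to conclude $\sig = \sig'$. No symplectic bookkeeping or $\Theta$-invariance is needed at this step; the identification happens entirely at the level of diagonal characters of the underlying rank-$4$ \'etale $\varphi$-module. Your proposed direction (computing the Breuil--Kisin module of a specific $\rhobar$ and locating it as a \emph{generic} point of the correct component) would require substantially more work.
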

\begin{proof}
By Proposition \ref{prop:diagram-BK-and-et-phi-stacks} and the discussion after \eqref{eqn:diagram-EG-BK-stacks}, we get the diagram except for the top diagonal arrow. The image of $\til{\cC}_\sig$ in $\til{M}_\cJ^\nv(\le\lam,\nba_{\bfa_\tau})_\F$ is a top dimensional irreducible component. By Theorem \ref{thm:nv-irred-comp-SW-bij}, it is equal to $\til{C}_{\sig'}^\zeta \tilw^*(\tau)^\mo$ for some $\sig' \in  \JH(W(\phi^\mo(\lam)-\eta)
    \otimes \ov{\sig}(\tau))$ and we can show that $\sig=\sig'$ following the argument in \cite[Theorem 7.4.2]{LLHLM-2020-localmodelpreprint}. Then the last assertion follows from Theorem \ref{thm:naive-BM}.    
\end{proof}

\begin{prop}\label{prop:geom-SW-EG-stack}
Let $\rhobar$ be a $2h_\eta$-generic tame $L$-parameter over $\F$.  Let $W_g(\rhobar)$ be the set of $3h_\eta$-deep Serre weights $\sigma$ such that $\rhobar \in \cC_\sigma(\F)$. Then, we have $W_g(\rhobar) = W^?(\rhobar|_{I_K})$.
\end{prop}

\begin{proof}
This follows from Corollary \ref{cor:geom-SW-local-model} using Theorem \ref{thm:mod-p-local-model} as in the proof of \cite[Proposition 7.4.7]{LLHLM-2020-localmodelpreprint}. The $3h_\eta$-deepness of $\sig$ is required to show that $\cC_{\sig}\subset \cX^{\eta,\tau}_{\Sym,\F}$ for $2h_\eta$-generic $\tau=\tau(1,\kappa)$  where $\sig=F(\kappa)$.
\end{proof}

\section{Global setup}\label{sec:global}

\subsection{Some local Galois deformation rings}\label{sub:local-def-rings}
Let $\CNL_\cO$ be the category of complete Noetherian local $\cO$-algebras with residue field $\F$.

Let $F$ be either a number field or a local field. 
Let $\rhobar: G_F \ra \GSp_4(\F)$ be a continuous representation. We let $R_{\rhobar}^\square$ be the framed deformation ring representing the functor $D^\square_{\rhobar}$ taking $A\in \CNL_\cO $ to the set of $\GSp_4(A)$-valued lifts $\rho$ of $\rhobar$. If $\psi: G_F \ra \cO^\times$ is a character lifting $\simc(\rhobar)$, we write $R_{\rhobar}^{\square,\psi}$ for the fixed similitude deformation ring representing $D^{\square,\psi}_{\rhobar}$ taking $A\in \CNL_\cO $ to the set of $\GSp_4(A)$-valued lifts $\rho$ with $\simc(\rho)=\psi\otimes_\cO A$.

\subsubsection{Local deformations:  $l=p$}
Suppose that $F=K$. Given a type $(\lam+\eta,\tau)$, we denote by $R_{\rhobar}^{\lam+\eta,\tau}$ the unique $\cO$-flat quotient of $R_{\rhobar}^{\square}$ whose $A$-points, for any $\cO$-flat $A\in \CNL_\cO$, are lattices in potentially crystalline representations with Hodge type $\lam+\eta$ and tame inertial type $\tau$. We have its version with fixed similitude character   $R_{\rhobar}^{\lam+\eta,\tau,\psi}$. Note that $R_{\rhobar}^{\lam+\eta,\tau,\psi}$ is nonzero only if $\psi$ is potentially crystalline of type $(\simc(\lam+\eta),\simc(\tau))$. We record the following Lemma relating a potentially crystalline deformation ring to its fixed similitude variants.

\begin{lemma}\label{lem:fixed-sim-formally-sm}
Recall that $p>2$. Twisting by the universal unramified twist
\begin{align*}
    \ur_{1+X}: G_K \ra \cO\DB{X} 
\end{align*}
sending $\Frob_K$ to $1+X$ induces an isomorphism $R_{\rhobar}^{\lam+\eta,\tau} \simeq R_{\rhobar}^{\lam+\eta,\tau,\psi} \DB{X}$.
\end{lemma}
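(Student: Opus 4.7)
The plan is to construct the claimed isomorphism by checking it on the functor of points. The key observation is that the scalar embedding $\G_m \hookrightarrow \GSp_4$, $a \mapsto \diag(a,a,a,a)$, lands in the center of $\GSp_4$ and satisfies $\simc \circ \diag = (-)^2$. Hence for any continuous representation $\rho: G_K \to \GSp_4(A)$ and any continuous character $\chi: G_K \to A^\times$, the twist $\rho \otimes \chi$ is again $\GSp_4(A)$-valued and satisfies $\simc(\rho \otimes \chi) = \chi^2 \cdot \simc(\rho)$. Moreover tensoring by an unramified character preserves being potentially crystalline with given Hodge type and inertial type.

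First, I would construct the morphism $\Phi : R_{\rhobar}^{\lam+\eta,\tau,\psi}\DB{X} \to R_{\rhobar}^{\lam+\eta,\tau}$ by declaring it to classify the framed deformation $\rho^{\univ,\psi}\otimes \ur_{1+X}$, where $\rho^{\univ,\psi}$ is the universal fixed-similitude lift over $R_{\rhobar}^{\lam+\eta,\tau,\psi}$ and $\ur_{1+X}: G_K \to \cO\DB{X}^\times$ sends $\Frob_K$ to $1+X$. This tensor product is visibly potentially crystalline of type $(\lam+\eta,\tau)$.

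To exhibit the inverse on $A$-points, take any framed potentially crystalline lift $\rho: G_K \to \GSp_4(A)$ of type $(\lam+\eta,\tau)$. Both $\simc(\rho)$ and $\psi$ are potentially crystalline characters of Hodge type $\simc(\lam+\eta)$ and inertial type $\simc(\tau)$, so they agree on $I_K$; consequently $\simc(\rho)\psi^{-1}$ is unramified, trivial modulo $\fm_A$, and hence equal to $\ur_u$ for a unique $u \in 1 + \fm_A$. Using the hypothesis $p>2$, the binomial series $v = \sum_{n \ge 0}\binom{1/2}{n}(u-1)^n$ converges $\fm_A$-adically (the coefficients lie in $\Z_{(2)} \subset \cO$ and $(u-1)^n \in \fm_A^n$) and produces the unique $v \in 1 + \fm_A$ with $v^2 = u$; uniqueness follows from $v_1+v_2 \in 2+\fm_A \subset A^\times$. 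Set $x := v-1 \in \fm_A$ and $\rho' := \rho \otimes \ur_{(1+x)^{-1}}$. Then $\simc(\rho') = \simc(\rho)\ur_{u^{-1}} = \psi$, and the pair $(\rho', x)$ defines the desired $A$-point of $R_{\rhobar}^{\lam+\eta,\tau,\psi}\DB{X}$.

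A quick check shows these assignments are mutually inverse: starting from $(\rho',x)$ and forming $\rho = \rho' \otimes \ur_{1+x}$ yields $\simc(\rho)\psi^{-1} = \ur_{(1+x)^2}$, so the construction recovers $v = 1+x$ and $\rho'$. The only non-formal ingredient is the existence and uniqueness of the square root in $1 + \fm_A$, which is exactly where the assumption $p>2$ enters; all other verifications (preservation of the potentially crystalline condition and of Hodge/inertial types under unramified twisting, and compatibility with framings) are immediate.
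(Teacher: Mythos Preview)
Your argument is correct and is precisely the adaptation of \cite[Lemma 4.3.1]{EG-geom_BM-MR3134019} (stated there for $\GL_n$ with the determinant) to $\GSp_4$ with the similitude character, which is what the paper's one-line proof defers to. The only place to be slightly more careful is the claim that $\simc(\rho)\psi^{-1}$ is unramified for arbitrary $A\in\CNL_\cO$: since $R_{\rhobar}^{\lam+\eta,\tau}$ is $\cO$-flat with reduced generic fiber, it suffices to verify this over the universal ring by checking it at all $\Qpbar$-points, exactly as you do.
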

\begin{proof}
This can be proven as \cite[Lemma 4.3.1]{EG-geom_BM-MR3134019}.
\end{proof}

We also write $R_{\std(\rhobar)}^{\lam+\eta,\tau}$ to denote the potentially crystalline ($\GL_4$-)deformation ring of $\std(\rhobar)$ and type $(\std(\lam+\eta),\std(\tau))$.

We prove that certain potentially crystalline deformation ring of tame $\rhobar:G_K \ra \GSp_4(\F)$ is nonzero and a domain. We have the following necessary condition for $R_{\rhobar}^{\lam+\eta,\tau} \neq 0$ under a mild assumption on $\rhobar$ and $\tau$.

\begin{prop}\label{prop:pcrys-nonzero}
Let $\rhobar:G_K \ra \GSp_4(\F)$ be a continuous representation and $(\lam+\eta,\tau)$ be a type.
\begin{enumerate}
    \item Suppose that $\rhobar$ is semisimple and $m$-generic. If $R_{\rhobar}^{\lam+\eta,\tau}\neq 0$ for a 1-generic tame inertial type $\tau$, then $\tau$ is $(m-(h_\eta+1))$-generic. 
    \item If $\tau$ has a lowest alcove presentation $(s,\mu)$ with $\mu$ $(h_{\lam+\eta}+1)$-deep in $\uC_0$ and $R_{\rhobar}^{\lam+\eta,\tau} \neq 0$, then $\rhobar^\ss$ has a 
    lowest alcove presentation such that $\tilw(\rhobar,\tau)\in \Adm(\phi^\mo(\lam)+\eta)$.
\end{enumerate}
\end{prop}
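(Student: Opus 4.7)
The plan is to reduce both items to the corresponding statements for $\GL_4$ proved in \cite{LLHLM-2020-localmodelpreprint}, via the standard embedding $\std:\GSp_4\to \GL_4$. The reduction rests on two preliminary observations. First, composing with $\std$ sends any potentially crystalline lift $\rho:G_K\to \GSp_4(\cO')$ of $\rhobar$ of type $(\lam+\eta,\tau)$ to a potentially crystalline lift $\std\circ \rho$ of $\std(\rhobar)$ of type $(\std(\lam+\eta),\std(\tau))$, and $h_{\std(\lam+\eta)}=h_{\lam+\eta}$; in particular $R_{\rhobar}^{\lam+\eta,\tau}\neq 0$ forces $R_{\std(\rhobar)}^{\std(\lam+\eta),\std(\tau)}\neq 0$. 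Second, a direct computation of the pairings $\RG{\cT(\lam)+\eta',\be^\vee}$ for $\be\in \Phi^+_{\GL_4}$ shows that they reproduce, with redundancies coming from the $\Theta$-action, the four $\GSp_4$-pairings $\RG{\lam+\eta,\al^\vee}$; hence $\cT$ preserves $m$-deepness in both directions, and $\tau$ (resp.\ $\rhobar^\ss$) is $m$-generic if and only if $\std(\tau)=\cT(\tau)$ (resp.\ $\std(\rhobar)^\ss$) is $m$-generic.

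For item (1), I will argue the contrapositive. Suppose $R_{\rhobar}^{\lam+\eta,\tau}\neq 0$ and $\rhobar^\ss$ is $\max\{2h_{\lam+\eta},22\}$-generic. The two preliminary observations transfer both hypotheses to $\GL_4$, so $R_{\std(\rhobar)}^{\std(\lam+\eta),\std(\tau)}\neq 0$ and $\std(\rhobar)^\ss$ is $\max\{2h_{\lam+\eta},22\}$-generic. Invoking the $\GL_4$-analog of the statement (a weight-elimination result appearing in the analysis of tame potentially crystalline shapes in \cite{LLHLM-2020-localmodelpreprint}) forces $\std(\tau)$, and hence $\tau$, to be $(h_{\lam+\eta}+1)$-generic, contradicting the hypothesis. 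Therefore $R_{\rhobar}^{\lam+\eta,\tau}=0$.

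For item (2), assume $R_{\rhobar}^{\lam+\eta,\tau}\neq 0$ and that $\tau$ admits an $(h+1)$-generic lowest alcove presentation $(s,\mu)$. Then $(\cT(s),\cT(\mu))$ is an $(h+1)$-generic lowest alcove presentation of $\std(\tau)$, and $R_{\std(\rhobar)}^{\std(\lam+\eta),\std(\tau)}\neq 0$ by the first observation. The $\GL_4$-analog then supplies a $\std(\phi^\mo(\lam))$-compatible lowest alcove presentation of $\std(\rhobar)^\ss$ such that $\tilw(\std(\rhobar),\std(\tau))\in \Adm(\std(\phi^\mo(\lam))+\eta')$. Because $\std(\rhobar)^\ss$ is the $\cT$-transfer of $\rhobar^\ss$, and lowest alcove presentations are determined up to shifts by $X^0$, this presentation can be chosen in the image of $\cT$ and then descends to a $\phi^\mo(\lam)$-compatible lowest alcove presentation of $\rhobar^\ss$; the central-character compatibility survives the descent because $\cT$ commutes with the projection to $X^*(\uZ)$ built into Definition \ref{def:central-char}. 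Finally, the identity $\cT(\tilw(\rhobar,\tau))=\tilw(\std(\rhobar),\std(\tau))$ together with Lemma \ref{lem:transfer-preserves-bruhat-order}, which identifies $\Adm(\phi^\mo(\lam)+\eta)$ with $\Adm(\cT(\phi^\mo(\lam)+\eta))^\Theta$ under $\cT$, yields $\tilw(\rhobar,\tau)\in \Adm(\phi^\mo(\lam)+\eta)$.

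The principal obstacle will be matching the numerical constants: the constant $22$ in (1) should arise from the explicit $\GL_4$-genericity thresholds used in the cited weight-elimination statement of \cite{LLHLM-2020-localmodelpreprint}, and one must verify within the $X^0$-shift ambiguity that the lowest alcove presentation of $\std(\rhobar)^\ss$ produced on the $\GL_4$-side can indeed be chosen $\Theta$-invariant and compatible with the $\GSp_4$-similitude character $\zeta\in X^*(\uZ)$. A secondary check is that the cited $\GL_4$-statement applies to regular (not necessarily $p$-small) cocharacters $\std(\lam)$, as is the case here.
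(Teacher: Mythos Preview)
Your proposal is correct and follows essentially the same route as the paper: reduce to $\GL_4$ via $\std$, invoke the corresponding $\GL_4$ statements (the paper cites \cite[Proposition 7]{Enns19} via \cite[Corollary 8.5.2]{LLHLM-2020-localmodelpreprint} for (1) and \cite[Corollary 5.5.8]{LLHLM-2020-localmodelpreprint} for (2)), and for (2) descend using Lemma~\ref{lem:transfer-preserves-bruhat-order}. The only difference is that the paper first passes to $\rhobar^\ss$ on the $\GSp_4$ side using \cite[Lemma 5]{Enns19} and then applies $\std$, whereas you apply $\std$ first and let the $\GL_4$ statement absorb the semisimplification; both orderings work, and your flagged obstacles (the constant $22$, the $\Theta$-invariance of the $\GL_4$ lowest alcove presentation) are exactly what the cited results and Lemma~\ref{lem:transfer-preserves-bruhat-order} address.
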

\begin{proof}
By \cite[Lemma 5]{Enns19}, if $R_{\rhobar}^{\lam+\eta,\tau} \neq 0$, then $R_{\rhobar^\ss}^{\lam+\eta,\tau} \neq 0$. Thus, we may assume that $\rhobar=\rhobar^\ss$ for item (2). If $R_{\rhobar}^{\lam+\eta,\tau} \neq 0$, then $R_{\std(\rhobar)}^{\lam+\eta,\tau}\neq 0$. Then item (1) (resp.~item (2)) follow from the corresponding result for $\GL_4$ \cite[Proposition 3.3.2]{LLHL-Duke-2019MR4007598} (resp.~\cite[Corollary 5.5.8]{LLHLM-2020-localmodelpreprint}) and Lemma \ref{lem:transfer-preserves-bruhat-order}. 
\end{proof}

There is a Coxeter length function $l$ on $\uW_a$ which can be extended to  $\utilW\simeq \uW_a \rtimes \uOm$ by setting $l(\tilw\delta)=l(\tilw)$ for $\tilw\in \uW_a$ and $\delta \in \uOm$. It is expected that if $R_{\rhobar}^{\lam+\eta,\tau} \neq 0$, the complexity of $R_{\rhobar}^{\lam+\eta,\tau}$ increases as the length of $\tilw(\rhobar,\tau)$ decreases. In the special case that $\tilw(\rhobar,\tau)$ has the \emph{maximal} length, i.e.~$\tilw(\rhobar,\tau)\in \uW(\phi^\mo(\lam)+\eta)$, we can compute $R_{\rhobar}^{\lam+\eta,\tau}$ explicitly under a genericity assumption.

\begin{thm}\label{thm:longest-shape-def-ring}
Let $\rhobar:G_K \ra \GSp_4(\F)$ be a tame representation. Let $(\lam+\eta,\tau)$ be a type and $(s,\mu)$ be a $(2h_{\lam+\eta}+1)$-generic lowest alcove presentation of $\tau$. If there is a lowest alcove presentation $(s_{\rhobar},\mu_{\rhobar})$ of $\rhobar|_{I_K}$ such that $\tilw(\rhobar,\tau) \in \uW(\lam+\eta)$, then $R_{\rhobar}^{\lam+\eta,\tau}$ is a power series ring over $\cO$ with $4f+11$ variables. Moreover, any $\rho: G_K \ra \GSp_4(\cO)$ of type $(\lam+\eta,\tau)$ lifting $\rhobar$ is potentially diagonalizable.
\end{thm}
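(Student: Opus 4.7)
The plan is to transfer the claim to formal smoothness of the local model $U_{\reg}(\tilz,\le\lam+\eta,\nba_{\bfa_\tau})^\pcp$ at the torus-fixed point $\tilz = \tilw(\rhobar,\tau)^*$ via Theorem~\ref{thm:pcrys-local-model}, exploiting that the longest-shape hypothesis places $\tilz$ at a maximal locus of the admissible set. First I would verify that the $(2h_{\lam+\eta}+1)$-genericity of $(s,\mu)$ comfortably covers the hypotheses of Theorem~\ref{thm:pcrys-local-model}(2) (its polynomial-genericity $P_{\CB{\lam_j+\eta_j}}$ being absorbed by the depth condition), so that there is an isomorphism $\til{\cX}_{\Sym,\reg}^{\le\lam+\eta,\tau}(\tilz) \simeq \til{U}_{\reg}(\tilz,\le\lam+\eta,\nba_{\bfa_\tau})^\pcp$. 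Because the longest-shape assumption $\tilw(\rhobar,\tau)\in\uW(\lam+\eta)$ places $\tilz^*$ at a maximal element of $\Adm(\phi^\mo(\lam+\eta))$ in the Bruhat order, with Coxeter length strictly exceeding every element of $\Adm(\phi^\mo(\lam'))$ for regular dominant $\lam' \lneq \lam+\eta$, the open substack $\cX_{\Sym,\reg}^{\le\lam+\eta,\tau}(\tilz)$ coincides with $\cX_{\Sym}^{\lam+\eta,\tau}$ in a neighborhood of $\rhobar$.

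I would then argue formal smoothness of the local model at $\tilz$. Within the open chart $U(\tilz) \subset \Fl$, the only Schubert cell $S^\circ_\F(\tilz'^{*})$ indexed by $\tilz' \in \Adm^\vee(\lam+\eta)$ whose closure contains $\tilz$ is the one at $\tilz$ itself, by maximality of $\tilz^*$; consequently $M(\lam+\eta,\nba_{\bfa_\tau})_\F \cap U(\tilz) = S^\circ_\F(\tilz) \cap \Fl^{\nba_{\bfa_\tau}} \cap U(\tilz)$ as sets. Applying Theorem~\ref{thm:schubert-cell-with-monodromy} to $\tilw = \tilw(\rhobar,\tau)$ identifies this intersection with an affine space of dimension $4f$, since regularity of $\lam+\eta$ prevents $\tilw(A_0)$ from lying in any strip $H_\al^{(0,1)}$. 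Combined with $\cO$-flatness of $M(\lam+\eta,\nba_{\bfa_\tau})$ and reducedness of its special fiber at $\tilz$, which can be transferred from the equal-characteristic local model via a Cohen--Macaulay argument in the spirit of the proof of Theorem~\ref{thm:unibranch-local-model}, the completed local ring is $\cO$-flat with special fiber a power series ring $\F\DB{y_1,\ldots,y_{4f}}$, hence isomorphic to $\cO\DB{x_1,\ldots,x_{4f}}$. Passing through the $T^{\vee,\cJ}_\cO$-torsor of Theorem~\ref{thm:pcrys-local-model} and then framing---the framed moduli being a $\GSp_4$-torsor over the versal ring of $\cX_{\Sym}^{\lam+\eta,\tau}$ and thus adjoining $\dim \GSp_4 = 11$ further formal variables---yields $R_{\rhobar}^{\lam+\eta,\tau} \simeq \cO\DB{x_1,\ldots,x_{4f+11}}$.

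For the second assertion, $\spec R_{\rhobar}^{\lam+\eta,\tau}[1/p]$ is connected since $R_{\rhobar}^{\lam+\eta,\tau}$ is a regular domain, and potential diagonalizability is constant on connected components of this spectrum by the discussion of \cite[\S1.4]{BLGGT-MR3152941}. It therefore suffices to exhibit a single potentially diagonalizable lift, which can be constructed explicitly by taking Teichm\"uller lifts of the four characters appearing in $\rhobar^\ss$, twisting each by a crystalline character realizing the prescribed Hodge--Tate weights, and arranging the four-tuple in symplectically complementary pairs whose products agree with a chosen crystalline lift of $\simc(\rhobar)$; the resulting potentially crystalline lift is diagonal over a tamely ramified extension $K'/K$ and is therefore potentially diagonalizable by Example~\ref{ex:pdreps}. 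The main technical hurdle is verifying reducedness of the special fiber of the mixed-characteristic local model at the longest-shape torus fixed point, which is needed to promote the set-theoretic identification coming from Theorem~\ref{thm:schubert-cell-with-monodromy} to a scheme-theoretic isomorphism; a secondary bookkeeping issue is ensuring the symplectic pairing is respected throughout the construction of the explicit diagonal lift, which is handled by matching characters pairwise against the target similitude character.
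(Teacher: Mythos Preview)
Your approach has a \textbf{circularity problem}. You invoke Theorem~\ref{thm:pcrys-local-model}, but in the paper's logical structure this result \emph{depends on} Theorem~\ref{thm:longest-shape-def-ring}. Specifically, Theorem~\ref{thm:pcrys-local-model}(2) uses Lemma~\ref{lem:pcrys-stack-nonempty}, whose proof (in \S\ref{sub:patching-argument}) goes through Proposition~\ref{prop:pcrys-nonzero-iff} and Theorem~\ref{thm:existence-patching-functor}(2), both of which invoke Theorem~\ref{thm:longest-shape-def-ring} to produce potentially diagonalizable lifts and to make $R^p$ formally smooth. So Theorem~\ref{thm:longest-shape-def-ring} is a logical \emph{input} to the local model diagram, not a consequence of it.

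Even setting aside circularity, your claim that $(2h_{\lam+\eta}+1)$-genericity ``comfortably covers'' the hypotheses of Theorem~\ref{thm:pcrys-local-model} is unjustified: part~(1) requires $N_{\mathrm{sing}}$-depth (coming from Elkik approximation, with no explicit bound in terms of $h_{\lam+\eta}$), and part~(2) requires polynomial $P_{\{\lam_j\}}$-genericity plus $(h+16)$-genericity via Lemma~\ref{lem:pcrys-stack-nonempty}. The theorem you are proving has a sharper, explicit genericity bound precisely because it must feed into those later results.

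The paper's proof is therefore deliberately more elementary. It first establishes the $\GL_n$ analogue (Theorem~\ref{thm:longest-shape-def-ring-GLn}) by working directly with the Breuil--Kisin module deformation ring $R_{\ov{\fM}}^{\le\lam+\eta',\tau,\ov{\be}}$: one writes the universal partial Frobenius matrices in the longest shape as $D_j^\univ(v+p)^{w_j(\lam+\eta')}U^{(j),\univ,\nba}$, shows (using only $(2h_{\lam+\eta}+1)$-genericity, following \cite[Propositions~3.4.8, 3.4.12]{LLHL-Duke-2019MR4007598}) that this is a quotient of a power series ring in $\tfrac{n(n+1)f}{2}$ variables, and concludes by a dimension count against the known dimension of $R_{\rhobar}^{\lam+\eta',\tau}$ via the diagram~(3.16) in \cite{LLHL-Duke-2019MR4007598}. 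The $\GSp_4$ case then imposes symplecticity on these matrices and uses Proposition~\ref{prop:EG-stack=BK-stack} in place of that diagram. Potential diagonalizability is obtained not by a separately constructed diagonal lift, but by specializing to the locus $U^{(j),\univ,\nba}=1$ inside the Breuil--Kisin deformation space and observing the resulting lift splits over an unramified extension.
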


\begin{proof}
We briefly explain the changes needed for the proof of \cite[Theorem 3.4.1]{LLHL-Duke-2019MR4007598}. As in \loccit~(cf.~the diagram (3.16)), we compute a related deformation ring for a Breuil--Kisin module $(\ov{\fM},\ov{\fN},\al')$ such that
\begin{align*}(\rhobar|_{G_{K_\infty}},\simc(\rhobar)|_{G_{K_\infty}},\al) \simeq T_{dd}^*((\ov{\fM},\ov{\fN},\al')).
\end{align*}
We first consider deformations of $\ov{\fM}$. As in Proposition 3.4.8 in \loccit, the universal partial Frobenius matrix $A^{(j),\univ}$ at $\jj$ of the universal deformation of $\ov{\fM}$ over $R_{\ov{\fM}}^{\le \lam+\eta',\tau,\ov{\be}}$ can be factorized into $T^{(j),\univ} U^{(j),\univ}$ where $T^{(j),\univ}$ and $U^{(j),\univ}$ are a diagonal matrix and a lower triangular matrix (after conjugation). The entries of $U^{(j),\univ}$ are in $R_{\ov{\fM}}^{\le \lam+\eta',\tau,\ov{\be}}[v]$. Then Proposition 3.4.12 in \loccit~computes the ring  $R_{\ov{\fM}}^{\le \lam+\eta',\tau,\ov{\be},\nba}$ as a quotient of $R_{\ov{\fM}}^{\le \lam+\eta',\tau,\ov{\be}}$ obtained by imposing equations solving the non-leading coefficients of $U^{(j),\univ}_{\al}$ in terms of the leading coefficient of $U^{(j),\univ}_{\al'}$ for $0>\al'>\al$ (this is where we need $(2h_{\lam+\eta}+1)$-genericity of $\tau$). As a result, $R_{\ov{\fM}}^{\le \lam+\eta',\tau,\ov{\be},\nba}$ is a power series ring generated by $10f$ variables, $4$ coming from $T^{(j),\univ}$ and $6=\#\Phi^-_4$ from $U^{(j),\univ,\nba}$ for each $\jj$.

The universal deformation ring $R_{(\ov{\fM},\ov{\fN},\al')}^{\le\lam+\eta,\tau,\ov{\be},\nba}$ of $(\ov{\fM},\ov{\fN},\al')$ is obtained by imposing the condition
\begin{align*}
    T^{(j),\univ}, U^{(j),\univ, \nba} \in \GSp_4(R_{\ov{\fM}}^{\le \lam+\eta',\tau,\ov{\be},\nba}).
\end{align*}
Thus, it is a quotient of $R_{\ov{\fM}}^{\le \lam+\eta',\tau,\ov{\be},\nba}$ obtained by imposing $t_1t_4=t_2t_3$ where $t_1,\dots,t_4$ are the entries of $T^{(j),\univ}$ and $U^{(j),\univ, \nba}_{\al}\equiv \pm U^{(j),\univ, \nba}_{\al'} \mod U^{(j),\univ, \nba}_{-\al_{23}}$ with an appropriate sign for $\al,\al'\in \Phi^{-}_4$ such that the images of $\al^\vee,(\al')^\vee$ under the restriction map $\Phi^-_4 \epi \Phi^-$ are the same (see \cite[pg.~28]{RS-GSp4-bookMR2344630} for the structure of the unipotent subgroup $U$ of $\GSp_4$). Since it is of relative dimension $7f$ over $\cO$, it is a formal power series ring over $\cO$ with $7f$ generators. This implies the assertion on $R_{\rhobar}^{\lam+\eta,\tau}$. 
The potential diagonalizability can be proven using Example \ref{ex:pdreps}. 
\end{proof}

We also record a lifting result for certain  non-tame $\rhobar$.

\begin{lemma}\label{lem:max-nonsplit-rhobar-lifting}
Let $\kappa\in X^*_1(\uT)$ be 0-deep.  Let $\rhobar: G_K \ra \GSp_4(\F)$ be maximally non-split of niveau 1 and weight $\sig=F(\kappa)$. Let $a$ be an integer and let $k$ be the unique integer such that $a- 2k=:b\in \{3,4\}$. Let $(\lam_a+\eta,\tau_a)$ be a type such that 
\begin{align*}
    \lam_a = \left\{ \begin{array}{cc}
        k(1,1;2) &  \text{if $b=3$}\\
        (1,1;1)+ k(1,1;2) & \text{if $b=4$}
    \end{array} \right.
\end{align*}
and $\tau_a = \tau(1,\kappa-\phi^\mo(\lam_a))$. 
Then there is a potentially crystalline lift  $\rho:G_K \ra \GSp_4(\cO)$ of $\rhobar$ with Hodge type $\lam_a+\eta$ and tame inertial type $\tau_a$ and of the form
\begin{align*}
    \rho = \pma{\chi_1 & * & * & * \\ 
    0 & \chi_2 & * & * \\ 
    0 & 0 & \chi_3 & * \\
    0 & 0& 0 & \chi_4}
\end{align*}
where $\oplus_{i=1}^4\chi_i = \eps_p^{\lam_a+\eta} \prod_{\jj} {\omega}_{K,\sig_j}^{\phi(\kappa_j)-\lam_{a,j} }$. 
\end{lemma}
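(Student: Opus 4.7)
Since $\rhobar$ is maximally nonsplit of niveau $1$ and of weight $\sigma = F(\kappa)$, its unique $G_K$-stable complete flag $V_1\subset V_2\subset V_3\subset V_4$ is automatically Lagrangian in the middle, forcing the graded pieces $\ov{\chi}_i$ to satisfy $\ov{\chi}_1\ov{\chi}_4 = \ov{\chi}_2\ov{\chi}_3 = \simc(\rhobar)$ with inertial restrictions prescribed by $\kappa$. The first step is to match these with the target characters $\chi_i$ of the lemma, constructed as products of a crystalline character of the required Hodge--Tate weight and a tame character of the required inertial type; checking that each $\chi_i$ reduces to $\ov{\chi}_i$ modulo $\varpi$ is a bookkeeping comparison of exponents of $\omega_{K,\sigma_j}$ using $\eps_p|_{I_K}=\prod_{\jj}\omega_{K,\sigma_j}$. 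The particular choice $\lam_a=k(1,1;2)$ (resp.\ $(1,0;0)+k(1,1;2)$) is engineered to make $\std(\lam_a+\eta)$ simultaneously strictly decreasing, namely $(k+2,k+1,k-1,k-2)$ (resp.\ $(k+3,k+1,k-1,k-3)$), which places the lift in the \emph{ordinary} configuration along the flag, and palindromic around its midpoint, so that the symplectic constraint $\chi_1\chi_4=\chi_2\chi_3$ holds with similitude character of Hodge--Tate weight $2k$.

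Next, I would build $\rho$ by successively lifting the extensions along the flag inside the Borel of $\GSp_4$. Assuming inductively a potentially crystalline Borel-valued lift $\rho_i$ of $\rhobar|_{V_i}$ with graded pieces $\chi_1,\dots,\chi_i$, I need to lift the mod-$\varpi$ extension class defining $\rhobar|_{V_{i+1}}$ to a class in the potentially crystalline subspace $H^1_f(G_K,\chi_{i+1}^{-1}\otimes\rho_i)$. The obstructions $H^2(G_K,\chi_j^{-1}\chi_{i+1})$ for $j\leq i$ are dual via local Tate duality to $H^0(G_K,\eps_p\chi_{i+1}^{-1}\chi_j)$, whose vanishing follows because the Hodge--Tate weight of $\eps_p\chi_{i+1}^{-1}\chi_j$ equals $1+(n_j-n_{i+1})$, where $n_i := (\std(\lam_a+\eta))_i$ is the Hodge--Tate weight of $\chi_i$ and strict decrease along the flag gives $n_j\geq n_{i+1}+1$. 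This quantity is strictly positive except in the two ``gap-$1$'' cases $(j,i+1)\in\{(1,2),(3,4)\}$ arising in the $b=3$ configuration, where the weight is $0$ and the non-triviality of $\eps_p\chi_{i+1}^{-1}\chi_j$ reduces to a statement on inertia, guaranteed by the $0$-deepness of $\kappa$. The Bloch--Kato Euler characteristic formula then gives $H^1_f$ sufficient dimension to surject onto the mod-$\varpi$ extension space.

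The key technical point --- and the main obstacle --- is organizing this induction so that $\rho$ lands in $\GSp_4(\cO)$ rather than merely in $\GL_4(\cO)$. The cleanest way I would do this is to apply Lin's obstruction theory \cite{Lin} (cf.\ Theorem \ref{thm:Lin} and the proof of Theorem \ref{thm:crys-lift}) to the Klingen parabolic $Q$ of $\GSp_4$ rather than to $\GL_4$; the $H^2$-codimension hypothesis and the quadratic-form nondegeneracy required by Theorem \ref{thm:Lin} both follow from the ordinary configuration and the $0$-deepness of $\kappa$ by the same Hodge--Tate weight analysis as above, while the pairing built into the non-abelian structure of the unipotent radical of $Q$ automatically enforces the symplectic relations $\chi_1\chi_4=\chi_2\chi_3$ between the corner entries. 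Finally, the resulting $\rho$ is potentially crystalline because, after base change to the tame extension $L'/K$ over which every $\chi_i$ becomes crystalline, $\rho|_{G_{L'}}$ is upper-triangular with strictly decreasing crystalline characters on the diagonal, i.e.\ ordinary crystalline, hence crystalline.
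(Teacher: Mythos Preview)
Your core understanding of the obstruction calculus is right, but you have made the argument far heavier than it needs to be and, in the process, introduced a step that does not quite fit. The paper's proof is three lines: since $\kappa$ is $0$-deep, one has $\ov\chi_i\ov\chi_j^{-1}\neq\ov\eps_p$ for all $i<j$, hence $\chi_i\chi_j^{-1}\neq\eps_p$, hence $H^2(G_K,\chi_i\chi_j^{-1})=0$ by local Tate duality; the lift then exists by the standard step-by-step lifting along the central series of the unipotent radical of the \emph{Borel of $\GSp_4$}, and potential crystallinity follows from \cite[Lemma~6.3.1]{EGstack} because the Hodge--Tate weights are strictly decreasing. Two points of comparison:

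First, you check $H^2=0$ by a Hodge--Tate weight argument, singling out the ``gap-$1$'' cases and then appealing to $0$-deepness on inertia. The paper bypasses this entirely by observing that the \emph{mod $p$} condition $\ov\chi_i\ov\chi_j^{-1}\neq\ov\eps_p$ already forces $\chi_i\chi_j^{-1}\neq\eps_p$. This is both shorter and uniform in $i,j$. You also bring in $H^1_f$ and Bloch--Kato dimension counts, but none of that is needed: once $H^2=0$ for every root space, the lift exists with no Selmer condition imposed, and crystallinity is verified afterward.

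Second, and more seriously, your plan to ``organize the induction so that $\rho$ lands in $\GSp_4(\cO)$'' by invoking Lin's theory for the Klingen parabolic is misplaced. The representation $\rhobar$ already factors through the Borel $B\subset\GSp_4$, so one lifts directly inside $B(\cO)$: the unipotent radical of $B$ has an abelian central series whose graded pieces are the four root spaces of $\GSp_4$, and the vanishing of the four corresponding $H^2$'s is exactly what is checked. There is no separate ``symplecticity'' obstacle once you work in the correct group from the start. Lin's theory, as recalled in Theorem~\ref{thm:Lin}, is designed for situations where $H^2$ does \emph{not} vanish; moreover, that theorem is stated under the hypothesis that the $\GL_2$-block $\ov\theta$ is irreducible, which fails here since $\rhobar$ has niveau $1$. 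So your proposed route is both unnecessary and not directly applicable as written.
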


\begin{proof}
 It is clear that $\chi_i$ lifts $\ov{\chi}_i$. Since $\kappa$ is 0-deep, for $i<j$, $\ov{\chi}_i \ov{\chi}_j^\mo \neq \ov{\eps}_p$. Thus, $\chi_i \chi_j^\mo \neq \eps_p$. Then the existence of a lift $\rho$ follows from the vanishing of $H^2(G_K, \chi_i \chi_j^\mo)$. It is potentially crystalline by \cite[Lemma 6.3.1]{EGstack}. To take $\rho$ valued in $\GSp_4$, we must match the extension classes at the $(1,2)$ and $(1,3)$ matrix entries with those at the $(3,4)$ and $(2,4)$ matrix entries (as in the proof of Lemma \ref{lem:dim-ext-family-Klingen}). Again, this can be done because $H^2(G_K, \chi_i \chi_j^\mo)=0$. 
\end{proof}

\subsubsection{Local deformations: $l\neq p$}\label{subsec:lneqp}
We record deformation problems for Ihara avoidance argument. Let $l$ be a prime and $l\neq p$. Let $F/\Q_l$ be a finite extension with the ring of integers $\cO_F$, a uniformizer $\varpi_F$, and the residue field $k_F$ of size $q_F$. We assume that $q_F\equiv 1 \mod p$. Let $\rhobar: G_F \ra \GSp_4(\F)$ be a trivial representation and $\psi:G_F \ra \cO^\times$ be a continuous character trivial modulo $\varpi$.

Let $\zeta=(\zeta_1,\zeta_2)$ be a pair of continuous characters $\zeta_i: \cO_F^\times \ra \cO^\times$ that are trivial modulo $\varpi$. We let $D_{\rhobar}^\zeta$ be the functor  taking $A\in \CNL_\cO$ to the set of $A$-valued lifts $\rho: G_F \ra \GSp_4(A)$ such that for any $\sigma\in I_{F}$, the characteristic polynomial of $\rho(\sig)$ is
\begin{align*}
    (X - \zeta_1(\Art_F^\mo(\sigma)))(X - \zeta_2(\Art_F^\mo(\sigma)))(X - \zeta_2(\Art_F^\mo(\sigma)))^\mo(X - \zeta_1(\Art_F^\mo(\sigma)))^\mo.
\end{align*}
Then $D_{\rhobar}^\zeta$ is a local deformation problem. We let $R^\zeta_{\rhobar}\in \CNL_\cO$ be an object representing $D_{\rhobar}^\zeta$. We record the following results on $R^\zeta_{\rhobar}$.

\begin{prop}[Proposition 7.4.7 and 7.4.8 in \cite{BCGP-ab_surf-2018abelian}]\label{prop:Ihara-avoid-def}
\begin{enumerate}
    \item Suppose that $\zeta=1$ is the pair of trivial characters. Then $\spec R^1_{\rhobar}$ is equidimensional of dimension 11 and every generic point has characteristic zero. Moreover, every generic point of $\spec R^1_{\rhobar}/\varpi$ is the specialization of a unique generic point of $\spec R^1_{\rhobar}$.
    \item Suppose that $\zeta_1,\zeta_2\neq 1$ and $\zeta_1 \neq \zeta_2^{\pm 1}$. Then $\spec R^{\zeta}_{\rhobar}$ is irreducible of dimension 11, and its generic point has characteristic zero.
\end{enumerate}
\end{prop}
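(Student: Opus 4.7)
The plan is to reduce both assertions to an explicit calculation on the tame quotient of $G_F$ and then analyze the resulting scheme of matrix pairs. Since $\rhobar$ is trivial and $A \in \CNL_\cO$ is pro-$p$, any lift $\rho: G_F \ra \GSp_4(A)$ sends wild inertia into the pro-$p$ subgroup $1 + \fm_A \cdot M_4(A)$; as wild inertia is pro-$l$ with $l \neq p$, it must lie in the kernel of $\rho$. Hence every lift factors through the tame quotient $G_F^t$, which is topologically generated by a Frobenius lift $\Phi$ and a generator $\tau \in I_F^t$ with relation $\Phi \tau \Phi^{-1} = \tau^{q_F}$. A framed lift is therefore a pair $(\Sigma, T) := (\rho(\Phi), \rho(\tau)) \in \GSp_4(A)^2$ subject to (a) the braid relation $\Sigma T \Sigma^{-1} = T^{q_F}$, (b) $\simc(T) = 1$ and $\simc(\Sigma)$ lifts $\psi(\Phi)$, and (c) the eigenvalue constraint: the characteristic polynomial of $T$ is $(X-z_1)(X-z_2)(X-z_2^{-1})(X-z_1^{-1})$ where $z_i = \zeta_i(\Art_F^{-1}(\tau))$. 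This identifies $\spec R^\zeta_{\rhobar}$ with the scheme of such pairs.

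For item (2) with $\zeta_i \neq 1$ and $\zeta_1 \neq \zeta_2^{\pm 1}$, the four eigenvalues $z_1, z_2, z_2^{-1}, z_1^{-1}$ of $T$ are pairwise distinct in $\cO$. Thus $T$ is automatically semisimple with distinct eigenvalues, and in a versal neighborhood we may change basis (smoothly) so that $T$ becomes the diagonal matrix $\diag(z_1, z_2, z_2^{-1}, z_1^{-1})$. Once $T$ is fixed, the braid relation $\Sigma T \Sigma^{-1} = T^{q_F} = T$ (using $q_F \equiv 1 \mod p$ and that the $z_i$ are $p$-power roots of $1$) forces $\Sigma$ to commute with $T$, hence to be diagonal. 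From here a direct count shows that $\spec R^\zeta_{\rhobar}$ is smooth over $\cO$ of relative dimension $\dim \GSp_4 - \dim T^\vee = 11 - 4$ relative to the framing orbit, giving a single irreducible scheme of dimension $11$ with generic point in characteristic zero. The key input is the distinctness of eigenvalues, which forces semisimplicity of $T$ and converts the braid relation into a linear commutation condition.

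For item (1) with $\zeta = 1$, the eigenvalues of $T$ all collapse to $1$, so $T$ becomes unipotent and the scheme of pairs $(\Sigma, T)$ acquires several irreducible components, stratified by the Jordan type of $T$. The proof proceeds by an explicit resolution: consider the incidence variety parametrizing a pair $(\Sigma, T)$ together with a complete $\Sigma$-stable symplectic flag $F_\bullet$ with respect to which $T$ is upper triangular unipotent. This incidence variety maps to $\spec R^1_{\rhobar}$ and is itself manageable because, once a flag is chosen, the unipotent condition on $T$ and the braid relation linearize in the resulting Borel coordinates. Projection from the resolution produces $\cO$-flat components of $\spec R^1_{\rhobar}$ of dimension $11$, while equidimensionality in characteristic $p$ follows from the semicontinuity of fiber dimension together with the dimension count on the generic fiber (where $\spec R^1_{\rhobar}[1/p]$ is smooth of dimension $11$ by the fixed central character version of the generic $\GSp_4$ local deformation computation). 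The final, and crucial, assertion — that every generic point of the special fiber lifts to a unique generic point of $\spec R^1_{\rhobar}$ — will be extracted from this explicit resolution: each irreducible component of the special fiber of the incidence variety contains a unique component of the generic fiber, and this persists under the projection because the projection is proper, birational over each component, and the generic points of the special fiber are unibranch in the resolution.

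The main obstacle is the last bijection in item (1): showing that generic points of $\spec R^1_{\rhobar}/\varpi$ lift \emph{uniquely} to generic points of $\spec R^1_{\rhobar}$. This is precisely Taylor's Ihara avoidance mechanism. The plan is to follow the $\GL_n$ argument of Taylor and its $\GSp_4$-adaptation: one stratifies by the conjugacy class of the unipotent $T$, shows that each stratum admits a smooth resolution via partial flag data, and then checks directly from the explicit equations on the resolution that the specialization map from generic fiber components to special fiber components is a bijection. The remaining cases of $\zeta$ (e.g.\ $\zeta_1 = \zeta_2$ or $\zeta_i$ of order dividing $2$) are not needed here, since the hypothesis in item (2) rules them out.
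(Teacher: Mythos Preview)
The paper does not prove this proposition; it is quoted as Propositions 7.4.7 and 7.4.8 of \cite{BCGP-ab_surf-2018abelian} with no argument supplied, so there is no proof in the paper to compare against. Your outline follows the standard approach of that reference (and of Taylor's original Ihara avoidance for $\GL_n$): reduce to the tame quotient, parametrize lifts by the pair $(\Sigma,T)=(\rho(\Phi),\rho(\tau))$ subject to the braid relation, and analyze the resulting scheme. But the sketch has real gaps.

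For item (2), the step ``in a versal neighborhood we may change basis (smoothly) so that $T$ becomes diagonal'' is not justified over the full deformation ring: the $z_i$ are all $\equiv 1 \bmod \varpi$, so over the residue field they coincide and $T$ reduces to the identity. The diagonalization works after inverting $p$, which gives smoothness and irreducibility of the generic fiber, but you still owe $\cO$-flatness and control of the special fiber; in \cite{BCGP-ab_surf-2018abelian} this comes from an explicit presentation, not from a diagonalization. Your dimension line ``$\dim\GSp_4 - \dim T^\vee = 11 - 4$'' is also garbled (the diagonal torus of $\GSp_4$ has rank $3$), even though the final answer $11$ is correct. For item (1), you correctly isolate the bijection between generic points of $\spec R^1_{\rhobar}/\varpi$ and of $\spec R^1_{\rhobar}$ as the crux, but you do not prove it: saying it ``will be extracted from this explicit resolution'' and that ``the generic points of the special fiber are unibranch in the resolution'' is precisely the content of the case-by-case analysis of unipotent classes in $\GSp_4$ carried out in \cite{BCGP-ab_surf-2018abelian}, and asserting that such a resolution exists with the right properties is not a substitute for that analysis.
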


\subsection{Weak patching functors}\label{subsec:patching-functors}
Recall the finite \'etale $\Z_p$-algebra $\cO_p$. It can be written as a finite product of finite \'etale local $\Z_p$-algebras $\prod_{v\in S_p}\cO_v$. We write $F_p = \cO_p[1/p] \simeq \prod_{v\in S_p}F_v$. Let $\cJ = \Hom_{\Z_p}(\cO_p, \cO)$.

Let $\rhobar:G_{\Q_p} \ra \LuG(\F)$ be a tame $L$-parameter. We consider $\psip:G_{\Q_p} \ra \prescript{L}{}{\underline{\G_m}(\cO)}$  lifting $\simc(\rhobar)$. Note that $\psip$ is equivalent to a collection $\CB{\psi_v : G_{F_v} \ra \cO^\times}_{v\in S_p}$. We define a set of $\psip$ that is relevant to our applications in \S\ref{sec:applications}.

\begin{defn}
\begin{enumerate}
    \item We define $\Psi(\rhobar)$ to be the set of $\psi_p: G_{\Qp} \ra \prescript{L}{}{\underline{\G_m}(\cO)}$ lifting $\simc(\rhobar)$ and $\psi_v\eps_p^{-w}$ has finite image and is tamely ramified for some integer $w$ and for all $v \in S_p$. 
    \item We say a type $(\lam+\eta,\tau)$ is \textit{compatible with $\psi_p$} if $\simc(\lam_j+\eta_j)=w$ for all $\jj$ and 
    \begin{align*}
        \simc(\tau)\eps_p^w|_{I_{\Qp}} = \psi_p|_{I_{\Qp}}.
    \end{align*}
\end{enumerate}
\end{defn}

\begin{rmk}
In applications, we take $\psi_v$ to be a restriction of a global character $\psi: G_F \ra \cO^\times$ for a totally real field $F$. In the above definition, the constancy of Hodge type and the finiteness of the image of $\psi_v\eps_p^{-w}$ are necessary because any $\psi$ satisfies these conditions (which essentially follow from the compactness of the norm-one subgroup of $ \A_F^\times/F^\times$ and \cite[Theorem 2.43]{GeeAWS}). Also, note that given a tame $L$-parameter $\rhobar$ and a type $(\lam+\eta,\tau)$, $\psip\in \Psi(\rhobar)$ compatible with $(\lam+\eta,\tau)$ always exists and is determined up to twisting by an unramified character of $p$-power order.
\end{rmk}

Let $\psi_p\in \Psi(\rhobar)$. We define 
\begin{align*}
    R_{\rhobar}^{\psi_p}:= \ctimes_{v\in S_p, \cO} R_{\rhobar_v}^{\square,\psi_v}, \ \ R_\infty^{\psi_p}:= R_{\rhobar}^{\psi_p}\ctimes_\cO R^p
\end{align*}
where  
$R^p$ is a complete Noetherian equidimensional flat $\cO$-algebra. For a type $(\lam+\eta,\tau)$ compatible with $\psi_p$, we define
\begin{align*}
    R_{\rhobar}^{\lam+\eta,\tau,\psi_p} := \ctimes_{v\in S_p,\cO} R_{\rhobar_v}^{\lam_v+\eta_v,\tau_v,\psi_v}, \ \ R_{\infty}^{\lam+\eta,\tau,\psi_p}:= R_\infty^{\psi_p} \otimes_{R_{\rhobar}^{\psi_p}} R_{\rhobar}^{\lam+\eta,\tau,\psi_p}.
\end{align*}

Let $\Mod(R_{\infty}^{\psi_p})$ be the category of finitely generated modules over $R_{\infty}^{\psi_p}$ and $\Rep_\cO^{\psi_p}(\GSp_4(\cO_p))$ be the category of topological $\cO[\GSp_4(\cO_p)]$-modules, which are finitely generated over $\cO$, with fixed central character given by $\otimes_{v\in S_p}(\psi_v\eps_p^{-3}|_{I_{F_v}})\circ (\Art_{F_v}|_{\cO_{F_v}})$. Let $\Rep_\F^{\psi_p}(\GSp_4(\cO_p))$ be the full subcategory of $\Rep_\cO^{\psi_p}(\GSp_4(\cO_p))$ consisting of the objects whose underlying modules are $\F$-vector spaces.

\begin{defn}\label{def:patching-functors}
A \emph{(fixed similitude) weak patching functor} for $(\rhobar,\psip)$ is 
a nonzero covariant exact functor
\begin{align*}
    M_\infty^{\psi_p}: \Rep^{\psi_p}_\cO(\GSp_4(\cO_p)) \ra \Mod(R_\infty^{\psi_p})
\end{align*}
satisfying the following conditions: 
\begin{enumerate}[(1)]
    \item for any type $(\lam+\eta,\tau)$ compatible with $\psi_p$ and a  $\GSp_4(\cO_p)$-stable $\cO$-lattice  $\sigma^\circ(\lam,\tau)$ in $\sig(\lam,\tau)$, $M_\infty^\psip(\sigma^\circ(\lam,\tau))$ is a maximal Cohen--Macaulay module over  $R_\infty^{\lam+\eta,\tau,\psip}$ if it is nonzero; and 
    \item for all $\sigma \in \JH(\ov{\sigma}(\lam,\tau))$, $M_\infty^\psip(\sigma)$ is a maximal Cohen--Macaulay module over $R_\infty^{\lam+\eta,\tau,\psip}/\varpi$ if it is nonzero.
\end{enumerate}
Furthermore, we say that
\begin{enumerate}[(i)]
    \item $M_\infty^{\psi_p}$ is \emph{minimal} if $R^p$ is formally smooth over $\cO$ and $M_\infty^{\psi_p}(\sig^\circ(\lam,\tau))[p^\mo]$ is locally free of rank at most one over $R_\infty^{\lam+\eta,\tau,\psip}[p^\mo]$; and
    \item $M_\infty^{\psi_p}$ is \emph{potentially diagonalizable} if $M_\infty^\psip(\sigma^\circ(\lam,\tau))$ is nonzero for all $(\lam+\eta,\tau)$ such that $\rhobar_v$ has a potentially diagonalizable lift of type $(\lam_v+\eta_v,\tau_v)$ for each $v\in S_p$. 
\end{enumerate}
\end{defn}

\subsection{Algebraic automorphic forms}\label{subsec:alg-aut-forms} In this subsection, we recall the global setup in \cite[\S4]{EL}. Let $F$ be a totally real field.
Suppose that $[F:\Q]$ is even. We also assume that $p$ is unramified in $F$. Let  $D$ be a quaternion algebra over $F$ ramified at all infinite places and split at all finite places. Such a $D$ exists because $[F:\Q]$ is even. Choose a maximal order $\cO_D$ of $D$. Then $\cO_D$ is stable under the canonical involution of $D$. We define $\cG$ to be the $\cO_F$-group given by
\begin{align*}
    \cG(R) = \CB{ g\in \GL_2(R\otimes_{\cO_F}\cO_D) \mid g^*g = \mu(g)I \ \text{for some $\mu(g)\in (R\otimes_{\cO_F}\cO_D)^\times$}}
\end{align*}
for any $\cO_F$-algebra $R$. Here, $*$ denotes the composition of transpose and the canonical involution of $D$.  Then $\cG$ is an inner form of $\GSp_4$. The center $Z_\cG$ is isomorphic to $\G_m$, and $\cG$ is compact modulo center at infinity. For each finite place $v$, we fix an isomorphism $\cO_{D,v} \simeq M_2(\cO_{F_v})$ which induces an isomorphism $\iota_v : \cG_{/\cO_{F_v}} \ra \GSp_{4/ \cO_{F_v}}$.

Let $\chi: \A_F^\times/F^\times \ra \C^\times$ be a Hecke character. We write $\chi_{p,\iota} = \iota^\mo \circ \chi$. Let $U = U_p U^{\infty,p} \le \cG(\cO_p) \times \cG(\A^{\infty,p}_F)$ be a compact open subgroup. For a finite place $v$ of $F$, we write $\Iw(v)$ (resp.~$\Iw_1(v)$) for the Iwahori subgroup (resp.~pro-$p$ Iwahori subgroup) of $\cG(F_v)\simeq \GSp_4(F_v)$. Let $W$ be an $\cO$-module with a continuous action of $U_p$. We define $S_\chi(U,W)$ to be the $\cO$-module of functions $f: \cG(F)\bss \cG(\A_F^{\infty}) \ra W$ such that
\begin{align*}
    f(zgu)= \chi_{p,\iota}(z) u_p^\mo f(g) \ \ \  \forall g\in \cG(\A^\infty_{F}), u\in U, z\in Z_\cG(\A^\infty_F).
\end{align*}

Let $U\le \cG(\A_F^{\infty})$ be a compact open subgroup. We assume that $U$ is \textit{sufficiently small} in the sense that it contains no element  of order $p$. For our local applications, we will fix a  finite place $v_0$ of residue characteristic $>5$ such that $q_{v_0}\not\equiv 1 \mod p$. We assume that $U_{v_0}= \Iw_1(v_0)$. In this case, the choice of $U_{v_0}$ ensures that $U$ is sufficiently small. Let $P_U$ be the finite set of finite places of $F$ at which $U$ is ramified. Let $S_p$ be the set of places of $F$ dividing $p$. For a finite set $P$, we define the   \emph{universal Hecke algebra} $\T^{P,\univ}$ to be the polynomial ring over $\cO$ generated by $S_v, T_{v,1}, T_{v,2}$ for each $v \notin P$ and for $v=v_0$ if $v_0 \in P$. When $P$ contains  $S_p\cup P_U$, there is a natural $\T^{P,\univ}$-action on $S_\chi(U,W)$ where   $S_v, T_{v,1}, T_{v,2}$ act through the double coset operators
\begin{align*}\small{
    \BR{U_v\pma{\varpi_v & & & \\
    & \varpi_v & & \\
    & & \varpi_v & \\
    & & & \varpi_v }U_v}, \ \ \BR{U_v\pma{\varpi_v & & & \\
    & \varpi_v & & \\
    & & 1 & \\
    & & & 1 }U_v}, \ \ \BR{U_v\pma{\varpi_v^2 & & & \\
    & \varpi_v & & \\
    & & \varpi_v & \\
    & & & 1 }U_v}}
\end{align*}
respectively. We denote the image of $\T^{P,\univ}$ in $\End_{\cO}(S_{\chi}(U,W))$ by $\T^P_{\chi}(U,W)$.

Let $\psi:= \chi_{p,\iota}\eps_p^{3}$. Let $\ov{r}: G_F \ra \GSp_4(\F)$ be an absolutely irreducible continuous representation and $\simc(\ov{r}) = {\psi} \mod \varpi$. When we have a fixed place $v_0$, we assume that $\rbar|_{G_{F_{v_0}}}$ is a sum of unramified characters with no two eigenvalues of $\Frob_{v_0}$ have ratio $q_{v_0}$. In applications, we can always choose such $v_0$ (\cite[\S 7.7]{BCGP-ab_surf-2018abelian}). We write
\begin{align*}
    \rbar|_{G_{F_{v_0}}} \simeq \ur_{c_1c_2 c_0}  \oplus \ur_{c_1c_0} \oplus \ur_{c_2c_0}  \oplus  \ur_{c_0}
\end{align*}
for $c_1,c_2,c_0 \in \F^\times$. We denote by $P_{\rbar}$ the set of finite places either dividing $p$ or at which $\ov{r}$ is ramified. For any finite set $P\supset P_{\rbar}$, we define a maximal ideal $\fm_{\rbar,\chi}^P \le \T^{P,\univ}$ with residue field $\F$ by demanding that
\begin{enumerate}
    \item for each $v \notin P$,
\begin{align*}
    S_v \mod \fm_{\rbar,\chi}^P = \ov{\psi}(\Frob_v)
\end{align*}
and the characteristic polynomial of $\rbar(\Frob_v)$ in $\F[X]$ is given by
\begin{equation*}
X^4-T_{v,1}X^3+(q_vT_{v,2}+(q_v^3+q_v)S_v)X^2-q_v^3T_{v,1}S_vX+q_v^6S_v^2 \mod \fm_{\rbar,\chi}^P,
\end{equation*}
\item for $v=v_0$ (if $v_0 \in P$), 
\begin{align*}
    S_v \mod \fm_{\rbar,\chi}^P = c_1c_2c_0^2, \ T_{v,1} \mod \fm_{\rbar,\chi}^P = c_1c_2c_0, \ \ T_{v,2} \mod \fm_{\rbar,\chi}^P = c_1^2c_2 c_0^2
\end{align*}
\end{enumerate}
(cf. \cite[\S2.4.7]{BCGP-ab_surf-2018abelian}; note $\fm_{\rbar,\chi}^P$ is well-defined by symplecticity of $\rbar$).

\begin{defn}
\begin{enumerate}

\item We say that a pair $(\rbar,\chi)$ as above is \emph{automorphic of weight $\mu\in (X^*(T)^+)^{S_p}$ and level $U$} if there is a finite set of finite places $P$ containing $P_U \cup P_{\rbar}$ such that $S_\chi(U,V(\mu)^\vee)_{\fm_{\rbar,\chi}^P}\neq 0$.
\item Let $\sigma$ be a Serre weight of $G_0(\F_p)$. We say that $\rbar$ is \emph{modular of weight $\sigma$} (and level $U$), or equivalently, $\sig$ is a \emph{modular weight} of $\rbar$ (at level $U$) if $S_\chi(U,\sigma^\vee)_{\fm_{\rbar,\chi}^P}\neq 0$ for some $\chi$. Note that $S_\chi(U,\sigma^\vee)_{\fm_{\rbar,\chi}^P}$ does not depend on the choice of $\chi$ as long as $\chi_{p,\iota} \eps_p^{3} \mod \varpi = \simc(\rbar)$.
\item We let $W(\rbar)$ be the set of modular Serre weights of $\rbar$. 
\end{enumerate}
\end{defn}

\begin{rmk}\label{rmk:sim-char-aut}
    We remark that $(\rbar,\chi)$ is automorphic for some $\chi$ if and only if $(\rbar,\chi)$ is automorphic for any $\chi$ such that $\chi_{p,\iota}\eps_p^3 \mod \varpi =\simc (\rbar)$. Equivalently, $\rbar$ is a mod $p$ reduction of the Galois representation attached to a regular algebraic cuspidal automorphic representation of $\cG(\A_F)$ (equivalently, of $\GSp_4(\A_F)$); see \cite[Remark 4.2.4]{EL}.
\end{rmk}

Let $(\rbar,\chi)$ be automorphic of weight $\mu$ and level $U$ and $P$ be as above. Suppose that there is a subset $R\subset P$ disjoint from $S_p$ such that for $v\in R$, $U_v = \Iw_1(v)$, $q_v \equiv 1 \mod p$, and $\rbar|_{G_{F_v}}$ is trivial. For each $v\in R$, we choose a pair of characters $\zeta_v = (\zeta_{v,1}, \zeta_{v,2})$ such that
\begin{enumerate}
    \item for $i=1,2$, $\zeta_{v,i}: \cO^\times_{v} \ra \cO^\times$ is a continuous character trivial modulo $\varpi$;
    \item either $\zeta_{v,1}=\zeta_{v,2}=1$, or $\zeta_{v,1},\zeta_{v,2}\neq 1$ and $\zeta_{v,1}\neq\zeta_{v,2}^{\pm 1}$.
\end{enumerate}
Let $T_\der := \ker (T \xra{\simc} \G_m)$. The projection to the first two diagonal entries induces an isomorphism  $T_\der \simeq \G_m^2$.  We write $\zeta_R$ for the character
\begin{align*}
    \zeta_R=\prod_{v\in R} [\ov{\zeta}_v]: \prod_{v\in R} T_{\der}(k_v) \ra \cO^\times.
\end{align*}
We let $\T^{P}_{\chi}(U,V(\mu))_{\zeta_R}$ to be the image of $\T^{{P},\univ}$ in $\End_{\cO}(S_{\chi}(U,V(\mu))_{\zeta_R})$ where the subscript $\zeta_R$ denotes taking the $\zeta_R$-coinvariant for the action of $\Pi_{v\in R} T_{\der}(k_v)$.

\begin{prop}\label{prop:gal-rep-valued-in-Hecke}
Keep the above notations and assumptions. We further assume that $\rbar: G_F \ra \GSp_4(\F)$ is absolutely irreducible. Then there exists a unique continuous representation
\begin{align*}
    r^{P}_{\chi,\mu}(U): G_F \ra \GSp_4(\T^{P}_{\chi}(U,V(\mu))_{\zeta_R,\fm_{\rbar,\chi}^P})
\end{align*}
such that
\begin{enumerate}
    \item $r^{P}_{\chi,\mu}(U)$ lifts $\rbar$;
    \item $\simc(r^{P}_{\chi,\mu}(U(Q)))=\psi$;
    \item if $v\notin P$, then $r^{P}_{\chi,\mu}(U)$ is unramified at $v$ and the characteristic polynomial of $r^{P}_{\chi,\mu}(U)(\Frob_v)$ in $\T^{P}_{\chi}(U,V(\mu))_{\zeta_R,\fm_{\rbar,\chi}^P}[X]$ is equal to
    \begin{align*}
    X^4-T_{v,1}X^3+(q_vT_{v,2}+(q_v^3+q_v)\chi_v(\varpi_v))X^2-q_v^3T_{v,1}\chi_v(\varpi_v) X+q_v^6 \chi_v(\varpi_v)^2,
    \end{align*}
    \item if $v\in R$, then the $\T^{P}_{\chi}(U,V(\mu))_{\zeta_R,\fm_{\rbar,\chi}^P}$-point of $R_{\rbar|_{G_{F_v}}}^\square$ induced by $r^{P}_{\chi,\mu}(U)|_{G_{F_v}}$ factors through $R_{\rbar|_{G_{F_v}}}^{\zeta_v}$;
    \item if $v_0 \in P$ and $v= v_0$, then
    \begin{align*}
        r^P_{\chi,\mu}(U)|_{G_{F_{v}}} \simeq\ur_{T_{v,1}} \oplus \ur_{T_{v,1}/T_{v,2}}  \oplus  \ur_{S_v T_{v,1}/T_{v,2}} \oplus \ur_{S_v/T_{v,1}};
    \end{align*}
    \item and for every $\cO$-algebra homomorphism $f:\T^{P}_{\chi}(U,V(\mu))_{\zeta_R,\fm_{\rbar,\chi}^P}\ra E'$ where $E'$ is a finite extension of $E$, the representation $f\circ r^P_{\chi,\mu}(U)|_{G_{F_v}}$ is de Rham of Hodge type $\mu_v+\eta_v$ for all $v\in S_p$. 
\end{enumerate}
\end{prop}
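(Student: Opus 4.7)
The plan is to follow the well-established strategy of constructing the representation first after inverting $p$, using known results on Galois representations attached to automorphic representations of $\GSp_4$, and then descending to the integral Hecke algebra using absolute irreducibility of $\rbar$. First, I would invert $p$: the space $S_\chi(U,V(\mu))_{\zeta_R,\fm_{\rbar,\chi}^P}[1/p]$ is a finite-dimensional $E$-vector space, so the Hecke algebra $\T:=\T^{P}_{\chi}(U,V(\mu))_{\zeta_R,\fm_{\rbar,\chi}^P}$ is semilocal after $\otimes E$, and its minimal primes correspond to Hecke eigensystems. By Jacquet--Langlands, each such eigensystem lifts to one appearing in the discrete automorphic spectrum of $\GSp_4(\A_F)$, and the regularity of $V(\mu)$ translates to the associated cuspidal automorphic representation being regular algebraic.

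Next, for each such automorphic representation $\pi$, I would invoke the construction of Galois representations of $\cite{Mok-Comp-2014-MR3200667, GeeTaibi19}$ (which is precisely the Arthur-conditional input acknowledged in the introduction) to obtain $r_\pi: G_F \to \GSp_4(\Qpbar)$ satisfying the expected local--global compatibilities. Then I would assemble a pseudo-representation of $G_F$ with values in $\T[1/p]$ whose specializations are the traces of the various $r_\pi$; its value on $\Frob_v$ at an unramified place $v \notin P$ is, by local--global compatibility at $v$ and the definition of the Hecke operators, a polynomial in $T_{v,1}, T_{v,2}, S_v$, hence lies in the integral Hecke algebra $\T$. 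So the pseudo-representation descends to $\T$. Since $\rbar$ is absolutely irreducible, the standard lifting result for pseudo-representations (e.g.\ Carayol's lemma) produces a continuous $r^P: G_F \to \GL_4(\T)$ with the prescribed reduction and trace.

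The genuinely nontrivial part is upgrading $r^P$ to a $\GSp_4$-valued representation with the right similitude character. For each $\pi$, $r_\pi$ is essentially self-dual with multiplier $\psi$ (via an alternating pairing satisfying the sign condition in Definition \ref{def:sym-et-pgma}), and by Schur's lemma this pairing is unique up to scalar. I would glue these pairings across $\pi$'s, use absolute irreducibility of $\rbar$ to show the resulting map $r^P \simeq (r^P)^\vee \otimes \psi$ is defined over $\T$ and alternating with the correct sign (here the sign condition follows by reduction modulo a lift of each minimal prime), and then invoke Lemma \ref{lem:sym-gal-rep-equiv} to obtain the sought-after $r^P_{\chi,\mu}(U): G_F \to \GSp_4(\T)$. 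Uniqueness follows, as usual, from Chebotarev and absolute irreducibility.

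It remains to verify the local properties at the places in $P$. Property (3) at unramified $v$ holds by construction. Property (6) at $v \mid p$ follows because each $r_\pi|_{G_{F_v}}$ is de Rham of Hodge--Tate weights $\mu_v+\eta_v$, a Zariski-closed condition that passes to the full deformation ring. Property (4) at $v \in R$ uses that taking $\zeta_R$-coinvariants on $S_\chi(U,V(\mu))$ forces each $r_\pi|_{G_{F_v}}$ to land in the locus cut out by $R_{\rbar|_{G_{F_v}}}^{\zeta_v}$, i.e.\ the characteristic polynomial of $\rho(\sigma)$ for $\sigma \in I_{F_v}$ is as prescribed; this is a closed condition and hence descends to $\T$. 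Property (5) at $v_0$ follows from local--global compatibility at $v_0$ combined with the chosen shape of $\rbar|_{G_{F_{v_0}}}$ (four distinct eigenvalues with no ratio equal to $q_{v_0}$), which forces the local $\GSp_4$-representation to be a sum of unramified characters whose Frobenius eigenvalues are exactly the Hecke parameters, by an argument identical to \cite[\S2.4.7, \S7.7]{BCGP-ab_surf-2018abelian}. The main obstacle throughout is the upgrade from $\GL_4$ to $\GSp_4$ with the correct alternating sign, and this is the step where one must use the symplectic structure in a nontrivial way; everything else is formal once the input on Galois representations for $\GSp_4$ is granted.
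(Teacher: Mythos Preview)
Your proposal is correct and follows the standard strategy; in fact you are unpacking precisely what the paper obtains by citation. The paper's own proof consists only of references: items (1)--(3) and (6) are quoted from \cite[Proposition 4.2.6]{EL}, while items (4) and (5) are deduced from local--global compatibility \cite[Theorem 3.5]{Mok-Comp-2014-MR3200667} together with \cite[Propositions 2.4.28, 2.4.3, 2.4.4, 7.4.2]{BCGP-ab_surf-2018abelian}. Your sketch---Jacquet--Langlands transfer to $\GSp_4(\A_F)$, attaching $r_\pi$ via \cite{Mok-Comp-2014-MR3200667,GeeTaibi19}, gluing traces into a $\T$-valued pseudo-representation, lifting via Carayol using absolute irreducibility of $\rbar$, and upgrading to $\GSp_4$ by gluing the alternating pairings and invoking Lemma \ref{lem:sym-gal-rep-equiv}---is exactly the argument carried out in \cite{EL}. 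Two small remarks on precision: for item (4) you should point explicitly to the local--global compatibility at $v\in R$ (this is what \cite[Proposition 2.4.28]{BCGP-ab_surf-2018abelian} supplies, not merely the effect of taking $\zeta_R$-coinvariants); and for item (5) the relevant BCGP inputs are Propositions 2.4.3, 2.4.4, and 7.4.2 rather than \S 2.4.7 and \S 7.7, though the underlying mechanism you describe is correct.
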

\begin{proof}
This is proven in \cite[Proposition 4.2.6]{EL} except item (4) and (5). By the local-global compatibility in \cite[Theorem 3.5]{Mok-Comp-2014-MR3200667}, item (4) follows from  \cite[Proposition 2.4.28]{BCGP-ab_surf-2018abelian} and item (5) follows from Proposition 2.4.3, 2.4.4, and 7.4.2 in \loccit.
\end{proof}

\begin{rmk}\label{rmk:exact}
    For any type $(\lam+\eta,\tau)$, if $\JH(\sig(\lam,\tau))$ contains a modular weight of $\rbar$, then $S_{\chi}(U,\sig^\circ(\lam,\tau)^\vee)_{\fm_{\rbar,\chi}^P}\neq 0$ for some $U$ and $\chi$. Then $\rbar_p$ admits a potentially crystalline lift of type $(\lam+\eta,\tau)$ by Proposition \ref{prop:gal-rep-valued-in-Hecke}.
\end{rmk}

\begin{thm}\label{thm:weight-elim}
Let $\rbar: G_F \ra \GSp_4(\F)$, $\chi$, and $\psi$ be as in Theorem \ref{thm:existence-patching-functor}(1). Let $\sig \in W(\rbar)$. We suppose that either $(\rbar|_{G_{F_v}})^\ss$ is $(6(h_\eta+1)-2)$-generic for each $v|p$ or $\sigma$ is $(2h_\eta+1)$-deep and $(\rbar|_{G_{F_v}})^\ss$ is $4(h_\eta+1)$-generic for each $v|p$. Then $\sigma \in W^?(\rbar^\ss_p)$.
\end{thm}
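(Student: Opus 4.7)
The strategy is to apply Lemma~\ref{lem:combinatorial-weight-elim}. Write $\sigma\simeq F(\lam)$ with $\lam\in X^*_1(\uT)$, and for each $s=(s_v)_{v\mid p}\in \uW$ set $\tau_s:=\tau(s,\tilw_h\cdot\lam+\eta)$; the task is to produce, for every $s$, a compatible lowest alcove presentation of $\tau_s$ such that $\tilw(\rbar_p^\ss,\tau_s)\in\Adm(\eta)$. By Theorem~\ref{thm:existence-patching-functor}(1), we obtain a congruent family of fixed similitude patching functors $\{M_\infty^{\psip}\}_{\psip}$ for $\rbar$. Since $\sigma\in W(\rbar)$, Remark~\ref{rmk:patching-nonzero}(1) produces some $\psip_0$ with $M_\infty^{\psip_0}(\sigma)\ne 0$, so $M_\infty^{\psip_0}(\sigma)/\varpi\ne 0$; by the congruence property, $M_\infty^{\psip}(\sigma)/\varpi\ne 0$ for every member $\psip$ of the family.

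Now fix $s\in\uW$ and let $\psip_s$ be the (unique, when it exists) element of the family compatible with the type $(\eta,\tau_s)$. Applying Proposition~\ref{prop:JH-DL} componentwise with the trivial choice $\tilw=1$ (the required condition $\tilw_h\cdot\lam \uparrow \tilw_h\cdot\lam$ is trivial, and the upper arrow relation $\uC_0\uparrow \tilw_h\cdot\uC_0$ is standard) gives $\sigma\in \JH(\ov{\sig}(\tau_s))$. Hence there is a $\GSp_4(\cO_p)$-stable $\cO$-lattice $\sig^\circ(\tau_s)\subset\sig(\tau_s)$ admitting $\sigma$ as a quotient of its mod-$\varpi$ reduction. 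Exactness of $M_\infty^{\psip_s}$ applied to $0\to\sig^\circ(\tau_s)\xra{\varpi}\sig^\circ(\tau_s)\to\ov{\sig}^\circ(\tau_s)\to 0$, combined with the surjection $\ov{\sig}^\circ(\tau_s)\epi\sigma$, produces a surjection $M_\infty^{\psip_s}(\sig^\circ(\tau_s))/\varpi\epi M_\infty^{\psip_s}(\sigma)$ whose target is nonzero. Nakayama's lemma yields $M_\infty^{\psip_s}(\sig^\circ(\tau_s))\ne 0$, and the maximal Cohen--Macaulay property of the patching functor forces $R_{\rbar_v}^{\eta,\tau_{s,v},\psi_v}\ne 0$ for every $v\mid p$; Lemma~\ref{lem:fixed-sim-formally-sm} then upgrades this to $R_{\rbar_v}^{\eta,\tau_{s,v}}\ne 0$.

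The genericity hypothesis is arranged precisely so that $\mu_s:=\tilw_h\cdot\lam$ is sufficiently deep in some $p$-alcove and $\rbar_v^\ss$ is sufficiently generic that Proposition~\ref{prop:pcrys-nonzero}(2) applies to $\tau_{s,v}$, producing the required $\eta$-compatible lowest alcove presentation of $\tau_{s,v}$ with $\tilw(\rbar_v^\ss,\tau_{s,v})\in\Adm(\eta)$. As this holds for every $s\in\uW$ and every $v\mid p$, Lemma~\ref{lem:combinatorial-weight-elim} applied placewise gives $F(\lam_v)\in W^?(\rbar_v^\ss)$ for each $v$, and therefore $\sigma\in W^?(\rbar_p^\ss)$. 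The principal technical obstacle is the genericity bookkeeping: the two cases in the hypothesis correspond to two ways of guaranteeing simultaneously the $12$-depth and $12$-genericity demanded by Lemma~\ref{lem:combinatorial-weight-elim} and the $(h_\eta+1)$-depth of $\mu_s$ needed for Proposition~\ref{prop:pcrys-nonzero}(2)---either through the strong $22$-genericity of $\rbar_v^\ss$ alone (which leaves enough slack even for less deep $\sigma$), or by combining the weaker $16$-genericity of $\rbar_v^\ss$ with the $7$-deepness of $\sigma$, which feeds directly into the depth of $\lam$ and hence of $\tilw_h\cdot\lam$.
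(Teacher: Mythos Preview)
Your argument correctly reproduces the paper's main line when $\lam$ is $12$-deep: construct the types $\tau_s$, use the patching functor to force $R_{\rbar_v}^{\eta,\tau_{s,v}}\neq 0$, apply Proposition~\ref{prop:pcrys-nonzero}(2), and conclude via Lemma~\ref{lem:combinatorial-weight-elim}. The gap is in your final paragraph, where you assert that the two hypotheses of the theorem each ``guarantee simultaneously the $12$-depth and $12$-genericity demanded by Lemma~\ref{lem:combinatorial-weight-elim}.'' They do not. The $12$-depth requirement in that lemma is a condition on $\lam$, and neither hypothesis forces it: in the first case $\sigma$ carries no depth assumption whatsoever, and in the second case $\sigma$ is only assumed $7$-deep. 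Genericity of $\rbar_v^\ss$ is a condition on $\mu_{\rhobar}$, not on $\lam$; no amount of slack there compensates for a shallow $\lam$. So your argument, as written, proves nothing when $\lam$ fails to be $12$-deep.

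The paper closes this gap by a case analysis with contradiction arguments, not by bookkeeping. If $\lam$ is $7$-deep but not $12$-deep, then the principal series type $\tau_e=\tau(e,\tilw_h\cdot\lam+\eta)$ is $4$-generic but not $12$-generic (via \cite[Proposition~2.2.16]{LLHL-Duke-2019MR4007598}); one then invokes \cite[Proposition~3.3.2]{LLHL-Duke-2019MR4007598} for $\GL_4$ to show $\std(\rbar_p)$ admits no potentially crystalline lift of the transferred type, contradicting the nonvanishing you already established. If $\lam$ is not even $7$-deep, one needs the full $22$-genericity of $\rbar_v^\ss$ and appeals to \cite[Theorem~8]{Enns19} (as in the proof of \cite[Corollary~4.2.4]{LLHL-Duke-2019MR4007598}) to rule out lifts of type $(\eta,\tau(e,\lam))$, again a contradiction. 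These two external inputs are what distinguish the two cases in the hypothesis; they are not mere genericity accounting, and your proposal omits them entirely.
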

\begin{proof}
By Remark \ref{rmk:sim-char-aut} and \cite[Lemma 4.4.3]{EL}, we can and do assume that $\psi_v \eps^{-3}$ is finite for $v\in S_p$. By Remark \ref{rmk:exact}, for a type $(\lam+\eta,\tau)$ such that $\JH(\overline{\sig}(\tau))$ contains a modular weight, $\rbar_p$ has a potentially crystalline lift of type  $(\lam+\eta,\tau)$.

Suppose that $\lam$ is $3(h_\eta+1)$-deep. For each $s\in \uW$,  $\tau_s :=\tau (s,\tilw_h\cdot \lam+\eta)$ is $(2h_\eta+3)$-generic and $\sig\in \JH(\osig(\tau_s))$. Thus, $\rbar_p$ admits a potentially crystalline lift of type $(\eta,\tau_s)$.  Following the proof of \cite[Corollary 4.1.12]{LLHL-Duke-2019MR4007598}, we can show that $\sig\in W^?(\rbar_p^\ss)$ using Lemma 4.1.10 in \loccit~(easily generalized to our setting) and Proposition \ref{prop:pcrys-nonzero}(2).

Suppose that $\lam$ is not $3(h_\eta+1)$-deep but $(2(h_\eta+1)-1)$-deep. Then the tame type $\tau_e=\tau (e,\tilw_h\cdot \lam+\eta)$ is $(h_\eta+1)$-generic but not $4(h_\eta+1)$-generic by \cite[Proposition 2.2.16]{LLHL-Duke-2019MR4007598} and its proof. We also have $F(\lam)\in \JH(\ov{\sig}(\tau_e))$. Then $\std(\rbar_p)$ does not admit a potentially crystalline lift of type $(\eta',\std(\tau_e))$ by Proposition \ref{prop:pcrys-nonzero}(1). This contradicts the first paragraph of this proof.

Finally, suppose that $\lam$ is not $(2(h_\eta+1)-1)$-deep and $(\rbar|_{G_{F_v}})^\ss$ is $(6(h_\eta+1)-2)$-generic for each $v|p$. For $v|p$,  $\std(\rbar|_{G_{F_v}})$ does not have a potentially crystalline lift of type $(\eta',\std(\tau(e,\lam)))$ by \cite[Theorem 8]{Enns19} as explained in the proof of \cite[Corollary 4.2.4]{LLHL-Duke-2019MR4007598}. Thus, $\rbar_p$ does not have a potentially crystalline lift of type $(\eta,\tau(e,\lam))$. This again contradicts the first paragraph of this proof.
\end{proof}

\subsection{Patching argument}\label{sub:patching-argument}
In this subsection, we prove the existence of weak patching functors in the global case (i.e.~$\cO_p= \cO_F\otimes_{\Z}\Zp)$ and in the local case (i.e.~$\cO_p=\cO_K$).

\begin{defn}\label{def:odd-TW-conditions}
\begin{enumerate}
    \item We say that a continuous representation $\rbar: G_F \ra \GSp_4(\F)$ is \emph{odd} if for each infinite place $v$ and corresponding choice of complex conjugation $c_v \in G_F$, $\simc(\rbar)(c_v)=-1$.
    
    \item We say that a continuous representation $\rbar: G_F \ra \GSp_4(\F)$  \emph{satisfies Taylor--Wiles conditions} if $\rbar$ is absolutely irreducible, odd, vast, and tidy (in the sense of \cite[\S7.5]{BCGP-ab_surf-2018abelian}).
\end{enumerate}
\end{defn}

\begin{thm}\label{thm:existence-patching-functor}
\begin{enumerate}
    \item Let $\rbar: G_F \ra \GSp_4(\F)$ be a continuous representation. Suppose that $\rbar$ satisfies Taylor--Wiles conditions and $(\rbar,\chi)$ is automorphic (of some Hecke character $\chi$, weight $\mu'$, and level $U'$). 
    Let 
    $\psi:= \chi_{p,\iota}\eps_p^3$. 
    Then for $\psip:=\{\psi|_{G_{F_v}}\}_{v\in S_p}$, there exists a weak patching functor for $(\rbar_p,\psip)$. Moreover, for a Serre weight $\sig$ of $\GSp_4(\cO_F/p)$, $\sig\in W(\rbar)$ if and only if $M_\infty^{\psip}(\sig)\neq 0$.

\item Let $\rhobar: G_K \ra \GSp_4(\F)$ be a $16$-generic continuous representation. Suppose that $\rhobar$ is either tame or maximally non-split of niveau 1 and weight $\sig$. Then for each $\psip\in \Psi(\rhobar)$, there exists a potentially diagonalizable weak patching functor for $(\rhobar,\psip)$. If $\rhobar$ is tame, it can be chosen to be minimal. Moreover, $M^{\psip}_\infty$ is independent of $\psip$ when restricted to $\Rep_\F^{\psi_w}(\GSp_4(\cO_K))$.
\end{enumerate}
\end{thm}

\begin{rmk}\label{rmk:patching-nonzero}
We apply part (2) of Theorem \ref{thm:existence-patching-functor} to prove a version of Breuil--M\'ezard conjecture in \S\ref{sub:BM}. The part (1) will be used to prove Serre weight conjectures (which also requires results in \S\ref{sub:BM}) and a modularity lifting result in \S\ref{sub:SWC} and \ref{sub:modularity-lifting}. 
\end{rmk}

We discuss the structure of the remaining subsection. We first provide a general construction of a weak patching functor by extending the construction in \cite[\S4.4]{EL}, which is based on \cite{6author} and \cite{BCGP-ab_surf-2018abelian}.  Theorem \ref{thm:existence-patching-functor}(1) will follow from this immediately. Theorem \ref{thm:existence-patching-functor}(2) is proven using the general construction together with a globalization of $\rhobar$ and Theorem \ref{thm:weight-elim}. We finish the subsection with the proof of Lemma \ref{lem:pcrys-stack-nonempty} and \ref{lem:lowerbound-mod-p-crys-stack}, thus completing the proof of Theorem \ref{thm:pcrys-local-model} and \ref{thm:mod-p-local-model}.

\subsubsection{Construction of weak patching functors}\label{subsubsec:construction-of-patching-functors}

Let $F$, $\rbar$, and $\chi$ be as in Theorem \ref{thm:existence-patching-functor}(1). 
Let  $S_p$ be the set of places of $F$ dividing $p$ and $S$ be a finite set of finite places containing $S_p$. We define
\begin{align*}
    q = h^1(F_S/F, \ad(\rbar)(1)), \ \ \  g= 2q-4[F:\Q_p] + \abs{S} -1.
\end{align*}
For $T\subset S$, we define $\cT_T:= \cO\DB{y_1, \dots, y_{11\abs{T}-1}}$. We also define $S_\infty:= \cT_S\DB{\Z_p^{2q}}$. 
Viewing $S_\infty$ as an augmented $\cO$-algebra, we let $\fa_\infty$ denote the augmentation ideal of $S_\infty$.

Suppose that $S$ contains $S_{\rbar}\cup S_U$. Let $R$ be a subset of $S$ disjoint from $S_p$. We assume that for each $v\in R$, $U_v = \Iw_1(v)$, $q_v \equiv 1 \mod p$, and $\rbar|_{G_{F_v}}$ is trivial. The presence of $R$ is necessary for the ``Ihara avoidance'' argument used in the proof of Theorem \ref{thm:modularity-lifting} (see Lemma \ref{lem:modularity-lifting-lemma}).    
For each $v\in R$, we choose a pair of characters $\zeta_v = (\zeta_{v,1}, \zeta_{v,2})$ as in \S\ref{subsec:lneqp} which induce a character $\zeta_R : \prod_{v\in R} T_{\der}(k_v) \ra \cO^\times$.

For each $v\in S$, let $\cD_v \subset \cD_v^{\square,\psi|_{G_{F_v}}}$ be a local deformation problem represented by  $R_v\in \CNL_\cO$. If $v\in R$, we take $\cD_v = \cD_v^{\zeta_v}$ defined in \S\ref{subsec:lneqp}. We consider a global deformation problem
\begin{align*}
    \cS=(S,\{\cD_v\}_{v\in S}, \psi).
\end{align*}
If $T$ is a subset of $S$, we write $\cD_\cS^T$ for the functor of $T$-framed deformations of type $\cS$. Since $\rbar$ is absolutely irreducible, $\cD_\cS$ (resp.~$\cD_\cS^T$) is represented by $R_\cS$ (resp.~$R^{T}_\cS$) in $\CNL_\cO$. The choice of a universal lift $r_{\cS} :G_F \ra \GSp_4(R_\cS)$ gives an isomorphism $R_\cS\otimes_\cO \cT_T \simeq R_{\cS}^T$. Let $R_{\cS}^{T,\loc}:=\ctimes_{v\in T} R_v$. Then there exists a natural map $R_{\cS}^{T,\loc} \ra R_{\cS}^{T}$.

Given a Taylor--Wiles datum $(Q,(\ov{\al}_{v,1}, \dots, \ov{\al}_{v,4})_{v\in Q})$ (\cite[Definition 4.4.6]{EL}), we define the augmented deformation problem
\begin{align*}
    \cS_Q=(S\cup Q, \{\cD_v\}_{v\in S} \cup \{\cD_v^{\square,\psi|_{G_{F_v}}}\}_{v\in Q}, \psi).
\end{align*}
For each $v\in Q$, let $\Del_v= k_v^\times(p)^2$ where $k_v^\times(p)$ is the maximal $p$-power quotient of $k_v^\times$. Set $\Del_Q =\prod_{v\in Q}\Del_v$. Let $\fa_Q$ denote the augmentation ideal of $\cO[\Del_Q]$. Then there is a canonical local ring homomorphism $\cO[\Del_Q] \ra R_{\cS_Q}^S$ such that $R_{\cS_Q}^S/\fa_Q \simeq R_{\cS}^S$. 
Since $\rbar$ is odd and vast, \cite[Corollary 7.6.3]{BCGP-ab_surf-2018abelian} shows that for each $n\ge 1$, there exists a Taylor--Wiles datum  $Q_n$ disjoint from $S$ such that
\begin{itemize}
    \item $\abs{Q_n}=q$;
    \item $q_v \equiv 1 \mod p^n$ for each $v\in Q_n$;
    \item there exists a local $\cO$-algebra surjection
    \begin{align*}
        \varphi_n^{\psip}: R_{\cS}^{S,\loc}\DB{x_1,\dots, x_g} \epi R_{S_{Q_n}}^S
    \end{align*}
    with $g:=2q-4[F:\Q] + \abs{S} -1$.
\end{itemize}

We define open compact subgroups $U^{p}_1(Q_n)$ of $\cG(\A^{\infty,p}_F)$ by setting $U^{p}_1(Q_n)_v = U^{p}_v$ if $v\notin Q\cup S$ and $U^{p}_1(Q_n)_v = \Iw_1(v)$ if $v\in Q$. 
We define a $\cG(\cO_p)$-patching datum in the sense of \cite[Definition 4.3.5]{EL}
\begin{align*}
    \PR{S_\infty, R_\infty^{\psip}, (R_n^{\psip}, \varphi_n^{\psip},\{M_r^{\psip}(H)_n\}_{r\ge 1, H\le_{\mathrm{c.o.}}\cG(\cO_p)}, \al_n^{\psip})_{n\ge 1},\{M_r^{\psip}(H)_0\}_{r\ge 1,H\le_{\mathrm{c.o.}}\cG(\cO_p)}  }
\end{align*}
by setting
\begin{itemize}
    \item $R_\infty^\psip:=R_{\cS}^{S,\loc}\DB{X_1,\dots, X_g}$; 
    \item $R_n^\psip:=R_{\cS_{Q_n}}^S$ with $S_\infty$-algebra structure induced by $S_\infty\epi \cO[\Del_{Q_n}]\otimes_\cO \cT_S \ra R^S_{\cS_{Q_n}}$;
    \item $M_r^\psip(H)_0 := [S_\chi(H\cdot U^{p},\cO/\varpi^r)_{\fm_{\rbar,\chi}^{S}, \zeta_R}]^\vee$ where the subscript $\zeta_R$ denotes the $\zeta_R$-coinvariant for the action of $\prod_{v\in R}T_\der(k(v))$; and
    \item for $n\ge 1$,
    \begin{align*}
        M_r^\psip(H)_n &:=[S_{\chi}(H\cdot U^{p}_1(Q_n), \cO/\varpi^r)_{\fm_{\rbar,\chi}^{S\cup Q_n},\fm_{Q_n},\zeta_R}]^\vee\otimes_{R_{\cS_{Q_n}}}R_n^\psip \\
         \al_n^\psip&:M_r^\psip(H)_n /\fa_\infty \simeq M_r^\psip(H)_0
    \end{align*}
    where $S_{\chi}(H\cdot U^{p}_1(Q_n), \cO/\varpi^r)_{\fm_{\rbar,\chi}^{S\cup Q_n},\fm_{Q_n},\zeta_R}$ is viewed as an $R_{\cS_{Q_n}}$-module via the usual Hecke action and Proposition \ref{prop:gal-rep-valued-in-Hecke}, $\fm_{Q_n}$ denotes the kernel of $\otimes_{v\in Q_n}\cO[T(F_v)/T(\cO_v)_1] \ra \F$ sending
    \begin{align*}
       T(\cO_v)/T(\cO_v)_1 \mapsto 1,  \be_0(\varpi_v) \mapsto \chi_v(\varpi_v) , \be_1(\varpi_v) \mapsto \ov{\al}_{v,1}, \textrm{and } \be_2(\varpi_v) \mapsto \ov{\al}_{v,1}\ov{\al}_{v,2},
    \end{align*}
    and the isomorphism $\al_n^\psip$ follows from \cite[\S2.4.29]{BCGP-ab_surf-2018abelian}.
\end{itemize}
Fix a nonprincipal ultrafilter $\cF\subset 2^{\N}$. By \cite[Lemma 4.3.9]{EL}, 
\begin{align*}
    M_{\rbar,\infty}^\psip:= \varprojlim_{r,H}\cU_\cF(\{M_r^{\psip}(H)_n\otimes_{S_\infty}S_\infty/\fm_{S_\infty}^r\}_{n\ge 1})
\end{align*}
is a finitely generated projective $S_\infty\DB{\GSp_4(\cO_p)}$-module with compatible $S_\infty[\GSp_4(F_p)]$-action and the action of $S_\infty$ factors through the map $S_\infty \ra R_\infty^\psip$ induced by $S_\infty \ra R_n^\psip$. We also let $M_{\rbar,\infty}^{\psip}$ denote the covariant functor
\begin{align*}
    \Rep^\psip_\cO(\GSp_4(\cO_p)) & \ra \Mod(R_\infty^\psip) \\
    V &\mapsto \Hom_{\cO[\GSp_4(\cO_p)]}(M_{\rbar,\infty}^{\psip}, V^\vee)^\vee.
\end{align*}

\begin{proof}[Proof of Theorem \ref{thm:existence-patching-functor}(1)]
We can assume that the level $U'$ is sufficiently small by shrinking it if necessary. We apply the above construction for the given $\rbar$, $\chi$, $S=S_{\rbar}\cup S_U$, $R=\emptyset$, and $\cD_v = \cD_v^{\square,\psi|_{G_{F_v}}}$ for each $v\in S$ and set $M_\infty^{\psip} := M_{\rbar,\infty}^{\psip}$. Since $M_\infty^\psip$ is projective over $\cO\DB{\GSp_4(\cO_p)}$, $M_\infty^\psip$ is an exact functor. Let $(\lam+\eta,\tau)$ be a type compatible with $\psip$ and $\sig \in \JH(\osig(\lam,\tau))$. 
It follows from \cite[Theorem A]{BG} and \cite[Proposition 7.4.7, 7.4.8]{BCGP-ab_surf-2018abelian} that $\dim S_\infty = \dim R^{\lam +\eta,\tau,\psip}_\infty$. Then $M_\infty^{\psip}(\sig^\circ(\lam,\tau))$ is defined over $R^{\lam +\eta,\tau,\psip}_\infty$ by Theorem \ref{thm:inertial-LLC} and Proposition \ref{prop:gal-rep-valued-in-Hecke}. It is maximal Cohen--Macaulay by the usual commutative algebra argument (e.g.~\cite[Lemma 4.18]{6author}). The maximal Cohen--Macaulayness of $M_\infty^{\psip}(\sig)$ over $R^{\lam +\eta,\tau,\psip}_\infty/\varpi$ follows similarly. 
Finally, for a Serre weight $\sig$, we have $\sig\in W(\rbar)$ if and only if $M_\infty^{\psip}(\sig)\neq 0$ because by construction,
\begin{align*}
    (M_\infty^{\psip}(\sig)/\fm_\infty)^\vee \simeq S_{\chi}(U,\sig^\vee)_{\fm_{\rbar,\chi}^P}. &\qedhere
\end{align*}
\end{proof}

\begin{proof}[Proof of Theorem \ref{thm:existence-patching-functor}(2)]
    
When $\rhobar$ is tame (resp.~maximally non-split of niveau 1 and weight $\sig$), we call it the tame case (resp.~the maximally non-split case). In both cases, we apply \cite[Lemma 4.4.5]{EL} and the existence of potentially diagonalizable lifts (Theorem \ref{thm:crys-lift}) to obtain a totally real field $F$, a continuous representation $\rbar: G_F \ra \GSp_4(\F)$, and a Hecke character $\chi$ such that
\begin{itemize}
    \item $F_v \simeq K$ for all $v|p$;
    \item $\rbar|_{G_{F_v}}\simeq \rhobar$;
    \item $\rbar$ is unramified at all finite places not dividing $p$;
    \item $\rbar$ satisfies Taylor--Wiles conditions; and
    \item $(\rbar,\chi)$ is potentially diagonalizably automorphic of level unramified outside $p$.
\end{itemize}

Fix a place $w\in S_p$. Let $\psi_w\in \Psi(\rhobar)$ such that $\psi_w \eps^{-a}$ is finite for $a\in \Z$. By Remark \ref{rmk:sim-char-aut} and \cite[Lemma 4.4.3]{EL}, we can and do assume that $\psi:= \chi_{p,\iota}\eps^{3}$ satisfies $\psi_w\simeq \psi|_{G_{F_v}}$ for all $v\in S_p$. Since $\rbar$ is vast and tidy, there exists a place $v_0\notin S_p$ as in \S\ref{subsec:alg-aut-forms} such that 
no two eigenvalues of $\rbar(\Frob_{v_0})$ have ratio $q_{v_0}$ (\cite[\S7.7]{BCGP-ab_surf-2018abelian}). 
We take $U^p\le \cG(\A_F^{\infty,p})$ by setting $U_v = \cG(\cO_v)$ for $v \neq v_0$ and $U_{v_0}= \Iw_1(v_0)$. 
We apply the construction in \S\ref{subsubsec:construction-of-patching-functors} by taking $S= S_p\cup\{v_0\}$, $R=\emptyset$, $\cD_v = \cD_v^{\square,\psi_v}$ for each $v\in S$ and obtain a functor $M_{\rbar,\infty}^{\psip}$. 

For $a\in \Z$ as above, let $k$ be the unique integer such that $a=2k+b$ where $b\in \{3,4\}$. We define $\lam_a\in X_*(\uT^\vee)$ as
\begin{align*}
    \lam_a = \left\{  \begin{array}{cc}
        k(1,1;2) & \text{if $b=3$} \\
        (1,1;1) + k(1,1;2) &  \text{if $b=4$}
    \end{array}\right.
\end{align*}
We let $\tau_a$ be a tame inertial type such that 
$\til{w}(\rhobar^\ss,\tau_a) = t_{u(\lam_a+\eta)}$ for a fixed $u\in \uW$ depending on $\sig$ (in the maximally non-split case, by taking $\tau_a$ as in Lemma \ref{lem:max-nonsplit-rhobar-lifting}). We take the ring $R^p$ in \S\ref{subsec:patching-functors} to be
\begin{align*}
    (\ctimes_{v\in S_p\bss\{w\}} R_{\rbar|_{G_{F_v}}}^{\lam_a+\eta,\tau_a,\psi_v})\ctimes_\cO R_{\rbar|_{G_{F_{v_0}}}}\DB{x_1,\dots, x_g}. 
\end{align*}
Note that in the tame case,  $R^p$  is formally smooth over $\cO$ by Theorem \ref{thm:longest-shape-def-ring} and \cite[Proposition 7.4.2]{BCGP-ab_surf-2018abelian}. Set  $R^{\psi_w}_\infty = R_{\rhobar}^{\square,\psi_w}\ctimes_\cO R^p$.
We define a functor 
\begin{align*}
    M_\infty^{\psi_w}: \Rep_\cO(\GSp_4(\cO_K)) & \ra \Mod(R^{\psi_w}_\infty) \\
     V & \mapsto \Hom_{\cO[\GSp_4(\cO_p)]}(M_{\rbar,\infty}^{\psi_p}, ((\otimes_{v\in S_p\bss \{w\},\cO} \sig^\circ(\lam_a,\tau_a)) \otimes_\cO V)^\vee )^\vee.
\end{align*}
Then $M_\infty^{\psi_w}$ is a weak patching functor by the same argument in the proof of Theorem \ref{thm:existence-patching-functor}(1). 

Note that, for each $v\in S_p\bss\{w\}$, there exists a potentially diagonalizable lift of $\rbar|_{G_{F_v}}$ of type $(\lam_a+\eta,\tau_a)$ by Theorem \ref{thm:longest-shape-def-ring} (for the tame case) and Lemma \ref{lem:max-nonsplit-rhobar-lifting} (in the maximally non-split case). If $\rho$ is a potentially diagonalizable lift of $\rhobar$ of type $(\lam+\eta,\tau)$ compatible with $\psi_w$, we can apply \cite[Theorem 3.4]{patrikis-tang-2020potential} to find a potentially diagonalizable lift $r$ of $\rbar$ with $\simc(r)=\psi$. By \cite[Lemma 4.4.4]{EL}, $r$ is automorphic, which implies that $M_\infty^{\psi_w}(\sig^\circ(\lam,\tau))$ is nonzero. In conclusion, $M_\infty^{\psi_w}$ is potentially diagonalizable.

We show that $M_\infty^{\psi_w}$ is minimal in the tame case. 
We already showed that $R^p$ is formally smooth. 
Since 
$R_\infty^{\lam_a+\eta,\tau_a,\psi_w}[1/p]$ is regular, $M_\infty^{\psi_w}(\sig^\circ(\lam,\tau))[1/p]$ is locally free over its support. Moreover, it has rank one; this can be checked at finite level, which follows from the multiplicity one assertion in Theorem \ref{thm:inertial-LLC}, the choice of $v_0$ and Hecke operators at $v_0$, and \cite[Proposition 2.4.3, 2.4.4]{BCGP-ab_surf-2018abelian}. 

Finally, we show that $M_\infty^{\psi_w}$ is independent of $\psi_w$ when restricted to $\Rep_\F^{\psi_w}(\GSp_4(\cO_K))$. By \cite[Corollary 2.6.5]{LLHLM-2020-localmodelpreprint}, we have
\begin{align*}
    \JH(\osig(\lam_a,\tau_a))\cap W^?(\rhobar^\ss|_{I_K}) = \{F_{\rhobar}(u)\}.
\end{align*}
For $V\in \Rep^{\psi_w}_\F(\GSp_4(\cO_K))$,  Theorem \ref{thm:weight-elim} implies 
\begin{align*}
    M_\infty^{\psi_w}(V) &= \Hom_{\cO[\GSp_4(\cO_p)]}(M_{\rbar,\infty}^{\psi_p}, (\otimes_{v\in S_p\bss \{w\}} \ov{\sig}(\lam_a,\tau_a) )^\vee \otimes_\cO V^\vee)  \\
    &= \Hom_{\cO[\GSp_4(\cO_p)]}(M_{\rbar,\infty}^{\psi_p}/\varpi, (\otimes_{v\in S_p\bss \{w\}} F_{\rhobar}(e))^\vee \otimes_\F V^\vee).
\end{align*} 
Note that the choices of Taylor--Wiles data and the nonprincipal ultrafilter in the construction in \S\ref{subsubsec:construction-of-patching-functors} relies only on $\rbar$ and is independent of $\psi_w$.  As a result, $M_{\rbar,\infty}^{\psip}/\varpi$ is independent of $\psi_w$. Together with the above equation, this implies the claim.
\end{proof}

Before we proceed to prove Lemma \ref{lem:pcrys-stack-nonempty} and \ref{lem:lowerbound-mod-p-crys-stack}, we first prove the following partial converse of Proposition \ref{prop:pcrys-nonzero}.

\begin{prop}\label{prop:pcrys-nonzero-iff}
Suppose that a tame $\rhobar: G_K \ra \GSp_4(\F)$ is 16-generic. Let $(\lam+\eta,\tau)$ be a type with a $(h_{\lam}+1)$-generic lowest alcove presentation of $\tau$. If $\rhobar$ has a 
lowest alcove presentation such that $\tilw(\rhobar,\tau)^* \in \Adm^\vee(\lam+\eta)$, then $R^{\lam+\eta,\tau}_{\rhobar}\neq 0$.
\end{prop}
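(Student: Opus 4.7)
The plan is to reduce to a ``longest shape'' auxiliary tame type via the congruent family of patching functors from Theorem \ref{thm:existence-patching-functor}(2). First, the admissibility condition $\tilw(\rhobar,\tau)^*\in \Adm^\vee(\lam+\eta)$ produces an element $w\in\uW$ with $\tilw(\rhobar,\tau) \le t_{w(\phi^\mo(\lam)+\eta)}$ in the Bruhat order on $\utilW$. Translating the lowest alcove presentation of $\tau$ by an appropriate element of $X^*(\uT)$, I will construct an auxiliary tame inertial $L$-parameter $\tau_w$ with a lowest alcove presentation $\lam$-compatible with the same central character $\zeta\in X^*(\uZ)$ as $\tau$ and satisfying $\tilw(\rhobar,\tau_w) = t_{w(\phi^\mo(\lam)+\eta)}$, i.e.~$\tilw(\rhobar,\tau_w) \in \uW(\lam+\eta)$. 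Theorem \ref{thm:longest-shape-def-ring} then shows that $R_\rhobar^{\lam+\eta,\tau_w}$ is formally smooth over $\cO$ of relative dimension $4f+11$, hence nonzero, and that every lift of $\rhobar$ of type $(\lam+\eta,\tau_w)$ is potentially diagonalizable.

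Let $\{M_\infty^{\psi_p}\}_{\psi_p\in\Phi(\rhobar)}$ be the congruent family of potentially diagonalizable fixed similitude patching functors, and let $\psi_{p,w},\psi_p' \in \Phi(\rhobar)$ be compatible with $(\lam+\eta,\tau_w)$ and $(\lam+\eta,\tau)$ respectively. By the previous paragraph and Definition \ref{def:patching-functors}(ii), $M_\infty^{\psi_{p,w}}(\sig^\circ(\lam,\tau_w))\neq 0$. Exactness gives a Jordan--H\"older factor $\sig$ of $\ov{\sig}(\lam,\tau_w)$ with $M_\infty^{\psi_{p,w}}(\sig)\neq 0$. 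Since the local patching functor is constructed from a global one for which Theorem \ref{thm:weight-elim} applies at the relevant place, the support of $M_\infty^{\psi_{p,w}}$ on Serre weights lies in $W^?(\rhobar|_{I_K})$. Applying Lemma \ref{lem:JH-factor-longest-shape} to the longest-shape pair $(\lam,\tau_w)$ pins down $\sig = F_\rhobar(w)$. Because $F_\rhobar(w)$ is a $\varpi$-torsion representation, the congruence property of the family yields $M_\infty^{\psi_p'}(F_\rhobar(w)) = M_\infty^{\psi_{p,w}}(F_\rhobar(w))\neq 0$ as $R_\infty/\varpi$-modules.

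It remains to verify the combinatorial statement $F_\rhobar(w)\in \JH(\ov{\sig}(\lam,\tau))$. Via Proposition \ref{prop:JH-factor-lam-tau}, this amounts to exhibiting an admissible pair $(\tilw_1,\tilw_2) \in \AP(\phi^\mo(\lam)+\eta)$ such that $F_{(\tilw_1,\tilw(\tau)\tilw_2^\mo(0))}\simeq F_\rhobar(w)$. The candidate pair arises directly from the Bruhat inequality $\tilw(\rhobar,\tau) \le t_{w(\phi^\mo(\lam)+\eta)}$: taking $\tilw_2 = \hat{w}$ and $\tilw_1 = \tilw_h^\mo\hat{w}$ (so that the admissibility condition $\tilw_1\uparrow t_{\phi^\mo(\lam)}\tilw_h^\mo\tilw_2$ is automatic), the resulting Serre weight matches $F_\rhobar(w)$ because the $\lam$-compatibility of the lowest alcove presentations forces $\tilw(\tau)\tilw_2^\mo(0)$ to coincide with the corresponding component in the lowest alcove presentation of $F_\rhobar(w)$. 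Given this, exactness of $M_\infty^{\psi_p'}$ gives $M_\infty^{\psi_p'}(\sig^\circ(\lam,\tau))\neq 0$, so $R_\rhobar^{\lam+\eta,\tau,\psi_p'}\neq 0$, and the claim follows from Lemma \ref{lem:fixed-sim-formally-sm}.

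The principal obstacle is the combinatorial identification of $F_\rhobar(w)$ as a Jordan--H\"older factor of $\ov{\sig}(\lam,\tau)$ using only the Bruhat inequality; this requires careful bookkeeping between admissibility in $\utilW$ and the upper-arrow ordering parameterizing admissible pairs, and must be done respecting the $\lam$-compatibility with the central character $\zeta$. A subsidiary technical point is the local version of weight elimination invoked above, which is available precisely because the local patching functor in Theorem \ref{thm:existence-patching-functor}(2) is realized through a global one whose support satisfies Theorem \ref{thm:weight-elim} under the genericity hypotheses in force.
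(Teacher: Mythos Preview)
Your strategy is close in spirit to the paper's, but there is a genuine genericity gap. You apply Theorem \ref{thm:longest-shape-def-ring} to the auxiliary type $(\lam+\eta,\tau_w)$, which requires $\tau_w$ to have a $(2h_{\lam+\eta}+1)$-generic lowest alcove presentation. However, $\tau_w$ is determined by the relation $\tilw(\tau_w)=\tilw(\rhobar)t_{-w(\phi^\mo(\lam)+\eta)}$, so its genericity is controlled by that of $\rhobar$ minus a shift of size $h_{\lam+\eta}$; since $\rhobar$ is only assumed $16$-generic, $\tau_w$ is at best roughly $(16-h_{\lam+\eta})$-generic. For $h_\lam$ larger than about $2$ this is insufficient, and the proposition is stated (and applied in Lemma \ref{lem:pcrys-stack-nonempty}) for arbitrary $\lam$. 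The paper circumvents this by taking auxiliary types $(\eta,\tau_s)$ of Hodge type $\eta$ for \emph{every} $s\in\uW$; these only require $7$-genericity, which the $16$-genericity of $\rhobar$ guarantees. This yields $M_\infty^{\psi_p}(F_\rhobar(s))\neq 0$ for all $s$, i.e.\ modularity of every obvious weight, and the conclusion then follows from the combinatorial input \cite[Proposition~2.6.6]{LLHLM-2020-localmodelpreprint}, which asserts $\JH(\osig(\lam,\tau))\cap W_{\obv}(\rhobar)\neq\emptyset$ whenever $\tilw(\rhobar,\tau)\in\Adm(\phi^\mo(\lam)+\eta)$.

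Your combinatorial sketch for $F_\rhobar(w)\in\JH(\osig(\lam,\tau))$ is also incomplete: the candidate pair $(\tilw_1,\tilw_2)=(\tilw_h^\mo\hat w,\hat w)$ is chosen independently of $\tau$, so the claimed identification $F_{(\tilw_1,\tilw(\tau)\tilw_2^\mo(0))}\simeq F_\rhobar(w)$ cannot be forced by $\lam$-compatibility alone (that only fixes a coset in $X^*(\uZ)$), and the Bruhat inequality $\tilw(\rhobar,\tau)\le t_{w(\phi^\mo(\lam)+\eta)}$ has not actually entered. The correct statement is precisely \cite[Proposition~2.6.6]{LLHLM-2020-localmodelpreprint}, whose proof does use the admissibility hypothesis in an essential way. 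If you rework your argument with Hodge-type-$\eta$ auxiliary types and invoke that proposition for the final step, you recover the paper's proof.
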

 
\begin{proof}
By Theorem \ref{thm:existence-patching-functor}(2), there exists a potentially diagonalizable weak patching functor $M_\infty^{\psip}$ for $(\rhobar,\psip)$ where $\psip$ is compatible with $(\lam+\eta,\tau)$. We claim that $M_\infty^{\psip}(\sig)\neq 0$ for all $\sig\in W_{\obv}(\rhobar)$.
Since $\JH(\ov{\sig}(\lam,\tau))\cap W_{\obv}(\rhobar) \neq \emptyset$ by \cite[Proposition 2.6.6]{LLHLM-2020-localmodelpreprint}, the claim implies $M_\infty^{\psip}(\osig^\circ(\lam,\tau)) \neq 0$ and thus $R^{\lam+\eta,\tau}_{\rhobar}\neq 0$. To prove the claim, we can use Corollary 2.6.5 in \loccit, Theorem \ref{thm:longest-shape-def-ring} and \ref{thm:weight-elim}.
\end{proof} 
 
\begin{proof}[Proof of Lemma \ref{lem:pcrys-stack-nonempty}]
The proof is identical to \cite[Lemma 7.3.5]{LLHLM-2020-localmodelpreprint} using Corollary \ref{cor:BK-stack-nonempty} and Proposition \ref{prop:pcrys-nonzero-iff} instead of Corollary 5.3.5 and Proposition 6.2.7 in \loccit.
\end{proof}

\begin{proof}[Proof of Lemma \ref{lem:lowerbound-mod-p-crys-stack}]
This follows from the argument in the proof of Lemma 7.4.4 and Proposition 6.2.9 in \cite{LLHLM-2020-localmodelpreprint} using Theorem \ref{thm:existence-patching-functor}(2) and Lemma \ref{lem:max-nonsplit-rhobar-lifting} instead of Proposition 6.2.4 and Lemma 6.2.8 in \loccit.
\end{proof}

\section{Applications}\label{sec:applications}

\subsection{The Breuil--M\'ezard conjecture}\label{sub:BM}

We first state the \emph{geometric} and \emph{versal} Breuil--M\'ezard conjectures for $\GSp_4$ following \cite[\S 8.1]{LLHLM-2020-localmodelpreprint}.

Recall that the algebraic stack $\cX_{\Sym,\red}$ is equidimensional of dimension $4f$ and its irreducible components are labeled by Serre weights of $\GSp_4(k)$. We write $\Z[\cX_{\Sym,\red}]$ for the free abelian group generated by irreducible components of $\cX_{\Sym,\red}$. We call elements in $\Z[\cX_{\Sym,\red}]$ \emph{cycles} and $\cC_\sigma \in \Z[\cX_{\Sym,\red}]$ for a Serre weight $\sigma$ an \emph{irreducible cycle}. A cycle is \emph{effective} if it is a sum of irreducible cycles with non-negative coefficients.

For a type $(\lam+\eta,\tau)$, we define a cycle $\cZ_{\lam,\tau}:= \sum_{\sigma} \mu_\sigma (\cX_{\Sym,\F}^{\lam+\eta,\tau}) \cC_\sigma \in \Z[\cX_{\Sym,\red}]$ where $\mu_\sigma(\cX_{\Sym,\F}^{\lam+\eta,\tau})$ is the multiplicity of $\cC_\sigma$ as an irreducible component of $\cX_{\Sym,\F}^{\lam+\eta,\tau}$ in the sense of \cite[\href{https://stacks.math.columbia.edu/tag/0DR4}{Tag 0DR4}]{stacks-project}. 

\begin{conj}[$\cS$-restricted geometric Breuil--M\'ezard conjecture for $\GSp_4$]\label{conj:geom-BM} Let $\cS$ be a set of types. For each $\sigma \in \JH(\ov{\sigma}(\cS)):= \cup_{(\lam+\eta,\tau)\in \cS} \JH(\osig(\lam,\tau))$,
there exists an effective cycle $\cZ_\sigma\in \Z[\cX_{\Sym,\red}]$ such that for all $(\lam+\eta,\tau)\in \cS$, 
\begin{align*}
    \cZ_{\lam,\tau} = \sum_{\sigma\in \JH(\ov{\sigma}(\lam,\tau))} [\osig(\lam,\tau):\sig]\cZ_\sig.
\end{align*}
\end{conj}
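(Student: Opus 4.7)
The plan is to follow the two-step strategy sketched in the introduction, namely first establish a versal version of the identity at tame $\rhobar$ and then interpolate globally via the axiomatic framework of \cite[\S 8.3]{LLHLM-2020-localmodelpreprint}. For the versal step, fix a tame $\rhobar: G_K \to \GSp_4(\F)$ that is sufficiently generic (depending on the set $\cS$), and apply Theorem \ref{thm:existence-patching-functor}(2) to obtain a congruent family $\{(\psip, R_\infty^\psip, M_\infty^\psip)\}_{\psip \in \Psi(\rhobar)}$ of potentially diagonalizable minimal fixed similitude patching functors for $\rhobar$. For a type $(\lam+\eta,\tau) \in \cS$ with $\tau$ sufficiently generic and $\psip$ the unique similitude character compatible with $(\lam+\eta,\tau)$, Theorem \ref{thm:pcrys-local-model}(2) (i.e.\ Corollary \ref{corx:domain}) ensures that $R_{\rhobar}^{\lam+\eta,\tau,\psip}$ is a domain whenever nonzero. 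Because $M_\infty^\psip$ is minimal, the maximal Cohen--Macaulay module $M_\infty^\psip(\sigma^\circ(\lam,\tau))$ is generically free of rank one on $\spec R_\infty^{\lam+\eta,\tau,\psip}$, so its mod-$\varpi$ support cycle coincides with $Z(R_\infty^{\lam+\eta,\tau,\psip}/\varpi)$.

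Define $Z_\sigma := Z(M_\infty^\psip(\sigma))$, the support cycle of the patched module attached to $\sigma$, for the unique $\psip$ whose restriction to $I_{\Qp}$ is compatible with the algebraic central character of $\sigma$. The congruence property of the family $\{M_\infty^\psip\}$ ensures that $Z_\sigma$ depends only on $\sigma$ and not on auxiliary choices, since $M_\infty^\psip(\sigma) = M_\infty^{\psip'}(\sigma)$ when both sides are defined. Exactness of $M_\infty^\psip$ applied to a composition series of $\osig(\lam,\tau)$, combined with additivity of cycles on short exact sequences of maximal Cohen--Macaulay modules, then yields the versal identity
\[
Z(R_\infty^{\lam+\eta,\tau,\psip}/\varpi) \;=\; \sum_{\sigma \in \JH(\osig(\lam,\tau))} [\osig(\lam,\tau):\sigma]\, Z_\sigma,
\]
which by Lemma \ref{lem:fixed-sim-formally-sm} descends to the analogous identity for $R_{\rhobar}^{\lam+\eta,\tau}/\varpi$ on $\spec R_{\rhobar}^{\square}/\varpi$. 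This proves the versal Breuil--M\'ezard identity for tame $\rhobar$.

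For the geometric step, I would interpolate the cycles $Z_\sigma$ across all sufficiently generic tame $\rhobar$ to produce global cycles $\cZ_\sigma \in \Z[\cX_{\Sym,\red}]$ supported on $\cC_\sigma$. The input is the local model diagram of Theorem \ref{thm:pcrys-local-model}, which identifies $p$-adic completions of $\cX_{\Sym,\reg}^{\le\lam,\tau}$ at tame points with those of the local model $U_\reg(\tilz,\le\lam,\nba_{\bfa_\tau})$, together with the mod-$p$ version Theorem \ref{thm:mod-p-local-model} which identifies the components $\cC_\sigma$ with the explicit irreducible components $C_\sigma^\zeta$ of the special fiber of the naive local model. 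The axiomatic formalism of \cite[\S 8.3]{LLHLM-2020-localmodelpreprint} then produces cycles $\cZ_\sigma$ such that $\cZ_{\lam,\tau} = \sum_{\sigma\in \JH(\osig(\lam,\tau))} [\osig(\lam,\tau):\sigma]\cZ_\sigma$ for all $(\lam+\eta,\tau) \in \cS$, by checking the equality at a dense collection of tame versal points where the versal identity above holds. Effectivity of $\cZ_\sigma$ is automatic since each $Z_\sigma$ arises as an honest support cycle.

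The main obstacle is the bookkeeping needed to ensure the interpolation is consistent across different types in $\cS$: one must track algebraic central characters and verify $\lam$-compatibility of lowest alcove presentations throughout, so that the cycles $Z_\sigma$ defined for different $(\lam+\eta,\tau)$ agree on their common tame loci. The combinatorial input that makes this work is Proposition \ref{prop:JH-factor-lam-tau} (which parametrizes $\JH(\osig(\lam,\tau))$ by admissible pairs with $\lam$-compatible presentations) together with Theorem \ref{thm:nv-irred-comp-SW-bij} (which matches these with the top-dimensional components $C_\sigma^\zeta$ of the naive local model). A secondary obstacle is the genericity requirement on $\tau$, which enters through the hypotheses of the local model diagrams and of the patching argument; for a fixed finite $\cS$ this is harmless, but extending to all regular $\lam$ simultaneously would require uniform genericity bounds.
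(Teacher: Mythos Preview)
The statement is a \emph{conjecture} that the paper does not prove in full generality; only the restricted case $\cS = \cS_{\cP_\ss,\Lam}$ is established (Theorem~\ref{thm:geom-BM}). Your outline correctly captures the two-step strategy the paper uses for this restricted case --- the versal Breuil--M\'ezard identity via congruent minimal patching functors and the domain property (this is Theorem~\ref{thm:versal-BM}), followed by interpolation through the axiomatic framework of \cite[\S8.3]{LLHLM-2020-localmodelpreprint} --- and you rightly flag the genericity constraint at the end.

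One substantive point your sketch understates: the geometric interpolation is not simply ``checking at a dense set of tame points''. To construct $\cZ_\sig$ one must express $[\sig]$ as a $\Z$-combination of classes $[\osig(\lam,\tau)]$ (Lemma~\ref{lem:BM-system}), but doing so forces the use of types from an auxiliary enlarged set $\hcS_\Lam \supsetneq \cS_\Lam$ on which the versal identity is not available. The paper resolves this via idempotent truncation operators $\tr_{\sig,\hcS_\Lam}$ that kill the contribution of the extra types (Lemma~\ref{lem:tr-kills-extra-types}), and this mechanism only works for Serre weights that are $(\cS_\Lam,\hcS_\Lam)$-generic --- hence the final set of admissible types shrinks to $\cS_{\cP_\ss,\Lam}\subsetneq\cS_\Lam$. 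Effectivity and uniqueness of $\cZ_\sig$ then come from injectivity of the combined pullback $i_{\cP_\ss,\cS_\Lam}^*$ (Lemma~\ref{lem:versal-to-geom-BM-cycle}), which in turn requires knowing that every relevant component $\cC_\sig$ contains a point of $\cP_\ss$; this is a separate (if easy) input, not a consequence of density alone.
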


Let $\rhobar \in \cX_{\Sym}(\F)$. We fix a versal ring $R_{\rhobar}^{\mathrm{ver}}$ for $\cX_{\Sym}$ at $\rhobar$. For example, we can take $R_{\rhobar}^{\mathrm{ver}} = R_{\rhobar}^\square$ the framed deformation ring. For a type $(\lam+\eta,\tau)$, we define
\begin{align*}
    \spf R_{\rhobar}^{\mathrm{ver},\lam+\eta,\tau} & := \spf R_{\rhobar}^{\mathrm{ver}}\times_{\cX_{\Sym}} \cX_{\Sym}^{\lam+\eta,\tau} \\ 
    \spf R_{\rhobar}^{\mathrm{alg}} & := \spf R_{\rhobar}^{\mathrm{ver}}\times_{\cX_{\Sym}} \cX_{\Sym,\red}.
\end{align*}
Note that the versal map $\spf R_{\rhobar}^{\mathrm{alg}} \ra \cX_{\Sym,\red}$ is effective, i.e.~arises from a map $i_{\rhobar}: \spec R_{\rhobar}^{\mathrm{alg}} \ra \cX_{\Sym,\red}$. The map $i_{\rhobar}$ induces a surjective map from the set of irreducible components of $\spec R_{\rhobar}^{\mathrm{alg}}$  to the set of irreducible components of $\cX_{\Sym,\red}$ containing $\rhobar$ (\cite[\href{https://stacks.math.columbia.edu/tag/0DRB}{Tag 0DRB}]{stacks-project}).  Define $\Z[\spec R_{\rhobar}^{\mathrm{alg}}]$ as the free abelian group generated by irreducible components of $\spec R_{\rhobar}^{\mathrm{alg}}$.  We interpret $\Z[\cX_{\Sym,\red}]$ and $\Z[\spec R_{\rhobar}^{\mathrm{alg}}]$ as spaces of functions on sets of irreducible components. Then we have a pullback \begin{align*}
    i^*_{\rhobar} : \Z[\cX_{\Sym,\red}] \ra \Z[\spec R_{\rhobar}^{\mathrm{alg}}].
\end{align*}
We define $\cZ_{\lam,\tau}(\rhobar) := i^*_{\rhobar}(\cZ_{\lam,\tau})$. The cycle $\cZ_{\lam,\tau}(\rhobar)$ is equal to the cycle corresponding to $\spec R_{\rhobar}^{\mathrm{ver},\lam+\eta,\tau}/\varpi$ (\cite[\href{https://stacks.math.columbia.edu/tag/0DRD}{Tag 0DRD}]{stacks-project}).

\begin{conj}[$\cS$-restricted versal Breuil--M\'ezard conjecture for $\GSp_4$]\label{conj:versal-BM} Let $\cS$ be a set of types. For each $\sigma \in \JH(\ov{\sigma}(\cS))$, there exists an effective cycle $\cZ_\sigma(\rhobar)\in \Z[\spec R_{\rhobar}^{
\mathrm{alg}}]$ such that for all $(\lam+\eta,\tau) \in \cS$, 
\begin{align*}
    \cZ_{\lam,\tau}(\rhobar) = \sum_{\sigma\in \JH(\ov{\sigma}(\lam,\tau))} [\osig(\lam,\tau):\sig]\cZ_\sig(\rhobar).
\end{align*}
\end{conj}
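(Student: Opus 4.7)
The plan is to first establish the conjecture under the generic hypotheses of Theorem~\ref{thmx:BM}(2), then discuss how one would hope to remove the genericity. Specifically, for a fixed finite set $\Lam$ of regular Hodge types, I will prove the conjecture when $\cS$ consists of all pairs $(\lam+\eta,\tau)$ with $\lam \in \Lam$ and $\tau$ sufficiently generic (depending on $\Lam$), and when $\rhobar$ is sufficiently generic. The strategy is to pull back the geometric version (Conjecture~\ref{conj:geom-BM}), which itself is first proved for tame $\rhobar$ using patching functors and then spread out to arbitrary (sufficiently generic) $\rhobar$.

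The first step is the tame case at the level of deformation rings. Fix a tame $\rhobar$ with a sufficiently generic lowest alcove presentation. Invoke a congruent family of potentially diagonalizable minimal fixed similitude patching functors $\{M_\infty^{\psi_p}\}$ from Theorem~\ref{thm:existence-patching-functor}(2). For each Serre weight $\sigma \in W^?(\rhobar|_{I_K})$, one defines a candidate cycle $Z_\sigma(\rhobar) \in \Z[\spec R_{\rhobar}^{\mathrm{alg}}]$ as (a scaling of) the support cycle of $M_\infty^{\psi_p}(\sigma)$, where $\psi_p$ is any compatible similitude character; the congruence property ensures this is independent of the choice of $\psi_p$. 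For each type $(\lam+\eta,\tau)$ in $\cS$, exactness of $M_\infty^{\psi_p}$ applied to a $\GSp_4(\cO_K)$-stable lattice in $\sigma(\lam,\tau)$ combined with a Jordan--H\"older filtration gives
\[
  [M_\infty^{\psi_p}(\sigma^\circ(\lam,\tau))/\varpi] = \sum_{\sigma} [\osig(\lam,\tau):\sigma]\,[M_\infty^{\psi_p}(\sigma)]
\]
in the Grothendieck group of finitely generated $R_\infty^{\psi_p}/\varpi$-modules. Corollary~\ref{corx:domain} asserts that $R_{\rhobar}^{\lam+\eta,\tau}$ is a domain whenever nonzero, which by minimality of the patching functor forces the support of $M_\infty^{\psi_p}(\sigma^\circ(\lam,\tau))$ to equal the whole of $\spec R_\infty^{\lam+\eta,\tau,\psi_p}$ whenever it is nonzero. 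Pairing this with the dimension formula of Proposition~\ref{prop:crys-ss-stacks} lets one read off the cycle $\cZ_{\lam,\tau}(\rhobar)$ as $\sum_\sigma [\osig(\lam,\tau):\sigma] Z_\sigma(\rhobar)$, proving the versal conjecture in the tame case.

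The second step is to globalize to the geometric conjecture following the axiomatic argument of \cite[\S8.3]{LLHLM-2020-localmodelpreprint}. The tame locus is dense in each irreducible component $\cC_\sigma$, and the local model diagram of Theorem~\ref{thm:pcrys-local-model} shows that the formation of the cycle $\cZ_{\lam,\tau}$ is compatible (via $T^{\vee,\cJ}_\cO$-torsors) with the corresponding cycles on the local models $U_{\reg}(\tilz,\le\lam,\nba_{\bfa_\tau})^\pcp$, which are unibranch by Theorem~\ref{thmx:unibranch}. One therefore defines the cycle $\cZ_\sigma$ on $\cX_{\Sym,\red}$ by interpolating the tame cycles $Z_\sigma(\rhobar)$ across $\rhobar$ using that a cycle on an irreducible component is determined by its coefficient at a single versal ring. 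The identity $\cZ_{\lam,\tau} = \sum_\sigma [\osig(\lam,\tau):\sigma]\,\cZ_\sigma$ then holds globally because it holds after pullback $i_{\rhobar}^*$ at a Zariski-dense set of tame points. Finally, for arbitrary (sufficiently generic) $\rhobar$, pulling these $\cZ_\sigma$ back under $i_{\rhobar}^*$ produces the desired $Z_\sigma(\rhobar) \in \Z[\spec R_{\rhobar}^{\mathrm{alg}}]$, and the identity for $\cZ_{\lam,\tau}(\rhobar)$ is the pullback of the geometric identity.

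The hardest step is establishing Corollary~\ref{corx:domain}, i.e.\ the domain property of tame potentially crystalline deformation rings, because this is what underlies the entire axiomatic argument; everything else is then formal interpolation. This in turn rests on the unibranch property (Theorem~\ref{thmx:unibranch}) for products of local models, whose proof requires the comparison between mixed and equal characteristic local models inside the universal local model and a careful analysis of their normalizations. Removing the genericity hypotheses on $\tau$ and $\rhobar$ (so as to establish the conjecture in the stated generality of Conjecture~\ref{conj:versal-BM}) would require a substantially different input, since both the local model machinery and the inertial local Langlands correspondence used here break down outside the generic range. A natural approach is to extend the inertial local Langlands and the local model theory to the non-generic regime, but this appears to require significant new ideas.
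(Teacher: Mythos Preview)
The statement is a conjecture, not a theorem, and the paper does not prove it in full generality; it proves restricted versions under genericity hypotheses (Theorem~\ref{thm:versal-BM} for tame $\rhobar$, then Corollary~\ref{cor:versal-BM-poly-gen} for general sufficiently generic $\rhobar$). Your proposal correctly acknowledges this and outlines precisely the paper's strategy: first prove the tame case using congruent minimal patching functors and the domain property of deformation rings, then interpolate to the geometric version via the axiomatic argument of \cite[\S8.3]{LLHLM-2020-localmodelpreprint}, and finally pull back to arbitrary sufficiently generic $\rhobar$.

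A few technical points where your sketch is looser than the paper: in Step~1 the cycle $\cZ_\sigma(\rhobar)$ is not literally the support cycle but $\til{Z}(M_\infty^{\psi_p}(\sigma))$, obtained by first taking the cycle with multiplicities in the fixed-similitude ring and then applying the unramified-twist map \eqref{eqn:unram-twist-cycle} to pass to $\spec R_{\rhobar}^\square/\varpi$; this map is what makes the comparison across different $\psi_p$ precise. In Step~2 the interpolation is more delicate than ``determined at a dense set of tame points'': the paper introduces the auxiliary set $\hcS_\Lam$, the notion of $(\cS_\Lam,\hcS_\Lam)$-generic Serre weights, and truncation operators $\tr_{\sigma,\hcS_\Lam}$ (Lemmas~\ref{lem:BM-system}--\ref{lem:tr-kills-extra-types}) to control the contribution of types outside $\cS_\Lam$ where the domain property is not available. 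These refinements are needed to make the axiomatic machinery of \cite{LLHLM-2020-localmodelpreprint} run, but your high-level picture is correct and matches the paper's approach.
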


Let $\Lam \subset X_*(\uT^\vee)$ be a finite set of dominant cocharacters containing 0. We define $\cS_{\Lam}$ as the set of types $(\lam'+\eta,\tau)$ where $\lam' \le \lam$  and $\tau$ is $P_{\lam+\eta,e}P_{(6(h_\eta+1)-2)+h_{\lam+\eta}}$-generic for some $\lam \in \Lam$ (see Theorem \ref{thm:pcrys-local-model} for the definition of $P_{\lam+\eta}$). Let $\cP_\ss$ be the set of $\rhobar\in \cX_{\Sym}(\F)$ such that $\rhobar|_{I_K}$ is $(6(h_\eta+1)-2)$-generic tame inertial type for $K$ over $\F$. 
The following is the main result of this subsection.

\begin{thm}\label{thm:versal-BM}
Let $\rhobar\in \cP_{\ss}$. 
For each Serre weight $\sig$, there exists an effective cycle $\cZ_\sig(\rhobar) \in \Z[\spec R_{\rhobar}^{\mathrm{alg}}]$ such that
\begin{align*}
    \cZ_{\lam,\tau}(\rhobar) = \sum_{\sigma\in \JH(\ov{\sigma}(\lam,\tau))} [\osig(\lam,\tau):\sig]\cZ_\sig(\rhobar).
\end{align*}
for all $(\lam+\eta,\tau) \in \cS_{\Lam}$.  
\end{thm}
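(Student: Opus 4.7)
My plan is to deduce the theorem from three ingredients already in place: the congruent family of fixed similitude patching functors of Theorem \ref{thm:existence-patching-functor}(2), the minimality/potential diagonalizability of these patching functors in the tame case, and the domain property of the local deformation rings (Corollary \ref{corx:domain}, which is Theorem \ref{thm:pcrys-local-model}(2)). Since $\rhobar$ is $22$-generic and semisimple, it is tame, so Theorem \ref{thm:existence-patching-functor}(2) supplies a congruent family $\{(\psip, R_\infty^{\psip}, M_\infty^{\psip})\}_{\psip \in \Psi(\rhobar)}$ of minimal potentially diagonalizable fixed similitude patching functors. Recall that $R_\infty^{\psip} = R_{\rhobar}^{\square,\psip}\,\hat{\otimes}_{\cO}\, R^p$ with $R^p$ formally smooth over $\cO$, and by Lemma \ref{lem:fixed-sim-formally-sm} the framed ring $R_{\rhobar}^{\square}$ is formally smooth of relative dimension one over $R_{\rhobar}^{\square,\psip}$. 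Composing these, I obtain a natural bijection between irreducible components of $\spec R_\infty^{\psip}/\varpi$ and of $\spec R_{\rhobar}^{\square}/\varpi$, and the latter specializes to irreducible components of $\spec R_{\rhobar}^{\mathrm{alg}}$ upon taking $R_{\rhobar}^{\mathrm{ver}} := R_{\rhobar}^{\square}$.

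Using this, I will define the effective cycle $\cZ_\sig(\rhobar) \in \Z[\spec R_{\rhobar}^{\mathrm{alg}}]$ as the transport of the support cycle $[M_\infty^{\psip}(\sig)]$ along the identifications just described. Well-definedness (independence of $\psip$) is forced by the congruence property of the family, since $\sig$ is already $\varpi$-torsion and hence $M_\infty^{\psip}(\sig)$ only depends on $M_\infty^{\psip}/\varpi$. Effectivity is automatic. The genericity hypothesis combined with Theorem \ref{thm:weight-elim} and Proposition \ref{prop:pcrys-nonzero-iff} controls when $\cZ_\sig(\rhobar)$ is nonzero.

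For a type $(\lam+\eta,\tau) \in \cS_\Lam$, pick the unique $\psip$ compatible with it. By axiom (1) of patching functors, $M_\infty^{\psip}(\sig^\circ(\lam,\tau))$ is maximal Cohen--Macaulay over $R_\infty^{\lam+\eta,\tau,\psip}$; by Corollary \ref{corx:domain} and Lemma \ref{lem:fixed-sim-formally-sm} the ring $R_\infty^{\lam+\eta,\tau,\psip}$ is a domain (when nonzero); and by minimality of $M_\infty^{\psip}$, the module $M_\infty^{\psip}(\sig^\circ(\lam,\tau))[1/p]$ has generic rank exactly one. Together these force the support cycle $[M_\infty^{\psip}(\sig^\circ(\lam,\tau))/\varpi]$ to equal the fundamental cycle of $\spec R_\infty^{\lam+\eta,\tau,\psip}/\varpi$, which under the smoothness identifications transports to $\cZ_{\lam,\tau}(\rhobar)$. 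On the other hand, for any $\GSp_4(\cO_K)$-stable lattice $\sig^\circ(\lam,\tau)$, the $\varpi$-adic filtration has semisimplified graded pieces $\osig(\lam,\tau)$, so exactness of $M_\infty^{\psip}$ and additivity of cycles in short exact sequences give
\[
[M_\infty^{\psip}(\sig^\circ(\lam,\tau))/\varpi] \;=\; \sum_{\sig \in \JH(\osig(\lam,\tau))} [\osig(\lam,\tau) : \sig]\;[M_\infty^{\psip}(\sig)].
\]
Transporting both sides to $\spec R_{\rhobar}^{\mathrm{alg}}$ yields the asserted Breuil--M\'ezard equation.

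The hard part will be the translation step: I need to verify that the formal smoothness bijections $\spec R_\infty^{\psip}/\varpi \rightsquigarrow \spec R_{\rhobar}^{\square,\psip}/\varpi \rightsquigarrow \spec R_{\rhobar}^{\square}/\varpi$ actually carry the patched cycle $[M_\infty^{\psip}(\sig^\circ(\lam,\tau))/\varpi]$ to $\cZ_{\lam,\tau}(\rhobar)$ with the correct multiplicities on each irreducible component, rather than merely matching supports. This reduces to checking that pullback of cycles along the faithfully flat completions $R_{\rhobar}^{\square,\psip} \to R_\infty^{\psip}$ and along the formally smooth extension $R_{\rhobar}^{\square,\psip}\DB{X} \simeq R_{\rhobar}^{\square}$ preserves multiplicities, which follows from a standard flatness diagram chase but must be written out carefully so that the equality of cycles (and not just equality of their supports) is justified at every step. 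Once that is done, the Breuil--M\'ezard equation holds identically on $\spec R_{\rhobar}^{\mathrm{alg}}$ for all $(\lam+\eta,\tau) \in \cS_\Lam$ simultaneously, completing the proof.
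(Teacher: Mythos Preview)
Your proposal is correct and follows essentially the same approach as the paper. The paper packages your ``translation step'' into the map $\til{(\cdot)}$ of \eqref{eqn:unram-twist-cycle} (so that $\til{Z}(R_{\rhobar}^{\lam+\eta,\tau,\psip}/\varpi) = \cZ_{\lam,\tau}(\rhobar)$ holds by construction) and then cites \cite[Lemma 2.2.7, 2.2.10]{EG-geom_BM-MR3134019} for the two facts you spell out, namely that a maximal Cohen--Macaulay module of generic rank one over a domain has special-fiber cycle equal to that of the ring, and that cycles are additive in short exact sequences. One small imprecision: minimality gives generic rank \emph{at most} one, and to get exactly one (when $R_{\rhobar}^{\lam+\eta,\tau}\neq 0$) you need nonvanishing of $M_\infty^{\psip}(\sig^\circ(\lam,\tau))$, which follows from the argument in the proof of Proposition~\ref{prop:pcrys-nonzero-iff} (the obvious weight $F_{\rhobar}(s)$ lies in $\JH(\osig(\lam,\tau))$ and is modular); you cite that proposition but should invoke it at this step rather than only for controlling which $\cZ_\sig(\rhobar)$ vanish.
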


Before we prove the theorem, we discuss cycles in the fixed similitude deformation ring. Let $\psip\in \Psi(\rhobar)$. Suppose that $C\subset \spec R_{\rhobar}^{\square, \psip}$ is a $(4f+10)$-dimensional irreducible reduced closed subscheme contained in the union of $\spec R_{\rhobar}^{\lam+\eta,\tau,\psip}/\varpi$ for all choices of a dominant character $\lam\in X^*(\uT)$ and a (possibly trivial) tame inertial $L$-parameters $\tau$ such that $(\lambda+\eta,\tau)$ is compatible with $\psip\in \Phi(\rhobar)$. By Lemma \ref{lem:fixed-sim-formally-sm}, the unramified twist of $C$ gives a $(4f+11)$-dimensional irreducible reduced closed subscheme $\til{C} \subset \spec R_{\rhobar}^{\square}/\varpi$ contained in the union of $\spec R_{\rhobar}^{\lam+\eta,\tau}/\varpi$.

Let $\Z[R_{\rhobar}^{\mathrm{p\dash crys}}]$ be the free abelian group generated by $(4f+10)$-dimensional cycles in $\spec R_{\rhobar}^\square/\varpi$ supported on the union of $\spec R_{\rhobar}^{\lam+\eta,\tau,\psip}/\varpi$ for all choices of a dominant character $\lam\in X^*(\uT)$, a (possibly trivial) tame inertial $L$-parameters $\tau$, and a character $\psip\in \Psi(\rhobar)$ compatible with $(\lam+\eta,\tau)$. By the previous paragraph, there is a natural injective group homomorphism
\begin{align}\label{eqn:unram-twist-cycle}
    \til{(\cdot)} : \Z[R_{\rhobar}^{\mathrm{p\dash crys}}] \mono \Z[\spec R_{\rhobar}^\alg]
\end{align}
such that $\til{Z}(R_{\rhobar}^{\lam+\eta,\tau,\psip}/\varpi) = \cZ_{\lam,\tau}(\rhobar)$ for all $\lam$, $\tau$, and $\psip$ as above.

Let $R$ be a Noetherian $\F$-algebra. Given a $R$-module $M$, we let $Z(M)$ denote the cycle
\begin{align*}
    \sum_{\cC} \mu_\cC(M)\cC \in \Z[\spec R]
\end{align*}
where $\cC$ ranges over irreducible components of  $\spec R$, and $\mu_\cC(M)$ denotes the length of the module $M_{\fp_\cC}$ over $ R_{\fp_\cC}$ for the prime ideal ${\fp_\cC}$ corresponding to $\cC$.

\begin{proof}[Proof of Theorem \ref{thm:versal-BM}]
By Theorem \ref{thm:existence-patching-functor}(2), there is a minimal weak patching functor $M_\infty^{\psi_p}$ for $(\rhobar,\psip)$ for all $\psip\in \Psi(\rhobar)$.  
For a Serre weight $\sig$, we define $\cZ_\sig(\rhobar):= \til{Z}(M_\infty^{\psip}(\sig))$ for some choice of $\psip\in \Psi(\rhobar)$. Note that this is independent of the choice of $\psip$. 
For any $(\lam+\eta,\tau)\in \cS_\Lam$, $R_{\rhobar}^{\lam+\eta,\tau}$ is a domain by Theorem \ref{thm:pcrys-local-model}. Let $\psi_p\in \Psi(\rhobar)$ be a character   compatible with $(\lam+\eta,\tau)$. By \cite[Lemma 2.2.7, 2.2.10]{EG-geom_BM-MR3134019} and the definition of \eqref{eqn:unram-twist-cycle}, we have $\til{Z}(M_\infty^{\psip}(\osig(\lam,\tau))) = Z_{\lam,\tau}(\rhobar)$ and
\begin{align*}
    \til{Z}(M_\infty^{\psip}(\osig(\lam,\tau))) = \sum_{\sig\in \JH(\osig(\lam,\tau))} [\osig(\lam,\tau):\sig]\til{Z}(M_\infty^{\psip}(\sig)). &\qedhere
\end{align*}
\end{proof}

We explain how one can interpolate Theorem \ref{thm:versal-BM} to prove a version of Conjecture \ref{conj:geom-BM} and \ref{conj:versal-BM}. Since they follow from the axiomatic argument of \cite[\S8.3]{LLHLM-2020-localmodelpreprint}, we keep the discussion short.

Let $\hat{\cS}_{\Lam}$ be the union of $\cS_{\Lam}$ and the set of types of the form $(\eta,\tau)$ where $\tau$ is $2h_\eta$-generic. We also define $\hat{\cS}_{\Lam,\mathrm{elim}}$ to be the union of  $\hat{\cS}_{\Lam}$ and the set of all types of the form $(\eta,\tau)$. 

\begin{defn}[cf.~Definition 8.3.3 in \cite{LLHLM-2020-localmodelpreprint}]
Let $\sig \in \JH(\ov{\sig}(\hcS_\Lam))$ be a Serre weight.
\begin{enumerate}
    \item We say that $\sig$ \emph{$\hcS_\Lam$-covers} $\sig'$ if for $(\lam+\eta,\tau)\in \hcS_\Lam$ such that $\sig \in \JH(\ov{\sig}(\lam,\tau))$, $\cC_{\sig'}$ lies in $\cX_{\Sym,\F}^{\lam+\eta,\tau}$.
    \item We say that  $\sig$ is \emph{$(\cS_\Lam,\hcS_\Lam)$-generic} if for all Serre weights $\sig'$ such that $\sig$ $\hcS_\Lam$-covers $\sig'$, $\cC_{\sig'}$ does not lie in $\cX_{\Sym,\F}^{\lam+\eta,\tau}$ for any $(\lam+\eta,\tau)\in \hcS_\Lam \bss \cS_\Lam$.
\end{enumerate}
\end{defn}

In our case, we have a simple description of $\hcS_\Lam$-covering.

\begin{lemma}\label{lem:covering-vs-uparrow}
Suppose $\sigma$ and $\sig'$ are $3h_\eta$-deep Serre weights, and $\sig$ $\hcS_\Lam$-covers $\sig'$. Then $\sig' \uparrow \sig$.
\end{lemma}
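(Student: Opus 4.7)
The strategy is to exploit the fact that $\hcS_\Lambda$ contains every type $(\eta,\tau)$ with $\tau$ a $6$-generic tame inertial $L$-parameter. Combining this with Theorem \ref{thm:naive-BM}, which identifies the reduced locus of $\cX_{\Sym,\F}^{\eta,\tau}$ (under sufficient genericity of $\tau$) with $\bigcup_{\kappa\in\JH(\ov{\sigma}(\tau))}\cC_\kappa$, the hypothesis that $\sigma$ $\hcS_\Lambda$-covers $\sigma'$ is translated into the purely combinatorial statement
\[
(*)\quad\sigma'\in\JH(\ov{\sigma}(\tau))\text{ whenever }\sigma\in\JH(\ov{\sigma}(\tau))\text{ for }\tau\text{ a sufficiently generic tame inertial }L\text{-parameter.}
\]
To deduce $\sigma'\uparrow\sigma$ from $(*)$, I would fix a $9$-generic lowest alcove presentation $(\tilw_1^\sigma,\omega_\sigma)$ of $\sigma$ and produce enough tame types $\tau$ for which the extra information that $\sigma'\in\JH(\ov{\sigma}(\tau))$ is decisive.

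First I would apply Proposition \ref{prop:JH-factor-lam-tau} with $\lambda=0$ to parameterise those $\tau$ containing $\sigma$ as a Jordan--H\"older factor compatibly with $(\tilw_1^\sigma,\omega_\sigma)$: they correspond to elements $\tilw_2\in\utilW^+$ with $(\tilw_1^\sigma,\tilw_2)\in\AP(\eta)$ (equivalently $\tilw_1^\sigma\uparrow\tilw_h^\mo\tilw_2$), via $\tilw(\tau)=t_{\omega_\sigma}\tilw_2$. For each admissible $\tilw_2$, I would construct $(s,\mu)$ realising $\tau$ and check that $\mu$ is $6$-deep in $\uC_0$, which uses that $\omega_\sigma-\eta$ is $9$-deep and $\tilw_2$ has small size; this places the corresponding $(\eta,\tau)$ in $\hcS_\Lambda$. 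For each such $\tau$, Proposition \ref{prop:JH-factor-lam-tau} combined with $(*)$ produces an admissible pair $(\tilw_1',\tilw_2')\in\AP(\eta)$ (depending on $\tilw_2$) with $\sigma'\simeq F_{(\tilw_1',\,t_{\omega_\sigma}\tilw_2\tilw_2^{\prime-1}(0))}$.

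I would then specialise to the distinguished choice $\tilw_2=\tilw_h\tilw_1^\sigma$ (the ``top'' element: $\tilw_h^\mo\tilw_2=\tilw_1^\sigma$ so admissibility is automatic). Fixing the algebraic central character $\zeta\in X^*(\uZ)$ of $\ov{\sigma}(\tau)$, Definition \ref{def:central-char} and \cite[Lemma 2.2.4]{LLHLM-2020-localmodelpreprint} force the $\zeta$-compatible lowest alcove presentations of $\sigma,\sigma',\tau$ to be uniquely determined modulo $X^0(\uT)$. Chasing the bijection of Proposition \ref{prop:JH-factor-lam-tau} with this compatibility imposed, the second component of the admissible pair representing $\sigma'$ must equal $\tilw_2=\tilw_h\tilw_1^\sigma$, so $\sigma'\simeq F_{(\tilw_1',\omega_\sigma)}$ with $\tilw_1'\in\utilW_1^+$ and (by admissibility) $\tilw_1'\uparrow\tilw_1^\sigma$. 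Since $\tilw_1'$ and $\tilw_1^\sigma$ lie in a common restricted region and $p$-dot actions on elements of the same $p$-alcove preserve the upper arrow relation (\cite[Proposition 2.1.5]{LLHLM-2020-localmodelpreprint}), this yields $\sigma'\uparrow\sigma$, completing the proof. Alternatively, I could pass through Lemma \ref{lem:upperarrow-SW-GSp4} by realising $\sigma$ as an obvious weight of any tame $\rhobar$ with $\tilw(\rhobar)=t_{\omega_\sigma}\tilw_h\tilw_1^\sigma$ and $\sigma'\in W^?(\rhobar)\cap\JH(\ov{\sigma}(\tau))$ automatically by $(*)$ applied to the same $\tau$.

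The main obstacle will be the central-character bookkeeping in the penultimate step: a given Serre weight $\sigma'$ can be represented by more than one admissible pair until one imposes $X^0(\uT)$-compatibility with a fixed $\zeta$, and one must verify that the first components $\tilw_1^\sigma,\tilw_1'$ of these presentations genuinely lie in $\utilW_1^+$ after the bookkeeping -- this is what the $9$-deepness hypothesis on $\sigma$ and $\sigma'$ is for. Once this is set up, the argument is a formal unwinding of Proposition \ref{prop:JH-factor-lam-tau} and the definition of $\AP(\eta)$.
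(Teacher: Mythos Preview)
Your reduction of the $\hcS_\Lambda$-covering hypothesis to the combinatorial condition $(*)$ via Theorem \ref{thm:naive-BM} is exactly the paper's first step. However, your subsequent deduction of $\sigma'\uparrow\sigma$ from $(*)$ has a genuine gap. After specialising to the single tame type $\tau$ with $\tilw(\tau)=t_{\omega_\sigma}\tilw_h\tilw_1^\sigma$, you assert that ``the second component of the admissible pair representing $\sigma'$ must equal $\tilw_2=\tilw_h\tilw_1^\sigma$.'' This does not follow: the bijection of Proposition \ref{prop:JH-factor-lam-tau} identifies $\JH(\ov\sigma(\tau))$ with the \emph{whole} set $\AP(\eta)$, and knowing only that $\sigma'\in\JH(\ov\sigma(\tau))$ (together with central-character compatibility, which every Jordan--H\"older factor of $\ov\sigma(\tau)$ automatically satisfies) places no constraint on which pair $(\tilw_1',\tilw_2')$ corresponds to $\sigma'$. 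A single $\tau$ cannot pin down $\tilw_2'$. Your alternative route through Lemma \ref{lem:upperarrow-SW-GSp4} has the same defect: that lemma requires $\sigma'\in W^?(\rhobar)$ as an input, and $(*)$ applied to one $\tau$ does not yield this.

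The paper instead observes that $(*)$ is precisely the notion ``$\sigma$ covers $\sigma'$'' of \cite[Definition 2.3.10]{LLHLM-2020-localmodelpreprint} and invokes the equivalence of items (1) and (4) in \cite[Proposition 2.3.12]{LLHLM-2020-localmodelpreprint}. The proof of that implication genuinely uses the intersection over \emph{all} admissible $\tau$: as $\tilw_2$ ranges over those elements of $\utilW^+$ with $\tilw_1^\sigma\uparrow\tilw_h^{-1}\tilw_2$, the intersection $\bigcap_\tau\JH(\ov\sigma(\tau))$ cuts the set of admissible pairs down to $\{(\tilw_1',\tilw_h\tilw_1^\sigma):\tilw_1'\uparrow\tilw_1^\sigma\}$. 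Your argument could in principle be repaired by carrying out this intersection explicitly, but as written it uses only one $\tau$ and is incomplete.
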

\begin{proof}
Let $\tau$ be a $2h_\eta$-generic tame inertial $L$-parameter such that $\sig \in \JH(\osig(\tau)$.  
Since $\sig$ $\hcS_\Lam$-covers $\sig'$, $\cC_{\sig'} \subset \cX_{\Sym,\F}^{\eta,\tau}$. By Theorem \ref{thm:naive-BM}, we have $\sig'\in \JH(\osig(\tau))$. 
In particular, 
$\sig$ covers $\sig'$ in the sense of \cite[Definition 2.3.10]{LLHLM-2020-localmodelpreprint}. 
Then our claim follows from the equivalence between item (1) and (4) in Proposition 2.3.12 in \loccit~using 9-deepness of $\sig$ and $\sig'$. (Note that in our setup, if $\tilw\in \tilW^+$  and $\tilw_1 \uparrow \tilw$ for some $\tilw\in \tilW^+_1$, then $\tilw_1\in \tilW_1^+$. Thus $L(\pi^\mo(\tilw_1)\cdot (\omega-\eta))|_{\rmG}$ in item (4) in \loccit~is equal to $F(\pi^\mo(\tilw_1)\cdot (\omega-\eta)) = F_{(\tilw_1,\omega)}$ and $F_{(\tilw_1,\omega)} \uparrow F_{(\tilw,\omega)}$.)
\end{proof}

\begin{lemma}\label{lem:BM-system}
For any Serre weight $\sig$, there exists integers $n_{\lam,\tau}^\sig$ such that 
\begin{align*}
    [\sig] - \sum_{(\lam+\eta,\tau)\in \hcS_{\Lam}} n_{\lam,\tau}^\sig [\ov{\sig}(\lam,\tau)]
\end{align*}
is supported on the union of $\cC_{\sig'}$ for $\sig'\notin \cup_{\rhobar\in \cP_{\ss}} W^?(\rhobar|_{I_K})$.  In other words, $\hcS_\Lam$ is a $(\cP_\ss,\hcS_{\Lam,\mathrm{elim}}$)-Breuil--M\'ezard system in the sense of \cite[Definition 8.3.3]{LLHLM-2020-localmodelpreprint}. 
\end{lemma}
\begin{proof}
This can be proven as \cite[Lemma 8.4.4]{LLHLM-2020-localmodelpreprint}. Note that $\sig'\notin \cup_{\rhobar\in \cP_{\ss}} W^?(\rhobar|_{I_K})$ implies that $\sig'$ is $(\cP_{\ss},\hat{\cS}_{\Lam,\mathrm{elim}})$-irrelevant in the sense of Definition 8.3.1 in \loccit~by the proof of Theorem \ref{thm:weight-elim}.
\end{proof}

Let $\cS_{\cP_{\ss},\Lam}\subset \cS_{\Lam}$ be the subset of $(\lam+\eta,\tau)$ such that  all $\sig\in \JH(\ov{\sig}(\lam,\tau))$ are $(\cS_\Lam,\hcS_\Lam)$-generic.

\begin{thm}\label{thm:geom-BM}
For each $(\cS_\Lam,\hcS_\Lam)$-generic $\sig$ in $\JH(\ov{\sig}(\cS_{\cP_\ss,\Lam}))$, there exists a unique effective cycle $\cZ_\sig$ in $\Z[\cX_{\Sym,\red}]$ with the support contained in $\CB{\cC_\kappa| \kappa \uparrow \sig}$ and for each $\rhobar\in \cP_\ss$, $i^*_{\rhobar}(\cZ_\sig) = \cZ_\sig(\rhobar)$. Moreover, for $(\lam+\eta,\tau)\in \cS_{\cP_\ss,\Lam}$,  we have
\begin{align*}
    \cZ_{\lam,\tau} = \sum_{\sig\in \JH(\ov{\sig}(\lam,\tau)) } [\ov{\sig}(\lam,\tau):\sig] \cZ_\sig.
\end{align*}
In particular, Conjecture \ref{conj:geom-BM} holds for $\cS = \cS_{\cP_\ss,\Lam}$.
\end{thm}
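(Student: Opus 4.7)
The plan is to follow the axiomatic argument of \cite[\S 8.3]{LLHLM-2020-localmodelpreprint}, using Theorem \ref{thm:versal-BM} as the input on the versal side and the effectivity/injectivity criterion of Lemma \ref{lem:versal-to-geom-BM-cycle} to globalize the resulting pointwise cycles. Given a $(\cS_\Lam,\hcS_\Lam)$-generic Serre weight $\sig \in \JH(\ov{\sig}(\cS_{\cP_\ss,\Lam}))$, I would take the integers $n^\sig_{\lam,\tau}$ furnished by Lemma \ref{lem:BM-system} and set
\begin{align*}
    \cZ_\sig := \tr_{\sig,\hcS_\Lam}\Bigl(\sum_{(\lam+\eta,\tau)\in \hcS_\Lam} n^\sig_{\lam,\tau}\,\cZ_{\lam,\tau}\Bigr) \in \Z[\cX_{\Sym,\red}].
\end{align*}
Because $\sig$ is $(\cS_\Lam,\hcS_\Lam)$-generic, Lemma \ref{lem:tr-kills-extra-types} kills the terms indexed by $\hcS_\Lam\setminus\cS_\Lam$, so in fact $\cZ_\sig \in \Z[\cC(\cS_\Lam)]$. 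The support condition is immediate from the construction: the trace is supported on $\cC_\kappa$ such that $\sig$ $\hcS_\Lam$-covers $\kappa$, and by Proposition \ref{prop:JH-factor-W?} all such $\sig,\kappa$ are deep enough for Lemma \ref{lem:covering-vs-uparrow} to upgrade ``covers'' into $\kappa \uparrow \sig$.

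To verify $i^*_\rhobar(\cZ_\sig) = \cZ_\sig(\rhobar)$ for each $\rhobar \in \cP_\ss$, I would first use that the congruent family of \emph{minimal} fixed similitude patching functors $\{M_\infty^\psip\}$ from Theorem \ref{thm:existence-patching-functor}(2), together with \cite[Lemmas 2.2.7 and 2.2.10]{EG-geom_BM-MR3134019}, produces the cycle identity
\begin{align*}
    \til{Z}\bigl(M_\infty^\psip(\ov{\sig}(\lam,\tau))\bigr) = \cZ_{\lam,\tau}(\rhobar)
\end{align*}
for \emph{every} type in $\hcS_\Lam$, since minimality forces $M_\infty^\psip(\ov{\sig}(\lam,\tau))[1/p]$ to have generic rank at most one on each component of $R_\infty^{\lam+\eta,\tau,\psip}[1/p]$. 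Feeding Lemma \ref{lem:BM-system} into the exact functor $\til{Z}\circ (M_\infty^\psip(-)/\varpi)$, and using that any $(\rhobar,\hcS_{\Lam,\mathrm{elim}})$-irrelevant $\sig'$ satisfies $M_\infty^\psip(\sig')=0$ (hence $\cZ_{\sig'}(\rhobar)=0$), I get
\begin{align*}
    \sum_{(\lam+\eta,\tau)\in \hcS_\Lam} n^\sig_{\lam,\tau}\,\cZ_{\lam,\tau}(\rhobar) = \cZ_\sig(\rhobar).
\end{align*}
Applying $\tr_{\sig,\hcS_\Lam}(\rhobar)$ and invoking \eqref{eqn:tr-i_rhobar-compatibility} on the left and the stability \eqref{eqn:versal-BM-cycle-stable-under-tr} on the right yields the desired identity.

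Effectivity of $\cZ_\sig$ then follows from Lemma \ref{lem:versal-to-geom-BM-cycle}(2), since each pullback $\cZ_\sig(\rhobar) = \til{Z}(M_\infty^\psip(\sig))$ is visibly effective. Uniqueness is a consequence of the injectivity part of the same lemma on $\Z[\cC(\cS_\Lam)]$ together with the support bound, after observing via Proposition \ref{prop:geom-SW-EG-stack} that every $\cC_\kappa$ with $\kappa\uparrow \sig$ in the relevant support range is hit by some $\rhobar\in\cP_\ss$. Finally, for $(\lam+\eta,\tau)\in \cS_{\cP_\ss,\Lam}$, Theorem \ref{thm:versal-BM} gives the pointwise identity $\cZ_{\lam,\tau}(\rhobar) = \sum [\ov{\sig}(\lam,\tau):\sig]\cZ_\sig(\rhobar)$ for every $\rhobar\in\cP_\ss$; both sides lie in $\Z[\cC(\cS_\Lam)]$, so Lemma \ref{lem:versal-to-geom-BM-cycle}(2) promotes it to the claimed equality in $\Z[\cX_{\Sym,\red}]$.

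The main obstacle is the cycle identity $\til{Z}(M_\infty^\psip(\ov{\sig}(\lam,\tau))) = \cZ_{\lam,\tau}(\rhobar)$ for \emph{all} types in $\hcS_\Lam$, not just for the generic types in $\cS_\Lam$ where the potentially crystalline deformation ring is known to be a domain (Theorem \ref{thm:pcrys-local-model}). The minimality of the patching functors in Theorem \ref{thm:existence-patching-functor}(2) is what rescues this identity past the domain regime; dually, it is what permits $\tr_{\sig,\hcS_\Lam}$ to excise the auxiliary types in $\hcS_\Lam\setminus\cS_\Lam$, which are indispensable to the Breuil--M\'ezard system of Lemma \ref{lem:BM-system}, without disturbing $\cZ_\sig(\rhobar)$.
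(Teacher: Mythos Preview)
Your overall strategy matches the paper's, and most of the steps are correct. The genuine gap is the claim that minimality of the patching functor yields the cycle identity $\til{Z}\bigl(M_\infty^\psip(\sig^\circ(\lam,\tau))/\varpi\bigr) = \cZ_{\lam,\tau}(\rhobar)$ for \emph{all} types in $\hcS_\Lam$. Minimality only gives generic rank at most one; combined with maximal Cohen--Macaulayness this forces the cycle of the patched module to be \emph{bounded above} by $\cZ_{\lam,\tau}(\rhobar)$, with equality precisely when the module has full support on $\spec R_\infty^{\lam+\eta,\tau,\psip}$. For types in $\cS_\Lam$ full support follows because the deformation ring is a domain (Theorem \ref{thm:pcrys-local-model}) and the module is nonzero, but for the auxiliary types in $\hcS_\Lam\setminus\cS_\Lam$ the ring may have several irreducible components and nothing in your argument forces the patched module to be supported on all of them. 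So ``rank at most one'' does not rescue the identity past the domain regime, contrary to your final paragraph.

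The paper circumvents this by reversing the order of your last two steps. Starting from the valid identity
\[
\cZ_\sig(\rhobar) \;=\; \sum_{(\lam+\eta,\tau)\in\hcS_\Lam} n^\sig_{\lam,\tau}\,\til{Z}\bigl(M_\infty^{\psip}(\ov{\sig}(\lam,\tau))\bigr)
\]
(obtained from Lemma \ref{lem:BM-system} and vanishing on irrelevant weights), one first applies $\tr_{\sig,\hcS_\Lam}(\rhobar)$, using \eqref{eqn:versal-BM-cycle-stable-under-tr} on the left. Each $\til{Z}\bigl(M_\infty^{\psip}(\ov{\sig}(\lam,\tau))\bigr)$ is supported on pullbacks of irreducible components of $\cX_{\Sym,\F}^{\lam+\eta,\tau}$; by the definition of $(\cS_\Lam,\hcS_\Lam)$-genericity of $\sig$, none of these components are $\hcS_\Lam$-covered by $\sig$ when $(\lam+\eta,\tau)\in\hcS_\Lam\setminus\cS_\Lam$, so $\tr_{\sig,\hcS_\Lam}(\rhobar)$ kills those terms (this is the versal analogue of Lemma \ref{lem:tr-kills-extra-types}). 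Only \emph{after} this truncation does one substitute the cycle identity $\til{Z}\bigl(M_\infty^{\psip}(\ov{\sig}(\lam,\tau))\bigr)=\cZ_{\lam,\tau}(\rhobar)$, now restricted to $(\lam+\eta,\tau)\in\cS_\Lam$ where it is established. With this reordering your argument goes through.
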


\begin{proof}
For $\sig\in \JH(\osig(\hcS_\Lam))$, let $\tr_{\sig,\hcS_\Lam}$ be the idempotent endomorphism of $\Z[\cX_{\Sym,\red}]$ which maps $\cC_{\sig'}$ to itself if $\sig$ $\hcS_\Lam$-covers $\sig'$ and to 0 otherwise. Following the notation in Lemma \ref{lem:BM-system}, we define
\begin{align*}
    \cZ_\sig := \tr_{\sig,\hcS_\Lam}\PR{\sum_{(\lam+\eta,\tau)\in \hcS_\Lam} n^\sig_{\lam,\tau} \cZ_{\lam,\tau}}.
\end{align*}
Then the remaining proof is essentially identical to \cite[Theorem 8.3.5(2)]{LLHLM-2020-localmodelpreprint} using a generalization of \cite[Lemma 8.2.2]{LLHLM-2020-localmodelpreprint} to our setting for $\cP=\cP_{\ss}$. Its assumption is satisfied by a generalization of Lemma 8.4.9 in \loccit, which follows from Theorem \ref{thm:mod-p-local-model} in our case. The claim on the support of $\cZ_{\sig}$ follows from Lemma \ref{lem:covering-vs-uparrow}.
\end{proof}

Given a polynomial $f\in \Z[X_1,X_2,X_3]$ and $\omega \in X^*(T)\simeq \Z^3$, define
\begin{align*}
    f^\omega(X_1,X_2,X_3) := \prod_{\nu = (\nu_1,\nu_2;\nu_3) \in \Conv(\omega)}f(X_1-\nu_1, X_2 -\nu_2, X_3 - \nu_3)
\end{align*}
We also define
\begin{align*}
    P_{\cP_\ss, \Lam,e} =  \prod_{\lam \in \Lam} \prod_{\jj} {P}_{\eta,e}^{(\lam+\eta-w_0(\eta))_j}P_{7(h_\eta+1)-3}^{(\lam+\eta-w_0(\eta))_j}.
\end{align*}

The following two corollaries are immediate from Theorem \ref{thm:geom-BM} using an obvious generalization of Lemmas 8.4.11 and 8.5.1 in \cite{LLHLM-2020-localmodelpreprint} (see also, Corollaries 8.4.12 and 8.5.2 in \loccit).

\begin{cor}\label{cor:geom-BM-poly-gen}
For each Serre weight $\sig$,  there exists an effective cycle $\cZ_\sig$ in $\Z[\cX_{\Sym,\red}]$ with support contained in $\CB{\cC_\kappa| \kappa \uparrow \sig}$ such that for any $\lam\in \Lam$ and  any tame inertial type $\tau$ for $K$  with a lowest alcove presentation $(s,\mu_\eta)$ with $P_{\cP_\ss, \Lam,e}$-generic $\mu$,
\begin{align*}
    \cZ_{\lam,\tau} = \sum_{\sig\in \JH(\ov{\sig}(\lam,\tau)) } [\ov{\sig}(\lam,\tau):\sig] \cZ_\sig.
\end{align*}
\end{cor}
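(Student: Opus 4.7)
The plan is to deduce this directly from Theorem \ref{thm:geom-BM} combined with Lemma \ref{lem:genericity-shifting-poly}(1). First, by Theorem \ref{thm:geom-BM}, for each $(\cS_\Lam,\hcS_\Lam)$-generic Serre weight $\sig \in \JH(\ov{\sig}(\cS_{\cP_\ss,\Lam}))$, there exists a unique effective cycle $\cZ_\sig \in \Z[\cX_{\Sym,\red}]$ supported on $\{\cC_\kappa \mid \kappa \uparrow \sig\}$ such that the Breuil--M\'ezard identity holds for all $(\lam+\eta,\tau) \in \cS_{\cP_\ss,\Lam}$. For Serre weights $\sig$ not appearing in any $\JH(\osig(\lam,\tau))$ with $(\lam+\eta,\tau)$ satisfying the genericity hypothesis in the statement, we simply set $\cZ_\sig := 0$.

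Next, I would verify that the genericity hypothesis implies the relevant types lie in $\cS_{\cP_\ss,\Lam}$ and that the Serre weights appearing as Jordan--H\"older factors are $(\cS_\Lam,\hcS_\Lam)$-generic. By Lemma \ref{lem:genericity-shifting-poly}(1), if $\tau$ has a lowest alcove presentation $(s,\mu-\eta)$ with $\mu$ being $P_{\cP_\ss,\Lam,e}$-generic and $\lam \in \Lam$, then $(\lam+\eta,\tau) \in \cS_{\cP_\ss,\Lam}$. In particular, every $\sig \in \JH(\ov{\sig}(\lam,\tau))$ is then $(\cS_\Lam,\hcS_\Lam)$-generic and lies in $\JH(\ov{\sig}(\cS_{\cP_\ss,\Lam}))$, so the cycle $\cZ_\sig$ has been constructed in the previous step.

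Finally, for any such $(\lam+\eta,\tau)$, Theorem \ref{thm:geom-BM} directly yields the identity
\[
\cZ_{\lam,\tau} = \sum_{\sig \in \JH(\ov{\sig}(\lam,\tau))} [\ov{\sig}(\lam,\tau):\sig]\, \cZ_\sig,
\]
which is exactly the claim. The support condition $\{\cC_\kappa \mid \kappa \uparrow \sig\}$ is inherited from Theorem \ref{thm:geom-BM}. Since this proof is really just a repackaging of Theorem \ref{thm:geom-BM} through the explicit polynomial genericity criterion of Lemma \ref{lem:genericity-shifting-poly}, there is no genuine obstacle; the only subtle point is ensuring that the polynomial $P_{\cP_\ss,\Lam,e}$ is strong enough to force $(\lam+\eta,\tau) \in \cS_{\cP_\ss,\Lam}$ uniformly in $\lam \in \Lam$, which is precisely the content of Lemma \ref{lem:genericity-shifting-poly}(1).
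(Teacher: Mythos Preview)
Your proposal is correct and follows essentially the same approach as the paper: use Lemma \ref{lem:genericity-shifting-poly}(1) to show that the polynomial genericity hypothesis forces $(\lam+\eta,\tau)\in \cS_{\cP_\ss,\Lam}$ (hence every $\sig\in\JH(\osig(\lam,\tau))$ is $(\cS_\Lam,\hcS_\Lam)$-generic), and then apply Theorem \ref{thm:geom-BM} directly. Your explicit handling of the remaining Serre weights by setting $\cZ_\sig=0$ is a minor elaboration that the paper leaves implicit.
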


\begin{cor}\label{cor:versal-BM-poly-gen}
Let $\rhobar: G_K \ra \GSp_4(\F)$ be a continuous representation. Suppose that $\rhobar^\ss|_{I_K}$ has a lowest alcove presentation $(s,\mu-\eta)$ with $P_{\cP_\ss, \Lam,e}^{\eta}P_{m}$-generic $\mu$ for $m=\max\{2h_{\Lam}+2h_\eta,6(h_\eta+1)-2\}$. Then for each Serre weight $\sig$, there exists a cycle $\cZ_{\sig}(\rhobar)\in \Z[\spec R_{\rhobar}^{\alg}]$ such that for any $\lam\in \Lam$ and any tame inertial type $\tau$ for $K$,
\begin{align*}
    Z(R^{\lam+\eta,\tau}_{\rhobar}/\varpi) = \sum_{\sig} [\osig(\lam,\tau):\sig] \cZ_\sig(\rhobar).
\end{align*}
\end{cor}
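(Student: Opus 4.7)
The plan is to define the cycles $\cZ_\sig(\rhobar)$ by pulling back the geometric cycles from Corollary \ref{cor:geom-BM-poly-gen} along the versal morphism $i_\rhobar\colon\spec R_\rhobar^{\alg}\to \cX_{\Sym,\red}$ and then to verify the claimed identity by a case split on the genericity of $\tau$ relative to $\rhobar^\ss|_{I_K}$. Concretely, Corollary \ref{cor:geom-BM-poly-gen} supplies, for each Serre weight $\sig$, an effective cycle $\cZ_\sig\in \Z[\cX_{\Sym,\red}]$ supported in $\{\cC_\kappa\mid \kappa\uparrow \sig\}$ such that $\cZ_{\lam,\tau}=\sum_\sig[\osig(\lam,\tau):\sig]\,\cZ_\sig$ for every $\lam\in\Lam$ and every tame inertial type $\tau$ admitting a $P_{\cP_\ss,\Lam,e}$-generic lowest alcove presentation $(s_\tau,\mu_\tau-\eta)$. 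One then sets $\cZ_\sig(\rhobar):=i^*_\rhobar(\cZ_\sig)$.

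Next I would recall from the discussion preceding Conjecture \ref{conj:versal-BM} that $i^*_\rhobar(\cZ_{\lam,\tau})=Z(R_\rhobar^{\lam+\eta,\tau}/\varpi)$, which is the desired interpretation of the left hand side. Fix $\lam\in\Lam$ and a tame inertial type $\tau$ for $K$ (for which $\osig(\tau)$ is defined, i.e.\ possessing a $1$-generic lowest alcove presentation, as is standard from the Remark after Conjecture \ref{conjx:geom-BM}). Case 1 is when $\tau$ admits a lowest alcove presentation $(s_\tau,\mu_\tau-\eta)$ with $\mu_\tau$ being $P_{\cP_\ss,\Lam,e}$-generic: apply $i^*_\rhobar$ to the identity from Corollary \ref{cor:geom-BM-poly-gen} to obtain
\[
Z(R_\rhobar^{\lam+\eta,\tau}/\varpi)=i^*_\rhobar(\cZ_{\lam,\tau})=\sum_\sig[\osig(\lam,\tau):\sig]\,\cZ_\sig(\rhobar).
\]

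Case 2 is when no lowest alcove presentation of $\tau$ is $P_{\cP_\ss,\Lam,e}$-generic; here one must show that both sides vanish. For the left hand side, the $P_{22}$-component of our genericity hypothesis ensures $\rhobar^\ss|_{I_K}$ is $22$-generic, so if $\tau$ fails to be $(h_{\lam+\eta}+1)$-generic then $R_\rhobar^{\lam+\eta,\tau}=0$ by Proposition \ref{prop:pcrys-nonzero}(1). Otherwise $\tau$ is $(h_{\lam+\eta}+1)$-generic and Proposition \ref{prop:pcrys-nonzero}(2) forces $\rhobar^\ss|_{I_K}$ to admit a $\phi^{-1}(\lam)$-compatible lowest alcove presentation with $\tilw(\rhobar^\ss|_{I_K},\tau)\in \Adm(\phi^{-1}(\lam)+\eta)$; combined with the $P_{\cP_\ss,\Lam,e}$-genericity of $\rhobar^\ss|_{I_K}$, Lemma \ref{lem:genericity-shifting-poly}(2) then yields $(\lam+\eta,\tau)\in\cS_{\cP_\ss,\Lam}$, which in particular forces $\tau$ to have a $P_{\cP_\ss,\Lam,e}$-generic lowest alcove presentation, contradicting the assumption of Case 2. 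Hence $R_\rhobar^{\lam+\eta,\tau}=0$ and $Z(R_\rhobar^{\lam+\eta,\tau}/\varpi)=0$.

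The right hand side in Case 2 requires a support argument, and this is the step I expect to be the main obstacle. The cycles $\cZ_\sig$ are supported in $\{\cC_\kappa\mid \kappa\uparrow\sig\}$, so $\cZ_\sig(\rhobar)\neq 0$ forces $\rhobar\in\cC_\kappa$ for some $\kappa\uparrow\sig$. For $\sig\in\JH(\osig(\lam,\tau))$ with $[\osig(\lam,\tau):\sig]\neq 0$, one needs to rule out such $\kappa$. The strategy is to combine Lemma \ref{lem:lowerbound-mod-p-crys-stack} (whose hypothesis on $\mu$ is comfortably met because $\cP_\ss$-genericity is much stronger than $(h_{\lam+\eta}+16)$-deepness) with the upper-arrow compatibility from Lemma \ref{lem:upperarrow-SW-GSp4}: if $\rhobar\in \cC_\kappa$ with $\kappa\uparrow\sig$ and $\sig\in\JH(\osig(\lam,\tau))$, then by inspecting weights one can exhibit a nearby generic $\tau'$ for which the chain-of-containment $\cC_\kappa\subset\cX_{\Sym,\F}^{\lam+\eta,\tau'}$ forces a potentially crystalline lift of $\rhobar$ of type $(\lam+\eta,\tau)$, contradicting the vanishing of $R_\rhobar^{\lam+\eta,\tau}$ established above. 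Once this support obstruction is resolved, the right hand side vanishes in Case 2, completing the proof.
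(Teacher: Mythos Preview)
Your overall structure matches the paper: pull back the geometric cycles $\cZ_\sig$ along $i_\rhobar$, split on whether $(\lam+\eta,\tau)\in\cS_{\cP_\ss,\Lam}$, and in the bad case show both sides vanish. Your treatment of Case~1 and of the left-hand side in Case~2 (via Proposition~\ref{prop:pcrys-nonzero} and Lemma~\ref{lem:genericity-shifting-poly}(2)) is correct and is exactly what the paper does.

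The genuine gap is in your Case~2 right-hand-side argument. Your strategy --- find a ``nearby generic $\tau'$'' with $\cC_\kappa\subset\cX_{\Sym,\F}^{\lam+\eta,\tau'}$ --- yields a potentially crystalline lift of type $(\lam+\eta,\tau')$, not of type $(\lam+\eta,\tau)$; since $\tau'\neq\tau$ (as $\tau'$ is generic and $\tau$ is not), this gives no contradiction with $R_\rhobar^{\lam+\eta,\tau}=0$. Neither Lemma~\ref{lem:lowerbound-mod-p-crys-stack} nor Lemma~\ref{lem:upperarrow-SW-GSp4} lets you transport information from $\tau'$ back to the non-generic $\tau$.

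The paper closes this gap by a different mechanism. First, it passes from $\rhobar$ to $\rhobar^\ss$: if $\rhobar$ lies in the support of $\cZ_\sig$ then so does $\rhobar^\ss$ (irreducible components are closed). Since $\rhobar^\ss\in\cP_\ss$, Theorem~\ref{thm:versal-BM} gives the explicit formula $\cZ_\sig(\rhobar^\ss)=\til{Z}(M_\infty^{\psi_{\rhobar^\ss}}(\sig))$ via the minimal patching functor. Now the point is that $\sig\in\JH(\osig(\lam,\tau))$ for the \emph{original} $\tau$, so by exactness of $M_\infty^{\psi_{\rhobar^\ss}}$ one gets $M_\infty^{\psi_{\rhobar^\ss}}(\sig^\circ(\lam,\tau))\neq 0$, hence $R_{\rhobar^\ss}^{\lam+\eta,\tau}\neq 0$. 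Then Proposition~\ref{prop:pcrys-nonzero}(2) and Lemma~\ref{lem:genericity-shifting-poly}(2) force $(\lam+\eta,\tau)\in\cS_{\cP_\ss,\Lam}$, a contradiction. The key idea you are missing is that the patching functor links the cycle $\cZ_\sig$ directly back to the deformation ring of the \emph{given} type $(\lam+\eta,\tau)$, without any need to change $\tau$.

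A minor point: the paper sets $\cZ_\sig(\rhobar)=0$ for $\sig$ not $(\cS_\Lam,\hcS_\Lam)$-generic, rather than $i_\rhobar^*(\cZ_\sig)$; this makes the non-generic case automatic in the argument above.
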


\subsection{The weight part of Serre's  conjecture}\label{sub:SWC}
Let $F,\chi,\rbar$ be as in \S\ref{subsec:alg-aut-forms}. We further assume that $\rbar$ is automorphic of some weight $\mu$ and level $U$. Recall that $W(\rbar)$ is the set of modular Serre weights of $\rbar$. The following conjecture is due to Gee--Herzig--Savitt (\cite{GHS-JEMS-2018MR3871496}).

\begin{conj}\label{conj:SWC-tame}
If $\rbar|_{G_{F_v}}$ is tame and sufficiently generic at $v|p$, then $W(\rbar)=W^?(\rbar_p|_{I_{\Qp}})$.
\end{conj}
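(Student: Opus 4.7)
The plan is to follow the now-standard strategy combining the congruent patching functor machinery of \S\ref{sec:global} with the local model diagrams of \S\ref{sec:local-models}, adapting \cite{LLHLM-2020-localmodelpreprint} to $\GSp_4$. Under the Taylor--Wiles hypotheses on $\rbar$ present in Theorem~\ref{thmx:SWC}, I would first invoke Theorem~\ref{thm:existence-patching-functor}(1) to produce a congruent family of fixed similitude patching functors $\{M_\infty^{\psip}\}_{\psip\in\Psi(\rbar_p)}$ for $\rbar_p$. By Remark~\ref{rmk:patching-nonzero}(1), modularity of $\sig$ is equivalent to non-vanishing of $M_\infty^{\psip}(\sig)$ for $\psip$ compatible with the central character of $\sig$. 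The inclusion $W(\rbar)\subseteq W^?(\rbar_p|_{I_{\Qp}})$ is then immediate from Theorem~\ref{thm:weight-elim}.

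For the reverse inclusion I would first establish that every obvious weight is modular, i.e.\ $W_{\obv}(\rbar_p|_{I_{\Qp}})\subseteq W(\rbar)$. For each $s=(s_v)_{v\mid p}$, choose a tame inertial type $\tau_s$ with lowest alcove presentation compatible with $\rbar_p|_{I_{\Qp}}$ such that $\tilw(\rbar_p|_{I_{\Qp}},\tau_s)$ has the longest shape $s^{-1}(\eta)$. By Theorem~\ref{thm:longest-shape-def-ring}, each local ring $R_{\rbar|_{G_{F_v}}}^{\eta_v,\tau_{s,v}}$ is formally smooth and admits a potentially diagonalizable lift; transferring these local lifts to an automorphic global lift of $\rbar$ via a potential automorphy / \cite[Theorem~3.4]{patrikis-tang-2020potential}-type argument yields $M_\infty^{\psip}(\osig(\tau_s))\neq 0$ for the compatible $\psip$. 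By Lemma~\ref{lem:JH-factor-longest-shape}, $W^?(\rbar_p|_{I_{\Qp}})\cap\JH(\osig(\tau_s))=\{F_{\rbar_p}(s)\}$, so combining exactness of $M_\infty^{\psip}$ with the weight elimination already established forces $M_\infty^{\psip}(F_{\rbar_p}(s))\neq 0$, giving modularity of every obvious weight.

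To propagate from $W_{\obv}$ to the full $W^?(\rbar_p|_{I_{\Qp}})$, I would use the domain property of tame potentially crystalline deformation rings (Corollary~\ref{corx:domain} / Theorem~\ref{thm:pcrys-local-model}) together with the Breuil--M\'ezard identity. For each $\sig\in W^?(\rbar_p|_{I_{\Qp}})$, Lemma~\ref{lem:defect-maximizing-tame-type} supplies a tame type $\tau_\sig$ with $\sig\in\JH(\osig(\tau_\sig))$ and such that $\JH(\osig(\tau_\sig))\cap W^?(\rbar_p|_{I_{\Qp}})$ contains an obvious weight; by the previous step this forces $M_\infty^{\psip}(\osig(\tau_\sig))\neq 0$. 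Since $R_{\rbar_p}^{\eta,\tau_\sig,\psip}$ is a domain (Corollary~\ref{corx:domain}) and $M_\infty^{\psip}(\osig(\tau_\sig))$ is maximal Cohen--Macaulay over it, the support of $M_\infty^{\psip}(\osig(\tau_\sig))$ is the entire spectrum. Reducing modulo $\varpi$ and appealing to the cycle identity of Corollary~\ref{cor:versal-BM-poly-gen}, the Hilbert--Samuel cycle of $R_{\rbar_p}^{\eta,\tau_\sig,\psip}/\varpi$ decomposes as $\sum_{\kappa}[\osig(\tau_\sig):\kappa]\,\cZ_\kappa(\rbar_p)$ with each $\cZ_\kappa(\rbar_p)$ effective and nonzero; in particular $\cZ_\sig(\rbar_p)$ must lie in the support of $M_\infty^{\psip}(\osig(\tau_\sig))/\varpi$, and additivity of support cycles along the Jordan--H\"older filtration then yields $M_\infty^{\psip}(\sig)\neq 0$, so $\sig\in W(\rbar)$.

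The hardest step will be modularity of obvious weights: turning the \emph{local} potentially diagonalizable lifts of Theorem~\ref{thm:longest-shape-def-ring} into a genuinely \emph{automorphic} global lift requires potential automorphy results for $\GSp_4$, which in turn rely on Arthur's multiplicity formula and the vast/tidy Taylor--Wiles hypotheses on $\rbar$. Once that input is in hand, the cycle-theoretic propagation in the last step becomes routine commutative algebra given the unibranch property established in Theorem~\ref{thmx:unibranch}.
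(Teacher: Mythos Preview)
Your overall architecture (weight elimination $\Rightarrow$ obvious weights modular $\Rightarrow$ propagation to all of $W^?$) matches the paper's proof of Theorem~\ref{thm:SWC}, but two of the three steps contain genuine gaps.

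\textbf{Obvious weights.} Invoking \cite[Theorem~3.4]{patrikis-tang-2020potential} directly does not work: that result takes as \emph{input} that $\rbar$ is already potentially diagonalizably automorphic, whereas you only know $\rbar$ is automorphic of some unspecified weight. The paper inserts a bootstrap (its Step~1): choose a single type $(2\eta,\tau)$ with $W^?(\rbar_p|_{I_{\Qp}})\subset\JH(\osig(\eta,\tau))$ via \cite[Lemma~2.6.7]{LLHLM-2020-localmodelpreprint}; weight elimination plus the congruent family give $M_\infty^{\psip}(\sig^\circ(\eta,\tau))\neq 0$; full support (domain property) plus the component structure of $\spec R_{\rbar_p}^{2\eta,\tau}/\varpi$ then extract a \emph{single} obvious weight in the highest alcove lying in $W(\rbar)$. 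That one weight, via Theorem~\ref{thm:longest-shape-def-ring}, produces a potentially diagonalizable automorphic lift of $\rbar$, and only then can \cite{patrikis-tang-2020potential} be invoked for the remaining obvious weights.

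\textbf{Propagation.} Your Breuil--M\'ezard argument is circular. Corollary~\ref{cor:versal-BM-poly-gen} asserts only effectivity of $\cZ_\kappa(\rbar_p)$, not that $\cZ_\sig(\rbar_p)\neq 0$; and for tame $\rbar_p$ the cycles $\cZ_\kappa(\rbar_p)$ are \emph{defined} (Theorem~\ref{thm:versal-BM}) as $\til{Z}(M_\infty(\kappa))$ for a local minimal patching functor, so the identity carries no independent information. Even granting that a component corresponding to $\sig$ lies in the support of $M_\infty^{\psip}(\osig(\tau_\sig))/\varpi$, additivity along a Jordan--H\"older filtration only tells you that \emph{some} $M_\infty^{\psip}(\sig')$ hits that component, not that $\sig'=\sig$. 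The paper's Step~3 supplies the missing ingredients: it uses Proposition~\ref{prop:geom-SW-EG-stack} under polynomial genericity to establish $W_g(\rbar_p)=W^?(\rbar_p|_{I_{\Qp}})$, hence $\rbar_p\in\cC_\sig$; then Lemma~\ref{lem:irred-comp-EGstack-to-versal} identifies an \emph{irreducible} cycle $\cC_\sig(\rbar_p)$ inside $\spec R_{\rbar_p}^{\eta,\tau_\sig}/\varpi$; finally the support constraint of Theorem~\ref{thm:geom-BM} (support of $\cZ_{\sig'}$ lies in $\{\cC_\kappa\mid\kappa\uparrow\sig'\}$) combined with the maximality clause Lemma~\ref{lem:defect-maximizing-tame-type}(2) forces $\sig'=\sig$.
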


We say that a Serre weight $\sigma$ is \emph{generic} if $\sigma$ is $(\cS_{\{0\}}, \hcS_{\{0\}})$-generic. Note that if $\sigma$ is generic, then it is $(\cS_{\Lam},\hcS_\Lam)$-generic for any finite set of dominant cocharacters $\Lam \subset X_*(\uT^\vee)$ (cf.~\cite[Lemma 8.4.8]{LLHLM-2020-localmodelpreprint}). We let $W_{\gen}(\rbar)$ be the subset of generic Serre weights in $W(\rbar)$.

\begin{defn}
Let $\rhobar: G_{\Qp} \ra \LuG(\F)$ be an $L$-parameter. 
We define $W^{\BM}_{\gen}(\rhobar)$ to be a set of $\sig=\otimes_{v\in S_p} \sig_v$ where  $\sig_v$ is a generic Serre weight such that $\rhobar_v$ is contained in the support of $\cZ_{\sig_v}$ defined in Theorem \ref{thm:geom-BM}.
\end{defn}

Following \cite[\S9.1]{LLHLM-2020-localmodelpreprint}, we formulate a generalization of Conjecture \ref{conj:SWC-tame} for $\rbar$ not necessarily tame at places above $p$.

\begin{conj}\label{conj:SWC-gen}
The set $W_\gen(\rbar)$ is equal to $W^{\BM}_\gen(\rbar_p)$.
\end{conj}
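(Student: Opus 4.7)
The plan is to deduce Conjecture~\ref{conj:SWC-gen} from the geometric Breuil--M\'ezard formalism of \S\ref{sub:BM} together with the congruent family of fixed similitude patching functors for $\rbar_p$. Imposing Taylor--Wiles hypotheses on $\rbar$ (as in Theorem~\ref{thmx:SWC}), Theorem~\ref{thm:existence-patching-functor}(1) produces a congruent family $\{M_\infty^{\psi_p}\}_{\psi_p\in\Psi(\rbar_p)}$ of fixed similitude patching functors for $\rbar_p$. For any generic Serre weight $\sigma=\otimes_{v|p}\sigma_v$, Remark~\ref{rmk:patching-nonzero}(1) gives the equivalence
\begin{equation*}
\sigma\in W(\rbar) \iff M_\infty^{\psi_p}(\sigma)\neq 0
\end{equation*}
for the $\psi_p\in\Psi(\rbar_p)$ whose inertial restriction matches the algebraic central character of $\sigma$.

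Next, I would unwind non-vanishing of $M_\infty^{\psi_p}(\sigma)$ into local support conditions at each $v\in S_p$. The ring $R_\infty^{\psi_p}$ factors over $\cO$ as a completed tensor product with a factor $R_{\rbar_v}^{\square,\psi_v}$ at each $v$; the patching construction is arranged so that the projection of $\supp M_\infty^{\psi_p}(\sigma)$ onto the $v$-th local factor $\spec R_{\rbar_v}^{\square,\psi_v}/\varpi$ equals the support of a local Breuil--M\'ezard cycle $\cZ_{\sigma_v}(\rbar_v)$. Concretely, this local cycle is extracted from the global one by pairing $M_\infty^{\psi_p}$ at all $w\neq v$ with a fixed Serre weight in a type $(\lambda_a+\eta,\tau_a)$ admitting a potentially diagonalizable lift of $\rbar_w$, exactly as in Step~2 of the proof of Theorem~\ref{thm:existence-patching-functor}(2); this produces a genuine local patching functor at $v$ to which the proof of Theorem~\ref{thm:versal-BM} applies verbatim. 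For generic $\sigma_v$, Theorem~\ref{thm:geom-BM} then gives $\cZ_{\sigma_v}(\rbar_v)=i^*_{\rbar_v}(\cZ_{\sigma_v})$, so $\cZ_{\sigma_v}(\rbar_v)\neq 0$ if and only if $\rbar_v\in\supp(\cZ_{\sigma_v})$, which is the membership criterion for $W^{\BM}_\gen(\rbar_p)$. Assembling these equivalences over all $v\in S_p$ yields the desired equality $W_\gen(\rbar) = W^{\BM}_\gen(\rbar_p)$.

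The main obstacle is the factorization in the middle step: deducing that non-vanishing of the global patched module $M_\infty^{\psi_p}(\otimes_v\sigma_v)$ is equivalent to the simultaneous non-vanishing of each local patched module for $\sigma_v$ at $\rbar_v$. The global patching datum is not literally a completed tensor product of local ones, so this factorization is not automatic; it has to be produced by hand by constructing auxiliary local patching functors from the global one, as just sketched. The congruence property of the family $\{M_\infty^{\psi_p}\}$ is essential here, since it lets one match up these auxiliary local functors across different similitude characters modulo $\varpi$, thereby compensating for the fact that the Hodge--Tate weights of the auxiliary types $(\lambda_a+\eta,\tau_a)$ at the other places will force a shift of $\psi_p$. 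A secondary subtlety is that $\cZ_{\sigma_v}$ is constructed in Theorem~\ref{thm:geom-BM} only under the $(\cS_\Lambda,\hcS_\Lambda)$-genericity condition; so one should restrict throughout to $\sigma$ in $W_\gen$ (as in the statement of the conjecture), or invoke Corollary~\ref{cor:geom-BM-poly-gen} with suitable polynomial-genericity of $\rbar_v$. Once these local functors and cycles are in place, the conjecture follows formally from the Breuil--M\'ezard machinery already established, and specializing to tame $\rbar_p$ and combining with Proposition~\ref{prop:geom-SW-EG-stack} recovers the tame statement $W(\rbar)=W^?(\rbar_p|_{I_{\Q_p}})$ of Theorem~\ref{thmx:SWC}.
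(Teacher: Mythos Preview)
This statement is a \emph{conjecture} in the paper; it is established only under the hypotheses of Theorem~\ref{thm:SWC} (tameness and polynomial genericity at $p$), and only as Step~4 of that proof. Your approach and the paper's are in opposite logical order. The paper first runs Steps~1--3 to prove $W(\rbar)=W^?(\rbar_p|_{I_{\Q_p}})$ using obvious weights, full-support arguments on domain deformation rings, and the covering argument of Lemma~\ref{lem:defect-maximizing-tame-type}; only then does Step~4 identify $W_\gen(\rbar)$ with $W^{\BM}_\gen(\rbar_p)$. You instead try to prove $W_\gen(\rbar)=W^{\BM}_\gen(\rbar_p)$ directly via a local-global factorization of the patched module, and read off $W(\rbar)=W^?$ afterwards.

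The obstacle you flag is real, and your proposed fix does not close it. The ``extracted local patching functor at $v$'' obtained by inserting a fixed Serre weight at each $w\neq v$ into the global functor of Theorem~\ref{thm:existence-patching-functor}(1) is not the \emph{minimal} local functor of Theorem~\ref{thm:existence-patching-functor}(2) that was used in Theorem~\ref{thm:versal-BM} to define $\cZ_{\sigma_v}(\rbar_v)$: the global functor of part~(1) carries an $R^p$ factor involving framed deformation rings at ramified places away from $p$, and no rank-one statement is proved in that setting. So there is no reason the cycle your extracted functor produces should agree with $\cZ_{\sigma_v}(\rbar_v)$, hence no reason its support should match $\supp(\cZ_{\sigma_v})$. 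Moreover, to insert a fixed Serre weight at $w\neq v$ you must already know that $M_\infty^{\psi_p}$ is nonzero on it, i.e.\ that some obvious weight lies in $W(\rbar)$; in the paper this is exactly Step~2 of Theorem~\ref{thm:SWC}. Thus your argument, far from bypassing Steps~1--3, would circularly re-enter them. The paper's route avoids the factorization problem entirely: once $W(\rbar)=W^?(\rbar_p|_{I_{\Q_p}})=\prod_v W^?(\rbar_v|_{I_{F_v}})$ is in hand, both sides of Conjecture~\ref{conj:SWC-gen} reduce to the same product description place by place.
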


We also define $W_g(\rbar_p)=\{\sig=\otimes_{v\in S_p}\sig_v \mid \sig_v \in W_g(\rbar_p|_{G_{F_v}})\}$, a set of Serre weights of $\GSp_4(\cO_F/p)$, where $W_g(\rbar_p|_{G_{F_v}})$ is the set defined in Proposition \ref{prop:geom-SW-EG-stack}. Note that any $\sig \in W_g(\rbar_p)$ is $3h_\eta$-deep.

Let $\cX_{\Sym,F_p}:= \prod_{v\in S_p,\cO}\cX_{\Sym,F_v}$. By Theorem \ref{thm:irred-comp-pgma-stack}, there is a bijection between irreducible components in the underlying reduced substack $\cX_{\Sym,F_p,\red}$ and isomorphism classes of Serre weights of $\GSp_4(\cO_F/p)$. If $\sig=\otimes_{v\in S_p} \sig_v$ is a Serre weight of   $\GSp_4(\cO_F/p)$, we denote its corresponding irreducible component by $\cC_{\sig} := \prod_{v\in S_p, \cO}\cC_{\sig_v}$.

\begin{lemma}\label{lem:irred-comp-EGstack-to-versal}
Let $\sig\in W_g(\rbar_p)$. Let $R_{\rbar_p}:= \ctimes_{v\in S_p}R_{\rbar|_{G_{F_v}}}^\square$ with a versal morphism
\begin{align*}
    \iota_{\rbar_p}: \spf R_{\rbar_p} \ra \cX_{\Sym,F_p}.
\end{align*}
If $\rbar_p|_{I_{\Qp}}$ is tame and $(6(h_\eta+1)-2)$-generic, then  $\iota^*_{\rbar_p}(\cC_{\sig})$ is an irreducible cycle corresponding to an irreducible subscheme $\cC_\sig(\rbar_p)\subset \spec R_{\rbar_p}/\varpi$. Moreover, if $\cC_{\sig}\subset \prod_{v\in S_p,\cO}\cX_{\Sym,F_v,\F}^{\lam_v+\eta_v,\tau_v}$, then $\cC_\sig(\rbar_p)$ is contained in $\spec R_{\rbar_p}^{\lam+\eta,\tau}/\varpi$.
\end{lemma}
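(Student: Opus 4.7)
The plan is to leverage the local model diagram of Theorem \ref{thm:mod-p-local-model} to transfer the irreducibility question about $\cC_\sig$ at $\rbar_p$ to the unibranch statement of Proposition \ref{prop:unibranch-T-fixed-point} for the Schubert-type component $C^\zeta_\sig$ at a torus-fixed point, and then transport the conclusion to the versal ring via the formally smooth morphism $\iota_{\rbar_p}$.

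To set up, I would first fix compatible alcove data. Since $\rbar_p|_{I_{\Qp}}$ is $22$-generic and $\rbar_p \in \cC_\sig(\F)$, Proposition \ref{prop:geom-SW-EG-stack} yields a lowest alcove presentation $(\tilw_1,\omega)$ of $\sig$ compatible with some algebraic central character $\zeta \in X^*(\uZ)$ and with a chosen lowest alcove presentation of $\rbar_p|_{I_{\Qp}}$. Applying Lemma \ref{lem:defect-maximizing-tame-type} to this datum, I would fix a sufficiently generic tame inertial $L$-parameter $\tau$ with compatible lowest alcove presentation such that $\sig \in \JH(\osig(\tau))$; by Theorem \ref{thm:naive-BM} this forces $\cC_\sig \subset \cX_{\Sym,\F}^{\eta,\tau}$. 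I would then invoke Theorem \ref{thm:mod-p-local-model} applied to $(\sig,\tau)$ to obtain the top diagonal isomorphism $\til\cC_\sig \risom \til C^\zeta_\sig$, where the left-hand side is a $T^{\vee,\cJ}_\F$-torsor over $\cC_\sig$ and the right-hand side is a $T^{\vee,\cJ}_\F$-torsor over the closed subvariety $C^\zeta_\sig \subset \Fl^\cJ$.

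Lifting $\rbar_p$ to an $\F$-point of $\til\cC_\sig$ and tracing through this isomorphism produces a corresponding $\F$-point of $C^\zeta_\sig$. Because $\rbar_p|_{I_{\Qp}}$ is tame, the shape of the étale $\varphi$-module $\epsilon_\infty(\rbar_p)$ lies in $\tilW^{\vee,\cJ}$, so its image in $\Fl^\cJ$ is a $T^{\vee,\cJ}_\F$-fixed point, and accounting for the right translation $r_{\tilw^*(\tau)}$ appearing in the diagram, the corresponding point of $C^\zeta_\sig$ is $T^{\vee,\cJ}_\F$-fixed as well. Proposition \ref{prop:unibranch-T-fixed-point} then shows that $C^\zeta_\sig$ is unibranch at this point, and since unibranchness transfers along the formally smooth $T^{\vee,\cJ}_\F$-torsors $\til C^\zeta_\sig \to C^\zeta_\sig$ and $\til\cC_\sig \to \cC_\sig$, the algebraic stack $\cC_\sig$ is unibranch at $\rbar_p$; equivalently, the completed local ring of $\cC_\sig$ at $\rbar_p$ is a domain.

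Finally, since $\iota_{\rbar_p}$ is versal (hence formally smooth), the pullback $\iota^*_{\rbar_p}(\cC_\sig)$ is supported on an irreducible reduced closed subscheme $\cC_\sig(\rbar_p) \subset \spec R_{\rbar_p}/\varpi$ of multiplicity one. The second assertion is then immediate from the functoriality of pullback along $\iota_{\rbar_p}$ together with the definitional identification of $\spf R_{\rbar_p}^{\lam+\eta,\tau}$ as the pullback of $\prod_{v \in S_p,\cO} \cX_{\Sym,F_v}^{\lam_v+\eta_v,\tau_v}$ along $\iota_{\rbar_p}$. The main technical hurdle is the identification in the preceding paragraph that the image of $\rbar_p$ in $C^\zeta_\sig$ is a $T^{\vee,\cJ}_\F$-fixed point, which requires carefully tracking $\epsilon_\infty(\rbar_p)$ through the chain of closed immersions in Theorem \ref{thm:mod-p-local-model} and exploiting the tameness of $\rbar_p|_{I_{\Qp}}$.
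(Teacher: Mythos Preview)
Your proposal is correct and follows essentially the same route as the paper: choose a sufficiently generic tame type $\tau$ via Lemma~\ref{lem:defect-maximizing-tame-type} so that $\sig\in\JH(\osig(\tau))$, invoke the local model diagram of Theorem~\ref{thm:mod-p-local-model} to identify $\til\cC_\sig$ with $\til C^\zeta_\sig$, and then apply Proposition~\ref{prop:unibranch-T-fixed-point} at the torus-fixed point coming from the tameness of $\rbar_p$. The paper's proof is terser (it does not spell out the transfer of unibranchness through the $T^{\vee,\cJ}_\F$-torsors or the identification of the image as a torus-fixed point), but the ingredients and logic are the same.
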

\begin{proof}
This follows from Theorem \ref{thm:mod-p-local-model} and Proposition \ref{prop:unibranch-T-fixed-point}. 
\end{proof}

The following is the main result of this subsection.

\begin{thm}\label{thm:SWC}
There exists a polynomial $P(X_1,X_2,X_3)\in \Z[X_1,X_2,X_3]$ independent of $p$ such that if $\rbar:G_F \ra \GSp_4(\F)$ is automorphic, satisfies Taylor--Wiles conditions, and for each $v|p$, $\rbar|_{G_{F_v}}$ is tame with a $P$-generic lowest alcove presentation $(s_v,\mu_v-\eta)$, then Conjectures \ref{conj:SWC-tame} and \ref{conj:SWC-gen} hold for $\rbar$.
\end{thm}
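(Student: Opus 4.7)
The plan is to deduce both Conjectures \ref{conj:SWC-tame} and \ref{conj:SWC-gen} from the machinery already assembled: the weight elimination of Theorem \ref{thm:weight-elim}, the congruent family of fixed similitude patching functors from Theorem \ref{thm:existence-patching-functor}(1), the versal and geometric Breuil--M\'ezard results (Theorem \ref{thm:versal-BM} and Corollary \ref{cor:geom-BM-poly-gen}), and the longest-shape deformation ring calculation of Theorem \ref{thm:longest-shape-def-ring} together with Lemma \ref{lem:JH-factor-longest-shape}. I fix $P$ as a product of an appropriate $P_m$ (with $m$ large enough to cover the genericity hypotheses of Theorem \ref{thm:weight-elim}, Theorem \ref{thm:longest-shape-def-ring}, and Proposition \ref{prop:geom-SW-EG-stack}(2) applied to every $\sig\in W^?(\rbar_p|_{I_{\Qp}})$) and of the polynomial $P_{\cP_\ss,\{0\},e}$ appearing in Corollary \ref{cor:geom-BM-poly-gen}, so that every downstream genericity assumption holds whenever each $\mu_v$ is $P$-generic.

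Next, I invoke Theorem \ref{thm:existence-patching-functor}(1) to obtain a congruent family $\{M^{\psi_p}_\infty\}_{\psi_p\in\Psi(\rbar_p)}$ of fixed similitude patching functors. By Remark \ref{rmk:patching-nonzero}(1), $\sig\in W(\rbar)$ iff $M^{\psi_p}_\infty(\sig)\ne 0$, and this criterion is independent of $\psi_p$ by the congruence property. For Conjecture \ref{conj:SWC-gen}, Theorem \ref{thm:versal-BM} gives $\cZ_\sig(\rbar_p)=\til{Z}(M^{\psi_p}_\infty(\sig))$, and Corollary \ref{cor:geom-BM-poly-gen} identifies this with $i^*_{\rbar_p}(\cZ_\sig)$ for the canonical global effective cycle $\cZ_\sig$. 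Since the versal morphism is flat up to a formally smooth twist (Lemma \ref{lem:fixed-sim-formally-sm}), $i^*_{\rbar_p}(\cZ_\sig)\ne 0$ iff $\rbar_p\in\supp(\cZ_\sig)$, i.e.~iff $\sig\in W^{\BM}_\gen(\rbar_p)$. Chaining these equivalences produces $W_\gen(\rbar)=W^{\BM}_\gen(\rbar_p)$.

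For Conjecture \ref{conj:SWC-tame}, the inclusion $W(\rbar)\subset W^?(\rbar_p|_{I_{\Qp}})$ is Theorem \ref{thm:weight-elim}. Under $P$-genericity and tameness, Proposition \ref{prop:geom-SW-EG-stack}(2) upgrades to $W^?(\rbar_p|_{I_{\Qp}})=W_g(\rbar_p|_{I_{\Qp}})$, so every $\sig\in W^?(\rbar_p|_{I_{\Qp}})$ already satisfies $\rbar_p\in\cC_\sig$. I first treat obvious weights: given $\sig=\otimes_v F_{\rbar|_{G_{F_v}}}(s_v)\in W_\obv(\rbar_p|_{I_{\Qp}})$, Lemma \ref{lem:JH-factor-longest-shape} supplies a tame type $\tau$ with $\tilw(\rbar_p,\tau)\in\uW(\eta)$ and $\JH(\osig(\tau))\cap W^?(\rbar_p|_I)=\{\sig\}$; Theorem \ref{thm:longest-shape-def-ring} then guarantees that $R^{\eta,\tau,\psi}_{\rbar_p}$ is formally smooth over $\cO$ and that every lift is potentially diagonalizable. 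Combining the automorphy of $(\rbar,\chi)$ with a Khare--Wintenberger transfer (invoking the Ihara avoidance patching of Proposition \ref{prop:ihara-avoidance} to propagate automorphy across types) produces an automorphic lift of type $(0,\tau)$, forcing $M^\psi_\infty(\sigma^\circ(0,\tau))\ne 0$; reducing modulo $\varpi$ and applying weight elimination to all other Jordan--H\"older factors leaves $M^\psi_\infty(\sig)\ne 0$, hence $\sig\in W(\rbar)$. For a non-obvious $\sig\in W^?(\rbar_p|_{I_{\Qp}})$ I bootstrap through the BM cycles: the coefficient of $\cC_\sig$ in $\cZ_\sig$ is strictly positive (verified by evaluating $\cZ_\sig$ at an auxiliary tame $\rbar_p^*$ with $\sig\in W_\obv(\rbar_p^*|_{I_{\Qp}})$ and using the support relation $\supp(\cZ_\sig)\subset\{\cC_\kappa\mid\kappa\uparrow\sig\}$ together with Lemma \ref{lem:upperarrow-SW-GSp4} to isolate $\cC_\sig$), so $\rbar_p\in\cC_\sig$ forces $\cZ_\sig(\rbar_p)\ne 0$ and therefore $\sig\in W(\rbar)$.

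The main obstacle is the obvious weight step: certifying $M^\psi_\infty(\sigma^\circ(0,\tau))\ne 0$ without presuming modularity at the prescribed type requires a global input (potential automorphy or Ihara avoidance) converting the given automorphy of $(\rbar,\chi)$ into automorphy at the prescribed Hodge type and tame inertial type $\tau$. The domain/formally smooth structure of $R^{\eta,\tau,\psi}$ from Theorem \ref{thm:longest-shape-def-ring} is essential here: once \emph{any} automorphic lift of type $(0,\tau)$ is exhibited, the maximal Cohen--Macaulay patched module is forced to have full support on this irreducible deformation space, and the weight-elimination-plus-exactness argument then collapses onto the single outer weight $\sig$.
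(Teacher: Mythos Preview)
Your proposal has a genuine gap at precisely the point you flag as ``the main obstacle'': producing an automorphic lift of the prescribed longest-shape type $(\eta,\tau)$. Your suggestion to use Ihara avoidance (Proposition~\ref{prop:ihara-avoidance}) is misplaced: that device matches two patching functors which differ only in their deformation conditions at auxiliary primes $v\in R$ away from $p$; it cannot alter the Hodge type or inertial type at $v\mid p$. Likewise, a Khare--Wintenberger/modularity-lifting step only propagates automorphy \emph{within} a type once you already know the patched module is nonzero over that deformation ring. So your argument for obvious weights is circular as written.

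The paper breaks this circle by inserting a preliminary step you omit entirely. One chooses a single large type $(2\eta,\tau)$ with $\tilw(\tau)=\tilw(\rbar_p)t_{-\eta-w_0\eta}$, for which $W^?(\rbar_p|_{I_{\Qp}})\subset \JH(\osig(\eta,\tau))$. The given automorphy of $(\rbar,\chi)$ (at \emph{some} weight) already forces $M_\infty^{\psip}(\sig^\circ(\eta,\tau))\neq 0$, and the domain property of $R_{\rbar_p}^{2\eta,\tau}$ from Theorem~\ref{thm:pcrys-local-model} gives full support. Now for the obvious weight $\sig$ in the \emph{highest} restricted alcove, the support of $\cZ_{\sig'}$ is contained in $\{\cC_\kappa\mid \kappa\uparrow\sig'\}$, so the only $\sig'$ whose patched cycle can hit $\cC_\sig(\rbar_p)$ is $\sig$ itself; hence this particular obvious weight is modular. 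From here one gets potentially diagonalizable automorphy of $\rbar$ (via Theorem~\ref{thm:longest-shape-def-ring}), and then \cite{patrikis-tang-2020potential} together with \cite[Lemma~4.4.4]{EL} (not Ihara avoidance) transfers to all obvious weights.

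A second, more technical gap: you invoke Theorem~\ref{thm:versal-BM} to assert $\cZ_\sig(\rbar_p)=\til{Z}(M^{\psi_p}_\infty(\sig))$ for the \emph{global} patching functor of Theorem~\ref{thm:existence-patching-functor}(1). But Theorem~\ref{thm:versal-BM} defines $\cZ_\sig(\rhobar)$ via the \emph{local minimal} patching functor of part~(2); identifying the two is not established, and your approach to Conjecture~\ref{conj:SWC-gen} and to non-obvious weights depends on it. The paper avoids this by proving Conjecture~\ref{conj:SWC-tame} first (via the defect-maximizing type of Lemma~\ref{lem:defect-maximizing-tame-type} and the domain property again, rather than via BM cycles), and only then deducing Conjecture~\ref{conj:SWC-gen} in Step~4, where the equivalence with $W^?$ is already known on both sides.
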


\begin{proof}
We take $P$ to be the product $P_{6(h_\eta+1)-2} P_{2\eta,e}P_{\eta,e}^{\eta_0}$ and assume that $\mu_v$ is $P$-generic. Note that $W^?(\rbar_p|_{I_{\Qp}}) = W_g(\rbar_p)$ by Proposition \ref{prop:geom-SW-EG-stack}.

\subsubsection*{Step 1}
We show that $W(\rbar)\cap W_{\mathrm{obv}}(\rbar_p|_{I_{\Qp}})$ is nonempty. 
Let $\tau$ be the tame inertial $L$-parameter with a lowest alcove presentation 
such that $\tilw(\tau)=\tilw(\rbar_p) t_{-\eta -w_0\eta}$ (note that this condition and $P_{2\eta,e}P_{6(h_\eta+1)-2}$-genericity of $\mu_v$ imply that $\tau$ is $P_{2\eta,e}P_{6(h_\eta+1)-2}$-generic). By \cite[Lemma 2.6.7]{LLHLM-2020-localmodelpreprint}, $W^?(\rbar_p|_{I_{\Qp}})\subset \JH(\ov{\sig}(\eta,\tau))$.  
For $\psi_p\in \Psi(\rbar_p)$  compatible with $(2\eta,\tau)$ and a weak patching functor $M^\psip_\infty$ for $\rbar_p$ (which exists by Theorem \ref{thm:existence-patching-functor}), the automorphy of $\rbar$ and Theorem \ref{thm:weight-elim} imply that $M_\infty^{\psip} (\sig^\circ(\eta,\tau)) \neq 0$. By Theorem \ref{thm:pcrys-local-model} and $P_{2\eta,e}$-genericity of $\tau$, $M_\infty^{\psip}(\sig^\circ(\eta,\tau))$ has full support on $\spec R_{\rbar_p}^{2\eta,\tau,\psip}$. By Lemma \ref{lem:irred-comp-EGstack-to-versal}, we have $\cC_{\sig}(\rbar_p)\subset \spec R_{\rbar_p}^{2\eta,\tau}/\varpi$ for any $\sig \in W^?(\rbar_p|_{I_{\Qp}})$. Then there exists $\sig'\in W^?(\rbar_p|_{I_{\Qp}})$ such that $\til{Z}(M_\infty^{\psip}(\sig'))$ is supported on $\cC_\sig(\rbar_p)$. 
Since the support of $\til{Z}(M_\infty^{\psip}(\sig'))=\cZ_{\sig'}(\rhobar)$ is contained in $\{\cC_{\kappa}(\rbar_p)|\kappa \uparrow\sigma'\}$, this implies, in particular, that $\sig \uparrow \sig'$.
Take $\sig\simeq F(\lam) \in W_{\mathrm{obv}}(\rbar_p|_{I_{\Qp}})$ such that $\lam$ is in the highest $p$-restricted alcove. Then $\sig \uparrow \sig'$ implies that $\sig=\sig'$, and $\sig$ also belongs to $W(\rbar)$.

\subsubsection*{Step 2} We show that $W_{\mathrm{obv}}(\rbar_p|_{I_{\Qp}})\subset W(\rbar)$. Any $\sig \in W_{\mathrm{obv}}(\rbar_p|_{I_{\Qp}})$ is equal to $F_{\rbar_p}(w^\mo)$ for a unique $w\in \uW$. We write $\tau_{\sig}$ for the $19$-generic tame inertial type such that $\tilw(\rbar_p,\tau)=t_{w(\eta)}$. By \cite[Corollary 2.6.5]{LLHLM-2020-localmodelpreprint}, $\sig \in \JH(\ov{\sig}(\tau_{\sig}))$. There exists $\psip\in \Psi(\rbar_p)$ compatible with $(\eta,\tau_{\sig})$ and a weak patching functor $M_\infty^\psip$ for $\rbar_p$.

By the previous step, we can choose $\sig\in W_{\mathrm{obv}}(\rbar_p|_{I_{\Qp}}) \cap W(\rbar)$. Then we have $M_\infty^{\psip}(\sig^\circ(\tau_{\sig}))\neq 0$. Thus, there exists a potentially crystalline lift $r_0$ of $\rbar$ of type $(\eta_v,\tau_{\sig,v})$ at $v|p$. By Theorem \ref{thm:longest-shape-def-ring}, this shows that $\rbar$ is potentially diagonalizably automorphic. We now consider an arbitrary $\sig \in W_{\mathrm{obv}}(\rbar_p|_{I_{\Qp}})$. By applying Theorem \ref{thm:longest-shape-def-ring}, \cite[Theorem 1.1]{Booher-minimal-deformation}, and \cite[Theorem 3.4]{patrikis-tang-2020potential}, we can find a lift $r$ of $\rbar$ that is potentially diagonalizable of type $(\eta_v,\tau_{\sig,v})$ at $v|p$. By \cite[Lemma 4.4.4]{EL}, $r$ is automorphic and $M_\infty^{\psip}(\sig^\circ(\tau_{\sig}))\neq 0$. By reducing modulo $p$, $M_\infty^{\psip}(\sig)\neq 0$ and thus $\sig\in W(\rbar)$.

\subsubsection*{Step 3} We show that $W(\rbar)=W^?(\rbar_p|_{I_{\Qp}})$. 
Let $\sig\in W^?(\rbar|_{I_{\Qp}})$ corresponding to $(\tilw,\tilw_1)$ under the bijection in Proposition \ref{prop:JH-factor-W?}. Let $\tau$ be a tame inertial type such that $\tilw(\rbar_p,\tau)= (\tilw_h\tilw)^\mo w_0 \tilw_1$. By \cite[Proposition 8.6.3]{LLHLM-2020-localmodelpreprint}, $\sig\in \JH(\ov{\sig}(\tau)) \cap W^?(\rbar|_{I_{\Qp}})$, and if $\sig' \in \JH(\ov{\sig}(\tau)) \cap W^?(\rbar|_{I_{\Qp}})$ satisfies $\sig \uparrow \sig'$, then $\sig=\sig'$. 
Since $\JH(\ov{\sig}(\tau)) \cap W^?(\rbar|_{I_{\Qp}})$ is nonempty if and only if  $\JH(\ov{\sig}(\tau)) \cap W_{\mathrm{obv}}(\rbar|_{I_{\Qp}})$ is nonempty, the previous step implies that $M_\infty^{\psip}(\sig^\circ(\tau))\neq 0$ and it has full support on $\spec R_{\rbar_p}^{\eta,\tau}$, by Theorem \ref{thm:pcrys-local-model} and the $P^{\eta_0}_{\eta,e}$-genericity of $\mu$. Since $\cC_\sig(\rbar_p)\subset \spec R_{\rbar_p}^{\eta,\tau}$ by Lemma \ref{lem:irred-comp-EGstack-to-versal}, there exists  $\sig'\in \JH(\ov{\sig}(\tau))$ such that $M^\psip_\infty(\sig')$ is supported on $\cC_{\sig}(\rbar_p)$. As we argued at the end of Step 1, this implies $\sig\uparrow \sig'$. Then $\sig=\sig'$ by our choice of $\tau$ and thus $\sig \in W(\rbar)$. Thus Conjecture \ref{conj:SWC-tame} holds for $\rbar$.

\subsubsection*{Step 4} It remains to prove that $W_{\gen}(\rbar) = W^{\mathrm{BM}}_{\gen}(\rbar_p)$. Let $\sig$ be a  generic Serre weight. Then $\sig$ is $(\cS_{\Lam},\hat{\cS}_{\Lam})$-generic. Let $M_\infty^{\psip}$ be a weak patching functor for $\rbar_p$ constructed in Theorem \ref{thm:existence-patching-functor}(1). Then $M_\infty^{\psip}(\sig)\neq 0$ if and only if $\sig \in W(\rbar)$. On the other hand, $\sig \in W^{\BM}_{\gen}(\rbar_p)$ is equivalent to $\rbar_p \in \iota_{\rbar_p}^*(\cZ_{\sig})$, which is equivalent to $M_\infty^{\psip}(\sig)\neq 0$ by the construction of $\cZ_{\sig}$. Thus Conjecture \ref{conj:SWC-gen} holds for $\rbar$.
\end{proof}

\subsection{Modularity lifting in generic tamely potentially crystalline case}\label{sub:modularity-lifting}

We prove the following modularity lifting result.

\begin{thm}\label{thm:modularity-lifting}
Let $r:G_F \ra \GSp_4(E)$ be a continuous representation satisfying the following conditions:
\begin{enumerate}
    \item $r$ is unramified at all but finitely many places;
    \item $r|_{G_{F_v}}$ is potentially crystalline of type $(\lam_v+\eta_v,\tau_v)$ with a lowest alcove presentation $(s_v,\mu_v-\eta_v)$ with $P_{\lam+\eta,e}$-generic $\mu_v$;
    \item $\rbar$ satisfies Taylor--Wiles conditions;
    \item $\rbar|_{G_{F_v}}$ is tame at $v|p$; and
    \item $\rbar\simeq \rbar_{\pi,p,\iota}$ for some cuspidal automorphic representation  $\pi$ of $\GSp_4(\A_F)$ of weight $\lam$ and central character $\chi$ such that $\sig(\tau_v)$ is a $K$-type for $\pi$ at $v|p$. 
\end{enumerate}
Then $r\simeq r_{\til{\pi},p,\iota}$ for some cuspidal automorphic representation $\til{\pi}$  of $\GSp_4(\A_F)$ of weight $\lam$ central character $\chi$ such that $\sig(\tau_v)$ is a $K$-type for $\til{\pi}$ at $v|p$. 
\end{thm}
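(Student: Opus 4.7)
The plan is to apply Taylor's Ihara avoidance argument using the congruent family of fixed similitude patching functors from Theorem \ref{thm:existence-patching-functor} in the Ihara-avoidance setup of \S\ref{subsub:ihara-avoidance}. First I would enlarge $\cO$ and choose a finite set $R$ of auxiliary places, disjoint from $S_p$ and from all places where $r$ or $r_{\pi,p,\iota}$ ramifies, such that for each $v\in R$ one has $q_v\equiv 1\bmod p$, residue characteristic $>5$, and $\bar r|_{G_{F_v}}$ trivial; standard Cebotarev arguments (cf.~\cite[\S7.5]{BCGP-ab_surf-2018abelian}) guarantee that $R$ can be chosen large enough to kill the relevant Selmer group obstructions. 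Let $\psi_p\in\Psi(\bar r_p)$ be induced by the similitude character of $r$. Applying \S\ref{subsub:ihara-avoidance} for both the trivial choice $\zeta=1$ and a generic choice of $\zeta=(\zeta_v)_{v\in R}$ with $\zeta_{v,1},\zeta_{v,2}\neq 1$ and $\zeta_{v,1}\neq\zeta_{v,2}^{\pm1}$, one obtains fixed similitude patching functors $(M_\infty^{1,\psi_p},R_\infty^{1,\psi_p})$ and $(M_\infty^{\zeta,\psi_p},R_\infty^{\zeta,\psi_p})$ that are congruent modulo $\varpi$ by Proposition \ref{prop:ihara-avoidance}.

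The next step is to run the standard support-equals-everything argument on the non-trivial side. Automorphy of $r_{\pi,p,\iota}$, via classicality of the inertial type $\sigma(\tau_v)$ and Proposition \ref{prop:ihara-avoidance}(2)--(3), provides $M_\infty^{1,\psi_p}(\sigma^\circ(\lambda,\tau))\neq 0$. The congruence between the two patching functors, combined with Nakayama, transfers this to $M_\infty^{\zeta,\psi_p}(\sigma^\circ(\lambda,\tau))\neq 0$. Now $R_\infty^{\zeta,\psi_p,\lambda+\eta,\tau}$ is a completed tensor product whose factors are: at $v\mid p$ the fixed similitude potentially crystalline deformation rings $R_{\bar r|_{G_{F_v}}}^{\lambda_v+\eta_v,\tau_v,\psi_v}$, which are domains by Corollary \ref{corx:domain} (applied via Theorem \ref{thm:pcrys-local-model}(2) using tameness and genericity) together with Lemma \ref{lem:fixed-sim-formally-sm}; at $v\in R$ the Ihara avoidance ring $R^{\zeta_v}_{\bar r|_{G_{F_v}}}$, irreducible with characteristic zero generic point by Proposition \ref{prop:Ihara-avoid-def}(2); at $v_0$ a formally smooth ring by \cite[Proposition 7.4.2]{BCGP-ab_surf-2018abelian}; and at remaining places of $S$ local framed deformation rings which, after enlarging $\cO$, may be chosen to have geometrically irreducible generic fibre. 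After these enlargements, $R_\infty^{\zeta,\psi_p,\lambda+\eta,\tau}$ is an $\cO$-flat domain of the correct dimension, so maximal Cohen--Macaulayness of $M_\infty^{\zeta,\psi_p}(\sigma^\circ(\lambda,\tau))$ forces its support to be all of $\spec R_\infty^{\zeta,\psi_p,\lambda+\eta,\tau}$.

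To conclude, I would transfer the full-support statement back to the trivial side and extract automorphy of $r$. Since $\zeta\equiv 1\pmod{\varpi}$, the schemes $\spec R_\infty^{\zeta,\psi_p,\lambda+\eta,\tau}/\varpi$ and $\spec R_\infty^{1,\psi_p,\lambda+\eta,\tau}/\varpi$ coincide; combined with the mod-$\varpi$ congruence of patching functors and the previous step, $M_\infty^{1,\psi_p}(\sigma^\circ(\lambda,\tau))/\varpi$ has full support on the mod $\varpi$ fibre of $R_\infty^{1,\psi_p,\lambda+\eta,\tau}$. The key Ihara avoidance input Proposition \ref{prop:Ihara-avoid-def}(1) then ensures that every generic point of the mod $\varpi$ fibre lifts uniquely to a generic point of $R_\infty^{1,\psi_p,\lambda+\eta,\tau}$, and the Cohen--Macaulay locus argument (e.g.~\cite[Lemma 4.18]{6author} style) upgrades full support in the special fibre to full support on all of $\spec R_\infty^{1,\psi_p,\lambda+\eta,\tau}$. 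The point corresponding to $r$ therefore lies in this support, and unpacking the construction of $M_\infty^{1,\psi_p}$ in terms of $S_\chi(U,\sigma^\circ(\lambda,\tau)^\vee)$ yields the required $\tilde\pi$ of weight $\lambda$, central character $\chi$, with $\sigma(\tau_v)$ a $K$-type at $v\mid p$. The main obstacle, and where the novel local input of the paper is decisive, is Step 2 above: obtaining that $R_\infty^{\zeta,\psi_p,\lambda+\eta,\tau}$ is a domain reduces to the domain property of tame potentially crystalline deformation rings under genericity, which is precisely Corollary \ref{corx:domain}; without this, the MCM-support argument collapses and one is left only with the much weaker conclusion that the support is a union of components.
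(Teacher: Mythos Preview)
Your Ihara avoidance argument is correct in structure and matches the paper's Lemma \ref{lem:modularity-lifting-lemma} once one is in the right situation, but your setup has a genuine gap. The problematic step is the claim that at the ``remaining places of $S$'' (those where $r$ or $r_{\pi,p,\iota}$ ramify, away from $p$, $R$, and $v_0$) the local framed deformation rings can, after enlarging $\cO$, be taken to have irreducible generic fibre. This is false in general: for $\ell\neq p$ the framed deformation ring of even an unramified $\rbar|_{G_{F_v}}$ typically has several components (unramified, unipotent of various types, etc.), and enlarging coefficients does nothing to remedy this. The whole point of Ihara avoidance is precisely that one cannot control these components directly; your proposal sidesteps the difficulty by an assertion that does not hold. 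A related confusion: you describe $R$ as ``large enough to kill the relevant Selmer group obstructions,'' but the Ihara places $R$ play no such role---that is the job of the Taylor--Wiles primes $Q_n$, which are entirely separate in the construction of \S\ref{subsubsec:construction-of-patching-functors}. Also, the Ihara setup in \S\ref{subsub:ihara-avoidance} explicitly requires $\rbar$ to be unramified at all finite places away from $p$ and takes $R=S\setminus(S_p\cup\{v_0\})$, leaving no room for additional bad places.

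The paper's fix is the standard one: before running Ihara avoidance, make a solvable base change to a totally real extension $F'/F$, linearly disjoint from $F^{\ker\rbar}$ and split completely at $p$, chosen so that every place $w\nmid p$ where $r|_{G_{F'}}$ ramifies satisfies $\rbar|_{G_{F'_w}}$ trivial, $q_w\equiv 1\bmod p$, and $r|_{G_{F'_w}}$ has only unipotent ramification, and so that the base-changed $\pi'$ has Iwahori-fixed vectors everywhere. Over $F'$ one is exactly in the situation of \S\ref{subsub:ihara-avoidance} with $R$ equal to the (now well-behaved) bad set, and your argument then goes through verbatim to prove automorphy of $r|_{G_{F'}}$; solvable descent (as in \cite[Lemma 8.3.2]{BCGP-ab_surf-2018abelian}) brings the conclusion back to $F$. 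So the missing ingredient is not a new local input but this base change reduction, without which the domain claim for $R_\infty^{\zeta,\psi_p,\lambda+\eta,\tau}$ fails.
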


The theorem follows from the following lemma and a base change argument.

\begin{lemma}\label{lem:modularity-lifting-lemma}
Let $r:G_F \ra \GSp_4(E)$ be a continuous representation satisfying the following conditions:
\begin{enumerate}
    \item $r$ is unramified at all but finitely many places;
    \item if $r$ is ramified at a place $v\nmid p$, then $\rbar|_{G_{F_v}}$ is trivial, $r|_{G_{F_v}}$ has only unipotent ramification, and $q_v=1 \mod p$;
    \item $r|_{G_{F_v}}$ is potentially crystalline of type $(\lam_v+\eta_v,\tau_v)$ with a lowest alcove presentation $(s_v,\mu_v-\eta_v)$ with $P_{\lam+\eta,e}$-generic $\mu_v$;
    \item $\rbar$ satisfies Taylor--Wiles conditions;
    \item $\rbar|_{G_{F_v}}$ is tame at $v|p$; and
    \item $\rbar\simeq \rbar_{\pi,p,\iota}$ for some cuspidal automorphic representation  $\pi$ of $\GSp_4(\A_F)$ of weight $\lam$ and central character $\chi$ such that $\sig(\tau_v)$ is a $K$-type for $\pi$ at $v|p$ and for all finite places $v$ of $F$, $(\pi_v)^{\Iw(v)}\neq 0$. 
\end{enumerate}
Then $r\simeq r_{\til{\pi},p,\iota}$ for some cuspidal automorphic representation $\til{\pi}$  of $\GSp_4(\A_F)$ of weight $\lam$ central character $\chi$ such that $\sig(\tau_v)$ is a $K$-type for $\til{\pi}$ at $v|p$. 
\end{lemma}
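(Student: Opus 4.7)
Set $\psi := \simc(r)$ and let $\psip \in \Psi(\rbar_p)$ be its restriction to places above $p$. For each place $v \in R$, the set of finite places $v \nmid p$ at which $r$ ramifies, hypothesis (2) says $\rbar|_{G_{F_v}}$ is trivial and $r|_{G_{F_v}}$ has only unipotent ramification, so $r|_{G_{F_v}}$ defines a point of $R_{\rbar|_{G_{F_v}}}^{1}$. Since $\pi_v^{\Iw(v)} \neq 0$ and $q_v \equiv 1 \mod p$, standard arguments on unramified principal series (possibly after a preliminary twist to avoid degenerate cases) allow us to select characters $\zeta_v = (\zeta_{v,1}, \zeta_{v,2})$ trivial modulo $\varpi$ with $\zeta_{v,1}, \zeta_{v,2} \neq 1$ and $\zeta_{v,1} \neq \zeta_{v,2}^{\pm 1}$ such that $r_{\pi,p,\iota}|_{G_{F_v}}$ defines a point of $R_{\rbar|_{G_{F_v}}}^{\zeta_v}$. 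Applying the construction of \S\ref{subsub:ihara-avoidance}, we obtain a congruent pair of fixed similitude patching functors $(R_\infty^{1,\psip}, M_\infty^{1,\psip})$ and $(R_\infty^{\zeta,\psip}, M_\infty^{\zeta,\psip})$ for $\rbar$, with local conditions at $v|p$ given by the potentially crystalline deformation rings of type $(\lam_v+\eta_v,\tau_v)$ with fixed similitude $\psi_v$, and at $v \in R$ given respectively by $\cD_v^{1}$ and $\cD_v^{\zeta_v}$.

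The next step is to show that $M_\infty^{\zeta,\psip}(\sig^\circ(\lam,\tau))$ has full support on $\spec R_\infty^{\lam+\eta,\tau,\zeta,\psip}$. Since $\rbar|_{G_{F_v}}$ is tame and $\mu_v$ is $P_{\lam+\eta,e}$-generic at each $v|p$, Theorem \ref{thm:pcrys-local-model}(2) combined with Lemma \ref{lem:fixed-sim-formally-sm} show that $R_{\rbar|_{G_{F_v}}}^{\lam_v+\eta_v,\tau_v,\psi_v}$ is a domain; Proposition \ref{prop:Ihara-avoid-def}(2) provides the same for the Ihara avoidance rings at $v \in R$. Hence the completed tensor product $R_\infty^{\lam+\eta,\tau,\zeta,\psip}$ is integral. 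The automorphy of $\pi$, together with the type condition $\sig(\tau_v) \subset \pi_v|_{\GSp_4(\cO_{F_v})}$ at $v|p$ and $\pi_v^{\Iw(v)} \neq 0$ at $v \in R$, implies that the classical space of algebraic automorphic forms of the relevant level and weight, localized at $\fm_{\rbar,\chi}^{P}$, is non-zero. Unwinding the construction of $M_\infty^{\zeta,\psip}$ in \S\ref{subsubsec:construction-of-patching-functors}, this forces $M_\infty^{\zeta,\psip}(\sig^\circ(\lam,\tau))/\fa_\infty \neq 0$, hence the patched module itself is non-zero. By the maximal Cohen--Macaulay property over the integral ring $R_\infty^{\lam+\eta,\tau,\zeta,\psip}$, its support must be all of $\spec R_\infty^{\lam+\eta,\tau,\zeta,\psip}$.

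Using the congruence $M_\infty^{1,\psip}(\sig^\circ(\lam,\tau))/\varpi \simeq M_\infty^{\zeta,\psip}(\sig^\circ(\lam,\tau))/\varpi$ from Proposition \ref{prop:ihara-avoidance}, and noting that $R_\infty^{\lam+\eta,\tau,1,\psip}/\varpi = R_\infty^{\lam+\eta,\tau,\zeta,\psip}/\varpi$ since all $\zeta_v$ are trivial modulo $\varpi$, we conclude that $M_\infty^{1,\psip}(\sig^\circ(\lam,\tau))/\varpi$ has full support on $\spec R_\infty^{\lam+\eta,\tau,1,\psip}/\varpi$. The support of $M_\infty^{1,\psip}(\sig^\circ(\lam,\tau))$ is a union of irreducible components of $\spec R_\infty^{\lam+\eta,\tau,1,\psip}$ by maximal Cohen--Macaulayness; by Proposition \ref{prop:Ihara-avoid-def}(1), every generic point of $\spec R_\infty^{\lam+\eta,\tau,1,\psip}/\varpi$ is the specialization of a unique generic point of $\spec R_\infty^{\lam+\eta,\tau,1,\psip}$, whence the mod-$\varpi$ full support forces the support to exhaust all irreducible components, i.e., equal the full $\spec R_\infty^{\lam+\eta,\tau,1,\psip}$. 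Since $r$ defines an $E$-point of this ring, the localization of $M_\infty^{1,\psip}(\sig^\circ(\lam,\tau))[1/p]$ at this point is non-zero; reducing modulo the augmentation ideal $\fa_\infty$ extracts a non-zero classical automorphic form giving rise to $r$, producing the desired $\til{\pi}$ with $r \simeq r_{\til{\pi},p,\iota}$.

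The main obstacle will be two-fold: first, arranging the Ihara avoidance characters $\zeta_v$ at $v \in R$ so as to both accommodate the unramified principal series structure forced by $\pi_v^{\Iw(v)} \neq 0$ and avoid the degenerate conditions excluded in Proposition \ref{prop:Ihara-avoid-def}(2), which may require an auxiliary solvable base change; and second, upgrading ``non-zero at a classical point'' to ``full support,'' which is enabled precisely by the domain property of the potentially crystalline deformation rings furnished by Theorem \ref{thm:pcrys-local-model}(2), and which would fail without the tameness of $\rbar|_{G_{F_v}}$ at $v|p$ and the strong $P_{\lam+\eta,e}$-genericity of $\mu_v$.
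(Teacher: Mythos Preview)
Your overall strategy matches the paper's: set up the congruent Ihara-avoidance pair $(M_\infty^{1,\psip}, M_\infty^{\zeta,\psip})$, use the domain property of $R_\infty^{\lam+\eta,\tau,\zeta,\psip}$ (from Theorem~\ref{thm:pcrys-local-model}(2) at $v\mid p$ and Proposition~\ref{prop:Ihara-avoid-def}(2) at $v\in R$) to get full support on the $\zeta$-side, transfer mod $\varpi$ by congruence, and lift to characteristic zero via Proposition~\ref{prop:Ihara-avoid-def}(1). The final extraction of $\til\pi$ from full support at the $E$-point given by $r$ is exactly right.

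There is one genuine confusion. You assert that the characters $\zeta_v$ can be chosen so that $r_{\pi,p,\iota}|_{G_{F_v}}$ defines a point of $R_{\rbar|_{G_{F_v}}}^{\zeta_v}$ with nontrivial $\zeta_v$. This is impossible: hypothesis~(6) gives $\pi_v^{\Iw(v)}\neq 0$, so by local--global compatibility $r_{\pi,p,\iota}|_{I_{F_v}}$ is unipotent and every inertial element has characteristic polynomial $(X-1)^4$, which lies in $\cD_v^{\zeta_v}$ only for $\zeta_v=1$. Thus $\pi$ witnesses $M^{1,\psip}\neq 0$, not $M^{\zeta,\psip}\neq 0$. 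The remedy is to invoke the congruence of Proposition~\ref{prop:ihara-avoidance}(1) an extra time at the start: from $M_\infty^{1,\psip}(\sig^\circ(\lam,\tau))\neq 0$ deduce $M_\infty^{\zeta,\psip}(\sig^\circ(\lam,\tau))/\varpi\neq 0$, hence $M_\infty^{\zeta,\psip}(\sig^\circ(\lam,\tau))\neq 0$, and then proceed as you wrote. The $\zeta_v$ are therefore chosen freely, subject only to the non-degeneracy conditions in Proposition~\ref{prop:Ihara-avoid-def}(2); no relation to $\pi$ is needed, and no ``preliminary twist'' is available. (The paper's own one-line proof contains what appears to be the same slip, writing ``$r_{\pi,p,\iota}$ gives an $E$-point of $\spec R_{\cS_\zeta}$''; the intended logic is as just described.)
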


\begin{proof}
We apply Taylor's Ihara avoidance argument. We apply the construction in \S\ref{subsubsec:construction-of-patching-functors} to $F$, $\chi$, $S=S_{r}\cup\{v_0\}$, $R=S\backslash (S_p\cup\{v_0\})$, and for each $v\in R$, take $\zeta_v$ as in item (1) (resp.~item (2)) of Proposition \ref{prop:Ihara-avoid-def} to get a weak patching functor which we denote by $M_{\infty}^{1,\psip}$ (resp.~$M_\infty^{\zeta,\psip}$). Note that by their construction, the two weak patching functors coincide on $\Rep_\F^{\psip}(\GSp_4(\cO_p))$. 
By the last assumption, Theorem \ref{thm:pcrys-local-model}, $P_{\lam+\eta,e}$-genericity of $\mu$, \cite[Proposition 7.4.2]{BCGP-ab_surf-2018abelian}, and Proposition \ref{prop:Ihara-avoid-def}(2),  $M_\infty^{\zeta,\psip}(\sig^\circ(\lam,\tau))$ has full support over $R^{\zeta,\lam+\eta,\tau,\psip}_\infty$. By the congruence between $M_\infty^{1,\psip}$ and $M_\infty^{\zeta,\psip}$, $M_\infty^{1,\psip}(\osig(\lam,\tau))$ has full support over $R^{1,\lam+\eta,\tau,\psip}_\infty/\varpi$. Then by  Proposition \ref{prop:Ihara-avoid-def}(1), $M_\infty^{1,\psip}(\sig^\circ(\lam,\tau))$ has full support over $R^{1,\lam+\eta,\tau,\psip}_\infty$. This proves the claim.
\end{proof}

\begin{proof}[Proof of Theorem \ref{thm:modularity-lifting}]
There exists a solvable extension of totally real fields $F'/F$ satisfying the following conditions:
\begin{enumerate}
    \item $F'/F$ is linearly disjoint from $F^{\ker \rbar}$;
    \item any place $v|p$ of $F$ splits completely in $F'$;
    \item if $r|_{G_{F'}}$ is ramified at a place $w$ of $F'$ lying over a place $v\nmid p$ of $F$, then $\rbar|_{G_{F'_w}}$ is trivial, $r|_{G_{F'_w}}$ has only unipotent ramification, and $q_w=1 \mod p$;
    \item there is a cuspidal automorphic representation ${\pi'}$ of $\GSp_4(\A_{F'})$ of weight $\lam'=(\lam_w)$, where $\lam_w=\lam_v$ for a place $v$ of $F$ and $w|v$, which is a base change of $\pi$, and for all finite places $w$ of $F'$, $(\pi'_w)^{\Iw(w)}\neq 0$ (here we use \cite[Proposition 4.13]{Mok-Comp-2014-MR3200667}). 
\end{enumerate}
Then by \cite[Lemma 8.3.2]{BCGP-ab_surf-2018abelian} (which easily generalizes to our setup), it suffices to show that there exists a cuspidal automorphic representation $\til{\pi}'$  of $\GSp_4(\A_{F'})$ of weight $\lam'$ such that for a place $w|p$ of $F'$ above a place $v$ of $F$, $\sig(\tau_v)$ is a $K$-type for $\til{\pi}'$ at $w$, and $r|_{G_{F'}}\simeq r_{\til{\pi}',p,\iota}$. This follows from Lemma \ref{lem:modularity-lifting-lemma}.
\end{proof}

\appendix

\numberwithin{equation}{section}
\numberwithin{figure}{section} 
\numberwithin{table}{section} 
\makeatletter
\let\c@equation\c@thmapp
\makeatother

\section{Torus-fixed points of certain affine Springer fibers}\label{sec:appdix}

Let $k$ be a field. The affine flag variety $\Fl$ associated with $\GSp_4$ over $k$ is defined by the quotient $\cI_k \bss L\GSp_4$ where $L\GSp_4$ is the loop group whose $R$-points are given by $\GSp_4(R\DP{v})$ for a $k$-algebra $R$, and $\cI_k$ is the standard Iwahori subgroup scheme in $L\GSp_4$. 

Let $\bfa\in \Lie\GSp_4(k)$ be a regular semisimple element. For simplicity, we assume that $\bfa$ is diagonal, and we write $\bfa = \diag(\bfa_1,\dots, \bfa_4)$. We define a sub-ind-scheme $L\GSp_4^{\nba_{\bfa}} \subset L\GSp_4$  such that for a $k$-algebra $R$, $g \in L\GSp_4(R)$ if and only if
\begin{align*}
    v \frac{dg}{dv} g^\mo + g \bfa g^\mo \in \frac{1}{v} \Lie \cI_k (R).
\end{align*}
Note that $L\GSp_4^{\nba_{\bfa}}$ is stable under left multiplication by $\cI_k$.

\begin{defapp}
    The \textit{deformed affine Springer fiber} $\Fl^{\nba_{\bfa}}$ associated with $\bfa$ is the subscheme of $\Fl$ given by the image of $L\GSp_4^{\nba_{\bfa}}$ under the quotient map $L\GSp_4 \epi \Fl$.
\end{defapp}

\cite{BA} considers certain irreducible components of the usual affine Springer fibers for $\GL_n$ and classifies their torus-fixed points. This was used to classify torus-fixed points in certain irreducible components of deformed affine Springer fibers for $\GL_n$ in \cite{LLHLM-2020-localmodelpreprint}. The goal of this appendix is to show how the main result of \cite{BA} generalizes to certain components of deformed affine Springer fibers for $\GSp_4$.

\begin{rmkapp}\label{rmk:convention}
Note that we use the left quotient by Iwahori to define the affine flag variety $\Fl$, as opposed to the right quotient in \cite{BA}. These conventions can be compared by taking the inverse at the level of $L\GSp_4$. 
\end{rmkapp}

Recall that $\mathrm{Fl}$ admits $T^\vee$-action induced by the right multiplication on $L\GSp_4$. The $T^\vee$-fixed points of $\mathrm{Fl}$ are given by the image of the map $\tilW \ra \mathrm{Fl}(k)$ sending $wt_{\lambda}\in \tilW$ to $v^{\phi(\lam)}\phi(w)^\mo  \cI_k$. Note that this is the composition of the map $(-)^*:\tilW \ra \tilW^\vee$ and the natural embedding of $\tilW^\vee$ into $\mathrm{Fl}(k)$.  For simplicity, we write the image of $\tilw$ under this map by $\tilw^{*}$. 

Let $\tilw\in \tilW$. Recall the definitions
\begin{align*}
    S^\circ(\tilw^*w_0)^{\nba_\bfa} &:= \cI_k \bss \cI_k\tilw^{*}w_0 \cI_k \cap \mathrm{Fl}^{\nba_{\bfa}} \\
    S^{\nba_\bfa}(\tilw^*w_0) &:= \ov{S^\circ(\tilw^*w_0)^{\nba_\bfa} }
\end{align*}
where the closure is taken inside $\Fl^{\nba_\bfa}$. 
If $\tilw \in \tilW_{1}^+$, then Theorem \ref{thm:schubert-cell-with-monodromy} shows that $ S^{\nba_\bfa}(\tilw^*w_0)$ is irreducible of dimension 4.

\begin{rmkapp}\label{rmk:affine_weyl_vs_ext_aff_weyl}
Suppose that $\tilw \in \tilW$ is equal to $\tilw'\delta$ for some $\tilw'\in W_a$ and $\del\in \Omega$. Since $\delta^{
*}$ normalizes $\cI_k$, we have $ S^{\nba_\bfa}(\tilw^*w_0)= \del^*S^{\nba_\bfa}(\tilw^{\prime *}w_0)$. 
\end{rmkapp}

\begin{thmapp}\label{thm:BA-GSp4}
Recall that $\bfa$ is regular semisimple. In addition, we assume that $\bfa$ is 3-generic. 
Then, the set of $T^\vee$-fixed points of $S^{\nba_\bfa}(\tilw^*w_0)^{T^\vee}$ is given by
\begin{align*}
   S^{\nba_\bfa}(\tilw^*w_0)^{T^\vee} = \{\tilz^{*} \mid \til{z} \in \tilW, \til{z} \le w_0 \tilw \}.
\end{align*}
\end{thmapp}

\begin{proof}
We follow the proof of \cite[Theorem 3.1]{BA} with minor modifications. 

By Remark \ref{rmk:affine_weyl_vs_ext_aff_weyl}, it suffices to consider $\tilw \in W_a$. Let $w$ be an element of $W^\vee$. By \cite[Proposition 4.3.1 and 4.3.6]{LLHLM-2020-localmodelpreprint}, we have the equality of subvarieties in $\Fl$
\begin{align*}
    S^{\nba_\bfa}(\tilw^*w_0) = S^{\nba_{w(\bfa)}}(\tilw^*w_0)w. 
\end{align*}
This implies that $\tilz \in S^{\nba_{w(\bfa)}}(\tilw^*w_0)$ if and only if $\tilz w \in S^{\nba_\bfa}(\tilw^*w_0)$. Thus, it suffices to show that 
\begin{align*}
    \{ \til{z}^{*}w_0 \mid \tilz \in \tilW^{+}, \til{z} \le \tilw \} \subset S^{\nba_\bfa}(\tilw^*w_0)^{T^\vee}.
\end{align*}

Recall that we have an isomorphism
\begin{align*}
    \tilw^* w_0 N_{\tilw^*w_0} \simeq \cI_k\bss \cI_k \tilw^* w_0 \cI_k.
\end{align*}
We define the subvariety $N_{\tilw^*w_0}^{\nba_\bfa} \subset N_{\tilw^*w_0}$ by the condition  $g\in N_{\tilw^*w_0}^{\nba_\bfa}$ if and only if $\tilw^* w_0 g \in L\GSp_4^{\nba_\bfa}$, or equivalently, 
\begin{align}\label{eqn:monodromy}
    v^2 \frac{dg}{dv}g^\mo + vg \bfa g^\mo \in \Ad ((\tilw^*w_0)^\mo)\PR{ \Lie \cI_k}.
\end{align}
Then we have $\tilw^* w_0 N_{\tilw^*w_0}^{\nba_\bfa} \simeq S^\circ(\tilw^*w_0)^{\nba_\bfa}$. 

By \cite[Lemma 4.7.1]{LLHLM-2020-localmodelpreprint} (which generalizes to $\GSp_4$ in an obvious manner), $\tilz^*w_0 \in S^{\nba_\bfa}(\tilw^*w_0)$ if and only if
\begin{align*}
    S^\circ(\tilw^*w_0)^{\nba_\bfa} \cap L^\mm\cG \tilz^*w_0 \neq \emptyset.
\end{align*}
We prove a slightly stronger condition. Let $\tilw_0 \in W_a^+$ be the element corresponding to the highest restricted alcove. Since $\tilw \uparrow \tilw_0$, we have
\begin{align*}
    \Ad((\tilw_0^*w_0)^\mo)(\Lie \cI_k)\cap \cI_k \subset \Ad((\tilw^*w_0)^\mo)(\Lie \cI_k) \cap \cI_k.
\end{align*}
As the left hand side of \eqref{eqn:monodromy} is in $\cI_k$, this implies that
\begin{align*}
    \cI_k\bss \cI_k \tilw^*w_0 N_{\tilw_0^*w_0}^{\nba_\bfa} \subset S^\circ(\tilw^* w_0)^{\nba_\bfa}
\end{align*}
and we claim that 
\begin{align}\label{eqn:intersection-nonempty}
    \cI_k\bss \cI_k \tilw^*w_0 N_{\tilw_0^*w_0}^{\nba_\bfa} \cap L^\mm\cG \tilz^*w_0 \neq \emptyset.
\end{align}
As explained in \cite[Appendix B.3]{BA}, this follows from the non-vanishing of the determinants of certain matrices. To explain this, we introduce some notation.

We identify $\mathrm{Fl}(k)$ with the set of full affine flags in a 4-dimensional $k\DP{v}$-vector space. Let $e_1, \dots , e_4$ be the standard basis of the $k\DP{v}$-vector space. For vectors $v_1,\dots,v_4$, we let $\RG{v_1,\dots,v_4}$ be the $k\DB{v}$-span of $v_1,\dots,v_4$ and $\RG{v_1,\dots,v_4}_{v^\mo}$ be the $k[1/v]$-span of $v_1,\dots,v_4$. Then \eqref{eqn:intersection-nonempty} holds if and only if there exists $M^{\tilw_0}\in w_0 N_{\tilw^*_0w_0}^{\nba_\bfa} w_0 (k)$ such that
\begin{align}\label{eqn:flag-intersection}
     \RG{ve_1,\dots,ve_l,e_{l+1},\dots, e_4} \tilw^* M^{\tilw_0} \cap \RG{ e_1,\dots,e_l,v^\mo e_{l+1},\dots, v^\mo e_4}_{v^\mo}\tilz^* = 0
\end{align}
for all $0\le l \le 3$. In our case, we can check these conditions directly.

We view both $\RG{ve_1,\dots,ve_l,e_{l+1},\dots, e_4} \tilw^*$ and $\RG{ e_1,\dots,e_l,v^\mo e_{l+1},\dots, v^\mo e_4}_{v^\mo}\tilz^*$ as $k$-vector spaces, and we want to show that the image of the former under the linear transform given by $M^{\tilw_0}$ has trivial intersection with the latter inside $k\DP{v}^4$.

Let $\omega_l = (1,\dots,1,0,\dots,0)$ be the cocharacter of $\GL_4$ whose first $l$ entries are 1 and the remaining entries are 0. Then $\RG{ve_1,\dots,ve_l,e_{l+1},\dots, e_4} \tilw^*$ contains $v^{\tilw(\omega_l)_1} \RG{e_1,\dots, e_4}$ which is stabilized by $M^{\tilw_0}$. Similarly, $\RG{ e_1,\dots,e_l,v^\mo e_{l+1},\dots, v^\mo e_4}_{v^\mo}\tilz^*$ contains $v^{\tilz(\omega_l)_4-1} \RG{e_1,\dots, e_4}_{v^\mo}$. Thus, to verify \eqref{eqn:flag-intersection}, we need to show that no non-trivial linear combination of $v^k e_j$'s, for $v^ke_j \in \RG{ve_1,\dots,ve_l,e_{l+1},\dots, e_4} \tilw^*$ and $k < \tilw(\omega_l)_1$, is mapped into $\RG{ e_1,\dots,e_l,v^\mo e_{l+1},\dots, v^\mo e_4}_{v^\mo}\tilz^*$ by $M^{\tilw_0}$.

Now we compute a block matrix whose $ij$-th block, for $1 \le i \le \tilw(\omega_l)_1 - \tilw(\omega_l)_4$ and $0 \le j\le \tilw(\omega_l)_1 - \tilz(\omega_l)_4 -1$, is a matrix representation of $M^{\tilw_0}$ with respect to the vectors
\begin{align*}
    v^{ \tilw(\omega_l)_4 +i -1}e_4, \dots, v^{ \tilw(\omega_l)_4 +i -1}e_{r_{li}}  &\ \ \text{in the domain} \\
    v^{\tilz(\omega_l)_4+j}e_{4}, \dots, v^{\tilz(\omega_l)_4+j}e_{s_{lj}} &\ \ \text{in the codomain,}
\end{align*}
where $1\le r_{li}\le 4$ is the smallest integer such that $v^{ \tilw(\omega_l)_4 +i -1}e_{r_{li}} \in \RG{ve_1,\dots,ve_l,e_{l+1},\dots, e_4} \tilw^*$ and $1\le s_{lj} \le 4$ is the smallest integer such that $v^{\tilz(\omega_l)_4+j}e_{s_{lj}} \notin \RG{ e_1,\dots,e_l,v^\mo e_{l+1},\dots, v^\mo e_4}_{v^\mo}\tilz^*$. 
Then \eqref{eqn:flag-intersection} holds if and only if this block matrix has a trivial kernel for some $k$-point of $M^{\tilw_0}$.

Explicitly, $N_{\tilw^*_0w_0}$ is given by 
\begin{align*}
    N_{\tilw^*_0w_0} = \pma{  1 & c_{12} & c_{13} + c_{13}' v & c_{14} + c_{14}'v + c_{14}''v^2 
    \\
    & 1 & c_{23} & c_{13} + c_{13}' v - c_{12}c_{23} \\
    & & 1 & -c_{12} \\
    & & & 1
    }
\end{align*}
and $N_{\tilw^*_0w_0}^{\nba_\bfa}$ is the locus given by the equations 
\begin{align*}
    c_{13}  &= \frac{\bfa_2 - \bfa_3}{\bfa_1 - \bfa_3} c_{12}c_{23} \\
    c_{14} &= - \frac{(\bfa_1-\bfa_2)(\bfa_2-\bfa_3)}{(\bfa_1-\bfa_4)(\bfa_2-\bfa_4)} c_{12}^2 c_{23}
    \\
    c_{14}' &= \frac{\bfa_2-\bfa_3-1}{\bfa_1-\bfa_4-1} c_{12}c_{13}'.
\end{align*}
We use the coordinate functions $c_{ij}$'s to describe $M^{\tilw_0}$ and the block matrices discussed above.

For simplicity, we compute block matrices for $\tilw = \tilw_0$ and $\tilz = e$. The remaining cases follow from a similar computation. When $l=0$, the block matrix is given by
\begin{align*}
\left(
\begin{tabular}{c c c c }
  &          & $c_{13}'$ & $c_{14}'$             \\ \hline
1 & $c_{12}$ & $c_{13}$  & $c_{14}$              \\
  & 1        & $c_{23}$  & $c_{13}-c_{12}c_{23}$ \\
  &          & 1         & $-c_{12}$            
\end{tabular}%
\right)
\end{align*}
where the rows correspond to $v^\mo e_4, e_4, e_3,e_2$ in the domain and  the columns correspond to $e_4,e_3,e_2,e_1$ in the codomain, respectively. Its determinant is given by $c'_{14}$.  

Similarly, the block matrices for $l=1,2,3$ are given by
\begin{align*}
\left(\begin{tabular}{ccc}
  &          & $c'_{13}$ \\ \hline
1 & $c_{12}$ & $c_{13}$  \\
  & 1        & $c_{23}$ 
\end{tabular}\right), \ \ \left(\begin{tabular}{cc|cccc}
  &          &   &          &           & $c''_{14}$            \\ \hline
1 & $c_{12}$ &   &          & $c'_{13}$ & $c'_{14}$             \\
  & 1        &   &          &           & $c'_{13}$             \\ \hline
  &          & 1 & $c_{12}$ & $c_{13}$  & $c_{14}$              \\
  &          &   & 1        & $c_{23}$  & $c_{13}-c_{12}c_{23}$ \\
  &          &   &          & 1         & $-c_{12}$            
\end{tabular}\right), \ \ \left(
\begin{tabular}{c|cccc}
1 &   &          & $c'_{13}$ & $c'_{14}$             \\
  &   &          &           & $c'_{13}$             \\ \hline
  & 1 & $c_{12}$ & $c_{13}$  & $c_{14}$              \\
  &   & 1        & $c_{23}$  & $c_{13}-c_{12}c_{23}$ \\
  &   &          & 1         & $-c_{12}$            
\end{tabular}%
    \right)
\end{align*}
whose determinants are $c'_{13}$, $-c''_{14}$, and $-c'_{13}$, respectively. 
Therefore, \eqref{eqn:flag-intersection} for $\tilw= \tilw_0$ and $\tilz=e$ holds for $M^{\tilw_0}$ with $c'_{13},c'_{14}, c''_{14} \neq 0$. Such     $M^{\tilw_0}$ exists by the description of $N^{\nba_{\bfa}}_{\tilw_0^*w_0}$ and our assumptions on $\bfa$.
\end{proof}

\bibliography{mybib}{}

@article {LLHL-Duke-2019MR4007598,
    AUTHOR = {Le, Daniel and Le Hung, Bao V. and Levin, Brandon},
     TITLE = {Weight elimination in {S}erre-type conjectures},
   JOURNAL = {Duke Math. J.},
  FJOURNAL = {Duke Mathematical Journal},
    VOLUME = {168},
      YEAR = {2019},
    NUMBER = {13},
     PAGES = {2433--2506},
      ISSN = {0012-7094},
   MRCLASS = {11F80 (11F33 11F75 20C33)},
  MRNUMBER = {4007598},
       DOI = {10.1215/00127094-2019-0015},
       URL = {https://doi.org/10.1215/00127094-2019-0015},
}

@article {LLHLM1,
    AUTHOR = {Le, Daniel and Le Hung, Bao V. and Levin, Brandon and Morra,
              Stefano},
     TITLE = {Potentially crystalline deformation rings and {S}erre weight
              conjectures: shapes and shadows},
   JOURNAL = {Invent. Math.},
  FJOURNAL = {Inventiones Mathematicae},
    VOLUME = {212},
      YEAR = {2018},
    NUMBER = {1},
     PAGES = {1--107},
      ISSN = {0020-9910},
   MRCLASS = {11F33 (11F80)},
  MRNUMBER = {3773788},
MRREVIEWER = {Atsushi Yamagami},
       DOI = {10.1007/s00222-017-0762-0},
       URL = {https://doi.org/10.1007/s00222-017-0762-0},
}

@book {RS-GSp4-bookMR2344630,
    AUTHOR = {Roberts, Brooks and Schmidt, Ralf},
     TITLE = {Local newforms for {GS}p(4)},
    SERIES = {Lecture Notes in Mathematics},
    VOLUME = {1918},
 PUBLISHER = {Springer, Berlin},
      YEAR = {2007},
     PAGES = {viii+307},
      ISBN = {978-3-540-73323-2},
   MRCLASS = {11F46 (11-02 11F70 22E50)},
  MRNUMBER = {2344630},
MRREVIEWER = {Daniel Bump},
       DOI = {10.1007/978-3-540-73324-9},
       URL = {https://doi.org/10.1007/978-3-540-73324-9},
}

@article {GHS-JEMS-2018MR3871496,
    AUTHOR = {Gee, Toby and Herzig, Florian and Savitt, David},
     TITLE = {General {S}erre weight conjectures},
   JOURNAL = {J. Eur. Math. Soc. (JEMS)},
  FJOURNAL = {Journal of the European Mathematical Society (JEMS)},
    VOLUME = {20},
      YEAR = {2018},
    NUMBER = {12},
     PAGES = {2859--2949},
      ISSN = {1435-9855},
   MRCLASS = {11F80 (11F75)},
  MRNUMBER = {3871496},
       DOI = {10.4171/JEMS/826},
       URL = {https://doi.org/10.4171/JEMS/826},
}

@article {Herzig-Duke-2009SerreweightMR2541127,
    AUTHOR = {Herzig, Florian},
     TITLE = {The weight in a {S}erre-type conjecture for tame
              {$n$}-dimensional {G}alois representations},
   JOURNAL = {Duke Math. J.},
  FJOURNAL = {Duke Mathematical Journal},
    VOLUME = {149},
      YEAR = {2009},
    NUMBER = {1},
     PAGES = {37--116},
      ISSN = {0012-7094},
   MRCLASS = {11F80 (11F75 20C33)},
  MRNUMBER = {2541127},
MRREVIEWER = {\Dbar \cftil{o} Ng\d{o}c Di\cfudot{e}p},
       DOI = {10.1215/00127094-2009-036},
       URL = {https://doi.org/10.1215/00127094-2009-036},
}

@article {DL-Annals-1976MR393266,
    AUTHOR = {Deligne, P. and Lusztig, G.},
     TITLE = {Representations of reductive groups over finite fields},
   JOURNAL = {Ann. of Math. (2)},
  FJOURNAL = {Annals of Mathematics. Second Series},
    VOLUME = {103},
      YEAR = {1976},
    NUMBER = {1},
     PAGES = {103--161},
      ISSN = {0003-486X},
   MRCLASS = {20G05 (14M15)},
  MRNUMBER = {393266},
MRREVIEWER = {S. I. Gel\cprime fand},
       DOI = {10.2307/1971021},
       URL = {https://doi.org/10.2307/1971021},
}

@article {GTtheta-RepTheory-2011MR2846304,
    AUTHOR = {Gan, Wee Teck and Takeda, Shuichiro},
     TITLE = {Theta correspondences for {${\rm GSp}(4)$}},
   JOURNAL = {Represent. Theory},
  FJOURNAL = {Representation Theory. An Electronic Journal of the American
              Mathematical Society},
    VOLUME = {15},
      YEAR = {2011},
     PAGES = {670--718},
   MRCLASS = {22E50 (11F27 11S37)},
  MRNUMBER = {2846304},
MRREVIEWER = {Ivan Mati\'{c}},
       DOI = {10.1090/S1088-4165-2011-00405-2},
       URL = {https://doi.org/10.1090/S1088-4165-2011-00405-2},
}

@article {DBR-Annals-2009MR2480618,
    AUTHOR = {DeBacker, Stephen and Reeder, Mark},
     TITLE = {Depth-zero supercuspidal {$L$}-packets and their stability},
   JOURNAL = {Ann. of Math. (2)},
  FJOURNAL = {Annals of Mathematics. Second Series},
    VOLUME = {169},
      YEAR = {2009},
    NUMBER = {3},
     PAGES = {795--901},
      ISSN = {0003-486X},
   MRCLASS = {22E50 (11F66 11S37)},
  MRNUMBER = {2480618},
MRREVIEWER = {Anne-Marie H. Aubert},
       DOI = {10.4007/annals.2009.169.795},
       URL = {https://doi.org/10.4007/annals.2009.169.795},
}

@article {Lust-J.Alg-2013MR3065991,
    AUTHOR = {Lust, Jaime},
     TITLE = {Depth-zero supercuspidal {$L$}-packets for inner forms of
              {$GSp_4$}},
   JOURNAL = {J. Algebra},
  FJOURNAL = {Journal of Algebra},
    VOLUME = {389},
      YEAR = {2013},
     PAGES = {23--60},
      ISSN = {0021-8693},
   MRCLASS = {22E50 (11S37)},
  MRNUMBER = {3065991},
MRREVIEWER = {Marko Tadi\'{c}},
       DOI = {10.1016/j.jalgebra.2013.05.002},
       URL = {https://doi.org/10.1016/j.jalgebra.2013.05.002},
}

@article {LLHLM-2020-localmodelpreprint,
    AUTHOR = {Le, Daniel and Le Hung, Bao V. and Levin, Brandon and Morra,
              Stefano},
     TITLE = {Local models for {G}alois deformation rings and applications},
   JOURNAL = {Invent. Math.},
  FJOURNAL = {Inventiones Mathematicae},
    VOLUME = {231},
      YEAR = {2023},
    NUMBER = {3},
     PAGES = {1277--1488},
      ISSN = {0020-9910,1432-1297},
   MRCLASS = {11F80 (14G45)},
  MRNUMBER = {4549091},
MRREVIEWER = {Yongquan\ Hu},
       DOI = {10.1007/s00222-022-01163-4},
       URL = {https://doi.org/10.1007/s00222-022-01163-4},
}

@article {LLHLM2,
    AUTHOR = {Le, Daniel and Le Hung, Bao V. and Levin, Brandon and Morra,
              Stefano},
     TITLE = {Serre weights and {B}reuil's lattice conjecture in dimension
              three},
   JOURNAL = {Forum Math. Pi},
  FJOURNAL = {Forum of Mathematics. Pi},
    VOLUME = {8},
      YEAR = {2020},
     PAGES = {e5, 135},
   MRCLASS = {11F80 (11F33 20C33)},
  MRNUMBER = {4079756},
       DOI = {10.1017/fmp.2020.1},
       URL = {https://doi.org/10.1017/fmp.2020.1},
}

@article {KM-2020-unitary,
    AUTHOR = {Kozio\l , Karol and Morra, Stefano},
     TITLE = {Serre weight conjectures for {$p$}-adic unitary groups of rank
              2},
   JOURNAL = {Algebra Number Theory},
  FJOURNAL = {Algebra \& Number Theory},
    VOLUME = {16},
      YEAR = {2022},
    NUMBER = {9},
     PAGES = {2005--2097},
      ISSN = {1937-0652,1944-7833},
   MRCLASS = {11F80 (11F33 11F55 20C33)},
  MRNUMBER = {4523325},
MRREVIEWER = {Carl\ Wang-Erickson},
       DOI = {10.2140/ant.2022.16.2005},
       URL = {https://doi.org/10.2140/ant.2022.16.2005},
}

@article {Morris-Comp-levelzeroMR1713308,
    AUTHOR = {Morris, Lawrence},
     TITLE = {Level zero {$\bf G$}-types},
   JOURNAL = {Compositio Math.},
  FJOURNAL = {Compositio Mathematica},
    VOLUME = {118},
      YEAR = {1999},
    NUMBER = {2},
     PAGES = {135--157},
      ISSN = {0010-437X},
   MRCLASS = {22E50},
  MRNUMBER = {1713308},
MRREVIEWER = {Goran Mui\'{c}},
       DOI = {10.1023/A:1001019027614},
       URL = {https://doi.org/10.1023/A:1001019027614},
}

@article {GT-Annals-LLC-GSp4-MR2800725,
    AUTHOR = {Gan, Wee Teck and Takeda, Shuichiro},
     TITLE = {The local {L}anglands conjecture for {${\rm GSp}(4)$}},
   JOURNAL = {Ann. of Math. (2)},
  FJOURNAL = {Annals of Mathematics. Second Series},
    VOLUME = {173},
      YEAR = {2011},
    NUMBER = {3},
     PAGES = {1841--1882},
      ISSN = {0003-486X},
   MRCLASS = {22E50 (20G25 22E35)},
  MRNUMBER = {2800725},
MRREVIEWER = {B. Sury},
       DOI = {10.4007/annals.2011.173.3.12},
       URL = {https://doi.org/10.4007/annals.2011.173.3.12},
}

@article {Shotton-Duke-BMconj-MR3769675,
    AUTHOR = {Shotton, Jack},
     TITLE = {The {B}reuil-{M}\'{e}zard conjecture when {$l\neq p$}},
   JOURNAL = {Duke Math. J.},
  FJOURNAL = {Duke Mathematical Journal},
    VOLUME = {167},
      YEAR = {2018},
    NUMBER = {4},
     PAGES = {603--678},
      ISSN = {0012-7094},
   MRCLASS = {11S37 (11S20)},
  MRNUMBER = {3769675},
MRREVIEWER = {Daniel Le},
       DOI = {10.1215/00127094-2017-0039},
       URL = {https://doi.org/10.1215/00127094-2017-0039},
}

@article {CL-ENS-2018-Kisinmodule-MR3764041,
    AUTHOR = {Caraiani, Ana and Levin, Brandon},
     TITLE = {Kisin modules with descent data and parahoric local models},
   JOURNAL = {Ann. Sci. \'{E}c. Norm. Sup\'{e}r. (4)},
  FJOURNAL = {Annales Scientifiques de l'\'{E}cole Normale Sup\'{e}rieure. Quatri\`eme
              S\'{e}rie},
    VOLUME = {51},
      YEAR = {2018},
    NUMBER = {1},
     PAGES = {181--213},
      ISSN = {0012-9593},
   MRCLASS = {11F80 (14G35)},
  MRNUMBER = {3764041},
MRREVIEWER = {Fausto Jarqu\'{\i}n Z\'{a}rate},
       DOI = {10.24033/asens.2354},
       URL = {https://doi.org/10.24033/asens.2354},
}

@article {KR-Manuscr-2000-Minuscule-alcoves-MR1785323,
    AUTHOR = {Kottwitz, R. and Rapoport, M.},
     TITLE = {Minuscule alcoves for {${\rm GL}_n$} and {$G{\rm Sp}_{2n}$}},
   JOURNAL = {Manuscripta Math.},
  FJOURNAL = {Manuscripta Mathematica},
    VOLUME = {102},
      YEAR = {2000},
    NUMBER = {4},
     PAGES = {403--428},
      ISSN = {0025-2611},
   MRCLASS = {20G15 (17B20)},
  MRNUMBER = {1785323},
MRREVIEWER = {Cheng Dong Chen},
       DOI = {10.1007/s002290070034},
       URL = {https://doi.org/10.1007/s002290070034},
}

@article {HN-AJM-2002-alcoves-MR1939783,
    AUTHOR = {Haines, Thomas J. and Ng\^{o} Bao Ch\^{a}u},
     TITLE = {Alcoves associated to special fibers of local models},
   JOURNAL = {Amer. J. Math.},
  FJOURNAL = {American Journal of Mathematics},
    VOLUME = {124},
      YEAR = {2002},
    NUMBER = {6},
     PAGES = {1125--1152},
      ISSN = {0002-9327},
   MRCLASS = {14G35 (05E15 11G18 22E15)},
  MRNUMBER = {1939783},
MRREVIEWER = {James Milne},
       URL =
              {http://muse.jhu.edu/journals/american_journal_of_mathematics/v124/124.6haines.pdf},
}

@article {PZ13-Inv-local_model-MR3103258,
    AUTHOR = {Pappas, G. and Zhu, X.},
     TITLE = {Local models of {S}himura varieties and a conjecture of
              {K}ottwitz},
   JOURNAL = {Invent. Math.},
  FJOURNAL = {Inventiones Mathematicae},
    VOLUME = {194},
      YEAR = {2013},
    NUMBER = {1},
     PAGES = {147--254},
      ISSN = {0020-9910},
   MRCLASS = {14G35 (11G18 14M15)},
  MRNUMBER = {3103258},
MRREVIEWER = {Conjeeveram S. Rajan},
       DOI = {10.1007/s00222-012-0442-z},
       URL = {https://doi.org/10.1007/s00222-012-0442-z},
}

@article {Zhu14-Annals-coherence_conj-MR3194811,
    AUTHOR = {Zhu, Xinwen},
     TITLE = {On the coherence conjecture of {P}appas and {R}apoport},
   JOURNAL = {Ann. of Math. (2)},
  FJOURNAL = {Annals of Mathematics. Second Series},
    VOLUME = {180},
      YEAR = {2014},
    NUMBER = {1},
     PAGES = {1--85},
      ISSN = {0003-486X},
   MRCLASS = {14M15 (20G25 20G44)},
  MRNUMBER = {3194811},
MRREVIEWER = {Eva Viehmann},
       DOI = {10.4007/annals.2014.180.1.1},
       URL = {https://doi.org/10.4007/annals.2014.180.1.1},
}

@misc{Emerton-formal-alg-stk,
      title={Formal algebraic stacks}, 
      author={Matthew Emerton},
      note = {Available at \url{http://www.math.uchicago.edu/~emerton/pdffiles/formal-stacks.pdf}},
}

@misc{stacks-project,
    shorthand    = {Stacks},
    author       = {The {Stacks Project Authors}},
    title        = {\textit{Stacks Project}},
    howpublished = {\url{https://stacks.math.columbia.edu}},
    year         = {2018},
  }

@book {EGstack,
    AUTHOR = {Emerton, Matthew and Gee, Toby},
     TITLE = {Moduli stacks of \'{e}tale ({$\varphi, \Gamma$})-modules and the
              existence of crystalline lifts},
    SERIES = {Annals of Mathematics Studies},
    VOLUME = {215},
 PUBLISHER = {Princeton University Press, Princeton, NJ},
      YEAR = {[2023] \copyright 2023},
     PAGES = {ix+298},
      ISBN = {978-0-691-24134-0; 978-0-691-24135-7; 978-0-691-24136-4},
   MRCLASS = {14D23 (11F80 11S37 14F30)},
  MRNUMBER = {4529886},
       DOI = {10.1515/9780691241364},
       URL = {https://doi.org/10.1515/9780691241364},
}

@article {patrikis-tang-2020potential,
    AUTHOR = {Stefan Patrikis and Shiang Tang},
     TITLE = {Potential automorphy of $\mathrm{GSpin}_{2n+1}$-valued {G}alois representations},
   JOURNAL = {Math. Z.},
  FJOURNAL = {Mathematische Zeitschrift},
      YEAR = {2021},
      ISSN = {1432-1823},
       DOI = {10.1007/s00209-021-02845-0},
       URL = {https://doi.org/10.1007/s00209-021-02845-0},
}

@article {Gee-Geraghty-companion-MR2876931,
    AUTHOR = {Gee, Toby and Geraghty, David},
     TITLE = {Companion forms for unitary and symplectic groups},
   JOURNAL = {Duke Math. J.},
  FJOURNAL = {Duke Mathematical Journal},
    VOLUME = {161},
      YEAR = {2012},
    NUMBER = {2},
     PAGES = {247--303},
      ISSN = {0012-7094},
   MRCLASS = {11F80 (11F33 11F55)},
  MRNUMBER = {2876931},
MRREVIEWER = {James Newton},
       DOI = {10.1215/00127094-1507376},
       URL = {https://doi.org/10.1215/00127094-1507376},
}

@article {BLGGT-MR3152941,
    AUTHOR = {Barnet-Lamb, Thomas and Gee, Toby and Geraghty, David and
              Taylor, Richard},
     TITLE = {Potential automorphy and change of weight},
   JOURNAL = {Ann. of Math. (2)},
  FJOURNAL = {Annals of Mathematics. Second Series},
    VOLUME = {179},
      YEAR = {2014},
    NUMBER = {2},
     PAGES = {501--609},
      ISSN = {0003-486X},
   MRCLASS = {11F33},
  MRNUMBER = {3152941},
MRREVIEWER = {Wen-Wei Li},
       DOI = {10.4007/annals.2014.179.2.3},
       URL = {https://doi.org/10.4007/annals.2014.179.2.3},
}

@article {EG-geom_BM-MR3134019,
    AUTHOR = {Emerton, Matthew and Gee, Toby},
     TITLE = {A geometric perspective on the {B}reuil-{M}\'{e}zard conjecture},
   JOURNAL = {J. Inst. Math. Jussieu},
  FJOURNAL = {Journal of the Institute of Mathematics of Jussieu. JIMJ.
              Journal de l'Institut de Math\'{e}matiques de Jussieu},
    VOLUME = {13},
      YEAR = {2014},
    NUMBER = {1},
     PAGES = {183--223},
      ISSN = {1474-7480},
   MRCLASS = {11F33},
  MRNUMBER = {3134019},
MRREVIEWER = {Matteo Longo},
       DOI = {10.1017/S147474801300011X},
       URL = {https://doi.org/10.1017/S147474801300011X},
}

@article {Mok-Comp-2014-MR3200667,
    AUTHOR = {Mok, Chung Pang},
     TITLE = {Galois representations attached to automorphic forms on {${\rm
              GL}_2$} over {CM} fields},
   JOURNAL = {Compos. Math.},
  FJOURNAL = {Compositio Mathematica},
    VOLUME = {150},
      YEAR = {2014},
    NUMBER = {4},
     PAGES = {523--567},
      ISSN = {0010-437X},
   MRCLASS = {11R39 (11F80 22E55)},
  MRNUMBER = {3200667},
MRREVIEWER = {Ivan Mati\'{c}},
       DOI = {10.1112/S0010437X13007665},
       URL = {https://doi.org/10.1112/S0010437X13007665},
}

@article {BCGP-ab_surf-2018abelian,
    AUTHOR = {Boxer, George and Calegari, Frank and Gee, Toby and Pilloni,
              Vincent},
     TITLE = {Abelian surfaces over totally real fields are potentially
              modular},
   JOURNAL = {Publ. Math. Inst. Hautes \'{E}tudes Sci.},
  FJOURNAL = {Publications Math\'{e}matiques. Institut de Hautes \'{E}tudes
              Scientifiques},
    VOLUME = {134},
      YEAR = {2021},
     PAGES = {153--501},
      ISSN = {0073-8301},
   MRCLASS = {11F80 (14G10 14K15)},
  MRNUMBER = {4349242},
       DOI = {10.1007/s10240-021-00128-2},
       URL = {https://doi.org/10.1007/s10240-021-00128-2},
}

@article {EL,
    AUTHOR = {Enns, John and Lee, Heejong},
     TITLE = {Mod {$p$} local-global compatibility for {${\rm
              GSp}_4(\Bbb{Q}_p)$} in the ordinary case},
   JOURNAL = {Doc. Math.},
  FJOURNAL = {Documenta Mathematica},
    VOLUME = {29},
      YEAR = {2024},
    NUMBER = {4},
     PAGES = {863--919},
      ISSN = {1431-0635,1431-0643},
   MRCLASS = {11F80 (11F33)},
  MRNUMBER = {4767488},
MRREVIEWER = {Nicolas\ Billerey},
       DOI = {10.4171/dm/960},
       URL = {https://doi.org/10.4171/dm/960},
}

@article {MRR,
    AUTHOR = {Mayeux, Arnaud and Richarz, Timo and Romagny, Matthieu},
     TITLE = {N\'{e}ron blowups and low-degree cohomological applications},
   JOURNAL = {Algebr. Geom.},
  FJOURNAL = {Algebraic Geometry},
    VOLUME = {10},
      YEAR = {2023},
    NUMBER = {6},
     PAGES = {729--753},
      ISSN = {2313-1691,2214-2584},
   MRCLASS = {14L15},
  MRNUMBER = {4673395},
MRREVIEWER = {Alan\ Koch},
       DOI = {10.14231/ag-2023-026},
       URL = {https://doi.org/10.14231/ag-2023-026},
}

@article {BG,
    AUTHOR = {Bellovin, Rebecca and Gee, Toby},
     TITLE = {{$G$}-valued local deformation rings and global lifts},
   JOURNAL = {Algebra Number Theory},
  FJOURNAL = {Algebra \& Number Theory},
    VOLUME = {13},
      YEAR = {2019},
    NUMBER = {2},
     PAGES = {333--378},
      ISSN = {1937-0652},
   MRCLASS = {11F80 (11F85)},
  MRNUMBER = {3927049},
MRREVIEWER = {Meng Fai Lim},
       DOI = {10.2140/ant.2019.13.333},
       URL = {https://doi.org/10.2140/ant.2019.13.333},
}

@article {CHT,
    AUTHOR = {Clozel, Laurent and Harris, Michael and Taylor, Richard},
     TITLE = {Automorphy for some {$l$}-adic lifts of automorphic mod {$l$}
              {G}alois representations},
      NOTE = {With Appendix A, summarizing unpublished work of Russ Mann,
              and Appendix B by Marie-France Vign\'{e}ras},
   JOURNAL = {Publ. Math. Inst. Hautes \'{E}tudes Sci.},
  FJOURNAL = {Publications Math\'{e}matiques. Institut de Hautes \'{E}tudes
              Scientifiques},
    NUMBER = {108},
      YEAR = {2008},
     PAGES = {1--181},
      ISSN = {0073-8301},
   MRCLASS = {11F80 (11G18 11R34)},
  MRNUMBER = {2470687},
MRREVIEWER = {Mark Kisin},
       DOI = {10.1007/s10240-008-0016-1},
       URL = {https://doi.org/10.1007/s10240-008-0016-1},
}

@article {6author,
    AUTHOR = {Caraiani, Ana and Emerton, Matthew and Gee, Toby and Geraghty,
              David and Pa\v{s}k\={u}nas, Vytautas and Shin, Sug Woo},
     TITLE = {Patching and the {$p$}-adic local {L}anglands correspondence},
   JOURNAL = {Camb. J. Math.},
  FJOURNAL = {Cambridge Journal of Mathematics},
    VOLUME = {4},
      YEAR = {2016},
    NUMBER = {2},
     PAGES = {197--287},
      ISSN = {2168-0930},
   MRCLASS = {11S37 (22E50)},
  MRNUMBER = {3529394},
MRREVIEWER = {Kimball L. Martin},
       DOI = {10.4310/CJM.2016.v4.n2.a2},
       URL = {https://doi.org/10.4310/CJM.2016.v4.n2.a2},
}

@article {Enns19,
    AUTHOR = {Enns, John},
     TITLE = {On weight elimination for {${\rm GL}_n(\mathbb{Q}_{p^f})$}},
   JOURNAL = {Math. Res. Lett.},
  FJOURNAL = {Mathematical Research Letters},
    VOLUME = {26},
      YEAR = {2019},
    NUMBER = {1},
     PAGES = {53--66},
      ISSN = {1073-2780},
   MRCLASS = {11F70 (11F80)},
  MRNUMBER = {3963975},
MRREVIEWER = {Santosh Nadimpalli},
       DOI = {10.4310/MRL.2019.v26.n1.a4},
       URL = {https://doi.org/10.4310/MRL.2019.v26.n1.a4},
}

@book {DM,
    AUTHOR = {Digne, Fran\c{c}ois and Michel, Jean},
     TITLE = {Representations of finite groups of {L}ie type},
    SERIES = {London Mathematical Society Student Texts},
    VOLUME = {95},
      NOTE = {Second edition of [ 1118841]},
 PUBLISHER = {Cambridge University Press, Cambridge},
      YEAR = {2020},
     PAGES = {vii+257},
      ISBN = {978-1-108-72262-9; 978-1-108-48148-9},
   MRCLASS = {20C33 (20D06 20G05)},
  MRNUMBER = {4211777},
}

@book {HT,
    AUTHOR = {Harris, Michael and Taylor, Richard},
     TITLE = {The geometry and cohomology of some simple {S}himura
              varieties},
    SERIES = {Annals of Mathematics Studies},
    VOLUME = {151},
      NOTE = {With an appendix by Vladimir G. Berkovich},
 PUBLISHER = {Princeton University Press, Princeton, NJ},
      YEAR = {2001},
     PAGES = {viii+276},
      ISBN = {0-691-09090-4},
   MRCLASS = {11G18 (11F70 11S37 14G35 22E45)},
  MRNUMBER = {1876802},
MRREVIEWER = {James Milne},
}

@book {JantzenBook,
    AUTHOR = {Jantzen, Jens Carsten},
     TITLE = {Representations of algebraic groups},
    SERIES = {Mathematical Surveys and Monographs},
    VOLUME = {107},
   EDITION = {Second},
 PUBLISHER = {American Mathematical Society, Providence, RI},
      YEAR = {2003},
     PAGES = {xiv+576},
      ISBN = {0-8218-3527-0},
   MRCLASS = {20G05 (17B10)},
  MRNUMBER = {2015057},
}

@misc{BA,
  
  note = {\url{https://arxiv.org/abs/1910.04780}, preprint},
  
  author = {Boixeda Alvarez, Pablo },
  
  
  title = {Fixed points and components of equivalued affine Springer fibers}
  
}

@misc{EGerrata,
 author = {Emerton, Matthew and Gee, Toby},
 title = {Moduli stacks of $(\varphi,{\Gamma})$-modules: errata},
 note = {Available at \url{https://www.ma.imperial.ac.uk/~tsg/Index_files/moduli-errata.pdf}}
}

@article {Lin,
    AUTHOR = {Lin, Zhongyipan},
     TITLE = {The {L}yndon-{D}emushkin method and crystalline lifts of
              {$G_2$}-valued {G}alois representations},
   JOURNAL = {Algebra Number Theory},
  FJOURNAL = {Algebra \& Number Theory},
    VOLUME = {19},
      YEAR = {2025},
    NUMBER = {3},
     PAGES = {415--456},
      ISSN = {1937-0652,1944-7833},
   MRCLASS = {11F80},
  MRNUMBER = {4879357},
MRREVIEWER = {Nicolas\ Billerey},
       DOI = {10.2140/ant.2025.19.415},
       URL = {https://doi.org/10.2140/ant.2025.19.415},
}

@incollection {cegsB,
    AUTHOR = {Caraiani, Ana and Emerton, Matthew and Gee, Toby and Savitt,
              David},
     TITLE = {Local geometry of moduli stacks of two-dimensional {G}alois
              representations},
 BOOKTITLE = {Arithmetic geometry},
    SERIES = {Tata Inst. Fundam. Res. Stud. Math.},
    VOLUME = {41},
     PAGES = {127--201},
 PUBLISHER = {Tata Inst. Fund. Res., Mumbai},
      YEAR = {[2024] \copyright 2024},
      ISBN = {978-81-957829-7-0},
   MRCLASS = {14D24 (11F80 14D23 14G35)},
  MRNUMBER = {4812702},
MRREVIEWER = {Jon\ Eivind\ Vatne},
}

@book {ArthurBook,
    AUTHOR = {Arthur, James},
     TITLE = {The endoscopic classification of representations},
    SERIES = {American Mathematical Society Colloquium Publications},
    VOLUME = {61},
      NOTE = {Orthogonal and symplectic groups},
 PUBLISHER = {American Mathematical Society, Providence, RI},
      YEAR = {2013},
     PAGES = {xviii+590},
      ISBN = {978-0-8218-4990-3},
   MRCLASS = {22E55 (11F66 11F70 11F72 11R37 20G25 22E50)},
  MRNUMBER = {3135650},
MRREVIEWER = {Dihua Jiang},
       DOI = {10.1090/coll/061},
       URL = {https://doi.org/10.1090/coll/061},
}

@article {Booher-minimal-deformation,
    AUTHOR = {Booher, Jeremy},
     TITLE = {Minimally ramified deformations when {$\ell \neq p$}},
   JOURNAL = {Compos. Math.},
  FJOURNAL = {Compositio Mathematica},
    VOLUME = {155},
      YEAR = {2019},
    NUMBER = {1},
     PAGES = {1--37},
      ISSN = {0010-437X},
   MRCLASS = {11F80},
  MRNUMBER = {3875451},
MRREVIEWER = {\'{A}lvaro Lozano-Robledo},
       DOI = {10.1112/S0010437X18007546},
       URL = {https://doi.org/10.1112/S0010437X18007546},
}

@article {LLHLM-extreme,
    AUTHOR = {Le, Daniel and Le Hung, Bao Viet and Levin, Brandon and Morra,
              Stefano},
     TITLE = {Extremal weights and a tameness criterion for
              {$\mathrm{mod}\,p$} {G}alois representations},
   JOURNAL = {J. Eur. Math. Soc. (JEMS)},
  FJOURNAL = {Journal of the European Mathematical Society (JEMS)},
    VOLUME = {28},
      YEAR = {2026},
    NUMBER = {6},
     PAGES = {2473--2548},
      ISSN = {1435-9855,1435-9863},
   MRCLASS = {11F33 (11F80 22E50)},
  MRNUMBER = {5045692},
       DOI = {10.4171/jems/1513},
       URL = {https://doi.org/10.4171/jems/1513},
}

@article {BM02,
    AUTHOR = {Breuil, Christophe and M\'{e}zard, Ariane},
     TITLE = {Multiplicit\'{e}s modulaires et repr\'{e}sentations de {${\rm
              GL}_2({\bf Z}_p)$} et de {${\rm Gal}(\overline{\bf Q}_p/{\bf
              Q}_p)$} en {$l=p$}},
      NOTE = {With an appendix by Guy Henniart},
   JOURNAL = {Duke Math. J.},
  FJOURNAL = {Duke Mathematical Journal},
    VOLUME = {115},
      YEAR = {2002},
    NUMBER = {2},
     PAGES = {205--310},
      ISSN = {0012-7094},
   MRCLASS = {11F80 (11F85 11S23)},
  MRNUMBER = {1944572},
MRREVIEWER = {Jacques Tilouine},
       DOI = {10.1215/S0012-7094-02-11522-1},
       URL = {https://doi.org/10.1215/S0012-7094-02-11522-1},
}

@article {DottoBM,
    AUTHOR = {Dotto, Andrea},
     TITLE = {Breuil-{M}\'{e}zard conjectures for central division algebras},
   JOURNAL = {Algebra Number Theory},
  FJOURNAL = {Algebra \& Number Theory},
    VOLUME = {19},
      YEAR = {2025},
    NUMBER = {2},
     PAGES = {213--246},
      ISSN = {1937-0652,1944-7833},
   MRCLASS = {11S37 (22E50)},
  MRNUMBER = {4859065},
       DOI = {10.2140/ant.2025.19.213},
       URL = {https://doi.org/10.2140/ant.2025.19.213},
}

@article {Col,
    AUTHOR = {Colmez, Pierre},
     TITLE = {Repr\'{e}sentations de {${\rm GL}_2(\mathbf{Q}_p)$} et
              {$(\phi,\Gamma)$}-modules},
   JOURNAL = {Ast\'{e}risque},
  FJOURNAL = {Ast\'{e}risque},
    NUMBER = {330},
      YEAR = {2010},
     PAGES = {281--509},
      ISSN = {0303-1179},
      ISBN = {978-2-85629-281-5},
   MRCLASS = {11S37 (11F80)},
  MRNUMBER = {2642409},
MRREVIEWER = {Fabrizio Andreatta},
}

@article {Pas,
    AUTHOR = {Pa{\v{s}}k\={u}nas, Vytautas},
     TITLE = {The image of {C}olmez's {M}ontreal functor},
   JOURNAL = {Publ. Math. Inst. Hautes \'{E}tudes Sci.},
  FJOURNAL = {Publications Math\'{e}matiques. Institut de Hautes \'{E}tudes
              Scientifiques},
    VOLUME = {118},
      YEAR = {2013},
     PAGES = {1--191},
      ISSN = {0073-8301},
   MRCLASS = {22E35 (11S37 22E50)},
  MRNUMBER = {3150248},
MRREVIEWER = {Ivan Mati\'{c}},
       DOI = {10.1007/s10240-013-0049-y},
       URL = {https://doi.org/10.1007/s10240-013-0049-y},
}

@article {BHHMS1,
    AUTHOR = {Breuil, Christophe and Herzig, Florian and Hu, Yongquan and
              Morra, Stefano and Schraen, Benjamin},
     TITLE = {Gelfand-{K}irillov dimension and {${\rm mod}\, p$} cohomology
              for {$\rm GL_2$}},
   JOURNAL = {Invent. Math.},
  FJOURNAL = {Inventiones Mathematicae},
    VOLUME = {234},
      YEAR = {2023},
    NUMBER = {1},
     PAGES = {1--128},
      ISSN = {0020-9910,1432-1297},
   MRCLASS = {11F80 (11R52)},
  MRNUMBER = {4635831},
MRREVIEWER = {Yiwen\ Ding},
       DOI = {10.1007/s00222-023-01202-8},
       URL = {https://doi.org/10.1007/s00222-023-01202-8},
}

@article {BHHMS2,
    AUTHOR = {Breuil, Christophe and Herzig, Florian and Hu, Yongquan and
              Morra, Stefano and Schraen, Benjamin},
     TITLE = {Conjectures and results on modular representations of {${\rm
              GL}_n(K)$} for a {$p$}-adic field {$K$}},
   JOURNAL = {Mem. Amer. Math. Soc.},
  FJOURNAL = {Memoirs of the American Mathematical Society},
    VOLUME = {315},
      YEAR = {2025},
    NUMBER = {1598},
     PAGES = {v+163},
      ISSN = {0065-9266,1947-6221},
      ISBN = {978-1-4704-7836-0; 978-1-4704-8509-2},
   MRCLASS = {22E50 (11F33 11F80 11F85)},
  MRNUMBER = {5003478},
       DOI = {10.1090/memo/1598},
       URL = {https://doi.org/10.1090/memo/1598},
}

@article {TW,
    AUTHOR = {Taylor, Richard and Wiles, Andrew},
     TITLE = {Ring-theoretic properties of certain {H}ecke algebras},
   JOURNAL = {Ann. of Math. (2)},
  FJOURNAL = {Annals of Mathematics. Second Series},
    VOLUME = {141},
      YEAR = {1995},
    NUMBER = {3},
     PAGES = {553--572},
      ISSN = {0003-486X},
   MRCLASS = {11G18 (11D41 11F11 11R34 13C40)},
  MRNUMBER = {1333036},
MRREVIEWER = {Karl Rubin},
       DOI = {10.2307/2118560},
       URL = {https://doi.org/10.2307/2118560},
}

@article {W,
    AUTHOR = {Wiles, Andrew},
     TITLE = {Modular elliptic curves and {F}ermat's last theorem},
   JOURNAL = {Ann. of Math. (2)},
  FJOURNAL = {Annals of Mathematics. Second Series},
    VOLUME = {141},
      YEAR = {1995},
    NUMBER = {3},
     PAGES = {443--551},
      ISSN = {0003-486X},
   MRCLASS = {11G05 (11D41 11F11 11F80 11G18)},
  MRNUMBER = {1333035},
MRREVIEWER = {Karl Rubin},
       DOI = {10.2307/2118559},
       URL = {https://doi.org/10.2307/2118559},
}

@incollection {EGH22,
    AUTHOR = {Emerton, Matthew and Gee, Toby and Hellmann, Eugen},
     TITLE = {An introduction to the categorical {$p$}-adic {L}anglands
              program},
 BOOKTITLE = {The {L}anglands program. {II}. {G}eometrization of the
              {L}anglands correspondence},
    SERIES = {Proc. Sympos. Pure Math.},
    VOLUME = {112},
     PAGES = {167--419},
 PUBLISHER = {Amer. Math. Soc., Providence, RI},
      YEAR = {[2025] \copyright 2025},
      ISBN = {978-1-4704-7438-6},
   MRCLASS = {11R39},
  MRNUMBER = {5007752},
       DOI = {10.1090/pspum/112.2/02066},
       URL = {https://doi.org/10.1090/pspum/112.2/02066},
}

@book {pseudo-red-grps,
    AUTHOR = {Conrad, Brian and Gabber, Ofer and Prasad, Gopal},
     TITLE = {Pseudo-reductive groups},
    SERIES = {New Mathematical Monographs},
    VOLUME = {26},
   EDITION = {Second},
 PUBLISHER = {Cambridge University Press, Cambridge},
      YEAR = {2015},
     PAGES = {xxiv+665},
      ISBN = {978-1-107-08723-1},
   MRCLASS = {20G15 (14L15)},
  MRNUMBER = {3362817},
       DOI = {10.1017/CBO9781316092439},
       URL = {https://doi.org/10.1017/CBO9781316092439},
}

@unpublished{Yam,
AUTHOR = {Yamauchi, Takuya},
TITLE = {Serre weights for {$\mathrm{GSp}_4$} over totally real fields},
NOTE = {\url{https://arxiv.org/abs/2006.07824}, preprint}
}

@article {GeeTaibi19,
    AUTHOR = {Gee, Toby and Ta\"{\i}bi, Olivier},
     TITLE = {Arthur's multiplicity formula for {${\bf GSp}_4$} and
              restriction to {${\bf Sp}_4$}},
   JOURNAL = {J. \'{E}c. polytech. Math.},
  FJOURNAL = {Journal de l'\'{E}cole polytechnique. Math\'{e}matiques},
    VOLUME = {6},
      YEAR = {2019},
     PAGES = {469--535},
      ISSN = {2429-7100},
   MRCLASS = {11F72 (11F46 11F55)},
  MRNUMBER = {3991897},
MRREVIEWER = {Han Wu},
       DOI = {10.5802/jep.99},
       URL = {https://doi.org/10.5802/jep.99},
}

@article {KW2,
    AUTHOR = {Khare, Chandrashekhar and Wintenberger, Jean-Pierre},
     TITLE = {Serre's modularity conjecture. {II}},
   JOURNAL = {Invent. Math.},
  FJOURNAL = {Inventiones Mathematicae},
    VOLUME = {178},
      YEAR = {2009},
    NUMBER = {3},
     PAGES = {505--586},
      ISSN = {0020-9910},
   MRCLASS = {11F80 (11F11 11F33 11R39)},
  MRNUMBER = {2551764},
MRREVIEWER = {Gabor Wiese},
       DOI = {10.1007/s00222-009-0206-6},
       URL = {https://doi.org/10.1007/s00222-009-0206-6},
}

@article {KW1,
    AUTHOR = {Khare, Chandrashekhar and Wintenberger, Jean-Pierre},
     TITLE = {Serre's modularity conjecture. {I}},
   JOURNAL = {Invent. Math.},
  FJOURNAL = {Inventiones Mathematicae},
    VOLUME = {178},
      YEAR = {2009},
    NUMBER = {3},
     PAGES = {485--504},
      ISSN = {0020-9910},
   MRCLASS = {11F80 (11F11 11F33 11R39)},
  MRNUMBER = {2551763},
MRREVIEWER = {Gabor Wiese},
       DOI = {10.1007/s00222-009-0205-7},
       URL = {https://doi.org/10.1007/s00222-009-0205-7},
}

@article {Serre87,
    AUTHOR = {Serre, Jean-Pierre},
     TITLE = {Sur les repr\'{e}sentations modulaires de degr\'{e} {$2$} de
              {$\mathrm{{G}al}(\overline{\mathbb{Q}}/\mathbb{Q})$}},
   JOURNAL = {Duke Math. J.},
  FJOURNAL = {Duke Mathematical Journal},
    VOLUME = {54},
      YEAR = {1987},
    NUMBER = {1},
     PAGES = {179--230},
      ISSN = {0012-7094},
   MRCLASS = {11F11 (11G05 14G15 14G25 14K15)},
  MRNUMBER = {885783},
MRREVIEWER = {M. A. Kenku},
       DOI = {10.1215/S0012-7094-87-05413-5},
       URL = {https://doi.org/10.1215/S0012-7094-87-05413-5},
}

@article {BDJ,
    AUTHOR = {Buzzard, Kevin and Diamond, Fred and Jarvis, Frazer},
     TITLE = {On {S}erre's conjecture for mod {$\ell$} {G}alois
              representations over totally real fields},
   JOURNAL = {Duke Math. J.},
  FJOURNAL = {Duke Mathematical Journal},
    VOLUME = {155},
      YEAR = {2010},
    NUMBER = {1},
     PAGES = {105--161},
      ISSN = {0012-7094},
   MRCLASS = {11F80 (11F33 11F41)},
  MRNUMBER = {2730374},
MRREVIEWER = {Michael M. Schein},
       DOI = {10.1215/00127094-2010-052},
       URL = {https://doi.org/10.1215/00127094-2010-052},
}

@article {GLS15,
    AUTHOR = {Gee, Toby and Liu, Tong and Savitt, David},
     TITLE = {The weight part of {S}erre's conjecture for {$\mathrm{GL}(2)$}},
   JOURNAL = {Forum Math. Pi},
  FJOURNAL = {Forum of Mathematics. Pi},
    VOLUME = {3},
      YEAR = {2015},
     PAGES = {e2, 52},
   MRCLASS = {11F33 (11F80)},
  MRNUMBER = {3324938},
MRREVIEWER = {Gabor Wiese},
       DOI = {10.1017/fmp.2015.1},
       URL = {https://doi.org/10.1017/fmp.2015.1},
}

@article {HerzigTilouine,
    AUTHOR = {Herzig, Florian and Tilouine, Jacques},
     TITLE = {Conjecture de type de {S}erre et formes compagnons pour {$\rm
              GSp_4$}},
   JOURNAL = {J. Reine Angew. Math.},
  FJOURNAL = {Journal f\"{u}r die Reine und Angewandte Mathematik. [Crelle's
              Journal]},
    VOLUME = {676},
      YEAR = {2013},
     PAGES = {1--32},
      ISSN = {0075-4102},
   MRCLASS = {11F70 (11F80)},
  MRNUMBER = {3028753},
MRREVIEWER = {Anne-Marie H. Aubert},
       DOI = {10.1515/CRELLE.2011.190},
       URL = {https://doi.org/10.1515/CRELLE.2011.190},
}

@article {CL10fundlemma,
    AUTHOR = {Chaudouard, Pierre-Henri and Laumon, G\'{e}rard},
     TITLE = {Le lemme fondamental pond\'{e}r\'{e}. {I}. {C}onstructions
              g\'{e}om\'{e}triques},
   JOURNAL = {Compos. Math.},
  FJOURNAL = {Compositio Mathematica},
    VOLUME = {146},
      YEAR = {2010},
    NUMBER = {6},
     PAGES = {1416--1506},
      ISSN = {0010-437X},
   MRCLASS = {22E55 (11F70 11F72 11R39 14D23)},
  MRNUMBER = {2735371},
MRREVIEWER = {Anne-Marie H. Aubert},
       DOI = {10.1112/S0010437X10004756},
       URL = {https://doi.org/10.1112/S0010437X10004756},
}

@article {Kisin2adic,
    AUTHOR = {Kisin, Mark},
     TITLE = {Modularity of 2-adic {B}arsotti-{T}ate representations},
   JOURNAL = {Invent. Math.},
  FJOURNAL = {Inventiones Mathematicae},
    VOLUME = {178},
      YEAR = {2009},
    NUMBER = {3},
     PAGES = {587--634},
      ISSN = {0020-9910},
   MRCLASS = {11F80 (11F41 14L15 14L30)},
  MRNUMBER = {2551765},
MRREVIEWER = {Gabor Wiese},
       DOI = {10.1007/s00222-009-0207-5},
       URL = {https://doi.org/10.1007/s00222-009-0207-5},
}

@misc{LinEG,
      title={Moduli stacks of generalized phi-modules}, 
      author={Zhongyipan Lin},
      note={\url{https://arxiv.org/abs/2304.05317}, preprint}
}

@incollection {CarICM,
    AUTHOR = {Caraiani, Ana},
     TITLE = {The cohomology of {S}himura varieties with torsion
              coefficients},
 BOOKTITLE = {I{CM}---{I}nternational {C}ongress of {M}athematicians. {V}ol.
              3. {S}ections 1--4},
     PAGES = {1744--1766},
 PUBLISHER = {EMS Press, Berlin},
      YEAR = {[2023] \copyright 2023},
      ISBN = {978-3-98547-061-7; 978-3-98547-561-2; 978-3-98547-058-7},
   MRCLASS = {11G18 (11F33 11F70 11F75 11F80 11R39)},
  MRNUMBER = {4680299},
MRREVIEWER = {Salim\ Tayou},
       DOI = {10.4171/ICM2022/161},
       URL = {https://doi.org/10.4171/ICM2022/161},
}

@misc{SuzukiXuGSp4,
      title={The explicit {L}ocal {L}anglands Correspondence for $\mathrm{GSp}_4$, $\mathrm{Sp}_4$ and stability (with an application to {M}odularity {L}ifting)}, 
      author={Kenta Suzuki and Yujie Xu},
     note = {\url{https://arxiv.org/abs/2304.02622}, preprint},
}

@article {GeeAWS,
    AUTHOR = {Gee, Toby},
     TITLE = {Modularity lifting theorems},
   JOURNAL = {Essent. Number Theory},
  FJOURNAL = {Essential Number Theory},
    VOLUME = {1},
      YEAR = {2022},
    NUMBER = {1},
     PAGES = {73--126},
      ISSN = {2834-4626},
   MRCLASS = {11S37},
  MRNUMBER = {4573253},
       DOI = {10.2140/ent.2022.1.73},
       URL = {https://doi.org/10.2140/ent.2022.1.73},
}

@misc{BCGP2,
      title={Modularity theorems for abelian surfaces}, 
      author={George Boxer and Frank Calegari and Toby Gee and Vincent Pilloni},
      note = {\url{https://arxiv.org/abs/2502.20645}, preprint},
}

@article {BZ,
    AUTHOR = {Bernstein, I. N. and Zelevinsky, A. V.},
     TITLE = {Induced representations of reductive {${p}$}-adic
              groups. {I}},
   JOURNAL = {Ann. Sci. \'{E}cole Norm. Sup. (4)},
  FJOURNAL = {Annales Scientifiques de l'\'{E}cole Normale Sup\'{e}rieure.
              Quatri\`eme S\'{e}rie},
    VOLUME = {10},
      YEAR = {1977},
    NUMBER = {4},
     PAGES = {441--472},
      ISSN = {0012-9593},
   MRCLASS = {22E50},
  MRNUMBER = {579172},
       URL = {http://www.numdam.org/item?id=ASENS_1977_4_10_4_441_0},
}
\bibliographystyle{alpha}

\end{document}